\theoremstyle{plain}
\newtheorem{theorem}{Theorem}[section]
\newtheorem{proposition}[theorem]{Proposition}
\newtheorem{lemma}[theorem]{Lemma}
\newtheorem{corollary}[theorem]{Corollary}
\theoremstyle{definition}
\newtheorem{conjecture}[theorem]{Conjecture}
\newtheorem{definition}[theorem]{Definition}
\newtheorem{example}[theorem]{Example}
\theoremstyle{remark}
\newtheorem{remark}[theorem]{Remark}
\newtheorem{notation}[theorem]{Notation}
\renewcommand{\bar}{\overline}
\newcommand{\C}{\mathbb{C}}
\newcommand{\Q}{\mathbb{Q}}
\newcommand{\R}{\mathbb{R}}
\newcommand{\Z}{\mathbb{Z}}
\newcommand{\p}{\mathbb{C}P}
\newcommand{\pp}{\mathbb{P}}
\newcommand{\mcal}{\mathcal}
\newcommand{\pd}{\mathrm{PD}}
\def\mcal{\mathcal}
\def\frak{\mathfrak}
\newcommand{\ds}{\displaystyle}
\newcommand{\vs}{\vspace}
\newcommand{\hs}{\hspace}
\numberwithin{equation}{section} \numberwithin{table}{section}
\begin{document}                                                                          

\title{Classification of six dimensional monotone symplectic manifolds admitting semifree circle actions}
\author{Yunhyung Cho}
\address{Department of Mathematics Education, Sungkyunkwan University, Seoul, Republic of Korea. }
\email{yunhyung@skku.edu}

\begin{abstract}
	Let $(M,\omega_M)$ be a six dimensional closed monotone symplectic manifold admitting an effective semifree Hamiltonian $S^1$-action.
	We show that $(M,\omega_M)$ is $S^1$-equivariant symplectomorphic 
	to some K\"{a}hler Fano manifold $(X,\omega_X, J)$ with a certain holomorphic $\C^*$-action.  
	We also give a complete list of all such Fano manifolds and describe all semifree $\C^*$-actions on them specifically. 
\end{abstract}
\maketitle
\setcounter{tocdepth}{1} 
\tableofcontents
\captionsetup[longtable]{skip=1em}

\section{Introduction}
\label{secIntroduction}

According to Koll\'{a}r-Miyaoka-Mori \cite{KMM}, there are only finitely many deformation types of smooth Fano varieties for each dimension. 
For example, there is only one $1$-dimensional smooth Fano variety $\C P^1.$ 
In dimension two, there are 10 types of smooth Fano surfaces, called \textit{del Pezzo surfaces}, classified as
$\C P^2$, $\C P^1 \times \C P^1$, and the blow-up of $\C P^2$ at $k$ generic points for $1 \leq k \leq 8.$ 
For the 3-dimensional case, Iskovskih \cite{I1} \cite{I2} classified all smooth Fano 3-folds having Picard number one. Later, Mori and Mukai \cite{MM} completed the classification of smooth Fano 3-folds.
(There are 105 types of smooth Fano 3-folds overall.) Note that any smooth Fano variety $X$ admits a K\"{a}hler form $\omega_X$ such that $[\omega_X] = c_1(TX)$. 

A \textit{monotone symplectic manifold $(M,\omega)$} is a symplectic analogue of a smooth Fano variety in the sense that it satisfies 
$\langle c_1(M), [\Sigma] \rangle > 0$ for any symplectic surface 
$\Sigma \subset M$. (See Section \ref{secMonotoneSemifreeHamiltonianS1Manifolds} for the precise definition.)
Then it is straightforward that the category of monotone symplectic manifolds contains all smooth Fano varieties. It is natural to ask whether a given monotone symplectic manifold is
K\"{a}hler (and hence Fano) with respect to some integrable almost complex structure compatible with the given symplectic form. It turned out that the answer for the question is negative in general, where a counter-example was found in dimension twelve by Fine and Panov \cite{FP}. 

In the low dimensional case, where $\dim M = 2$ or $4$, the answer is positive. Ohta and Ono \cite[Theorem 1.3]{OO2} proved that if $\dim M = 4$, then $M$ is diffeomorphic to a del Pezzo surface. Thus, from the uniqueness of a symplectic structure on a rational surface (due to McDuff \cite{McD3}), it follows that  every closed monotone symplectic four manifold is K\"{a}hler. 
As far as the author knows, the existence of a closed monotone symplectic manifold which is not K\"{a}hler is not known for dimension 6, 8, 10. 

The aim of this paper is to study six-dimensional monotone symplectic manifolds admitting Hamiltonian circle actions. More specifically, we deal with the following conjecture. 

\begin{conjecture}\cite[Conjecture 1.1]{LinP}\cite[Conjecture 1.4]{FP2}\label{conjecture_main}
	Let $(M,\omega)$ be a six dimensional closed monotone symplectic manifold equipped with an effective Hamiltonian circle action.  Then $(M,\omega)$ is $S^1$-equivariantly symplectomorphic
	to some K\"{a}hler manifold $(X,\omega_X, J)$ with some holomorphic Hamiltonian $S^1$-action.
\end{conjecture}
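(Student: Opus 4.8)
The plan is to read off the structure of $M$ from the moment map $\mu\colon M\to\R$ of the action and the symplectic reduced spaces it produces, and then to match the reconstructed manifold against the Mori--Mukai list of Fano $3$-folds. Since $\mu$ is a Morse--Bott function whose critical set is exactly the fixed locus $M^{S^1}$, the first step is to record, at each fixed component $F$, the integer weights of the $S^1$-representation on its normal bundle. Monotonicity enters through the relation $c_1(M)=\lambda[\omega]$ for some $\lambda>0$, which together with the Atiyah--Bott--Berline--Vergne localization formula constrains the admissible weight patterns and the critical values $\mu(F)$. The extremal components (where $\mu$ attains its minimum and maximum) are symplectic submanifolds, hence in dimension six are points, surfaces, or $4$-manifolds, and these serve as the anchors of the reconstruction.

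Next I would study the reduced spaces $M_t=\mu^{-1}(t)/S^1$ at regular values $t$. Each $M_t$ is a closed $4$-dimensional symplectic orbifold, and by the Duistermaat--Heckman theory its cohomology class varies linearly in $t$; from this and the monotonicity of $M$ I would argue that each $M_t$ carries a Fano-type (monotone) structure. Invoking Ohta--Ono together with McDuff's uniqueness of symplectic structures on rational surfaces, the smooth reduced spaces are then symplectic del Pezzo surfaces. The heart of the argument is the wall-crossing analysis: as $t$ passes a critical value carrying a fixed component $F$, the reduced space changes by a surgery dictated by the weights at $F$. When the action is \emph{semifree}, all weights are $\pm1$, the regular level sets are genuine circle bundles, the reduced spaces are smooth, and the surgeries are ordinary symplectic blow-ups and blow-downs; the bookkeeping then closes up and yields the classification carried out in this paper.

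The hard part is the passage from the semifree case to a \emph{general} effective action, which is what the full conjecture demands. For non-semifree actions the weights may exceed $1$, so the level sets acquire finite stabilizers, the reduced spaces become \emph{orbifold} del Pezzo surfaces, and the wall-crossing surgeries become weighted (toric) blow-ups. Controlling this requires (i) classifying the admissible local isotropy models at each fixed component compatible with $c_1(M)=\lambda[\omega]$, (ii) showing that the resulting family of orbifold del Pezzo surfaces, glued across the successive walls, assembles into a genuinely \emph{smooth} $3$-fold, and (iii) upgrading the resulting symplectomorphism type to a complex-geometric one by matching against the finite Mori--Mukai list and equipping the model with the standard holomorphic Hamiltonian $\C^*$-action. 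Step (ii) is subtle precisely because the intermediate reduced orbifolds are singular while $M$ itself is smooth, so the orbifold singularities must cancel in the total space; tracking this cancellation across all weighted walls is the main technical obstacle.

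I expect the general conjecture to require, beyond this bookkeeping, genuinely new input to exclude hypothetical weight configurations that satisfy every numerical (localization and Duistermaat--Heckman) constraint yet admit no Kähler realization. This is exactly the phenomenon underlying the non-Kähler monotone examples known in higher dimension, so any complete proof must supply an argument special to dimension six that no such ``ghost'' configuration survives. For this reason the present paper establishes the semifree case, where the weights are forced to be $\pm1$ and the reductions stay smooth, as the portion of the conjecture that can be settled with the reduced-space and wall-crossing machinery alone.
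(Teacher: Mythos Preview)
The statement you were given is a \emph{conjecture}, not a theorem; the paper does not prove it and explicitly presents it as open. What the paper establishes is the semifree special case (Theorem~\ref{theorem_main}), and you correctly recognize this in your final paragraph. So there is no ``paper's own proof'' of Conjecture~\ref{conjecture_main} to compare against.

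That said, your outline of the semifree-case strategy is broadly right but omits the decisive uniqueness ingredient. Classifying the combinatorics of the fixed locus and reduced spaces (what the paper calls the \emph{topological fixed point data}) is only half the job; the other half is proving that any two semifree Hamiltonian $S^1$-manifolds sharing the same such data are $S^1$-equivariantly symplectomorphic. The paper does this via Gonzalez's Theorem~\ref{theorem_Gonzalez}, which requires every reduced space to be \emph{symplectically rigid}, and then must further show that the topological fixed point data determines the genuine fixed point data---i.e., the symplectic isotopy class of each fixed surface inside its reduced space. This last step rests on the symplectic isotopy theorems of Siebert--Tian and Zhang (Theorems~\ref{theorem_ST} and~\ref{theorem_Z}) together with the connectedness of the relevant linear systems (Lemma~\ref{lemma_isotopic}). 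Your phrase ``the bookkeeping then closes up'' glosses over precisely this step, which is where the analytic content of the semifree argument lives.

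Your discussion of the obstacles in the non-semifree case (orbifold reductions, weighted blow-ups, possible ``ghost'' weight configurations) is a reasonable diagnosis of why the full conjecture remains open, but since the paper does not address that case there is nothing further to compare.
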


Note that Conjecture \ref{conjecture_main} is known to be true when $b_2(M) = 1$ by McDuff \cite{McD2} and Tolman \cite{Tol}. 
(There are four types of such manifolds up to symplectomorphism.)
Recently, Lindsay and Panov \cite{LinP} provided some evidences that Conjecture \ref{conjecture_main} is possibly true.  For instance, they proved that $M$ given in Conjecture
\ref{conjecture_main} is simply-connected as other smooth Fano varieties are. 
In this article, we prove Conjecture \ref{conjecture_main} provided that the action is {\em seimfree}\footnote{An $S^1$-action is called {\em semifree} 
		if the action is free outside the fixed point set.}.

\begin{theorem}\label{theorem_main}
		Let $(M,\omega)$ be a six-dimensional closed monotone symplectic manifold equipped with a semifree Hamiltonian 
		circle action. 
		Then $(M,\omega)$ is $S^1$-equivariantly symplectomorphic to some 
		K\"{a}hler Fano manifold with some holomorphic Hamiltonian circle action. 
\end{theorem}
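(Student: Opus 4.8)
The strategy is classification by the structure of the moment map and its reduced spaces. Let me think about how to prove this.

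We have a semifree Hamiltonian $S^1$-action on a 6-dimensional monotone symplectic manifold. The moment map $\mu: M \to \mathbb{R}$ is Morse-Bott with even-index critical submanifolds (the fixed point set components). Semifreeness is a strong constraint: at a fixed point, the isotropy weights are all $\pm 1$. So fixed components are either isolated points (with weights a mix of $+1$'s and $-1$'s) or higher-dimensional submanifolds on which the normal weights are all $\pm 1$.

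Key steps:

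1. **Analyze the fixed point data.** Using semifreeness, the fixed point set $M^{S^1}$ consists of the minimum $Z_{\min}$, the maximum $Z_{\max}$, and possibly interior fixed components. Each interior fixed component $F$ has $\mu(F)$ an interior critical value, and by semifreeness the negative normal bundle and positive normal bundle each consist of $\pm 1$ weight spaces only. Since $\dim M = 6$, the options for fixed components are: points, surfaces ($\dim 2$), or 4-manifolds ($\dim 4$).

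2. **Study reduced spaces.** For a regular value $t$, the reduced space $M_t = \mu^{-1}(t)/S^1$ is a 4-dimensional symplectic manifold (orbifold, but semifreeness forces smoothness away from fixed points). As $t$ crosses a critical value, $M_t$ changes by a blow-up/blow-down or a more drastic birational operation determined by the fixed component. Monotonicity of $M$ should force strong constraints on $M_t$ — in particular the reduced spaces should be (monotone, or at least rational/ruled) symplectic 4-manifolds, hence by McDuff's work diffeomorphic to del Pezzo surfaces or blow-ups/ruled surfaces, with unique symplectic structure in each class.

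3. **Reconstruct $M$.** The monotonicity condition $c_1 = \lambda[\omega]$ pins down the critical values and the sizes of the reduced spaces: the length of the moment image and the symplectic areas of vanishing cycles are all determined. This should leave only finitely many combinatorial possibilities for the "necklace" of reduced spaces and wall-crossings.

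4. **Match with Fano.** For each combinatorial type, exhibit an explicit Kähler Fano 3-fold $X$ with a holomorphic Hamiltonian $S^1$-action (e.g. toric Fano 3-folds, $\mathbb{P}(\mathcal{O}\oplus\mathcal{O}(k))$-type bundles over del Pezzo surfaces, etc.) realizing that type, and invoke a uniqueness/rigidity argument (Delzant-type for the toric pieces, or equivariant Moser, or the work of Karshon on 4-dimensional Hamiltonian $S^1$-spaces applied fiberwise) to conclude the equivariant symplectomorphism.

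**Main obstacle:** The hard part is Step 2–3 — controlling how the reduced spaces and the cohomology/symplectic class transform across wall-crossings, and showing monotonicity of $M$ survives as enough positivity on $M_t$ to limit the del Pezzo type (the reduced spaces need not themselves be monotone). Equally delicate is the final rigidity step: upgrading a diffeomorphism-type classification to an $S^1$-\emph{equivariant symplectomorphism}, which requires careful use of equivariant Moser-type arguments and the uniqueness of symplectic forms on rational/ruled 4-manifolds (McDuff), together with Karshon's classification of Hamiltonian $S^1$-actions on 4-manifolds to handle the fiber directions.
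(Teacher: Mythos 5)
Your plan follows the same broad strategy as the paper (classify fixed point data via reduced spaces, match with Fano models, invoke a rigidity/uniqueness theorem), but as written it has three concrete gaps.

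First, in your ``main obstacle'' you assert that ``the reduced spaces need not themselves be monotone.'' This is exactly backwards from the mechanism that makes the classification tractable: after normalizing the moment map so that the equivariant symplectic class equals $c_1^{S^1}(TM)$ (the \emph{balanced} moment map of Proposition \ref{proposition_normalized_moment_map}), the Kirwan map sends $c_1^{S^1}(TM)$ to $c_1(TM_0)$ and $[\widetilde{\omega}_H]$ to $[\omega_0]$, so the reduced space at level $0$ \emph{is} monotone (Proposition \ref{proposition_monotonicity_preserved_under_reduction}) and hence a del Pezzo surface by Ohta--Ono. Without this normalization your Step 2--3 has no handle on which 4-manifolds can occur; with it, the balanced moment map also pins the critical values to $\{0,\pm1,\pm2,\pm3\}$ via $H(Z)=-\Sigma(Z)$ (Corollary \ref{corollary_sum_weights_moment_value}), which is what makes the combinatorics finite.

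Second, your reconstruction step conflates two different pieces of data. The Duistermaat--Heckman and localization arguments only determine the \emph{homology classes} of the interior fixed surfaces inside the reduced spaces (the topological fixed point data), whereas the uniqueness theorem you need (Gonzalez, Theorem \ref{theorem_Gonzalez}) takes as input the actual embedded symplectic submanifolds up to isotopy. Two homologous symplectic surfaces in a 4-manifold are not symplectically isotopic in general; bridging this gap requires the Siebert--Tian and Zhang/Li--Wu isotopy theorems (Theorems \ref{theorem_ST} and \ref{theorem_Z}) together with connectedness of the space of smooth divisors in a linear system (Lemma \ref{lemma_isotopic}). Third, the rigidity hypothesis of Gonzalez's theorem (which is the correct tool here, not Karshon's 4-dimensional classification) fails for $\p^2\#\,k\overline{\p^2}$ with $k\geq 5$ by Seidel, so the cases where the reduced space is $X_k$, $k\geq 5$ (type {\bf (II-1-4.k)}) cannot be handled by the general theorem and need the separate two-parameter Moser argument given in the proof, exploiting that there the Euler class vanishes and there are no interior fixed points.
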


The proof of Theorem \ref{theorem_main} is essentially based on Gonzalez's approach \cite{G}. He introduced a notion so-called a {\em fixed point data} for a semifree Hamiltonian circle action,  
which is a collection of {\em a symplectic reduction\footnote{A reduced space at a critical level is not a smooth manifold nor an orbifold in general. However, if $\dim M = 6$ and the action is semifree, then a symplectic reduction at any (critical) level is a smooth manifold with the induced symplectic form. See 
Proposition \ref{proposition_topology_reduced_space}.} at each critical level} together with an information of 
critical submanifolds (or equivalently fixed components) as embedded symplectic submanifolds in the reduced space. (See Definition \ref{definition_fixed_point_data} or \cite[Definition 1.2]{G}.)
He then proved that a fixed point data determines a 
semifree Hamiltonian $S^1$-manifold up to $S^1$-equivariant symplectomorphism under the assumption that every reduced space is {\em symplectically rigid}\footnote{See Section \ref{secFixedPointData} for the definition.}. 

\begin{theorem}\cite[Theorem 1.5]{G}\label{theorem_Gonzalez}
		Let $(M,\omega)$ be a six-dimensional closed semifree Hamiltonian $S^1$-manifold. 
		Suppose that every reduced space is symplectically rigid.
		Then $(M,\omega)$ is determined by its fixed point data up to $S^1$-equivariant symplectomorphism.
\end{theorem}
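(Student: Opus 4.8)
The plan is to reconstruct $(M,\omega)$ from its fixed point data by cutting $M$ along the level sets of the moment map $\mu\colon M\to\R$ and regluing the pieces $S^1$-equivariantly and symplectically. Write $c_1<\cdots<c_N$ for the critical values of $\mu$. Then $M$ is covered by two kinds of $S^1$-invariant pieces: the \emph{regular} pieces $\mu^{-1}([a,b])$ with $[a,b]$ containing no critical value, on which $S^1$ acts freely; and the \emph{singular} pieces $\mu^{-1}((c_i-\epsilon,c_i+\epsilon))$, each of which equivariantly retracts onto the fixed component $Z_i=\mu^{-1}(c_i)\cap M^{S^1}$. I would first show that each piece is pinned down, up to equivariant symplectomorphism, by data contained in (or extracted from) the fixed point data, and then run an induction on $i$, building the equivariant symplectomorphism from the minimum $c_1$ upward to the maximum $c_N$, at each stage matching the new piece onto the part already constructed.

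For a regular piece, freeness makes $\mu^{-1}([a,b])/S^1$ a fiber bundle over $[a,b]$ with fiber the reduced space $M_t$, hence diffeomorphic to $M_{t_0}\times[a,b]$; by the Duistermaat--Heckman theorem the reduced classes satisfy $[\omega_t]=[\omega_{t_0}]-(t-t_0)\,e$, where $e\in H^2(M_{t_0};\Z)$ is the Euler class of the principal bundle $\mu^{-1}(t_0)\to M_{t_0}$. An equivariant Moser argument then identifies $\mu^{-1}([a,b])$, up to equivariant symplectomorphism, with the minimal-coupling model built from the pair $(M_{t_0},\omega_{t_0})$ and the class $e$. Crossing a critical value, the reduced spaces $M_{c_i\pm\epsilon}$ are obtained from one another, and from $M_{c_i}$, by symplectic blow-ups and blow-downs along submanifolds determined by $Z_i$ and its symplectic normal bundle (here one uses Proposition~\ref{proposition_topology_reduced_space} to know that all of these reduced spaces are smooth symplectic $4$-manifolds), and the jump of $e$ across $c_i$ is read off from the same normal data. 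Consequently all regular pieces, and the gluing maps between consecutive ones, are determined once one knows, for each $i$, the reduced space $M_{c_i}$ together with the embedded symplectic submanifold $Z_i\subset M_{c_i}$ and its symplectic normal bundle --- which is exactly the fixed point data.

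For a singular piece I would invoke the equivariant symplectic normal form of Marle and Guillemin--Sternberg: an $S^1$-invariant neighborhood of $Z_i$ is determined, up to equivariant symplectomorphism, by $(Z_i,\omega|_{Z_i})$ together with the symplectic normal bundle $\nu(Z_i)$ and its fiberwise linear $S^1$-action, all of whose weights are $\pm1$ by semifreeness. This is again recorded in the fixed point data, and the reductions of this neighborhood at levels just below and just above $c_i$ reproduce precisely the local blow-up/blow-down models around $Z_i$, so that the singular piece matches the two adjacent regular pieces.

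The crux --- and the one place the hypothesis of symplectic rigidity is indispensable --- is carrying out the inductive gluing when $(M,\omega)$ and $(M',\omega')$ have the same fixed point data. Suppose an $S^1$-equivariant symplectomorphism $\Phi$ has been built over $\mu^{-1}((-\infty,c_i-\epsilon])$. To extend it past $c_i$ one must, on the overlap $\mu^{-1}((c_i-\epsilon,c_i-\epsilon/2))$ where $\Phi$ descends to the reduced spaces, replace the symplectomorphism of reduced spaces induced by $\Phi$ with the one coming from the given identification of the $M_{c_i}$'s (respecting $Z_i$), and then continue across $c_i$ and beyond. Both steps require that the relevant symplectomorphisms of reduced spaces are isotopic through symplectomorphisms; this is exactly the content of symplectic rigidity (connectedness of the symplectomorphism groups of the reduced spaces, uniqueness up to isotopy of cohomologous symplectic forms on them, and uniqueness up to symplectic isotopy of the symplectic embeddings of balls and of the submanifolds $Z_i$). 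Such an isotopy of reduced-space data, automatically fixing $e$, lifts through the minimal-coupling model of the second paragraph to an equivariant symplectic isotopy of the total space over a collar, which corrects $\Phi$ there without disturbing it further down, and the induction proceeds. The main obstacle I anticipate is precisely the bookkeeping of these choices across successive critical levels --- ensuring that the correction made near $c_i$ does not undo the matching achieved near $c_{i-1}$ --- which is why all corrections are confined to collars and why the connectedness statements bundled into symplectic rigidity are needed at every level.
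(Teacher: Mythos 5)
Your overall strategy --- decompose $M$ into slices separated by the critical levels, pin down each slice by data extracted from the fixed point data, and glue inductively from the minimum upward, using symplectic rigidity to correct the gluing maps on collars --- is exactly the strategy behind the cited result of Gonzalez, and it is the machinery this paper sets up in Section~\ref{secFixedPointData} (regular and critical slices, and the gluing criterion of Lemma~\ref{lemma_gluing}, which matches reduced symplectic forms and Euler classes on the common boundary). The treatment of the regular pieces via Duistermaat--Heckman and an equivariant Moser argument, and the role you assign to rigidity in the inductive step, are both correct.

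There is, however, a genuine gap in your treatment of the interior critical slices. For a non-extremal critical value $c_i$ the level set $\mu^{-1}(c_i)$ is \emph{not} the fixed component $Z_i$: it is a $5$-dimensional singular space containing $Z_i$ together with a large set of free orbits, so $\mu^{-1}((c_i-\epsilon,c_i+\epsilon))$ does not equivariantly retract onto $Z_i$. Consequently the Marle--Guillemin--Sternberg equivariant normal form of $Z_i$ determines only an invariant neighborhood of $Z_i$ in $M$, not the whole critical slice. The critical slice is the union of this local model with a free piece, and the hard part of the theorem --- the part that actually consumes the symplectic rigidity hypothesis --- is showing that this union is itself determined by the fixed point data: one must control how the local model is glued into the free part, which amounts to controlling a symplectomorphism of the reduced space $M_{c_i\pm\epsilon}$ relative to the blown-up/blown-down locus coming from $Z_i$ (Proposition~\ref{proposition_topology_reduced_space}), and this is where uniqueness of cohomologous forms, ``deformation implies isotopy,'' and connectedness of $\mathrm{Symp}$ all enter. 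As written, your second and third paragraphs only identify the regular slices and a neighborhood of $Z_i$; the identification of the full interior critical slice --- the crux of Gonzalez's argument --- is asserted in one sentence (``so that the singular piece matches the two adjacent regular pieces'') rather than proved. For the extremal slices your argument is fine, since there the critical level coincides with the fixed component.
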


The proof of Theorem \ref{theorem_main} goes as follows : if $(M,\omega)$ is a closed six-dimensional monotone semifree Hamiltonian $S^1$-manifold with an isolated fixed point, 
then we show that
\begin{itemize}
	\item ({\bf Step 1} :) every reduced space of $(M,\omega)$ is symplectically rigid, and 
	\item ({\bf Step 2} :) the fixed point data of $(M,\omega)$ coincides with that of some smooth Fano variety equipped with some holomorphic semifree Hamiltonian $S^1$-action. 
\end{itemize}
The main difficulty in the second step is that it is almost impossible to determine whether two given fixed point data coincide or not in general. 
To overcome the difficulty, we first classify all possible {\em topological fixed point data}\footnote{See Definition \ref{definition_topological_fixed_point_data}.}(or TFD shortly) of $(M,\omega)$. 
A {\em topological fixed point data} of $(M,\omega)$ is a topological version of a fixed point data 
in the sense that it records ``homology classes'', 
not embeddings themselves, of fixed components in reduced spaces. With the aid of the Duistermaat-Heckman theorem (Theorem \ref{theorem_DH}), 
the ABBV-localization theorem (Theorem \ref{theorem_localization}), and some theorems about symplectic four manifolds (cf. \cite{LL}, \cite{Li}), we classify all possible 
TFD, see in Table \ref{table_list}, \ref{table_list_2}, and \ref{table_list_3}.

An immediate consequence of our classification of TFD is that every reduced space of $(M,\omega)$, except for very few cases (see Table \ref{table_list_3}: {\bf (II-1-4.k)}), 
is either $\p^2$, $\p^1 \times \p^1$, or $\p^2 \# ~k\overline{\p^2}$
for $k \leq 4$, where those manifolds are known to be symplectically rigid. (See Theorem \ref{theorem_uniqueness} and Theorem \ref{theorem_symplectomorphism_group}.)
Moreover, each topological fixed point data determines the first Chern number $\langle c_1(TM)^3, [M] \rangle$ as well as the Betti numbers of $M$. This enables us to expect a candidate for $(M,\omega)$
in the list of smooth Fano 3-folds given by Mori-Mukai \cite{MM}. Indeed, we could succeed in finding holomorphic Hamiltonian $S^1$-actions on those Fano candidates whose TFD match up with 
ours listed in Table \ref{table_list}, \ref{table_list_2}, and \ref{table_list_3}. 
(See the examples given in Section \ref{secClassificationOfTopologicalFixedPointDataDimZMin}, \ref{secClassificationOfTopologicalFixedPointDataDimZMinDimZMax2}, 
\ref{secClassificationOfTopologicalFixedPointDataDimZMinGeq2AndDimZMax4}.) 

Next, we will show that each TFD in our tables determines a fixed point data uniquely. The following two facts, due to Siebert-Tian \cite{ST} and Zhang \cite{Z}, 
are essentially used in this process. 
\begin{itemize}
	\item Any possible fixed point data whose topological type is given in Table \ref{table_list}, \ref{table_list_2}, and \ref{table_list_3} 
	is algebraic, i.e., any fixed component as an embedded symplectic submanifold in a reduced space is 
	symplectically isotopic to an algebraic curve. (See Theorem \ref{theorem_ST} and Theorem \ref{theorem_Z}.)
	\item Any two algebraic curves in a reduced space representing a same fixed point data are symplectically isotopic to each other. (See Lemma \ref{lemma_isotopic}.)
\end{itemize}

This paper is organized as follows. In Section \ref{secHamiltonianCircleActions}, 
we give a brief introduction to Hamiltonian $S^1$-actions including the Duistermaat-Heckman theorem.
An equivariant cohomology theory for Hamiltonian $S^1$-actions, especially about the Atiyah-Bott-Berline-Vergne localization theorem and equivariant Chern classes, 
is explained in Section \ref{secEquivariantCohomology}. In Section \ref{secMonotoneSemifreeHamiltonianS1Manifolds},
we restrict our attention to a closed monotone semifree Hamiltonian $S^1$-manifold and explain
how the topology of a reduced space and a reduced symplectic form change when crossing critical values of a moment map. We also explain how a reduced space 
inherits a monotone reduced symplectic form from $\omega$. In Section \ref{secFixedPointData}, we give a definition of (topological) fixed point data and introduce the Gonzalez's Theorem 
\cite[Theorem 1.5]{G}. In Section \ref{secClassificationOfTopologicalFixedPointDataDimZMin}, \ref{secClassificationOfTopologicalFixedPointDataDimZMinDimZMax2}, and 
\ref{secClassificationOfTopologicalFixedPointDataDimZMinGeq2AndDimZMax4}, we classify all topological 
fixed point data and describe the corresponding Fano candidates with specific holomorphic circle actions. In Section \ref{secMainTheorem}, we prove Theorem \ref{theorem_main}.

We have two appendices. Section \ref{secMonotoneSymplecticFourManifoldsWithSemifreeS1Actions}
is about a classification of closed monotone semifree Hamiltonian four manifolds. We apply our arguments used in this paper
to four dimensional cases and obtain a complete list of such manifolds. See Table \ref{table_list_4dim}. Finally in Section \ref{secSymplecticCapacitiesOfSmoothFano3Folds},
as a by-product of our classification, we calculate the Gromov width and the Hofer-Zehnder capacity for each manifold in Table \ref{table_list} by applying theorem of Hwang-Suh \cite{HS}.

\subsection*{Acknowledgements} 
The author would like to thank Dmitri Panov for bringing the paper \cite{Z} to my attention.
The author would also like to thank Jinhyung Park for helpful comments. 
This work is supported by the National Research Foundation of Korea(NRF) grant funded by the Korea government(MSIP; Ministry of Science, ICT \& Future Planning) (NRF-2017R1C1B5018168).

\section{Hamiltonian circle actions}
\label{secHamiltonianCircleActions}
    
    In this section, we briefly review some facts about Hamiltonian circle actions.
    Throughout this section, we assume that $(M,\omega)$ is a $2n$-dimensional closed symplectic manifold and $S^1$ is the unit circle group in $\C$ 
    acting on $M$ smoothly with the fixed point set $M^{S^1}$. 

\subsection{Hamiltonian actions}
\label{ssecHamiltonianActions} 

	Let $\frak{t}$ be the Lie algebra of $S^1$ and $X \in \frak{t}$. The vector field $\underbar{X}$ on $M$ defined by
	\[
		\underbar{X}_p = \left.\frac{d}{dt}\right|_{t=0} \exp(tX) \cdot p 
	\] is called the \textit{fundamental vector field with respect to} $X$.
	We say that the $S^1$-action on $(M,\omega)$ is \textit{symplectic} if it preserves the symplectic form $\omega$, i.e.,
	\[
		\mcal{L}_{\underbar{X}} \omega = 0
	\] for any $X \in \frak{t}$. By Cartan's magic formula, we have
	\[
		\mcal{L}_{\underbar{X}} \omega = d \circ i_{\underbar{X}} \omega + i_{\underbar{X}} \circ d \omega = d \circ i_{\underbar{X}} \omega.
	\]		
	Thus the action is symplectic if and only if $i_{\underbar{X}} \omega$ is a closed 1-form on $M$. If $i_{\underbar{X}} \omega$ is exact, then we say that the action is \textit{Hamiltonian}.
	In particular, any symplectic circle action is locally Hamiltonian by the classical Poincar\'{e} lemma.
	
	When the $S^1$-action is Hamiltonian, there exists a smooth function $H : M \rightarrow \R$, called a {\em moment map}, such that 
	\[
		i_{\underbar{X}} \omega = dH.
	\]
	It immediately follows that $p \in M$ is a fixed point of the action if and only if $p$ is a critical point of $H$.  	
	The following theorem describes a local behavior of the action near each fixed point of the action. 
	
	\begin{theorem}\label{theorem_equivariant_darboux}(Equivariant Darboux theorem)
		Let $(M,\omega)$ be a $2n$-dimensional symplectic manifold equipped with a Hamiltonian circle action and $H : M \rightarrow \R$ be a moment map. 
		For each fixed point $p \in M^{S^1}$, there is an $S^1$-invariant complex coordinate chart $(\mcal{U}_p, z_1, \cdots, z_n)$ with weights 
		$(\lambda_1, \cdots, \lambda_n) \in \Z^n$ such that
		\begin{enumerate}
			\item $\omega|_{\mcal{U}_p} = \frac{1}{2i} \sum_i dz_i \wedge d\bar{z_i},$ and
                                \item for any $t \in S^1$, the action can be expressed by $$t \cdot (z_1, \cdots, z_n) = (t^{\lambda_1}z_1, \cdots, t^{\lambda_n}z_n)$$ 
                                and the moment map can be written by
                                \[
                                	H(z_1, \cdots, z_n) = H(p) + \frac{1}{2}\sum_i \lambda_i |z_i|^2.
			\]
		\end{enumerate}
	\end{theorem}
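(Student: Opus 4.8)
The plan is to reduce the statement to a linear model near $p$ and then apply an equivariant version of Moser's trick. Since $S^1$ is compact, everything we need about invariance will come from averaging over the group.

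First I would linearize the action at $p$. Averaging any Riemannian metric over $S^1$ and applying the Riemannian exponential map at $p$ produces an $S^1$-equivariant diffeomorphism from a neighborhood of $0 \in T_pM$ onto a neighborhood $\mcal{U}_p$ of $p$, under which the action corresponds to the linear isotropy representation $\rho \colon S^1 \to GL(T_pM)$. Because the action preserves $\omega$, $\rho$ lands in the linear symplectic group of $(T_pM, \omega_p)$. Next I would average an $\omega_p$-compatible complex structure over $S^1$ to get an invariant one; then $\rho$ is unitary, and decomposing $T_pM$ into the weight spaces of this unitary $S^1$-representation and picking a unitary eigenbasis gives complex linear coordinates $(z_1, \dots, z_n)$ on $T_pM$ in which $t \cdot (z_i) = (t^{\lambda_i} z_i)$ for integers $\lambda_i$ and (after a routine rescaling of the basis) $\omega_p = \tfrac{1}{2i}\sum_i dz_i \wedge d\bar z_i =: \omega_0$ at the origin. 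Pulling this back through the diffeomorphism above, I obtain an $S^1$-invariant coordinate chart on $\mcal{U}_p$ realizing the linear diagonal action of (2), in which $\omega$ has become an $S^1$-invariant symplectic form $\tilde\omega$ with $\tilde\omega_0 = \omega_0$.

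The second half is to straighten $\tilde\omega$ to $\omega_0$ without disturbing the linear action. Following Moser, I would take the path $\omega_t = (1-t)\omega_0 + t\tilde\omega$ of $S^1$-invariant closed $2$-forms, nondegenerate near $0$; produce an $S^1$-invariant $1$-form $\sigma$ with $d\sigma = \tilde\omega - \omega_0$ that vanishes to second order at $0$ (invariance by averaging the primitive supplied by the Poincar\'e lemma, second-order vanishing by subtracting the constant and linear parts, which are themselves closed because $\tilde\omega_0 = \omega_0$); and integrate the time-dependent vector field $X_t$ defined by $i_{X_t}\omega_t = -\sigma$. Since $\sigma$ vanishes to second order and is invariant, $X_t$ vanishes at $0$ and is $S^1$-invariant, so its flow $\phi_t$ is defined on a neighborhood of $0$ for $t \in [0,1]$, fixes $0$, is $S^1$-equivariant, and satisfies $\phi_1^* \tilde\omega = \omega_0$. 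Composing $\phi_1$ with the chart of the previous step yields the desired $S^1$-invariant complex chart in which $\omega$ is standard (part (1)) and the action is diagonal with weights $(\lambda_1, \dots, \lambda_n)$. Finally, for the moment map I would compute in this model directly: writing $z_i = x_i + i y_i$, the fundamental vector field of the standard generator of $\frak{t}$ is $\underbar{X} = \sum_i \lambda_i (x_i\,\pa_{y_i} - y_i\,\pa_{x_i})$, and contracting with $\omega_0$ gives $i_{\underbar{X}}\omega_0 = d\bigl(\tfrac12 \sum_i \lambda_i |z_i|^2\bigr)$. As $i_{\underbar{X}}\omega = dH$ and $\mcal{U}_p$ is connected, $H - \tfrac12\sum_i \lambda_i |z_i|^2$ is constant there; evaluating at $p$ identifies the constant as $H(p)$, which is (2).

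I expect the only delicate point to be the equivariant Moser step, and specifically the requirement to arrange \emph{simultaneously} that the primitive $\sigma$ is $S^1$-invariant (so that the straightening flow commutes with the action) and that it vanishes to second order at $p$ (so that the Moser vector field vanishes at $p$, hence its flow fixes $p$ and is defined on a full neighborhood of $p$). Both are standard consequences of averaging over the compact group $S^1$ together with a careful bookkeeping of the Taylor expansion at $p$, but they must be combined with some care; the remaining ingredients — existence of an invariant metric, of an invariant compatible complex structure, and simultaneous diagonalization of a unitary $S^1$-representation — are routine.
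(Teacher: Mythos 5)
The paper states this theorem without proof, treating it as the classical equivariant Darboux theorem, so there is no argument of the authors' to compare against; your proposal is the standard textbook proof (equivariant linearization via an averaged metric and the exponential map, weight-space decomposition of the unitary isotropy representation, then an equivariant Moser trick with an invariant primitive vanishing to second order), and it is correct. The one imprecision is the phrase ``average an $\omega_p$-compatible complex structure'': the average of complex structures need not square to $-\mathrm{id}$, so one should instead average a compatible metric and recover an invariant compatible $J$ by the polar-decomposition construction from $g$ and $\omega_p$; with that routine fix, and granting that the signs in your moment-map computation follow the paper's conventions, the argument goes through.
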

	
	Using Theorem \ref{theorem_equivariant_darboux}, we obtain the following.
	
	\begin{corollary}\label{corollary_properties_moment_map}\cite[Chapter 4]{Au}
		Let $(M,\omega)$ be a closed symplectic manifold equipped with a Hamiltonian circle action with a moment map $H : M \rightarrow \R$. Then $H$
                     satisfies the followings.
		\begin{enumerate}
			\item $H$ is a Morse-Bott function.
			\item Let $p \in M^{S^1}$ be a fixed point of the action and let $(\mcal{U}_p, z_1, \cdots, z_n)$ be an equivariant Darboux chart near $p$ with weights 
			$(\lambda_1, \cdots, \lambda_n) \in \Z^n$. Then we have $$ \mathrm{ind}(p) = 2n_p $$ where $\mathrm{ind}(p)$ is the Morse-Bott index of $p$ and $n_p$ is the number of
			 negative $\lambda_i$'s. Moreover, twice the number of zeros in $\{\lambda_1, \cdots, \lambda_n \}$ is a real dimension of the fixed component
			  containing $p$.
			\item Any fixed component is a symplectic submanifold of $(M,\omega)$.
			\item Each level set of $H$ is connected. In particular, an extremal fixed component is connected.
		\end{enumerate}
	\end{corollary}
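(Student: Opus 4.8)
The plan is to obtain parts (1)--(3) by directly unwinding the equivariant Darboux normal form of Theorem~\ref{theorem_equivariant_darboux}, and then to deduce part (4) from the evenness of the Morse--Bott indices supplied by (2) together with a classical connectedness lemma of Atiyah.

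First I would fix a point $p\in M^{S^1}$ and a chart $(\mcal{U}_p,z_1,\dots,z_n)$ with weights $(\lambda_1,\dots,\lambda_n)$ as in Theorem~\ref{theorem_equivariant_darboux}, so that on $\mcal{U}_p$
\[
H = H(p)+\frac{1}{2}\sum_i\lambda_i|z_i|^2, \qquad t\cdot(z_1,\dots,z_n)=(t^{\lambda_1}z_1,\dots,t^{\lambda_n}z_n).
\]
In the real coordinates given by the real and imaginary parts of the $z_i$, the Hessian of $H$ at $p$ is diagonal, with the two coordinates coming from $z_i$ each carrying the eigenvalue $\lambda_i$. This exhibits, inside $\mcal{U}_p$, the critical set of $H$ as the linear subspace $L_p=\{z_i=0:\lambda_i\neq 0\}$, shows that the Hessian is nondegenerate transverse to $L_p$, and identifies $L_p$ with $M^{S^1}\cap\mcal{U}_p$ (a point of $\mcal{U}_p$ is fixed precisely when $z_i=0$ for every $i$ with $\lambda_i\neq 0$). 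As $p$ is arbitrary and $M^{S^1}$ is closed, this yields (1), identifies $M^{S^1}$ with the critical set of $H$, and---by counting the negative, zero and positive weights---gives (2), namely $\mathrm{ind}(p)=2n_p$ with $n_p=\#\{i:\lambda_i<0\}$, while the fixed component through $p$ has dimension $2\,\#\{i:\lambda_i=0\}$.

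For (3) I would observe that, writing $Z$ for the fixed component containing $p$, $Z\cap\mcal{U}_p=L_p$ is the coordinate subspace spanned by the $z_i$ with $\lambda_i=0$, so restricting the Darboux form $\omega|_{\mcal{U}_p}=\frac{1}{2i}\sum_i dz_i\wedge d\bar{z_i}$ to $TL_p$ gives $\frac{1}{2i}\sum_{\lambda_i=0}dz_i\wedge d\bar{z_i}$, which is nondegenerate; hence $Z$ is a symplectic submanifold. (Equivalently, an $S^1$-invariant $\omega$-compatible almost complex structure $J$ leaves $T_pZ$---the trivial summand of the isotropy representation---invariant, and $\omega$ restricts nondegenerately to any $J$-invariant subspace.)

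Finally, for (4) I would use that, by (2), every Morse--Bott index---and likewise every coindex $2\,\#\{i:\lambda_i>0\}$---is even, so $H$ has no critical submanifold of index or coindex $1$. Atiyah's connectedness lemma then gives that such a Morse--Bott function on a compact connected manifold has connected level sets and connected sub- and superlevel sets: on crossing a critical value, the number of connected components of $\{H\le c\}$ can increase only at a local-minimum submanifold and decrease only via an index-$1$ handle, so the absence of index-$1$ critical submanifolds together with connectedness of $M=\{H\le\max H\}$ forces every $\{H\le c\}$---and, by the same argument for $-H$, every $\{H\ge c\}$---to be connected; a Mayer--Vietoris/Morse-surgery argument then yields connectedness of each fibre $H^{-1}(c)$, and in particular of $H^{-1}(\min H)$ and $H^{-1}(\max H)$, onto which the adjacent sub- and superlevel sets deformation retract (see \cite[Chapter~4]{Au}). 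I expect this global connectedness statement to be the only step that is not a mechanical consequence of the normal form; one should also keep in mind the standing hypothesis that $M$ is connected, without which (4) fails already for the regular level sets.
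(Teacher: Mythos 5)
The paper offers no proof of this corollary beyond the remark that it follows from the equivariant Darboux theorem together with the citation to \cite[Chapter 4]{Au}, and your derivation of (1)--(3) by reading off the Hessian, the fixed locus, and the restricted symplectic form from the normal form of Theorem~\ref{theorem_equivariant_darboux}, followed by the standard Atiyah connectedness argument (no index- or coindex-one critical submanifolds) for (4), is exactly that intended route. Your proposal is correct, including the worthwhile caveat that connectedness of $M$ is needed for part (4).
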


\subsection{Symplectic reduction} 
\label{ssecSymplecticReduction}

	Let $r \in \R$ be a regular value of $H$. Then the level set $H^{-1}(r)$ does not have any fixed point so that $H^{-1}(r)$ is a fixed point-free $S^1$-manifold of dimension $2n-1$. 
	The quotient space $M_r = H^{-1}(r) / S^1$ is an orbifold of dimension $2n-2$ with cyclic quotient singularities. Since the restriction of $\omega$ on $H^{-1}(r)$ satisfies
	\begin{itemize}
		\item $i_{\underbar{X}} \omega = -dH = 0$ on $H^{-1}(r)$, and
		\item $\mcal{L}_{\underbar{X}} \omega = i_{\underbar{X}} \circ d\omega + d \circ i_{\underbar{X}} \omega = 0$.
	\end{itemize} 
	Thus we can push-forward $\omega$ to $M_r$ via the quotient map $$\pi_r : H^{-1}(r) \rightarrow M_r$$ and so that we obtain a symplectic form $\omega_r$ on $M_r$.
	We call $(M_r, \omega_r)$ the \textit{symplectic reduction at $r$}.

\subsection{Duistermaat-Heckman theorem} 
\label{ssecDuistermaatHeckmanTheorem}

	Let $J$ be an $S^1$-invariant $\omega$-compatible almost complex structure on $M$ so that $g_J(\cdot, \cdot) := \omega(J \cdot, \cdot)$ is an $S^1$-invariant
	 Riemannian metric on $M$. Note that 
	the following equality
	\[
		 g( JX, Y) = \omega(-X, Y) = -\omega(X,Y) = dH(Y)
	\]
	implies that $JX$ is the gradient vector field of $H$ with respect to $g_J$.
	
	Let $(a,b) \subset \R$ be an open interval which does not contain any critical value of $H$.
	For any $r, s \in (a,b)$ with $r < s$, we may identify $H^{-1}(r)$ with $H^{-1}(s)$ via the diffeomorphism $\phi_{r, s} : H^{-1}(r) \rightarrow H^{-1}(s)$ which sends a point 
	$z \in H^{-1}(r)$ to a point in $H^{-1}(s)$ along the flow of the gradient vector field $JX$. Thus one gets a diffeomorphism
	\begin{equation}
		\begin{array}{ccc}
			\phi : H^{-1}([r,s]) &\stackrel{\cong}\rightarrow& H^{-1}(s) \times [r,s]\\[0.5em]
			p &\mapsto& (\phi_{H(p), s}(p), H(p)).
		\end{array}
	\end{equation}
	By pulling back $\omega$ to $H^{-1}(s) \times [r,s]$ via $\phi^{-1}$, we have an $S^1$-equivariant symplectomorphism
	\[
		 (H^{-1}([r,s]), \omega|_{(H^{-1}([r,s])}) \cong (H^{-1}(r) \times [r,s], (\phi^{-1})^*\omega) 
	\] with a moment map
	$(\phi^{-1})^*H : H^{-1}(r) \times [r,s] \rightarrow [r,s]$ which is simply a projection on the second factor. Therefore, we may identify $M_s$ with $M_r$ via $\phi$. 
	This identification allows us to think of reduced symplectic forms $\{\omega_t ~|~ t \in (a,b) \}$ as a one-parameter family of symplectic forms on $M_r$.

	\begin{theorem}\label{theorem_DH}\cite{DH}
		Let $\omega_s$ and $\omega_r$ be the reduced symplectic forms on $M_s$ and $M_r$, respectively. By identifying $M_s$ with $M_r$ as described above, we have
		$$ [\omega_s] - [\omega_r] = (r-s)e $$
		where $e \in H^2(M_r;\Q)$ is the Euler class of the $S^1$-fibration $\pi_r : H^{-1}(r) \rightarrow M_r.$
	\end{theorem}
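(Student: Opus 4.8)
The plan is to compute the cohomology class of the reduced form by working on the product $H^{-1}(r) \times [r,s]$, where the identification with the trivial bundle is furnished by the gradient flow as in the paragraph preceding the statement. First I would pick a connection 1-form $\alpha$ on the principal $S^1$-bundle $\pi_r : H^{-1}(r) \to M_r$ (after averaging, an $S^1$-invariant such $\alpha$ exists), so that $d\alpha = \pi_r^*\, e$ represents the Euler class $e \in H^2(M_r;\Q)$ (rationally; integrally if the bundle is principal, which it is on a regular level for the semifree case). On $H^{-1}([r,s]) \cong H^{-1}(r) \times [r,s]$ with coordinate $t$ on the interval, the moment map is the projection to $t$, so on the $2n$-manifold-with-boundary one has the Hamiltonian constraint $i_{\underbar X}\omega = dt$. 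This forces $\omega$, when restricted to this product and decomposed using $\alpha$ and the pullback from $M_r$, to take the normal form $\omega = \pi^*\omega_t + d(t\,\alpha)$ for some family $\omega_t$ of 2-forms pulled back from $M_r$, since $i_{\underbar X}(t\,d\alpha) = 0$ and $i_{\underbar X}(dt \wedge \alpha) = dt$ (using $\alpha(\underbar X)=1$, $i_{\underbar X}\pi^*\beta = 0$).

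From the normal form the conclusion is immediate: restricting to the level $t$ and pushing down along $\pi_t$ (which kills $\alpha$ and $dt$), the reduced form at level $t$ is exactly $\omega_t$ up to the cohomology contribution of $d(t\alpha)$. More precisely, $\omega|_{H^{-1}(t)} = \pi_t^*(\omega_t + t\, e)$ as closed forms on the level set, so on $M_r \cong M_t$ we get $[\omega_t] = [\omega_r] + (r - t)\,[e]$ after matching normalizations — equivalently, differentiating in $t$ gives $\frac{d}{dt}[\omega_t] = -e$, and integrating from $s$ down to $r$ yields $[\omega_s] - [\omega_r] = (r-s)e$ as claimed. The sign is fixed by the convention $i_{\underbar X}\omega = dH$ together with the orientation of the flow $\phi_{r,s}$ increasing $H$.

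I expect the main obstacle to be purely bookkeeping: carefully justifying the normal form $\omega = \pi^*\omega_t + d(t\alpha)$ on the product. One must check that the $[r,s]$-component of $\omega$ in the splitting induced by $\alpha$ is forced to be $dt\wedge\alpha$ with no extra $dt \wedge (\text{horizontal})$ term — this uses $d\omega = 0$ together with $S^1$-invariance, i.e.\ that $\mcal{L}_{\underbar X}\omega = 0$ and $i_{\underbar X}\omega = dt$ pin down the vertical and interval directions, leaving only the genuinely reduced piece $\omega_t$ free — and that $\omega_t$ indeed descends to $M_t$ because it is basic. Once the normal form is in hand, the rest is a one-line cohomology computation. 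An alternative, perhaps cleaner, route avoids the normal form entirely: observe that $\omega - t\, d\alpha$ (with $\alpha$ extended to the product as the pullback of the connection form, $t$ the interval coordinate) restricts on each slice to a basic form whose class is independent of $t$ by a Moser-type or Stokes argument on the region $H^{-1}([r,t])$, since $\omega$ is closed and $\alpha$ is globally defined there; comparing the two slices $t=r$ and $t=s$ then gives the formula directly. I would present the normal-form version as the main argument and remark on the Stokes shortcut.
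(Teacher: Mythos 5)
The paper offers no proof of this statement: it is quoted directly from Duistermaat--Heckman \cite{DH}, so there is nothing internal to compare your argument against. Judged on its own, your argument is the standard one and is essentially correct: trivialize $H^{-1}([r,s])\cong H^{-1}(r)\times[r,s]$ by the gradient flow, fix an invariant connection $\alpha$ with $\alpha(\underline{X})=1$, use $i_{\underline{X}}\omega=dt$ to pin down the vertical components, and read off $\tfrac{d}{dt}[\omega_t]=-[e]$ from closedness.

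One correction to how you frame the ``main obstacle.'' The normal form $\omega=\pi^*\omega_t+d(t\alpha)$ with \emph{no} extra $dt\wedge(\text{horizontal})$ term is not forced, and the verification you propose would fail: writing an invariant closed $2$-form with $i_{\underline{X}}\omega=dt$ in the splitting induced by $\alpha$ gives
$\omega=\beta_t+dt\wedge(\delta_t-\alpha)$ with $\beta_t$ basic and $\delta_t$ a genuinely possible basic horizontal $1$-form; the identification by the gradient flow does not kill $\delta_t$. The point is that this term is cohomologically harmless: closedness yields $d_P\beta_t=0$ and $\dot\beta_t=d_P\delta_t-d\alpha$, so after descending to $M_r$ the derivative of $[\omega_t]$ is still $-[e]$ regardless of $\delta_t$. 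So you should not try to prove the extra term vanishes (it need not); you should observe that it contributes only an exact form to $\dot\beta_t$. Relatedly, your ``Stokes shortcut'' has the sign of the correction backwards with the conventions you set up: it is $[\omega_t+t\,e]$ (not $[\omega_t-t\,e]$) that is constant in $t$, which again integrates to $[\omega_s]-[\omega_r]=(r-s)e$. These are exactly the bookkeeping issues you flagged, but the first one changes what needs to be proved, not merely a sign.
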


	Note that if the action is semifree, then the reduced space 
	at a regular value becomes a smooth manifold and the fibration $\pi_r$ in Theorem \ref{theorem_DH} becomes a genuine $S^1$-bundle so that the Euler class $e \in H^2(M_r; \Z)$ is integral. 

\section{Equivariant cohomology}
\label{secEquivariantCohomology}

    In this section, we recall some well-known facts about equivariant cohomology of Hamiltonian $S^1$-manifolds. 
	Throughout this section, we take cohomology with coefficients in $\R$, unless stated otherwise.

	Let $(M,\omega)$ be a $2n$-dimensional closed symplectic manifold equipped with a Hamiltonian circle action. The equivariant cohomology $H^*_{S^1}(M)$ is defined by
	\[
		H^*_{S^1}(M) = H^*(M \times_{S^1} ES^1) 
	\] where $ES^1$ is a contractible space on which $S^1$ acts freely. In particular, the equivariant cohomology of
     a point $p$ is given by 
     \[
     		H^*_{S^1}(p) = H^*(p \times_{S^1} ES^1) = H^*(BS^1) 
	\]
	where $BS^1 = ES^1 / S^1$ is the classifying space of $S^1$. Note that $BS^1$ can be constructed as an inductive limit of the sequence of Hopf fibrations
	\begin{equation}
		\begin{array}{ccccccccc}
			S^3          & \hookrightarrow & S^5        & \hookrightarrow & \cdots & S^{2n+1} & \cdots & \hookrightarrow & ES^1 \sim S^{\infty} \\
		\downarrow   &                 & \downarrow &                 & \cdots & \downarrow & \cdots &                 & \downarrow \\
		   \C P^1       & \hookrightarrow & \C P^2     &\hookrightarrow  & \cdots & \C P^n      & \cdots & \hookrightarrow &BS^1 \sim \C P^{\infty}
		\end{array}
	\end{equation}
	so that  $H^*(BS^1) \cong \R[x]$ where $x$ is an element of degree two such that $\langle x, [\C P^1] \rangle = 1$.

\subsection{Equivariant formality}
\label{ssecEquivariantFormality}
	
	One remarkable fact on the equivariant cohomology of a Hamiltonian $S^1$-manifold is that the space is {\em equivariantly formal}.
	Before we state the equivariant formality of $(M,\omega)$, recall that $H^*_{S^1}(M)$ has a natural $H^*(BS^1)$-module structure as follows.
	The projection map $M \times ES^1 \rightarrow ES^1$ on the second factor is $S^1$-equivariant and it induces the projection map
	\[	
		\pi : M \times_{S^1} ES^1 \rightarrow BS^1 
	\] which makes $M \times_{S^1} ES^1$ into an $M$-bundle over $BS^1$ :
	\begin{equation}\label{equation_Mbundle}
		\begin{array}{ccc}
			M \times_{S^1} ES^1 & \stackrel{f} \hookleftarrow & M \\[0.3em]
			\pi \downarrow          &                             &   \\[0.3em]
			BS^1                   &                             &
		\end{array}
	\end{equation}
	where $f$ is an inclusion of $M$ as a fiber. Then $H^*(BS^1)$-module structure on $H^*_{S^1}(M)$ is given by the map $\pi^*$ such that 
	\[
		y \cdot \alpha = \pi^*(y)\cup \alpha 
	\] for $y \in H^*(BS^1)$ and $\alpha \in H^*_{S^1}(M)$. In particular, we have the following sequence of ring homomorphisms
	\[
		 H^*(BS^1) \stackrel{\pi^*} \rightarrow H^*_{S^1}(M) \stackrel{f^*} \rightarrow H^*(M). 
	\]

	\begin{theorem}\label{theorem_equivariant_formality}\cite{Ki}
		Let $(M,\omega)$ be a closed symplectic manifold equipped with a Hamiltonian circle action. Then $M$ is equivariatly formal, that is,
		$H^*_{S^1}(M)$ is a free $H^*(BS^1)$-module so that $$H^*_{S^1}(M) \cong H^*(M) \otimes H^*(BS^1).$$
		Equivalently, the map $f^*$ is surjective with kernel $x \cdot H^*_{S^1}(M)$ where $\cdot$ denotes the scalar multiplication of $H^*(BS^1)$-module structure on $H^*_{S^1}(M)$.
	\end{theorem}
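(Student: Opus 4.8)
The plan is to run equivariant Morse theory for the moment map $H\colon M\to\R$. By Corollary~\ref{corollary_properties_moment_map}, $H$ is a Morse--Bott function whose critical set is exactly $M^{S^1}$, and every fixed component $F$ has \emph{even} index $\mathrm{ind}(F)=2n_F$, where $n_F$ is the number of negative weights of the $S^1$-action on the normal bundle $\nu_F$ (Theorem~\ref{theorem_equivariant_darboux}). The whole argument rests on one observation: the \emph{negative} normal bundle $\nu_F^-$ carries an $S^1$-action all of whose weights are nonzero (they are precisely the negative weights $\lambda_1,\dots,\lambda_{n_F}$), so its equivariant Euler class is a non-zero-divisor in $H^*_{S^1}(F)$. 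Indeed $S^1$ acts trivially on $F$, so $H^*_{S^1}(F)\cong H^*(F)\otimes H^*(BS^1)=H^*(F)[x]$, and by the splitting principle $e_{S^1}(\nu_F^-)=\bigl(\textstyle\prod_{j=1}^{n_F}\lambda_j\bigr)x^{n_F}+(\text{terms of lower order in }x)$; multiplication by such a polynomial --- whose top coefficient $\prod_j\lambda_j$ is a nonzero integer, hence invertible --- is injective on $H^*(F)[x]$.

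Next, enumerate the critical values $c_1<c_2<\cdots<c_k$ of $H$ and set $M_i=H^{-1}\bigl((-\infty,c_i+\varepsilon)\bigr)$ for small $\varepsilon>0$, so $M_0=\varnothing$ and $M_k=M$. Up to $S^1$-equivariant homotopy, $M_i$ is obtained from $M_{i-1}$ by attaching the negative disk bundles of the fixed components $F\subset H^{-1}(c_i)$, so excision and the equivariant Thom isomorphism give
\[
H^*_{S^1}(M_i,M_{i-1})\;\cong\;\bigoplus_{F\subset H^{-1}(c_i)}H^{*-\mathrm{ind}(F)}_{S^1}(F),
\]
and the composite of this isomorphism with $H^*_{S^1}(M_i,M_{i-1})\to H^*_{S^1}(M_i)\to H^*_{S^1}(F)$ (restriction) is, on each summand, cup product with $e_{S^1}(\nu_F^-)$. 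By the previous paragraph this composite is injective, hence so is $H^*_{S^1}(M_i,M_{i-1})\to H^*_{S^1}(M_i)$; thus every connecting homomorphism in the long exact sequence of the pair $(M_i,M_{i-1})$ vanishes and we obtain short exact sequences
\[
0\longrightarrow\bigoplus_{F\subset H^{-1}(c_i)}H^{*-\mathrm{ind}(F)}_{S^1}(F)\longrightarrow H^*_{S^1}(M_i)\longrightarrow H^*_{S^1}(M_{i-1})\longrightarrow 0.
\]
Each $H^*_{S^1}(F)\cong H^*(F)\otimes H^*(BS^1)$ is free over $H^*(BS^1)$; so, inducting on $i$ from $H^*_{S^1}(M_0)=0$, the quotient $H^*_{S^1}(M_{i-1})$ is free, hence projective, the sequence splits as $H^*(BS^1)$-modules, and $H^*_{S^1}(M_i)$ is free. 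At $i=k$ this says $H^*_{S^1}(M)$ is a free $H^*(BS^1)$-module, i.e.\ $M$ is equivariantly formal.

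To pin down this module and $\ker f^*$, map the short exact sequences above into the corresponding long exact sequences in ordinary cohomology by the fiber-restriction maps; under the Thom isomorphisms the relative map is $\bigoplus_F f_F^*$, which is surjective (it is ``set $x=0$''). A diagram chase over $i$ --- with base case $M_0=\varnothing$ --- then shows each $f_i^*\colon H^*_{S^1}(M_i)\to H^*(M_i)$ is surjective, in particular $f^*\colon H^*_{S^1}(M)\to H^*(M)$ is surjective. Since $f^*\circ\pi^*=0$ for the maps in \eqref{equation_Mbundle}, $f^*$ kills $x\cdot H^*_{S^1}(M)$ and so factors through $H^*_{S^1}(M)/x\,H^*_{S^1}(M)$. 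Writing $r_n$ for the rank of the free module $H^*_{S^1}(M)$ in degree $n$, surjectivity of $f^*$ gives $r_n\ge\dim H^n(M)$, whence $\dim H^n_{S^1}(M)=\sum_{j\ge0}r_{n-2j}\ge\sum_{j\ge0}\dim H^{n-2j}(M)$; on the other hand the Leray--Serre spectral sequence of \eqref{equation_Mbundle}, with $E_2$-page $H^*(BS^1)\otimes H^*(M)$, forces $\dim H^n_{S^1}(M)\le\sum_{j\ge0}\dim H^{n-2j}(M)$. Hence all these are equalities: $r_n=\dim H^n(M)$, so $H^*_{S^1}(M)\cong H^*(M)\otimes H^*(BS^1)$, and the factored map $H^*_{S^1}(M)/x\,H^*_{S^1}(M)\to H^*(M)$ is a surjection between spaces of equal (finite) dimension, hence an isomorphism --- that is, $\ker f^*=x\cdot H^*_{S^1}(M)$.

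The main obstacle, and the only place the Hamiltonian hypothesis is genuinely used, is the first paragraph together with the resulting splitting: the vanishing of the connecting maps in the Morse stratification rests on every fixed component having \emph{even} index --- equivalently, on $S^1$ acting with only nonzero weights on each negative normal bundle --- which is exactly what Corollary~\ref{corollary_properties_moment_map}, via the equivariant Darboux theorem, supplies. For a general symplectic, non-Hamiltonian circle action no such control is available, and equivariant formality can fail.
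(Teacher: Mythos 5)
The paper states this theorem with a citation to Kirwan \cite{Ki} and supplies no proof of its own, so there is nothing internal to compare against; your argument is correct and is essentially the standard Atiyah--Bott--Kirwan proof from that source: evenness of the Morse--Bott indices of the moment map and the non-zero-divisor property of the equivariant Euler classes of the negative normal bundles split the long exact sequences of the stratification, and the Serre spectral sequence bound then forces $r_n=\dim H^n(M)$ and $\ker f^*=x\cdot H^*_{S^1}(M)$. No gaps.
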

	
\subsection{Localization theorem}
\label{ssecLocalizationTheorem} 

	Let $\alpha \in H^*_{S^1}(M)$ be any element of degree $k$. Theorem \ref{theorem_equivariant_formality} implies that $\alpha$ can be uniquely
	expressed as 
	\begin{equation}\label{equation_expression}
		 \alpha = \alpha_k \otimes 1 + \alpha_{k-2} \otimes x + \alpha_{k-4} \otimes x^2 + \cdots 
	\end{equation}
	where $\alpha_i \in H^i(M)$ for each $i = k, k-2, \cdots$. We then obtain $f^*(\alpha) = \alpha_k$ where $f$ is given in \eqref{equation_Mbundle}.

	\begin{definition}
		An \textit{integration along the fiber $M$} is an $H^*(BS^1)$-module homomorphism $\int_M : H^*_{S^1}(M) \rightarrow H^*(BS^1)$ defined by
		\[
			\int_M \alpha = \langle \alpha_k, [M] \rangle \cdot 1 + \langle \alpha_{k-2}, [M] \rangle \cdot x + \cdots 
		\]
		for every $ \alpha = \alpha_k \otimes 1 + \alpha_{k-2} \otimes x + \alpha_{k-4} \otimes x^2 + \cdots \in H^k_{S^1}(M)$ 
		where $[M] \in H_{2n}(M; \Z)$ denotes the fundamental homology class of $M$.
	\end{definition}

	Note that $\langle \alpha_i, [M] \rangle$ is zero for every $i < \dim M = 2n$ and $\alpha_i = 0$ for every $i > \deg \alpha$ by dimensional reason. 
	Therefore we have the following corollary.

	\begin{corollary}\label{corollary : localization degree 2n}
		Let $\alpha \in H^{2n}_{S^1}(M)$ be given in \eqref{equation_expression}.
		Then 
		\[
			\int_M \alpha = \langle \alpha_{2n}, [M] \rangle = \langle f^*(\alpha), [M] \rangle.
		\]
		Also if $\mathrm{deg} ~\alpha < 2n$, then we have 
		\[
			\int_M \alpha = 0.
		\]	
	\end{corollary}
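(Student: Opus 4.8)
The plan is simply to unwind the two definitions involved, since everything follows from dimensional bookkeeping together with Theorem \ref{theorem_equivariant_formality}. By equivariant formality, an arbitrary $\alpha \in H^{2n}_{S^1}(M)$ has the unique expansion \eqref{equation_expression}, namely $\alpha = \alpha_{2n}\otimes 1 + \alpha_{2n-2}\otimes x + \alpha_{2n-4}\otimes x^2 + \cdots$, where $\alpha_i \in H^i(M)$ and the indices $i$ run through $2n, 2n-2, \dots$ down to $0$ or $1$. The first point I would make is the dimensional vanishing: because $[M]\in H_{2n}(M;\Z)$, the pairing $\langle \alpha_i, [M]\rangle$ is zero for every $i \neq 2n$ appearing in the expansion (for $i<2n$ a lower-degree cohomology class pairs trivially with the top fundamental class, and for $i > 2n$ one simply has $H^i(M)=0$). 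Substituting this into the definition of $\int_M \alpha = \langle \alpha_{2n},[M]\rangle\cdot 1 + \langle \alpha_{2n-2},[M]\rangle\cdot x + \cdots$ collapses the sum to its leading term $\langle \alpha_{2n},[M]\rangle\cdot 1 \in H^0(BS^1)$.

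Next I would recall, as noted right after \eqref{equation_expression}, that applying $f^*$ to the expansion of $\alpha$ yields $f^*(\alpha)=\alpha_{2n}$: indeed $x$ lies in $\ker f^*$ by Theorem \ref{theorem_equivariant_formality} (equivalently, the restriction of the $H^*(BS^1)$-generator to a fiber $M$ vanishes), so every term $\alpha_{2n-2j}\otimes x^j$ with $j\geq 1$ maps to zero. Combining this with the previous computation gives
\[
	\int_M \alpha \;=\; \langle \alpha_{2n}, [M] \rangle \;=\; \langle f^*(\alpha), [M] \rangle,
\]
which is the first claim.

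For the second claim, suppose $k:=\deg\alpha < 2n$. In the expansion \eqref{equation_expression} every index $i$ satisfies $i \leq k < 2n$, so the same dimensional vanishing gives $\langle \alpha_i, [M]\rangle = 0$ for all $i$ occurring, and hence $\int_M \alpha = 0$ term by term.

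I do not expect any genuine obstacle here; the statement is a formal consequence of the definition of integration along the fiber and of the decomposition \eqref{equation_expression}. The only point requiring a little care is the bookkeeping that the expansion of $\alpha$ involves only cohomology degrees that are $\leq \deg\alpha$ and congruent to $\deg\alpha$ modulo $2$, so that the fundamental class $[M]$ detects exactly one summand when $\deg\alpha = 2n$ and none when $\deg\alpha < 2n$.
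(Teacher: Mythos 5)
Your argument is correct and is exactly the paper's route: the corollary is stated there as an immediate consequence of the definition of $\int_M$ together with the observation that $\langle \alpha_i,[M]\rangle=0$ for $i<2n$ and that $f^*(\alpha)=\alpha_{2n}$ since $x\in\ker f^*$. Nothing further is needed.
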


	Now, let $M^{S^1}$ be the fixed point set of the $S^1$-action on $M$ and $F \subset M^{S^1}$ a fixed component. Then the inclusion $i_F : F \hookrightarrow M$ 
	induces a ring homomorphism $$i_F^* : H^*_{S^1}(M) \rightarrow H^*_{S^1}(F) \cong H^*(F) \otimes H^*(BS^1).$$
	For any $\alpha \in H^*_{S^1}(M)$, we call the image $i_F^*(\alpha)$ \textit{the restriction of $\alpha$ to $F$} and denote by 
	\[
		\alpha|_F := i_F^*(\alpha).
	\] 
	The following theorem due to Atiyah-Bott \cite{AB} and Berline-Vergne \cite{BV} states that 
	$\int_M \alpha$ can be computed in terms of the fixed point set.

	\begin{theorem}[ABBV localization]\label{theorem_localization}
		For any $ \alpha \in H^*_{S^1}(M)$, we have
		\[
			\int_M \alpha = \sum_{F \subset M^{S^1}} \int_F \frac{\alpha|_F}{e^{S^1}(F)}
		\]
		where $e^{S^1}(F)$ is the equivariant Euler class of the normal bundle $\nu_F$ of $F$ in $M$. That is, $e^{S^1}(F)$ is the Euler class of the bundle 
		\[
			\nu_F \times_{S^1} ES^1 \rightarrow F \times BS^1.
		\] induced from the projection $\nu_F \times ES^1 \rightarrow F \times ES^1$.
	\end{theorem}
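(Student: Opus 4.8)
The plan is to prove the formula by the standard localization argument over the ring $H^*(BS^1)$ with its degree-two generator $x$ inverted: one first shows that the restriction map to the fixed locus becomes an isomorphism after localization, then writes an arbitrary equivariant class as a sum of Gysin pushforwards from the fixed components, and finally applies $\int_M$.

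Write $R = H^*(BS^1) \cong \R[x]$ and let $R_x = \R[x,x^{-1}]$ be the localization inverting $x$; for an $S^1$-space $Y$ set $H^*_{S^1}(Y)_x := H^*_{S^1}(Y)\otimes_R R_x$. The key input is the Atiyah--Bott/Borel localization theorem: the restriction homomorphism
\[
i^*\colon H^*_{S^1}(M)_x \longrightarrow H^*_{S^1}(M^{S^1})_x = \bigoplus_{F\subset M^{S^1}} H^*_{S^1}(F)_x
\]
is an isomorphism. To prove it I would show that $H^*_{S^1}(M\setminus M^{S^1})_x = 0$ and then feed this into the long exact sequence of the pair $(M,M\setminus M^{S^1})$ together with the $S^1$-equivariant Thom isomorphism for the normal bundle of $M^{S^1}$. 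The vanishing is seen as follows: up to $S^1$-equivariant homotopy, $M\setminus M^{S^1}$ is a compact $S^1$-manifold without fixed points (e.g.\ the complement of an open invariant tubular neighborhood of $M^{S^1}$), so by the slice theorem it is covered by finitely many invariant open sets each equivariantly homotopy equivalent to an orbit $S^1/\Gamma$ with $\Gamma\subsetneq S^1$ finite. Over $\R$ one has $H^*_{S^1}(S^1/\Gamma) = H^*(B\Gamma;\R) = \R = R/(x)$, which is annihilated by $x$; an induction on the number of sets in the cover via Mayer--Vietoris (using that an extension of $x$-torsion modules is $x^N$-torsion for some $N$) shows $H^*_{S^1}(M\setminus M^{S^1})$ is annihilated by a power of $x$ and hence dies after $\otimes_R R_x$. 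I would also record that for each fixed component $F$ the equivariant Euler class $e^{S^1}(F)$ of its normal bundle is a unit in $H^*_{S^1}(F)_x$: all normal weights $\lambda_j$ of $F$ are nonzero, so $e^{S^1}(F) = \bigl(\prod_j\lambda_j\bigr)x^{r} + (\text{terms of lower order in }x)$ with $2r = \mathrm{codim}\,F$ and $\prod_j\lambda_j\neq 0$, and its inverse is obtained from a finite geometric series because the non-leading terms are nilpotent in $H^*(F)$.

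Granting this, fix $\alpha\in H^*_{S^1}(M)$ and set
\[
\beta := \sum_{F\subset M^{S^1}} (i_F)_*\!\left(\frac{\alpha|_F}{e^{S^1}(F)}\right)\in H^*_{S^1}(M)_x,
\]
where $(i_F)_*$ denotes the $S^1$-equivariant Gysin pushforward along $i_F\colon F\hookrightarrow M$. The excess-intersection formula $i_F^*(i_F)_*(\gamma) = \gamma\cup e^{S^1}(F)$, together with $i_{F'}^*(i_F)_*(\gamma) = 0$ for $F'\neq F$ (the fixed components are disjoint), gives $i_{F'}^*(\beta) = \alpha|_{F'}$ for every component $F'$; hence $i^*(\alpha-\beta) = 0$ and injectivity of $i^*$ forces $\alpha = \beta$ in $H^*_{S^1}(M)_x$. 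Now apply $\int_M$: by functoriality of the pushforward under the compositions $F\hookrightarrow M\to\mathrm{pt}$ one has $\int_M\circ(i_F)_* = \int_F$, and therefore
\[
\int_M\alpha = \sum_{F\subset M^{S^1}}\int_F\frac{\alpha|_F}{e^{S^1}(F)} .
\]
A priori this is an equality in $R_x$, but since the left-hand side lies in $R$ it is an equality in $R = H^*(BS^1)$.

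The main obstacle is the localization isomorphism itself: establishing that $H^*_{S^1}(M\setminus M^{S^1})$ is killed by a power of $x$ and packaging this, via the equivariant Thom isomorphism and the pair sequence, into the bijectivity of $i^*$ over $R_x$ (and the invertibility of the Euler classes there). Once that is in hand, the remaining manipulations with Gysin maps are purely formal. It is also worth noting that extracting the $x^0$-coefficient of the displayed identity, in combination with Corollary~\ref{corollary : localization degree 2n}, yields the numerical residue form of the theorem that is actually used in the later sections.
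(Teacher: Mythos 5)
Your proposal is correct: it is the standard Atiyah--Bott/Berline--Vergne argument (Borel localization of $H^*_{S^1}$ at $x$, vanishing of the localized equivariant cohomology of the fixed-point-free part via the slice theorem and a Mayer--Vietoris induction, invertibility of $e^{S^1}(F)$ in $H^*_{S^1}(F)_x$ because the normal weights are nonzero and the lower-order terms are nilpotent, and then the self-intersection formula for the Gysin maps followed by $\int_M\circ (i_F)_* = \int_F$). The paper does not prove this theorem at all --- it quotes it from \cite{AB} and \cite{BV} --- so there is no internal proof to compare against; your sketch reproduces exactly the argument of those sources, and the only point worth flagging is that in the Hamiltonian setting of this paper the injectivity of $i^*$ you need is even available before localization by Kirwan's theorem \ref{theorem_Kirwan_injectivity}, though your more general torsion argument is what makes the result hold for arbitrary $S^1$-manifolds.
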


	One more important property of a Hamiltonian $S^1$-action is that any two equivariant cohomology classes are distinguished by their images of the restriction to the fixed point set.

          \begin{theorem}\cite{Ki}(Kirwan's  injectivity theorem)\label{theorem_Kirwan_injectivity}
		Let $(M,\omega)$ be a closed Hamiltonian $S^1$-manifold and $i : M^{S^1} \hookrightarrow M$ be the inclusion of the fixed point set.
		Then the induced ring homomorphism $i^* : H^*_{S^1}(M) \rightarrow H^*_{S^1}(M^{S^1})$ is injective.
	\end{theorem}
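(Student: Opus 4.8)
The plan is to prove Kirwan's injectivity theorem (Theorem~\ref{theorem_Kirwan_injectivity}) by combining the Atiyah--Bott--Berline--Vergne localization formula (Theorem~\ref{theorem_localization}) with a Poincar\'e-duality pairing argument, together with the standard observation that inverting the generator $x \in H^*(BS^1)$ makes the restriction map to the fixed locus an isomorphism. Concretely, one works over the field of fractions $\mathcal{R} := \R[x]$ localized at the multiplicative set $\{x^k\}$, and uses that for a Hamiltonian $S^1$-manifold the $S^1$-action has the fixed point set $M^{S^1}$ as the ``$S^1$-fixed stratum,'' so that after inverting $x$ the inclusion $i \colon M^{S^1} \hookrightarrow M$ induces an isomorphism $i^* \otimes \mathcal{R} \colon H^*_{S^1}(M) \otimes_{\R[x]} \mathcal{R} \xrightarrow{\ \cong\ } H^*_{S^1}(M^{S^1}) \otimes_{\R[x]} \mathcal{R}$. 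Granting this localization isomorphism, injectivity of $i^*$ itself follows from the equivariant formality (Theorem~\ref{theorem_equivariant_formality}): since $H^*_{S^1}(M) \cong H^*(M) \otimes \R[x]$ is a free, hence torsion-free, $\R[x]$-module, the natural map $H^*_{S^1}(M) \to H^*_{S^1}(M) \otimes_{\R[x]} \mathcal{R}$ is injective, and $i^*$ factors through an injection followed by the localization isomorphism.

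The first step would be to set up the localization isomorphism carefully. One decomposes $M^{S^1} = \bigsqcup_F F$ into its connected fixed components; for each $F$ the equivariant Euler class $e^{S^1}(\nu_F) \in H^*_{S^1}(F) \cong H^*(F) \otimes \R[x]$ has the form $e^{S^1}(\nu_F) = \prod_j (x \cdot w_j + (\text{higher-degree-in-}H^*(F)\text{ terms}))$ where the $w_j$ are the nonzero integer weights of the $S^1$-action on the normal bundle; because these weights are \emph{nonzero} (that is the defining property of $M^{S^1}$), the leading term is a nonzero multiple of $x^{\mathrm{codim}}$, so $e^{S^1}(\nu_F)$ becomes invertible in $H^*_{S^1}(F) \otimes_{\R[x]} \mathcal{R}$. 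Then, using the push-forward (Gysin) map $(i_F)_* \colon H^*_{S^1}(F) \to H^*_{S^1}(M)$ and the projection formula $(i_F)^*(i_F)_*(\beta) = \beta \cup e^{S^1}(\nu_F)$, one writes down an explicit inverse to $i^*$ after localization: the class $\alpha \in H^*_{S^1}(M) \otimes \mathcal{R}$ is recovered from its restrictions $\alpha|_F$ as $\alpha = \sum_F (i_F)_*\!\bigl(\alpha|_F / e^{S^1}(\nu_F)\bigr)$, where the correctness of this formula is exactly the content of the ABBV localization theorem applied componentwise. This shows $i^* \otimes \mathcal{R}$ is surjective, and a parallel computation (or the same formula read backwards) shows it is injective.

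The second, shorter step is to deduce the unlocalized statement. One contemplates the commutative square whose top arrow is $i^* \colon H^*_{S^1}(M) \to H^*_{S^1}(M^{S^1})$, whose bottom arrow is $i^* \otimes \mathcal{R}$, and whose vertical arrows are the localization maps $N \mapsto N \otimes_{\R[x]} \mathcal{R}$. By equivariant formality $H^*_{S^1}(M)$ is a free $\R[x]$-module, so its localization map is injective; since the bottom arrow is an isomorphism by Step~1, the composite $H^*_{S^1}(M) \to H^*_{S^1}(M) \otimes \mathcal{R} \to H^*_{S^1}(M^{S^1}) \otimes \mathcal{R}$ is injective, and it factors as $i^*$ followed by the (not necessarily injective) localization map on the target; in particular $i^*$ must be injective. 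This completes the argument.

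The main obstacle I anticipate is establishing the localization isomorphism $i^* \otimes \mathcal{R}$ rigorously rather than merely invoking it: one must know that $M^{S^1}$ is precisely the set of points with nontrivial stabilizer containing no proper closed subgroup issue (here the group is $S^1$, so the only subgroups are finite cyclic and all of $S^1$), and more importantly one must handle the contribution of points with \emph{finite} nontrivial stabilizers. For a general smooth $S^1$-action these would obstruct the clean statement, but one can stratify $M$ by orbit type and run a Mayer--Vietoris / long-exact-sequence induction showing that each stratum with finite stabilizer $\Z/k$ contributes equivariant cohomology that is killed by inverting $x$ (since $H^*_{S^1}$ of such a stratum is a torsion $\R[x]$-module after passing to $\R$ coefficients, as $H^*(B\Z/k;\R)$ is concentrated in degree $0$). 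Assembling these vanishing statements to conclude that only the true fixed locus survives localization is the technical heart; everything else is formal manipulation of the projection formula and the ABBV theorem already stated in the excerpt.
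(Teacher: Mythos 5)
Your argument is correct in outline, but note that the paper itself offers no proof of this statement: it is quoted from Kirwan's monograph \cite{Ki}, where the original argument is Morse-theoretic (one runs the equivariant Thom--Gysin sequences for the Morse--Bott stratification by the moment map and checks inductively that restriction to the critical locus, which for a circle action is exactly $M^{S^1}$, is injective at each stage). You instead take the other standard route: equivariant formality makes $H^*_{S^1}(M)$ a free, hence torsion-free, $\R[x]$-module, while Borel localization makes $\ker i^*$ a torsion module, forcing $\ker i^* = 0$. Your reduction in Step~2 is exactly right, and you correctly identify both the technical heart (that inverting $x$ kills everything away from $M^{S^1}$, which over $\R$ requires only that orbits with finite stabilizer have $H^*_{S^1} \cong \R[x]/(x)$-type torsion, handled by a finite orbit-type stratification) and the reason the equivariant Euler classes become invertible (nonzero weights give a leading term $c\,x^{\operatorname{codim}/2}$ modulo nilpotents from $H^{>0}(F)$). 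Kirwan's route buys a proof uniform with that of equivariant formality itself; yours buys a softer, purely module-theoretic deduction once the localization theorem is granted, at the cost of importing that theorem, which is strictly stronger than the ABBV integration formula stated in the paper.

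One remark worth making: you can bypass Borel localization entirely and use only ingredients already stated in this paper. Given $0 \neq \alpha \in \ker i^*$, freeness lets you assume $f^*(\alpha) = \alpha_d \neq 0$ after dividing by a power of $x$ (note $H^*_{S^1}(M^{S^1})$ is also free, so dividing preserves membership in $\ker i^*$); ordinary Poincar\'e duality on $M$ gives $\beta_{2n-d}$ with $\langle \alpha_d \cup \beta_{2n-d}, [M]\rangle \neq 0$, and surjectivity of $f^*$ (Theorem~\ref{theorem_equivariant_formality}) lifts it to $\beta \in H^{2n-d}_{S^1}(M)$. Then Corollary~\ref{corollary : localization degree 2n} gives $\int_M \alpha \cup \beta = \langle \alpha_d \cup \beta_{2n-d}, [M]\rangle \neq 0$, while Theorem~\ref{theorem_localization} gives $\int_M \alpha \cup \beta = \sum_F \int_F (\alpha|_F \cup \beta|_F)/e^{S^1}(F) = 0$, a contradiction. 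This is the ``Poincar\'e-duality pairing argument'' you allude to in your opening sentence, carried to completion; it is shorter than the stratification argument you defer to your final paragraph.
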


\subsection{Equivariant Chern classes}
\label{ssecEquivariantChernClasses} 
                                           
	\begin{definition}
		Let $\pi : E \rightarrow B$ be an $S^1$-equivariant complex vector bundle over a topological space $B$. 
		Then the \textit{$i$-th equivariant Chern class $c_i^{S^1}(E)$} is defined as an $i$-th Chern class of the complex vector bundle $\widetilde{\pi}$
		\[\xymatrix{E \times ES^1 \ar[r]^{/S^1} \ar[d]^{\pi} & E \times_{S^1} ES^1 \ar[d]^{\widetilde{\pi}} \\ B \times ES^1 \ar[r]^{/S^1} & B \times_{S^1} ES^1}\]
		For an almost complex $S^1$-manifold $(X,J)$, we denote by $c_i^{S^1}(X) := c_i^{S^1}(TX,J)$ the $i$-th equivariant Chern class of 
		the complex tangent bundle $(TX,J)$.
	\end{definition}

	For a closed symplectic manifold $(M,\omega)$ equipped with a Hamiltonian circle action, there exists an $S^1$-invariant 
	$\omega$-compatible almost complex structure $J$ on $M$. 
	Moreover, the space of such almost complex structures is contractible so that the Chern classes of $(M,J)$ do not depend on the choice of $J$.
	The following proposition gives an explicit formula for the restriction of the first equivariant Chern class of $(M, J)$ on each fixed component.

	\begin{proposition}\label{proposition_equivariant_Chern_class}
		Let $(M,\omega)$ be a $2n$-dimensional closed symplectic manifold equipped with a Hamiltonian circle action.
		Let $F \subset M^{S^1}$ be any fixed component with weights $\{ w_1(F), \cdots, w_n(F) \} \in \Z^n$ of the tangential $S^1$-representation at $F$. 
		Then the restriction $c_1^{S^1}(M)|_F \in H^*_{S^1}(F) \cong H^*(F) \otimes H^*(BS^1)$
		is given by 
		\[
			c_1^{S^1}(M)|_F = c_1(M)|_F \otimes 1 + 1 \otimes (\sum_i w_i)x.
		\]
	\end{proposition}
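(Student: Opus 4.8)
The plan is to compute both sides of the claimed identity after restricting to the fiber $F$ and then lift the computation to equivariant cohomology, using that the projection $H^*_{S^1}(F) \to H^*(F)$ kills exactly the ideal generated by $x$ (Theorem \ref{theorem_equivariant_formality} applied to the trivial action on $F$, for which $H^*_{S^1}(F) \cong H^*(F) \otimes H^*(BS^1)$). First I would decompose the equivariant tangent bundle along $F$. Since $F$ is a fixed component, the restriction $TM|_F$ splits $S^1$-equivariantly as $TF \oplus \nu_F$, where $S^1$ acts trivially on $TF$ and $\nu_F$ decomposes further into line bundles $L_1 \oplus \cdots \oplus L_k$ (after choosing a compatible invariant complex structure) on which $S^1$ acts with the nonzero weights $w_{i_1}(F), \ldots, w_{i_k}(F)$; the remaining weights among $w_1(F), \ldots, w_n(F)$ are zero and correspond to the $TF$ summand. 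By Whitney additivity of the equivariant first Chern class, $c_1^{S^1}(M)|_F = c_1^{S^1}(TF)|_F + \sum_j c_1^{S^1}(L_j)|_F$.

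Next I would evaluate each piece. Because $S^1$ acts trivially on $F$, the space $F \times_{S^1} ES^1$ is just $F \times BS^1$, and a vector bundle with trivial $S^1$-action pulls back from $F$; hence $c_1^{S^1}(TF) = c_1(TF) \otimes 1 = c_1(M)|_F \otimes 1$ (the last equality because $\nu_F$ contributes nothing to $c_1(M)|_F$ in ordinary cohomology — wait, that is not quite right; more carefully, $c_1(M)|_F = c_1(TF) + c_1(\nu_F)$, so I should instead write the ordinary first Chern class of $TF$ and absorb $c_1(\nu_F)$ appropriately). Let me restate: set $c_1(M)|_F = c_1(TF) + \sum_j c_1(L_j)$ in $H^2(F)$. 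For the normal line bundle $L_j$ with weight $w = w_{i_j}(F)$, the associated bundle $L_j \times_{S^1} ES^1 \to F \times BS^1$ has first Chern class $c_1(L_j) \otimes 1 + w\,(1 \otimes x)$: this is the standard fact that twisting a line bundle by the weight-$w$ representation of $S^1$ adds $w$ times the generator $x = c_1^{S^1}(\text{pt}, \text{weight }1)$ to the equivariant first Chern class, which one verifies by pulling back along $F \times BS^1 \to BS^1$ and using $H^*(BS^1) = \R[x]$. Summing over $j$, the ordinary parts collect to $\sum_j c_1(L_j)$ and the $x$-parts collect to $(\sum_j w_{i_j}(F))\,x = (\sum_i w_i(F))\,x$ since the omitted weights vanish. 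Adding the $TF$ contribution $c_1(TF) \otimes 1$ and recombining $c_1(TF) + \sum_j c_1(L_j) = c_1(M)|_F$ gives exactly
\[
	c_1^{S^1}(M)|_F = c_1(M)|_F \otimes 1 + 1 \otimes \Bigl(\sum_i w_i(F)\Bigr)x,
\]
as claimed.

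The only genuinely nontrivial input is the weight-shift formula for an equivariant line bundle over a space with trivial action, i.e.\ that tensoring with the one-dimensional weight-$w$ representation changes $c_1^{S^1}$ by $w\,x$; everything else is Whitney sum and the identification $F \times_{S^1} ES^1 = F \times BS^1$. I expect that formula to be the main (and essentially only) obstacle, and it is handled by reducing to $F = \text{pt}$, where $L \times_{S^1} ES^1 \to BS^1$ is the $\mathcal{O}(w)$ bundle over $\C P^\infty$ with $c_1 = wx$, and then noting naturality in $F$. One should also remark that the splitting of $\nu_F$ into equivariant line bundles requires choosing an $S^1$-invariant $\omega$-compatible $J$, which exists and is unique up to homotopy, so the resulting classes are well defined; this is already noted in the paragraph preceding the proposition.
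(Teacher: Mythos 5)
Your proposal is correct and follows essentially the same route as the paper: decompose $TM|_F = TF \oplus \nu_F$ equivariantly, observe that the trivial action on $TF$ gives $c_1(TF)\otimes 1$, and extract the $(\sum_i w_i)x$ term from the normal directions via the computation over $BS^1$ (the paper reads this off from the total Chern class $(1+w_1x)\cdots(1+w_nx)$ of $\C^n\times_{S^1}ES^1 \to BS^1$, while you split $\nu_F$ into equivariant line bundles and apply the weight-shift formula to each — the same input packaged slightly differently). No gaps; the self-correction about $c_1(M)|_F = c_1(TF) + c_1(\nu_F)$ lands in the right place.
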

                                           
	\begin{proof}
		Let $TF$ be the tangent bundle of $F$ and $\nu_F$ be the normal bundle of $F$, respectively, and consider the following bundle map : 
		\[\xymatrix{TM|_F \times ES^1 \ar[r]^{/S^1} \ar[d]^{\pi} & TM|_F \times_{S^1} ES^1 \ar[d]^{\widetilde{\pi}} \\ F \times ES^1 \ar[r]^{/S^1} & F \times_{S^1} ES^1}\]
		By definition, we have $c_1^{S^1}(M)|_F = c_1(\widetilde{\pi})$. Since $TM|_F = TF \oplus \nu_F$, 
		$\widetilde{\pi}$ is decomposed into the Whitney sum $\widetilde{\pi}_1 \oplus \widetilde{\pi}_2$ where $\widetilde{\pi}_1$ and $\widetilde{\pi}_2$ are given by
		\[\xymatrix{TF \times_{S^1} ES^1 \ar[d]^{\widetilde{\pi}_1}  & \nu_F \times_{S^1} ES^1 \ar[d]^{\widetilde{\pi}_2} \\ F \times BS^1  & F \times BS^1}\]
		Since $F$ is fixed by the $S^1$-action, the induced $S^1$-action on $TF$ is trivial, and therefore we get 
		\[
			c_1(\widetilde{\pi}_1) = c_1(TF) \otimes 1.
		\]
		Note that the restriction of $\widetilde{\pi}_2$ on $F \times \{\mathrm{pt}\} \subset F \times BS^1$ 
		is $\nu_F$. Also, the restriction of $\widetilde{\pi}_2$ on $\{ \mathrm{pt} \} \times BS^1$ is $\C^n \times_{S^1} ES^1$. More precisely, we have 
		\[\xymatrix{\C^n \times ES^1 \ar[d] \ar[r]^{/S^1} & \C^n \times_{S^1} ES^1 \ar[d]^{\widetilde{\pi}_2|_{\{\mathrm{pt}\} \times BS^1}} \\  ES^1 \ar[r]^{/S^1} & BS^1}\]
		and the total Chern class of the restricted bundle $\widetilde{\pi}_2 : \C^n \times_{S^1} ES^1 \rightarrow BS^1$ is $(1+w_1x)(1+w_2x)\cdots(1+w_nx)$. 
		Thus it follows that 
		\begin{displaymath}
			\begin{array}{ll}
				c_1(\widetilde{\pi}) = c_1(\widetilde{\pi}_1) + c_1(\widetilde{\pi}_2) & = c_1(TF) \otimes 1 + c_1(\nu(F)) \otimes 1 + 1 \otimes (\sum_i w_i)x\\[1em]
				& =c_1(M)|_F \otimes 1 + (\sum_i w_i)x
			\end{array}
		\end{displaymath}
		and the result follows.
	\end{proof}

\subsection{Equivariant symplectic classes} 
\label{ssecEquivariantSymplecticClasses}

	Let $H : M \rightarrow \R$ be a moment map for the $S^1$-action on $(M,\omega)$.  
	Let $\omega_H := \omega + d(H \cdot \theta)$ be a two-form on $M \times ES^1$ where $\theta$ is a connection form of the principal $S^1$-bundle 
	$ES^1 \rightarrow BS^1$. 
	By the $S^1$-invariance of $\omega$, $\theta$, and $H$, we obtain 
	\[
		\mcal{L}_{\underbar{X}} \omega_H = i_{\underbar{X}} \omega_H = 0
	\] 
	where $\underbar{X}$ denotes the vector field generated by the diagonal $S^1$-action on $M \times ES^1$. 
	Thus we may push-forward $\omega_H$ to $M \times_{S^1} ES^1$ 
	and denote the push-forward of $\omega_H$ by $\widetilde{\omega}_H$, which we call the \textit{equivariant symplectic form with respect to $H$}.
	Moreover, the corresponding cohomology class $[\widetilde{\omega}_H] \in H^2_{S^1}(M)$ is called the \textit{equivariant symplectic class with respect to $H$}.
	Note that the restriction of $\widetilde{\omega}_H$ on each fiber $M$ is precisely $\omega$.

	\begin{proposition}\label{proposition_equivariant_symplectic_class}
		Let $F \in M^{S^1}$ be a fixed component. Then we have
		\[
			[\widetilde{\omega}_H]|_F = [\omega]|_F \otimes 1 - H(F) \otimes x.
		\]
	\end{proposition}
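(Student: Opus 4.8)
The plan is to compute the restriction of $[\widetilde{\omega}_H]$ to a fixed component $F$ by working directly with the two-form $\omega_H = \omega + d(H\cdot\theta)$ on $M\times ES^1$ and then pushing forward to $M\times_{S^1}ES^1$. First I would observe that since $F\subset M^{S^1}$ is fixed, the diagonal $S^1$-action on $F\times ES^1$ acts only on the second factor, so $F\times_{S^1}ES^1 \cong F\times BS^1$ and the restriction $i_F^*\widetilde{\omega}_H$ is the push-forward of $(\omega_H)|_{F\times ES^1}$. On $F\times ES^1$ we have $\omega|_F$ pulled back from $F$, and $H$ restricted to $F$ is the constant $H(F)$ (the moment map is locally constant on fixed components by Theorem \ref{theorem_equivariant_darboux}, or directly because $dH = i_{\underline X}\omega$ vanishes where $\underline X = 0$). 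Hence $d(H\cdot\theta)|_{F\times ES^1} = H(F)\, d\theta$, which is the pullback of the curvature form of the connection $\theta$ on $ES^1\to BS^1$.

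\textbf{Key steps.} The second step is to identify the cohomology class of this curvature form: the push-forward of $d\theta$ to $BS^1$ represents $-x \in H^2(BS^1)$ (with the sign convention that makes $\langle x,[\C P^1]\rangle = 1$; this is exactly the normalization coming from the Hopf fibration picture displayed in the text). Therefore the push-forward of $H(F)\,d\theta$ represents $-H(F)\otimes x$ in $H^2(F\times BS^1) \cong H^*(F)\otimes H^*(BS^1)$. Combining with the $\omega$-part, which restricts to $[\omega]|_F\otimes 1$ by construction (the restriction of $\widetilde{\omega}_H$ to any fiber $M$, and a fortiori to $F$, is $\omega|_F$, and it carries no $x$-component because it comes from a form pulled back along $F\times ES^1 \to F$), we obtain
\[
	[\widetilde{\omega}_H]|_F = [\omega]|_F \otimes 1 - H(F)\otimes x,
\]
as claimed. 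An alternative, cleaner route avoiding explicit connection-form bookkeeping: use the defining property $i_{\underline X}\widetilde{\omega}_H$ corresponds (in the Cartan model) to the equivariant closedness $d_{S^1}(\omega + H x) = 0$, so $[\widetilde{\omega}_H]$ is represented in the Cartan model by $\omega - Hx$ (sign depending on convention); restricting to $F$ replaces $\omega$ by $\omega|_F$ and $H$ by $H(F)$, giving the formula immediately. I would likely present whichever of these two is most consistent with the sign conventions already fixed earlier in the paper.

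\textbf{Main obstacle.} The only real subtlety is getting the sign right: one must pin down the relationship between the connection form $\theta$, its curvature $d\theta$, the generator $x\in H^2(BS^1)$, and the convention $i_{\underline X}\omega = dH$ for the moment map. All of these are fixed earlier in the excerpt, so the "hard part" is purely bookkeeping — verifying that with the stated normalization $\langle x,[\C P^1]\rangle = 1$ and $i_{\underline X}\omega = dH$, the curvature contribution comes out as $-H(F)\otimes x$ rather than $+H(F)\otimes x$. Once the sign is nailed down, the rest is the formal observation that restriction to a fixed component freezes $\omega \mapsto \omega|_F$ and $H\mapsto H(F)$. I would double-check the sign against Proposition \ref{proposition_equivariant_Chern_class} and the localization formula to make sure everything is internally consistent.
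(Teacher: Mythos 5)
Your proof is correct and follows essentially the same route as the paper: restrict $\omega_H=\omega+d(H\cdot\theta)$ to $F\times ES^1$, use that $H$ is constant on $F$ so only the curvature term $H(F)\,d\theta$ survives, and identify the push-forward of $d\theta$ with $-x$. The sign convention you flag as the main subtlety is exactly the one the paper invokes (the curvature representing the first Chern class of $ES^1\to BS^1$ gives $\widetilde{[d\theta]}=-x$), so no further work is needed.
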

	
	\begin{proof}
		Consider the push-forward of 
		$\widetilde{\omega}_H|_F = (\omega + dH \wedge \theta + H \cdot d\theta)|_{F \times ES^1}$ to $F \times BS^1$. 
		Since the restriction $dH|_{F \times ES^1}$ vanishes, we have $[\widetilde{\omega}_H]|_F = [\omega]|_F \otimes 1+ H(F) \otimes \widetilde{[d\theta]}|_{BS^1}$ where 
		$\widetilde{[d\theta]}$ is the push-forward of $[d\theta]$ to $F \times BS^1$. Since the push-forward of $d\theta$ is a curvature form which represents the first Chern class of 
		$ES^1 \rightarrow BS^1$, we have $\widetilde{[d\theta]} = -x$. Therefore, we have $[\widetilde{\omega}_H]|_F = [\omega]|_F \otimes 1 - H(F) \otimes x$.
	\end{proof}

\section{Monotone semifree Hamiltonian $S^1$-manifolds}
\label{secMonotoneSemifreeHamiltonianS1Manifolds}
	
	A \textit{monotone symplectic manifold} is a symplectic manifold $(M,\omega)$ such that $c_1(M) = \lambda \cdot [\omega]$ for some positive real number 
	$\lambda \in \R^+$ called the {\em monotonicity constant}. 
	In this section, we serve crucial ingredients for the classification of closed monotone semifree Hamiltonian $S^1$-manifolds.			
	
	Throughout this section, we assume that $(M,\omega)$ is a closed monotone symplectic manifold equipped with a 
	semifree Hamiltonian circle action
	such that $c_1(M) = [\omega]$ with a moment map $H : M \rightarrow \R$. 

\subsection{Topology of reduced spaces} 
\label{ssecTopologyOfReducedSpaces}

	Let $r \in \R$ be a regular value of $H$. The ``semifreeness'' implies that the level set $H^{-1}(r)$ is a free $S^1$-manifold of dimension $2n-1$. 
	Then the symplectic reduction $(M_r,\omega_r)$ becomes a closed symplectic manifold of dimension $2n-2$.
 
	Let $I \subset \R$ be an open interval consisting of regular values of $H$. Recall from Section \ref{ssecDuistermaatHeckmanTheorem} that 
	any two reduced spaces $M_r$ and $M_s$ ($r, s \in I$) 
	can be identified via the map $\phi_{r,s} / S^1 : M_r \rightarrow M_s$ where
	\[
		\phi_{r,s}  : H^{-1}(r) \rightarrow H^{-1}(s)
	\] is an $S^1$-equivariant diffeomorphism induced from the gradient vector field of $H$.
	In particular, $M_r$ and $M_s$ are diffeomorphic. 
	
	The topology of a reduced space $M_t$ may change when $t$ crosses a critical value of $H$. 
	More precisely, let $c \in \R$ be a critical value of $H$ and $p \in H^{-1}(c)$ a fixed point of index is $2k$. 
	By the equivariant Darboux theorem \ref{theorem_equivariant_darboux}, there is an $S^1$-equivariant complex coordinate chart $(\mcal{U}_p, z_1, \cdots, z_n)$ near $p$ 
	such that
	\begin{itemize}
        		\item $\omega|_{\mcal{U}_p} = \frac{1}{2i} \sum_i dz_i \wedge d\bar{z_i},$ and
	        	\item the action can be expressed by 
        		\[
        			t \cdot (z_1, \cdots, z_n) = (t^{-1}z_1, \cdots, t^{-1}z_k, z_{k+1}, \cdots, z_{k+l}, tz_{k+l+1}, \cdots, tz_n), \quad t \in S^1, 
		\] 
		\item the moment map is given by 
		\[
			H(z_1, \cdots, z_n) = \underbrace{H(p)}_{= c} - \frac{1}{2}(\sum_{i=1}^k |z_i|^2 - \sum_{i=k+l+1}^n |z_i|^2)
		\]
	    \end{itemize}
	where $2l$ is a dimension of the fixed component $Z_p$ containing $p$. 
	Thus $\mcal{U}_p \cap H^{-1}(c)$ locally looks like the solution space of the equation 
	\[
		\sum_{i=1}^k |z_i|^2 - \sum_{i=k+l+1}^n |z_i|^2 = 0.
	\]
	For a sufficiently small parameter $\epsilon > 0$, observe that
	\[
		\mcal{U}_p \cap H^{-1}(c-\epsilon) \cong \{(z_1,\cdots,z_n) \in \mcal{U}_p ~| ~\sum_{i=1}^k |z_i|^2 = \sum_{i=k+l+1}^n |z_i|^2 + 2\epsilon \}.
	\]
	In particular, $\mcal{U}_p \cap H^{-1}(c-\epsilon) \cong S^{2k-1} \times \C^l \times \C^{n-k-l} $ and it contains 
	\begin{equation}\label{equation_c_epsilon}
		\{ (z_1,\cdots,z_n) \in \mcal{U}_p \cap H^{-1}(c-\epsilon) ~| ~ z_{k+l+1} = \cdots = z_n = 0 \} = \mcal{S}_{2\epsilon} \times \C^l \times 0 \in \C^k \times \C^l \times \C^{n - k - l}
	\end{equation}
	where $\mcal{S}_{2\epsilon} = \{ (z_1, \cdots, z_n) \in \mcal{U}_p ~| ~z_{k+1} = \cdots = z_n = 0 \}$ is the $(2k-1)$-dimensional sphere of radius $\sqrt{2\epsilon}$ 
	centered at the origin in $\mcal{U}_p$.
	
	With respect to an $S^1$-invariant metric $g_J$,
	the set in \eqref{equation_c_epsilon} is the unstable submanifold of $Z_p$. (See Section \ref{ssecDuistermaatHeckmanTheorem}.)
	The gradient flow of $H$ induces a surjective continuous map 
	\[
		\phi_{c-\epsilon, c} : H^{-1}(c-\epsilon) \cap \mcal{U}_p \rightarrow H^{-1}(c) \cap \mcal{U}_p
	\] and this map sends $\mcal{S}_\epsilon \times \C^l \times 0$ to $0 \times \C^l \times 0$. We similarly apply this argument to other fixed components in $H^{-1}(c)$ so that 
	we obtain a surjective map 
	\[
		\phi_{c-\epsilon, c} /S^1 : M_{c-\epsilon} \rightarrow M_c
	\] When $k=1$, i.e., $Z_p$ is of index two, 
	then $\mcal{S}_\epsilon \times_{S^1} \C^l \cong \C^l$ so that $\phi_{c-\epsilon, c} / S^1$ is bijective on $\mcal{U}_p \cap H^{-1}(c-\epsilon)$. 
	Similarly, if $Z_p$ is of co-index two (i.e., $k=n-l-1$), 
	we have 
	\[
		\mcal{U}_p \cap H^{-1}(c-\epsilon) \cong S^{2n-2l-3} \times \C^l \times \C
	\] so that $\phi_{c-\epsilon, c} / S^1$ maps $\left(S^{2n-2l-3} \times_{S^1} \C \right) \times \C^l$ to 
	$\C^{n-l-1} \times \C^l$, that is, $\phi_{c-\epsilon, c} / S^1$ is the blow-down map along $\C^l$.  
	Therefore we have the following.

	\begin{proposition}\label{proposition_topology_reduced_space}\cite{McD2}\cite{GS}
		Let $(M,\omega)$ be a closed semifree Hamiltonian $S^1$-manifold with a moment map $H : M \rightarrow \R$ and $c \in \R$ a critical value of $H$. 
		If $Z_c := H^{-1}(c) \cap M^{S^1}$ consists of index two (co-index two, resp.) fixed points, then $M_c = H^{-1}(c) / S^1$ is smooth and is diffeomorphic to $M_{c-\epsilon}$
		($M_{c+\epsilon}$, resp.). 
		Also, $M_{c+\epsilon}$ is the blow-up (blow-down, resp.) of $M_c$ along $Z_c$.
	\end{proposition}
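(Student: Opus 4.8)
The discussion preceding the statement already carries out the key local computation, so the plan is to organize it into a proof and supply the two missing ingredients: the passage from the Darboux chart around a point to a normal-form model along the whole fixed component, and the verification that the relevant quotients are \emph{smooth} manifolds rather than merely topological spaces. First I would dispose of the co-index two case by replacing $H$ with $-H$, which interchanges $M_{c-\epsilon}$ with $M_{c+\epsilon}$, index with co-index, and ``blow-up'' with ``blow-down''; so it suffices to treat the case in which every point of $Z_c := H^{-1}(c)\cap M^{S^1}$ has index two. I would then fix $\epsilon>0$ so small that $c$ is the only critical value of $H$ in $[c-\epsilon,c+\epsilon]$, choose a small invariant tubular neighborhood $U$ of $Z_c$, and observe that on $H^{-1}([c-\epsilon,c+\epsilon])\setminus U$ the vector field $\nabla H$ has no zeros, so $\phi_{c\pm\epsilon,c}$ restricts there to $S^1$-equivariant diffeomorphisms of (pieces of) level sets, which descend to diffeomorphisms of the corresponding open subsets of the reduced spaces. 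Hence the whole statement is local around $Z_c$, and it remains to understand the three reduced spaces in a slice transverse to a single fixed component $Z_p\subseteq Z_c$.

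For the local model I would use that each point of $Z_p$ has index two, i.e.\ exactly one negative weight, which equals $-1$ by semifreeness (and every nonzero positive weight equals $+1$). By the equivariant Darboux theorem (Theorem~\ref{theorem_equivariant_darboux}) together with an equivariant normal form for $\omega$ and $H$ along $Z_p$, a neighborhood of $Z_p$ in $(M,\omega)$ is equivariantly symplectomorphic to a neighborhood of the zero section of the normal bundle $\nu_{Z_p}=L^-\oplus E^+$, where $S^1$ acts with fiberwise weight $-1$ on the complex line bundle $L^-$ and with fiberwise weight $+1$ on the rank-$m$ complex bundle $E^+$, and where $H=c-\tfrac{1}{2}\big(\lVert v^-\rVert^2-\lVert v^+\rVert^2\big)$ in the fiber coordinates $(v^-,v^+)$.

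In this model I would compute the three quotients near $[Z_p]$. On $H^{-1}(c-\epsilon)=\{\lVert v^-\rVert^2=\lVert v^+\rVert^2+2\epsilon\}$ one has $v^-\neq 0$, the circle acts freely, and the $S^1$-invariant map $(v^-,v^+)\mapsto v^-\otimes v^+$ identifies the quotient with a disk bundle $D$ of a rank-$m$ complex bundle over $Z_p$. On $H^{-1}(c)=\{\lVert v^-\rVert^2=\lVert v^+\rVert^2\}$, which is singular along $Z_p$, I would show the $S^1$-quotient is nonetheless smooth: fiberwise, $H^{-1}(c)$ is the cone on $S^1\times S^{2m-1}$, and the free diagonal circle action descends to the cone on $(S^1\times S^{2m-1})/S^1\cong S^{2m-1}$, that is, to $\C^m$; moreover the gradient flow carries this quotient diffeomorphically onto $D$, so $M_c$ is smooth near $[Z_p]$, inherits a well-defined reduced symplectic form, and $\phi_{c-\epsilon,c}/S^1$ is a diffeomorphism there. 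Finally, on $H^{-1}(c+\epsilon)=\{\lVert v^+\rVert^2=\lVert v^-\rVert^2+2\epsilon\}$ one has $v^+\neq 0$, and recording the line $[v^+]$ together with the invariant $v^-\otimes v^+$ (which lies in the tautological line over $[v^+]$) identifies the quotient with the total space of the tautological bundle over $\pp(E^+)$, i.e.\ with the blow-up of $D$ along its zero section $Z_p$.

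To finish I would patch: using the flow-diffeomorphisms over $M\setminus U$ together with the uniqueness of (equivariant) tubular neighborhoods and a partition of unity, the local models assemble into a global statement that $M_c$ is a closed symplectic manifold, that $\phi_{c-\epsilon,c}/S^1\colon M_{c-\epsilon}\to M_c$ is a diffeomorphism, and --- since blowing up is a modification supported in a neighborhood of $Z_c$ --- that $M_{c+\epsilon}$ is the blow-up of $M_c$ along $Z_c$; this reproduces the results of \cite{McD2} and \cite{GS}. The step I expect to be the main obstacle is precisely the smoothness of $M_c$ along $Z_c$ and its compatible identification with $M_{c-\epsilon}$: since $H^{-1}(c)$ is genuinely singular along $Z_c$, one must check that the (non-free, along $Z_c$) circle quotient is a smooth manifold, that the smooth structure agrees with the one transported by the gradient flow, and that the reduced symplectic form extends smoothly across $[Z_c]$. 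The cone-on-a-sphere picture makes the topological assertion transparent, but upgrading it to a statement about smooth and symplectic structures, uniformly over $Z_p$, is the technical heart of the argument.
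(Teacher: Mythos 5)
Your proposal is correct and follows essentially the same route as the paper, which proves this proposition only through the local Darboux-chart computation in Section~4.1 (showing $\phi_{c-\epsilon,c}/S^1$ is bijective in the index-two case and is the blow-down along $Z_c$ in the co-index-two case) and otherwise defers to \cite{McD2} and \cite{GS}. Your version merely globalizes that chart computation to a normal form along the whole fixed component via the identification of the quotients with neighborhoods in $L^-\otimes E^+$ and its blow-up, and correctly isolates the smoothness of $M_c$ across $Z_c$ as the point the paper leaves to the references.
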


	More generally, Guillemin and Sternberg \cite[Theorem 11.1]{GS} described how the topology of a reduced space varies when crossing a critical value of $H$.  
	Here we introduce their result briefly, even though we apply it to a very special case.
	Let $Z \subset M^{S^1}$ be a critical submanifold (fixed component) of $H$ at level $c \in \R$ and its signature is $(2p, 2q)$. 
	For a sufficiently small $\epsilon >0$, if we perform an $S^1$-equivariant symplectic blow-up of $(M,\omega)$ along $Z$ an $\epsilon$-amount, then we get a new Hamiltonian $S^1$-manifold 
	$(\widetilde{M}, \widetilde{\omega})$ and it has two fixed components $\widetilde{Z}_{c-\epsilon}$ and $\widetilde{Z}_{c+\epsilon}$ at level $c-\epsilon$ and $c+\epsilon$, respectively.
	Moreover, $\widetilde{Z}_{c-\epsilon}$ (respectively $\widetilde{Z}_{c+\epsilon}$) is of signature $(2,2q)$ (respectively signature $(2, 2p)$). Since the blow-up changes nothing outside 
	$H^{-1}([c-\epsilon, c+\epsilon])$, 
	we see that the reduced space $M_{c-\eta}$ 
	(respectively $M_{c+\eta}$) is diffeomorphic to $\widetilde{M}_{c-\epsilon-\eta}$ (respectively $\widetilde{M}_{c+\epsilon+\eta}$) for a sufficiently small $\eta >0$.
	Therefore, $M_{c+\eta}$ is obtained by ``blowing down along $\widetilde{Z}_{c+\epsilon}$'' of a ``blow-up'' of $M_{c-\eta}$ along $\widetilde{Z}_{c - \epsilon}$.
	 (This relation is so-called a {\em birational equivalence of reduced spaces}.
	See \cite{GS} for more details.)
	Note that Proposition \ref{proposition_topology_reduced_space} is the special case ($p=2$, or $q=2$) of \cite[Theorem 11.1]{GS}.

\subsection{Variation of Euler classes} 
\label{ssecVariationOfEulerClasses}
	
	Recall that the Duistermaat-Heckman's theorem \ref{theorem_DH} says that 
	\[
		[\omega_r] - [\omega_s] = (s-r)e, \quad r,s \in I
	\]
	where $I$ is an interval consisting of regular values of $H$ and $e \in H^2(M_r; \Z)$ denotes the Euler class of the principal $S^1$-bundle 
	$\pi_r : H^{-1}(r) \rightarrow M_r$. 
	
	As the topology of a reduced space changes, the topology of a principal bundle $\pi_r : H^{-1}(r) \rightarrow M_r$ changes. 
	In \cite{GS}, Guillemin and Sternberg also provide a variation formula of the Euler class of the principal $S^1$-bundle $\pi_r : H^{-1}(r) \rightarrow M_r$
	when $r$ passes through a critical value of index (or co-index) two.  

	\begin{lemma}\cite[Theorem 13.2]{GS}\label{lemma_Euler_class}
		Suppose that $Z_c = M^{S^1} \cap H^{-1}(c)$ consists of fixed components $Z_1, \cdots, Z_k$ each of which is of index two.   
		Let $e^-$ and $e^+$ be the Euler classes of principal $S^1$-bundles $\pi_{c-\epsilon} : H^{-1}(c-\epsilon) \rightarrow M_{c-\epsilon}$ and 
		$\pi_{c+\epsilon} : H^{-1}(c+\epsilon) \rightarrow M_{c+\epsilon}$, respectively.
		Then 
		\[
			e^+ = \phi^*(e^-) + E \in H^2(M_{c+\epsilon}; \Z)
		\] where $\phi : M_{c+\epsilon} \rightarrow M_{c-\epsilon}$ is the blow-down map and $E$ is the Poincar\'{e} dual of the exceptional divisor of $\phi$.
	\end{lemma}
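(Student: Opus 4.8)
The plan is to reduce the statement to a local model computation near the index-two fixed components $Z_1,\dots,Z_k$ and then glue. First I would observe that crossing the critical level $c$ changes nothing over the complement of a small neighborhood of the $Z_i$ in $M_c$: outside $H^{-1}([c-\epsilon,c+\epsilon])$ the gradient flow of $H$ gives an $S^1$-equivariant diffeomorphism between the level sets, so the principal $S^1$-bundles $\pi_{c-\epsilon}$ and $\pi_{c+\epsilon}$ are canonically isomorphic there, and the blow-down map $\phi : M_{c+\epsilon}\to M_{c-\epsilon}$ is the identity away from the exceptional divisors. Hence $e^+ - \phi^*(e^-)$ is supported in (the Poincaré duals of) neighborhoods of the exceptional divisors $E_i$, and the whole question becomes: what is the restriction of $e^+$ to a tubular neighborhood of each $E_i$, versus the pullback of $e^-$?

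Next I would set up the local model. Near a point $p\in Z_i$ the equivariant Darboux chart of Theorem \ref{theorem_equivariant_darboux}, specialized to an index-two fixed component with signature $(2,2l)$, writes the action as $t\cdot(z_1,\dots,z_n)=(t^{-1}z_1,z_2,\dots,z_{l+1},tz_{l+2},\dots,tz_n)$ with $H=c-\tfrac12(|z_1|^2-\sum_{i\ge l+2}|z_i|^2)$. So the relevant local picture for the reduction is the semifree $S^1$-action on $\C\times\C^{n-l-1}$ with weights $(-1,1,\dots,1)$; the reduced space just above level $c$ contains the exceptional $\C P^{n-l-2}$-bundle over $Z_i$ (or, in the truly local model, $\C P$ of the negative-weight direction paired against the positive ones) and just below it is the blow-down. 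On this standard local model one computes directly that the restriction of the Euler class of $\pi_{c+\epsilon}$ to the exceptional divisor is the hyperplane class, i.e. exactly $E_i|_{E_i}$ differs from $\phi^*(e^-)|_{E_i}$ by the generator. Concretely, the level set $H^{-1}(c+\epsilon)$ near $Z_i$ is $\{|z_1|^2 = \sum|z_i|^2+2\epsilon\}$, which is an $S^1$-bundle over a $\C^{n-l-1}$-bundle over $Z_i$; identifying this with the tautological setup shows $e^+$ restricts to the first Chern class of the line bundle whose projectivization produced the blow-up, which is precisely the Poincaré dual of the exceptional divisor restricted there.

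Finally I would assemble the pieces: since $e^+ - \phi^*(e^-)$ vanishes on the complement of the exceptional loci and restricts to the hyperplane (= $E_i$) class near each exceptional divisor, and since $H^2(M_{c+\epsilon};\Z)$ is detected by its behavior on $M_{c-\epsilon}$-part plus the exceptional classes (Mayer--Vietoris for the decomposition of $M_{c+\epsilon}$ into the blow-up neighborhoods and the common part), it follows that $e^+ = \phi^*(e^-) + \sum_i E_i = \phi^*(e^-)+E$. I expect the main obstacle to be the bookkeeping in the local model — correctly identifying the $S^1$-bundle $\pi_{c+\epsilon}$ restricted to a neighborhood of the exceptional divisor with the tautological line bundle over the projectivization, i.e. matching orientations/signs so that the exceptional class appears with coefficient $+1$ rather than $-1$ — together with checking that the Mayer--Vietoris gluing has no correction terms (which is where one uses that $\phi$ is an honest blow-down and that the transition region deformation-retracts appropriately). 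The rest is the same cut-and-paste argument used to prove Proposition \ref{proposition_topology_reduced_space}.
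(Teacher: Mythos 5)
The paper does not prove this lemma at all: it is quoted verbatim from Guillemin--Sternberg \cite[Theorem 13.2]{GS}, so there is no in-paper argument to compare against. Your localize-and-glue strategy is the standard proof of that theorem and is essentially sound: the gradient flow identifies the bundles $\pi_{c\pm\epsilon}$ over the complement of the stable/unstable loci, so $e^+-\phi^*(e^-)$ lifts to $H^2(M_{c+\epsilon}, M_{c+\epsilon}\setminus \bigcup_i E_i)\cong\bigoplus_i H^0(E_i)$ by excision and the Thom isomorphism, and the integer coefficient on each summand is read off from the equivariant Darboux model; this is exactly the argument behind Proposition \ref{proposition_topology_reduced_space} upgraded to track the Euler class.

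Two concrete slips in your local model, both of the "bookkeeping" type you yourself flag. First, the level set you write for $H^{-1}(c+\epsilon)$, namely $\{|z_1|^2=\sum_{i\ge l+2}|z_i|^2+2\epsilon\}$, is actually the $c-\epsilon$ level (compare the paper's equation \eqref{equation_c_epsilon}); on that side $z_1$ never vanishes and the quotient is the un-blown-up $\C^{n-l-1}$-bundle over $Z_i$. The exceptional divisor lives in the quotient of the other level, $\{\sum_{i\ge l+2}|z_i|^2=|z_1|^2+2\epsilon\}$, over the locus $z_1=0$, where the residual action on $(z_{l+2},\dots,z_n)$ has all weights $+1$. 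Second, with the paper's normalization (the weight-$(1,\dots,1)$ Hopf fibration has Euler class $-u$, as used for $e(P^+_{-3})$ in Section \ref{ssecCaseIDimZMax}), the restriction of $e^+$ to the exceptional $\C P^{n-l-2}$ is \emph{minus} the hyperplane class, which equals $E_i|_{E_i}$; it is not $+H$ as your sentence first asserts. Once these are corrected the coefficient comes out $+1$ and the gluing step goes through as you describe, so the plan is viable; just be aware that the sign you defer is pinned down by the same convention the paper already uses for the minimal fixed point, not by an independent choice.
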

	
	The {\em Dustermaat-Heckman function}, denoted by ${\bf DH}$, is a function defined on $\mathrm{Im}~H$ and it assigns the symplectic area of the reduced space $M_r$, 
	i.e.,
	\[
		\begin{array}{ccccl}\vs{0.1cm}
			{\bf DH} & : & \mathrm{Im}~H & \rightarrow & \R \\ \vs{0.1cm}
					&	& r  & \mapsto & \ds {\bf DH}(r) := \int_{M_r} \omega_r^{n-1}. \\
		\end{array}
	\]
	It follows from Theorem \cite{DH} that the Duistermaat-Heckman function is a piecewise polynomial function, that is, if $(b,c) \subset \mathrm{Im}~H$ is an open interval consisting of 
	regular values of $H$, then the restriction of ${\bf DH}$ onto $(b,c)$ is a polynomial in one variable.

%
%

\subsection{Equivariant monotone symplectic form}
\label{ssecEquivariantMonotoneSymplecticForm}

	The monotonicity of $(M,\omega)$ guarantees the existence of 
	so-called the {\em equivariant monotone symplectic form}.

	\begin{proposition}\label{proposition_normalized_moment_map}
		Suppose that $(M,\omega)$ is a monotone closed Hamiltonian $S^1$-manifold such that $c_1(TM) = [\omega] \in H^2(M)$.
		Then there exists a unique moment map $H : M \rightarrow \R$ such that 
		\begin{equation}\label{equation_balanced}
			[\widetilde{\omega}_H] = c_1^{S^1}(TM) \in H^2_{S^1}(M)
		\end{equation}
		where $\widetilde{\omega}_H$ is the equivariant symplectic form defined in Section \ref{ssecEquivariantSymplecticClasses}.
	\end{proposition}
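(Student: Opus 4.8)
The plan is to build the desired moment map by starting from any moment map $H_0$ for the action and correcting it by an additive constant. Recall from Proposition \ref{proposition_equivariant_symplectic_class} that for any fixed component $F$ one has $[\widetilde{\omega}_{H_0}]|_F = [\omega]|_F \otimes 1 - H_0(F)\otimes x$, while Proposition \ref{proposition_equivariant_Chern_class} gives $c_1^{S^1}(TM)|_F = c_1(M)|_F \otimes 1 + \big(\sum_i w_i(F)\big)\otimes x$. Since $c_1(TM) = [\omega]$ by hypothesis, the degree-zero (in $x$) parts already agree: $[\omega]|_F = c_1(M)|_F$ for every $F$. So the equation $[\widetilde{\omega}_H] = c_1^{S^1}(TM)$ is, after restriction to $M^{S^1}$, equivalent to requiring
\[
-H(F) = \sum_{i} w_i(F) \qquad \text{for every fixed component } F.
\]
By Kirwan injectivity (Theorem \ref{theorem_Kirwan_injectivity}), two equivariant classes that restrict identically to all of $M^{S^1}$ are equal, so it suffices to arrange this finite collection of scalar identities.

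First I would show the constant $\sum_i w_i(F) + H_0(F)$ is independent of $F$. The difference $c_1^{S^1}(TM) - [\widetilde{\omega}_{H_0}]$ lies in $H^2_{S^1}(M)$ and, by equivariant formality (Theorem \ref{theorem_equivariant_formality}), can be written as $a\otimes 1 + b\,(1\otimes x)$ with $a\in H^2(M)$, $b\in\mathbb{R}$; its image under $f^*$ is $c_1(M) - [\omega] = 0$, so $a = 0$ and the class equals $b\,(1\otimes x) = b\,\pi^*(x)$. Restricting to any $F$ gives $c_1^{S^1}(TM)|_F - [\widetilde{\omega}_{H_0}]|_F = b\,(1\otimes x)$, i.e. $\sum_i w_i(F) + H_0(F) = b$ is the same real number $b$ for every $F$. (Alternatively one can see this directly: $\sum_i w_i(F)$ records the weight sum at $F$, and on any gradient trajectory between adjacent fixed components the change in $H_0$ matches the change in weight sum because the weight that flips sign when crossing a critical level contributes $\pm 1$ on each side; this is the content of the index/co-index discussion in Section \ref{ssecTopologyOfReducedSpaces}.) Then I set $H := H_0 - b$; this is again a moment map (moment maps are unique up to an additive constant), and now $-H(F) = -H_0(F) + b = \sum_i w_i(F)$ for all $F$, as required.

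Having matched all restrictions to $M^{S^1}$, Kirwan injectivity forces $[\widetilde{\omega}_H] = c_1^{S^1}(TM)$ in $H^2_{S^1}(M)$, which is \eqref{equation_balanced}. For uniqueness, suppose $H'$ is another moment map with $[\widetilde{\omega}_{H'}] = c_1^{S^1}(TM)$. Then $H' = H + c$ for some constant $c$, and comparing equivariant symplectic classes via Proposition \ref{proposition_equivariant_symplectic_class} gives $[\widetilde{\omega}_{H'}]|_F - [\widetilde{\omega}_H]|_F = -c\,(1\otimes x)$ for every $F$; since both sides equal $c_1^{S^1}(TM)|_F$, this difference is zero, and evaluating the $x$-coefficient (for nonempty $M^{S^1}$, which holds for Hamiltonian actions) forces $c = 0$.

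I do not expect a serious obstacle here: the only mild subtlety is justifying that the correction term $\sum_i w_i(F) + H_0(F)$ is constant in $F$, which I would handle cleanly by the equivariant-formality argument above rather than by a trajectory-by-trajectory analysis. Everything else is a formal consequence of Propositions \ref{proposition_equivariant_Chern_class} and \ref{proposition_equivariant_symplectic_class} together with Theorems \ref{theorem_equivariant_formality} and \ref{theorem_Kirwan_injectivity}.
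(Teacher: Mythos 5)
Your proposal is correct and follows essentially the same route as the paper: both arguments use equivariant formality (Theorem \ref{theorem_equivariant_formality}) to conclude that $c_1^{S^1}(TM) - [\widetilde{\omega}_{H_0}]$ lies in $\ker f^* = x\cdot H^*_{S^1}(M)$, hence equals $b\,x$ for a real constant $b$, and then absorb $b$ by shifting the moment map by a constant. The only cosmetic difference is that the paper closes by directly computing $[\widetilde{\omega}_{\bar H - a}] - [\widetilde{\omega}_{\bar H}] = x\cdot a$ from the definition of $\widetilde{\omega}_H$, whereas you verify the resulting identity on restrictions to fixed components and invoke Kirwan injectivity; both are valid.
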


	\begin{proof}
		Recall that the equivariant formality of $(M,\omega)$ yells that
		\[
			 f^* : H^*_{S^1}(M) \rightarrow H^*(M)
		\]
		is a surjective ring homomorphism by Theorem \ref{theorem_equivariant_formality}
		whose kernel is given by
		\[
			\ker{f^*} = x \cdot H^*_{S^1}(M)
		\] 
		where $x \in H^2(BS^1)$ is the generator of $H^*(BS^1)$.
           
           Choose any moment map $\bar{H} : M \rightarrow \R$. Since $i^*([\widetilde{\omega}_{\bar{H}}]) = [\omega]$ and $i^*(c_1^{S^1}(TM)) = c_1(TM) = [\omega]$, the difference 
           $[\widetilde{\omega}_{\bar{H}}] - c_1^{S^1}(TM) \in H_{S^1}^2(M)$ is in $\ker f^*$ so that 
           we have
		\[
			[\widetilde{\omega}_{\bar{H}}] + x\cdot a = c_1^{S^1}(TM) 
		\]
		for some $a \in \R$ by Theorem \ref{theorem_equivariant_formality}. Set $H := \bar{H} - a$ as a new moment map.
		From the definition of an equivariant symplectic form in Section \ref{ssecEquivariantSymplecticClasses}, we get 
		\[
			[\widetilde{\omega}_H] - [\widetilde{\omega}_{\bar{H}}] = [\widetilde{\omega}_{\bar{H}-a}] - [\widetilde{\omega}_{\bar{H}}] = -a[d\theta] = x \cdot a,
		\]
		and therefore we obtain$[\widetilde{\omega}_H] = c_1^{S^1}(TM)$. 
	\end{proof}

	\begin{definition}\label{definition_balanced}
		The moment map $H$ in Proposition \ref{proposition_normalized_moment_map} is called the {\em balanced moment map}. 
	\end{definition}

	\begin{corollary}\label{corollary_sum_weights_moment_value}
		Assume that $c_1(M) = [\omega]$ and let $H : M \rightarrow \R$ be the balanced moment map. For each fixed component $Z \subset M^{S^1}$, 
		we have $H(Z) = -\Sigma(Z)$ where $\Sigma(Z)$ denotes the sum of all weights of the $S^1$-action at $Z$.
	\end{corollary}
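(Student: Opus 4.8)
The plan is to combine the two restriction formulas already established—Proposition~\ref{proposition_equivariant_symplectic_class} for the equivariant symplectic class and Proposition~\ref{proposition_equivariant_Chern_class} for the first equivariant Chern class—with the defining identity \eqref{equation_balanced} of the balanced moment map. First I would fix a fixed component $Z \subset M^{S^1}$ with tangential weights $\{w_1(Z), \dots, w_n(Z)\}$, and set $\Sigma(Z) = \sum_i w_i(Z)$. Restricting the equality $[\widetilde{\omega}_H] = c_1^{S^1}(TM)$ to $Z$ via the injective homomorphism $i_Z^*$ (cf.\ Theorem~\ref{theorem_Kirwan_injectivity}, though injectivity is not even needed here—only that restriction is a ring homomorphism) gives $[\widetilde{\omega}_H]|_Z = c_1^{S^1}(TM)|_Z$ in $H^2_{S^1}(Z) \cong H^2(Z) \otimes H^0(BS^1) \ \oplus\ H^0(Z) \otimes H^2(BS^1)$.

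Next I would plug in the two formulas. By Proposition~\ref{proposition_equivariant_symplectic_class}, the left-hand side equals $[\omega]|_Z \otimes 1 - H(Z) \otimes x$. By Proposition~\ref{proposition_equivariant_Chern_class}, the right-hand side equals $c_1(M)|_Z \otimes 1 + 1 \otimes \Sigma(Z)\, x$. Comparing the two components of the decomposition $H^2_{S^1}(Z) \cong \big(H^2(Z)\otimes 1\big) \oplus \big(H^0(Z) \otimes \langle x\rangle\big)$: the degree-$(2,0)$ part recovers $[\omega]|_Z = c_1(M)|_Z$, which is just the monotonicity hypothesis $c_1(M) = [\omega]$ restricted to $Z$ and carries no new information; the degree-$(0,2)$ part—the coefficient of $1 \otimes x$—yields $-H(Z) = \Sigma(Z)$, i.e.\ $H(Z) = -\Sigma(Z)$, which is exactly the claim.

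There is no real obstacle here: the statement is a direct bookkeeping consequence of the two propositions proved in Section~\ref{secEquivariantCohomology} together with Proposition~\ref{proposition_normalized_moment_map}. The only point requiring a word of care is that $Z$ need not be a point, so one must work in $H^2_{S^1}(Z)$ rather than in $H^2_{S^1}(\mathrm{pt}) = \langle x^2 \rangle$-free part; but since both restriction formulas are stated for an arbitrary fixed \emph{component} and the $H^0(Z)\otimes H^2(BS^1)$-summand is canonically $\mathbb{R}\cdot x$ regardless of $Z$, the comparison of $x$-coefficients is unambiguous. (Implicitly one also uses that the weights $w_i(Z)$ appearing in Proposition~\ref{proposition_equivariant_Chern_class} are precisely the ``weights of the $S^1$-action at $Z$'' in the sense of the corollary, which is the standard identification of the isotropy representation on the normal bundle; the tangential directions along $Z$ contribute zero weights and hence nothing to $\Sigma(Z)$.) I would present the argument in three or four lines: restrict \eqref{equation_balanced} to $Z$, substitute, and read off the coefficient of $x$.
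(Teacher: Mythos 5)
Your proof is correct and is essentially the argument the paper gives: restrict the balanced identity $[\widetilde{\omega}_H] = c_1^{S^1}(TM)$ to $Z$, substitute the two restriction formulas from Propositions \ref{proposition_equivariant_symplectic_class} and \ref{proposition_equivariant_Chern_class}, and compare coefficients of $x$. Your parenthetical observation that Kirwan injectivity is not actually needed (only that $i_Z^*$ is a ring homomorphism) is accurate and, if anything, a slight sharpening of the paper's phrasing, which cites Theorem \ref{theorem_Kirwan_injectivity} at this step.
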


	\begin{proof}
		Recall that Proposition \ref{proposition_equivariant_Chern_class} and Proposition \ref{proposition_equivariant_symplectic_class} imply that 
		\[
			[\widetilde{\omega}_H]|_Z = [\omega]|_Z - x \cdot H(Z), \quad \text{and} \quad c_1^{S^1}(TM)|_Z = c_1(TM)|_Z + x \cdot \Sigma(Z)  
		\]
		for each fixed component $Z \subset M^{S^1}$.
		Since $[\omega] = c_1(TM)$ and $H$ is balanced by our assumption, we have $[\widetilde{\omega}_H] = c_1^{S^1}(TM)$
		by Proposition \ref{proposition_normalized_moment_map}. Thus the result follows by Kirwan's injectivity theorem \ref{theorem_Kirwan_injectivity}.
        \end{proof}

	\begin{remark}\label{remark_balanced}
		Note that a moment map $H$ is called {\em normalized} if 
		\[
			\int_M H\omega^n = 0.
		\]
		It is worth mentioning that two notions `{\em normalized}' and `{\em balanced}' are different in general. 
		Indeed, we can easily check the difference between two notions in the case where $M$ is a monotone blow-up of $\p^1 \times \p^1$.
	\end{remark}
	
%
%
	
\subsection{Monotonicity of symplectic reduction}
\label{ssecMonotonicityOfSymplecticReduction}

		If $H$ is balanced, then the reduced space $H^{-1}(0) / S^1$ inherits the monotone reduced symplectic form.
		To show this, consider the embedding $H^{-1}(0) \hookrightarrow M$, which is obviously $S^1$-equivariant. 
		Consider
		\[
			\kappa : H^*_{S^1}(M) \rightarrow H^*_{S^1}(H^{-1}(0)) \cong H^*(M_0) 
		\]
		an induced ring homomorphism called the \textit{Kirwan map}.
        
		\begin{proposition}\label{proposition_monotonicity_preserved_under_reduction}
			Let $(M,\omega)$ be a semifree Hamiltonian $S^1$-manifold with $c_1(TM) = [\omega]$ and $H$ be the balanced moment map.
			If 0 is a regular value of $H$, then $(M_0, \omega_0)$ is a monotone symplectic manifold with $[\omega_0] = c_1(TM_0)$
		\end{proposition}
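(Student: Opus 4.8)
The plan is to exploit the Kirwan map $\kappa : H^*_{S^1}(M) \to H^*(M_0)$ together with the balancedness identity \eqref{equation_balanced}, $[\widetilde{\omega}_H] = c_1^{S^1}(TM)$, and to track what happens to both sides under $\kappa$. First I would recall the two standard facts about the Kirwan map at a regular level: it is a ring homomorphism, and it sends the equivariant symplectic class $[\widetilde{\omega}_H]$ to the reduced symplectic class $[\omega_0]$. The latter is essentially the definition of the reduced form: $\widetilde{\omega}_H$ restricts to $\omega$ on the fiber $M$ and, since $H$ vanishes on $H^{-1}(0)$, the correction term $d(H\cdot\theta)$ dies upon restriction to $H^{-1}(0)$, so the pulled-back class on $H^{-1}(0)/S^1 = M_0$ is exactly $[\omega_0]$. (This is the same bookkeeping as in the proof of Proposition \ref{proposition_equivariant_symplectic_class}, now applied at the level set rather than at a fixed component.)

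Next I would identify $\kappa(c_1^{S^1}(TM))$ with $c_1(TM_0)$. The point is that over the free level set $H^{-1}(0)$ one has an $S^1$-equivariant splitting $TM|_{H^{-1}(0)} \cong \pi^*TM_0 \oplus (\text{trivial rank-two piece spanned by }\underline{X}\text{ and }\nabla H)$, where the rank-two complement carries the trivial $S^1$-representation (the action is free, hence has no nonzero weights on the level set in the fiber directions relevant here). Passing to the Borel construction and taking first Chern classes, the trivial summand contributes nothing, and $c_1^{S^1}(TM)$ restricted to $H^{-1}(0) \times_{S^1} ES^1$ becomes $\pi^* c_1(TM_0)$ under the identification $H^{-1}(0) \times_{S^1} ES^1 \simeq M_0$. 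Hence $\kappa(c_1^{S^1}(TM)) = c_1(TM_0)$.

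Combining the two computations: applying $\kappa$ to $[\widetilde{\omega}_H] = c_1^{S^1}(TM)$ gives $[\omega_0] = c_1(TM_0)$ in $H^2(M_0)$, which is exactly the assertion (with monotonicity constant $1$). That $M_0$ is genuinely a closed symplectic manifold — not merely an orbifold — is guaranteed by semifreeness: $H^{-1}(0)$ is a free $S^1$-manifold, as recalled in Section \ref{ssecTopologyOfReducedSpaces}.

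The main obstacle is making the identification $\kappa(c_1^{S^1}(TM)) = c_1(TM_0)$ fully rigorous, i.e.\ justifying the equivariant normal-bundle splitting of $TM$ along $H^{-1}(0)$ and checking that the rank-two complement is equivariantly trivial so that it drops out of the first Chern class. Concretely, $\underline{X}$ trivializes one real direction tangent to the orbits, $\nabla_g H = JX$ trivializes a transverse direction, and the $S^1$-action on this $J$-complex line is trivial because $\mathcal{L}_{\underline{X}}(JX) = 0$ along the free level set; the $g_J$-orthogonal complement of this line inside $TM|_{H^{-1}(0)}$ is $S^1$-invariant and descends to $TM_0$ with its reduced complex structure. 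Everything else — ring-homomorphism property of $\kappa$, commutativity of restriction with the Borel construction, and the vanishing $\langle x \cdot (\text{anything}),\,\text{a class pulled back from }M_0\rangle$ type identities — is formal once this splitting is in place.
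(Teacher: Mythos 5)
Your proposal is correct and follows essentially the same route as the paper: the paper's proof simply observes that the Kirwan map $\kappa$ sends $[\widetilde{\omega}_H]$ to $[\omega_0]$ and $c_1^{S^1}(TM)$ to $c_1(TM_0)$, and then applies the balancedness identity $[\widetilde{\omega}_H]=c_1^{S^1}(TM)$ from Proposition \ref{proposition_normalized_moment_map}. Your additional justification of $\kappa(c_1^{S^1}(TM))=c_1(TM_0)$ via the equivariant splitting of $TM$ along the free level set is a correct elaboration of a step the paper leaves implicit.
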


		\begin{proof}
			Observe that $\kappa$ takes $c_1^{S^1}(TM)$ to $c_1(TM_0)$ and $[\widetilde{\omega}_H]$ to 
			$[\omega_0]$. Since $[\widetilde{\omega}_H] = c_1^{S^1}(TM)$ by Proposition \ref{proposition_normalized_moment_map}, we have 
			$c_1(TM_0) = [\omega_0]$.
		\end{proof}

		\begin{remark}\label{remark_monotone_reduction_criticalvalue}
			Note that Proposition \ref{proposition_monotonicity_preserved_under_reduction} holds even for the case where $0$ is a critical value of $H$ under the assumption 
			that the symplectic reduction at level $0$ is well-defined. This is the case where $M^{S^1} \cap H^{-1}(0)$ consists of index two or co-index two fixed components, respectively,
			See Proposition \ref{proposition_topology_reduced_space}.
		\end{remark}

		It is an immediate consequence from Proposition \ref{proposition_monotonicity_preserved_under_reduction} that if $\dim M = 6$, then $M_0$ should be diffeomorphic to 
		either $\p^2$, $\p^1 \times \p^1$, or $\p^2 \# k \overline{\p}^2$ for $1 \leq k \leq 8$ by the classification (by Ohta and Ono \cite{OO2}) of closed monotone symplectic four manifolds. 
		
		We end up this section by the following lemma, 
		which give a list of all cohomology classes,
		called {\em exceptional classes}, in $H^2(M_0 ; \Z)$ each of which is represented by a symplectically embedded 2-sphere with the self-intersection number $-1$.
		
		\begin{lemma}\label{lemma_list_exceptional}\cite[Section 2]{McD2}
			Suppose that $M_0$ is the $k$-times simultaneous symplectic blow-up of $\p^2$ with the exceptional divisors $C_1, \cdots, C_k$.
			Denote by $E_i := \mathrm{PD}(C_i) \in H^2(M_0; \Z)$.  
			Then all possible exceptional classes are listed as follows (modulo permutations of indices) : 
			\[
				\begin{array}{l}
					E_1, u - E_{12},  \quad 2u - E_{12345}, \quad 3u - 2E_1 - E_{234567}, \quad 4u - 2E_{123} - E_{45678}  \\ \vs{0.1cm}
					5u - 2E_{123456}  - E_{78},  \quad 6u - 3E_1 - 2E_{2345678}  \\
				\end{array}
			\]
			Here, we denote by $E_{j \cdots n} := \sum_{i=j}^n E_i = $. Furthermore, elements involving $E_i$ do not appear in $M_0$ when $k < i$. 
		\end{lemma}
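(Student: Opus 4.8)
The plan is to classify the exceptional classes in $H^2(M_0;\Z)$ by exploiting the fact that $M_0$ is a monotone symplectic four-manifold diffeomorphic to a $k$-fold blow-up of $\p^2$ with $k \le 8$, and that its intersection form is the standard diagonal form $\langle 1 \rangle \oplus k\langle -1 \rangle$ in the basis $u = \mathrm{PD}(\text{line}), E_1, \dots, E_k$. First I would recall that any symplectically embedded $(-1)$-sphere $C$ has class $A = \mathrm{PD}(C) = du - \sum_i a_i E_i$ satisfying two numerical constraints: the self-intersection condition $A \cdot A = d^2 - \sum_i a_i^2 = -1$, and the adjunction/monotonicity condition coming from $c_1(TM_0)\cdot A = 1$ for a $(-1)$-sphere. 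Since $M_0$ is monotone with $c_1(TM_0) = 3u - \sum_i E_i$ (the anticanonical class of the blow-up), this second condition reads $3d - \sum_i a_i = 1$. So the whole lemma reduces to the Diophantine problem of finding all integer solutions $(d; a_1, \dots, a_k)$ with $d^2 - \sum a_i^2 = -1$ and $3d - \sum a_i = 1$, up to permutation of the $a_i$.

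Next I would solve this Diophantine system directly. Combining the two equations via Cauchy–Schwarz or a direct bound: from $\sum a_i = 3d-1$ and $\sum a_i^2 = d^2+1$, one gets $(3d-1)^2 \le k \sum a_i^2 = k(d^2+1)$ (when all $a_i \ge 0$, which one can arrange since reflections/permutations act), giving $(9-k)d^2 - 6d + (1-k) \le 0$; for $k \le 8$ this forces $d$ into a small finite range, namely $0 \le d \le 6$. For each such $d$ one enumerates the finitely many partitions: $d=0$ gives $\sum a_i^2 = 1$, $\sum a_i = -1$, i.e. one $a_i = -1$ and the rest $0$, which after a reflection is the class $E_1$; $d=1$ gives $\sum a_i = 2$, $\sum a_i^2 = 2$, i.e. two of the $a_i$ equal $1$, yielding $u - E_{12}$; $d=2$ gives $\sum a_i = 5$, $\sum a_i^2 = 5$, i.e. five $a_i$ equal to $1$, yielding $2u - E_{12345}$; and so on through $d=3,4,5,6$, which produce exactly the remaining entries $3u - 2E_1 - E_{234567}$, $4u - 2E_{123} - E_{45678}$, $5u - 2E_{123456} - E_{78}$, $6u - 3E_1 - 2E_{2345678}$. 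The constraint that a solution with a nonzero coefficient on $E_i$ cannot occur when $k < i$ is automatic since then $E_i$ is not a class in $H^2(M_0)$.

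The one genuine input beyond arithmetic is the \emph{converse direction}: every class satisfying these two numerical conditions is actually represented by a symplectically embedded $(-1)$-sphere. For this I would cite the results on symplectic four-manifolds already invoked in the introduction — work of Li–Liu and T.-J. Li (\cite{LL}, \cite{Li}), together with McDuff's analysis in \cite{McD2} — which show that in a rational symplectic four-manifold a homology class $A$ with $A \cdot A = -1$ and $c_1 \cdot A = 1$ that pairs non-negatively with the symplectic form (guaranteed here by monotonicity, since $[\omega_0] = c_1(TM_0)$ and $c_1 \cdot A = 1 > 0$) has a symplectically embedded sphere representative; this is precisely the content cited as "all possible exceptional classes" in \cite[Section 2]{McD2}. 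The main obstacle, then, is not really a proof obstacle at all — it is purely bookkeeping in the Diophantine enumeration, making sure no partition is missed for intermediate $d$ and that the reflection/permutation normalization is applied consistently — since the hard analytic content is imported wholesale from the cited literature. I would therefore present the proof as: set up the two equations, bound $d$, enumerate, and invoke \cite{McD2} for realizability.
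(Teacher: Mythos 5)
The paper gives no proof of this lemma; it is imported verbatim from \cite[Section~2]{McD2}, and the argument you outline (adjunction plus self-intersection reduce the problem to the Diophantine system $d^2-\sum a_i^2=-1$, $3d-\sum a_i=1$, followed by enumeration and an appeal to the realizability results of \cite{LL}, \cite{Li}) is exactly the standard one behind the cited statement, so your proposal is essentially the intended proof. Two small bookkeeping remarks: your Cauchy--Schwarz bound $(3d-1)^2\le k(d^2+1)$ for $k=8$ actually allows $d=7$, which you must rule out separately (equality in Cauchy--Schwarz would force all eight $a_i$ equal to $20/8$, not an integer); and the normalization ``all $a_i\ge 0$'' for $d\ge 1$ is justified by positivity of intersections of the exceptional sphere with the $E_i$ rather than by lattice reflections, which do not preserve the condition $c_1\cdot A=1$ --- though in fact the Cauchy--Schwarz inequality holds without any sign assumption, so your bound on $d$ is unaffected.
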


\section{Fixed point data}
\label{secFixedPointData}
	
	Consider a $2n$-dimensional closed Hamiltonian $S^1$-manifold $(M,\omega)$ with a moment map $H : M \rightarrow I \subset \R$. Assume that the critical values of $H$ are given by 
	\[
		\min H = c_1 < \cdots < c_k = \max H.
	\]	
	One can decompose $M$ into a union of $2n$-dimensional Hamiltonian $S^1$-manifolds $\{ (N_j, \omega_j) \}_{1 \leq j \leq 2k-1}$ with boundary where 
	\[
		N^{2j-1} = H^{-1}(\underbrace{[c_j - \epsilon, c_j + \epsilon]}_{ =: I_{2j-1}}), \quad N^{2j} = H^{-1}(\underbrace{[c_j + \epsilon, c_{j+1} - \epsilon]}_{=: I_{2j}})
	\]
	 and $\epsilon > 0$ is chosen to be sufficiently small so that $I_{2j-1}$ contains exactly one critical value $c_j$ of $H$ for each $j$.
	 We call those $N_j$'s {\em slices}. 
	 
	 \begin{definition}\label{definition_regular_slice}\cite[Definition 2.3]{G}
		A {\em regular slice} $(N,\sigma,K, I)$ is a free Hamiltonian $S^1$-manifold $(N, \sigma)$ with boundary and 
		$K : N \rightarrow I$ is a surjective proper moment map where $I = [a,b]$ is a closed interval. 
	\end{definition}

	By definition, a regular slice does not contain a fixed point and the image of a moment map consists of regular values. 

	\begin{definition}\label{definition_critical_slice}
		A {\em critical slice} $(N, \sigma, K, I)$ is a semifree Hamiltonian $S^1$-manifold $(N, \sigma)$ with boundary together with a surjective proper moment map 
		$K : N \rightarrow I = [a,b]$ such that 
		there exists exactly one critical value $c \in [a,b]$ satisfying one the followings : 
		\begin{itemize}
			\item (interior slice) $c \in (a,b)$, 
			\item (maximal slice) $c = b$ and $K^{-1}(c)$ is a critical submanifold, 
			\item (minimal slice) $c = a$ and $K^{-1}(c)$ is a critical submanifold. 
		\end{itemize}
		An interior critical slice is called {\em simple} if every fixed component in $K^{-1}(c)$ has the same Morse-Bott index. 
	\end{definition}
	
	One can define an isomorphism of slices as follows : two slices $(N_1,\sigma_1,K_1, I_1)$ and $(N_2,\sigma_2,K_2, I_2)$ are said to be 
	{\em isomorphic} if there exists an $S^1$-equivariant symplectomorphism $\phi : (N_1, \sigma_1) \rightarrow (N_2, \sigma_2)$ satisfying 
	\[
		\xymatrix{N_1 \ar[r]^{\phi} \ar[d]_{K_1} & N_2 \ar[d]^{K_2} \\ I_1 \ar[r]^{ + k} & I_2}
	\]
	where $+k$ denotes the translation map by some constant $k \in \R$. 
	We note that the notion of ``slices'' are already introduced by Li \cite{Li3} for constructing a closed Hamiltonian $S^1$-manifold by gluing slices. (She call a slice a {\em local piece} in \cite{Li3}.)

	\begin{lemma}\cite[Lemma 13]{Li3}\cite[Lemma 1.2]{McD2}\label{lemma_gluing}
		Two slices $(N_1, \sigma_1, K_1, [a,b])$ and $(N_2, \sigma_2, K_2, [b,c])$ can be glued along $K_i^{-1}(b)$ if there exists a diffeomorphism 
		\[
			\phi : (N_1)_b \rightarrow (N_2)_b, \quad \quad (N_i)_b := K_i^{-1}(b) / S^1
		\]
		such that 
		\begin{itemize}
			\item $\phi^* (\sigma_2)_b = (\sigma_1)_b$, and 
			\item $\phi^* (e_2)_b = (e_1)_b$ 
		\end{itemize}
		where $(\sigma_i)_b$ and $(e_i)_b$ denote the reduced symplectic form on $(N_i)_b$ and the Euler class of the principal $S^1$-bundle 
		$K_i^{-1}(b) \rightarrow (N_i)_b$, respectively.
	\end{lemma}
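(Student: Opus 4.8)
The plan is to recognize each slice, in a collar of the level set $K_i^{-1}(b)$, as a standard model built entirely out of the reduced data $\bigl((N_i)_b,(\sigma_i)_b,(e_i)_b\bigr)$, to lift $\phi$ to an $S^1$-equivariant symplectomorphism between these two models, and then to glue. Since we glue along a regular level, the $S^1$-action near $K_i^{-1}(b)$ is free; thus, as in Section~\ref{ssecSymplecticReduction}, $\pi_i\colon K_i^{-1}(b)\to (N_i)_b$ is a principal $S^1$-bundle, $K_i^{-1}(b)$ is a coisotropic hypersurface whose characteristic foliation is the orbit foliation, and $\sigma_i|_{K_i^{-1}(b)}=\pi_i^*(\sigma_i)_b$.

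\textbf{Step 1: lifting $\phi$.} The hypothesis $\phi^*(e_2)_b=(e_1)_b$ says that the pulled-back bundle $\phi^*\bigl(K_2^{-1}(b)\bigr)$ and the bundle $K_1^{-1}(b)$ over $(N_1)_b$ have the same Euler class, hence are isomorphic; fixing such an isomorphism produces an $S^1$-equivariant diffeomorphism $\Phi\colon K_1^{-1}(b)\to K_2^{-1}(b)$ with $\pi_2\circ\Phi=\phi\circ\pi_1$. Then
\[
\Phi^*\bigl(\sigma_2|_{K_2^{-1}(b)}\bigr)=\Phi^*\pi_2^*(\sigma_2)_b=\pi_1^*\phi^*(\sigma_2)_b=\pi_1^*(\sigma_1)_b=\sigma_1|_{K_1^{-1}(b)},
\]
using $\phi^*(\sigma_2)_b=(\sigma_1)_b$. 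So $\Phi$ is an isomorphism of $K_1^{-1}(b)$ and $K_2^{-1}(b)$ as presymplectic $S^1$-manifolds.

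\textbf{Step 2: collar normal form and extension of $\Phi$.} Using the gradient-flow identification of Section~\ref{ssecDuistermaatHeckmanTheorem}, a one-sided collar of $K_i^{-1}(b)$ in $N_i$ is $S^1$-equivariantly symplectomorphic, compatibly with the moment map, to a model $\bigl(K_i^{-1}(b)\times J_i,\ \varpi_i\bigr)$, where $J_1=(b-\delta,b]$, $J_2=[b,b+\delta)$, the moment map is projection to $J_i$, and by the Duistermaat--Heckman Theorem~\ref{theorem_DH} the reduced form at level $t$ is $(\sigma_i)_b+(b-t)(e_i)_b$; hence $\varpi_i$ is determined by the single pair $\bigl((\sigma_i)_b,(e_i)_b\bigr)$ together with a choice of connection $1$-form on the bundle. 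Choosing the connection on $K_1^{-1}(b)$ and transporting it by $\Phi$, the map $\Phi\times\mathrm{id}_{\{b\}}$ identifies the two model forms along $\{t=b\}$; since both models are instances of the same $\bigl((\sigma_1)_b,(e_1)_b\bigr)$-construction, this extends to $\Phi\times\mathrm{id}$ on the full collars. (Equivalently, one applies the equivariant Gotay coisotropic neighbourhood theorem to the presymplectic isomorphism $\Phi$ of Step~1, then adjusts by a relative Moser argument so that the extension respects $K_1,K_2$.) In either formulation, a neighbourhood of $K_1^{-1}(b)$ in $N_1$ is $S^1$-equivariantly symplectomorphic, respecting moment maps, to a neighbourhood of $K_2^{-1}(b)$ in $N_2$, via a map extending $\Phi$.

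\textbf{Step 3: gluing; and the main obstacle.} Set $N:=\bigl(N_1\sqcup N_2\bigr)\big/\bigl(x\sim\Phi(x),\ x\in K_1^{-1}(b)\bigr)$. By Step~2 the forms $\sigma_1,\sigma_2$ agree on the overlapping collar, so they patch to a smooth symplectic form $\sigma$ on $N$; the $S^1$-actions patch because $\Phi$ is equivariant; the moment maps $K_1,K_2$ agree at the gluing locus (both equal $b$) and on the collar, so they patch to a surjective proper moment map $K\colon N\to[a,c]$. This gives the Hamiltonian $S^1$-manifold obtained by gluing the two slices. The only non-formal ingredient is Step~2: one must know that the symplectic germ of a slice along a regular boundary level is \emph{completely} determined---not merely to first order---by the reduced symplectic form and the Euler class at that level, and one must carry the moment map through all the identifications so that the glued $K$ is smooth. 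Both points are supplied by the Duistermaat--Heckman normal form of Section~\ref{ssecDuistermaatHeckmanTheorem} (compare also Proposition~\ref{proposition_topology_reduced_space} for the local model of the slices themselves).
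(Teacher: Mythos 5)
The paper offers no proof of this lemma: it is quoted verbatim from Li \cite[Lemma 13]{Li3} and McDuff \cite[Lemma 1.2]{McD2}, so there is no in-paper argument to compare against. Your proof is the standard one and is essentially the argument underlying the cited sources: lift $\phi$ to an equivariant bundle isomorphism $\Phi$ using the Euler class hypothesis, observe that $\Phi$ matches the restricted (presymplectic) forms because $\sigma_i|_{K_i^{-1}(b)}=\pi_i^*(\sigma_i)_b$, invoke a normal form for the symplectic germ along a regular level, and glue. You also correctly identify Step 2 as the only non-formal ingredient.

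One point of phrasing should be repaired, though it does not change the substance. The conclusion of your Step 2 --- ``a neighbourhood of $K_1^{-1}(b)$ in $N_1$ is $S^1$-equivariantly symplectomorphic, respecting moment maps, to a neighbourhood of $K_2^{-1}(b)$ in $N_2$'' --- cannot hold as stated: the two collars have moment images $(b-\delta,b]$ and $[b,b+\delta)$ respectively, so no moment-map-compatible map between them exists. What the coisotropic neighbourhood theorem (or the Duistermaat--Heckman normal form) actually gives, and what Step 3 actually uses, is that each one-sided collar is equivariantly symplectomorphic, compatibly with the moment map, to the corresponding \emph{half} of a single two-sided model $\bigl(P\times(b-\delta,b+\delta),\ \pi^*(\sigma_1)_b+d(t\alpha)\bigr)$ built from $\bigl((\sigma_1)_b,(e_1)_b\bigr)$ and a connection $\alpha$, with the two identifications agreeing along $\{t=b\}$ via $\Phi$. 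The glued space then inherits the smooth structure and symplectic form of this model, which is what makes $\sigma$ and $K$ smooth across the seam in Step 3; similarly, ``the forms agree on the overlapping collar'' should read that both halves are halves of one model, since in $N$ the collars meet only along the gluing locus. With that correction the argument is complete and agrees with the intended proof of the cited lemmas.
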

	
	Thus if we have a collection of slices (containing maximal and minimal critical slices) which satisfy the compatibility conditions given in Lemma \ref{lemma_gluing}, then 
	we can construct a closed Hamiltonian $S^1$-manifold. It is worth mentioning that the resulting closed manifold may not be unique, i.e., it might depend on the choice of 
	gluing maps. 
	Gonzalez \cite{G} used slices to classify semifree Hamiltonian $S^1$-manifolds in terms of so-called {\em fixed point data}. 
	Roughly speaking, he considered which conditions on a fixed point data of a given Hamiltonian $S^1$-manifold $(M,\omega)$ determine $(M,\omega)$ uniquely up to $S^1$-equivariant 
	symplectomorphism.

	Now, we focus on the case where $(M,\omega)$ is a six-dimensional closed monotone symplectic manifold 
	equipped with an effective semifree Hamiltonian $S^1$-action with the balanced moment map $H : M \rightarrow \R$.
	We further assume that, by scaling $\omega$ if necessary, $c_1(TM) = [\omega]$.
	
	\begin{definition}\cite[Definition 1.2]{G}\label{definition_fixed_point_data} 
		Let $(M,\omega)$ be a six-dimensional closed semifree Hamiltonian $S^1$-manifold equipped with a moment map $H : M \rightarrow I$ such that all critical level sets are simple
		in the sense of Definition \ref{definition_critical_slice}. 
		A {\em fixed point data} of $(M,\omega, H)$, denoted by $\frak{F}(M, \omega, H)$, is a collection 
		\[
			 \frak{F} (M, \omega, H) := \left\{(M_{c}, \omega_c, Z_c^1, Z_c^2, \cdots,  Z_c^{k_c}, e(P_{c}^{\pm})) ~|~c \in \mathrm{Crit} ~H \right\}
		\]
		which consists of the information below.
		\begin{itemize}
			\item $(M_c, \omega_c)$\footnote{$M_c$ is smooth manifold under  the assumption that the action is semifree and the dimension of $M$ is six.
				See Proposition \ref{proposition_topology_reduced_space}.} is the reduced symplectic manifold at level $c$.
			\item $k_c$ is the number of fixed components at level $c$. 
			\item Each $Z_c^i$ is a connected fixed component and hence is a symplectic submanifold of $(M_c, \omega_c)$ via the embedding
				\[
					Z_c^i \hookrightarrow H^{-1}(c) \rightarrow H^{-1}(c) / S^1 = M_c.
				\]
				(This information contains a normal bundle of $Z_c^i$ in $M_c$.)
			\item The Euler class $e(P_c^{\pm})$ of principal $S^1$-bundles $H^{-1}(c \pm \epsilon) \rightarrow M_{c \pm \epsilon}$.
		\end{itemize}		
	\end{definition}
	
	Gonzalez proved that the fixed point data determines $(M,\omega)$ uniquely under the assumption that every reduced symplectic form is {\em symplectically rigid}. 
	Following \cite[Definition 2.13]{McD2} or \cite[Definition 1.4]{G}, a manifold $B$ is said to be {\em symplectically rigid} if 
	\begin{itemize}
		\item (uniqueness) any two cohomologous symplectic forms are diffeomorphic, 
		\item (deformation implies isotopy) every path $\omega_t$ ($t \in [0,1]$) of symplectic forms such that $[\omega_0] = [\omega_1]$ can be homotoped through families of symplectic forms 
		with the fixed endpoints $\omega_0$ and $\omega_1$ to an isotopy, that is, a path $\omega_t'$ such that $[\omega_t']$ is constant in $H^2(B)$. 
		\item For every symplectic form $\omega$ on $B$, the group $\text{Symp}(B,\omega)$ of symplectomorphisms that act trivially on $H_*(B;\Z)$ is path-connected.
	\end{itemize}
	Using this terminology, together with Definition \ref{definition_fixed_point_data}, Gonzalez proved the following. 
	
	\begin{theorem}\cite[Theorem 1.5]{G}\label{theorem_Gonzalez_5}
		Let $(M,\omega)$ be a six-dimensional closed semifree Hamiltonian $S^1$-manifold such that every critical level is simple. 
		Suppose further that every reduced space is symplectically rigid.
		Then $(M,\omega)$ is determined by its fixed point data up to $S^1$-equivariant symplectomorphism.
	\end{theorem}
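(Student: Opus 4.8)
The plan is to reconstruct $(M,\omega)$ from $\frak{F}(M,\omega,H)$ slice by slice along the moment map and to glue the pieces using Lemma \ref{lemma_gluing}, with symplectic rigidity of the reduced spaces supplying the diffeomorphisms needed for gluing. Write $\min H = c_1 < \cdots < c_k = \max H$, fix a small $\epsilon>0$ so that each $[c_j-\epsilon,c_j+\epsilon]$ contains only the critical value $c_j$, and decompose $M$ as in Section \ref{secFixedPointData} into the critical slices $N^{2j-1}=H^{-1}([c_j-\epsilon,c_j+\epsilon])$ (simple by hypothesis, and either interior, maximal, or minimal) and the regular slices $N^{2j}=H^{-1}([c_j+\epsilon,c_{j+1}-\epsilon])$. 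Given a second manifold $(M',\omega',H')$ with the same fixed point data, I would construct an $S^1$-equivariant symplectomorphism $H^{-1}([c_1,\,c_j+\epsilon]) \xrightarrow{\cong} (H')^{-1}([c_1',\,c_j'+\epsilon])$ by induction on $j$, at each stage extending first over a regular slice and then over the next critical slice.

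\textbf{Local rigidity of the two kinds of slice.} First I would show that a \emph{regular} slice $(N,\sigma,K,[a,b])$ is determined up to isomorphism by the reduced space $(N_a,(\sigma)_a)$ at one endpoint together with the Euler class $e\in H^2(N_a;\Z)$ of $K^{-1}(a)\to N_a$: by the Duistermaat--Heckman theorem \ref{theorem_DH} the reduced forms vary along the explicit linear path $[(\sigma)_t]=[(\sigma)_a]+(a-t)e$, and an equivariant Moser argument then identifies two regular slices carrying the same reduced data. Next I would show that a simple \emph{interior} critical slice is determined by the corresponding entry $(M_{c\pm\epsilon},Z_c^1,\dots,Z_c^{k_c},e(P_c^\pm))$ of the fixed point data. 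Since $\dim M=6$ and the action is semifree, each critical component $Z_c^i$ has (co)index at most $2$; by Proposition \ref{proposition_topology_reduced_space} the slice is an equivariant blow-up or blow-down along $Z_c^i$, whose symplectic normal neighbourhood is determined by $Z_c^i$ as a symplectic manifold together with its $S^1$-equivariant symplectic normal bundle. Semifreeness forces that normal bundle to split into weight $\pm1$ summands whose underlying complex bundle is exactly the normal bundle of the embedding $Z_c^i\hookrightarrow M_{c-\epsilon}$ (resp.\ $M_{c+\epsilon}$) recorded in the data, and Lemma \ref{lemma_Euler_class} shows the Euler classes $e(P_c^\pm)$ carry precisely the remaining information (the Poincar\'{e} dual of the exceptional divisor). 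The extremal (minimal and maximal) slices are handled the same way: they are equivariant symplectic disc bundles over the extremal fixed component, determined by that component as a symplectic manifold plus its equivariant normal bundle, which is again encoded in $(M_{c\pm\epsilon},Z_c^\bullet,e(P_c^\pm))$.

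\textbf{Gluing.} To push the induction forward I would use Lemma \ref{lemma_gluing}: extending the equivariant symplectomorphism already built on $H^{-1}([c_1,\,b])$ across the next slice requires a diffeomorphism $\phi$ of the common reduced space $B$ with $\phi^*(\sigma_2)_b=(\sigma_1)_b$ and $\phi^*(e_2)_b=(e_1)_b$ that also agrees, after reduction, with the boundary value of the map already constructed. Equality of fixed point data forces $(\sigma_1)_b$ and $(\sigma_2)_b$ to be cohomologous (same Duistermaat--Heckman data) and $(e_1)_b=(e_2)_b$ under a fixed cohomology identification; the \emph{uniqueness} clause of rigidity then yields a symplectomorphism realising the first condition. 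Its discrepancy with the previously built reduced map is a symplectomorphism acting trivially on homology, hence --- by the third clause of rigidity --- isotopic through symplectomorphisms to the identity, and absorbing this isotopy together with a gauge transformation matching the $S^1$-connections produces a $\phi$ satisfying all three requirements; Lemma \ref{lemma_gluing} then provides the extension. Here the \emph{deformation-implies-isotopy} clause is used to realise, by an honest isotopy, the passage of reduced forms across a critical level (where the topology of $B$ changes by a blow-up), so that the two sides of a critical slice can be matched.

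\textbf{Main obstacle.} The delicate part is the bookkeeping in the induction: each time a gluing map is corrected by an element of $\mathrm{Symp}_0(B)$ or by an isotopy produced by the rigidity hypotheses, one must check that the correction can be carried through the interior of the adjacent slice without disturbing the fixed components sitting inside it or the matching already achieved on its far boundary. This is exactly where all three clauses of symplectic rigidity are needed simultaneously, and where the hypothesis that every critical level be simple is essential, since it guarantees that every interior critical slice has the clean equivariant blow-up/blow-down normal form of Proposition \ref{proposition_topology_reduced_space} rather than the more complicated mixed-index local model of \cite[Theorem 11.1]{GS}. A secondary subtlety is verifying that the equivariant normal-bundle data of each fixed component is genuinely recoverable from its embedding into the reduced spaces together with the Euler classes --- true here because semifreeness pins down all weights to $\pm1$.
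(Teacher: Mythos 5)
This statement is quoted verbatim from Gonzalez \cite[Theorem 1.5]{G}; the paper supplies no proof of it (only Remark \ref{remark_Gonzalez_5}, explaining why the ``(co)-index two'' hypothesis of the original can be dropped in dimension six), so there is no internal argument to compare yours against. That said, your sketch is a faithful reconstruction of the strategy actually used in \cite{G} (building on McDuff \cite{McD2}): decompose $M$ into regular and simple critical slices, prove that each slice is determined up to equivariant symplectomorphism by its reduced data, and glue via Lemma \ref{lemma_gluing}, using the three clauses of symplectic rigidity exactly where you place them --- uniqueness to match cohomologous reduced forms, connectivity of $\mathrm{Symp}(B,\omega)$ to absorb the discrepancy with the map already built, and deformation-implies-isotopy to compare the two families of reduced forms. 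Your identification of where the real work lies (carrying corrections through adjacent slices without disturbing the matching already achieved, and recovering the equivariant normal bundle of each fixed component from its embedding in the reduced space, which semifreeness makes possible) is also accurate.

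One small refinement: the deformation-implies-isotopy clause is already needed to identify two \emph{regular} slices carrying the same endpoint data, not only when crossing a critical level. The Duistermaat--Heckman theorem pins down the cohomology classes $[(\sigma)_t]$ along the slice, but two one-parameter families of symplectic forms realizing the same linear path of classes need not be isotopic as families without that hypothesis; the ``equivariant Moser argument'' you invoke for regular slices silently uses it. Apart from relocating that one ingredient, your outline matches the cited proof.
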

	
	\begin{remark}\label{remark_Gonzalez_5}
		 Note that Theorem \ref{theorem_Gonzalez_5} is a six-dimensional version of the original statement of the Gonzalez Theorem \cite[Theorem 1.5]{G}
		 so that we may drop ``(co)-index two'' condition in his original statement because every non-extremal fixed component has index two or co-index two in a six-dimensional case.
	 \end{remark}

	Now, we introduce the notion ``{\em topological fixed point data}'', which is a topological analogue of a fixed point data, as follows. 
				
	\begin{definition}\label{definition_topological_fixed_point_data}
		Let $(M,\omega)$ be a six-dimensional closed semifree Hamiltonian $S^1$-manifold equipped with a moment map $H : M \rightarrow I$ such that all critical level sets are simple.
		A {\em topological fixed point data} of $(M,\omega, H)$, denoted by $\frak{F}_{\text{top}}(M, \omega, H)$, is defined as a collection 
		\[
			 \frak{F}_{\text{top}}(M, \omega, H) := \left\{(M_{c}, [\omega_c], [Z_c^1], [Z_c^2], \cdots, [Z_c^{k_c}], e(P_c^{\pm}) ) ~|~c \in \mathrm{Crit} ~H \right\}
		\]
		where 
		\begin{itemize}
			\item $(M_c, \omega_c)$ is the reduced symplectic manifold at level $c$, 
			\item $k_c$ is the number of fixed components at level $c$, 
			\item each $Z_c^i$ is a connected fixed component lying on the level $c$ and $[Z_c^i] \in H^*(M_c)$ denotes the Poincar\'{e} dual class of the image of the embedding
				\[
					Z_c^i \hookrightarrow H^{-1}(c) \rightarrow H^{-1}(c) / S^1 = M_c.
				\]
			\item the Euler class $e(P_c^{\pm})$ of principal $S^1$-bundles $H^{-1}(c \pm \epsilon) \rightarrow M_{e \pm \epsilon}$.
		\end{itemize}		
	\end{definition}
	
	The following lemma allows us 
	to compute the data $e(P_c^{\pm})$ in terms of $\frak{F}_{\mathrm{top}}$.
	
	\begin{lemma}\label{lemma_Euler_condition}
		If $(M,\omega)$ is a six-dimensional closed monotone semifree Hamiltonian $S^1$-manifold, then 
		the Euler classes $\{e(P_c^{\pm}) ~|~ c \in \mathrm{Crit} ~H\}$ is completely determined by other topological fixed point data
		\[
			\left\{(M_{c}, [\omega_c], [Z_c^1], [Z_c^2], \cdots, [Z_c^{k_c}],  ) ~|~c \in \mathrm{Crit} ~H \right\}
		\] 
		and $e(P_{\min}^+)$, the Euler class of the $S^1$-bundle near the minimum of $H$.
	\end{lemma}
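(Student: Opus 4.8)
\textbf{Proof proposal for Lemma \ref{lemma_Euler_condition}.}

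The plan is to reconstruct all the Euler classes $e(P_c^{\pm})$ by induction on the critical values of $H$, moving upward from the minimum, using the combination of two ingredients already available in the excerpt: the Duistermaat--Heckman variation of the reduced symplectic class (Theorem \ref{theorem_DH}), and the blow-up/blow-down description of how the reduced space and its $S^1$-bundle change across a critical level (Proposition \ref{proposition_topology_reduced_space} and Lemma \ref{lemma_Euler_class}). The base case is given to us: $e(P_{\min}^+)$ is part of the hypothesis. So suppose inductively that, for some critical value $c$, we already know $e(P_c^-)$ (the Euler class just below $c$); I will show that $e(P_c^-)$, together with the reduced-space data $(M_c,[\omega_c],[Z_c^1],\dots,[Z_c^{k_c}])$, determines $e(P_c^+)$, and that $e(P_c^+)$ then determines $e(P_{c'}^-)$ for the next critical value $c'>c$.

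For the second, easier half of the inductive step: on the open interval $(c,c')$ of regular values, the reduced spaces $M_t$ are all canonically identified, and by Theorem \ref{theorem_DH} the Euler class is \emph{constant} on this interval --- it is the Euler class of the $S^1$-fibration $\pi_r$, which does not change as long as the bundle does not pass through a fixed point. Hence $e(P_{c'}^-) = e(P_c^+)$ under the Duistermaat--Heckman identification of $M_{c+\epsilon}$ with $M_{c'-\epsilon}$. For the first, main half: since every critical level is simple and $\dim M = 6$, every non-extremal fixed component $Z_c^i$ has index two or co-index two. If the $Z_c^i$ have index two, then by Proposition \ref{proposition_topology_reduced_space} $M_{c+\epsilon}$ is the blow-up of $M_{c-\epsilon}$ along the (disjoint union of the) $Z_c^i$, and Lemma \ref{lemma_Euler_class} gives exactly
\[
	e(P_c^+) = \phi^*\big(e(P_c^-)\big) + \sum_{i} E_i \in H^2(M_{c+\epsilon};\Z),
\]
where $\phi : M_{c+\epsilon}\to M_{c-\epsilon}$ is the blow-down and $E_i = \mathrm{PD}$ of the exceptional divisor over $Z_c^i$ --- and the classes $E_i$ are determined purely by the embedding data $[Z_c^i]$, hence lie in $\frak{F}_{\mathrm{top}}$. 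The co-index two case is the time-reversed version (replace $H$ by $-H$, which swaps index two with co-index two and $c-\epsilon$ with $c+\epsilon$): $M_{c-\epsilon}$ is the blow-up of $M_{c+\epsilon}$, so Lemma \ref{lemma_Euler_class} reads $e(P_c^-) = \psi^*(e(P_c^+)) + \sum_i E_i'$ with $\psi : M_{c-\epsilon}\to M_{c+\epsilon}$ the blow-down; since $\psi^*$ is injective on $H^2$ and its image together with the $E_i'$ span $H^2(M_{c-\epsilon})$, this equation solves uniquely for $e(P_c^+)$ in terms of $e(P_c^-)$ and the $E_i'$ (again read off from the $[Z_c^i]$).

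The one subtlety --- and the step I expect to require the most care --- is bookkeeping of the \emph{identifications}: Lemma \ref{lemma_Euler_class} and the Duistermaat--Heckman theorem each express equalities only after fixing explicit diffeomorphisms (the blow-down maps, and the gradient-flow identifications), so one must check that these identifications are themselves recorded by, or canonically recoverable from, the topological fixed point data. Concretely, the blow-down map $\phi$ is determined up to isotopy by the submanifold $Z_c^i \subset M_c$ (its normal bundle is encoded by $[Z_c^i]$ and the symplectic form via Proposition \ref{proposition_topology_reduced_space}), so $\phi^*$ on $H^2$ is well-defined; and the $(c,c')$-identification is canonical up to isotopy, so it induces a well-defined map on cohomology. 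Once this is spelled out, the induction closes and every $e(P_c^{\pm})$ is expressed as an explicit $\Z$-linear combination of $e(P_{\min}^+)$ and the exceptional classes determined by the $[Z_c^i]$, which is exactly the assertion.
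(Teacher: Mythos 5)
Your proposal is correct and follows essentially the same route as the paper, which simply cites Proposition \ref{proposition_topology_reduced_space} and Lemma \ref{lemma_Euler_class}; you have merely spelled out the induction over critical values (constancy of the Euler class on regular intervals via the gradient-flow identification, the Guillemin--Sternberg jump formula at index-two levels, and its time-reversed, uniquely solvable version at co-index-two levels) that the paper leaves implicit. The only nit is the sign of the exceptional class in the co-index-two case (reversing the action negates the Euler classes, so one gets $e(P_c^-)=\psi^*(e(P_c^+))-\sum_i E_i'$, as the paper uses in the proof of Theorem \ref{theorem_Sec6_1_2}), but this does not affect your solvability argument.
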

	
	\begin{proof}
		The proof directly follows from Proposition \ref{proposition_topology_reduced_space} and Lemma \ref{lemma_Euler_class}.
	\end{proof}
	
	Our aim is to classify all such manifolds up to $S^1$-equivariant symplectomorphism and show that each manifold is indeed algebraic Fano. 
	(See Theorem \ref{theorem_main}.)
	The rest of this paper consists of two parts : 
	\begin{itemize}
		\item {\bf First part  : classification of all topological fixed point data.} Throughout Section 
		\ref{secClassificationOfTopologicalFixedPointDataDimZMin}, \ref{secClassificationOfTopologicalFixedPointDataDimZMinDimZMax2}, and
		\ref{secClassificationOfTopologicalFixedPointDataDimZMinGeq2AndDimZMax4}
		we give a complete list of possible topological fixed point data that $(M,\omega)$ might have. We also show that there exists a smooth Fano 3-fold (in the Mori-Mukai list \cite{MM})
		with a semifree holomorphic $\C^*$-action having a (any) given topological fixed point data in our list. 
		\item {\bf Second part : uniqueness.} Based on our classification result, we will show that a topological fixed point data determines a fixed point data uniquely. Moreover, 
		all conditions in Theorem \ref{theorem_Gonzalez} are satisfied, and hence a topological fixed point data determines a manifold uniquely. Consequently, every $(M,\omega)$ 
		is $S^1$-equivariantly symplectomorphic to one of smooth Fano 3-folds described in the first part. 
	\end{itemize}

	We finalize this section with the following lemma which shows that a possible topological type of a fixed component is very restrictive. 
	We denote by $Z_{\min}$ and $Z_{\max}$ the minimal and the maximal fixed components of the action, respectively.

	\begin{lemma}\label{lemma_possible_critical_values} 
		All possible critical values of $H$ are $\pm 3, \pm 2, \pm 1$, and $0$. Moreover, any connected component $Z$ of $M^{S^1}$ satisfies one of the followings : 
		\begin{table}[H]
			\begin{tabular}{|c|c|c|c|}
			\hline
			    $H(Z)$ & $\dim Z$ & $\mathrm{ind}(Z)$ & $\mathrm{Remark}$ \\ \hline 
			    $3$ &  $0$ & $6$ & $Z = Z_{\max} = \mathrm{point}$ \\ \hline
			    $2$ &  $2$ & $4$ & $Z = Z_{\max} \cong S^2$ \\ \hline
			    $1$ &  $4$ & $2$ & $Z = Z_{\max}$ \\ \hline
			    $1$ &  $0$ & $4$ & $Z = \mathrm{pt}$ \\ \hline
			    $0$ &  $2$ & $2$ & \\ \hline
			    $-1$ &  $0$ & $2$ & $Z = \mathrm{pt}$ \\ \hline
			    $-1$ &  $4$ & $0$ & $Z = Z_{\min}$ \\ \hline
			    $-2$ &  $2$ & $0$ & $Z = Z_{\min} \cong S^2$ \\ \hline
			    $-3$ &  $0$ & $0$ & $Z = Z_{\min} = \mathrm{point}$ \\ \hline
			\end{tabular}
			\vspace{0.2cm}
			\caption{\label{table_fixed} List of possible fixed components}
		\end{table}

	\end{lemma}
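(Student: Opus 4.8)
The plan is to combine the balanced moment map identity from Corollary~\ref{corollary_sum_weights_moment_value} with the constraints imposed by monotonicity, semifreeness, and the fact that $\dim M = 6$. Since the action is semifree, at any fixed point all weights lie in $\{-1,0,1\}$, and a fixed component $Z$ of dimension $2l$ has exactly $l$ zero weights (by Corollary~\ref{corollary_properties_moment_map}(2)) and the remaining $3-l$ weights in $\{-1,+1\}$. Writing $n_Z$ for the number of negative weights (so $\mathrm{ind}(Z) = 2n_Z$ and the number of positive weights is $3-l-n_Z$), Corollary~\ref{corollary_sum_weights_moment_value} gives
\[
	H(Z) = -\Sigma(Z) = -\big((3-l-n_Z)\cdot(+1) + n_Z\cdot(-1)\big) = n_Z - (3 - l - n_Z) = 2n_Z + l - 3.
\]
Thus $H(Z)$ is determined by $\dim Z$ and $\mathrm{ind}(Z)$ alone. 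The plan is then to enumerate all triples $(l, n_Z)$ with $0 \le l \le 3$, $0 \le n_Z \le 3-l$, compute $H(Z) = 2n_Z + l - 3$, and read off the table.

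Running through the cases: $l = 3$ forces $Z = M$, excluded since the action is effective (or: $Z$ is then both $Z_{\min}$ and $Z_{\max}$, impossible for an effective nontrivial action). For $l = 2$ we have $n_Z \in \{0,1\}$, giving $H(Z) = -1$ (with $n_Z = 0$, so $Z = Z_{\min}$) or $H(Z) = 1$ (with $n_Z = 1$; I claim this forces $Z = Z_{\max}$ because a co-index-zero... wait — here $3 - l = 1$, so the single nonzero weight is $+1$ meaning $\mathrm{ind} = 0$ gives $Z_{\min}$, or $-1$ meaning the weight... let me recompute: $n_Z=1$ means the one nonzero weight is $-1$, so all normal directions flow down, i.e.\ $Z$ is a local max, hence $Z = Z_{\max}$). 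For $l = 1$, $n_Z \in \{0,1,2\}$ gives $H(Z) \in \{-2, 0, 2\}$; the extremes $H(Z) = -2$ ($n_Z = 0$, local min, $Z = Z_{\min} \cong S^2$ by Corollary~\ref{corollary_properties_moment_map}(4)) and $H(Z) = 2$ ($n_Z = 2$, local max, $Z = Z_{\max} \cong S^2$), while $H(Z) = 0$ is the genuinely interior case. For $l = 0$ (isolated fixed points), $n_Z \in \{0,1,2,3\}$ gives $H(Z) \in \{-3,-1,1,3\}$, with $H(Z) = -3$ the global min and $H(Z) = 3$ the global max (each a point, connected by Corollary~\ref{corollary_properties_moment_map}(4)), and $H(Z) = \pm 1$ the interior isolated points. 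Collecting these values shows the critical values lie in $\{\pm 3, \pm 2, \pm 1, 0\}$ and the nine rows of Table~\ref{table_fixed} exhaust the possibilities.

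The only genuine subtlety — and the step I expect to need the most care — is justifying the ``$Z = Z_{\min/\max}$'' entries in the Remark column, i.e.\ that an extremal value of $H(Z)$ among the algebraically possible list actually forces $Z$ to be \emph{the} extremal fixed component. This follows because $n_Z = 0$ (resp.\ $n_Z = 3 - l$) means every nonzero weight at $Z$ is positive (resp.\ negative), so $H$ attains a local minimum (resp.\ maximum) along the normal directions at $Z$; since $H$ is Morse--Bott with $Z$ a critical submanifold of index $0$ (resp.\ maximal index), $Z$ is a local and hence global extremum, and by connectedness of level sets (Corollary~\ref{corollary_properties_moment_map}(4)) the extremal fixed set is a single component. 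The shape of $Z_{\min}$ and $Z_{\max}$ ($S^2$ when $\dim = 2$, a point when $\dim = 0$) is then forced: a $4$-dimensional $Z_{\min}$ inherits a monotone symplectic form, but we only need here that it can occur; a $2$-dimensional extremal component is a closed symplectic surface on which $H$ is constant — its genus is constrained by monotonicity of $(M,\omega)$ (the normal bundle and $c_1$ must cooperate), forcing $g = 0$, i.e.\ $Z \cong S^2$. I would spell out this last genus computation, but it is short: it is exactly the argument that $\langle c_1(TM), [Z]\rangle = \langle c_1(TZ),[Z]\rangle + \langle c_1(\nu_Z),[Z]\rangle = (2 - 2g) + e(\nu_Z)$ together with positivity of $c_1$ on the symplectic surface $Z$ and the weight information pinning $e(\nu_Z)$.
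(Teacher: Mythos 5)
Your enumeration of the possible fixed components is correct and is essentially the paper's argument: semifreeness forces all weights into $\{0,\pm1\}$, the zero weights span $T_zZ$, and Corollary~\ref{corollary_sum_weights_moment_value} gives $H(Z)=-\Sigma(Z)=2n_Z+l-3$, which produces exactly the nine rows of the table; the identification of which components are extremal via $n_Z=0$ or $n_Z=3-l$ is also fine.

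However, there is a genuine gap in the step you yourself flag as the subtle one: showing that a two-dimensional extremal component (the rows $H(Z)=\pm2$) is a sphere. Your proposed argument is
$\langle c_1(TM),[Z]\rangle=(2-2g)+e(\nu_Z)$
together with positivity of $\langle c_1(TM),[Z]\rangle=\langle[\omega],[Z]\rangle$ and ``the weight information pinning $e(\nu_Z)$.'' But the weights do \emph{not} pin $e(\nu_Z)$: the weights only describe the linearized $S^1$-representation on the fibers of the normal bundle (here $\nu_Z=L_1\oplus L_2$ with weight $-1$ on each factor), and $\deg L_1+\deg L_2$ can be any integer. The inequality you actually get is $e(\nu_Z)>2g-2$, which does not exclude $g\geq1$ (e.g.\ $g=1$, $e(\nu_Z)=1$ is consistent with everything you have used). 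Indeed, the paper's Corollary~\ref{corollary_volume} computes $\int_{Z_{\max}}\omega=2+b_{\max}$ only \emph{after} knowing $Z_{\max}\cong S^2$, so the logic cannot be run in your direction. The paper closes this gap by a completely different route: the reduced space $M_0$ is a monotone symplectic four-manifold, hence diffeomorphic to a del Pezzo surface by Ohta--Ono and in particular simply connected, and Li's theorem \cite[Theorem 0.1]{Li1} gives $\pi_1(M)\cong\pi_1(M_0)\cong\pi_1(Z_{\max})$, forcing $\pi_1(Z_{\max})=0$ and hence $Z_{\max}\cong S^2$. You would need to import this (or some equivalent topological input) to complete your proof.
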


	\begin{proof}
		Let $z$ be any point in the fixed component $Z \subset M^{S^1}$.
		Since the action is semifree, every weight of the $S^1$-representation on $T_z M$ is either $0$ or $\pm 1$. Thus all possible (unordered) weights at $z$ are 
		$(\pm 1, \pm1, \pm1)$, $(\pm 1, \pm1, 0)$, and $(\pm 1, 0, 0)$. Thus the first statement follows from Corollary \ref{corollary_sum_weights_moment_value}.
		
		For the second statement, it is enough to consider the case where $H(Z) \geq 0$ due to the symmetry of the table \ref{table_fixed}.
		Note that the zero-weight subspace of $T_z M$ is exactly the tangent space $T_z Z$ whose dimension equals twice the multiplicity of the zero weight on $T_z M$.
		If $H(Z) =3$, then the weights at $z$ is $(-1, -1, -1)$. Thus $\dim Z = 0$ (i.e., $Z = z$.) 
		Moreover, since twice the number of negative weights at $Z$ is equal to the Morse index of $Z$ 
		by Corollary \ref{corollary_properties_moment_map}, we have $\mathrm{ind}(Z) = 6$.
		Therefore, $H(Z)$ is the maximum value of $H$. 
		 
		We can complete the table \ref{table_fixed} in a similar way. The only non-trivial part of the lemma is that $Z \cong S^2$ when $H(Z) = 2$ (and hence $\dim Z = 2$.) 
		To show this, recall that $M_0$ is a monotone symplectic 4 manifold, diffeomorphic to a del Pezzo surface, by Proposition 
		\ref{proposition_monotonicity_preserved_under_reduction}. Since any del Pezzo surface simply connected, we have 
		\[
			\pi_1(M) \cong \pi_1(M_0) \cong \pi_1(Z_{\max}) \cong \{ 0 \}
		\] 
		by the Theorem \cite[Theorem 0.1]{Li1} of Li. Therefore we have $Z_{\max} = S^2$. 
	\end{proof}

	\begin{notation}\label{notation} From now on, we use the following notation. Let $c$ be a critical value of $H$. 
	\begin{itemize}
		\item $\mathrm{Crit}~H$ : set of critical values of $H$.
		\item $\mathrm{Crit}~ \mathring{H}$ : set of non-extremal critical values of $H$.
		\item $P_c^{\pm}$  : the principal bundle $\pi_{c \pm \epsilon} : H^{-1}(c \pm \epsilon) \rightarrow M_{c \pm \epsilon}$ where $\epsilon > 0$ is sufficiently small.
		\item $Z_c$ : fixed point set lying on the level set $H^{-1}(c)$. That is, $Z_c = M^{S^1} \cap H^{-1}(c)$.
		\item $\R[\lambda]$ : the cohomology ring of $H^*(BS^1;\R)$, where $-\lambda$ is the Euler class of the universal Hopf bundle $ES^1 \rightarrow BS^1.$
		\item $\R[u] / \langle u^3 \rangle$ : the cohomology ring of $H^*(\C P^2 ; \R)$ where $u$ is the Poincar\'{e} dual to a line.
		\item $P_M(t)$ : the Poincar\'{e} polynomial of $M$.
	\end{itemize}
	\end{notation}

\section{Classification of Topological fixed point data : $\dim Z_{\min} = 0$}
\label{secClassificationOfTopologicalFixedPointDataDimZMin}

In this section, we classify all topological fixed point data in the case where $\dim Z_{\min} = 0$, i.e., $Z_{\min}$ is an isolated point.
We divide into three cases by the dimension of $Z_{\max}$. 

\subsection{Case I : $\dim Z_{\max} = 0$}
\label{ssecCaseIDimZMax}

Let $(M,\omega)$ be a six-dimensional closed monotone semifree Hamiltonian $S^1$-manifold with the balanced moment map $H$ where $H(Z_{\min}) = -3$ and $H(Z_{\max}) = 3$.
In this section, we classify all possible topological fixed point data of $(M,\omega)$ as well as we provide algebraic Fano examples for each cases. 

By Lemma \ref{lemma_possible_critical_values}, the only possible non-extremal critical values are $\{\pm1, 0\}$
and each non-extremal fixed component $Z$ satisfies either 
\[
	\begin{cases}
		\text{$Z$ = pt} \hspace{1cm} \text{if $H(Z) = \pm 1$, \quad or} \\
		\text{$\dim Z = 2$} \quad \text{if $H(Z) = 0$.}
	\end{cases}
\]
Moreover, since $H$ is a perfect Morse-Bott function, we can easily see that 
\[
	|Z_1| = |Z_{-1}|
\]
by the Poincar\'{e} duality.

By the equivariant Darboux theorem, the $S^1$-action near the minimum (maximum, resp.) is locally identified with the standard semifree $S^1$-action 
\[
	 t \cdot (z_1, z_2, z_3) = (tz_1, tz_2, tz_3) \quad \left(\text{$(t^{-1}z_1, t^{-1}z_2, t^{-1}z_3)$, resp.}\right)
\]
on $(\C^3, \omega_{\mathrm{std}})$. Also, the balanced moment map is written by
\[
	H(z_1, z_2, z_3) = \frac{1}{2}|z_1|^2 + \frac{1}{2}|z_2|^2 + \frac{1}{2}|z_3|^2 - 3 \quad \left( \frac{1}{2}|z_1|^2 + \frac{1}{2}|z_2|^2 + \frac{1}{2}|z_3|^2 +3, \text{resp}.\right)
\] 
Therefore, the level set $H^{-1}(-3 + \epsilon)$ near the minimum is homeomorphic to $S^5$ and hence the reduced space is
\[
	M_{-3 + \epsilon} = H^{-1}(-3 + \epsilon) / S^1 \cong S^5 / S^1 \cong \p^2.
\]
Note that the Euler class $e(P_{-3}^+)$ of the principal $S^1$-bundle $H^{-1}(-3+\epsilon) \rightarrow M_{-3 + \epsilon} \cong \p^2$ is $-u$ where $u$ is the generator of $H^*(\p^2 ; \Z)$.
Similarly, for the maximal fixed point $Z_{\max}$, we can apply the previous argument to show that the reduced space near the maximum is $M_{3 - \epsilon} \cong \p^2$ 
and the Euler class of $H^{-1}(3 - \epsilon) \rightarrow M_{3-\epsilon}$ is given by $e(P_3^{-}) = u$. \\

\subsubsection{${\mathrm{Crit} ~\mathring{H}} = \{0\}$}
\label{sssecMathrmCritMathringH}
~ \\

In this case, the reduced space $M_0$ is $\p^2$ with the reduced symplectic form $\omega_0$ where $[\omega_0] = c_1(TM_0) = 3u$. 

\begin{lemma}\label{lemma_connectedness_1_1}
	$Z_0$ is connected.
\end{lemma}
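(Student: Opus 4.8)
The plan is to rule out that $Z_0 = M^{S^1}\cap H^{-1}(0)$ has two or more connected components, exploiting the fact that the reduced space $M_0$ has already been identified with $\p^2$, whose second homology is cyclic and whose intersection form is positive on effective classes. First I would record the relevant structure: by Lemma~\ref{lemma_possible_critical_values} (and the discussion opening this subsection) every fixed component $Z$ with $H(Z)=0$ is a $2$-dimensional symplectic submanifold of $(M_0,\omega_0)$, embedded via $Z\hookrightarrow H^{-1}(0)\to M_0$, and $Z_0\neq\emptyset$ because $0$ is a critical value in the case under consideration. Write $Z_0 = Z_0^1\sqcup\cdots\sqcup Z_0^m$ with each $Z_0^i$ a connected fixed surface and $m\geq 1$; the goal is $m=1$.

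Second I would check that the images of the $Z_0^i$ in $M_0$ are pairwise disjoint. For $i\neq j$ we have $Z_0^i\cap Z_0^j=\emptyset$ in $M$, hence in $H^{-1}(0)$; since every point of $Z_0^i$ is a fixed point, its $S^1$-orbit is a single point, so the quotient map $H^{-1}(0)\to M_0$ sends $Z_0^i$ and $Z_0^j$ to disjoint subsets of $M_0$. Consequently the homological intersection numbers satisfy $[Z_0^i]\cdot[Z_0^j]=0$ in $H_*(M_0;\Z)$ for all $i\neq j$.

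Third I would compute the classes. Since $H_2(\p^2;\Z)\cong\Z\langle[\p^1]\rangle$, write $[Z_0^i]=d_i[\p^1]$ with $d_i\in\Z$; because $Z_0^i$ is symplectic in $(M_0,\omega_0)$ and $[\omega_0]=3u$, we get $3d_i=\langle 3u,[Z_0^i]\rangle=\int_{Z_0^i}\omega_0>0$, so $d_i\geq 1$. Then for $i\neq j$ this forces $[Z_0^i]\cdot[Z_0^j]=d_id_j\geq 1$, contradicting the previous step. Hence $m=1$, i.e.\ $Z_0$ is connected.

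I expect the whole argument to be short, and the only point requiring a little care is the disjointness of the images in $M_0$ in the second step; this is nonetheless immediate since the $Z_0^i$ are pointwise fixed. One can moreover pin down the class $[Z_0]$ by comparing the Euler classes $e(P_0^{\pm})$ (which coincide with $-u$ and $u$, the Euler classes near the minimum and maximum) through Lemma~\ref{lemma_Euler_class}, equivalently via continuity of the Duistermaat--Heckman function at $t=0$; but that refinement is not needed for connectedness.
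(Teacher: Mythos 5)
Your proof is correct and follows essentially the same route as the paper: decompose $Z_0$ into components, use disjointness to force $[Z_0^i]\cdot[Z_0^j]=0$, and contradict this with the positivity of symplectic area in $H^2(\p^2;\Z)\cong\Z u$, which makes the pairwise intersection numbers $d_id_j>0$. The only difference is cosmetic — you treat all components pairwise while the paper splits $Z_0$ into two disjoint pieces — and your extra care about disjointness of the images in $M_0$ is a harmless elaboration of a point the paper takes for granted.
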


\begin{proof}
	Recall  that $Z_0$ can be regarded as a symplectic submanifold of the reduced space $(M_0, \omega_0)$. If $Z_0$ is the disjoint union of two disjoint set $Z_0^1$ and $Z_0^2$, 
	then $[Z_0^1] \cdot [Z_0^2] = 0$. On the other hand, if we let $\mathrm{PD} (Z_0^1) = au$ and $\mathrm{PD}(Z_0^2) = bu$, then 
	\[
		\int_{Z_0^1} \omega_0 = 3a > 0, \quad \text{and} \quad \quad \int_{Z_0^2} \omega_0 = 3b > 0
	\]
	In particular both $a$ and $b$ are non-zero, and therefore $[Z_0^1] \cdot [Z_0^2] = ab \neq 0$ which leads to a contradiction.
\end{proof}

\begin{lemma}\label{lemma_Z0_1_1}
	$\mathrm{PD}(Z_0) = 2u$ and $Z_0 \cong S^2$ 
\end{lemma}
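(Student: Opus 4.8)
The plan is to pin down the coefficient $a$ defined by $\mathrm{PD}(Z_0) = au \in H^2(M_0;\Z) = H^2(\p^2;\Z)$ by a single application of the ABBV localization theorem to $c_1^{S^1}(TM)$, and then read off the diffeomorphism type of $Z_0$ from the adjunction formula. Under the standing assumptions ($H(Z_{\min}) = -3$, $H(Z_{\max}) = 3$, and $\mathrm{Crit}\,\mathring{H} = \{0\}$), Lemma~\ref{lemma_possible_critical_values} together with Lemma~\ref{lemma_connectedness_1_1} forces the fixed set of $M$ to consist of exactly three components: $Z_{\min}$ (an isolated point with weights $(1,1,1)$), $Z_{\max}$ (an isolated point with weights $(-1,-1,-1)$), and the connected surface $Z_0$ at level $0$. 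At $Z_0$, semifreeness together with $\dim Z_0 = 2$ and $\Sigma(Z_0) = -H(Z_0) = 0$ (Corollary~\ref{corollary_sum_weights_moment_value}) forces the tangential weights to be $(0,1,-1)$, so the normal bundle of $Z_0$ in $M$ splits $S^1$-equivariantly as $L^+ \oplus L^-$, with $S^1$ acting on $L^{\pm}$ with weight $\pm 1$.

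Next I would apply localization (Theorem~\ref{theorem_localization}) to $\alpha := c_1^{S^1}(TM) \in H^2_{S^1}(M)$. Since $\deg\alpha = 2 < 6 = \dim M$, Corollary~\ref{corollary : localization degree 2n} gives $\int_M\alpha = 0$. For the two isolated fixed points, Proposition~\ref{proposition_equivariant_Chern_class} gives $\alpha|_{Z_{\min}} = 3\lambda$ with $e^{S^1}(Z_{\min}) = \lambda^3$, and $\alpha|_{Z_{\max}} = -3\lambda$ with $e^{S^1}(Z_{\max}) = -\lambda^3$, so these contribute $3/\lambda^2$ and $3/\lambda^2$. For $Z_0$, Proposition~\ref{proposition_equivariant_Chern_class} and $\Sigma(Z_0) = 0$ give $\alpha|_{Z_0} = c_1(TM)|_{Z_0} \in H^2(Z_0)$, while $e^{S^1}(Z_0) = (c_1(L^+) + \lambda)(c_1(L^-) - \lambda)$, whose $H^0(Z_0)$-component is $-\lambda^2$. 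Because $Z_0$ is two-dimensional, after multiplying by $c_1(TM)|_{Z_0}$ and integrating over $Z_0$ only this $H^0$-component survives, so the $Z_0$-contribution equals $-\lambda^{-2}\langle c_1(TM), [Z_0]\rangle$. By monotonicity $c_1(TM) = [\omega]$, and $\langle[\omega],[Z_0]\rangle = \int_{Z_0}\omega = \int_{Z_0}\omega_0 = \langle 3u, [Z_0]\rangle = 3a$, the middle equality because the quotient map $H^{-1}(0) \to M_0$ restricts to a symplectic embedding of $Z_0$ into $(M_0,\omega_0) = (\p^2, 3u)$. Summing the three contributions, $0 = \int_M\alpha = (3 + 3 - 3a)/\lambda^2$, hence $a = 2$, i.e. $\mathrm{PD}(Z_0) = 2u$.

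Finally, $Z_0$ is a connected embedded symplectic surface in $(M_0,\omega_0) = (\p^2, 3u)$ representing $2u$, so the adjunction formula for embedded symplectic surfaces in a symplectic four-manifold gives
\[
	2g(Z_0) - 2 = [Z_0] \cdot [Z_0] - \langle c_1(TM_0), [Z_0]\rangle = (2u)^2 - \langle 3u, 2u\rangle = 4 - 6 = -2,
\]
so $g(Z_0) = 0$ and $Z_0 \cong S^2$.

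The one point needing care is the $Z_0$-term of the localization sum: one must verify that after integrating over the surface $Z_0$ the unknown Euler classes $c_1(L^{\pm})$ drop out, so that the contribution depends only on $\langle c_1(TM),[Z_0]\rangle$, and one must correctly identify the intrinsic symplectic area $\int_{Z_0}\omega$ with the pairing $\langle[\omega_0],[Z_0]\rangle = 3a$ computed in the reduced space. Everything else — the enumeration of fixed components, the tangential weights, and the closing adjunction computation — is immediate from the results already assembled.
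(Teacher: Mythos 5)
Your proof is correct, but it reaches $\mathrm{PD}(Z_0)=2u$ by a genuinely different route than the paper. The paper's proof is a one-line application of the Guillemin--Sternberg jump formula (Lemma~\ref{lemma_Euler_class}): since $e(P_0^-)=e(P_{-3}^+)=-u$ and $e(P_0^+)=e(P_3^-)=u$, the relation $e(P_0^+)=e(P_0^-)+\mathrm{PD}(Z_0)$ immediately gives $\mathrm{PD}(Z_0)=2u$. You instead apply ABBV localization to $c_1^{S^1}(TM)$, obtaining $0=(3+3-3a)/\lambda^2$ and hence $a=2$; your handling of the $Z_0$-term (only the $H^0(Z_0)$-component $-\lambda^2$ of $e^{S^1}(Z_0)$ survives after integrating the degree-two class $c_1(TM)|_{Z_0}$ over the surface, so the unknown Chern numbers of $L^{\pm}$ drop out) is sound, and is in fact exactly the computation the paper carries out in the proof of Lemma~\ref{lemma_1_3_k} for the case $\mathrm{Crit}\,\mathring{H}=\{-1,0,1\}$. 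The trade-off: the paper's argument leans on the Euler class data $e(P_{\pm 3}^{\mp})=\mp u$ already recorded and avoids localization entirely, while yours avoids Lemma~\ref{lemma_Euler_class} at the cost of a weight analysis at $Z_0$ and the identification $\int_{Z_0}\omega=\langle[\omega_0],[Z_0]\rangle$ (both of which you justify correctly). The closing adjunction computation is identical to the paper's, and you correctly invoke Lemma~\ref{lemma_connectedness_1_1} so that the genus formula applies to a single component.
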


\begin{proof}
	It follows from Lemma \ref{lemma_Euler_class} that
	\[
		e(P_{0}^+) = e(P_{0}^-) + \mathrm{PD}(Z_0)
	\]
	where $e(P_{0}^+)  = e(P_{3 - \epsilon}^+) = u$ and $e(P_{0}^-)  = e(P_{-3 + \epsilon}^+) = -u$, and hence $\mathrm{PD}(Z_0) = 2u$. On the other hand, 
	the adjunction formula\footnote{Any embedded symplectic surface $\Sigma$ in a closed symplectic four manifold $(X,\omega)$ can be made into an image of some embedded
	$J$-holomorphic curve for some $\omega$-compatible almost complex structure. Therefore, we may apply the adjunction formula to $(M, \Sigma, J)$.}  for the symplectic surface 
	$Z_0$ gives
	\[
		[Z_0] \cdot [Z_0] + 2 - 2g = \langle c_1(TM_0), Z_0 \rangle = 6, \quad \quad \text{$g$ : genus of $Z_0$.}
	\]	
	So, we get $g = 0$.
\end{proof}

Summing up, we have the following. 

\begin{theorem}\label{theorem_Sec6_1_1}
	Let $(M,\omega)$ be a six-dimensional closed monotone semifree Hamiltonian $S^1$-manifold such that $\mathrm{Crit} H = \{ 3, 0, -3\}$. Then the only possible 
	topological fixed point data is given by 	
		\begin{table}[H]
			\begin{tabular}{|c|c|c|c|c|c|}
				\hline
				    & $(M_0, [\omega_0])$ & $Z_{-3}$ &  $Z_0$ & $Z_3$ \\ \hline \hline
				    {\bf (I-1)} & $(\p^2, 3u)$ & $\mathrm{pt}$ &  $Z_0 \cong S^2$, $[Z_0] = 2u$ & $\mathrm{pt}$\\ \hline    
			\end{tabular}
		\end{table}
	\noindent In particular, we have $b_2(M) = 1$ and $b_{\mathrm{odd}}(M) = 0$.
\end{theorem}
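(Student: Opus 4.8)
The plan is to assemble the already-established pieces into the single TFD claimed in the table, and then read off the Betti numbers from the Morse-Bott stratification. First I would invoke the structural facts set up in this subsection: by the equivariant Darboux theorem the action near $Z_{\min}$ (resp. $Z_{\max}$) is the standard semifree action on $(\C^3,\omega_{\mathrm{std}})$, hence $M_{-3+\epsilon}\cong\p^2$ with $e(P_{-3}^+)=-u$ and $M_{3-\epsilon}\cong\p^2$ with $e(P_3^-)=u$. Since $\mathrm{Crit}\,\mathring H=\{0\}$, there is a single interior slice at level $0$, the reduced space $M_0$ carries the monotone form with $[\omega_0]=c_1(TM_0)=3u$ by Proposition \ref{proposition_monotonicity_preserved_under_reduction}, and by Proposition \ref{proposition_topology_reduced_space} the birational transition at level $0$ is a blow-up/blow-down. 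With no other interior critical value, $M_0$ is forced to be $\p^2$ itself.

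Next I would pin down $Z_0$ as a symplectic submanifold of $(M_0,\omega_0)=(\p^2,3u)$. Lemma \ref{lemma_connectedness_1_1} (connectedness, from the positivity of $\int_{Z_0^i}\omega_0$ together with $[Z_0^1]\cdot[Z_0^2]=0$ for a disjoint union, which is impossible in $\p^2$) shows $Z_0$ is a single component. Then Lemma \ref{lemma_Z0_1_1} applies Lemma \ref{lemma_Euler_class}, $e(P_0^+)=e(P_0^-)+\mathrm{PD}(Z_0)$, with $e(P_0^-)=e(P_{-3}^+)=-u$ and $e(P_0^+)=e(P_3^-)=u$, to get $\mathrm{PD}(Z_0)=2u$; the adjunction formula in $M_0$ then forces genus $0$, i.e.\ $Z_0\cong S^2$. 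By Lemma \ref{lemma_Euler_condition} the Euler data $e(P_c^\pm)$ are determined by the rest, so this completes the determination of the TFD, and we record it in a \texttt{table}; it remains only to verify this TFD is the \emph{only} possibility, which is exactly what Lemmas \ref{lemma_connectedness_1_1} and \ref{lemma_Z0_1_1} have shown given the hypothesis $\mathrm{Crit}\,H=\{3,0,-3\}$.

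Finally I would compute the Betti numbers. Since $H$ is a perfect Morse-Bott function by Corollary \ref{corollary_properties_moment_map}, $P_M(t)$ is the sum of the Morse-Bott contributions $t^{\mathrm{ind}(Z)}P_Z(t)$ over the three fixed components: $Z_{-3}=\mathrm{pt}$ with index $0$ contributes $1$; $Z_0\cong S^2$ with index $2$ contributes $t^2(1+t^2)=t^2+t^4$; $Z_3=\mathrm{pt}$ with index $6$ contributes $t^6$. Hence $P_M(t)=1+t^2+t^4+t^6$, giving $b_2(M)=1$ and $b_{\mathrm{odd}}(M)=0$. There is no serious obstacle here: every ingredient — the local normal forms at the extrema, the Duistermaat–Heckman/Euler-class variation, the adjunction formula, and Morse-Bott perfectness — has already been proved or cited in the excerpt, and the only mild care needed is confirming that under $\mathrm{Crit}\,\mathring H=\{0\}$ the reduced space $M_0$ cannot be anything other than $\p^2$, which follows because a single blow-down from $M_{3-\epsilon}\cong\p^2$ along a (possibly empty, but here two-dimensional) center must return $\p^2$ when the center is $2$-dimensional — i.e.\ the transition is an isomorphism of the underlying $4$-manifold in this case.
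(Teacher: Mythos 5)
Your proposal is correct and follows essentially the same route as the paper: the extremal normal forms give $M_{\pm 3\mp\epsilon}\cong\p^2$ with Euler classes $\mp u$, connectedness of $Z_0$ follows from the intersection-positivity argument of Lemma \ref{lemma_connectedness_1_1}, the Euler-class variation of Lemma \ref{lemma_Euler_class} forces $\mathrm{PD}(Z_0)=2u$, and adjunction gives $Z_0\cong S^2$ exactly as in Lemma \ref{lemma_Z0_1_1}. The Betti-number computation via perfectness of the Morse--Bott function is also the intended (if unstated) argument in the paper.
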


\begin{example}[Fano variety of type {\bf (I-1)}]\cite[16th in the list in p. 215]{IP}\label{example_Sec6_1_1} 
    Let $Q$ be a smooth quadric in $\p^4$, also known as a co-adjoint orbit of $\mathrm{SO}(5)$, an example of algebraic Fano 3-fold with $b_2 = 1$.
    With respect to the $\mathrm{SO}(5)$-invariant K\"{a}hler form $\omega$ with $[\omega] = c_1(TQ)$, 
    the diagonal maximal torus $T$ of $\mathrm{SO}(5)$ acts on $Q$ in a Hamiltonian fashion and its moment map image in the dual Lie algebra $\frak{t}^*$ of $T$
    is described as follows. (See \cite{Li3}, \cite{McD2}, or \cite{Tol} for more details.)
    \begin{figure}[H]
		\scalebox{1}{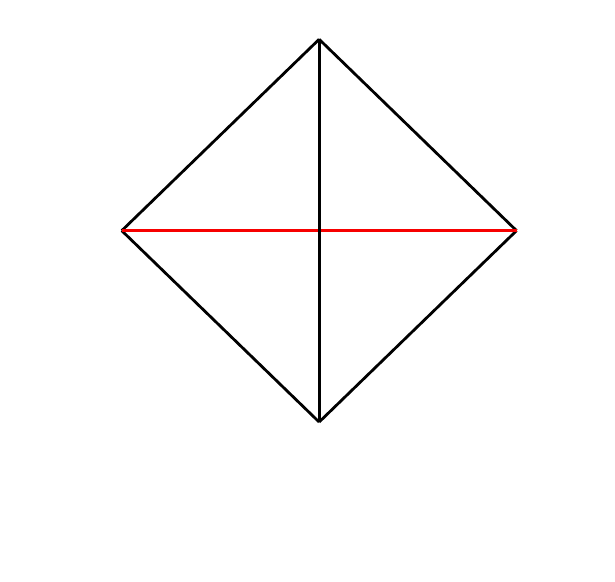}
		\caption{\label{figure_1_1} Moment map image of $Q$}
    \end{figure}
    \noindent 
    In this figure, each vertex (on the boundary of the image) corresponds to a fixed point and 
    each edge indicates an image of an invariant 2-sphere (called a 1-skeleton in \cite{GKM}). If we take a circle subgroup generated by $(0,1) \in \frak{t}$, then 
    the fixed point set is given by $\{ Z_{-3} = \mathrm{pt}, Z_0 \cong S^2, Z_3 = \mathrm{pt} \}$ where the image of $Z_0$ is colored by  red.
\end{example}
\vs{0.1cm}

\subsubsection{${\mathrm{Crit} \mathring{H}} = \{-1,1\}$}
\label{sssecMathrmCritMathringH11}
~\\

In this case, all fixed points in $Z_{-1}$ and $Z_1$ are isolated (see Table \ref{table_fixed}) and their Morse indices are two and four, respectively. Thus
the Poincar\'{e} polynomial $P_M$ of $M$ is given by 
\[
	P_M(t) = \sum b_i(M) t^i = 1 + |Z_{-1}|t^2 + |Z_{1}|t^4 + t^6.
\]
Let  $k = |Z_{-1}| = |Z_1| \in \Z_+$. For a sufficiently small $\epsilon >0$, the reduced space $M_{-1 + \epsilon}$
is diffeomorphic to the blow-up of $M_{-1-\epsilon} \cong M_{-3+\epsilon} \cong \p^2$ at $k$ generic points by Proposition \ref{proposition_topology_reduced_space}. 
Denote each classes of the exceptional divisors by 
\[
	E_1, \cdots, E_k \in H^2(M_{-1+\epsilon}; \Z).
\]
Since $e(P_{-1}^-) = -u$, Lemma \ref{lemma_Euler_class} implies
\[
	e(P_{-1}^+) = -u + E_1 + \cdots + E_k.
\]
Note that $M_0 \cong M_t$ for every $t \in (-1, 1)$ and therefore each reduced space $M_t$ can be identified with $M_0$ with the reduced symplectic form $\omega_t$
where $[\omega_t] \in H^2(M_0; \R)$. We also note that the set $\{u, E_1, \cdots, E_k\} \subset H^2(M_0; \Z)$ is an integral basis of $H^2(M_0; \Z)$ satisfying
\begin{equation}\label{equation_basis}
	\int_{M_0} u^2 = 1, \quad \int_{M_0} {E_i}^2 = -1, \quad \int_{M_0} u \cdot E_i = 0, \quad \text{and $\int_{M_0} E_i \cdot E_j = 0$}
\end{equation}
for every $0 \leq i,j \leq k$ with $i \neq j$. 

Now, let us compute the symplectic area of $(M_1, \omega_1)$. Using the Duistermaat-Heckman theorem \ref{theorem_DH}, we get 
\[
	\lim_{t \rightarrow 1^-} \int_{M_t} [\omega_t]^2 = \int_{M_0} \left([\omega_0] - e(P_{-1}^+) \right)^2 = \int_{M_0} \left( 4u - 2E_1 - \cdots - 2E_k \right)^2 = 16 - 4k.
\]	
On the other hand, the right limit of the symplectic area of $(M_t, \omega_t)$ at $t = 1$ is given by 
\[
	\lim_{t \rightarrow 1^+} \int_{M_t} [\omega_t]^2 = \lim_{t \rightarrow 1^+} \int_{\p^2} \left((3-t)u\right)^2 = \int_{\p^2} (2u)^2 = 4. 
\]
Since the Duistermaat-Heckman function is continuous, we have $k=3$ and therefore $\mathfrak{F}_{\mathrm{top}}(M)$ is given as follows.

\begin{theorem}\label{theorem_Sec6_1_2}
	Let $(M,\omega)$ be a six-dimensional closed monotone semifree Hamiltonian $S^1$-manifold such that $\mathrm{Crit} H = \{ 3, 1, -1, -3\}$. Then the only possible 
	topological fixed point data is given by 	
		\begin{table}[H]
			\begin{tabular}{|c|c|c|c|c|c|c|}
				\hline
				   & $(M_0, [\omega_0])$ & $Z_{-3}$ & $Z_{-1}$ & $Z_1$ & $Z_3$ \\ \hline \hline
				   {\bf (I-2)} & $(\p^2 \# 3 \overline{\p^2}, 3u - E_1 - E_2 - E_3)$ & {\em pt} & {\em 3 ~pts} & {\em 3 ~pts} & {\em pt} \\ \hline    
			\end{tabular}
		\end{table}
	\noindent In particular, we have $b_2(M) = 3$ and $b_{\mathrm{odd}}(M) = 0$.
\end{theorem}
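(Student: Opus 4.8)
The plan is to follow the same strategy already used for Theorem \ref{theorem_Sec6_1_1}: pin down the reduced space $M_0$, then use the Euler-class variation formula together with the Duistermaat--Heckman theorem to force the numerical invariants. First I would record the topology of the extremal slices exactly as in the discussion preceding this theorem: by the equivariant Darboux theorem the action near $Z_{\min}$ and $Z_{\max}$ is the standard semifree action on $(\C^3, \omega_{\mathrm{std}})$, so $M_{-3+\epsilon} \cong M_{3-\epsilon} \cong \p^2$ with Euler classes $e(P_{-3}^+) = -u$ and $e(P_3^-) = u$. Since $H$ is a perfect Morse--Bott function and every fixed point at levels $\pm 1$ is isolated of index $2$ and $4$ respectively (Table \ref{table_fixed}), Poincar\'e duality gives $|Z_{-1}| = |Z_1| =: k$, and the Poincar\'e polynomial is $P_M(t) = 1 + kt^2 + kt^4 + t^6$.

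Next I would identify $M_0$ and the Euler classes on either side of the critical value $-1$. By Proposition \ref{proposition_topology_reduced_space}, crossing the $k$ index-two fixed points at level $-1$ blows up $M_{-3+\epsilon} \cong \p^2$ at $k$ points, so $M_0 \cong \p^2 \# k\overline{\p^2}$, with exceptional classes $E_1, \dots, E_k$ and the standard intersection form \eqref{equation_basis}. Lemma \ref{lemma_Euler_class} gives $e(P_{-1}^+) = -u + E_1 + \cdots + E_k$, and since no further critical value lies in $(-1,1)$ this is also $e(P_{1}^-)$. Monotonicity of the reduced space (Proposition \ref{proposition_monotonicity_preserved_under_reduction}, valid at the critical level $0$ by Remark \ref{remark_monotone_reduction_criticalvalue}) forces $[\omega_0] = c_1(TM_0) = 3u - E_1 - \cdots - E_k$.

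The crux is a numerical constraint from the Duistermaat--Heckman theorem \ref{theorem_DH}. On the one hand, pushing $[\omega_t]$ from $t = -1^+$ up to $t = 1^-$ along the interval of regular values $(-1,1)$, where the Euler class is constantly $e(P_{-1}^+)$, gives
\[
	\lim_{t \to 1^-} \int_{M_t} [\omega_t]^2 = \int_{M_0} \big([\omega_0] - e(P_{-1}^+)\big)^2 = \int_{M_0} \big(4u - 2E_1 - \cdots - 2E_k\big)^2 = 16 - 4k.
\]
On the other hand, on the top slice $H^{-1}([1+\epsilon, 3])$ the reduced spaces are $\p^2$ with $[\omega_t] = (3-t)u$, so the right-hand limit at $t = 1$ is $\int_{\p^2}(2u)^2 = 4$. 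Since the Duistermaat--Heckman function is continuous (indeed piecewise polynomial), $16 - 4k = 4$, hence $k = 3$. This yields the unique topological fixed point data listed in the table, and reading off $P_M(t) = 1 + 3t^2 + 3t^4 + t^6$ gives $b_2(M) = 3$ and $b_{\mathrm{odd}}(M) = 0$. I expect the only subtlety is to be careful about which side of each critical value the various reduced spaces and Euler classes live on; once the bookkeeping of slices is set up correctly, the argument is forced.
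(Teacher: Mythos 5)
Your proposal is correct and follows essentially the same route as the paper: identify $M_{\pm 3 \mp \epsilon} \cong \p^2$ with Euler classes $\mp u$, deduce $M_0 \cong \p^2 \# k\overline{\p^2}$ with $e(P_{-1}^+) = -u + E_1 + \cdots + E_k$ via Lemma \ref{lemma_Euler_class}, and then use the continuity of the Duistermaat--Heckman function at $t=1$ to equate $16-4k$ with $4$ and conclude $k=3$. The only nitpick is that $0$ is a regular value here (since $\mathrm{Crit}\,H = \{\pm 3, \pm 1\}$), so Proposition \ref{proposition_monotonicity_preserved_under_reduction} applies directly without invoking Remark \ref{remark_monotone_reduction_criticalvalue}.
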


\begin{example}[Fano variety of type {\bf (I-2)}]\label{example_Sec6_1_2}\cite[No. 27 in the list in Section 12.4]{IP}
	We denote by $\omega_{\mathrm{FS}}$ the Fubini-Study form on $\p^1$ which is {\em normalized}, i.e., $\int_{\p^1} \omega_{\mathrm{FS}} = ~1.$
	Consider $(M,\omega) = (\p^1 \times \p^1 \times \p^1, 2\omega_{\mathrm{FS}} \oplus 2\omega_{\mathrm{FS}} \oplus 2\omega_{\mathrm{FS}})$ with the standard 
	Hamiltonian $T^3$-action 
	\[
    		(t_1, t_2, t_3) \cdot ([x_1, y_1], [x_2, y_2], [x_3, y_3]) = ([t_1x_1, y_1], [t_2x_2, y_2] , [t_3x_3, y_3]), \quad (t_1, t_2, t_3) \in T^3
	\] 
	with a moment map $H : M \rightarrow \frak{t}^* \cong \R^3$ given by 
	\[
		H(([x_1, y_1], [x_2, y_2], [x_3, y_3])) = \left( \frac{2|x_1|^2}{|x_1|^2+|y_1|^2}, \frac{2|x_2|^2}{|x_2|^2+|y_2|^2}, \frac{2|x_3|^2}{|x_3|^2+|y_3|^2} \right) - (1,1,1)
	\]
	so that the image of the moment map is pictorially described as follows. \vs{0.1cm}
	
	\begin{figure}[H]
		\scalebox{1}{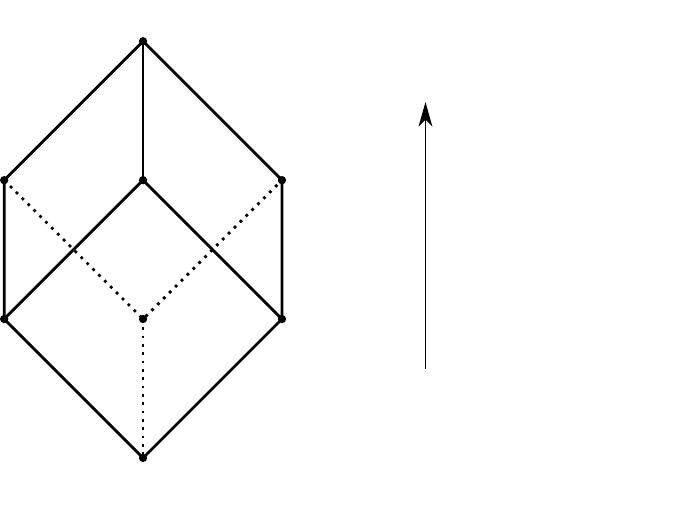}
		\caption{\label{figure_6_2}Moment graph of $\p^1 \times \p^1 \times \p^1$}
	\end{figure}
	\noindent 
	The diagonal subgroup $S^1$ of $T^3$ is generated by $\xi = (1,1,1)$ and the induced $S^1$-action has 
	the associated balanced moment map is given by $\mu = \langle H , \xi \rangle$.
	Then the $S^1$-action has the same topological data as in Theorem \ref{theorem_Sec6_1_2}.
\end{example}
\vs{0.1cm}

\subsubsection{${\mathrm{Crit} \mathring{H}} = \{-1,0,1\}$}
\label{sssecMathrmCritMathringH11}
~\\

Suppose that $|Z_{-1}| = |Z_1| = k \geq 1$ and that $Z_0$ has $r$ connected components. The Poincar\'{e} polynomial of $M$ is given by 
\[
	\begin{array}{ccl}
		P_M(t) & = & 1 + |Z_{-1}|t^2 + \left(P_{Z_0}(t) \right)t^2 + |Z_{1}|t^4 + t^6 \\
				& = & 1 + |Z_{-1}|t^2 + (r + st + rt^2) t^2 + |Z_{1}|t^4 + t^6 \\
				& = & 1 + (k+ r) t^2 + st^3 + (k + r) t^4 + t^6
	\end{array}
\]
where $s$ is the rank of $H^1(Z_0; \Z)$. 
In this case, the reduced space $M_{-1+\epsilon}$ is 
a $k$-times blow-up of $M_{-1-\epsilon} \cong \p^2$ with the exceptional classes $E_1, \cdots, E_k \in H^2(M_{-1+\epsilon}; \Z)$.

\begin{lemma}\label{lemma_1_3_k}
	Following the above notation, we have $k = 1$ and $\mathrm{Vol}(Z_0) = 4$. 
\end{lemma}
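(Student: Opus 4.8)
The plan is to pin down the reduced symplectic classes $[\omega_t]$ on each of the four intervals $(-3,-1)$, $(-1,0)$, $(0,1)$, $(1,3)$ of regular values of $H$ by repeatedly invoking the Duistermaat-Heckman theorem \ref{theorem_DH}, the Euler-class variation formula (Lemma \ref{lemma_Euler_class}), and the balanced identity $[\omega_0]=c_1(TM_0)$; the values $k=1$ and $\mathrm{Vol}(Z_0)=4$ will then drop out of comparing the two descriptions of the reduced space near the critical value $1$. As a first step I would record, exactly as in Section \ref{sssecMathrmCritMathringH} and the proof of Lemma \ref{lemma_Z0_1_1}: that $M_{-3+\epsilon}\cong\p^2$ with $e(P_{-3}^+)=-u$ and $M_{3-\epsilon}\cong\p^2$ with $e(P_3^-)=u$; that crossing $-1$ (which consists of $k$ isolated index-two fixed points) yields $M_{-1+\epsilon}\cong\p^2\#k\overline{\p^2}$ with exceptional classes $E_1,\dots,E_k$ and $e(P_{-1}^+)=-u+\sum_iE_i$ by Proposition \ref{proposition_topology_reduced_space} and Lemma \ref{lemma_Euler_class}; and that crossing $0$, whose fixed set $Z_0$ has signature $(2,2)$, leaves the diffeomorphism type of the reduced space unchanged. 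Hence $M_t\cong M_0\cong\p^2\#k\overline{\p^2}$ for all $t\in(-1,1)$, with intersection numbers as in \eqref{equation_basis} and with $[\omega_0]=c_1(TM_0)=3u-\sum_iE_i$ (Proposition \ref{proposition_monotonicity_preserved_under_reduction}, valid at $t=0$ by Remark \ref{remark_monotone_reduction_criticalvalue}). Writing $z:=\mathrm{PD}(Z_0)\in H^2(M_0;\Z)$, the argument of Lemma \ref{lemma_Z0_1_1} gives $e(P_0^+)=-u+\sum_iE_i+z$.

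Next I would integrate the Duistermaat-Heckman relation outward from $t=0$: this produces $[\omega_t]=(3+t)u$ on $(-3,-1)$, $[\omega_t]=(3+t)u-(1+t)\sum_iE_i$ on $(-1,0)$, $[\omega_t]=(3+t)u-(1+t)\sum_iE_i-t\,z$ on $(0,1)$, and $[\omega_t]=(3-t)u'$ on $(1,3)$, where $u'$ generates $H^2(M_{1+\epsilon};\Z)=H^2(\p^2;\Z)$. The key step is the crossing at $1$: since $Z_1$ consists of $k$ isolated co-index-two fixed points, $M_{1-\epsilon}$ is a $k$-fold blow-up $\phi:M_{1-\epsilon}\to M_{1+\epsilon}\cong\p^2$ (Proposition \ref{proposition_topology_reduced_space}), and as $\epsilon\to0$ the exceptional curves collapse to zero area, so $\lim_{t\to1^-}[\omega_t]=\phi^*\big(\lim_{t\to1^+}[\omega_t]\big)=2\,\phi^*u'$. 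Comparing with the formula on $(0,1)$ gives the identity $4u-2\sum_iE_i-z=2\,\phi^*u'$ in $H^2(M_0;\Z)$. From this I would extract: (i) $z\in 2H^2(M_0;\Z)$; (ii) $\big(4u-2\sum_iE_i-z\big)^2=4(\phi^*u')^2=4$; and (iii) pairing with $c_1(TM_0)=3u-\sum_iE_i$, using $\phi^*u'\cdot c_1(TM_0)=3$ (because $c_1(TM_0)=3\phi^*u'-\sum_iF_i$ for the exceptional classes $F_i$ of $\phi$, with $\phi^*u'\cdot F_i=0$) and $\mathrm{Vol}(Z_0)=\langle c_1(TM_0),z\rangle$, one gets $\mathrm{Vol}(Z_0)=6-2k$. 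Positivity of the reduced volume forces $k\le2$, so $k\in\{1,2\}$.

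The hard part will be ruling out $k=2$. If $k=2$ then $\mathrm{Vol}(Z_0)=2$; writing $z=au+b_1E_1+b_2E_2$ with $a,b_1,b_2$ even by (i) and $3a+b_1+b_2=\mathrm{Vol}(Z_0)=2$, identity (ii) becomes $(4-a)^2-(2+b_1)^2-(2+b_2)^2=4$, and applying $2(x^2+y^2)\ge(x+y)^2$ with $x=2+b_1$, $y=2+b_2$ (so $x+y=6-3a$) forces $7a^2-20a+12\le0$, hence $a=2$ and then $b_1=b_2=-2$, i.e.\ $z=2(u-E_1-E_2)$ and $z^2=-4$. On the other hand $Z_0$ is a disjoint union of embedded symplectic surfaces $Z_0^j$ in the del Pezzo surface $M_0$, each of volume $\langle c_1(TM_0),[Z_0^j]\rangle$, a positive integer and hence $\ge1$; since these volumes sum to $2$ there are at most two components, and the adjunction formula together with $c_1(TM_0)=[\omega_0]$ gives $[Z_0^j]^2=\mathrm{Vol}(Z_0^j)-2+2g_j\ge-1$, so $z^2=\sum_j[Z_0^j]^2\ge-2$, contradicting $z^2=-4$. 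Therefore $k=1$, and $\mathrm{Vol}(Z_0)=6-2k=4$.

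Everything before the elimination of the case $k=2$ is routine bookkeeping with the Duistermaat-Heckman theorem and Lemma \ref{lemma_Euler_class}; the genuine obstacle is that last step, where one has to bring in monotonicity (via $c_1(TM_0)=[\omega_0]$) and the adjunction formula to turn the purely numerical output $z^2=-4$ into a contradiction with the geometry of $Z_0$ as a symplectic submanifold. A secondary point to handle carefully is the justification that crossing $0$ does not change the diffeomorphism type of the reduced space and that $[\omega_0]$ is still balanced there; both follow from the signature-$(2,2)$ condition on $Z_0$, exactly as used implicitly in the proof of Lemma \ref{lemma_Z0_1_1}.
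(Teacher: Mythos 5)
Your proof is correct, but it reaches the two key numerical facts by a genuinely different route than the paper. The paper obtains the relation $2k+\mathrm{Vol}(Z_0)=6$ by applying the ABBV localization theorem \ref{theorem_localization} to $c_1^{S^1}(TM)$ and evaluating the contribution of each fixed component (the only nontrivial part being the computation of $\int_{Z}c_1^{S^1}(TM)|_Z/e_Z^{S^1}=-\mathrm{Vol}(Z)\lambda/\lambda^3$ for $Z\subset Z_0$); you instead extract the same relation from the Duistermaat--Heckman theorem \ref{theorem_DH} and Lemma \ref{lemma_Euler_class} by pairing the blow-down identity $4u-2\sum_iE_i-\mathrm{PD}(Z_0)=2\phi^*u'$ with $c_1(TM_0)$. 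The two computations are of comparable length, but yours yields the extra divisibility statement $\mathrm{PD}(Z_0)\in 2H^2(M_0;\Z)$ for free, which you then exploit in the $k=2$ case: combined with the quadratic identity and the elementary inequality $2(x^2+y^2)\ge(x+y)^2$, it pins down $\mathrm{PD}(Z_0)=2u-2E_1-2E_2$ directly, whereas the paper gets there by identifying the two vanishing cycles as $E_1,E_2$ via Lemma \ref{lemma_list_exceptional} and disjointness, forcing $b=c=-2$ from their areas, and then eliminating the spurious solution $a=6$ by positivity of $\int_{M_t}[\omega_t]^2$. Both arguments terminate with the same adjunction-formula contradiction \eqref{equation_adjunction}: a class of square $-4$ and symplectic area $2$ cannot be represented by a disjoint union of at most two embedded symplectic surfaces. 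One small point to make explicit: the identity $\lim_{t\to1^-}[\omega_t]=\phi^*\bigl(\lim_{t\to1^+}[\omega_t]\bigr)$ needs slightly more than the vanishing of the exceptional areas --- that only places the limit class in $\phi^*H^2(\p^2;\R)=\R\,\phi^*u'$; to fix the coefficient you should invoke continuity of the Duistermaat--Heckman function (giving square $4$) together with positivity, exactly the two facts the paper uses separately. With that remark inserted, the argument is complete.
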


\begin{proof}
	First, we apply Theorem \ref{theorem_localization} to the equivariant first Chern class $c_1^{S^1}(TM)$ : 
	\begin{equation}\label{equation_k_1_vol_4}
		\begin{array}{ccl}\vs{0.1cm}
				   		0 & = & \ds \int_M c_1^{S^1}(TM) \\ \vs{0.1cm}
							& = & \ds \sum_{Z \subset M^{S^1}} \int_Z \frac{c_1^{S^1}(TM)|_Z}{e_Z^{S^1}} \\ \vs{0.1cm}
							& = & \ds  \frac{3\lambda}{\lambda^3} - \frac{k\lambda}{\lambda^3} + 
							\sum_{Z \subset Z_0} \int_{Z} \frac{c_1^{S^1}(TM)|_{Z}}{e_{Z}^{S^1}} - \frac{k\lambda}{\lambda^3} + \frac{3\lambda}{\lambda^3}. \\
		\end{array}
	\end{equation}
	Moreover, it follows from Corollary \ref{corollary_sum_weights_moment_value} and 
	Proposition \ref{proposition_monotonicity_preserved_under_reduction} that 
	\[
		\begin{cases}
			c_1^{S^1}(TM)|_{Z} = c_1(TM)|_{Z} + \lambda \cdot \underbrace{\sum (Z)}_{\text{sum of weights} = 0}  = c_1(TM)|_{Z} \quad \text{for $Z\subset Z_0$, ~and } \\
			[\omega_0] = c_1(TM_0) = 3u - E_1 - \cdots - E_k.  
		\end{cases}
	\]
	
	Let $q$ be the positive generator of $H^2(Z; \Z)$ (so that $q^2 = 0$). Since the action is semifree, 
	the equivariant first Chern classes of the positive and negative normal bundle of $Z$ in $M$ can be written by $\lambda + mq$ and $-\lambda + nq$ for some $m, n \in \Z$, respectively.
	Thus 
	\[
		\begin{array}{ccl}\vs{0.3cm}
			\ds \int_{Z} \frac{c_1^{S^1}(TM)|_{Z}}{e_{Z}^{S^1}} & = & \ds \int_{Z} \frac{c_1(TM)|_{Z}}{(\lambda + mq) (-\lambda + nq)} 
														= \int_{Z} \frac{c_1(TM)|_{Z}}{-\lambda^2 - (m-n)\lambda q}  \\ \vs{0.3cm}
												& = & \ds \int_{Z} \frac{c_1(TM)|_{Z}\cdot (\lambda - (m-n)q)}{-\lambda(\lambda + (m-n)q)(\lambda - (m-n)q)}  \\ \vs{0.3cm}
												& = & \ds \int_{Z} \frac{c_1(TM)|_{Z}\cdot \lambda}{-\lambda^3}  \\ \vs{0.3cm}
												& = &\ds -\frac{\langle c_1(TM), [Z] \rangle \lambda}{\lambda^3} \\ \vs{0.3cm}
												& = & \ds -\frac{\mathrm{Vol}(Z)\lambda}{\lambda^3} ~(\text{since $c_1(TM) = [\omega]$}).
		\end{array}
	\]
	From \eqref{equation_k_1_vol_4}, we get $6- 2k - \mathrm{Vol}(Z_0) = 0$ so that there are only two possibilities
	\[
		\left(k, \mathrm{Vol}(Z_0) \right) = \begin{cases} (1,4), \hs{0.2cm} \text{or} \\ (2,2) \end{cases}
	\]
	
	It remains to show that $(k, \mathrm{Vol}(Z_0)) \neq (2,2)$.
	Suppose that $k=2$. Then $M_0 \cong \p \# 2 \overline{\p}^2$ with two exceptional classes $E_1, E_2 \in H^2(M_0; \Z)$. 
	Let $\mathrm{PD}(Z_0) = au + bE_1 + cE_2 \in H^2(M_0; \Z)$. 
	Then Lemma \ref{lemma_Euler_class} implies that
	\[
		e(P_0^+) = e(P_0^-) + \mathrm{PD}(Z_0) = (a-1)u + (b+1)E_1 + (c+1)E_2.
	\]
	Since $c_1(TM_0) = [\omega_0] = 3u - E_1 - E_2$, the Duistermaat-Heckman theorem \ref{theorem_DH} implies that
	\[
		[\omega_t] = [\omega_0] - t e(P_0^+) = (3 - t(a-1))u + (-1 - t(b+1))E_1 + (-1 - t(c+1))E_2, \quad \quad t \in [0,1).
	\]
	By Proposition \ref{proposition_topology_reduced_space}, two symplectic blow-downs occur simultaneously on $M_1$.
	We denote by $C_1$ and $C_2$ the corresponding two exceptional divisors on $M_0$. 	
	Since the only possible exceptional classes in $H^2(M_0; \Z)$ is $E_1, E_2$, and $u-E_1-E_2$ by Lemma \ref{lemma_list_exceptional} 
	and $C_1$ and $C_2$ are disjoint, we have $\mathrm{PD}(C_1) = E_1$ and $\mathrm{PD}(C_2) = E_2$.
	As the symplectic areas of $C_1$ and $C_2$ go to zero as $t \rightarrow 1$, we get 
	\[
		\langle [\omega_1], [C_1] \rangle = -2-b = 0, ~~\langle [\omega_1], [C_2] \rangle = -2-c = 0, 
	\]
	i.e., $b = c = -2$. 
	
	To compute $a$, consider a symplectic volume of $M_1$. By the Duistermaat-Heckman theorem \ref{theorem_DH}, we have 
	\[
		\lim_{t\rightarrow 1^+} \int_{M_t} [\omega_t]^2 = \int_{\p^2} (2u)^2 = 4, \hs{0.5cm}
				\lim_{t\rightarrow 1^-} \int_{M_t} [\omega_t]^2 = \int_{M_0} (4-a)^2u^2 = (4-a)^2.
	\]
	Thus we obtain $4 = (4-a)^2$ and hence $a=2$ or $6$. However, if $a=6$, then the symplectic area of $M_t$ is given by
	\[
		\int_{M_t} [\omega_t]^2 = \int_{M_0} \left( (3-5t)^2 u^2 + (t - 1)^2 E_1^2 + (t - 1)^2 E_2^2 \right) = (5t - 3)^2 - 2(t-1)^2, \quad t \in (0,1)
	\]
	which is negative for some $t$ (e.g. $t = \frac{3}{5}$). Thus we get $a=2$ and
	$\mathrm{PD}(Z_0) = 2u - 2E_1 - 2E_2$, and therefore the symplectic area of $Z_0$ is given by $\langle [\omega_0], [Z_0] \rangle = \langle c_1(TM_0), [Z_0] \rangle = 2$. 
	
	Consequently, the number of connected components of $Z_0$ is at most two (since the symplectic area of each component should be a positive integer.)
	On the other hand, the adjunction formula 
	\begin{equation}\label{equation_adjunction}
		[Z_0] \cdot [Z_0] + \sum_i (2 - 2g_i)  = \langle c_1(TM_0), [Z_0] \rangle
	\end{equation}
	implies that $-4 + \sum_{i} (2-2g_i) = 2$ where the sum is taken over all connected components of $Z_0$ and $g_i$ denotes the genus of the component of $Z_0$ index by $i$.
	This equality implies that $Z_0$ should contain at least three spheres which contradicts that the number of component of $Z_0$ is at most two.
	This finishes the proof.
\end{proof}

\begin{theorem}\label{theorem_Sec6_1_3}
	Let $(M,\omega)$ be a six-dimensional closed monotone semifree Hamiltonian $S^1$-manifold such that $\mathrm{Crit} H = \{ 3, 1, 0, -1, -3\}$. 
	Then the topological fixed point data is given by 	
		\begin{table}[H]
			\begin{tabular}{|c|c|c|c|c|c|c|c|}
				\hline
				    & $(M_0, [\omega_0])$ & $Z_{-3}$ & $Z_{-1}$ & $Z_0$ & $Z_1$ & $Z_{3}$\\ \hline \hline
				    {\bf (I-3)} & $(\p^2 \# \overline{\p^2}, 3u - E_1)$ & {\em pt} & {\em pt} &  \makecell{ $Z_0 = Z_0^1 ~\dot \cup ~ Z_0^2$ \\
				    $Z_0^1 \cong Z_0^2 \cong S^2$ \\ $[Z_0^1] = [Z_0^2] = u - E_1$} & {\em pt} & {\em pt} \\ \hline    
			\end{tabular}
		\end{table}
	\noindent In particular, we have $b_2(M) = 3$, $b_{\mathrm{odd}}(M) = 0$, and $\langle c_1(TM)^3, [M] \rangle = 52$.
\end{theorem}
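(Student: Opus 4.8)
The plan is to bootstrap from Lemma \ref{lemma_1_3_k}, which already tells us $k = |Z_{-1}| = |Z_1| = 1$ and $\mathrm{Vol}(Z_0) = 4$. Hence the reduced space is $M_0 \cong \p^2 \# \overline{\p^2}$ with a single exceptional class $E_1$, and $[\omega_0] = c_1(TM_0) = 3u - E_1$ by Proposition \ref{proposition_monotonicity_preserved_under_reduction}. Since the only critical value in $(-3,3)$ other than $\pm 1, 0$ is absent, and since the crossing at level $0$ is a blow-up along the divisor $Z_0$ (Proposition \ref{proposition_topology_reduced_space}), which does not alter the diffeomorphism type, I would first fix an identification of every reduced space $M_t$, $t\in(-3,3)$, with $\p^2 \# \overline{\p^2}$, so that all Euler classes and homology classes below live in a single $H^2(\p^2 \# \overline{\p^2};\Z) = \Z\langle u, E_1\rangle$.

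Next I would compute the Euler classes flanking level $0$. Near the minimum $M_{-3+\epsilon}\cong\p^2$ with $e(P_{-3}^+) = -u$, and crossing the index-two point $Z_{-1}$ gives $e(P_0^-) = e(P_{-1}^+) = -u + E_1$ by Lemma \ref{lemma_Euler_class}. Symmetrically, near the maximum $e(P_3^-) = u$, hence $e(P_1^+) = u$; crossing the co-index-two point $Z_1$ — equivalently, applying Lemma \ref{lemma_Euler_class} to the reversed circle action (moment map $-H$), under which Euler classes of the associated $S^1$-bundles change sign — exhibits $M_{1-\epsilon}$ as a blow-up of $\p^2$ with some exceptional class $E''$ and yields $e(P_0^+) = e(P_1^-) = u - E''$, where $E''$, being represented by an embedded symplectic $(-1)$-sphere, must be one of $E_1$ or $u - E_1$ by Lemma \ref{lemma_list_exceptional}. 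Then Lemma \ref{lemma_Euler_class} at level $0$ gives $\mathrm{PD}(Z_0) = e(P_0^+) - e(P_0^-) = 2u - E_1 - E''$. Matching symplectic areas, $\langle[\omega_0], \mathrm{PD}(Z_0)\rangle = \mathrm{Vol}(Z_0) = 4$, forces $E'' = E_1$, so $\mathrm{PD}(Z_0) = 2u - 2E_1$.

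To pin down the topology of $Z_0$, I would apply the adjunction formula. Using $[Z_0]^2 = 0$ and $\langle c_1(TM_0), [Z_0]\rangle = 4$ one gets $\sum_i (2 - 2g_i) = 4$; were $Z_0$ connected this would force genus $-1$, so $Z_0$ is disconnected. Each component has positive integral symplectic area summing to $4$, so there are at most four of them; combining $\sum_i (1 - g_i) = 2$ with a short case check over $H^2(\p^2\#\overline{\p^2};\Z)$ (adjunction plus positivity of area rules out any admissible embedded symplectic torus) leaves exactly $Z_0 = Z_0^1 \dot\cup Z_0^2$ with both $Z_0^i \cong S^2$. Finally, writing $[Z_0^1] = au + bE_1$ and imposing $[Z_0^1]\cdot[Z_0^2] = 0$ together with the two sphere-adjunction equations and positivity of areas pins down $[Z_0^1] = [Z_0^2] = u - E_1$, which is the entry {\bf (I-3)}.

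For the numerical claims: $H$ is a perfect Morse--Bott function with $\mathrm{ind}(Z_{-1}) = 2$, $\mathrm{ind}(Z_0) = 2$ (and $H^1(Z_0) = 0$), $\mathrm{ind}(Z_1) = 4$, so $P_M(t) = 1 + 3t^2 + 3t^4 + t^6$, giving $b_2(M) = 3$ and $b_{\mathrm{odd}}(M) = 0$. For the Chern number I would use $\langle c_1(TM)^3, [M]\rangle = \int_M \omega^3 = 3\int_{-3}^{3}\mathbf{DH}(t)\,dt$: Theorem \ref{theorem_DH} and the Euler classes above determine $[\omega_t]$, hence $\mathbf{DH}(t) = \int_{M_t}\omega_t^2$, on each of the four intervals $(-3,-1), (-1,0), (0,1), (1,3)$, and a direct integration gives $\int_{-3}^{3}\mathbf{DH}(t)\,dt = \tfrac{52}{3}$, so $\langle c_1(TM)^3, [M]\rangle = 52$. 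The step I expect to be the main obstacle is the Euler-class bookkeeping across the co-index-two level together with checking that the exceptional class $E''$ coming from the blow-down near $Z_1$ is \emph{the same} class $E_1$ inside the single identified surface $\p^2\#\overline{\p^2}$ — this is precisely where the constraint $\mathrm{Vol}(Z_0) = 4$ from Lemma \ref{lemma_1_3_k} is decisive; the subsequent analysis of $Z_0$ is a finite, if slightly fiddly, case check.
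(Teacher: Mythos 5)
Your proposal is correct and, for the determination of the topological fixed point data itself, follows essentially the same route as the paper: Lemma \ref{lemma_1_3_k} gives $|Z_{\pm 1}|=1$ and $\mathrm{Vol}(Z_0)=4$, and then Lemma \ref{lemma_Euler_class} plus the Duistermaat--Heckman theorem and the adjunction formula pin down $\mathrm{PD}(Z_0)=2u-2E_1$ and the two-sphere decomposition. The bookkeeping differs slightly: you propagate Euler classes from both extrema and read off $\mathrm{PD}(Z_0)=e(P_0^+)-e(P_0^-)$ directly, using $\mathrm{Vol}(Z_0)=4$ to identify the blow-down class at level $1$; the paper instead writes $\mathrm{PD}(Z_0)=au+bE_1$, gets $b=-2$ from the vanishing of $\langle[\omega_1],E_1\rangle$, and gets $a=2$ from $\langle e(P_0^+)^2,[M_0]\rangle=\langle e(P_1^-)^2,[M_0]\rangle=0$ together with positivity of $\langle[\omega_0],[Z_0]\rangle$. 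These are equivalent. One small slip: in $\p^2\#\overline{\p^2}$ the class $u-E_1$ has self-intersection $0$, so it is not an exceptional class and Lemma \ref{lemma_list_exceptional} leaves $E_1$ as the \emph{only} candidate for $E''$; your volume check is therefore redundant rather than decisive, but harmless. Where you genuinely diverge is the Chern number: the paper applies the ABBV localization theorem \ref{theorem_localization} to $c_1^{S^1}(TM)^3$ (getting $27-1+0-1+27=52$), whereas you integrate the Duistermaat--Heckman function, using $\langle c_1(TM)^3,[M]\rangle=\int_M\omega^3=3\int_{-3}^{3}\mathbf{DH}(t)\,dt$. I checked your computation: the four pieces contribute $\tfrac{8}{3}+6+6+\tfrac{8}{3}=\tfrac{52}{3}$, so this also yields $52$. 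The DH route is a nice cross-check since it consumes the already-computed classes $[\omega_t]$, while the localization route is more robust in the other cases of the paper where the DH function is harder to assemble; either is acceptable here.
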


\begin{proof}
	Since $|Z_{-1}|=1$ by Lemma \ref{lemma_1_3_k}, we have $M_{-1 + \epsilon} \cong M_0 \cong \p^2 \# \overline{\p}^2$. 
	Let $\mathrm{PD}(Z_0) = au + bE_1 \in H^2(M_0)$. Since $e(P_{-1}^-) = -u$ and $e(P_{-1}^+) = -u + E_1$, we have 
    	\begin{equation}\label{equation_ep0}
    	   	e(P_0^+) = (a-1)u + (b+1)E_1 \quad \quad \text{(by Lemma \ref{lemma_Euler_class}.)}
	\end{equation}
	By the Duistermaat-Heckman theorem \ref{theorem_DH}, we get	   
	\[
		[\omega_t] = [\omega_0] - e(P_0^+) t = (3 - t(a-1))u + (-1 - t(b+1))E_1 \quad \text{for} \quad t \in [0,1)
	\]
	where $[\omega_0] = c_1(TM_0) = 3u - E_1$. 

	 On the level $t=1$, the symplectic blow-down occurs by Proposition \ref{proposition_topology_reduced_space}. We denote by $C$ the corresponding divisor where
	$\mathrm{PD}(C) = E_1$ by Lemma \ref{lemma_list_exceptional}. 
	Since the symplectic area of $C$ goes to zero as $t \rightarrow 1$, we get 
	\[
		0 = \langle [\omega_1], E_1 \rangle = -2-b \quad \Rightarrow \quad b = -2.
	\]

	To compute $a$, consider the equation 
           \[
        		(a-1)^2 - (b+1)^2 = \langle e(P_0^+)^2, [M_0] \rangle = \langle e(P_1^-)^2, [M_0] \rangle = \int_{M_0} (u-E_1)^2 = 0.
	\]
	where the first equality comes from \eqref{equation_ep0} and the last inequality is obtained from the fact that 
	\[
		e(P_1^-) = e(P_1^+) - E_1 = e(P_3^-) - E_1 = u - E_1 \quad \text{(by Lemma \ref{lemma_Euler_class}.)}
	\]
	This induces $(a-1)^2 - (b+1)^2 = (a-1)^2 - 1 = 0$ (since $b = -2$) 
	so that $a=0$ or $2$. 
	Moreover, since 
	\[
		\langle [\omega_0], [Z_0] \rangle = \langle 3u-E_1, [Z_0] \rangle = 3a + b = 3a - 2 > 0,
	\] we have $a=2$ and therefore $\mathrm{PD}(Z_0) = 2u - 2E_1$.

	Now, we apply the adjunction formula to $Z_0 = \sqcup Z_0^i$. Then 
	\[
		\underbrace{\langle c_1(TM_0) , [Z_0] \rangle}_{ = \langle [\omega_0], [Z_0] \rangle}
		 = \underbrace{\int_{M_0} (3u - E_1) \cdot (2u - 2E_1)}_{ = 4}  = \underbrace{\int_{M_0}(2u - 2E_1)^2}_{ = [Z_0] \cdot [Z_0] = 0} + 
		\sum_{i}\underbrace{(2-2g_i)}_{\langle c_1(TZ_0^i), [Z_0^i] \rangle} = \sum_{i}(2-2g_i), 
	\]
	where $g_i$ is the genus of $Z_0^i$. By direct computation, we may check that each $\mathrm{Z_0^i}$ is of the form $pu - pE_1$ for some $p \in \Z$ (since $Z_0^i$'s are disjoint)
	and we see that $Z_0$ is the disjoint union of two spheres $Z_0^1$ and $Z_0^2$ with 
	$\mathrm{PD}(Z_0^1) = \mathrm{PD}(Z_0^2) = u - E_1$. Using the perfectness of the moment map, it is straightforward that $b_2(M) = 3$ and $b_{\mathrm{odd}}(M) = 0$.

	For a computation of the Chern number, we use the localization theorem \ref{theorem_localization} and have that
	\begin{equation}\label{equation_CN_1_3}
		\begin{array}{ccl}\vs{0.1cm}
			\ds \int_M c_1^{S^1}(TM)^3 & = &  \ds \sum_{Z \subset M^{S^1}} \int_Z \frac{c_1^{S^1}(TM)|_Z}{e_Z^{S^1}} \\ \vs{0.1cm}
							& = & \ds  \frac{(3\lambda)^3}{\lambda^3} - \frac{(\lambda)^3}{\lambda^3} + 
							\sum_{Z \subset Z_0} \int_{Z} \frac{\left(c_1^{S^1}(TM)|_{Z}\right)^3}{e_{Z}^{S^1}} + \frac{(-\lambda)^3}{\lambda^3} - \frac{(-3\lambda)^3}{\lambda^3}. \\ \vs{0.1cm}	
		\end{array}
	\end{equation}
	Since $c_1^{S^1}(TM)|_Z = c_1(TM)|_Z$ by Proposition \ref{proposition_equivariant_Chern_class}, the term 
	$\left(c_1^{S^1}(TM)|_{Z}\right)^3$ vanishes so that 
	$\langle c_1(TM)^3, [M] \rangle = 52.$ Thus the result follows.

\end{proof}

\begin{example}[Fano variety of type {\bf (I-3)}]\label{example_Sec6_1_3}\cite[No. 31 in the list in Section 12.4]{IP}
	Let $M$ be a projective toric variety whose moment polytope 
	$\mcal{P}$ is given on the left of Figure \ref{figure_1_3}. Note that $\mcal{P}$ is a Delzant polytope so that our variety $M$ is smooth. 
	Moreover, it is easy to check that $\mcal{P}$ is reflexive (with the unique interior point $(1,1,1)$) which guarantees that $M$ is Fano. 
	
	Now, let $T$ be the 3-dimensional compact subtorus of $(\C^*)^3$ acting on $M$ and $S^1$ be the circle subgroup $S^1$ of $T$ generated by $(1,1,1) \in \frak{t}$. 
	Then the $S^1$-fixed point set consists of four points corresponding to vertices $(0,0,0), (0,0,2), (1,1,2),$ $(3,3,0)$ of $\mcal{P}$ and two spheres corresponding to
	edges $\overline{(0,1,2) (0,3,0)}$ and $\overline{(1,0,2) (3,0,0)}$ (colored by red in Figure \ref{figure_1_3}.) If we denote by $\mu : M \rightarrow \mcal{P}$ the moment map
	for the $T$-action, then the balanced moment map for the $S^1$-action is written by
	\[
		H(p) := \langle \mu(p), (1,1,1) \rangle - 3, \quad p \in M
	\]  
	Then it is straightforward to check that the topological fixed point data for the $S^1$-action is exactly the same as in Theorem \ref{theorem_Sec6_1_3}. 
	
	It is sometimes useful to use so-called a GKM-graph\footnote{See \cite{GKM}, \cite{GZ}, or \cite{CK1} for the precise definition of a GKM-graph and its properties.} 
	to describe higher dimensional Delzant polytopes. 
	In the context of toric variety, a GKM-graph is a ``well-projected'' image (onto a lower dimensional Euclidean space) of the one-skeleton of a Delzant polytope. 
	For example, the right one of Figure \ref{figure_1_3} is the projection image $\mcal{G}$ of $\mcal{P}$ onto the plane $x + y - 2z = 0$ with the coordinate system 
	whose axes are spanned by $(1,-1,0)$ and $(1,1,1)$, respectively. 
	(Note that 
	we can also think of $\mcal{G}$ as a moment map image of the Hamiltonian $T^2$-action generated by $(1,-1,0)$ and $(1,1,1)$ in $\frak{t}$. Then a moment map for the $S^1$-action
	is just a projection of $\mcal{G}$ onto the $y$-axis.)
	 
	\begin{figure}[h]
		\scalebox{1}{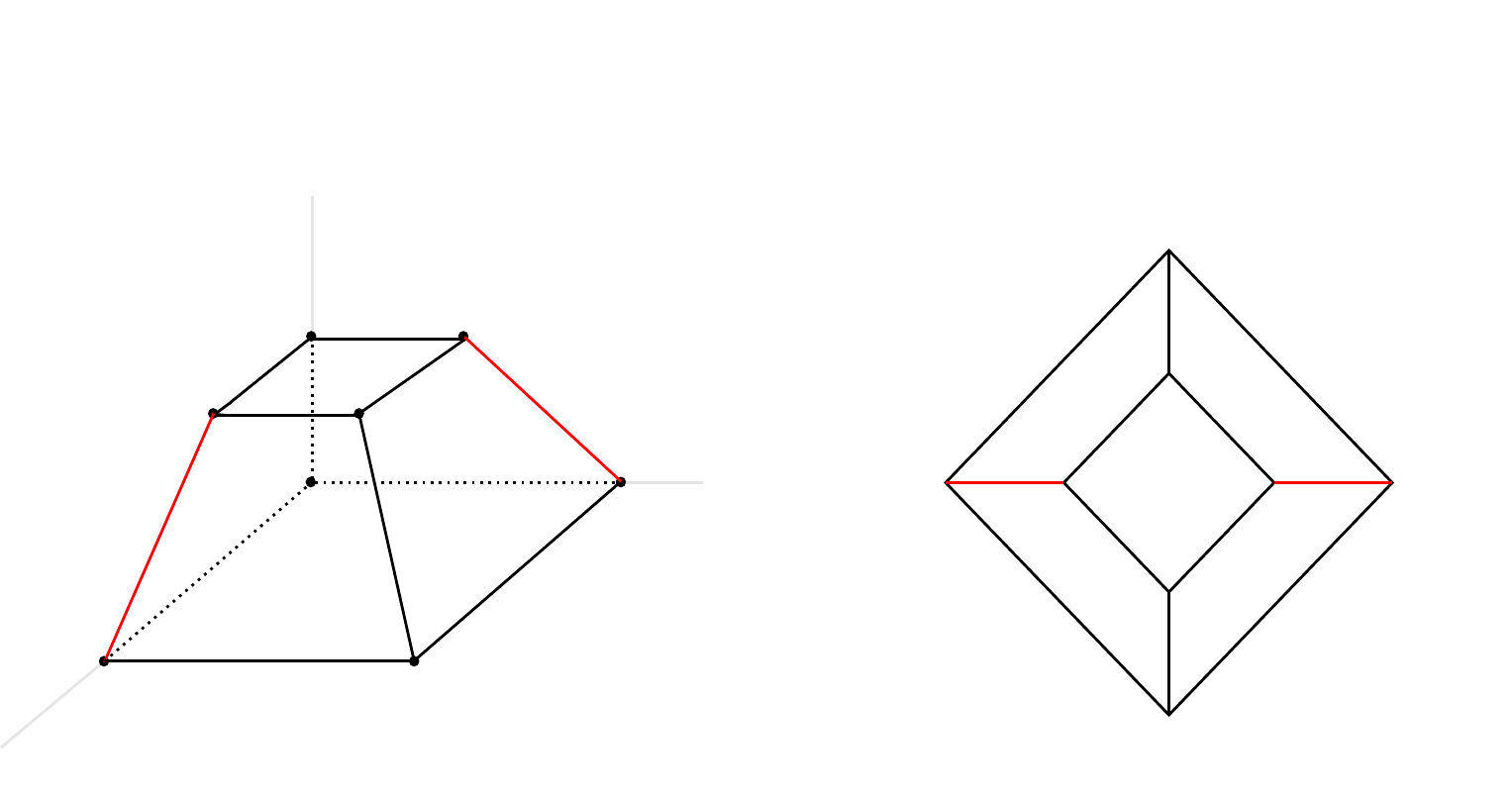}
		\caption{\label{figure_1_3} Moment graph of $\p(\mcal{O} \oplus \mcal{O}(1,1))$}
	\end{figure}

\end{example}

\subsection{Case II : $\dim Z_{\max} = 2$}
\label{ssecCaseIIDimZMax2}

In this section, we provide the classification of topological fixed point data for the case : $H(Z_{\min}) = -3$ and $H(Z_{\max}) = 2$, which is the case where
 $Z_{\max} \cong S^2$ and $Z_{\min} = \mathrm{point}$, see Lemma \ref{lemma_possible_critical_values}.

We start with two well-known facts about the number of index-two and four fixed points and the volume of the maximal fixed component.   
First, recall that $\mathrm{Crit}~ \mathring{H} \subseteq \{ 0, \pm 1\}$ and each non-extremal fixed component $Z$ satisfies
\[
	\begin{cases}
		\text{$Z$ = pt} \hspace{1cm} \text{if $H(Z) = \pm 1$, \quad or} \\
		\text{$\dim Z = 2$} \quad \text{if $H(Z) = 0$.}
	\end{cases}
\]
Since $H$ is perfect Morse-Bott, the Poincar\'{e} polynomial is given by 
\[
	P_M(t) = \sum b_i(M) t^i = 1 + |Z_{-1}|t^2 + \underbrace{P_{Z_0}(t)}_{ = r + st + rt^2} t^2 + |Z_{1}|t^4 + (1 + t^2) t^4
\]
where $r$ denotes the number of connected components of $Z_0$ and $s = \mathrm{rk} ~H_1(Z_0; \Z)$. In particular, 
the Poincar\'{e} duality implies that 
\begin{equation}\label{equation_plus_one}
	|Z_1| + 1= |Z_{-1}|,
\end{equation}
and therefore we get $Z_{-1} \neq \emptyset$. So, the set of interior critical values of $H$ is one of the followings : 
\[
	\mathrm{Crit}\mathring{H} = \{-1\}, \quad \{ -1,1\},\quad  \{ -1,0\}, \quad \text{or} \quad  \{-1,0,1\}.
\]

Second, we can compute the symplectic volume $\langle [\omega], [Z_{\max}] \rangle$ of $Z_{\max} \cong S^2$ as follows. 
Note that the reduced space near $Z_{\max}$ is an $S^2$-bundle over $S^2$ and it is well-known that there are two diffeomorphism types of 
$S^2$-bundles over $S^2$, namely a trivial bundle $S^2 \times S^2$ or a Hirzebruch surface denoted by $E_{S^2}$.

When $M_{2 - \epsilon}$ is a trivial bundle,  let $x$ and $y$ in $H^2(M_{2 - \epsilon} ;\Z)$ be the dual classes of the fiber $S^2$ and the base $S^2$ respectively so that 
\begin{equation}\label{equation_basis_trivial}
	\langle xy, [M_{2 - \epsilon}] \rangle = 1, \quad \langle y^2, [M_{2 - \epsilon}] \rangle = \langle x^2, [M_{2 - \epsilon}] \rangle = 0.
\end{equation}
Similarly, when $M_{2 - \epsilon} \cong E_{S^2}$, let $x$ and $y$ be the dual classes of the fiber $S^2$ and the base respectively which satisfy
\begin{equation}\label{equation_basis_twist}
	\langle xy, [M_{2 - \epsilon}] \rangle = 1, \quad \langle y^2, [M_{2 - \epsilon}] \rangle = -1, \quad \langle x^2, [M_{2 - \epsilon}] \rangle = 0.
\end{equation}
In either case, we have the following.

\begin{lemma}\cite[Lemma 6, 7]{Li2}\label{lemma_volume}
	Let $b_{\max}$ be the first Chern number of the normal bundle of $Z_{\max}$. Then 
	\[
		\langle e(P_2^-)^2, [M_{2-\epsilon}] \rangle = -b_{\max}.
	\]
	Also, $b_{\max}$ is even if and only if $M_{2-\epsilon} \cong S^2 \times S^2$. Moreover, if $b_{\max} = 2k$ or $2k+1$, then
	\[
		e(P_2^-) = -kx + y.
	\]
\end{lemma}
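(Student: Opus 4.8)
The plan is to make the reduced space $M_{2-\epsilon}$ just below the maximum, together with the circle bundle $P_2^-$, completely explicit, using that $Z_{\max}\cong S^2$ is a fixed surface of index $4$. The first step is to read off the $S^1$-representation on the normal bundle $\nu$ of $Z_{\max}$ in $M$. By Corollary~\ref{corollary_properties_moment_map} the zero-weight subspace of $T_zM$ at a point $z\in Z_{\max}$ is exactly $T_zZ_{\max}$, so the two remaining weights of the tangential representation are nonzero; semifreeness forces them into $\{\pm 1\}$, and since $\mathrm{ind}(Z_{\max})=4$ both equal $-1$ (compatibly with $H(Z_{\max})=2$ and Corollary~\ref{corollary_sum_weights_moment_value}). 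Thus $\nu$ is a rank-two complex bundle over $S^2$ carrying the scalar weight-$(-1)$ action of $S^1$, and since a rank-two complex bundle over $S^2$ is determined by its first Chern number I may take $\nu\cong\mathcal{O}\oplus\mathcal{O}(b_{\max})$ with $b_{\max}=\langle c_1(\nu),[Z_{\max}]\rangle$.

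Next I would identify the reduced space. Since $H$ is Morse--Bott with maximum $Z_{\max}$, for $\epsilon>0$ so small that $(2-\epsilon,2]$ contains no other critical value, $H^{-1}([2-\epsilon,2])$ is $S^1$-equivariantly diffeomorphic to the unit disk bundle of $\nu$, so $H^{-1}(2-\epsilon)\cong S(\nu)$ and $M_{2-\epsilon}=S(\nu)/S^1\cong\pp(\nu)$, an $S^2$-bundle over $S^2$. As $\pp(\mathcal{O}\oplus\mathcal{O}(b_{\max}))$ is diffeomorphic to $S^2\times S^2$ when $b_{\max}$ is even and to $E_{S^2}$ when $b_{\max}$ is odd --- for instance because the intersection form in the basis given by a section and a fibre has determinant $-1$ and is even precisely when $b_{\max}$ is even --- the assertion that $b_{\max}$ is even iff $M_{2-\epsilon}\cong S^2\times S^2$ follows.

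For the Euler class I would observe that, under the identification above, $S(\nu)$ is the unit circle bundle of the tautological line bundle $\mathcal{O}_{\pp(\nu)}(-1)$ and the residual circle rotates its fibres with weight $-1$, i.e.\ oppositely to the standard Hopf action; hence $e(P_2^-)=-c_1\big(\mathcal{O}_{\pp(\nu)}(-1)\big)=c_1\big(\mathcal{O}_{\pp(\nu)}(1)\big)=:\xi$, with the orientation conventions of Section~\ref{secEquivariantCohomology}. Working in $H^*(\pp(\nu);\Z)$, let $F$ be the fibre class and $S_0$ the section corresponding to the $\mathcal{O}$-summand, so $S_0^2=b_{\max}$, $F\cdot S_0=1$, $F^2=0$. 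Then $\xi\cdot F=1$ (since $\mathcal{O}_{\pp(\nu)}(1)$ restricts to $\mathcal{O}_{\p^1}(1)$ on a fibre) and $\xi\cdot S_0=0$ (the tautological sub-line bundle is trivial along $S_0$), so $\xi=-b_{\max}F+S_0$ and therefore
\[
\langle e(P_2^-)^2,[M_{2-\epsilon}]\rangle=\langle\xi^2,[M_{2-\epsilon}]\rangle=-2b_{\max}(F\cdot S_0)+S_0^2=-b_{\max},
\]
which is the first displayed identity.

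Finally I would rewrite $\xi$ in the basis of \eqref{equation_basis_trivial} or \eqref{equation_basis_twist}. The restriction of $P_2^-$ to a fibre $\p^1$ is the Hopf circle bundle, so $e(P_2^-)$ pairs to $1$ with a fibre; writing $e(P_2^-)=\alpha x+\beta y$ this forces $\beta=1$, and then $\langle e(P_2^-)^2\rangle=2\alpha+\langle y^2\rangle=-b_{\max}$ together with $\langle y^2\rangle\in\{0,-1\}$ gives $\alpha=-k$ whenever $b_{\max}\in\{2k,2k+1\}$, i.e.\ $e(P_2^-)=-kx+y$. The part that needs the most care is the sign bookkeeping: it is the weight being $-1$ (not $+1$) that makes $e(P_2^-)$ equal to $\xi$ rather than $-\xi$, and in the twisted case one must also respect the convention of \eqref{equation_basis_twist} that $y$ is dual to the section of self-intersection $-1$; keeping these consistent is exactly what yields $-kx+y$ rather than its negative.
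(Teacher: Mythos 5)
Your proof is correct: the identification of $M_{2-\epsilon}$ with $\pp(\nu)$, the computation $e(P_2^-)=c_1(\mathcal{O}_{\pp(\nu)}(1))=-b_{\max}F+S_0$, and the translation into the bases \eqref{equation_basis_trivial} and \eqref{equation_basis_twist} are all carried out with the right sign conventions (consistent with the paper's normalization $e(P_{-3}^+)=-u$, $e(P_3^-)=u$ near isolated extrema). The paper itself gives no proof, citing \cite[Lemma 6, 7]{Li2}; your argument is the standard derivation behind that cited result.
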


Using Lemma \ref{lemma_volume}, we obtain the following.

\begin{corollary}\label{corollary_volume}
	Let $(M,\omega)$ be a six-dimensional closed semifree Hamiltonian $S^1$-manifold. Suppose that $[\omega] = c_1(TM)$. If the maximal fixed component $Z_{\max}$ is diffeomorphic to 
	$S^2$ and $b_{\max} \in \Z$ is the first Chern number of the normal bundle of $Z_{\max}$, then 
	\[
		\int_{Z_{\max}} \omega = 2 + b_{\max} = 2 - \langle e(P_2^-)^2, [M_{2-\epsilon}] \rangle.
	\]
\end{corollary}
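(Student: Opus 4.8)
The plan is to read off the first equality from the adjunction-type formula for the first equivariant-free Chern class restricted to $Z_{\max}$, and to obtain the second equality directly from Lemma~\ref{lemma_volume}.

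First I would use the hypothesis $[\omega]=c_1(TM)$ to rewrite the quantity of interest as a Chern number: $\int_{Z_{\max}}\omega=\langle[\omega],[Z_{\max}]\rangle=\langle c_1(TM),[Z_{\max}]\rangle$. Next, since $Z_{\max}$ is a fixed component it is a symplectic submanifold of $(M,\omega)$ by Corollary~\ref{corollary_properties_moment_map}, so choosing an $S^1$-invariant $\omega$-compatible almost complex structure $J$ preserving $TZ_{\max}$ yields a $J$-complex splitting $TM|_{Z_{\max}}=TZ_{\max}\oplus\nu_{Z_{\max}}$. Whitney additivity of the first Chern class then gives $c_1(TM)|_{Z_{\max}}=c_1(TZ_{\max})+c_1(\nu_{Z_{\max}})$.

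Evaluating on $[Z_{\max}]$, the first term contributes $\chi(Z_{\max})=\chi(S^2)=2$ because $Z_{\max}\cong S^2$, and the second term contributes $b_{\max}$ by the very definition of $b_{\max}$ as the first Chern number of the normal bundle. Hence $\int_{Z_{\max}}\omega=2+b_{\max}$, which is the first asserted equality. For the second equality, Lemma~\ref{lemma_volume} gives $\langle e(P_2^-)^2,[M_{2-\epsilon}]\rangle=-b_{\max}$, so $2+b_{\max}=2-\langle e(P_2^-)^2,[M_{2-\epsilon}]\rangle$, completing the argument.

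There is essentially no serious obstacle here; the only point requiring a word of care is that the normal-bundle splitting used above is genuinely complex-linear, which is exactly what the symplectic-submanifold property of fixed components (together with the contractibility of the space of invariant compatible almost complex structures) provides. Everything else is the adjunction formula for an embedded symplectic sphere plus a substitution from Lemma~\ref{lemma_volume}.
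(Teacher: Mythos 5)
Your proof is correct and follows essentially the same route as the paper: the paper's own (very terse) argument likewise reduces to the identity $\langle c_1(TM),[Z_{\max}]\rangle = 2+b_{\max}$ via the splitting $TM|_{Z_{\max}}=TZ_{\max}\oplus\nu_{Z_{\max}}$ together with Lemma~\ref{lemma_volume}. You have simply spelled out the details that the paper leaves implicit.
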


\begin{proof}
	The proof is straightforward from the fact that 
	\[
		\langle c_1(TM), [Z_{\max}] \rangle = 2 + b_{\max}
	\]
	Lemma \ref{lemma_volume}.
\end{proof}

Now, we are ready to classify topological fixed point data for the case where $Z_{\max} \cong S^2$ and $Z_{\min} = \mathrm{point}.$
As we mentioned above, there are four possibilities:
\[
	\mathrm{Crit}\mathring{H} = \{-1\}, \quad \{ -1,1\},\quad  \{ -1,0\}, \quad \text{or} \quad  \{-1,0,1\}.
\]

\subsubsection{${\mathrm{Crit} \mathring{H}} = \{-1\}$}
\label{ssecMathrmCritMathringH1}
~\\

In this case, $Z_{-1}$ consists of a single point since $M_0 \cong M_{2 - \epsilon}$ is an $S^2$-bundle over $S^2$, which is in particular 
diffeomorphic to $\p^2 \# \overline{\p}^2$ by Proposition \ref{proposition_topology_reduced_space}. 

\begin{theorem}\label{theorem_Sec6_2_1}
	There is no six-dimensional 
	closed monotone semifree Hamiltonian $S^1$-manifold such that $\mathrm{Crit} H = \{2, -1, -3\}$.
\end{theorem}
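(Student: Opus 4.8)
The plan is to run everything through the Euler class of the symplectic reduction and then collide it with the parity constraint of Lemma \ref{lemma_volume}. First I would fix the combinatorics of the fixed point set. By hypothesis $\mathrm{Crit}\,\mathring H=\{-1\}$, so $Z_1=\emptyset$, and the relation $|Z_1|+1=|Z_{-1}|$ from \eqref{equation_plus_one} forces $Z_{-1}$ to be a single point; by Table \ref{table_fixed} this point has Morse index $2$. Near the minimum we have $M_{-3+\epsilon}\cong\p^2$ with $e(P_{-3}^+)=-u$, and since the Euler class is constant on any interval of regular values, $e(P_{-1}^-)=-u$ as well.

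Next I would cross the level $-1$. Since $Z_{-1}$ consists of one index-two point, Proposition \ref{proposition_topology_reduced_space} gives that $M_{-1+\epsilon}$ is the one-point blow-up $\p^2\#\overline{\p^2}$, with exceptional class $E_1$ satisfying $\int_{M_0}u^2=1$, $\int_{M_0}E_1^2=-1$, $\int_{M_0}u\cdot E_1=0$; and Lemma \ref{lemma_Euler_class} (the blow-down map pulls $u$ back to $u$) yields $e(P_{-1}^+)=-u+E_1$. Because $\mathrm{Crit}\,H=\{2,-1,-3\}$ there is no critical value in $(-1,2)$, so both the reduced space and its Euler class stay constant there; in particular $M_{2-\epsilon}\cong M_0\cong\p^2\#\overline{\p^2}$ and $e(P_2^-)=-u+E_1$. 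Consequently
\[
	\langle e(P_2^-)^2,[M_{2-\epsilon}]\rangle=\langle(-u+E_1)^2,[M_0]\rangle=\int_{M_0}u^2+\int_{M_0}E_1^2=1-1=0.
\]

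Finally I would invoke Lemma \ref{lemma_volume}: it identifies $\langle e(P_2^-)^2,[M_{2-\epsilon}]\rangle$ with $-b_{\max}$, where $b_{\max}$ is the first Chern number of the normal bundle of $Z_{\max}\cong S^2$, and it states that $b_{\max}$ is even precisely when $M_{2-\epsilon}\cong S^2\times S^2$. The computation above gives $b_{\max}=0$, which is even, hence $M_{2-\epsilon}\cong S^2\times S^2$; but $M_{2-\epsilon}\cong\p^2\#\overline{\p^2}$, and these two $S^2$-bundles over $S^2$ have non-isomorphic intersection forms. This contradiction shows no such $(M,\omega)$ exists. (Equivalently, one could compare $e(P_2^-)=-u+E_1$ directly with the normal form $e(P_2^-)=-kx+y$ of Lemma \ref{lemma_volume}, getting $k=1$ and $k=0$ simultaneously.) There is no real obstacle in this argument; the only thing needing care is the bookkeeping of how $e(P_c^\pm)$ transforms under the blow-down map in Lemma \ref{lemma_Euler_class} together with the fact that the Euler class is locally constant away from critical values of $H$.
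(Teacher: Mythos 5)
Your argument is correct, but it reaches the contradiction by a different mechanism than the paper. The paper's proof starts from the same data --- $M_{2-\epsilon}\cong M_0\cong\p^2\#\overline{\p^2}$ and $e(P_{-1}^+)=-u+E_1$ --- but then works with the fiber class $C$ of the $S^2$-bundle $M_{2-\epsilon}\to Z_{\max}$: since $[C]\cdot[C]=0$ one has $\mathrm{PD}(C)=au\pm aE_1$ for some $a\neq 0$, and the Duistermaat--Heckman formula $[\omega_t]=(3+t)u-(1+t)E_1$ gives $\lim_{t\to 2}\langle[\omega_t],[C]\rangle=(5\pm3)a\neq0$, even though the fiber must collapse at the maximum. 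You instead square the Euler class, read off $b_{\max}=0$ from Lemma \ref{lemma_volume}, and collide the parity criterion ($b_{\max}$ even if and only if $M_{2-\epsilon}\cong S^2\times S^2$) with the odd intersection form of $\p^2\#\overline{\p^2}$. Both routes rest on the same input; yours outsources the geometry of the collapse to Lemma \ref{lemma_volume} and yields a clean numerical/parity obstruction, while the paper's stays self-contained (only Duistermaat--Heckman plus $[C]^2=0$) and exhibits the failure directly on the vanishing cycle. One minor caveat: your parenthetical alternative (``$k=1$ and $k=0$ simultaneously'') is not quite what the comparison with the normal form $e(P_2^-)=-kx+y$ produces --- with the identification $x=u-E_1$, $y=E_1$ one finds $e(P_2^-)=-x$, which fails to match $-kx+y$ because of the $y$-coefficient, not because of two conflicting values of $k$ --- but this aside plays no role in your main argument, which is complete as written.
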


\begin{proof}
	Recall that $M_{2 - \epsilon}$ is an $S^2$-bundle over $Z_{\max} \cong S^2$.
	If we denote by $C$ a fiber of the bundle, then $[C] \cdot [C] = 0$ (by the local triviality of a fiber bundle), which implies that
	$\mathrm{PD}(C) = au \pm aE$ for some nonzero $a \in \Z$.
	 Furthermore, since 
	 \[
	 	[\omega_t] = [\omega_0] - te(P_{-1}^+) = 3u - E_1 - (-u + E_1)t = (3+t)u - (1 + t)E_1,  \quad t \in (-1, 2), 
	\] and $C$ is a vanishing cycle at $t=2$, we get
	\[
		\begin{array}{ccl} \vs{0.1cm}
			0 & = & \ds \lim_{t\rightarrow 2} \langle [\omega_t], [C] \rangle \\ \vs{0.1cm}
					& = & (5u - 3E)\cdot (au \pm aE) \\ \vs{0.1cm}
					& = & (5 \pm 3)a \neq 0
		\end{array}
	\]
	which leads to a contradiction. Therefore no such manifold exists.
\end{proof}
\vs{0.1cm}

\subsubsection{${\mathrm{Crit} \mathring{H}} = \{-1,1\}$}
\label{ssecMathrmCritMathringH11}
~\\

Suppose that $Z_1$ consists of $k$ points (so that $|Z_{-1}| = k+1$ by \eqref{equation_plus_one}.)

\begin{lemma}\label{lemma_2_2_number}
	$k=2$ is the only possible value of $k$.
\end{lemma}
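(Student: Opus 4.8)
The plan is to pin down $k$ using two independent constraints: the Duistermaat--Heckman volume polynomial just below the level $1$ gives $k\le 2$, and the way the Euler class of the principal circle bundle changes when crossing the co-index-two level $1$ rules out $k=1$.

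\textbf{Setting up reduced spaces and Euler classes.} First I would record the reduced data along the moment map. As in \S\ref{ssecCaseIDimZMax}, near the minimum $M_{-3+\epsilon}\cong\p^2$ with $e(P_{-3}^+)=-u$. Since $Z_{-1}$ consists of $k+1$ index-two fixed points, Proposition \ref{proposition_topology_reduced_space} gives $M_0\cong\p^2\#(k+1)\overline{\p^2}$ with exceptional classes $E_1,\dots,E_{k+1}$, and Lemma \ref{lemma_Euler_class} gives $e(P_1^-)=e(P_{-1}^+)=-u+E_1+\cdots+E_{k+1}$ (the Euler class is constant on the interval $(-1,1)$ of regular values). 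By Proposition \ref{proposition_monotonicity_preserved_under_reduction}, $[\omega_0]=c_1(TM_0)=3u-E_1-\cdots-E_{k+1}$, so Theorem \ref{theorem_DH} yields $[\omega_t]=(3+t)u-(1+t)(E_1+\cdots+E_{k+1})$ for $t\in(-1,1)$.

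\textbf{The bound $k\le 2$.} By continuity of the Duistermaat--Heckman function, ${\bf DH}(1)=\lim_{t\to 1^-}\int_{M_0}[\omega_t]^2=16-4(k+1)=12-4k$; since $M_1$ is a closed symplectic four-manifold (Proposition \ref{proposition_topology_reduced_space}), ${\bf DH}(1)=\int_{M_1}\omega_1^2>0$, so $k\le 2$. Because $1\in\mathrm{Crit}\,\mathring H$ forces $Z_1\ne\emptyset$, we also have $k\ge 1$, hence $k\in\{1,2\}$.

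\textbf{Excluding $k=1$.} This is the step I expect to require the most care, since it is all Euler-class bookkeeping. Assume $k=1$, so $M_0\cong\p^2\#2\overline{\p^2}$ and $[\omega_1]=4u-2E_1-2E_2$. At $t=1$ exactly one symplectic blow-down occurs (Proposition \ref{proposition_topology_reduced_space}); its exceptional $(-1)$-sphere $C_1\subset M_0$ is a vanishing cycle, so $\langle[\omega_1],[C_1]\rangle=0$. By Lemma \ref{lemma_list_exceptional} the only exceptional classes in $M_0$ are $E_1$, $E_2$, $u-E_1-E_2$, whose symplectic areas with respect to $[\omega_1]$ are $2$, $2$, and $0$ respectively, so $\mathrm{PD}(C_1)=u-E_1-E_2$. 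Applying Lemma \ref{lemma_Euler_class} across $c=1$ (an index-two crossing for the reversed $S^1$-action), with $\psi\colon M_0\to M_{1+\epsilon}$ the blow-down, gives $e(P_1^-)=\psi^*(e(P_1^+))-\mathrm{PD}(C_1)$, hence $\psi^*(e(P_1^+))=e(P_1^-)+\mathrm{PD}(C_1)=(-u+E_1+E_2)+(u-E_1-E_2)=0$; since $\psi^*$ is injective, $e(P_1^+)=0$. But the Euler class is constant on $(1,2)$, so $e(P_1^+)=e(P_2^-)$, and Lemma \ref{lemma_volume} writes $e(P_2^-)=-mx+y$ with $y$ the (nonzero) base class of the $S^2$-bundle $M_{2-\epsilon}\to Z_{\max}$, so $e(P_2^-)\ne 0$ --- a contradiction. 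Therefore $k=2$. As a cross-check, localizing $\int_M c_1^{S^1}(TM)=0$ (Theorem \ref{theorem_localization}) over the fixed components yields $b_{\max}=4-2k$ for the normal Chern number of $Z_{\max}$, so that $\int_{Z_{\max}}\omega=2+b_{\max}=6-2k>0$ recovers $k\le 2$ independently.
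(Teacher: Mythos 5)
Your proof is correct, but it takes a genuinely different route from the paper's. The paper disposes of the lemma with a two-step ABBV computation: applying Theorem \ref{theorem_localization} to $1\in H^0_{S^1}(M)$ forces the normal bundle of $Z_{\max}$ to have total degree $d_1+d_2=0$, and then applying it to $c_1^{S^1}(TM)$ yields $4-2k=0$ outright, with no case analysis and with the extra dividend $b_{\max}=0$. You instead combine the Duistermaat--Heckman volume bound $\mathbf{DH}(1)=12-4k>0$ (giving $k\le 2$) with a wall-crossing argument to kill $k=1$: the unique vanishing cycle at $t=1$ must be $u-E_1-E_2$ by Lemma \ref{lemma_list_exceptional}, whence $e(P_1^+)=0$ by Lemma \ref{lemma_Euler_class}, contradicting the formula $e(P_2^-)=-mx+y\ne 0$ of Lemma \ref{lemma_volume} (equivalently, the circle bundle $H^{-1}(2-\epsilon)\to M_{2-\epsilon}$ restricts to the Hopf bundle on each fiber of the $S^2$-bundle over $Z_{\max}$, so its Euler class cannot vanish). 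I checked the sign conventions in your co-index-two application of Lemma \ref{lemma_Euler_class} ($e(P_1^-)=\psi^*(e(P_1^+))-\mathrm{PD}(C_1)$) against the paper's usage in Theorem \ref{theorem_Sec6_1_3}, and they agree, so the cancellation $\psi^*(e(P_1^+))=(-u+E_1+E_2)+(u-E_1-E_2)=0$ is legitimate. Your method is very much in the spirit of the arguments the paper uses elsewhere in this section (compare the proof of Theorem \ref{theorem_Sec6_2_2}), and it has the merit of exhibiting the Euler classes explicitly; the paper's localization is shorter and determines $b_{\max}$ as a by-product, which your final cross-check ($b_{\max}=4-2k$) essentially recovers.
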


\begin{proof}
	Note that the normal bundle of $Z_{\max}$ in $M$ splits into the direct sum of two complex  line bundles. We denote the first Chern classes of each line bundles by $d_1 q$ and $d_2 q$ in 
	$H^2(Z_{\max}; \Z)$, respectively.
	
	Applying the localization theorem \ref{theorem_localization} to $1$ and $c_1^{S^1}(TM)$, respectively, we get 
	\[
		\begin{array}{ccl} \vs{0.1cm}
			0 & = & \ds \int_M 1 \\ \vs{0.1cm}
				& = & \ds \frac{1}{\lambda^3} + (k+1) \cdot \frac{1}{-\lambda^3} + k \cdot \frac{1}{\lambda^3} + \int_{Z_{\max}} \frac{1}{(-\lambda + d_1 q)(-\lambda + d_2 q)} \\ \vs{0.1cm}
				& = & \ds \frac{1}{\lambda^3} \cdot \int_{Z_{\max}} (d_1 + d_2) q = \frac{d_1 + d_2}{\lambda^3}.
		\end{array}
	\]
	Thus we have $d_1 + d_2 = 0$. Moreover,
	\[
		\begin{array}{ccl} \vs{0.1cm}
			0 & = & \ds \int_M c_1^{S^1}(TM) \\ \vs{0.1cm}
				& = & \ds \frac{3\lambda}{\lambda^3} + (k+1) \cdot \frac{\lambda}{-\lambda^3} + k \cdot \frac{-\lambda}{\lambda^3} + \int_{Z_{\max}} 
				\frac{\overbrace{-2\lambda + (d_1 + d_2) q + 2q}^{ = -2\lambda + 2q}}{(\underbrace{-\lambda + d_1 q)(-\lambda + d_2 q)}_{ = \lambda^2}} \\ \vs{0.1cm}
				& = & \ds \frac{1}{\lambda^2} \cdot (3 - 2k - 1) + \int_{Z_{\max}} \frac{-2\lambda^2 + 2\lambda q}{\lambda^3} \\ \vs{0.1cm}
				& = & \ds \frac{1}{\lambda^2} \cdot (3 - 2k - 1 + 2).
		\end{array}
	\]
	So, we get $k = 2$.
\end{proof}

Then Lemma \ref{lemma_2_2_number} implies the following. 

\begin{theorem}\label{theorem_Sec6_2_2}
	There is no six-dimensional closed monotone semifree Hamiltonian $S^1$-manifold such that $\mathrm{Crit} H = \{2, 1, -1, -3\}$.
\end{theorem}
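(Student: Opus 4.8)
The plan is to argue by contradiction, pushing the analysis of Lemma~\ref{lemma_2_2_number} one step further. If such an $M$ existed, then $|Z_1| = 2$ and $|Z_{-1}| = 3$. Crossing the level $-1$, which carries three index-two fixed points, from below, Proposition~\ref{proposition_topology_reduced_space} identifies $M_0 \cong M_{-1+\epsilon}$ with the blow-up of $M_{-3+\epsilon} \cong \p^2$ at three points, so $M_0 \cong \p^2 \# 3\overline{\p^2}$ with exceptional classes $E_1, E_2, E_3$, and $[\omega_0] = c_1(TM_0) = 3u - E_1 - E_2 - E_3$ by Proposition~\ref{proposition_monotonicity_preserved_under_reduction}. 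Since $e(P_{-1}^-) = -u$, Lemma~\ref{lemma_Euler_class} gives $e(P_{-1}^+) = -u + E_1 + E_2 + E_3$, which is the Euler class on the whole regular interval $(-1,1)$; hence the Duistermaat--Heckman theorem~\ref{theorem_DH} yields $[\omega_t] = (3+t)u - (1+t)(E_1 + E_2 + E_3)$ for $t \in (-1,1)$, and in particular $[\omega_1] := \lim_{t \to 1^-}[\omega_t] = 4u - 2(E_1 + E_2 + E_3)$.

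Next I would analyze the crossing at level $1$. As $Z_1$ consists of two co-index-two fixed points, Proposition~\ref{proposition_topology_reduced_space} presents $M_{2-\epsilon} \cong M_{1+\epsilon}$ as the simultaneous blow-down $\phi : M_0 \to M_{2-\epsilon}$ of two disjoint exceptional spheres $C_1, C_2 \subset M_0$. Since the reduced areas of $C_1$ and $C_2$ tend to $0$ as $t \to 1^-$, we get $\langle [\omega_1], [C_i] \rangle = 0$; comparing with the list in Lemma~\ref{lemma_list_exceptional} (the classes $E_j$ have area $\langle [\omega_1], E_j \rangle = 2$) forces each $\mathrm{PD}(C_i)$ to be of the form $u - E_j - E_l$. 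Two distinct such classes share exactly one index, so after relabeling I may take $\mathrm{PD}(C_1) = u - E_1 - E_2$ and $\mathrm{PD}(C_2) = u - E_1 - E_3$ (these are indeed disjoint, as $(u - E_1 - E_2)\cdot(u - E_1 - E_3) = 0$). Arguing for a co-index-two crossing exactly as in the proof of Theorem~\ref{theorem_Sec6_1_3}, the Euler class varies by $e(P_1^-) = \phi^*(e(P_2^-)) - \mathrm{PD}(C_1) - \mathrm{PD}(C_2)$; since $e(P_1^-) = e(P_{-1}^+) = -u + E_1 + E_2 + E_3$, this gives $\phi^*(e(P_2^-)) = u - E_1$.

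The contradiction then comes from the bundle structure $M_{2-\epsilon} \to Z_{\max} \cong S^2$. Because $\phi$ has degree one, $\langle e(P_2^-)^2, [M_{2-\epsilon}] \rangle = (u - E_1)^2 = 0$. Let $F$ be a generic fiber, chosen to avoid the two points blown down by $\phi$, and let $\widetilde F = \phi^{-1}(F)$ be its strict transform, so $\widetilde F \cong F$ and $\phi_*[\widetilde F] = [F]$. Then $[\widetilde F]^2 = 0$, $[\widetilde F]\cdot[C_i] = 0$, and $\langle c_1(TM_0), [\widetilde F]\rangle = \langle c_1(TM_{2-\epsilon}), [F]\rangle = 2$ since $F \cong \p^1$; writing $\mathrm{PD}(\widetilde F) = au + bE_1 + cE_2 + dE_3$, these constraints give $c = d = -a-b$, $a - b = 2$, and $(a+b)(a+3b) = 0$, whose only integral solution is $\mathrm{PD}(\widetilde F) = u - E_1$. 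Hence $\langle e(P_2^-), [F]\rangle = \langle \phi^*e(P_2^-), [\widetilde F]\rangle = \langle u - E_1, [\widetilde F]\rangle = (u - E_1)^2 = 0$. On the other hand, Lemma~\ref{lemma_volume} gives $e(P_2^-) = -kx + y$ with $x,y$ dual to the fiber and the section, so $\langle e(P_2^-), [F]\rangle = -k\,([F]\cdot[F]) + ([\text{section}]\cdot[F]) = 1$. (Equivalently, $\langle e(P_2^-)^2\rangle = 0$ forces $M_{2-\epsilon}\cong S^2\times S^2$ and $e(P_2^-) = y$, after which $\phi^*(x)$ is forced by $\phi^*(x)^2 = 0$ and $\phi^*(x)\cdot\phi^*(y) = 1$ to equal $\tfrac32 u - \tfrac12 E_1 - E_2 - E_3$, a non-integral class, again a contradiction.) This rules out the case, proving the theorem.

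The main obstacle is the middle step and the bookkeeping feeding it: one must pin down the homology classes of the two blown-down curves precisely, combining the monotone reduced class, the Duistermaat--Heckman variation, and Lemma~\ref{lemma_list_exceptional}, and then correctly transport both the Euler class and the fiber class across the blow-down $\phi$, since $M_0$ and $M_{2-\epsilon}$ are genuinely different surfaces and every intersection number has to be compared through $\phi^*$. Getting the sign right in the Euler-class variation for a co-index-two crossing --- the co-index-two analogue of Lemma~\ref{lemma_Euler_class} --- is the delicate point; after that, the final contradiction is a one-line computation of $\langle e(P_2^-),[F]\rangle$ evaluated two ways.
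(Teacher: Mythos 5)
Your argument is correct, but it takes a genuinely different (and longer) route than the paper's. Both proofs share the same setup: $|Z_{-1}|=3$, $M_0\cong\p^2\#3\overline{\p^2}$, and $[\omega_t]=(3+t)u-(1+t)(E_1+E_2+E_3)$. At that point the paper simply observes that \emph{all three} exceptional classes $u-E_i-E_j$ have area $1-t$, so three pairwise disjoint exceptional spheres would have to vanish at $t=1$ while only two blow-downs occur (since $|Z_1|=2$); the third class survives into $(M_1,\omega_1)$ as an exceptional class of zero symplectic area, which is impossible. You instead discard that third class, pin down the two blown-down classes up to relabeling, transport the Euler class across the blow-down to get $\phi^*e(P_2^-)=u-E_1$, and then derive the contradiction from the $S^2$-bundle structure of $M_{2-\epsilon}$ over $Z_{\max}$ via Lemma~\ref{lemma_volume} (the Euler class must pair to $1$ with the fiber class, but your computation forces it to pair to $0$; equivalently, $\phi^*(x)$ would be non-integral). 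Every step checks out --- the sign in the co-index-two Euler-class variation matches the paper's own usage in the proof of Theorem~\ref{theorem_Sec6_1_3}, the projection formula handles the transport of $[F]$, and the Diophantine computation for $\mathrm{PD}(\widetilde F)$ is right --- but the extra machinery is not needed: the immediate contradiction is already contained in the fact that you identified \emph{three} zero-area exceptional classes and can only collapse two of them. Your route does have the minor virtue of not needing the existence of symplectic representatives for the surviving exceptional class (Li--Liu), relying instead only on Lemma~\ref{lemma_volume}, so it is a legitimate alternative; it is just considerably more work for the same conclusion.
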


\begin{proof}
	Lemma \ref{lemma_2_2_number} says that $Z_{-1}$ consists of three points so that $M_0 \cong \p^2 \# 3 \overline{\p}^2$. 
	As $t$ approaches to $1$, two exceptional spheres, namely $C_1$ and $C_2$, are getting smaller in a symplectic sense and eventually vanish on $M_1$.
	In other words, two simultaneous blow-downs occur on the level $t = 1$. 
	
	On the other hand, the Duistermaat-Heckman theorem \ref{theorem_DH} says that
	\[
		\begin{array}{ccl}\vs{0.1cm}
			[\omega_t] = [\omega_0] - t e(P_0^+) & = & (3u - E_1 - E_2 - E_3) - t (-u + E_1 + E_2 + E_3) \quad t\in (-1, 1) \\ \vs{0.1cm}
									& = & (3 + t)u - (E_1 + E_2 + E_3)(1+t). 
		\end{array}
	\]
	Observe that 
	\[
			\ds \langle [\omega_t], u - E_1 - E_2 \rangle = \langle [\omega_t], u - E_2 - E_3 \rangle = \langle [\omega_t], u - E_1 - E_3 \rangle = 1 - t.
	\]
	That is, three disjoint exceptional divisors $\{u - E_i - E_j ~|~ 1 \leq i, j \leq 3, i \neq j\}$ vanish at $t = 1$, which leads to a contradiction.
\end{proof}

	\begin{figure}[h]
		\scalebox{1}{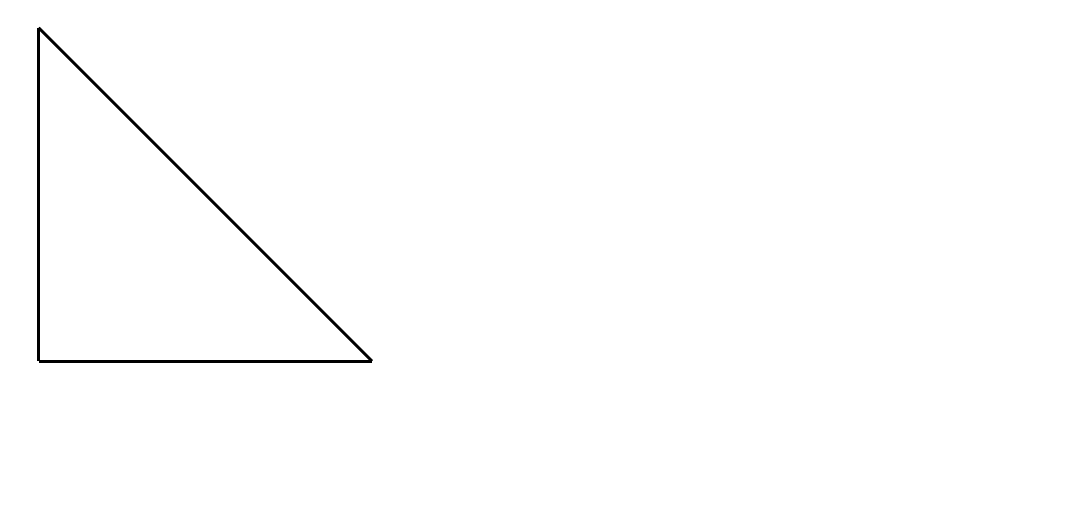}
		\caption{Blow-ups and blow-downs}
	\end{figure}

\subsubsection{${\mathrm{Crit} \mathring{H}} = \{-1,0\}$}
\label{ssecMathrmCritMathringH1}
~\\

	In this case, we have $|Z_{-1}| = 1$ by \eqref{equation_plus_one} and $M_0 \cong \p^2 \# \overline{\p}^2$.
	Regarding $Z_0$ as an embedded symplectic submanifold of $(M_0, \omega_0)$ via
	\[
		Z_0 \hookrightarrow H^{-1}(0) \stackrel{/S^1}\longrightarrow  M_0,
	\]
	let $\mathrm{PD}(Z_0) = au + bE_1 \in H^2(M_0; \Z)$ for some  $a, b \in \Z$.
	
	Note that $M_{2 - \epsilon}$ is a symplectic $S^2$-bundle over $Z_{\max}$ where we denote by $C$ a fiber of the bundle. Since $M_{2 - \epsilon} \cong M_0$
	by Proposition \ref{proposition_topology_reduced_space}, we can express 
	$\mathrm{PD}(C)$ as a linear combination of $u$ and $E_1$. 

	\begin{lemma}\label{lemma_2_3_1}
		$\mathrm{PD}(C) = u - E_1$. 
	\end{lemma}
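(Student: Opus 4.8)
The plan is to recognise $C$ as a symplectically embedded $2$-sphere of zero self-intersection in the reduced space near the maximum, and then to pin down its homology class using the adjunction formula together with the first Chern class of $M_{2-\epsilon}$.

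First I would record the local structure near $Z_{\max}$. By the equivariant Darboux theorem \ref{theorem_equivariant_darboux}, a neighbourhood of $Z_{\max}\cong S^2$ is modelled on the total space of its normal bundle $\nu$, a rank-two complex vector bundle over $Z_{\max}$ on whose fibres $S^1$ acts by scalar multiplication of weight $-1$, the moment map being $H = 2 - \frac{1}{2}(|z_1|^2+|z_2|^2)$ in fibre coordinates $(z_1,z_2)$. Hence, for $\epsilon>0$ small, the level set $H^{-1}(2-\epsilon)$ is the radius-$\sqrt{2\epsilon}$ sphere bundle $S(\nu)$, and $M_{2-\epsilon}=S(\nu)/S^1=\p(\nu)$ is an honest $S^2$-bundle over $Z_{\max}$ on whose fibres $\omega_{2-\epsilon}$ restricts to a positive multiple of the Fubini--Study form. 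Thus a fibre $C$ is a symplectically embedded sphere in $(M_{2-\epsilon},\omega_{2-\epsilon})$ with $[C]\cdot[C]=0$, the latter by local triviality of the bundle.

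Next I would compute $[C]$. Writing $\mathrm{PD}(C)=\alpha u+\beta E_1$ in the integral basis $\{u,E_1\}$ of $H^2(M_{2-\epsilon};\Z)\cong H^2(M_0;\Z)$, the relation $\alpha^2-\beta^2=[C]\cdot[C]=0$ forces $\beta=\pm\alpha$. Choosing an $\omega_{2-\epsilon}$-compatible almost complex structure for which $C$ is holomorphic and applying the adjunction formula gives $\langle c_1(TM_{2-\epsilon}),[C]\rangle=[C]\cdot[C]+2-2g=2$. On the other hand, the reduced forms $\omega_t$ for $t$ in the regular interval $(0,2)$ form a deformation, through the Duistermaat--Heckman family (Theorem \ref{theorem_DH}), of the monotone reduction $\omega_0$ on the fixed smooth manifold $M_0$ (the topology being unchanged across the critical value $0$, since $Z_0$ is a divisor in $M_0$); hence the first Chern class is constant along the family and equals $c_1(TM_0)=[\omega_0]=3u-E_1$ by Proposition \ref{proposition_monotonicity_preserved_under_reduction}. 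Consequently $3\alpha+\beta=2$, and combined with $\beta=\pm\alpha$ the unique integral solution is $\alpha=1$, $\beta=-1$; therefore $\mathrm{PD}(C)=u-E_1$.

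The one point requiring care is the bookkeeping in the last step: one must verify that the gradient-flow identifications $M_{2-\epsilon}\cong M_t\cong M_0$ transport the symplectic deformation class, and hence $c_1$, correctly, so that indeed $c_1(TM_{2-\epsilon})=3u-E_1$ in the chosen basis. Granting that, the self-intersection computation and the adjunction formula do everything, and it is precisely the integrality of $(\alpha,\beta)$ that excludes the spurious branch $\beta=\alpha$. (Alternatively, one could argue purely topologically, using that $\p^2\#\overline{\p^2}$ carries a unique ruling whose fibre class is $u-E_1$; the adjunction argument is shorter and self-contained.)
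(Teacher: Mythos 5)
Your proof is correct and follows essentially the same route as the paper: use $[C]\cdot[C]=0$ to force $\mathrm{PD}(C)=\alpha u\pm\alpha E_1$, then apply the adjunction formula with $c_1(TM_0)=3u-E_1$ to get $3\alpha+\beta=2$, whose only integral solution on that locus is $(\alpha,\beta)=(1,-1)$. The extra care you take in identifying $M_{2-\epsilon}$ with $\mathbb{P}(\nu)$ and transporting $c_1$ along the Duistermaat--Heckman family is sound but is already implicit in the paper's setup via Proposition \ref{proposition_topology_reduced_space}.
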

	
	\begin{proof}
		Since $[C] \cdot [C] = 0$, we have $\mathrm{PD}(C) = pu \pm pE_1$ for some $p \neq 0$ in $\Z$. Also, the adjunction formula \eqref{equation_adjunction} implies that 
		\[
			 3p \pm p = \langle 3u - E_1, pu \pm pE_1 \rangle = \langle c_1(TM_0), [C] \rangle = [C] \cdot [C] + 2 = 2.
		\]
		So, we have $p=1$ and $\mathrm{PD}(C) = u - E_1$.
	\end{proof}	
	
	\begin{lemma}\label{lemma_2_3_2}
		All possible pairs of $(a,b)$ are $(0,1)$, $(1,0)$, or $(2, -1)$. In any case, we have $Z_0 \cong S^2$.
	\end{lemma}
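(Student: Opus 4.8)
The plan is to follow the pattern already established in Section~\ref{sssecMathrmCritMathringH} and in the proof of Theorem~\ref{theorem_Sec6_1_3}: first read off the Euler class $e(P_0^+)$, then propagate $[\omega_t]$ with the Duistermaat--Heckman theorem up to the top level $t=2$, extract the numerical constraints on $(a,b)$, and finally use the adjunction formula to control the genus of $Z_0$. For Step 1, since $M_{-3+\epsilon}\cong\p^2$ with $e(P_{-3}^+)=-u$ and crossing the unique index-two point $Z_{-1}$ is a one-point blow-up, Lemma~\ref{lemma_Euler_class} gives $e(P_0^-)=e(P_{-1}^+)=-u+E_1$; crossing the two-dimensional fixed component $Z_0$ at level $0$ then gives, by the two-dimensional version of Lemma~\ref{lemma_Euler_class} already used in Section~\ref{sssecMathrmCritMathringH}, $e(P_0^+)=e(P_0^-)+\mathrm{PD}(Z_0)=(a-1)u+(b+1)E_1$. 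As there is no critical value in $(0,2)$, Theorem~\ref{theorem_DH} yields, for $t\in[0,2)$,
\[
	[\omega_t]=[\omega_0]-t\,e(P_0^+)=(3-t(a-1))u-(1+t(b+1))E_1,
\]
with $[\omega_0]=c_1(TM_0)=3u-E_1$ by Proposition~\ref{proposition_monotonicity_preserved_under_reduction}.

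For Step 2, I would use three positivity/vanishing inputs. By Lemma~\ref{lemma_2_3_1} the ruling fiber $C$ of the $S^2$-bundle $M_{2-\epsilon}\to Z_{\max}$ has $\mathrm{PD}(C)=u-E_1$, and its symplectic area tends to $0$ as $t\to 2$ (the $\p^1$-fibers are $S^1$-quotients of the links of the normal-bundle fibers of $Z_{\max}$, of size $O(\epsilon)$); hence $\langle\lim_{t\to2}[\omega_t],[C]\rangle=0$, and the intersection computation (using $u^2=1$, $u\cdot E_1=0$, $E_1^2=-1$) collapses this to $a+b=1$. Positivity of the area of $Z_0$ gives $\langle[\omega_0],[Z_0]\rangle=3a+b=2a+1>0$, so $a\ge0$. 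For the upper bound I would invoke Corollary~\ref{corollary_volume}: since $e(P_2^-)=e(P_0^+)$ and $a+b=1$, one computes $\langle e(P_2^-)^2,[M_{2-\epsilon}]\rangle=(a-1)^2-(b+1)^2=2a-3$, whence $\int_{Z_{\max}}\omega=2-(2a-3)=5-2a>0$, i.e.\ $a\le2$. Therefore $(a,b)\in\{(0,1),(1,0),(2,-1)\}$.

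For Step 3, write $Z_0=\bigsqcup_i Z_0^i$; disjointness forces $[Z_0^i]\cdot[Z_0^j]=0$ for $i\ne j$, so applying the adjunction formula to all of $Z_0$ gives $\sum_i(1-g_i)=\frac{1}{2}\big(\langle c_1(TM_0),[Z_0]\rangle-[Z_0]^2\big)=\frac{1}{2}\big((2a+1)-(2a-1)\big)=1$. I then claim $Z_0$ is connected: each summand has positive $[\omega_0]$-area with total area $2a+1\le5$, bounding the number of components, and a short check of the three classes $E_1$ (area $1$, hence irreducible), $u$, and $2u-E_1$ shows none of them can be written as a sum of two or more distinct disjoint classes with positive area (the Diophantine system $a_1a_2=b_1b_2$, $3a_i+b_i>0$, $\sum a_i=a$, $\sum b_i=b$ has no solution in any case). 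Given connectedness, $\sum_i(1-g_i)=1$ reads $g(Z_0)=0$, so $Z_0\cong S^2$.

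The one genuinely non-formal step is the connectedness claim: the Betti-number count alone permits, a priori, a decomposition of $Z_0$ into a sphere and a torus (both compatible with $\sum(1-g_i)=1$), and ruling this out requires the explicit analysis of disjoint effective classes on $\p^2\#\overline{\p}^2$ — routine but the place where actual case-checking is needed, exactly as in the proof of Lemma~\ref{lemma_1_3_k}. A minor point worth flagging is that Lemma~\ref{lemma_Euler_class} is being applied to a two-dimensional fixed component and that $M_{2-\epsilon}$ is identified with $M_0$ through the gradient flow (legitimate since $(0,2)$ contains no critical value); both conventions are already in force in Section~\ref{sssecMathrmCritMathringH} and in Theorem~\ref{theorem_Sec6_1_3}.
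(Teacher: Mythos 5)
Your proposal is correct and follows essentially the same route as the paper: Duistermaat--Heckman propagation of $[\omega_t]$, the vanishing of the fiber class $u-E_1$ at $t=2$ giving $a+b=1$, positivity of $\langle[\omega_0],[Z_0]\rangle$ and of the area of $Z_{\max}$ giving $0\le a\le 2$, and then a case check of disjoint decompositions plus the adjunction formula to get connectedness and $g=0$. The only (harmless) variations are that you derive $5-2a>0$ from Corollary~\ref{corollary_volume} rather than from a section class of the $S^2$-bundle (an alternative the paper itself notes via Lemma~\ref{lemma_volume}), and your connectedness check runs purely on the intersection-form Diophantine system where the paper additionally feeds in adjunction on a sphere component --- both close the same gap.
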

	
	\begin{proof}
		We obtain three (in)equalities in $a$ and $b$ as follows. First, the Duistermaat-Heckman theorem \ref{theorem_DH} implies that 
		\[
			\begin{array}{ccl}\vs{0.1cm}
				[\omega_t] = [\omega_0] - t e(P_0^+) & = & (3u - E_1) - t(-u + E_1 + au + bE_1) \\ \vs{0.1cm}
										& = & (3 - at + t)u - (1 + bt + t)E_1, \quad t \in (0, 2).
			\end{array}
		\]
		Since the symplectic volume of $C$ tends to 0 as $t \rightarrow 2$, we have 
		\[
			\lim_{t\rightarrow 2} \langle [\omega_t], [C] \rangle = \lim_{t\rightarrow 2} \langle [\omega_t], u-E_1 \rangle = 5 - 2a -2b -3 = 0 \quad \Leftrightarrow \quad a + b = 1.
		\]
		Second, the condition $\langle [\omega_0], [Z_0] \rangle > 0$ implies that $3a+b>0$ (since $[\omega_0] = c_1(TM_0) = 3u - E_1$.)
		Third, consider any section $\sigma$ of the bundle $M_{2-\epsilon}$ over $Z_{\max} = Z_2$. Since the intersection of $\sigma$ and a fiber $C$
		equals one, we have 
		\[
			[\sigma] \cdot [C] = 1, 
		\]
		equivalently $\mathrm{PD}(\sigma) = u + d(u - E_1)$ for some $d \in \Z$. 
		In particular, we have 
		\[
			\begin{array}{ccl}\vs{0.1cm}
				\langle [\omega], [Z_{2}] \rangle & = & \lim_{t \rightarrow 2} \langle [\omega_t], [\sigma] \rangle \\ \vs{0.1cm}
									& = & \lim_{t\rightarrow 2} \langle [\omega_t] \cdot \mathrm{PD}(\sigma), [M_0] \rangle \\ \vs{0.1cm}
									& = & \lim_{t \rightarrow 2} \langle (5 - 2a) (u - E_1) \cdot (u + d(u-E_1)), [M_0] \rangle = 5- 2a > 0 
			\end{array}
		\]
		Combining the three (in)equalities $a+b = 1$, $3a + b > 0$, and $5 - 2a > 0$, we may conclude that $(a,b)$ is either $(0,1)$, $(1,0)$, or $(2,-1)$.
		(Note that this consequence is also obtained from Lemma \ref{lemma_volume})

		It remains to show that $Z_0 \cong S^2$. Using the adjunction formula \eqref{equation_adjunction}, we have 
		\[
			\underbrace{\langle 3u - E_1, au + bE_1 \rangle}_{= ~\text{symplectic area of $Z_0$ on $M_0$} ~= ~3a + b} = a^2 - b^2 + \sum_i 2 - 2g_i.
		\]
		where each connected component of $Z_0$ is indexed by $i$ and $g_i$ denotes the genus of each component. 
		For $(a,b) = (0,1)$, since $\mathrm{Vol}(Z_0) = 3a + b = 1$ (and so $2 = \sum_i (2 - 2g_i)$), 
		it is easy to see that $Z_0$ is connected and its genus is equal to zero. 
		
		For $(a,b) = (1,0)$, we have $\mathrm{Vol}(Z_0) = 3$ and $2 = \sum_i (2 - 2g_i)$. This implies that there is at least one sphere component denoted by $Z_0^1$. Let
		$Z_0^2$ be the complement of $Z_0^1$ in $Z_0$ so that $Z_0^1$ and $Z_0^2$ are disjoint. If we let $\mathrm{PD}(Z_0^1) = a_1u + b_1E_1$ 
		and $\mathrm{PD}(Z_0^2) = a_2u + b_2E_1$, respectively, then 
		\begin{equation}\label{equation_2_3_Z0}
			a_1a_2 - b_1b_2 = 0, \quad \underbrace{a_1 + a_2 = 1, \quad b_1 + b_2 = 0}_{\mathrm{PD}(Z_0^1) + \mathrm{PD}(Z_0^2) = \mathrm{PD}(Z_0)}, 
			\quad \underbrace{\begin{cases} 3a_1 + b_1 = 1 ~\text{or}~ 2 \\ 3a_2 + b_2 = 2 ~\text{or}~ 1 \end{cases}}_{\mathrm{Vol}(Z_0^1) + \mathrm{Vol}(Z_0^2) = 3}
		\end{equation}
		Also, the adjunction formula \eqref{equation_adjunction} for $Z_0^1$ implies that 
		\begin{equation}\label{equation_2_3_adjunction}
			(\text{1 or 2} =) ~3a_1 + b_1 = a_1^2 - b_1^2 + 2.
		\end{equation}
		When $3a_1 + b_1 = 1$, then $a_1 = 0$ and $b_1 = 1$ which implies that $a_2 = 1$ and $b_2 = -1$. Then it contradicts the first equation in 
		\eqref{equation_2_3_Z0}. Similarly, if $3a_1 + b_1 = 2$, then $a_1 = \pm b_1$ which implies that $a_1 = 1$ and $b_1 = -1$, and so $a_2 = 0$ and $b_2 = 1$.
		This also violates the first equation in \eqref{equation_2_3_Z0}. Consequently, $Z_0$ is connected and $Z_0 \cong S^2$.
		
		We can show that $Z_0 \cong S^2$ when $(a,b) = (2,-1)$ in a similar way. In this case, $\mathrm{Vol}(Z_0) = 5$ and $5 = 3 + \sum 2 - 2g_i$ by the adjunction 
		formula. Thus we can take $Z_0^1 \cong S^2$ and $Z_0^2$ as in the ``$(a,b) = (1,0)$''-case. Then 
		\begin{equation}\label{equation_2_3_Z0_2}
			a_1a_2 - b_1b_2 = 0, \quad \underbrace{a_1 + a_2 = 2, \quad b_1 + b_2 = -1}_{\mathrm{PD}(Z_0^1) + \mathrm{PD}(Z_0^2) = \mathrm{PD}(Z_0)}, 
			\quad \underbrace{\begin{cases} 3a_1 + b_1 = 1 ~\text{or}~ 2 
			~\text{or}~ 3 ~\text{or}~ 4\\ 3a_2 + b_2 = 4 ~\text{or}~ 3 ~\text{or}~ 2 ~\text{or}~ 1\end{cases}}_{\mathrm{Vol}(Z_0^1) + \mathrm{Vol}(Z_0^2) = 5}
		\end{equation}
		and 
		\begin{equation}\label{equation_2_3_adjunction}
			(\text{1 or 2 or 3 or 4} =) ~3a_1 + b_1 = a_1^2 - b_1^2 + 2.
		\end{equation}
		Then we have 
		\begin{table}[H]
			\begin{tabular}{|c|c|c|}
				\hline
				   $3a_1 + b_1$ & $(a_1, b_1)$ & $(a_2, b_2)$\\ \hline \hline
				    1 & $(0, 1)$ & $(2, -2)$ \\ \hline
				    2 & $(1, -1)$ & $(1, 0)$ \\ \hline
				    3 & $(1, 0)$ & $(1, -1)$ \\ \hline
				    4 & $\times$ & $\times$ \\ \hline
			\end{tabular}
		\end{table}
		\noindent
		and check that the first equation of \eqref{equation_2_3_Z0_2} fails in any case. Thus $Z_0$ is connected and $Z_0 = Z_0^1 \cong S^2$ and this completes the proof.
	\end{proof}

	Consequently, we obtain the following.

	\begin{theorem}\label{theorem_Sec6_2_3}
		Let $(M,\omega)$ be a six-dimensional closed monotone semifree Hamiltonian $S^1$-manifold such that $\mathrm{Crit} H = \{ 2, 0, -1, -3\}$. 
		Then the topological fixed point data is given by
		\begin{table}[H]
			\begin{tabular}{|c|c|c|c|c|c|c|}
				\hline
				    & $(M_0, [\omega_0])$ & $Z_{-3}$ & $Z_{-1}$ & $Z_0$ & $Z_{2}$\\ \hline \hline
				   {\bf (II-3)} &  $(\p^2 \# \overline{\p^2}, 3u - E_1)$ & {\em pt} & {\em pt} &  \makecell{ $Z_0 = S^2$ \\
				    $[Z_0] = au + bE_1$} & $S^2$ \\ \hline    
			\end{tabular}
		\end{table}
		\noindent where $(a,b) = (0,1), (1,0),$ or $(2, -1)$. Moreover, $b_2(M) = 2$ and 
		the Chern number $\langle c_1(TM)^3, [M] \rangle$ is given by
		\[
			\langle c_1(TM)^3, [M] \rangle = 
			\begin{cases}
				\text{$62$ \quad if $(a,b) = (0,1)$ \quad {\bf (II-3.1)}} \\ \text{$54$ \quad if $(a,b) = (1,0)$ \quad {\bf (II-3.2)}} \\ \text{$46$ \quad if $(a,b) = (2,-1)$ \quad {\bf (II-3.3)}}
			\end{cases}
		\]	

	\end{theorem}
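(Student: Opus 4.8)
The plan is to read the table off the two preceding lemmas and then extract the two numerical invariants, $b_2(M)$ and $\langle c_1(TM)^3,[M]\rangle$, by the perfectness of $H$ and the ABBV localization theorem respectively.

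\emph{Assembling the data.} By \eqref{equation_plus_one} we have $|Z_{-1}|=1$, so Proposition \ref{proposition_topology_reduced_space} gives $M_0\cong\p^2\#\overline{\p^2}$, and Proposition \ref{proposition_monotonicity_preserved_under_reduction} pins $[\omega_0]=c_1(TM_0)=3u-E_1$. Lemma \ref{lemma_2_3_2} supplies $Z_0\cong S^2$ with $[Z_0]=au+bE_1$ and $(a,b)\in\{(0,1),(1,0),(2,-1)\}$, while $Z_{-3}=Z_{\min}$ is a point and $Z_2=Z_{\max}\cong S^2$ by hypothesis. The Euler classes $e(P_c^\pm)$ are then forced by Lemma \ref{lemma_Euler_condition} (equivalently, apply Lemma \ref{lemma_Euler_class} across each critical level starting from $e(P_{-3}^+)=-u$). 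This is precisely the displayed table, so nothing further is needed here.

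\emph{Betti numbers.} Since the balanced moment map $H$ is a perfect Morse--Bott function (Corollary \ref{corollary_properties_moment_map}), $P_M(t)=\sum_{Z\subset M^{S^1}}t^{\mathrm{ind}(Z)}P_Z(t)$, whose four summands are $1$ (from $Z_{-3}$, index $0$), $t^2$ (from $Z_{-1}$, index $2$, a point), $(1+t^2)t^2$ (from $Z_0\cong S^2$, index $2$), and $(1+t^2)t^4$ (from $Z_{\max}\cong S^2$, index $4$). Adding these gives $P_M(t)=1+2t^2+2t^4+t^6$, hence $b_2(M)=2$ and $b_{\mathrm{odd}}(M)=0$.

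\emph{The Chern number.} I would apply Theorem \ref{theorem_localization} to $c_1^{S^1}(TM)^3\in H^6_{S^1}(M)$; being of top degree, this class has $\int_M c_1^{S^1}(TM)^3=\langle c_1(TM)^3,[M]\rangle$ by Corollary \ref{corollary : localization degree 2n}, and localization rewrites the left side as a sum over fixed components. Semifreeness together with Corollary \ref{corollary_sum_weights_moment_value} forces all weights: $(1,1,1)$ at $Z_{-3}$, $(-1,1,1)$ at $Z_{-1}$, normal weights $(-1,1)$ at $Z_0$, and normal weights $(-1,-1)$ at $Z_{\max}$. The isolated points contribute $(3\lambda)^3/\lambda^3=27$ and $\lambda^3/(-\lambda^3)=-1$. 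The $Z_0$ term vanishes, since Proposition \ref{proposition_equivariant_Chern_class} and the zero weight-sum give $c_1^{S^1}(TM)|_{Z_0}=c_1(TM)|_{Z_0}$, whose cube is zero in $H^*(S^2)$. For $Z_{\max}$ one has $c_1^{S^1}(TM)|_{Z_{\max}}=vq-2\lambda$, where $q$ generates $H^2(S^2;\Z)$ and $v:=\langle[\omega],[Z_{\max}]\rangle=5-2a$ by the section-class computation inside the proof of Lemma \ref{lemma_2_3_2}, while $e^{S^1}(Z_{\max})=\lambda^2-b_{\max}\lambda q$ with $b_{\max}=v-2=3-2a$ the normal Chern number (Corollary \ref{corollary_volume}). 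Expanding the fraction modulo $q^2=0$ yields the $Z_{\max}$ contribution $12v-8b_{\max}=36-8a$. Summing, $\langle c_1(TM)^3,[M]\rangle=27-1+0+(36-8a)=62-8a$, which equals $62,54,46$ for $(a,b)=(0,1),(1,0),(2,-1)$ respectively.

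\emph{Main obstacle.} There is no conceptual difficulty left once Lemmas \ref{lemma_2_3_1}--\ref{lemma_2_3_2} are in place; the only point requiring care is feeding the localization formula at $Z_{\max}$ with the correct values of $v$ and $b_{\max}$ without circular reasoning, which is handled by tracking the Duistermaat--Heckman family $[\omega_t]$ on $M_0$ for $t\in(0,2)$, using $\mathrm{PD}(C)=u-E_1$ (Lemma \ref{lemma_2_3_1}), and invoking Lemma \ref{lemma_volume}/Corollary \ref{corollary_volume}. After that the localization integral is elementary, because $H^*(S^2)$ truncates in degree two.
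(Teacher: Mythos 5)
Your proposal is correct and follows essentially the same route as the paper: the table is assembled from Lemma \ref{lemma_2_3_1}, Lemma \ref{lemma_2_3_2} and \eqref{equation_plus_one}, $b_2(M)=2$ comes from perfectness of $H$, and the Chern number from ABBV localization applied to $c_1^{S^1}(TM)^3$, with the $Z_0$ term killed by $c_1^{S^1}(TM)|_{Z_0}=c_1(TM)|_{Z_0}$. The one genuine (though minor) divergence is how you feed the normal Chern number of $Z_{\max}$ into the localization integral: the paper obtains $d_1+d_2=2-a^2+b^2$ by a second localization, namely of $c_1^{S^1}(TM)$ itself, whereas you take $b_{\max}=\mathrm{Vol}(Z_{\max})-2=3-2a$ from the Duistermaat--Heckman/section-class computation already carried out in the proof of Lemma \ref{lemma_2_3_2} together with Corollary \ref{corollary_volume}; the two values agree because $a+b=1$ on the solution set, and your closed form $62-8a$ reproduces $62,54,46$ exactly.
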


\begin{proof}
	We have already computed the topological fixed point data in Lemma \ref{lemma_2_3_1} where the fact $|Z_{-1}| = 1$ follows from \eqref{equation_plus_one}. 
	Also, the perfectness (as a Morse-Bott function) of the moment map $H$ implies that $b_2(M) = 2$. Thus we only need to compute the Chern number in each case. 
	
	If we let $b_+$ and $b_-$ be the first Chern numbers of the positive and negative normal (line) bundles $\xi_+$ and $\xi_-$ of $Z_0$, respectively, then the equivariant first 
	Chern class of the normal bundle of $Z_0$ in $M$
	is 
	\[
		(-\lambda + b_-q) + (\lambda + b_+ q) = (b_- + b_+)q = ([Z_0]\cdot [Z_0]) q
	\] because the normal bundle of $Z_0$ in $M_0$ is isomorphic to $\xi_+ \otimes \xi_-$, see \cite[Proof of Lemma 5]{McD1}. 
	So, we have $c_1(TM)|_{Z_0} = (b_- + b_+ + 2)q$. 
	Applying the localization theorem \ref{theorem_localization} to $c_1^{S^1}(TM)^3$, we get 
	\begin{equation}\label{equation_CN_2_3}
		\begin{array}{ccl}\vs{0.1cm}
			\ds \int_M c_1^{S^1}(TM)^3 & = &  \ds \sum_{Z \subset M^{S^1}} \int_{Z} 
								\frac{\overbrace{c_1^{S^1}(TM)^3|_Z}^{ = \left( c_1^{S^1}(TM)|_{Z}\right)^3}}{e_Z^{S^1}} \\ \vs{0.1cm}
							& = & \ds  \frac{(3\lambda)^3}{\lambda^3} + \frac{(\lambda)^3}{-\lambda^3} + 
							\int_{Z_0} \frac{\overbrace{\left(c_1^{S^1}(TM)|_{Z_0}\right)^3}^{ = \left(c_1(TM)|_{Z_0}\right)^3 = 0}}{e_{Z_0}^{S^1}} + 
							\int_{Z_2} \frac{(-2\lambda + (d_1 + d_2 + 2)q)^3}{(-\lambda +d_1q)(-\lambda + d_2q)} \\ \vs{0.1cm}
							& = & 26 - 8(d_1 + d_2) + 12(d_1 + d_2 + 2) = 50 + 4(d_1 + d_2).
		\end{array}
	\end{equation}
	On the other hand, applying the localization theorem to $c_1^{S^1}(TM)$, we have 
	\begin{equation}\label{equation_2_3_localization}
		\begin{array}{ccl}\vs{0.1cm}
			0 & = & \ds \int_M c_1^{S^1}(TM) \\ \vs{0.1cm}
				& = &  \ds \sum_{Z \subset M^{S^1}} \int_{Z} \frac{c_1^{S^1}(TM)|_Z}{e_Z^{S^1}} \\ \vs{0.1cm}
						& = & 	\ds  \frac{3\lambda}{\lambda^3} + \frac{\lambda}{-\lambda^3} + 
							\int_{Z_0} \frac{\overbrace{c_1^{S^1}(TM)|_{Z_0}}^{ = ([Z_0]\cdot [Z_0] + 2)q = (a^2 - b^2 + 2)q}}{e_{Z_0}^{S^1}} + 
							\int_{Z_2} \frac{-2\lambda + (d_1 + d_2 + 2)q}{(-\lambda +d_1q)(-\lambda + d_2q)} \\ \vs{0.1cm}
							& = & \ds \frac{1}{\lambda^2} \cdot (2 - (a^2 - b^2 + 2) + (2 - d_1 - d_2)) \\ \vs{0.1cm}
		\end{array}		
	\end{equation}
	Therefore we get $d_1 + d_2 = 2 - a^2 + b^2$. Using \eqref{equation_CN_2_3} and \eqref{equation_2_3_localization}, we can confirm that the Chern numbers
	for $(a,b) = (0,1), (1,0),$ and $(2,-1)$ are the same as given in the theorem. This completes the proof.
\end{proof}

\begin{example}[Fano variety of type {\bf (II-3)}]\label{example_Sec6_2_3}
	We provide algebraic Fano examples for each topological fixed point data given in Theorem \ref{theorem_Sec6_2_3} as follows.
	\begin{enumerate}
		\item {\bf Case (II-3.1) \cite[No. 36 in the list in Section 12.3]{IP}} 
		: For $(a,b) = (0,1)$, let $M = \pp(\mcal{O} \oplus \mcal{O}(2))$. This is a toric variety with a moment map $\mu : M \rightarrow \mcal{P}$ 
		where the moment polytope $\mcal{P}$ 
		(with respect to the normalized monotone K\"{a}hler form)
		is described by 
		\begin{figure}[H]
			\scalebox{0.8}{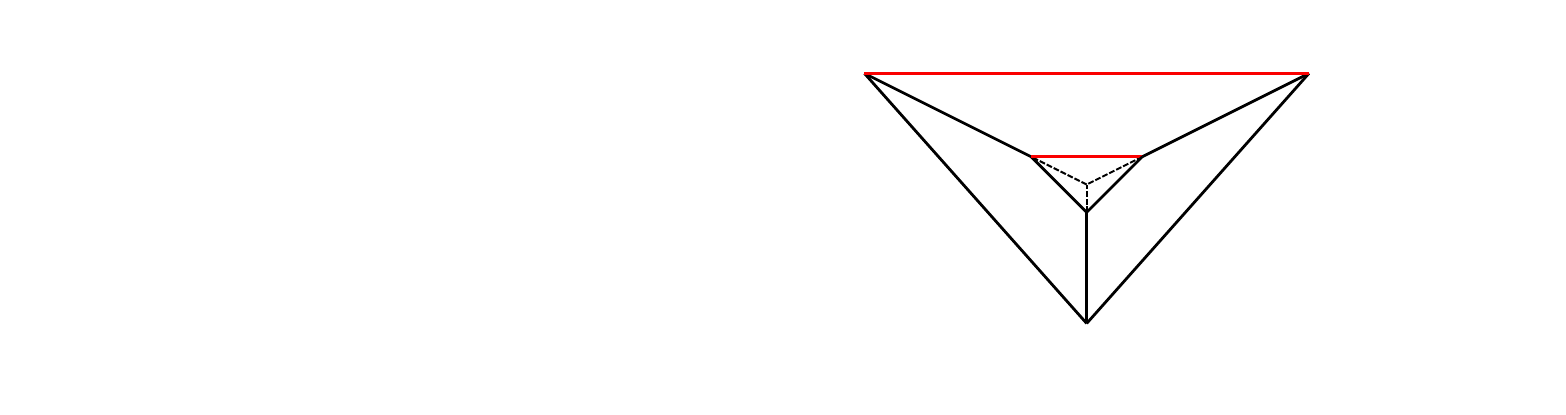}
			\caption{\label{figure_2_3_1} Moment polytope of $M = \pp(\mcal{O} \oplus \mcal{O}(2))$}
		\end{figure}
		\noindent
		where the right one of Figure \ref{figure_2_3_1} is the image of $\mcal{P}$ under the projection $\pi : \R^3 \rightarrow \R^2$ given by 
		\[
			\begin{pmatrix} a \\ b \\ c \end{pmatrix} \mapsto \begin{pmatrix} 1 & -1 & 0 \\ 1 & 1 & 1 \\ \end{pmatrix} \cdot \begin{pmatrix} a \\ b \\ c \end{pmatrix} + \begin{pmatrix} 0 \\ -3 \end{pmatrix}
		\]
		Then $\xi = (1,1,1) \in \frak{t}$ generates a semifree Hamiltonian circle action on $M$ with the balanced moment map 
		$
			\mu_\xi = \pi_y \circ \pi \circ \mu
		$
		where $\pi_y : \R^2 \rightarrow \R$ is the projection onto the $y$-axis. (Note that the ``semifreeness'' can be confirmed by showing that 
		\[
			\langle u_v, (1,1,1) \rangle = \pm 1 ~\text{or} ~0
		\]
		for any vertex $v$ of $\mcal{P}$ and a primitive integral edge vector $u_v$ at $v$. 
		
		To check that $M$ has the same topological fixed point data as in Theorem \ref{theorem_Sec6_2_3} for $(a,b) = (0,1)$, observe that there are exactly four connected faces 
		corresponding to the fixed components $Z_{-3}, Z_{-1}, Z_0, Z_2$, respectively, for the $S^1$-action generated by $(1,1,1)$, namely
		\[
			(0,0,0), \quad (0,0,2), \quad \overline{(0,1,2) (1,0,2)}, \quad \overline{(0,5,0)(5,0,0)}.
		\]
		
		In fact, we can check other geometric data of $M$, such as the volume of fixed components, coincide with those in Theorem \ref{theorem_Sec6_2_3}. 
		Note that
		\[
			\lim_{t \rightarrow 2} ~[\omega_t] = [\omega_0] - 2e(P_0^+) = 5(u - E_1)
		\]
		by the Duistermaat-Heckman theorem \ref{theorem_DH}. This implies that the symplectic area of $Z_2$ is $5$ while $Z_0$ has the symplectic area $1$. 
		(This can be obtained from the fact (used in the proof of Lemma \ref{lemma_2_3_1})
		that any section class of the $S^2$-bundle $M_{2 - \epsilon}$ over $Z_2$ 
		is of the form $u + d(u - E_1)$ for some $d \in \Z$.) 
		This is the reason why the edge $\overline{(5,-2)~(5,2)}$ (corresponding to $Z_2$) is five-times as long as $\overline{(-1,0)~(1,0)}$ (corresponding to $Z_0$).
		Furthermore, the Chern number also agrees, i.e., $\langle c_1^3 , [M] \rangle = 62$. \\
		\vs{0.1cm}
		
		\item {\bf Case (II-3.2) \cite[No. 34 in the list in Section 12.3]{IP}} : For $(a,b) = (1,0)$, consider the toric variety $M = \p^1 \times \p^2$ with the moment map $\mu : M \rightarrow \mcal{P}$ where 
		$\mcal{P}$ is given as follows. 
		\begin{figure}[H]
			\scalebox{0.8}{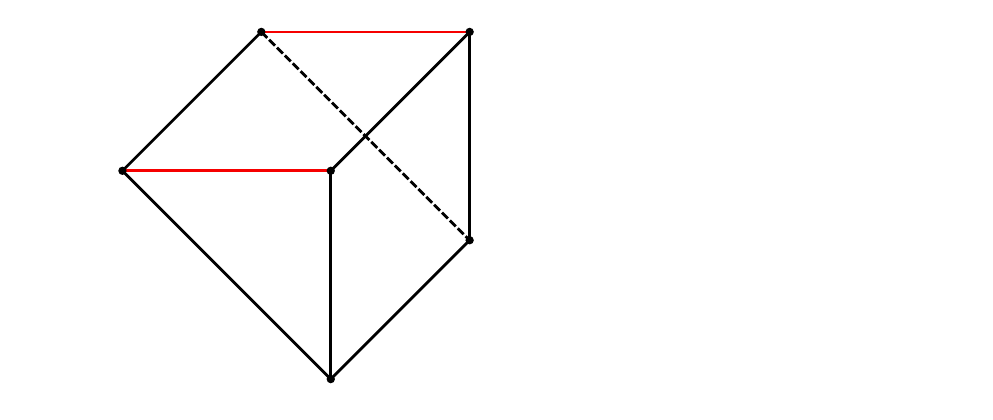}
			\caption{\label{figure_2_3_2} Moment polytope of $M = \p^1 \times \p^2$}
		\end{figure}
		\noindent
		Take $\xi = (0,-1,1) \in \frak{t}$. Then the balanced moment map for the action generated by $\xi$ is factored as $\mu_\xi = \pi_y \circ \pi \circ \mu$ where
		\[
			\pi : \R^3 \rightarrow \R^2, \quad \quad \pi(v) = \begin{pmatrix} 1 & -1 & 0 \\ 0 & -1 & 1 \end{pmatrix} \cdot v + (2,2).
		\]
		The $S^1$-action has four fixed components 
		\[
			\mu^{-1}(0,2,-3), \quad \mu^{-1}(0,0,-3), \quad \mu^{-1}(\overline{(-3,2,0) (0,2,0)}), \quad \mu^{-1}(\overline{(-3,0,0) (0,0,0)}).
		\]
		which correspond to $Z_{-3}, Z_{-1}, Z_0,$ and $Z_2$, respectively.
		
		If we want to check whether the symplectic areas of $Z_2$ and $Z_0$ coincide with those given in Theorem \ref{theorem_Sec6_2_3}, 
		recall that the Duistermaat-Heckman theorem \ref{theorem_DH} says that
		\[
			\lim_{t \rightarrow 2} ~[\omega_t] = [\omega_0] - 2e(P_0^+) = 3(u - E_1). 
		\]
		This implies that the symplectic volume of $Z_2$ (which coincides with the length of $\overline{(-3,0,0) ~(0,0,0)}$, is three and it coincides with the symplectic volume of $Z_0$, 
		the length of $\overline{(-3,2,0) ~(0,2,0)}$ in Figure \ref{figure_2_3_2}.
		Also, we may easily check that $\langle c_1^3 , [M] \rangle = 54$. \\
		\vs{0.1cm}
		
		\item {\bf Case (II-3.3) \cite[No. 31 in the list in Section 12.3]{IP}} : 
		For $(a,b) = (2,-1)$, consider the smooth quadric $Q \subset \p^4$ with a moment map $\mu : Q \rightarrow \mcal{P}$ for the $T^2$-action 
		described in Example \ref{example_Sec6_1_1}. By taking an equivariant blow-up along the rational curve corresponding 
		to the edge $\overline{(0,3) ~(3,0)}$ in Figure \ref{figure_1_1}, we get a new algebraic variety denoted by $\widetilde{Q}$ whose moment polytope $\widetilde{\mcal{P}}$
		is described in Figure \ref{figure_2_3_3}. 
		
		\begin{figure}[H]
			\scalebox{1}{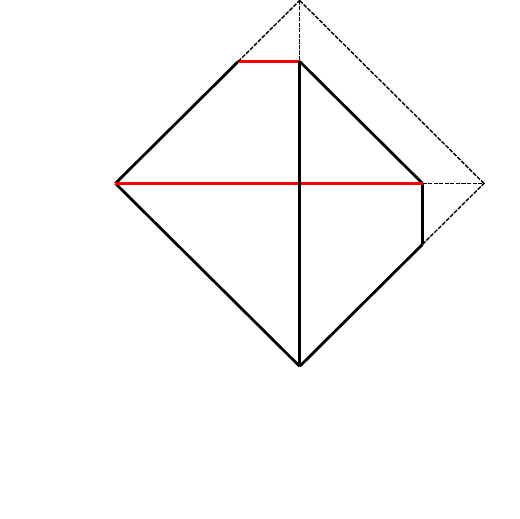}
			\caption{\label{figure_2_3_3} Moment polytope of $\widetilde{Q}$}
		\end{figure}
		\noindent
		The $S^1$-action generated by $\xi = (0,1) \in \frak{t}$ is semifree and has four fixed components corresponding to 
		\[
			(0,-3), \quad (2,-1), \quad \overline{(-3,0)(2,0)}, \quad \overline{(-1,2)(0,2)}.
		\]
		We can easily check that the moment map $\mu_\xi$ for the $S^1$-action is the projection of $\mcal{P}$ onto $y$-axis, and the four fixed components 
		has values $-3, -1, 0, 2$, respectively.
		So, $\widetilde{Q}$ together with the balanced moment map $\mu_\xi$ has the same topological fixed point data 
		given in Theorem \ref{theorem_Sec6_2_3}.  

		To compare the symplectic areas of $Z_0$ and $Z_2$ with those in Theorem \ref{theorem_Sec6_2_3}, 
		we use the Duistermaat-Heckman theorem \ref{theorem_DH} so that 
		\[
			\lim_{t \rightarrow 2} ~[\omega_t] = [\omega_0] - 2e(P_0^+) = (u - E_1). 
		\]
		Thus $\mathrm{Vol}(Z_0) = 1$ and it coincides with the length of $\overline{(-1,2) ~(0,2)}$. 
		Also, we obtain 
		\[
			\mathrm{Vol}(Z_0) = \int_{M_0} (3u - E_1) \cdot (2u - E_1) = 5, 
		\] which is equal to the length of $\overline{(-3,0) ~(2,0)}$. 
		Moreover, we may easily check that $\langle c_1^3 , [\widetilde{Q}] \rangle = 46$. \\
		\vs{0.1cm}
	\end{enumerate}
\end{example}

\subsubsection{${\mathrm{Crit} \mathring{H}} = \{-1,0,1\}$}
\label{ssecMathrmCritMathringH11}
~\\

	Assume that $Z_{-1}$ consists of $k$ points (so that $|Z_1| = k-1$ by \eqref{equation_plus_one}) for some $k \geq 2$. 

	\begin{lemma}\label{lemma_2_4_k}
		The only possible values of $k$ are $2$ and $3$.
	\end{lemma}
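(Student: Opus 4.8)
The plan is to follow the strategy of Lemma~\ref{lemma_2_2_number}, namely to apply the ABBV localization theorem (Theorem~\ref{theorem_localization}) to the equivariant first Chern class $c_1^{S^1}(TM)\in H^2_{S^1}(M)$. Since $\deg c_1^{S^1}(TM)=2<6=\dim M$, Corollary~\ref{corollary : localization degree 2n} gives $\int_M c_1^{S^1}(TM)=0$, and the resulting identity among the fixed-point contributions will pin $k$ down.

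First I would record the fixed-point data via Lemma~\ref{lemma_possible_critical_values} and Table~\ref{table_fixed}. In the present case $Z_{\min}$ is a point at level $-3$ with weights $(1,1,1)$; $Z_{-1}$ is $k$ isolated points, each of index two with weights $(-1,1,1)$; $Z_0$ is a disjoint union of $2$-dimensional fixed components, each with normal weights $\{-1,+1\}$ (so $\Sigma(Z_0^i)=0$); $Z_1$ is $k-1$ isolated points, each of index four with weights $(-1,-1,1)$; and $Z_2=Z_{\max}\cong S^2$ has normal bundle splitting as a sum of two line bundles with first Chern classes $d_1q,\,d_2q$ (where $q$ is the positive generator of $H^2(S^2;\Z)$) and both normal weights equal to $-1$. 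Then I would compute each term $\int_F c_1^{S^1}(TM)|_F/e^{S^1}(F)$ using Proposition~\ref{proposition_equivariant_Chern_class} ($c_1^{S^1}(TM)|_F=c_1(TM)|_F+\Sigma(F)\lambda$) and monotonicity ($c_1(TM)=[\omega]$, so $\langle c_1(TM),[F]\rangle=\mathrm{Vol}(F)$ for each $2$-dimensional $F$, and $c_1^{S^1}(TM)|_{Z_0^i}=\mathrm{Vol}(Z_0^i)q$). The three sets of isolated points contribute $3/\lambda^2$, $-k/\lambda^2$ and $-(k-1)/\lambda^2$ respectively; each component $Z_0^i$ contributes $-\mathrm{Vol}(Z_0^i)/\lambda^2$; and a short expansion of $1/e^{S^1}(Z_2)$ modulo $q^2$ shows that $Z_2$ contributes $(2-d_1-d_2)/\lambda^2$. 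Adding these up and clearing $\lambda^2$ yields $\mathrm{Vol}(Z_0)+(d_1+d_2)=6-2k$.

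To finish, I would invoke Corollary~\ref{corollary_volume}, which says $\mathrm{Vol}(Z_2)=2+d_1+d_2$, to rewrite this as $\mathrm{Vol}(Z_0)+\mathrm{Vol}(Z_2)=8-2k$. Since $0\in\mathrm{Crit}\mathring{H}$ the fixed component $Z_0$ is nonempty, and $Z_0,Z_2$ are symplectic submanifolds of the monotone manifold $M$, so each has a positive integral symplectic volume; hence $8-2k\ge 2$, i.e.\ $k\le 3$, and together with the standing assumption $k\ge 2$ this gives $k\in\{2,3\}$. The main (and essentially only) obstacle is the bookkeeping of signs and weights in the localization contributions --- in particular expanding $1/e^{S^1}(F)$ for the rank-$2$ normal bundles of the $Z_0^i$ and of $Z_2$ only to the order that survives integration over a surface; everything else is formal, and in contrast to Lemma~\ref{lemma_2_2_number} the auxiliary identity $\int_M 1=0$ is not needed here because Corollary~\ref{corollary_volume} already supplies $\mathrm{Vol}(Z_2)$.
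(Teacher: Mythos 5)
Your proposal is correct and follows essentially the same route as the paper: both apply the ABBV localization theorem to $c_1^{S^1}(TM)$, arrive at the identity $2k+\mathrm{Vol}(Z_0)+\mathrm{Vol}(Z_{2})=8$, and conclude $k\in\{2,3\}$ from positivity of the volumes and $k\ge 2$. The only cosmetic difference is that you pass through $d_1+d_2$ and then invoke Corollary~\ref{corollary_volume}, whereas the paper substitutes $a_1+a_2+2=\mathrm{Vol}(Z_2)$ directly inside the localization computation.
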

	
	\begin{proof}
		Applying the localization theorem \ref{theorem_localization} to $c_1^{S^1}(TM)$, we have 
	\begin{equation}\label{equation_2_4_localization}
		\begin{array}{ccl}\vs{0.1cm}
			0 & = & \ds \int_M c_1^{S^1}(TM) \\ \vs{0.1cm}
				& = &  \ds \sum_{Z \subset M^{S^1}} \int_{Z} \frac{c_1^{S^1}(TM)|_Z}{e_Z^{S^1}} \\ \vs{0.1cm}
						& = & 	\ds  \frac{3\lambda}{\lambda^3} + \frac{k\lambda}{-\lambda^3} + 
							\sum_{Z \subset Z_0} \int_{Z} \frac{\overbrace{c_1^{S^1}(TM)|_{Z}}^{ = (c_1(TM)|_Z)q = \mathrm{Vol}(Z)q}}{e_{Z}^{S^1}} + 
							\frac{-(k-1)\lambda}{\lambda^3} + 
							\int_{Z_2} \frac{-2\lambda + (c_1(TM)|_{Z_2})q}{(-\lambda +a_1q)(-\lambda + a_2q)} \\ \vs{0.1cm}
							& = & \ds \frac{1}{\lambda^2} \left( 4 - 2k - \mathrm{Vol}(Z_0) - \mathrm{Vol}(Z_2) + 4\right) \quad \quad 
							(\because a_1 + a_2 + 2 = \int_{Z_2} c_1(TM)|_{Z_2} = \mathrm{Vol}(Z_2))  \\
		\end{array}		
	\end{equation}
	where $a_1q$ and $a_2q$ denote the first Chern classes of the complex line bundles $\xi_1$ and $\xi_2$ over $Z_2$, respectively, such that $\xi_1 \oplus \xi_2$ is isomorphic to the normal bundle 
	over $Z_2$. Therefore,
	\[
		2k + \mathrm{Vol}(Z_0) + \mathrm{Vol}(Z_{\max}) = 8. 
	\]Since $\mathrm{Vol}(Z_0)$ and $\mathrm{Vol}(Z_2)$ are positive integers and $k \geq 2$, 
	the only possible value of $k$ is $2$ or $3$.
	\end{proof}

\begin{theorem}\label{theorem_Sec6_2_4}
		Let $(M,\omega)$ be a six-dimensional closed monotone semifree Hamiltonian $S^1$-manifold such that $\mathrm{Crit} H = \{ 2, 1, 0, -1, -3\}$. 
		Then, up to permutation of indices\footnote{Any permutation on $\{1,2,\cdots,k\}$ switches the ordering of exceptional divisors on $\p^2 \#~k\overline{\p^2}$. }, 
		there are two possible topological fixed point data given by 
			\begin{table}[H]
				\begin{tabular}{|c|c|c|c|c|c|c|c|}
					\hline
					   &  $(M_0, [\omega_0])$ & $Z_{-3}$ & $Z_{-1}$ & $Z_0$ & $Z_1$ & $Z_{2}$\\ \hline \hline
					   {\bf (II-4.1)} & $(\p^2 \# 2\overline{\p^2}, 3u - E_1 - E_2)$ & {\em pt} & {\em 2 pts} & \makecell{ $Z_0 = Z_0^1 ~\dot \cup ~ Z_0^2$ \\
					    $Z_0^1 \cong Z_0^2 \cong S^2$ \\ $[Z_0^1] = u - E_1$ \\ $[Z_0^2] = u - E_1 - E_2$} & {\em pt} & $S^2$ \\ \hline    
					   {\bf (II-4.2)} &  $(\p^2 \# 3\overline{\p^2}, 3u - E_1 - E_2 - E_3)$ & {\em pt} & {\em 3 pts} & \makecell{ $Z_0 = S^2$ \\		
					    			    $[Z_0] = u - E_2 - E_3$} & {\em 2 pts} & $S^2$ \\ \hline    
				\end{tabular}
			\end{table}
			\noindent
		In either case, we have $b_2(M) = 4$ and 
		\[
			\langle c_1(TM)^3, [M] \rangle = \begin{cases}
				44 & \text{\bf (II-4-1)} \\ 42 & \text{\bf (II-4-2)}
			\end{cases}
		\]
	\end{theorem}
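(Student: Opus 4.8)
The plan is to split into the two sub-cases $k:=|Z_{-1}|\in\{2,3\}$ isolated by Lemma \ref{lemma_2_4_k} (so $|Z_1|=k-1$ by \eqref{equation_plus_one}), and in each run the same machinery as in the earlier parts of Case II. By Proposition \ref{proposition_topology_reduced_space} the reduced space $M_0\cong M_t$ ($t\in(-1,1)$) is the $k$-fold blow-up $\p^2\#\,k\overline{\p^2}$ with exceptional classes $E_1,\dots,E_k$, and by Proposition \ref{proposition_monotonicity_preserved_under_reduction} one has $[\omega_0]=c_1(TM_0)=3u-E_1-\cdots-E_k$. Near the minimum $e(P_{-1}^-)=-u$, so Lemma \ref{lemma_Euler_class} gives $e(P_0^-)=e(P_{-1}^+)=-u+E_1+\cdots+E_k$; writing $\mathrm{PD}(Z_0)=au+\sum b_iE_i$ and applying Lemma \ref{lemma_Euler_class} once more yields $e(P_0^+)=e(P_0^-)+\mathrm{PD}(Z_0)$, whence the Duistermaat--Heckman theorem \ref{theorem_DH} gives an explicit linear-in-$t$ formula for $[\omega_t]$ on $(0,1)$ with coefficients depending on $a$ and the $b_i$.

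Next I would impose the two ``blow-down'' constraints to pin down the $a,b_i$. At $t=1$ the $k-1$ co-index-two points force $k-1$ pairwise disjoint exceptional classes of $M_0$ — drawn from the finite list of Lemma \ref{lemma_list_exceptional} — to have vanishing $[\omega_1]$-area; at $t=2$ the reduced space $M_{2-\epsilon}$ is an $S^2$-bundle over $Z_{\max}\cong S^2$, so its fibre class $C$ satisfies $[C]\cdot[C]=0$, $\langle c_1(TM_{2-\epsilon}),[C]\rangle=2$ and $\langle[\omega_t],[C]\rangle\to 0$, while a section $\sigma$ has $\langle[\omega_t],[\sigma]\rangle\to\mathrm{Vol}(Z_{\max})$ as $t\to 2$ (Corollary \ref{corollary_volume}, Lemma \ref{lemma_volume}). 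Feeding these, together with the relation $2k+\mathrm{Vol}(Z_0)+\mathrm{Vol}(Z_{\max})=8$ coming from \eqref{equation_2_4_localization} and the positivity $\langle[\omega_0],[Z_0]\rangle>0$, into the DH expression leaves only finitely many candidate classes $\mathrm{PD}(Z_0)$; a short elimination (in the spirit of Lemmas \ref{lemma_1_3_k} and \ref{lemma_2_3_2}) leaves exactly $\mathrm{PD}(Z_0)=2u-2E_1-E_2$ when $k=2$ and $\mathrm{PD}(Z_0)=u-E_2-E_3$ when $k=3$, up to permutation of the $E_i$. The topology of $Z_0$ is then read off from the adjunction formula \eqref{equation_adjunction}: in the $k=3$ case $[Z_0]\cdot[Z_0]=-1$ and $\langle c_1(TM_0),[Z_0]\rangle=1$ force $Z_0$ to be a single $(-1)$-sphere, and in the $k=2$ case $[Z_0]\cdot[Z_0]+\sum_i(2-2g_i)=4$, disjointness of the components, and a case analysis on how $2u-2E_1-E_2$ can decompose force $Z_0=Z_0^1\,\dot\cup\,Z_0^2$ with $[Z_0^1]=u-E_1$, $[Z_0^2]=u-E_1-E_2$, both spheres.

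Finally, $b_2(M)=4$ follows by counting contributions to the perfect Morse--Bott function $H$: the index-two fixed data consists of the $k$ points of $Z_{-1}$ and the $H^0$ of the $\#\pi_0(Z_0)$ components of $Z_0$, so $b_2(M)=k+\#\pi_0(Z_0)=4$ in both cases (and $b_{\mathrm{odd}}=0$ since every fixed component is a point or a $2$-sphere). The Chern number is computed exactly as in Theorems \ref{theorem_Sec6_1_3} and \ref{theorem_Sec6_2_3}: apply the ABBV localization theorem \ref{theorem_localization} to $c_1^{S^1}(TM)^3$, observe that $c_1^{S^1}(TM)|_{Z}=c_1(TM)|_Z$ is a multiple of the area class on each component of $Z_0$ so that its cube contributes nothing, and evaluate the remaining terms at the isolated fixed points and at $Z_{\max}$ using the weights of Table \ref{table_fixed} and Lemma \ref{lemma_volume}; this yields $44$ for \textbf{(II-4.1)} and $42$ for \textbf{(II-4.2)}. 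I expect the main obstacle to be the second step — the combinatorial elimination of candidate classes for $\mathrm{PD}(Z_0)$, and in particular showing that $Z_0$ breaks into exactly the two stated components in \textbf{(II-4.1)} while staying connected in \textbf{(II-4.2)} — since that is where the permutation bookkeeping and the interaction between the $t=1$ and $t=2$ vanishing conditions is most delicate.
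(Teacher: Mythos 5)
Your plan follows the paper's proof essentially step for step: split by $k=|Z_{-1}|\in\{2,3\}$ via Lemma \ref{lemma_2_4_k}, compute $[\omega_t]$ from Lemma \ref{lemma_Euler_class} and Theorem \ref{theorem_DH}, constrain $\mathrm{PD}(Z_0)$ by forcing the $k-1$ disjoint exceptional classes from Lemma \ref{lemma_list_exceptional} to vanish at $t=1$ and by the volume/Euler-class conditions at $t=2$ (Lemma \ref{lemma_volume}, Corollary \ref{corollary_volume}), then settle the topology of $Z_0$ by adjunction and intersection arguments. You also correctly identify the combinatorial elimination and the connectedness analysis of $Z_0$ as the delicate step, which is exactly where the paper spends its effort.
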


\begin{proof}
	Thanks to Lemma \ref{lemma_2_4_k}, we know that $|Z_{-1}| = 2$ (and $|Z_1| = 1$) or $3$ (and $|Z_1| = 2$).  \vs{0.1cm}
	
	Suppose that $|Z_{-1}| = 2$. Then, we have $M_{-1 + \epsilon} \cong M_0 \cong \p^2 \# 2 \overline{\p}^2$. Denote by $\mathrm{PD}(Z_0) = au + bE_1 + cE_2$ for some $a,b,c \in \Z$. 
	Since $|Z_1| = 1$, by Proposition \ref{proposition_topology_reduced_space}, 
	exactly one symplectic blow-down occurs at $(M_1, \omega_1)$, i.e., there is a certain symplectic sphere $C$ with $[C] \cdot [C] = -1$ in $(M_0, \omega_0)$ vanishing at level $t = 1$.  
	So, 
		\begin{itemize}
			\item $\mathrm{PD}(C) = E_1, E_2,$ or $u - E_1 - E_2$ \quad (by Lemma \ref{lemma_list_exceptional}), 
			\item $\langle [\omega_0], [C] \rangle = \langle c_1(TM_0), [C] \rangle = [C]\cdot[C] + 2 = 1$ \quad (by the adjunction formula \eqref{equation_adjunction}),
			\item $\langle [\omega_1], [C] \rangle = 0$
		\end{itemize}
	where the last equation, letting $\mathrm{PD}(C) = xu + yE_1 + zE_2$, can be rephrased as  
	\begin{equation}\label{equation_2_4_blowdown}
			x(4-a) + y(b+2) + z(c+2) = 0.
	\end{equation}
	which follows from the fact that 
	\[
		\begin{array}{ccl}\vs{0.1cm}
			\lim_{t \rightarrow 1} ~[\omega_t] & = & [\omega_0] - e(P_0^+) \\ \vs{0.1cm}
									& = & = 3u - E_1 - E_2 - (-u + E_1 + E_2 + \mathrm{PD}(Z_0)) \\ \vs{0.1cm}
									& = & (4-a)u - (b+2)E_1 - (c+2)E_2
		\end{array}
	\]
	by the Duistermaat-Heckman theorem \ref{theorem_DH}. \vs{0.1cm}
	
	If $\mathrm{PD}(C) = u - E_1 - E_2$ (so that $x = 1, y = z = -1$), then $a + b + c = 0$ by \eqref{equation_2_4_blowdown}. Also, since $E_1$ and $E_2$ should not vanish on 
	the reduced space $M_1$, we have 
	\[
		\langle [\omega_1], E_1 \rangle = b+2 > 0 \quad \text{and} \quad \langle [\omega_1], E_2 \rangle = c+2 >0. 
	\]
	Moreover, as the symplectic area $\langle [\omega_t]^2, [M_t] \rangle$ is consistently positive for every $t \in (-3, 2)$,
	the coefficient of $u$ of $[\omega_t]$ should never vanish (by the mean value theorem), in particular, we have $4-a > 0$. 
	Furthermore, since $\langle [\omega_0], [Z_0] \rangle  = \langle c_1(TM), [Z_0] \rangle \geq 1$,  
	we also get $3a + b + c >0$. To sum up, we obtain 
	\begin{equation}\label{equation_II_4_inequality}
		a + b + c = 0, \quad a \leq 3,\quad  b \geq -1, \quad c \geq -1, \quad 3a + b + c \geq 1. 
	\end{equation}
	Solving \eqref{equation_II_4_inequality}, 
	we see that $\mathrm{PD}(Z_0) = 2u - E_1 - E_2$, $u - E_1$, or $u - E_2$. However, in either case, it satisfies 
	\[
		\lim_{t \rightarrow 2} \langle [\omega_t]^2, [M_t] \rangle = (5-2a)^2 - (2b+3)^2 - (2c+3)^2 < 0
	\] 
	which leads to a contradiction. Therefore, we have $\mathrm{PD}(C) \neq u - E_1 - E_2$. \vs{0.1cm}
	
	If $\pd(C) = E_1$ (or equally $\pd(C) = E_2$ up to permutation of indices), then we have $x = z = 0$ and $y = 1$ so that $b = -2$ by \eqref{equation_2_4_blowdown}. 
	Moreover, other exceptional classes $u - E_1 - E_2$ and $E_2$ should not vanish at $t = 1$ so that 
	$\langle [\omega_1], u - E_1 - E_2 \rangle = 4 - a - (c+2) >0$ and $\langle [\omega_1], E_2 \rangle = c+2 > 0$. So,
	\[
		a < 2 - c, \quad c \geq -1, \quad \text{and} \quad b = -2.
	\]
	Together with the condition $\langle [\omega_0], [Z_0] \rangle > 0$ (equivalently $3a + b + c > 0$), we can easily check that there are two possible cases;
	$\pd(Z_0) = 2u  - 2E_1 - E_2$ or $u - 2E_1$.  
	
	If $\pd(Z_0) = u - 2E_1$, then $Z_0$ has the symplectic area $\langle [\omega_0], [Z_0] \rangle = 1$ so that it is connected. 
	On the other hand, the adjunction formula \eqref{equation_adjunction} implies that 
	\[
		1 = \langle c_1(TM_0), [Z_0] \rangle = [Z_0] \cdot [Z_0] + 2 - 2g = -1 -2g
	\]
	which is impossible. Therefore, the only possible case is that $\pd(Z_0) = 2u - 2E_1 - E_2$. 
	So, the symplectic area of $Z_0$ is $\langle [\omega_0], [Z_0] \rangle = 3$, which means that $Z_0$ consists of at most three connected components. 
	In addition, the adjunction formula \eqref{equation_adjunction} says that 
	\[
		3 = \langle c_1(TM_0), [Z_0] \rangle = [Z_0] \cdot [Z_0] + \sum_i (2 - 2g_i) = -1 + \sum_i (2 - 2g_i)
	\]
	where the sum is taken over all connected components of $Z_0$. This implies that $Z_0$ should contain at least two spheres and we have two possibilities : 
	\begin{itemize}
		\item $Z_0 = S^2 \sqcup S^2$ (with symplectic areas $1$ and $2$, respectively), or 
		\item $Z_0 = S^2 \sqcup S^2 \sqcup T^2$ (with the same symplectic areas 1). 
	\end{itemize}
	In either case, we denote the two sphere components by $Z_0^1$ and $Z_0^2$ and assume the symplectic area of $Z_0^1$ is equal to one. 
	Then the adjunction formula \eqref{equation_adjunction} implies that $[Z_0^1] \cdot [Z_0^1] = -1$. Then, by Lemma \ref{lemma_list_exceptional}, 
	we have $\pd(Z_0^1) = u - E_1 - E_2$, $E_1$, or $E_2$. 
	
	First, if $Z_0 = Z_0^1 \sqcup Z_0^2$, then we have $\pd(Z_0^1) = u - E_1 - E_2$ and $\pd(Z_0^2) = u - E_1$ because, if $\pd(Z_0^1) = E_1$ 
	(or $E_2$ respectively), then 
	$\pd(Z_0^2)$ should be $2u - 3E_1 - E_2$ (or $2u - 2E_1 - 2E_2$ respectively), which implies that $[Z_0^1] \cdot [Z_0^2] \neq 0$ and this cannot happen since $Z_0^1$ and $Z_0^2$
	are disjoint.
	
	Second, if $Z_0 = Z_0^1 \sqcup Z_0^2 \sqcup Z_0^3$ with $Z_0^3 \cong T^2$, then the symplectic area of $Z_0^1$ and $Z_0^2$ are all equal to one so that 
	$[Z_0^1] \cdot [Z_0^1] = [Z_0^2] \cdot [Z_0^2] = -1$ by
	the adjunction formula \eqref{equation_adjunction}. Again by Lemma \ref{lemma_list_exceptional}, $\pd(Z_0^1)$ and $\pd(Z_0^2)$ are one of $\{u - E_1 - E_2, E_1, E_2\}$, respectively. 
	Since $[Z_0^1] \cdot [Z_0^2] = 0$, the only possible case is that $\pd(Z_0^1) = E_1$ and $\pd(Z_0^2) = E_2$, or $\pd(Z_0^1) = E_2$ and $\pd(Z_0^2) = E_1$. However, 
	in either case, we have 
	$\pd(Z_0^3) = 2u - 3E_1 - 2E_2$, which is impossible because $[Z_0^1] \cdot [Z_0^3] \neq 0$. 
	
	Consequently, if $|Z_{-1}| = 2$, then the only possible case is where $Z_0$ consists of two spheres whose dual classes are $u - E_1 - E_2$ and $u - E_1$, respectively. 
	This proves the half of 
	Theorem \ref{theorem_Sec6_2_4}. \vs{0.3cm}
	
	Now, we consider the case where $|Z_{-1}| = 3$ (and $|Z_1| = 2$ by \eqref{equation_plus_one}). In this case, we have 
	\[
		M_{-1 + \epsilon} \cong M_0 \cong \p^2 \# 3 \p^3.
	\]
	On $(M_1, \omega_1)$, two blow-downs occur simultaneously and we denote the exceptional divisors by $C_1$ and $C_2$ (with $[C_1] \cdot [C_2] = 0$). 
	Let $\pd(Z_0) = au + bE_1 + cE_2 + dE_3$ for some $a,b,c,d \in \Z$. Note that, up to permutation of indices, there are three possible cases : 
	\begin{itemize}
		\item {\bf Case I :} $\pd(C_1) = E_1$ and $\pd(C_2) = E_2$, 
		\item {\bf Case II :} $\pd(C_1) = E_1$ and $\pd(C_2) = u - E_2 - E_3$, 
		\item {\bf Case III :} $\pd(C_1) = u - E_1 - E_2$ and $\pd(C_2) = u - E_1 - E_3$. 
	\end{itemize} 
	For each case, let us investigate the (in)equalities $\langle [\omega_1], [C_1] \rangle = \langle [\omega_1], [C_2] \rangle = 0$, $\langle [\omega_0], [Z_0] \rangle > 0$, and
	$\langle [\omega_t]^2, [M_t] \rangle > 0$ for $t < 2$. Note that 
	\[
		[\omega_1] = (4-a)u - (b+2)E_1 - (c+2)E_2 -(d+2)E_3
	\] by the Duistermaat-Heckman theorem \ref{theorem_DH}.
	\vs{0.3cm}
	
	\noindent {\bf Case I.} Suppose that $\pd(C_1) = E_1$ and $\pd(C_2) = E_2$. Then we have 
	\[
		\langle [\omega_1], [C_1] \rangle = \langle [\omega_1], [C_2] \rangle = b+2 = c+2 = 0, \quad (\Leftrightarrow \quad b = c = -2.)
	\]
	Also, by assumption, the classes $E_3$, $u - E_1 - E_2$, $u - E_1 - E_3$, and $u - E_2 - E_3$ do not vanish on $(M_1, \omega_1)$ and therefore
	we have 
	\[
		\langle [\omega_1], E_3 \rangle = d + 2 > 0, \quad \langle [\omega_1], u - E_1 - E_2 \rangle = -a + 4> 0, \quad 
	\] 
	and 
	\[
		\langle [\omega_1], u - E_1 - E_3 \rangle = -a - b - d > 0, \quad \langle [\omega_1], u - E_2 - E_3 \rangle = -a - c - d > 0
	\]
	(or equivalently $2 > a + d$).
	Also, since 
	$\langle [\omega_0], [Z_0] \rangle > 0$, and $\langle [\omega_t]^2, [M_t] \rangle > 0$ for $t < 2$, we have 
	\[
		3a +  b + c + d > 0, \quad 5 - 2a \geq 0.
	\]
	Consequently, we obtain
	\[
		a \leq 2, \quad b = c = -2, \quad d \geq -1, \quad a + d \leq 1, \quad 3a + d \geq 5.
	\]
	This has the only integral solution $a = 2, b = c = -2, d = -1$
	Thus $\pd(Z_0) = 2u - 2E_1 - 2E_2 - E_3$ and the symplectic area of $Z_0$ is 
	\[
		\langle [\omega_0], [Z_0] \rangle = 1,
	\]
	which implies that $Z_0$ is connected. On the other hand, the adjunction formula \eqref{equation_adjunction} says that 
	\[
		1 = [Z_0] \cdot [Z_0] + 2 - 2g = -3 - 2g, \quad g \geq 0
	\]
	which is impossible. 
	Therefore, $\left( \pd(C_1), \pd(C_2) \right) \neq (E_1, E_2)$. \vs{0.3cm}

	\noindent {\bf Case II.} Assume that $\pd(C_1) = E_1$ and $\pd(C_2) = u - E_2 - E_3$. Then we obtain 
	\[
		\langle [\omega_1], [C_1] \rangle = b+2 = 0, \quad \langle [\omega_1], [C_2] \rangle = (4-a) - (c+2) - (d+2) = 0 \quad (\Leftrightarrow \quad b = -2, ~a + c + d = 0).
	\]
	Also, since $E_2$, $E_3$, $u - E_1 - E_2$, and $u - E_1 - E_3$ do not vanish on $(M_1, \omega_1)$, 
	we get
	\[
		\langle [\omega_1], E_2 \rangle  = c + 2 > 0, \quad \langle [\omega_1], E_3 \rangle = d + 2 > 0
	\]
	and
	\[
		\langle [\omega_1], u - E_1 - E_2 \rangle = -a - b - c > 0, \quad \langle [\omega_1], u - E_1 - E_3 \rangle = -a - b - d > 0.
	\] 
	Furthermore, since 
	$\langle [\omega_0], [Z_0] \rangle > 0$, and $\langle [\omega_t]^2, [M_t] \rangle > 0$ for $t < 2$, we have 
	\[
		3a +  b + c + d > 0, \quad 5 - 2a \geq 0.
	\]
	Therefore, 
	\[
		a \leq 2, \quad b = -2, \quad c \geq -1, \quad d \geq -1, \quad a + c \leq 1, \quad a + d \leq 1, \quad 3a + c + d \geq 3.
	\]
	and it has a unique solution $a = 2, b = -2, c = d = -1$ so that $\pd(Z_0) = 2u - 2E_1 - E_2 - E_3$. 
	
	On the other hand, note that $e(P_0^+) = u - E_1$ which implies that $\langle e(P_2^-)^2, [M_{2-\epsilon}] \rangle = 2$ by Lemma \ref{lemma_Euler_class}.
	Applying Lemma \ref{lemma_volume}, the first Chern number of the normal bundle of $Z_{\max}$ is $b_{\max} = -2$.  Then, by Corollary \ref{corollary_volume} implies that 
	\[
		\int_{Z_{\max}} \omega = 2 + b_{\max} = 0
	\]
	which leads to a contradiction. Consequently, we see that $\left( \pd(C_1), \pd(C_2) \right) \neq (E_1, u - E_2 - E_3)$. \vs{0.3cm}
	

	\noindent {\bf Case III.} Assume that $\pd(C_1) = u - E_1 - E_2$ and $\pd(C_2) = u - E_1 - E_3$. 
	Then, by the fact that  $\langle [\omega_1], [C_1] \rangle = \langle [\omega_1], [C_2] \rangle = 0$, we obtain
	\[
		(4-a) - (b+2) - (c+2) = (4 - a) - (b+2) - (d+2) = 0, \quad (\Leftrightarrow \quad a + b + c = a + b + d = 0) .
	\]	
	Also, since the classes $E_1, E_2, E_3, u - E_2 - E_3$ 
	 do not vanish on $(M_1, \omega_1)$, we have 
	 \[
	 	\langle [\omega_1], E_1 \rangle = 2 + b > 0,  \quad 
	 	\langle [\omega_1], E_2 \rangle = 2 + c > 0,  \quad 
	 	\langle [\omega_1], E_3 \rangle = 2 + d > 0,
	\]
	and 
	\[
	 	\langle [\omega_1], u - E_2 - E_3 \rangle = (4-a) - (c+2) - (d+2) > 0 \quad (\Leftrightarrow \quad a + c + d < 0).
	\]
	Moreover, since $\langle [\omega_0], [Z_0] \rangle > 0$, and $\langle [\omega_t]^2, [M_t] \rangle > 0$ for $t < 2$, we have 
	\[
		\langle [\omega_0], [Z_0] \rangle = 3a + b + c + d > 0 \quad \text{and} \quad 	5 - 2a \geq 0.
	\]
	To sum up, we obtain
	\[
		a + b + c = a + b + d = 0,\quad b,c,d \geq -1,\quad a+c+d \leq -1, \quad 3a+b+c+d \geq 1, \quad a \leq 2
	\]
	and it has a unique solution $(a,b,c,d) = (1,0,-1,-1)$, i.e., $\mathrm{PD}(Z_0) = u - E_2 - E_3$. 
	It follows from the adjunction formula \eqref{equation_adjunction} that 
	\[
		1 = \langle c_1(TM), [Z_0] \rangle = [Z_0] \cdot [Z_0] + 2 - 2g = 1 - 2g \quad (\Leftrightarrow g = 0)
	\]
	and hence $Z_0 \cong S^2$. This completes the proof.
\end{proof}

\begin{example}[Fano variety of type {\bf (II-4)}]\label{example_Sec6_2_4} In this example, we provide Fano varieties equipped with semifree $\C^*$-actions 
having topological fixed point data described in Theorem \ref{theorem_Sec6_2_4}. We first denote by $S_d$ a del Pezzo surface of degree $d$, 
that is, the $(9-d)$-times blow up of $\p^2$ where $0 \leq d\leq 8$. \vs{0.1cm}

\begin{enumerate}
    \item {\bf Case (II-4.1) \cite[No. 11 in the list in Section 12.5]{IP}} :
    Let $M = \p^1 \times S_8$ equipped with the monotone K\"{a}hler form $\omega$ with $c_1(TM) = [\omega] \in H^2(M; \Z)$. Since $\p^1$ and $S_8$ are both 
    toric varieties, $M$ is also a toric variety with the induced Hamiltonian $T^3$-action whose moment map image is the product of a closed interval (of length 2)
    and a right trapezoid as in Figure \ref{figure_2_4_1}. Let $C$ be a smooth rational curve corresponding to the edge (a dotted edge in Figure \ref{figure_2_4_1}) connecting $(0,2,2)$ and $(1,2,2)$. 
    Let $\widetilde{M}$ be the monotone toric blow-up of $M$ along $C$ with a moment map $\mu$ where the moment map image $\mu(\widetilde{M})$ is described below.   \vs{0.3cm}
		\begin{figure}[H]
			\scalebox{0.8}{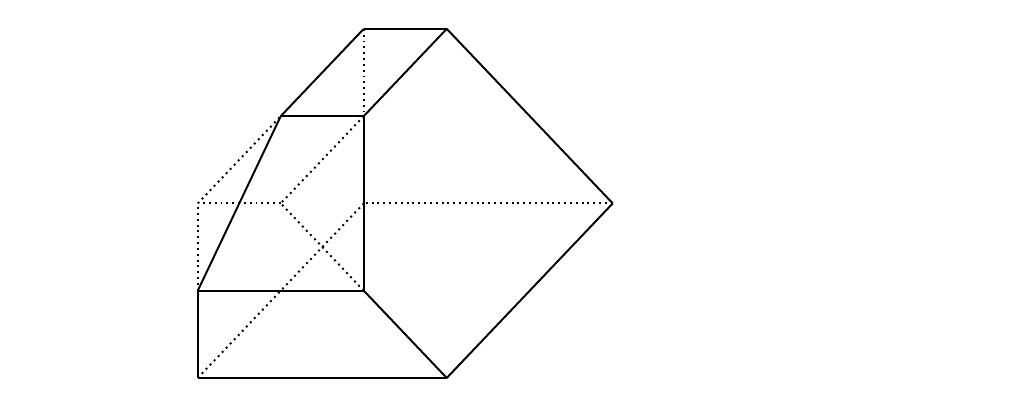} \vs{0.3cm}
			\caption{\label{figure_2_4_1} Blow-up of $\p^1 \times S_8$ along the sphere corresponding to the edge $C$.}
		\end{figure}
  Take the $S^1$-subgroup of $T^3$ generated by $\xi = (-1,1,0) \in \frak{t}$. The $S^1$-action is semifree since the dot product of each primitive edge vectors and $\xi$ is 
   either $0$ or $\pm 1$. Moreover, the fixed point set $\widetilde{M}^{S^1}$ can be expressed as \\
\begin{itemize}
   	\item $Z_{\min} = \mu^{-1}(3,0,0)) = \mathrm{pt},$ \vs{0.1cm}
   	\item $Z_{-1} = \mu^{-1}(3,2,0) ~\cup ~\mu^{-1}(1,0,2) = \mathrm{two~pts} ,$ \vs{0.2cm}
	\item $Z_0 = \underbrace{\mu^{-1}\left(\overline{(1,1,2) ~(2,2,1)}\right)}_{\text{volume} = 1} ~\cup ~\underbrace{\mu^{-1}\left(\overline{(0,0,0) ~(0,0,2)}\right)}_{\text{volume} = 2} ~\cong 
	~S^2 ~\dot \cup ~S^2,$ \vs{0.1cm}
	\item $Z_1 = \mu^{-1}(0,1,2) = \mathrm{pt}$, \vs{0.2cm}
	\item $Z_{\max} = Z_2 = \underbrace{\mu^{-1}\left(\overline{(0,2,0) ~(0,2,1)}\right)}_{\text{volume} = 1} ~\cong ~S^2$ \vs{0.1cm}
\end{itemize}
and this data coincides with the fixed point data {\bf (II-4.1)} in Theorem \ref{theorem_Sec6_2_4}.
\vs{0.5cm}

    \item {\bf Case (II-4.2) \cite[No. 10 in the list in Section 12.5]{IP}} :
    Let $M = S_7 \times \p^1$ where $S_7$ denotes the del Pezzo surface of degree $7$, i.e., two points blow up of $\p^2$. Then the toric structure on 
    $S_7$ and $\p^1$ inherits a toric structure on $M$ whose moment polytope is given in Figure \ref{figure_2_4_2}. 
    
		\begin{figure}[H]
			\scalebox{0.8}{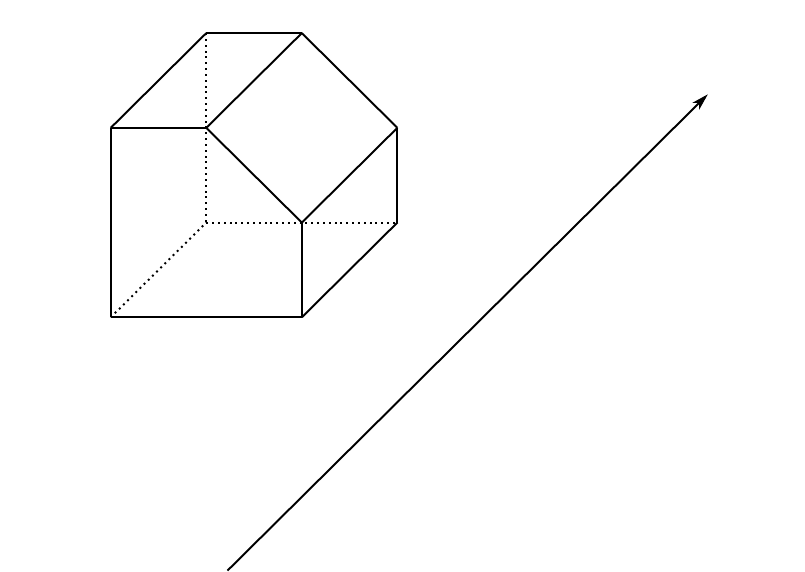} \vs{0.3cm}
			\caption{\label{figure_2_4_2} $\p^1 \times S_7$.}
		\end{figure}
		
	\noindent Now, let $S^1$ be the subgroup of $T^3$ generated by $\xi = (1, -1, 1) \in \frak{t}$. Then the $S^1$-action is semifree because every primitive edge vector 
	(except for those corresponding to fixed components of the $S^1$-action) is either one of $(\pm 1,0,0)$, $(0, \pm 1, 0)$, or $(0,0,\pm 1)$. Moreover, the fixed components for the action are given by \\ 
\begin{itemize}
   	\item $Z_{\min} = \mu^{-1}(0,2,0)) = \mathrm{pt}$, \vs{0.2cm}
   	\item $Z_{-1} = \mu^{-1}(2,2,0) ~\cup ~\mu^{-1}(0,2,2) ~\cup ~\mu^{-1}(0,0,0) = \mathrm{three~pts} ,$ \vs{0.3cm}
	\item $Z_0 = \underbrace{\mu^{-1}(\overline{(2,2,1) ~(1,2,2)})}_{\text{volume} = 1} ~\cong 
	~S^2,$ \vs{0.1cm}
	\item $Z_1 = \mu^{-1}(0,0,2) ~\cup ~\mu^{-1}(2,0,0) = \mathrm{two ~pts},$ \vs{0.2cm}
	\item $Z_{\max} = Z_2 = \underbrace{\mu^{-1}(\overline{(1,0,2) ~(2,0,1)})}_{\text{volume} = 1} ~\cong ~S^2$ \vs{0.2cm}
\end{itemize}
and are exactly the same as {\bf (II-4.2)} in Theorem \ref{theorem_Sec6_2_4}.
\end{enumerate}
\end{example}
\vs{0.1cm}

\subsection{Case III : $\dim Z_{\max} = 4$}
\label{ssecCaseIIIDimZMax4}

In this section, we give the complete classification of topological fixed point data in the case where 
\[
	H(Z_{\min}) = -3 \quad \text{and} \quad H(Z_{\max}) = 1, 
\]
or equivalently, $Z_{\min} = \mathrm{point}$ and $\dim Z_{\max} = 4$. 
We also provide algebraic Fano examples for each cases and describe them in terms of moment polytopes for certain 
Hamiltonian torus actions as in Section \ref{ssecCaseIDimZMax} and \ref{ssecCaseIIDimZMax2}.

Lemma \ref{lemma_possible_critical_values} implies all possible non-extremal critical values of the balanced moment map $H$ are $- 1$ or $0$. Also, since $Z_{\max}$ is four dimensional, 
we have $M_0 \cong Z_{\max}$ by Proposition \ref{proposition_topology_reduced_space} and therefore 
$Z_{\max} \cong M_0 \cong M_{-1+\epsilon}$ is $|Z_{-1}|$-times blow-up of $\p^2$. \\

\subsubsection{${\mathrm{Crit} \mathring{H}} = \emptyset$}
~\\

Assume that there is no non-extremal fixed point. Then, 
\[
	M_{\max} \cong M_0 \cong M_{-3 + \epsilon} \cong \p^2. 
\]
In addition, since $e(P^+_{-3}) = -u \in H^2(\p^2 ; \Z)$, the cohomology class of the reduced symplectic form on each $M_t$ is given by 
\[
	[\omega_t] = (t + 3) u, \quad -3 \leq t \leq 1.
\]
by the Duistermaat-Heckman theorem \ref{theorem_DH}. 

\begin{theorem}\label{theorem_Sec6_3_1}
		Let $(M,\omega)$ be a six-dimensional closed monotone Hamiltonian $S^1$-manifold such that $\mathrm{Crit} H = \{ 1,-3\}$. 
		Then there is a unique possible topological fixed point data given by 
			\begin{table}[H]
				\begin{tabular}{|c|c|c|c|}
					\hline
					   &  $(M_0, [\omega_0])$ & $Z_{-3}$ & $Z_1$ \\ \hline \hline
					   {\bf (III-1)} & $(\p^2, 3u)$ & {\em pt} & $\p^2$ \\ \hline
				\end{tabular}
			\end{table}
			\noindent
		Moreover, we have $b_2(M) = 1$ and 
		$
			\langle c_1(TM)^3, [M] \rangle = 64.
		$
\end{theorem}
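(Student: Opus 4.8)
The plan is to observe that the table {\bf (III-1)} is essentially already assembled in the discussion just before the statement, and then to read off $b_2(M)$ and compute $\langle c_1(TM)^3,[M]\rangle$. For the table: by Lemma \ref{lemma_possible_critical_values} (Table \ref{table_fixed}), $H(Z_{\min})=-3$ forces $Z_{-3}$ to be a single point, $H(Z_{\max})=1$ forces $\dim Z_{\max}=4$ with $\mathrm{ind}(Z_{\max})=2$, and $Z_{\max}$ is connected by Corollary \ref{corollary_properties_moment_map}(4). Since $\mathrm{Crit}\,\mathring H=\emptyset$, the gradient flow of $H$ identifies every reduced space $M_t$, $t\in(-3,1)$, with $M_{-3+\epsilon}\cong S^5/S^1\cong\p^2$; the reduced space $M_1=H^{-1}(1)/S^1$ equals $Z_{\max}$ and is identified with $M_{1-\epsilon}$ through the disk-bundle normal form of $H$ near the maximum, so $Z_{\max}\cong\p^2$. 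With $[\omega_t]=(t+3)u$ from the Duistermaat--Heckman theorem \ref{theorem_DH}, setting $t=0$ yields $[\omega_0]=3u=c_1(T\p^2)$, consistent with Proposition \ref{proposition_monotonicity_preserved_under_reduction}; this is precisely {\bf (III-1)}. Since the balanced moment map $H$ is a perfect Morse--Bott function, the Poincar\'{e} polynomial is $P_M(t)=t^{0}P_{\mathrm{pt}}(t)+t^{2}P_{\p^2}(t)=1+t^{2}+t^{4}+t^{6}$, so $b_2(M)=1$ (and $b_{\mathrm{odd}}(M)=0$).

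The only genuine computation is the Chern number, which I would obtain by ABBV localization (Theorem \ref{theorem_localization}). Because the action is semifree and $\Sigma(Z_{-3})=-H(Z_{-3})=3$ by Corollary \ref{corollary_sum_weights_moment_value}, the tangential weights at $Z_{-3}$ are $(1,1,1)$; likewise the single nonzero (normal) weight at $Z_{\max}$ is $-1$, and I would write $c_1(\nu)=b\,u$ for the normal line bundle $\nu$ of $Z_{\max}$, so that $c_1(TM)|_{Z_{\max}}=c_1(T\p^2)+c_1(\nu)=(3+b)u$ and $c_1^{S^1}(TM)|_{Z_{\max}}=(3+b)u-\lambda$ by Proposition \ref{proposition_equivariant_Chern_class}. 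Since $\deg c_1^{S^1}(TM)<6$, Corollary \ref{corollary : localization degree 2n} gives $\int_M c_1^{S^1}(TM)=0$, i.e.
\[
	0=\frac{3\lambda}{\lambda^{3}}+\int_{\p^2}\frac{(3+b)u-\lambda}{b\,u-\lambda}=\frac{3(1-b)}{\lambda^{2}},
\]
where one expands $(b\,u-\lambda)^{-1}=-\lambda^{-1}\sum_{j\ge 0}(b\,u/\lambda)^{j}$ and truncates at $u^{2}$ because $u^{3}=0$ on $\p^2$; hence $b=1$. Feeding $b=1$ into localization applied to $c_1^{S^1}(TM)^{3}$ and using $\int_M c_1^{S^1}(TM)^{3}=\langle c_1(TM)^{3},[M]\rangle$ (Corollary \ref{corollary : localization degree 2n}) gives
\[
	\langle c_1(TM)^{3},[M]\rangle=\frac{(3\lambda)^{3}}{\lambda^{3}}+\int_{\p^2}\frac{(4u-\lambda)^{3}}{u-\lambda}=27+37=64.
\]

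I do not expect a serious obstacle: this is the simplest case of the classification. The one point that needs care is the identification $Z_{\max}\cong\p^2$, i.e.\ matching the reduced space just below the maximum with the maximal fixed component via the equivariant Darboux/disk-bundle normal form of $H$ near $Z_{\max}$, and keeping the signs straight in the localization residues; everything else is bookkeeping.
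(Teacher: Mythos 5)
Your proposal is correct and follows essentially the same route as the paper: the table and $b_2(M)=1$ are read off from the normal forms and perfect Morse--Bott theory, and the Chern number comes from ABBV localization reducing to $27+\int_{\p^2}(4u-\lambda)^3/(u-\lambda)=27+37=64$. The only (harmless) variation is that you pin down the normal bundle degree of $Z_{\max}$ by an auxiliary localization of $c_1^{S^1}(TM)$, whereas the paper gets it directly from $e(P_1^-)=e(P_{-3}^+)=-u$, the Euler class of the Hopf fibration carried unchanged across the critical-value-free interval.
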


\begin{proof}
	We only need to prove that $\langle c_1(TM)^3, [M] \rangle = 64.$ Using the localization theorem \ref{theorem_localization}, we obtain
	\[
		\begin{array}{ccl}\vs{0.1cm}
			\ds \int_M c_1^{S^1}(TM)^3 & = & \ds  \sum_{Z \subset M^{S^1}} \int_Z \frac{\left(c_1^{S^1}(TM)|_Z\right)^3}{e_Z^{S^1}} \\ \vs{0.2cm}
								& = & \ds  \frac{(3\lambda)^3}{\lambda^3} + \int_{Z_{\max}} \frac{\left(3u + (-\lambda - e(P^-_1)\right)^3 }{-\lambda - e(P^-_{1})} \quad \quad (e(P^-_1) = -u) \\ \vs{0.1cm}
								& = & 27 + \ds \int_{Z_{\max}} \frac{(4u - \lambda)^3}{u - \lambda} = 27 + \ds \int_{Z_{\max}} (48u^2 - 12u^2 + u^2) = 64.
		\end{array}
	\]
\end{proof}

\begin{example}[Fano variety of type {\bf (III-1)}]\cite[17th in the list in p. 215]{IP}\label{example_Sec6_3_1}
	Let $M = \p^3$ equipped with the Fubini-Study form $\omega$ with $[\omega ] = 4u = c_1(TM)$. 
	Then $(\p^3, \omega)$ admits a toric structure given by 
	\[
		(t_1, t_2, t_3) \cdot [x,y,z,w] = [t_1x, t_2y, t_3z, w], \quad (t_1, t_2, t_3) \in T^3, [x,y,z,w] \in \p^3
	\]
	and the corresponding moment polytope is the 3-simplex in $\R^3$ as in Figure \ref{figure_3_1}.
	
		\begin{figure}[H]
			\scalebox{0.8}{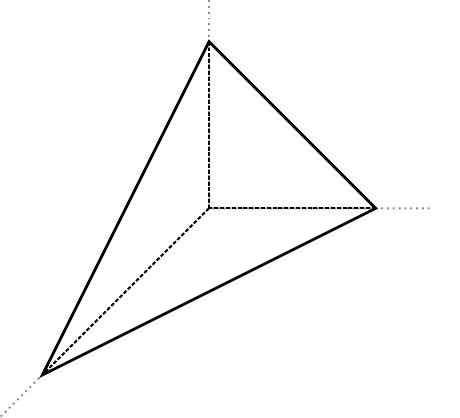} \vs{0.3cm}
			\caption{\label{figure_3_1} Moment polytope for $\p^3$.}
		\end{figure}
		
	\noindent If we take a subgroup $S^1 \subset T^3$ generated by $\xi = (1,1,1)$, then the induced action is expressed as
	\begin{equation}\label{equation_standard_action}
		t \cdot [x,y,z,w] = [tx,ty,tz,w], \quad t\in S^1, [x,y,z,w] \in \p^3
	\end{equation}
	which is semifree with two fixed components
	\[
		Z_{\min} = \mu^{-1}(0,0,0) = \mathrm{pt} \quad \text{and} \quad Z_{\max} = \mu^{-1}(\Delta) \cong \p^2
	\]
	where $\Delta$ is the triangle whose vertices are $(4,0,0)$, $(0,4,0)$, and $(0,0,4)$. This fixed point data coincides with {\bf (III-1)} in Theorem \ref{theorem_Sec6_3_1}.
\end{example}
\vs{0.3cm}

\subsubsection{${\mathrm{Crit} \mathring{H}} = \{ -1\}$}
~\\

Let $k = |Z_{-1}|$ be the number of index two fixed points. Then $M_{-1 + \epsilon} \cong M_0 \cong Z_{\max}$ is the $k$-times blow-up of $\p^2$. 
We denote by $E_1, \cdots, E_k$ the corresponding exceptional divisors. 
By Theorem \ref{theorem_DH}, we have 
\[
	[\omega_1] = 4u - 2E_1 - \cdots - 2E_k.
\]
\begin{lemma}\label{lemma_3_2}
	The only possible case is $k = 1$.
\end{lemma}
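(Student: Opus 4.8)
The plan is to pin down the cohomology class of the reduced symplectic form at the maximal level and then, in the case $k\ge 2$, contradict the known description of the symplectic cone of a rational surface.

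First I would recall the topological setup already established just before the lemma. Since $\mathrm{Crit}\,\mathring H=\{-1\}$, the interval $(-1,1)$ consists of regular values, and applying Proposition \ref{proposition_topology_reduced_space} at the index-two level $-1$ and at the maximum gives
\[
	M_0\;\cong\;M_{-1+\epsilon}\;\cong\;Z_{\max}\;\cong\;\p^2\#k\,\overline{\p^2},\qquad k:=|Z_{-1}|\ge 1,
\]
with exceptional classes $E_1,\dots,E_k\in H^2(M_0;\Z)$ and $e(P_{-3}^+)=-u$. By Lemma \ref{lemma_Euler_class} across $t=-1$, the Euler class on all of $(-1,1)$ is $e(P_{-1}^+)=-u+E_1+\cdots+E_k$, while Proposition \ref{proposition_monotonicity_preserved_under_reduction} gives $[\omega_0]=c_1(TM_0)=3u-E_1-\cdots-E_k$. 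Feeding these into Theorem \ref{theorem_DH} yields $[\omega_t]=(3+t)u-(1+t)(E_1+\cdots+E_k)$ for $t\in(-1,1)$, so in the limit $t\to 1$ the symplectic reduction at the top level is a symplectic form $\omega_1$ on $M_1\cong Z_{\max}\cong\p^2\#k\,\overline{\p^2}$ with
\[
	[\omega_1]=4u-2E_1-\cdots-2E_k .
\]
(Equivalently $\omega_1=\omega|_{Z_{\max}}$ and $[\omega_1]=[\omega]|_{Z_{\max}}=c_1(TM)|_{Z_{\max}}$, the normal Chern class of $Z_{\max}$ being read off from $e(P_1^-)=e(P_{-1}^+)$.)

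Then comes the decisive step. Assume $k\ge 2$. The class $u-E_1-E_2$ appears in the list of exceptional classes of Lemma \ref{lemma_list_exceptional}, and, with the standard intersection form $u^2=1$, $E_i^2=-1$, $u\cdot E_i=E_i\cdot E_j=0$ ($i\ne j$),
\[
	\langle[\omega_1],\,u-E_1-E_2\rangle=\big\langle\,4u-2E_1-\cdots-2E_k,\ u-E_1-E_2\,\big\rangle=4-2-2=0 .
\]
However, by the description of the symplectic cone of $\p^2\#k\,\overline{\p^2}$ (\cite{LL}, \cite{Li}; see also \cite{McD2}), every symplectic form on $\p^2\#k\,\overline{\p^2}$ pairs strictly positively with each exceptional class in that list. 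This contradicts the existence of $\omega_1$, so $k\le 1$; since $Z_{-1}\ne\emptyset$ we conclude $k=1$. As a sanity check, for $k=1$ one has $[\omega_1]=4u-2E_1$ with $\langle[\omega_1]^2,[M_1]\rangle=12>0$ and $\langle[\omega_1],E_1\rangle=2>0$, and $E_1$ is the only exceptional class of $\p^2\#\overline{\p^2}$, so no obstruction arises.

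The step I expect to require the most care is the second half of the first paragraph: justifying that the reduction at the maximal level is genuinely $(Z_{\max},\omega|_{Z_{\max}})$ and that the formula for $[\omega_t]$ extends continuously to $t=1$, so that $4u-2E_1-\cdots-2E_k$ is really realized by a symplectic form on $\p^2\#k\,\overline{\p^2}$, together with invoking the precise form of the symplectic cone theorem. The purely numerical input is not enough on its own: positivity of the Duistermaat--Heckman function, $\langle[\omega_t]^2,[M_t]\rangle=(3+t)^2-k(1+t)^2>0$ on $(-1,1)$, only forces $k\le 3$, so it is the exceptional-sphere constraint that closes the remaining gap between $k\le 3$ and $k=1$.
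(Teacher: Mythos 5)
Your proposal is correct and follows essentially the same route as the paper: both compute $[\omega_1]=4u-2E_1-\cdots-2E_k$ from the Duistermaat--Heckman theorem and then observe that for $k\ge 2$ the exceptional class $u-E_1-E_2$ pairs to zero with $[\omega_1]$, which is impossible since $M_{1-\epsilon}\cong Z_{\max}$ admits no vanishing exceptional sphere. The paper phrases the contradiction as an illegal blow-down at the maximal level rather than citing the symplectic cone, but this is the same underlying fact.
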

\begin{proof}
	Suppose that $k > 1$ and consider a symplectic sphere $C$ in the class $u - E_1 - E_2$. Then 
	\[
		\langle [\omega_1], [C] \rangle = 0
	\]
	so that $C$ vanishes, i.e., the symplectic blow-down occurs on $Z_{\max}$ and this contradicts that $M_{1 - \epsilon} \cong Z_{\max}$.
\end{proof}

\begin{theorem}\label{theorem_Sec6_3_2}
		Let $(M,\omega)$ be a six-dimensional closed monotone semifree Hamiltonian $S^1$-manifold such that $\mathrm{Crit} H = \{ 1,-1,-3\}$. 
		Then there is a unique possible topological fixed point data given by 
			\begin{table}[H]
				\begin{tabular}{|c|c|c|c|c|}
					\hline
					   &  $(M_0, [\omega_0])$ & $Z_{-3}$ & $Z_{-1}$ & $Z_1$ \\ \hline \hline
					   {\bf (III-2)} & $(\p^2, 3u)$ & {\em pt} & {\em pt} & $\p^2 \# \overline{\p^2}$ \\ \hline
				\end{tabular}
			\end{table}
			\noindent
		Moreover, we have $b_2(M) = 2$ and 
		$
			\langle c_1(TM)^3, [M] \rangle = 56.
		$
\end{theorem}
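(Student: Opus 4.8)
The plan is to follow the proof of Theorem~\ref{theorem_Sec6_3_1} almost verbatim, with Lemma~\ref{lemma_3_2} supplying the one extra ingredient. Since $-1 \in \mathrm{Crit}\,H$, Lemma~\ref{lemma_possible_critical_values} forces $Z_{-1}$ to be a nonempty set of index-two points, so $|Z_{-1}| \geq 1$, and Lemma~\ref{lemma_3_2} gives $|Z_{-1}| = 1$. As recalled at the start of Section~\ref{ssecCaseIIIDimZMax4}, $\dim Z_{\max} = 4$ implies $M_0 \cong Z_{\max} \cong M_{-1+\epsilon}$, the $|Z_{-1}|$-times blow-up of $\p^2$; hence $M_0 \cong Z_{\max} \cong \p^2 \# \overline{\p^2}$. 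Write $u$ for the line class pulled back from $\p^2$ and $E_1$ for the exceptional class, so $u^2 = 1$, $E_1^2 = -1$, $uE_1 = 0$. The Euler classes are then forced: Lemma~\ref{lemma_Euler_class} with $e(P_{-1}^-) = e(P_{-3}^+) = -u$ gives $e(P_{-1}^+) = -u + E_1$, and because $(-1,1)$ contains no critical value, $e(P_1^-) = e(P_{-1}^+) = -u + E_1$. Proposition~\ref{proposition_monotonicity_preserved_under_reduction} gives $[\omega_0] = c_1(TM_0) = 3u - E_1$, and Theorem~\ref{theorem_DH} then yields $[\omega_t] = (3+t)u - (1+t)E_1$ for $t \in (-1,1)$, consistent with the exceptional sphere having zero area as $t \to -1^+$ and with $[\omega_1] = 4u - 2E_1$ as $t \to 1^-$. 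Combined with Lemma~\ref{lemma_Euler_condition}, this pins down the whole topological fixed point data, which gives uniqueness.

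The Betti numbers follow from the fact that the balanced moment map $H$ is a perfect Morse--Bott function (Corollary~\ref{corollary_properties_moment_map}): the critical sets $\{Z_{\min}\}$ of index $0$, the point $Z_{-1}$ of index $2$, and $Z_{\max} \cong \p^2 \# \overline{\p^2}$ of index $2$ contribute
\[
	P_M(t) = 1 + t^2 + P_{\p^2 \# \overline{\p^2}}(t)\, t^2 = 1 + t^2 + (1 + 2t^2 + t^4)\, t^2 = 1 + 2t^2 + 2t^4 + t^6,
\]
so $b_2(M) = 2$ and $b_{\mathrm{odd}}(M) = 0$.

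For the first Chern number I would apply the ABBV localization theorem~\ref{theorem_localization} to $c_1^{S^1}(TM)^3$, exactly as in the proof of Theorem~\ref{theorem_Sec6_3_1}. Semifreeness and Corollary~\ref{corollary_sum_weights_moment_value} force the tangent weights $(1,1,1)$ at $Z_{\min}$ and $(-1,1,1)$ at the index-two point $Z_{-1}$, with localization contributions $(3\lambda)^3/\lambda^3 = 27$ and $\lambda^3/(-\lambda^3) = -1$. For $Z_{\max}$, as in Theorem~\ref{theorem_Sec6_3_1} one has $e^{S^1}(\nu_{Z_{\max}}) = c_1^{S^1}(\nu_{Z_{\max}}) = -\lambda - e(P_1^-) = -\lambda + u - E_1$, so Proposition~\ref{proposition_equivariant_Chern_class} gives
\[
	c_1^{S^1}(TM)|_{Z_{\max}} = c_1(TZ_{\max}) + c_1^{S^1}(\nu_{Z_{\max}}) = (3u - E_1) + (-\lambda + u - E_1) = 4u - 2E_1 - \lambda,
\]
and the $Z_{\max}$-term is $\int_{Z_{\max}} (4u - 2E_1 - \lambda)^3 / (u - E_1 - \lambda)$. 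Expanding $(u - E_1 - \lambda)^{-1}$ as a geometric series in $\lambda^{-1}$, the only part surviving integration over the $4$-dimensional $Z_{\max}$ is the cohomological degree-four term $3(4u - 2E_1)^2 - 3(4u - 2E_1)(u - E_1) + (u - E_1)^2$, whose integral over $Z_{\max}$ is $36 - 6 + 0 = 30$. Summing, $\langle c_1(TM)^3, [M] \rangle = 27 - 1 + 30 = 56$.

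I do not expect a genuine obstacle: once Lemma~\ref{lemma_3_2} is in hand, everything is routine localization bookkeeping of the same shape as Theorem~\ref{theorem_Sec6_3_1}. The only step requiring care is the equivariant normal data of the codimension-two maximal fixed component $Z_{\max}$ --- recovering $c_1^{S^1}(\nu_{Z_{\max}})$ from $e(P_1^-)$ and truncating the $\lambda^{-1}$-expansion so that exactly the degree-four part of the $Z_{\max}$ contribution is kept.
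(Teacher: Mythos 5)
Your proof is correct and follows essentially the same route as the paper: Lemma~\ref{lemma_3_2} pins down the topological fixed point data, the Betti numbers come from perfection of the Morse--Bott function, and the Chern number is computed by the same ABBV localization, with your evaluation of the $Z_{\max}$-term $\int_{Z_{\max}}(4u-2E_1-\lambda)^3/(u-E_1-\lambda)=36-6+0=30$ matching the paper's. (Your identification $M_0\cong\p^2\#\overline{\p^2}$ with $[\omega_0]=3u-E_1$ is the one consistent with the setup of Section~\ref{ssecCaseIIIDimZMax4} and with the entries for {\bf (III-4)}; the entry $(\p^2,3u)$ in the theorem's table records the reduced space below level $-1$ rather than at level $0$.)
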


\begin{proof}
	By Lemma \ref{lemma_3_2}, we only need to prove that 
		$
			\langle c_1(TM)^3, [M] \rangle = 56.
		$
	Using Theorem \ref{theorem_localization}, we obtain
	\[
		\begin{array}{ccl}\vs{0.1cm}
			\ds \int_M c_1^{S^1}(TM)^3 & = & \ds  \sum_{Z \subset M^{S^1}} \int_Z \frac{\left(c_1^{S^1}(TM)|_Z\right)^3}{e_Z^{S^1}} \\ \vs{0.2cm}
								& = & \ds  \frac{(3\lambda)^3}{\lambda^3} + \frac{\lambda^3}{-\lambda^3} + \int_{Z_{\max}} \frac{\left((3u-E_1) + (-\lambda - e(P^-_1)\right)^3) }{-\lambda - e(P^-_{1})} \quad \quad (e(P^-_1) = -u + E_1) \\ \vs{0.1cm}
								& = & 26 + \ds \int_{Z_{\max}} \frac{(4u - 2E_1 - \lambda)^3}{u - E_1 - \lambda} \\ \vs{0.1cm}
								& = & 26 + \ds \int_{Z_{\max}} \frac{(4u - 2E_1 - \lambda)^3 \cdot ((u-E_1)^2 + (u-E_1) \lambda + \lambda^2)}{-\lambda^3} \\ \vs{0.1cm}
								& = & 26 + 30 = 56. 
		\end{array}
	\]
\end{proof}

\begin{example}[Fano variety of type {\bf (III-2)}]\cite[No. 35 in the list in Section 12.3]{IP}\label{example_Sec6_3_2}
	Let $M$ be the one-point blow-up of $\p^3$ equipped with the monotone K\"{a}hler form $\omega$ with $[\omega ] = c_1(TM)$. 
	(Following Mori-Mukai's notation, we give a special name on $M$ by $V_7$. See \cite{MM}.)
	If we consider a toric structure on $M$ regarding $M$ as the toric blow-up of $\p^3$, the corresponding moment polytope 
	is given in Figure \ref{figure_3_2}.
	
		\begin{figure}[H]
			\scalebox{0.8}{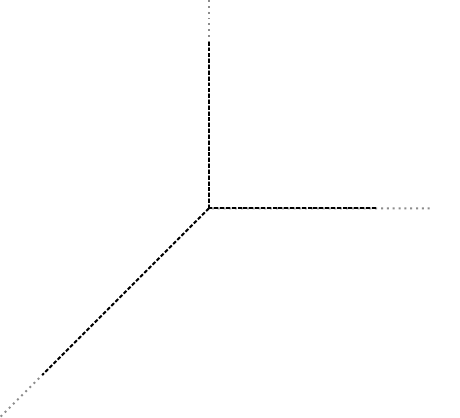} \vs{0.3cm}
			\caption{\label{figure_3_2} Moment polytope for the blow-up of $\p^3$.}
		\end{figure}
		
	\noindent Let $S^1$ be the subgroup of $T^3$ generated by $\xi = (0, -1, 0)$. Then, one can easily see that the action is semifree and the fixed point set of the $S^1$-action consists of 
	\[
		Z_{\min} = \mu^{-1}(0,4,0) = \mathrm{pt}, \quad Z_{-1} = \mu^{-1}(0,2,2) = \mathrm{pt}, \quad\text{and} \quad Z_{\max} = \mu^{-1}(\Delta) \cong \p^2
	\]
	where $\Delta$ is the trapezoid whose vertices are $(4,0,0)$, $(0,0,0)$, $(0,0,2)$, and $(2,0,2)$. So, the $S^1$-action on $M$, together with the balanced moment map
	$\langle \mu, \xi \rangle + 1$, has the same topological fixed point data as in Theorem \ref{theorem_Sec6_3_2}.
\end{example}
\vs{0.3cm}

\subsubsection{${\mathrm{Crit} \mathring{H}} = \{0\}$}
~\\

In this case, we have $M_0 \cong \p^2 \cong Z_{\max}$. Let $\mathrm{PD}(Z_0) = au \in H^2(M_0 ; \Z)$ for some integer $a > 0$. 

\begin{lemma}\label{lemma_3_3}
	$Z_0$ is connected. Also, we have $a=1,2,$ or $3$. 
\end{lemma}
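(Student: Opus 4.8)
The plan is to view $Z_0$ as an embedded symplectic submanifold of the reduced space $(M_0,\omega_0)\cong(\p^2,3u)$ and to combine elementary intersection theory on $\p^2$ with the Duistermaat--Heckman variation of the reduced symplectic class across the interval $[0,1]$.

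For connectedness I would argue exactly as in Lemma \ref{lemma_connectedness_1_1}. If $Z_0$ split as a disjoint union $Z_0^1\,\dot\cup\,Z_0^2$ (and then inductively for more components), writing $\mathrm{PD}(Z_0^i)=a_iu$ gives $\int_{Z_0^i}\omega_0=\langle 3u\cdot a_iu,[\p^2]\rangle=3a_i>0$, hence $a_i\ge 1$; but disjoint submanifolds satisfy $[Z_0^1]\cdot[Z_0^2]=0$, whereas $[Z_0^1]\cdot[Z_0^2]=a_1a_2\ge 1$, a contradiction. Thus $Z_0$ is connected with $\mathrm{PD}(Z_0)=au$, and the same area computation yields $a\ge 1$.

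For the bound on $a$ I would first compute the relevant Euler classes and the evolving symplectic class. Since there is no critical value in $(-3,0)$ we have $e(P_0^-)=e(P_{-3}^+)=-u$, so Lemma \ref{lemma_Euler_class} gives $e(P_0^+)=e(P_0^-)+\mathrm{PD}(Z_0)=(a-1)u$; and by Remark \ref{remark_monotone_reduction_criticalvalue} the reduced form at level $0$ is monotone, i.e. $[\omega_0]=c_1(TM_0)=3u$. Hence the Duistermaat--Heckman theorem (Theorem \ref{theorem_DH}) yields
\[
[\omega_t]=[\omega_0]-t\,e(P_0^+)=\bigl(3-t(a-1)\bigr)u,\qquad t\in[0,1).
\]
As $t\to 1^-$ the reduced spaces $M_t$ (each diffeomorphic to $Z_{\max}\cong\p^2$, since $\dim Z_{\max}=4$ and no critical value lies in $(0,1)$) carry symplectic classes converging to $[\omega|_{Z_{\max}}]=(4-a)u$; since $Z_{\max}$ is a symplectic submanifold of $M$ (Corollary \ref{corollary_properties_moment_map}(3)), the class $[\omega|_{Z_{\max}}]$ pairs positively with the line class, forcing $4-a>0$. (Equivalently, the Duistermaat--Heckman function ${\bf DH}(t)=(3-t(a-1))^2$ must be positive at every regular value in $(0,1)$ and have positive limit $\mathrm{Vol}(Z_{\max})$ at $t=1$, which already fails when $a\ge 4$.) Combined with $a\ge 1$ this gives $a\in\{1,2,3\}$.

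I expect the only delicate point to be this last step: correctly pinning down $e(P_0^-)=-u$ and the diffeomorphism under which $\lim_{t\to 1^-}[\omega_t]$ becomes the symplectic class $[\omega|_{Z_{\max}}]$ of $Z_{\max}\cong\p^2$, so that positivity of $\omega|_{Z_{\max}}$ translates into the numerical bound $a\le 3$; everything else is formal and parallels the arguments already used in Section \ref{ssecCaseIDimZMax} and Section \ref{ssecCaseIIDimZMax2}.
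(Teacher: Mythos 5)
Your proposal is correct and follows essentially the same route as the paper: connectedness via the intersection-number argument on $\p^2$ (as in Lemma \ref{lemma_connectedness_1_1}), and the bound $a\le 3$ from $e(P_0^+)=(a-1)u$ and the Duistermaat--Heckman computation $[\omega_1]=(4-a)u>0$. The extra care you take in identifying $e(P_0^-)=-u$ and interpreting $\lim_{t\to 1^-}[\omega_t]$ as $[\omega|_{Z_{\max}}]$ is exactly what the paper leaves implicit.
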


\begin{proof}
	Suppose not. If $Z_0^1$ and $Z_0^2$ are disjoint components in $Z_0$,  then $[Z_0^1] = a_1u$ and $[Z_0^2] = a_2u$ for some positive integers $a_1, a_2 \in \Z$. 
	Then $[Z_0^1] \cdot [Z_0^2] = a_1 a_2 \neq 0$ which contradicts that $Z_0^1$ and $Z_0^2$ are disjoint.
	
	For the second statement, using the Duistermaat-Heckman theorem \ref{theorem_DH}. it follows that
	\[
		[\omega_1] = (4-a)u. 
	\]
	Since $4-a>0$, the only possible values of $a$ are $1,2,$ and$3$. 
\end{proof}

\begin{theorem}\label{theorem_3_3}
		Let $(M,\omega)$ be a six-dimensional closed monotone semifree Hamiltonian $S^1$-manifold such that $\mathrm{Crit} H = \{ 1,0,-3\}$. 
		Then the  topological fixed point data is one of the followings :
			\begin{table}[H]
				\begin{tabular}{|c|c|c|c|c|}
					\hline
					   &  $(M_0, [\omega_0])$ & $Z_{-3}$ & $Z_0$ & $Z_1$ \\ \hline \hline
					   {\bf (III-3.1)} & $(\p^2, 3u)$ & {\em pt} & $Z_0 \cong S^2, [Z_0] = u$ & $\p^2$ \\ \hline
					   {\bf (III-3.2)} & $(\p^2, 3u)$ & {\em pt} & $Z_0 \cong S^2, [Z_0] = 2u$ & $\p^2$ \\ \hline
					   {\bf (III-3.3)} & $(\p^2, 3u)$ & {\em pt} & $Z_0 \cong T^2, [Z_0] = 3u$ & $\p^2$ \\ \hline
				\end{tabular}
			\end{table}
			\noindent
		In any case, we have $b_2(M) = 2$ and 
		\[
			\langle c_1(TM)^3, [M] \rangle = 
			\begin{cases}
				54 & \text{\bf (III-3.1)} \\ 
				46 & \text{\bf (III-3.2)} \\
				40 & \text{\bf (III-3.3)}
			\end{cases}
		\]	
\end{theorem}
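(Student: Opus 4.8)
The plan is to build on Lemma \ref{lemma_3_3}, which already confines $Z_0$ to a connected symplectic curve in $M_0$ with $[Z_0] = au$ for some $a \in \{1,2,3\}$; recall also from the discussion preceding the theorem that in this case $M_0 \cong \p^2 \cong Z_{\max}$. First I would pin down the diffeomorphism type of $Z_0$ using the adjunction formula \eqref{equation_adjunction} inside the monotone four-manifold $(M_0,\omega_0)$, where $c_1(TM_0) = [\omega_0] = 3u$. Since $[Z_0]^2 = a^2$ and $\langle c_1(TM_0), [Z_0]\rangle = 3a$, the formula reads $a^2 + 2 - 2g = 3a$, i.e. $2g = (a-1)(a-2)$; hence $Z_0 \cong S^2$ when $a \in \{1,2\}$ and $Z_0 \cong T^2$ when $a = 3$, which is precisely the trichotomy in the statement.

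Next I would read off $b_2(M) = 2$ from the fact that the balanced moment map $H$ is a perfect Morse--Bott function (Corollary \ref{corollary_properties_moment_map}). The three fixed components $Z_{-3}$, $Z_0$, $Z_{\max}$ have Morse--Bott indices $0$, $2$, $2$ respectively (Table \ref{table_fixed}), so
\[
	P_M(t) = 1 + t^2 P_{Z_0}(t) + t^2(1 + t^2 + t^4),
\]
and in each of the three cases the coefficient of $t^2$ equals $2$. (When $Z_0 \cong T^2$ one also gets $b_3(M) = 2$, but only $b_2$ is claimed.)

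Then I would compute the Chern number via the ABBV localization formula (Theorem \ref{theorem_localization}) applied to $\alpha = c_1^{S^1}(TM)^3$, using $\langle c_1(TM)^3, [M]\rangle = \int_M \alpha$ (Corollary \ref{corollary : localization degree 2n}). The contribution of $Z_{-3} = Z_{\min}$, a point with weights $(1,1,1)$, is $(3\lambda)^3/\lambda^3 = 27$. The contribution of $Z_0$ vanishes: semifreeness forces the normal weights of $Z_0$ to be $\pm 1$ with zero sum, so by Proposition \ref{proposition_equivariant_Chern_class} $c_1^{S^1}(TM)|_{Z_0} = c_1(TM)|_{Z_0} \in H^2(Z_0)$, whose cube is zero since $\dim Z_0 = 2$. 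For $Z_{\max} \cong \p^2$ I need $e(P_1^-) = e(P_0^+)$, and by Lemma \ref{lemma_Euler_class} (as applied in Lemma \ref{lemma_Z0_1_1}, the ``blow-up'' along the divisor $Z_0$ being trivial on the underlying surface), $e(P_0^+) = e(P_0^-) + \mathrm{PD}(Z_0) = -u + au = (a-1)u$. As in the proof of Theorem \ref{theorem_Sec6_3_2}, the $Z_{\max}$-term equals $\int_{\p^2} \big((4-a)u - \lambda\big)^3 \big/ \big(-\lambda - (a-1)u\big)$; expanding $1/\big(-\lambda - (a-1)u\big)$ as a geometric series modulo $u^3$ and extracting the coefficient of $u^2$ gives $(a-1)^2 - 9(a-1) + 27$. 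Adding the $27$ from $Z_{\min}$ yields $54$, $46$, $40$ for $a = 1, 2, 3$, as claimed.

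The bulk of the argument is routine once Lemma \ref{lemma_3_3} is in hand; the only point needing care is the Euler-class bookkeeping across the level $0$ --- in particular using the correct ``exceptional class'' $\mathrm{PD}(Z_0)$ for a surface (rather than an isolated point) fixed component --- and getting the equivariant restriction $c_1^{S^1}(TM)|_{Z_{\max}}$ right, since the final Chern numbers depend sensitively on these signs.
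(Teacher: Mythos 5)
Your proposal is correct and follows essentially the same route as the paper's proof: Lemma \ref{lemma_3_3} for connectedness and the restriction $a\in\{1,2,3\}$, the adjunction formula $a^2+2-2g=3a$ for the genus, perfectness of the Morse--Bott function for $b_2(M)=2$, and ABBV localization with $e(P_1^-)=(a-1)u$ for the Chern numbers (your $(a-1)^2-9(a-1)+27$ is the same polynomial as the paper's $a^2-11a+37$). No gaps.
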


\begin{proof}
	Since $Z_0$ is connected by Lemma \ref{lemma_3_3}, we apply the adjunction formula \eqref{equation_adjunction} to $Z_0$ so that 
	\[
		 3a = \langle c_1(M_0), [Z_0] \rangle = [Z_0] \cdot [Z_0] + 2 - 2g = a^2 + 2 - 2g, \quad g =~ \text{genus of $Z_0$}.
	\]
	So, we have $g=0$ if $a = 1$ or $2$ and $g = 1$ if $a=3$. This proves the first statement of Theorem \ref{theorem_3_3}. The second assertion ``$b_2(M) = 2$'' easily follows from the fact that 
	the moment map $H$ is perfect Morse-Bott so that the Poincar\'{e} polynomial of $M$ is given by 
	\[
		\begin{array}{ccl}\vs{0.1cm}
			P_t(M) & = & t^0 + t^2(P_t(Z_0) + P_t(Z_{\max})) \\ \vs{0.1cm}
				& = & 1 + t^2(1 + (2-2g)t + t^2 + 1 + t^2 + t^4) \\ \vs{0.1cm}
				& = & 1 + 2t^2 + (2-2g)t^3 + 2t^4 + t^6, \quad \quad (Z_0, Z_{\max} : \text{index~two})
		\end{array}
	\]
	For the final assertion (for Chern numbers), we apply the localization theorem \ref{theorem_localization} : 
	\[
		\begin{array}{ccl}\vs{0.1cm}
			\ds \int_M c_1^{S^1}(TM)^3 & = & \ds  \sum_{Z \subset M^{S^1}} \int_Z \frac{\left(c_1^{S^1}(TM)|_Z\right)^3}{e_Z^{S^1}} \\ \vs{0.2cm}
								& = & \ds  \frac{(3\lambda)^3}{\lambda^3} 
								+ \int_{Z_0} \frac{\left(\mathrm{Vol}(Z_0) q\right)^3) }{(\lambda + b^+q)(-\lambda + b^-q)} 
								+ \int_{Z_{\max}} \frac{\left(3u + (-\lambda - e(P^-_1)\right)^3}{-\lambda - e(P^-_{1})} \\ \vs{0.1cm}
								& = & 27 + \ds \int_{Z_{\max}} \frac{((4-a)u - \lambda)^3}{- \lambda - (a-1)u} \quad \quad (e(P_1^-) = -u + au = (a-1)u)\\ \vs{0.1cm}
								& = & 27 + a^2 - 11a + 37 \\ \vs{0.1cm}
								& = & \begin{cases}
									54 & \text{if $a = 1$} \\
									46 & \text{if $a = 2$} \\
									40 & \text{if $a = 3$} 
								\end{cases}
		\end{array}
	\]
	and this completes the proof.
\end{proof}

\begin{example}[Fano variety of type {\bf (III-3)}]\label{example_Sec6_3_3}
	For each {\bf (III-1.a)} (${\bf a}=1,2,3$), we present a Fano variety $X_{\text{\bf a}}$ equipped with a semifree Hamiltonian $\C^*$-action whose topological fixed point data coincides with {\bf (III-1.a)}
	in Theorem \ref{theorem_3_3}.
	(We will see that each manifold $X_{\text{\bf a}}$ can be obtained by {\em an $S^1$-equivariant blow-up} of $\p^3$ along some smooth curve $Q_{\text{\bf a}}$.)
	
	Following Example \ref{example_Sec6_3_1}, we consider $\p^3$ with the Fubini-Study form $\omega$ with $[\omega] = c_1(T\p^3) = 4u$. 
	Also, we consider the $S^1$-action induced from the standard $T^3$-action (given by \eqref{equation_standard_action})
	generated by $\xi = (1,1,1)$ so that the fixed point set of the $S^1$-action is given by 
	\[
		\mu^{-1}(0,0,0) = \mathrm{pt}, \quad \mu^{-1}(\Delta) = \{ [x,y,z,0] \} \subset \p^3 \}.	
	\]
	Let $Q_{\text{\bf a}}$ be the smooth curve in $\mu^{-1}(\Delta)$ defined by $\{ [x,y,z,0] ~|~ x^{\text{\bf a}} + y^{\text{\bf a}} + z^{\text{\bf a}} = 0 \}$. 
	Note that the adjuction formula \eqref{equation_adjunction} implies that 
	\[
		Q_{\text{\bf a}} \cong \begin{cases}
			\p^1 & \text{\bf a} = 1, 2 \\
			T^2 &  \text{\bf a} = 3. \\
		\end{cases}
	\]
	If we perform an $S^1$-equivariant symlectic blow up $\p^3$ along $Q_{\text {\bf a}}$, then we obtain a symplectic manifold $M_{\text{\bf a}}$ with an induced Hamiltonian $S^1$-action.
	It is worth mentioning that 
	\begin{itemize}
		\item $M_{\text{\bf a}}$ is Fano as \cite[No. 33,30,28 in the list in Section 12.3]{IP}, 
		\item the induced action is semifree in the following reason : for a fixed point $z_0 \in Q_{\text{\bf a}}$, let $\mcal{U}$ be an $S^1$-equivariant open neighborhood 
		of $z_0$ with a local complex coordinates $(z_1, z_2, w)$ such that 
			\begin{itemize}
				\item $(z_1, z_2, 0)$ is a local coordinate system of $\p^2 \cong \mu^{-1}(\Delta)$ near $z_0 = (0,0,0)$, 
				\item $(z_1,0,0)$ is a local coordinate system of $Q_{\text{\bf a}}$ near $z_0$
			\end{itemize} 
		where the action can be expressed as 
		\[
			t(z_1, z_2, w) = (z_1, z_2, t^{-1}w), \quad t \in S^1.
		\]
		Then, an $S^1$-equivariant blow-up of $M_{\text{\bf a}}$ along $Q_{\text{\bf a}}$ is locally described as a blow-up 
		of $\mcal{U}$ along a submanifold $\{(z_1, 0,0) \} \cong \C \subset \mcal{U}$ : 
		\[
			\widetilde{\mcal{U}} = \{ \left(z_1, \left( [z_2, w], \lambda z_2, \lambda w \right) \right) \in \C \times (\p^1 \times \C^2)  ~|~ \lambda \in \C  \}
		\]
		where the induced $S^1$-action is given by 
		\[
			t \cdot (z_1, ([z_2, w], \lambda z_2, \lambda w)) =  (z_1, ([z_2, t^{-1} w], \lambda z_2, t^{-1} \lambda w)), \quad t \in S^1.
		\]
		It can be easily verified that the $S^1$-action on $\widetilde{\mcal{U}}$ is semifree (since there is no point having a finite non-trivial stabilizer). Moreover, there are two fixed components
		\[
			\{(z_1, ([1,0], \lambda, 0)) \} \cong \C^2 \quad \text{and} \quad \{(z_1, ([0,1], 0, 0) \} \cong \C
		\]
		where the first one corresponds to an open subset of $Z_{\max}$ ($= Z_1$) and the latter corresponds to an open subset of $Z_0$.
	\end{itemize}
	
		\begin{figure}[H]
			\scalebox{1}{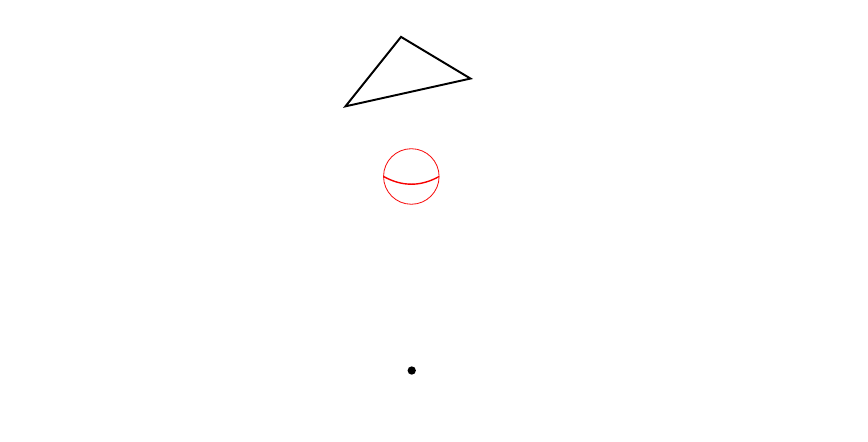} \vs{0.3cm}
			\caption{\label{figure_3_3} $S^1$-equivariant blow-ups of $\p^3$ along $Q_{\text{\bf a}}$ for ${\text{\bf a}} = 1,2,3$.}
		\end{figure}

	Note that we may also choose $Q = \{ [x,y,0,0] \}$ as a $T^3$-invariant rational curve of degree one in $\mu^{-1}(\Delta)$. Then the toric blow-up of $\p^3$ along 
	$Q$ inherits a toric structure and the induced $S^1$-action also has a topological fixed point data that coincides with {\bf (III-3.1)}. See Figure \ref{figure_3_3_toric}.

		\begin{figure}[H]
			\scalebox{1}{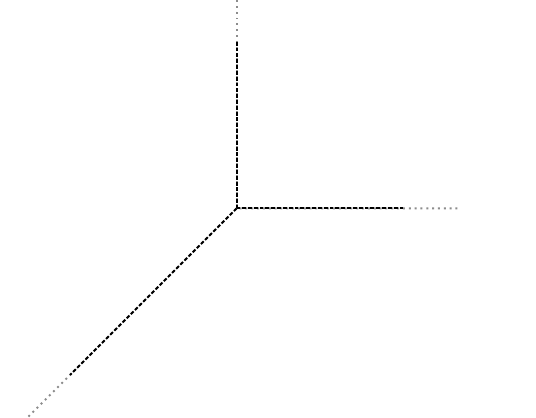} \vs{0.3cm}
			\caption{\label{figure_3_3_toric} Toric blow-up of $\p^3$ along $Q$.}
		\end{figure}	
	
\end{example}

\subsubsection{${\mathrm{Crit} \mathring{H}} = \{ -1,0\}$}
~\\

Let $|Z_{-1}| = k \in \Z_+$ be the number of fixed points of index two and 
\[
	\mathrm{PD}(Z_0) = au + b_1E_1 + \cdots + b_kE_k, \quad a, b_1, \cdots, b_k \in \Z.
\]
From Lemma \ref{lemma_Euler_class}, we obtain
\[
	e(P_1^-) = -u + \sum_{i=1}^k E_i + \mathrm{PD}(Z_0) = (a-1)u + \sum_{i=1}^k (b_i + 1)E_i
\]
Also, the Duistermaat-Heckman theorem \ref{theorem_DH} implies that 
\begin{equation}\label{equation_3_4_omega}
	[\omega_1] = (3u - \sum_{i=1}^k E_i) - e(P_1^-) = (4-a)u - \sum_{i=1}^k (2 + b_i) E_i
\end{equation}

\begin{lemma}\label{lemma_3_4_coef}
	The following inequalities hold : 
	\[
		a < 4, \quad b_i > -2, \quad a + b_i + b_j < 0, \quad (4-a)^2 - \sum_{i=1}^k (2 + b_i)^2 > 0, \quad 3a + \sum_{i=1}^k b_i > 0 
	\]
	for  $i,j = 1,\cdots, k$ and $i \neq j$.
\end{lemma}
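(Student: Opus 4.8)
The plan is to translate each of the five (in)equalities into a positivity statement about the reduced cohomology class $[\omega_1]:=\lim_{t\to 1^-}[\omega_t]$ on $M_{1-\epsilon}$, using the explicit expression \eqref{equation_3_4_omega} together with $[\omega_0]=c_1(TM_0)=3u-\sum_iE_i$, and then to read off those positivity statements from the geometry of the reduced spaces.

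First I would establish two structural facts. (i) Since the only interior critical values are $-1$ and $0$, and crossing the two-dimensional fixed component $Z_0$ of signature $(2,2)$ amounts, in the Guillemin--Sternberg picture recalled after Proposition \ref{proposition_topology_reduced_space}, to blowing up and then blowing down a \emph{divisor} inside the four-dimensional reduced space (which changes nothing), the reduced spaces $M_t$ for $t\in(-1,1)$ are all canonically identified with $\p^2\#k\overline{\p^2}$, where $k=|Z_{-1}|\le 8$ (recall $M_0$ is a monotone, hence del Pezzo, reduction by Proposition \ref{proposition_monotonicity_preserved_under_reduction}), and no symplectic blow-down of a reduced space occurs on $(-1,1)$. (ii) As $t\to 1^-$ the circle bundle $H^{-1}(t)\to M_t$ degenerates by collapsing the normal circle of $Z_{\max}$, so $(M_{1-\epsilon},\omega_{1-\epsilon})$ converges to $(Z_{\max},\omega|_{Z_{\max}})$; in particular $[\omega_1]$ is the class of a genuine symplectic form on the rational surface $Z_{\max}\cong\p^2\#k\overline{\p^2}$, and $\int_{Z_{\max}}(\omega|_{Z_{\max}})^2>0$.

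Given this, I would dispatch the inequalities as follows. The last one, $3a+\sum_i b_i>0$, is exactly $\langle[\omega_0],[Z_0]\rangle=\int_{Z_0}\omega_0$, which is positive because $Z_0$ embeds in $(M_0,\omega_0)$ as a symplectic submanifold. For the remaining four, I use that each of the classes $u$, $E_i$, and $u-E_i-E_j$ ($i\neq j$) is represented by an embedded symplectic sphere in $M_{-1+\epsilon}=\p^2\#k\overline{\p^2}$ — a generic line, an exceptional divisor, and the proper transform of a line through two blown-up points, the latter two being exceptional classes in the sense of Lemma \ref{lemma_list_exceptional}. By (i) these spheres persist, with strictly positive $\omega_t$-area, all the way to $t=1$, where they become symplectic curves in $(Z_{\max},\omega|_{Z_{\max}})$ and hence have positive $[\omega_1]$-area. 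Evaluating those areas from \eqref{equation_3_4_omega} yields $\langle[\omega_1],u\rangle=4-a>0$, $\langle[\omega_1],E_i\rangle=2+b_i>0$, and $\langle[\omega_1],u-E_i-E_j\rangle=-(a+b_i+b_j)>0$. Finally, $(4-a)^2-\sum_i(2+b_i)^2=\langle[\omega_1]^2,[M_{1-\epsilon}]\rangle=\int_{Z_{\max}}(\omega|_{Z_{\max}})^2>0$ by (ii).

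The crux — and the step I expect to be the main obstacle — is the persistence assertion: one must check that the symplectic spheres representing $u$, $E_i$ and $u-E_i-E_j$ that are present at the blow-up level $t=-1+\epsilon$ cannot degenerate at or before $t=1$. This rests on the absence of a reduced-space blow-down on $(-1,1)$, established in (i), together with the structure theory of symplectic forms on rational surfaces (every exceptional class, and the line class, is represented by an embedded symplectic sphere for every symplectic form in the del Pezzo range; cf. \cite{LL}, \cite{Li}). Once this is in place, the identification $[\omega_1]=[\omega|_{Z_{\max}}]$ upgrades the a priori weak inequalities — which are limits of positive quantities — to the strict ones claimed in the lemma.
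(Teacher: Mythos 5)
Your proof is correct and follows essentially the same route as the paper: the last inequality is $\langle[\omega_0],[Z_0]\rangle>0$, the second and third come from the positivity of the $[\omega_1]$-areas of the exceptional classes $E_i$ and $u-E_i-E_j$ (via the Li--Liu $(-1)$-curve theorem), and the fourth is $\langle[\omega_1]^2,[M_1]\rangle>0$. The only cosmetic difference is the derivation of $a<4$: the paper gets it from the non-vanishing of the coefficient of $u$ in $[\omega_t]$ over $[0,1]$ (forced by $\langle[\omega_t]^2,[M_t]\rangle>0$ together with continuity from $t=0$), whereas you use the positive area of the line class $u$; both work, though the paper's version avoids needing a symplectic representative of a class of positive square.
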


\begin{proof}	
	By the Duistermaat-Heckman theorem \ref{theorem_DH}, we obtain 
	\[
		[\omega_t] = (3 + (1-a)t ) u - \sum_{k=1}^k (1 + (b_i + 1)t) E_i, \quad 0 \leq t \leq 1. 
	\] 
	Note that $3+(1-a)t$ should never vanish on $[0,1]$ since $\langle [\omega_t]^2, [M_0] \rangle > 0$ for every $t \in [0,1]$. Thus we get $a > 4$. For the second and third inequalities, 
	we consider exceptional classes $E_i$ or $u - E_i - E_j$ with $i \neq j$. As the ``(-1)-curve theorem'' by Li and Liu \cite[Theorem A]{LL} guarantees the existence of a symplectic sphere
	representing $E_i$ or $u - E_i - E_j$, the symplectic volume of each class should be positive, that is, 
	\[
		\langle [\omega_t], E_i \rangle = 2 + b_i > 0 \quad \text{and} \quad \langle [\omega_t], u - E_i - E_j \rangle = -a - b_i - b_j > 0.
	\]
	The last two inequalities immediately follow from the fact that $\langle [\omega_1]^2, [M_1] \rangle > 0$ and $\langle [\omega_0], [Z_0] \rangle > 0$.
\end{proof}

\begin{lemma}\label{lemma_3_4_a}
	We have $a \geq 0$. Moreover if $k > 1$, then $a \leq 1.$
\end{lemma}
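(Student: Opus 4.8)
The upper bound $a\le 1$ under the hypothesis $k>1$ is immediate from Lemma~\ref{lemma_3_4_coef}: applying the inequality $a+b_i+b_j<0$ to any pair $i\neq j$ and using $b_i,b_j\ge -1$ forces $a<-(b_i+b_j)\le 2$, hence $a\le 1$ since $a\in\Z$. So the substantive part is the lower bound $a\ge 0$, which I expect to hold for every $k\ge 1$.

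The plan for $a\ge 0$ is to argue by contradiction and to split the range $a<0$ into $a\in\{-1,-2\}$ and $a\le -3$. For $a\in\{-1,-2\}$ I would use adjunction: writing $\mathrm{PD}(Z_0)=au+\sum_i b_iE_i$, the formula \eqref{equation_adjunction} applied to (the disjoint components of) $Z_0$ gives $a^2-\sum_i b_i^2+\sum_i(2-2g_i)=\langle c_1(TM_0),[Z_0]\rangle=3a+\sum_i b_i$. Since each component of $Z_0$ has genus $\ge 0$ and, because $[\omega_0]=c_1(TM_0)$ is integral, symplectic area a positive integer, the number of components is at most $3a+\sum_i b_i$, so $\sum_i(2-2g_i)\le 2(3a+\sum_i b_i)$. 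Substituting, $a^2-\sum_i b_i^2\ge -(3a+\sum_i b_i)$, i.e. $a(a+3)\ge \sum_i b_i(b_i-1)\ge 0$; hence $a\ge 0$ or $a\le -3$, and in particular $a\notin\{-1,-2\}$.

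It then remains to rule out $a\le -3$, and here I would feed the remaining inequalities of Lemma~\ref{lemma_3_4_coef} through Cauchy--Schwarz. From $3a+\sum_i b_i>0$ (positivity of the symplectic area of $Z_0$) we get $\sum_i(2+b_i)=2k+\sum_i b_i\ge 2k+1-3a$, and since each $2+b_i\ge 1>0$, $\sum_i(2+b_i)^2\ge \tfrac{1}{k}(2k+1-3a)^2$; combined with $(4-a)^2-\sum_i(2+b_i)^2>0$ (positivity of the symplectic volume of $M_1$) this yields $k(4-a)^2>(2k+1-3a)^2$. On the other hand, $M_0\cong M_{-1+\epsilon}$ is a blow-up of $\p^2$ at $k$ points, so $1\le k\le 8$ by the classification of closed monotone symplectic four-manifolds (see the discussion after Proposition~\ref{proposition_monotonicity_preserved_under_reduction}); for such $k$ the function $g(a):=(2k+1-3a)^2-k(4-a)^2$ is a convex quadratic in $a$ with leading coefficient $9-k\ge 1$ and vertex at a positive value of $a$, so on $a\le -3$ it is minimized at $a=-3$, where $g(-3)=4k^2-9k+100>0$ (negative discriminant). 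This contradicts $k(4-a)^2>(2k+1-3a)^2$, completing the proof.

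The geometric content here is entirely in Lemma~\ref{lemma_3_4_coef} and the adjunction formula, and the hard part is essentially just organizing the right combination of inequalities; the one external input that is genuinely needed is the a priori bound $k\le 8$, without which the concluding quadratic comparison fails for large $k$. I would take care to verify the convexity and vertex computation so that the minimum of $g$ over $\{a\le -3\}$ is indeed attained at the endpoint $a=-3$.
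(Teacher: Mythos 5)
Your proof is correct, and for the substantive part ($a\ge 0$) it takes a genuinely different route from the paper. The upper bound $a\le 1$ for $k>1$ is handled identically in both arguments, directly from $a+b_i+b_j<0$ and $b_i\ge -1$. For the lower bound, the paper localizes to a single connected component $Z_0^1$ whose $u$-coefficient is negative, uses the adjunction formula together with the list of exceptional classes in Lemma~\ref{lemma_list_exceptional} to force $[Z_0^1]\cdot[Z_0^1]\ge 0$, and then invokes Li's positivity theorem \cite[Corollary 3.10]{Li} (a symplectic surface of non-negative self-intersection meets every exceptional class non-negatively) to contradict the existence of some $b_i^1>0$. You instead run a purely arithmetic argument: the adjunction formula \eqref{equation_adjunction} applied to all of $Z_0$, combined with the integrality and positivity of the area of each component (which bounds the number of components by the total area $3a+\sum_i b_i$), yields $a(a+3)\ge\sum_i b_i(b_i-1)\ge 0$ and so excludes $a\in\{-1,-2\}$; and a Cauchy--Schwarz estimate pitting $3a+\sum_i b_i\ge 1$ against $(4-a)^2>\sum_i(2+b_i)^2$ from Lemma~\ref{lemma_3_4_coef} excludes $a\le -3$, using the bound $1\le k\le 8$ coming from the Ohta--Ono classification quoted after Proposition~\ref{proposition_monotonicity_preserved_under_reduction} (your quadratic computation $g(-3)=4k^2-9k+100>0$ and the convexity/vertex analysis check out). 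The trade-off is clear: your argument dispenses with the Seiberg--Witten-based input of Li's theorem and with the classification of exceptional classes, at the cost of needing the a priori bound $k\le 8$, which the paper's component-by-component argument never uses; the paper's proof is also shorter and uniform in $k$. Both are complete proofs of the lemma.
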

	
\begin{proof}
	Suppose that $a < 0$. Then there is a connected component, say $Z_0^1$, of $Z_0$ such that the coefficient, say $a_1$, of $u$ in $\mathrm{PD}(Z_0^1)$ is negative. 
	If $[Z_0^1] \cdot [Z_0^1] < 0$, then by the adjunction formula \eqref{equation_adjunction}
	\[
		\langle c_1(TM_0), [Z_0^1] \rangle = [Z_0^1] \cdot [Z_0^1] + 2 - 2g
	\]
	implies that $\langle c_1(TM_0), [Z_0^1] \rangle = 1$, $[Z_0^1] \cdot [Z_0^1] = -1$, and $g = 0$ since $\langle c_1(TM_0), [Z_0^1] \rangle = \langle [\omega_0], [Z_0^1] \rangle$ is a positive integer.
	It means that $Z_0^1$ is an exceptional sphere so that, by Lemma \ref{lemma_list_exceptional}, $\mathrm{PD}(Z_0^1)$ cannot have a negative coefficient of $u$. So, we have 
	$[Z_0^1] \cdot [Z_0^1] \geq 0$. 
	
	Now, let $\mathrm{PD}(Z_0^1) = a_1 u + b_1^1 E_1 + \cdots + b_k^1 E_k$. 
	By the last inequality of Lemma \ref{lemma_3_4_coef}, there exists some $b_i^1 > 0$, which implies that $[Z_0^1] \cdot E_i = - b_i^1 < 0$. 
	This situation exactly fits into the case of Li's Theorem \cite[Corollary 3.10]{Li} which states that any symplectic surface with non-negative self-intersection number 
	should intersect the exceptional class $E_i$ non-negatively.
	(See also \cite[Theorem 5.1]{W}.) Consequently, $[Z_0^1] \cdot E_i = b_i$ cannot be negative and 	this leads to a contradiction. So, we have $a \geq 0$.
	
	For the second statement, it follows from Lemma \ref{lemma_3_4_coef} that
	\[
		b_i \geq -1~\text{for every $i$} \quad \Rightarrow \quad a - 2 \leq a + b_i + b_j \leq -1 \quad \Rightarrow \quad a \leq 1.
	\]
\end{proof}

\begin{lemma}\label{lemma_3_4_k}
	The only possible values of $k$ are $1$ and $2$. 
\end{lemma}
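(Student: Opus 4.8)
The plan is to run an elementary integer argument on top of Lemmas~\ref{lemma_3_4_coef} and~\ref{lemma_3_4_a}, which already package all the symplectic input (the Duistermaat--Heckman theorem~\ref{theorem_DH}, the existence of symplectic spheres in the classes $E_i$ and $u-E_i-E_j$, and the positivity of their intersections with exceptional classes). Since $\mathrm{Crit}\,\mathring{H}=\{-1,0\}$ forces $Z_{-1}\neq\emptyset$, the inequality $k=|Z_{-1}|\ge 1$ is automatic, so it suffices to exclude $k\ge 3$. Assume therefore $k\ge 2$. By Lemma~\ref{lemma_3_4_a} the coefficient $a$ of $u$ in $\mathrm{PD}(Z_0)=au+b_1E_1+\cdots+b_kE_k$ satisfies $0\le a\le 1$, so only the cases $a=0$ and $a=1$ occur.

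For $a=0$ I would argue as follows. As all the classes are integral, $b_i>-2$ gives $b_i\ge-1$ and $a+b_i+b_j<0$ gives $b_i+b_j\le-1$ for all $i\neq j$. Fixing $i$ and choosing any $j\neq i$ (possible since $k\ge 2$) yields $b_i\le-1-b_j\le 0$, hence $\sum_i b_i\le 0$. But the last inequality of Lemma~\ref{lemma_3_4_coef}, namely $3a+\sum_i b_i>0$, becomes $\sum_i b_i>0$ when $a=0$, a contradiction. Hence $a=1$.

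For $a=1$ the inequality $a+b_i+b_j<0$ becomes $b_i+b_j\le-2$ for all $i\neq j$; together with $b_j\ge-1$ this forces $b_i\le-1$, hence $b_i=-1$ for every $i$ (again using that each index $i$ has a partner $j\neq i$). Thus $\mathrm{PD}(Z_0)=u-E_1-\cdots-E_k$, and since $[\omega_0]=c_1(TM_0)=3u-E_1-\cdots-E_k$ the symplectic area of $Z_0$ is
\[
	\langle[\omega_0],[Z_0]\rangle=3a+\sum_{i=1}^{k} b_i=3-k .
\]
Feeding this back into $3a+\sum_i b_i>0$ gives $k<3$, i.e. $k=2$. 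Combining the two cases, $k\in\{1,2\}$, as claimed.

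I do not expect a genuine obstacle: once the geometry has been absorbed into Lemmas~\ref{lemma_3_4_coef} and~\ref{lemma_3_4_a}, what is left is a short finite case check. The only points that require a little attention are the conversion of the strict inequalities into the sharp integer bounds $b_i+b_j\le-1$ (resp.\ $\le-2$) and $3a+\sum_i b_i\ge 1$, and the repeated use of the hypothesis $k\ge 2$ to guarantee that every $i$ admits some $j\neq i$.
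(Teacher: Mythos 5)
Your proof is correct and follows essentially the same route as the paper's: both arguments combine the integer bounds from Lemmas~\ref{lemma_3_4_coef} and~\ref{lemma_3_4_a} to force $a=1$ and $b_i=-1$ for all $i$, and then read off $k\le 2$ from $3a+\sum_i b_i\ge 1$. The only difference is organizational (you split into cases $a=0$ and $a=1$ where the paper argues by contradiction from $k>2$), which does not change the substance.
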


\begin{proof}
	Assume to the contrary that $k > 2$. Then, 
	\begin{itemize}
		\item $b_i \leq 0$ for every $i$ (by Lemma \ref{lemma_3_4_a} and the second and third inequalities of Lemma \ref{lemma_3_4_coef},)
		\item $a > 0$ (by the last inequality of Lemma \ref{lemma_3_4_coef} and $b_i \leq 0$.)
	\end{itemize}
	From the second part of Lemma \ref{lemma_3_4_a}, we have $a=1$. Moreover  $b_i = -1$ for every $i$ 
	(since $a+ b_i + b_j < 0$ and $-1 \leq b_i \leq 0$.) Then, by the last inequality of Lemma \ref{lemma_3_4_coef}, 
	\[
		3 - k = 3a + \sum_{i=1}^k b_i \geq 1,
	\]
	which implies that $k \leq 2$. 
\end{proof}

\begin{theorem}\label{theorem_Sec6_3_4}
		Let $(M,\omega)$ be a six-dimensional closed monotone semifree Hamiltonian $S^1$-manifold such that $\mathrm{Crit} H = \{ 1,0,-1,-3\}$. 
		Then the  topological fixed point data is one of the followings :
			\begin{table}[H]
				\begin{tabular}{|c|c|c|c|c|c|}
					\hline
					   &  $(M_0, [\omega_0])$ & $Z_{-3}$ & $Z_{-1}$ & $Z_0$ & $Z_1$ \\ \hline \hline
					   {\bf (III-4.1)} & $(\p^2 \# \overline{\p^2}, 3u - E_1)$ & {\em pt} & {\em pt} & $Z_0 \cong S^2, [Z_0] = E_1$ & $\p^2 \# \overline{\p^2}$ \\ \hline
					   {\bf (III-4.2)} & $(\p^2 \# \overline{\p^2}, 3u - E_1)$ & {\em pt} & {\em pt} & $Z_0 \cong S^2, [Z_0] = u - E_1$ & $\p^2 \# \overline{\p^2}$ \\ \hline
					   {\bf (III-4.3)} & $(\p^2 \# \overline{\p^2}, 3u - E_1)$ & {\em pt} & {\em pt} & $Z_0 \cong S^2, [Z_0] = u$ & $\p^2 \# \overline{\p^2}$ \\ \hline
					   {\bf (III-4.4)} & $(\p^2 \# \overline{\p^2}, 3u - E_1)$ & {\em pt} & {\em pt} & $Z_0 \cong S^2, [Z_0] = 2u - E_1$ & $\p^2 \# \overline{\p^2}$ \\ \hline
					   {\bf (III-4.5)} & $(\p^2 \# 2\overline{\p^2}, 3u - E_1-E_2)$ & {\em pt} & {\em 2 pts} & $Z_0 \cong S^2, [Z_0] = u - E_1 - E_2$ & $\p^2 \# 2\overline{\p^2}$ \\ \hline
				\end{tabular}
			\end{table}
			\noindent
		Also, we have 
		\[
			b_2(M) = \begin{cases}
				3 & \text{\bf (III-4.1$\sim$4)} \\ 
				4 & \text{\bf (III-4.5)} \\ 
			\end{cases}
			\quad \text{and} \quad 
			\langle c_1(TM)^3, [M] \rangle = 
			\begin{cases}
				50 & \text{\bf (III-4.1)} \\ 
				50 & \text{\bf (III-4.2)} \\
				46 & \text{\bf (III-4.3)} \\
				42 & \text{\bf (III-4.4)} \\
				46 & \text{\bf (III-4.5)} \\
			\end{cases}
		\]	
\end{theorem}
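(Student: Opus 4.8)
The plan is to determine the Poincar\'{e} dual class $\mathrm{PD}(Z_0)=au+\sum_{i=1}^k b_iE_i$ by solving the system of (in)equalities already assembled in Lemmas \ref{lemma_3_4_coef}, \ref{lemma_3_4_a} and \ref{lemma_3_4_k}, then to read off the diffeomorphism type of $Z_0$ from the adjunction formula, and finally to compute $b_2(M)$ from the perfect Morse--Bott structure of $H$ and $\langle c_1(TM)^3,[M]\rangle$ from the ABBV localization theorem. By Lemma \ref{lemma_3_4_k} we have $k\in\{1,2\}$, and recall that $M_0\cong Z_{\max}\cong \p^2\#\,k\overline{\p^2}$ by Proposition \ref{proposition_topology_reduced_space}. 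For $k=1$ the active constraints are $0\le a\le 3$, $b_1\ge -1$, $3a+b_1>0$ and $(4-a)^2-(2+b_1)^2>0$; running through $a=0,1,2,3$ these force $(a,b_1)\in\{(0,1),(1,-1),(1,0),(2,-1)\}$, with $a=3$ impossible. For $k=2$ one has in addition $a\le 1$ and $a+b_1+b_2<0<3a+b_1+b_2$, which immediately give $a=1$, $b_1=b_2=-1$, i.e. $\mathrm{PD}(Z_0)=u-E_1-E_2$. So the only candidates for $\mathrm{PD}(Z_0)$ are $E_1,\ u-E_1,\ u,\ 2u-E_1$ (when $k=1$) and $u-E_1-E_2$ (when $k=2$).

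Next I would show $Z_0\cong S^2$ in every case. Since $H$ has no critical value in $(0,1)$, the reduced space at level $0$ is monotone, $[\omega_0]=c_1(TM_0)$, and the symplectic area of $Z_0$ equals the positive integer $\langle c_1(TM_0),[Z_0]\rangle$; hence $Z_0$ is automatically connected in the two cases of area $1$ (namely $E_1$ and $u-E_1-E_2$). In the remaining cases, were $Z_0$ disconnected, each component would be an embedded symplectic surface whose homology class $pu+qE_i$ obeys the adjunction formula \eqref{equation_adjunction}; combining this with the list of exceptional classes of $\p^2\#\overline{\p^2}$ (resp. $\p^2\#2\overline{\p^2}$) from Lemma \ref{lemma_list_exceptional} and with the requirement that distinct components have vanishing mutual intersection number yields a contradiction exactly as in the proof of Lemma \ref{lemma_2_3_2} (the class $2u-E_1$ is literally the case $(a,b)=(2,-1)$ handled there). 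Once connectedness is known, \eqref{equation_adjunction} in the form $[Z_0]\cdot[Z_0]+2-2g=\langle c_1(TM_0),[Z_0]\rangle$ forces $g=0$ in all five cases. Together with $M_0\cong Z_{\max}\cong\p^2\#\,k\overline{\p^2}$ this produces precisely the table.

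The Betti numbers then follow from the perfectness of $H$. The critical submanifolds are $Z_{\min}=\mathrm{pt}$ of index $0$, $Z_{-1}$ consisting of $k$ points of index $2$, $Z_0\cong S^2$ of index $2$, and $Z_{\max}\cong\p^2\#\,k\overline{\p^2}$ of index $2$, so
\[
	P_M(t)=1+kt^2+(1+t^2)t^2+\bigl(1+(k+1)t^2+t^4\bigr)t^2=1+(k+2)t^2+(k+2)t^4+t^6,
\]
giving $b_2(M)=k+2$, i.e. $3$ for \textbf{(III-4.1$\sim$4)} and $4$ for \textbf{(III-4.5)}. For the Chern number I would apply Theorem \ref{theorem_localization} to $c_1^{S^1}(TM)^3$. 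The fixed point $Z_{\min}$ (weights $(1,1,1)$) contributes $27$ and the $k$ points of $Z_{-1}$ (weights $(-1,1,1)$) contribute $-k$; the contribution of $Z_0$ vanishes because the weights at $Z_0$ sum to $0$, so $c_1^{S^1}(TM)|_{Z_0}=c_1(TM)|_{Z_0}$ is a multiple of the positive generator $q$ of $H^2(Z_0)$ and $q^2=0$; and the contribution of $Z_{\max}$ is $\int_{Z_{\max}}\tfrac{(c_1(TZ_{\max})-\lambda-e(P_1^-))^3}{-\lambda-e(P_1^-)}$, where $e(P_1^-)=(a-1)u+\sum_i(b_i+1)E_i$ by Lemma \ref{lemma_Euler_class} and Theorem \ref{theorem_DH}. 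This last integral is evaluated by clearing the denominator exactly as in the proofs of Theorems \ref{theorem_Sec6_3_1} and \ref{theorem_Sec6_3_2}, and computing case by case gives the values $50,50,46,42,46$.

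The main obstacle is the connectivity-and-genus analysis of $Z_0$ in the cases of larger symplectic area, above all $\mathrm{PD}(Z_0)=2u-E_1$ and $\mathrm{PD}(Z_0)=u$, where one must juggle the adjunction formula, the $(-1)$-curve theorems of Li--Liu and Li used in Lemma \ref{lemma_3_4_a}, and the intersection constraints among the hypothetical components to exclude every disconnected configuration; the Diophantine enumeration, the Betti computation, and the localization computation are then routine.
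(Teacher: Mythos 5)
Your proposal is correct and follows essentially the same route as the paper: the same Diophantine enumeration of $\mathrm{PD}(Z_0)$ from Lemmas \ref{lemma_3_4_coef}, \ref{lemma_3_4_a}, \ref{lemma_3_4_k}, the same connectedness-plus-adjunction analysis of $Z_0$ (your appeal to the $(a,b)=(2,-1)$ case of Lemma \ref{lemma_2_3_2} is a legitimate substitute for the paper's inline argument via \cite[Corollary 3.10]{Li} excluding the class $u+E_1$), and the same perfect Morse--Bott and ABBV localization computations, with all final values matching. No gaps.
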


\begin{proof}
	We divide the proof into two cases : $k = 1$ and $k=2$. \vs{0.2cm}
	\begin{itemize}
		\item {\bf Case I : ${\bf k = 1}$.} Recall that Lemma \ref{lemma_3_4_coef}, together with Lemma \ref{lemma_3_4_a}, says that we have 
		\[
			0 \leq a \leq 3, \quad b_1 \geq -1, \quad (4-a)^2 - (2+b_1)^2 \geq 1, \quad 3a + b_1 \geq 1.
		\]
		Thus the list of all possible pairs $(a, b_1)$ is as below : 
		\[
			(a, b_1) = (0,1), (1,-1), (1,0), (2,-1), 
		\]	
		or equivalently, $\mathrm{PD}(Z_0) = E_1, u - E_1, u, 2u - E_1$ as listed in the table of Theorem \ref{theorem_Sec6_3_4}. 
		Moreover, $Z_0$ is connected in any case in the following reasons. \vs{0.1cm}
		\begin{itemize}
			\item If $\mathrm{PD}(Z_0) = E_1$, then $\langle [\omega_0], [Z_0] \rangle = 1$.
			\item When $\mathrm{PD}(Z_0) = u - E_1$ or $\mathrm{PD}(Z_0) = u$, suppose that $Z_0$ is disconnected. Then 
			there exists a connected component of $Z_0$, say $Z_0^1$, such that $\mathrm{PD}(Z_0^1) = pE_1$ for some $p \in \Z_+$ 
			(because of Lemma \ref{lemma_3_4_a} and the last inequality in Lemma \ref{lemma_3_4_coef}). Moreover, since $Z_0^1$ does not intersect other components, we have 
			\[
				pE_1 \cdot (\mathrm{PD}(Z_0) - pE_1) = 0 
			\]  
			which is impossible unless $p = 0$. \vs{0.1cm}
			\item When $\mathrm{PD}(Z_0) = 2u - E_1$, assume that $Z_0$ is disconnected. 
			Then we can easily see that $Z_0$ should consist of exactly two components, namely $Z_0^1$ and $Z_0^2$,
			and $\mathrm{PD}(Z_0^i) = u + p_iE_1$ for some $p_i \in \Z$ where $i=1,2$. 
			(Otherwise there is a component whose dual class is of the form $pE_1$ for some $p \in \Z_+$ and this is impossible since 
			$pE_1 \cdot (\mathrm{PD}(Z_0) - pE_1) \neq 0$.) Moreover, since $[Z_0^1] \cdot [Z_0^2] = 0$, we have $p_1p_2 = -1$. In other words, we have 
			$p_1 = 1$ and $p_2 = -1$ (by rearranging the order of $Z_0^1$ and $Z_0^2$ if necessary). However, there cannot exist a symplectic surface representing class $u + E_1$ 
			by \cite[Corollary 3.10]{Li} since it has non-negative (actually zero) 
			self-intersection number and intersects the stable class $E_1$ negatively. Therefore $Z_0$ is connected. \vs{0.1cm}
		\end{itemize}
	
		Now, we apply the adjunction formula \eqref{equation_adjunction} to each case
		\[
			\langle c_1(M_0), [Z_0] \rangle = [Z_0] \cdot [Z_0] + 2 - 2g, \quad c_1(M_0) = 3u - E_1
		\]
		where $g$ is the genus of $Z_0$. Then,\vs{0.1cm}
		\begin{itemize}
			\item If $\mathrm{PD}(Z_0) = E_1$, then $1 = -1 + 2 - 2g$. 
			\item If $\mathrm{PD}(Z_0) = u - E_1$, then $2 = 0 + 2 - 2g$. 
			\item If $\mathrm{PD}(Z_0) = u$, then $3 = 1 + 2 - 2g$.
			\item If $\mathrm{PD}(Z_0) = 2u  - E_1$, then $5 = 3 + 2 - 2g$. \vs{0.1cm}
		\end{itemize}
		In either case, we have $g=0$ and hence $Z_0 \cong S^2$. This proves the first claim of Theorem \ref{theorem_Sec6_3_4}. The claim $b_2(M) = 3$ can be obtained directly 
		by computing the Poincar\'{e} polynomial of $M$ in terms of fixed components. 
		
		It remains to compute the Chern numbers for each case. Applying the localization theorem \ref{theorem_localization}, we get
		\[
			\begin{array}{ccl}\vs{0.1cm}
				\ds \int_M c_1^{S^1}(TM)^3 & = & \ds  \sum_{Z \subset M^{S^1}} \int_Z \frac{\left(c_1^{S^1}(TM)|_Z\right)^3}{e_Z^{S^1}} \\ \vs{0.2cm}
									& = & \ds  \frac{(3\lambda)^3}{\lambda^3} + \frac{\lambda^3}{-\lambda^3} 
									+ \int_{Z_0} \frac{\left(\mathrm{Vol}(Z_0)q\right)^3 }{(\lambda + b^+q)(-\lambda + b^-q)} 
									+ \int_{Z_{\max}} \frac{\left((3u - E_1 + (-\lambda - e(P^-_1)\right)^3) }{-\lambda - e(P^-_{1})} \\ \vs{0.1cm}
									& = & 26 + \ds \int_{Z_{\max}} \frac{\left((3u - E_1 + (-\lambda - e(P^-_1)\right)^3) }{-\lambda - e(P^-_{1})} \quad \quad (e(P_1^-) = (a-1)u + (b_1 + 1)E_1)\\ \vs{0.1cm}
									& = & 26 + 24 -3(3(a-1) + (b_1+1)) + (a-1)^2 - (b_1+1)^2 \\ \vs{0.1cm}
									& = & \begin{cases}
										50 & \text{if $(a, b_1) = (0,1)$} \\
										50 & \text{if $(a, b_1) = (1,-1)$} \\
										46 & \text{if $(a, b_1) = (1,0)$} \\
										42 & \text{if $(a, b_1) = (2,-1)$} \\
									\end{cases}
			\end{array}
		\]
		\vs{0.2cm} 
		
		\item {\bf Case II : ${\bf k = 2}$.} Note that we have $0 \leq a \leq 1$ by Lemma \ref{lemma_3_4_a}. If $a = 0$, then two inequalities 
		\[
			a + b_1 + b_2 < 0 \quad \text{and} \quad 3a + b_1 + b_2 > 0
		\]
		in Lemma \ref{lemma_3_4_coef} contradict each other, that is, we have $a = 1$ (and hence $b_1 = b_2 = -1$.) Therefore, the only possible triple
		$(a,b_1,b_2)$ is $(1,-1,-1)$, or equivalently, $\mathrm{PD}(Z_0) = u -E_1 - E_2$. The symplectic area of $Z_0$ is then $1$ so that $Z_0$ is connected. 
		Also the adjunction formula \eqref{equation_adjunction} implies that $Z_0 \cong S^2$. Furthermore, we have 
		\[
			\begin{array}{ccl}\vs{0.1cm}
				\ds \int_M c_1^{S^1}(TM)^3 & = & \ds  \sum_{Z \subset M^{S^1}} \int_Z \frac{\left(c_1^{S^1}(TM)|_Z\right)^3}{e_Z^{S^1}} \\ \vs{0.2cm}
									& = & \ds  \frac{(3\lambda)^3}{\lambda^3} + \frac{2\lambda^3}{-\lambda^3} 
									+ \int_{Z_0} \frac{\left(\mathrm{Vol}(Z_0)q\right)^3 }{(\lambda + b^+q)(-\lambda + b^-q)} 
									+ \int_{Z_{\max}} \frac{\left((3u - E_1 - E_2 + (-\lambda - e(P^-_1)\right)^3) }{-\lambda - e(P^-_{1})} \\ \vs{0.1cm}
									& = & 25 + \ds \int_{Z_{\max}} \frac{\left((3u - E_1 - E_2 + (-\lambda - e(P^-_1)\right)^3) }{-\lambda - e(P^-_{1})} \quad \quad (e(P_1^-) = 0) 
									\\ \vs{0.1cm}
									& = & 25 + \ds \int_{Z_{\max}} 3(3u - E_1 - E_2)^2 = 46.
			\end{array}
		\]
		The statement $b_2(M) = 4$ follows from the perfectness of a moment map (as a Morse-Bott function). 
	\end{itemize}
	
\end{proof}

\begin{example}[Fano varieties of type {\bf (III-4)}]\label{example_Sec6_3_4}
	We follow Mori-Mukai's notation in \cite{MM}. 
	Let $V_7$ be the one-point toric blow-up of $\p^3$ so that the corresponding moment polytope (with respect to $\omega$ with $[\omega] = c_1(TV_7)$) is given in Figure \ref{figure_3_4}.
	See also Example \ref{example_Sec6_3_2} (case {\bf (III-2)}). 
	
		\begin{figure}[H]
			\scalebox{0.8}{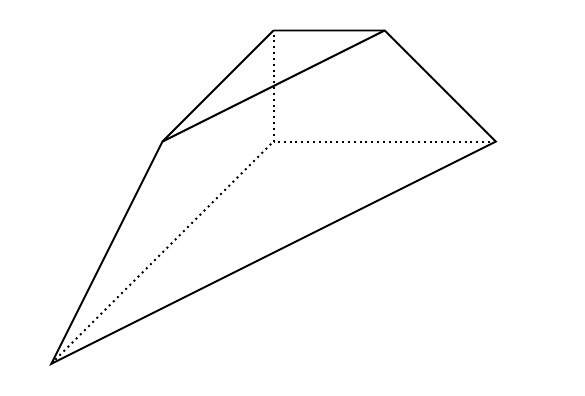} \vs{0.3cm}
			\caption{\label{figure_3_4} $V_7$ : Toric blow-up of $\p^3$ at a fixed point corresponding to the vertex $(0,0,4)$.}
		\end{figure}	
	\noindent		
	We will see that Fano varieties of type {\bf (III-4.1$\sim$3)} can be obtained as (toric) blow-ups of $V_7$ along some rational curves.
	We also construct a Fano variety of type {\bf (III-4.4)} as a blow-up of $V_7$ which is not toric.  
	Moreover, an example of a Fano variety of type {\bf (III-4.5)} can be given as the blow-up of $Y$ along two rational curves where $Y$ is the toric blow-up of $\p^3$ along a torus invariant line,  
	see Figure \ref{figure_3_4_5}.

	\begin{enumerate}
		\item {\bf Case (III-4.1) \cite[No. 29 in the list in Section 12.4]{IP}} : 
		Let $M$ be the toric blow-up of $V_7$ along the torus invariant sphere which is the preimage of the edge ${\bf e}_1$ connecting 
		$(0,2,2)$ and $(2,0,2)$ in Figure \ref{figure_3_4_1}, i.e., the blow-up of a line lying on the exceptional divisor of $V_7$.
		Then the corresponding moment polytope can be illustrated as in Figure \ref{figure_3_4_1}. Let $S^1$ be the subgroup of $T^3$ generated 
		by $\xi = (1,1,1)$. Then we can easily check (by calculating the inner products of each primitive edge vectors and $\xi$) that the induced $S^1$-action is semifree. Also, 
		with respect to the balanced moment map $H = \langle \mu, \xi \rangle - 3$, the fixed point set for the $S^1$-action consists of 
		\[
			\begin{array}{ll}
				Z_{-3} = \mu^{-1}((0,0,0)) = \mathrm{pt}, &  Z_{-1} = \mu^{-1}((0,0,2)) = \mathrm{pt}, \\
				Z_{0} = \mu^{-1}(e) \cong S^2,  & Z_1 = \mu^{-1}(\Delta)
			\end{array}
		\]
		where $e$ is the edge connecting $(0,1,2)$ and $(1,0,2)$ and $\Delta$ is the trapezoid whose vertex set is given by 
		$\{ (3,0,1), (0,3,1), (0,4,0), (4,0,0) \}$ in Figure \ref{figure_3_4_1} (on the right.)
		
		\begin{figure}[H]
			\scalebox{0.9}{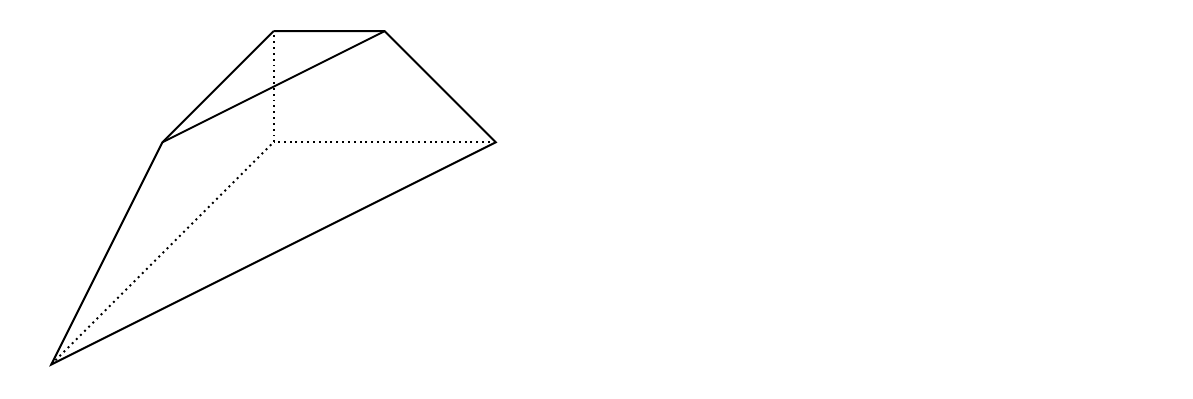} \vs{0.3cm}
			\caption{\label{figure_3_4_1} {\bf (III-4.1)} Toric blow-up of $V_7$ along the sphere corresponding to the edge ${\bf e}_1$.}
		\end{figure}			
		
		\item {\bf Case (III-4.2) \cite[No. 30 in the list in Section 12.4]{IP}} : Let $M$ be the toric blow-up of $V_7$ along the sphere corresponding to the edge ${\bf e}_2$ 
		in Figure \ref{figure_3_4_2} (on the left) where the corresponding moment polytope is described on the right. 
		(In other words, $M$ is the blow-up of $V_7$ along a line passing through the exceptional divisor of $V_7$.)
		Let $S^1$ be the subgroup of $T^3$ generated 
		by $\xi = (-1,0,0)$. The induced $S^1$-action is semifree and has the balanced moment map $H = \langle \mu, \xi \rangle +1$. Also the fixed point set for the $S^1$-action is given by
		\[
			\begin{array}{ll}
				Z_{-3} = \mu^{-1}((4,0,0)) = \mathrm{pt}, &  Z_{-1} = \mu^{-1}((2,0,2)) = \mathrm{pt}, \\
				Z_{0} = \mu^{-1}(e) \cong S^2,  & Z_1 = \mu^{-1}(\Delta)
			\end{array}
		\]
		where $e$ is the edge connecting $(1,0,2)$ and $(1,0,0)$ and $\Delta$ is the trapezoid whose vertex set is given by 
		$\{ (0,4,0), (0,1,0), (0,1,2), (0,2,2) \}$ in Figure \ref{figure_3_4_2} on the right. 

		\begin{figure}[H]
			\scalebox{0.9}{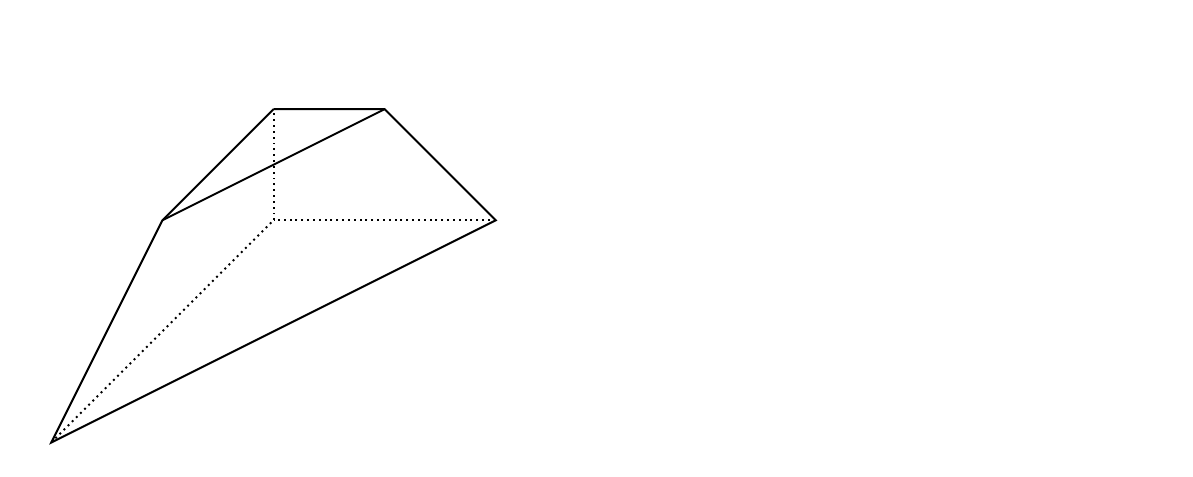} \vs{0.3cm}
			\caption{\label{figure_3_4_2} {\bf (III-4.2)} Toric blow-up of $V_7$ along the sphere corresponding to the edge ${\bf e}_2$.}
		\end{figure}			
		
		\item {\bf Case (III-4.3) \cite[No. 26 in the list in Section 12.4]{IP}} : Now, let $M$ be the toric blow-up of $V_7$ along the sphere corresponding to the edge ${\bf e}_3$ 
		in Figure \ref{figure_3_4_3}, that is, $M$ is the blow-up of $V_7$ along a line not intersecting the exceptional divisor of $V_7$.
		The corresponding moment polytope is described in Figure \ref{figure_3_4_3} on the right. 
		Let $S^1$ be the subgroup of $T^3$ generated 
		by $\xi = (1,1,1)$. Then we can easily check (by looking up Figure \ref{figure_3_4_3}) that the induced $S^1$-action is semifree and has the balanced moment map $H = \langle \mu, \xi \rangle -3$. 
		Also the fixed point set for the $S^1$-action is listed as 
		\[
			\begin{array}{ll}
				Z_{-3} = \mu^{-1}((0,0,0)) = \mathrm{pt}, &  Z_{-1} = \mu^{-1}((0,0,2)) = \mathrm{pt}, \\
				Z_{0} = \mu^{-1}(e) \cong S^2,  & Z_1 = \mu^{-1}(\Delta)
			\end{array}
		\]
		where $e$ is the edge connecting $(3,0,0)$ and $(0,3,0)$ and $\Delta$ is the trapezoid whose vertex set is given by 
		$\{ (0,3,1), (3,0,1), (2,0,2), (0,2,2) \}$ in Figure \ref{figure_3_4_3}. 
		
		\begin{figure}[H]
			\scalebox{0.9}{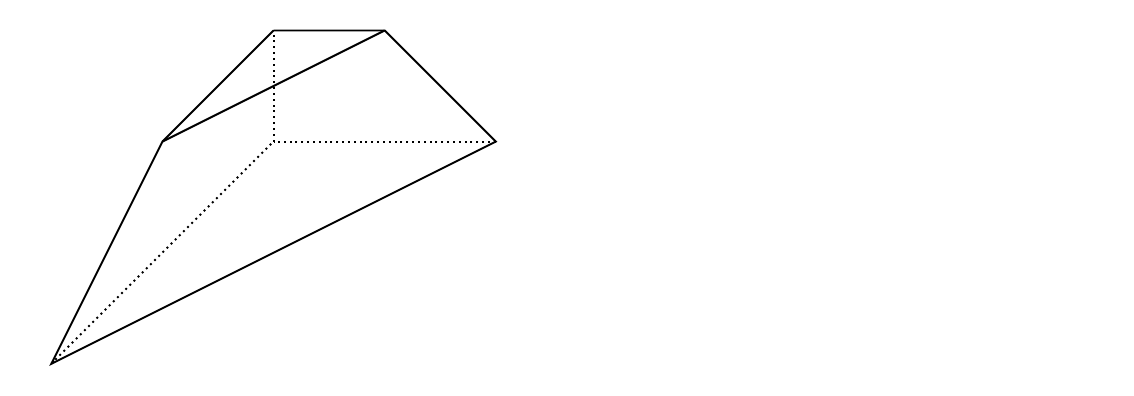} \vs{0.3cm}
			\caption{\label{figure_3_4_3} {\bf (III-4.3)} Toric blow-up of $V_7$ along the sphere corresponding to the edge ${\bf e}_3$.}
		\end{figure}			

		\item {\bf Case (III-4.4) \cite[No. 23 in the list in Section 12.4]{IP}} : 
		Consider $V_7$ as a toric variety and let $S^1$ be the subgroup of $T^3$ generated by $\xi = (-1,0,0)$. Then the induced $S^1$-action on $V_7$ is semifree and 
		the fixed point set is given by
		\[
			Z_{-3} = \mathrm{pt}, \quad Z_{-1} = \mathrm{pt}, \quad Z_{1} = \mu^{-1}(\Delta) \cong \p^2 \# \overline{\p^2}.
		\]
		Let $Q \subset Z_1 \subset V_7$ be the proper transformation of a conic in the hyperplane $\p^2 \subset \p^3$ passing through the blown-up point, i.e., the center of $V_7$.
		As a symplectic submanifold of $V_7$, one can describe $Q$ as a smoothing of two spheres in $Z_1$ representing $u$ and $u-E$ respectively. 
		Then, similar to the case of {\bf (III-3)} in Example \ref{example_Sec6_3_3}, we perform an $S^1$-equivariant blow up of $V_7$ along $Q$ and we denote the resulting manifold 
		by $M$.
		(Note that the procedure of the blowing-up construction is exactly the same as described in Example \ref{example_Sec6_3_3}.)
		As appeared in \cite[Table 3. no.23]{MM}, $M$ is a smooth Fano variety and the induced $S^1$-action on $M$ has a fixed point set which coincides with {\bf (III-4.4)}. 
		See Figure \ref{figure_3_4_non}.
		
		\begin{figure}[H]
			\scalebox{1}{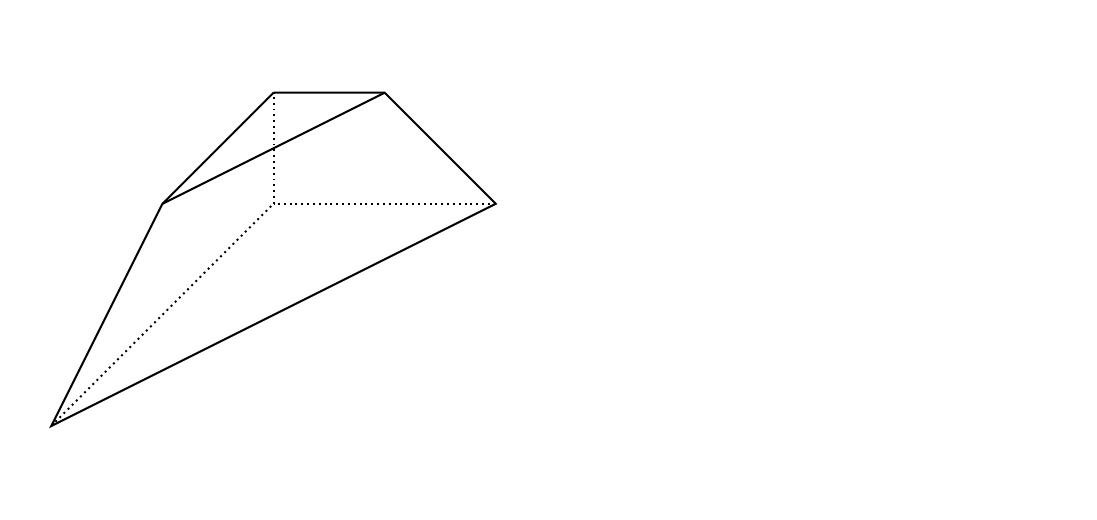} \vs{0.3cm}
			\caption{\label{figure_3_4_non} {\bf (III-4.4)} (Non-toric) blow-up of $V_7$ along a conic.}
		\end{figure}			

		\item {\bf Case (III-4.5) \cite[No. 12 in the list in Section 12.5]{IP}} : 
		In this case, we consider $Y$, the toric blow-up of $\p^3$ along a torus invariant line whose moment polytope is 
		described in Figure \ref{figure_3_4_5} on the left. Then, we let 
		$M$ be the toric blow-up of $Y$ along two disjoint spheres corresponding to the edges ${\bf C}_1$ and ${\bf C}_2$. 
		The corresponding moment polytope is given in Figure \ref{figure_3_4_5} on the right. 
		Let $S^1$ be the subgroup of $T^3$ generated 
		by $\xi = (-1,0,0)$. Then it follows that the induced $S^1$-action is semifree and has the balanced moment map $H = \langle \mu, \xi \rangle +1$. 
		Moreover, the fixed point set for the $S^1$-action is given by
		\[
			\begin{array}{ll}
				Z_{-3} = \mu^{-1}((4,0,0)) = \mathrm{pt}, &  Z_{-1} = \mu^{-1}((2,0,2)) ~\cup ~ \mu^{-1}((2,0,0)) = \mathrm{2 ~pts}, \\
				Z_{0} = \mu^{-1}(e) \cong S^2,  & Z_1 = \mu^{-1}(\Delta)
			\end{array}
		\]
		where $e$ is the edge connecting $(1,0,1)$ and $(1,0,2)$ and $\Delta$ is the five gon whose vertex set is given by 
		$\{ (0,4,0), (0,2,0), (0,3,1), (0,1,2), (0,2,2) \}$ in Figure \ref{figure_3_4_5}. 
		
		\begin{figure}[H]
			\scalebox{0.9}{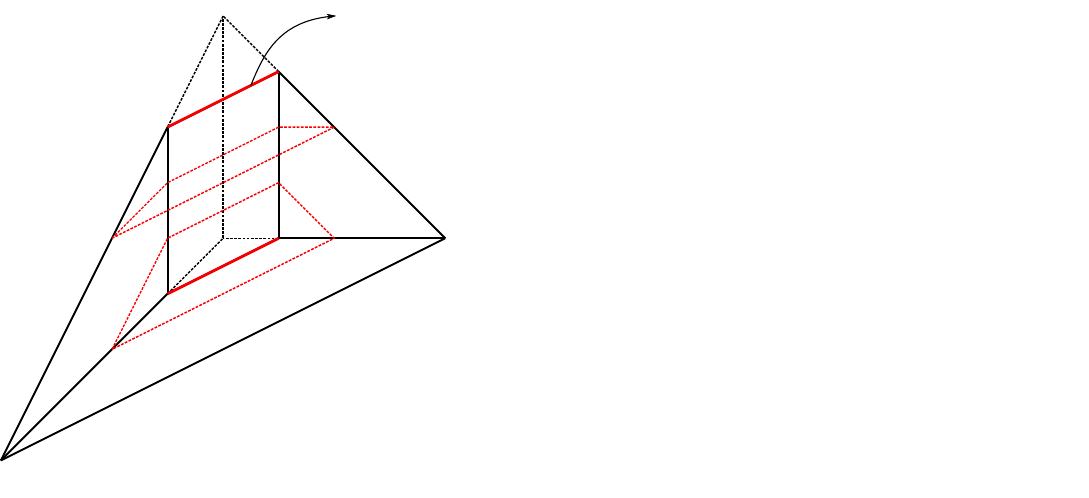} \vs{0.3cm}
			\caption{\label{figure_3_4_5} {\bf (III-4.5)} Toric blow-up of $Y$ along two spheres corresponding to the edge ${\bf C}_1$ and ${\bf C}_2$.}
		\end{figure}					
		
	\end{enumerate}		
				
	\begin{table}[h]
		\begin{tabular}{|c|c|c|c|c|c|c|c|c|c|c|}
			\hline
			    & $(M_0, [\omega_0])$ & $Z_{-3}$ & $Z_{-1}$ &  $Z_0$ & $Z_1$ & $Z_2$ & $Z_3$ & $c_1^3$ \\ \hline \hline
			    {\bf (I-1)} & $(\p^2, 3u)$ & pt &  & $Z_0 \cong S^2$, $[Z_0] = 2u$  &  & & pt &54 \\ \hline    
			   {\bf (I-2)} & $(\p^2 \# 3 \overline{\p^2}, 3u - E_1 - E_2 - E_3)$ &pt & 3 ~pts & & 3 ~pts & & pt & 48 \\ \hline    
			   
			   {\bf (I-3)} & $(\p^2 \# \overline{\p^2}, 3u - E_1)$ & pt & pt &  \makecell{ $Z_0 = Z_0^1 ~\dot \cup ~ Z_0^2$ \\
			    $Z_0^1 \cong Z_0^2 \cong S^2$ \\ $[Z_0^1] = [Z_0^2] = u - E_1$} & pt & & pt & 52\\ \hline    
			    
			   {\bf (II-3.1)} &  $(\p^2 \# \overline{\p^2}, 3u - E_1)$ & pt & pt &  \makecell{ $Z_0 = S^2$ \\
			    $[Z_0] = E_1$} & & $S^2$ & & 62\\ \hline    				    
			   {\bf (II-3.2)} &  $(\p^2 \# \overline{\p^2}, 3u - E_1)$ & pt & pt &  \makecell{ $Z_0 = S^2$ \\
			    $[Z_0] = u $} & & $S^2$ & & 54\\ \hline    				    
			   {\bf (II-3.3)} &  $(\p^2 \# \overline{\p^2}, 3u - E_1)$ & pt & pt &  \makecell{ $Z_0 = S^2$ \\
			    $[Z_0] = 2u - E_1$} & & $S^2$ & & 46 \\ \hline    			
			    
			   {\bf (II-4.1)} & $(\p^2 \# 2\overline{\p^2}, 3u - E_1 - E_2)$ & pt & 2 pts & \makecell{ $Z_0 = Z_0^1 ~\dot \cup ~ Z_0^2$ \\
				    $Z_0^1 \cong Z_0^2 \cong S^2$ \\ $[Z_0^1] = u - E_1$ \\ $[Z_0^2] = u - E_1 - E_2$} & pt & $S^2$ & & 44\\ \hline    
			   {\bf (II-4.2)} &  $(\p^2 \# 3\overline{\p^2}, 3u - E_1 - E_2 - E_3)$ & pt & 3 pts & \makecell{ $Z_0 = S^2$ \\		
		    		    $[Z_0] = u - E_2 - E_3$} & 2pts & $S^2$ & &42\\ \hline    	    
		    		    
			   {\bf (III-1)} & $(\p^2, 3u)$ & pt & & & $\p^2$ & &  & 64\\ \hline
			   {\bf (III-2)} & $(\p^2, 3u)$ & pt & pt & & $\p^2 \# \overline{\p^2}$ & & &56 \\ \hline
			   {\bf (III-3.1)} & $(\p^2, 3u)$ & pt & & $Z_0 \cong S^2, [Z_0] = u$ & $\p^2$ & & &54\\ \hline
			   {\bf (III-3.2)} & $(\p^2, 3u)$ & pt & & $Z_0 \cong S^2, [Z_0] = 2u$ & $\p^2$ & & & 46\\ \hline
			   {\bf (III-3.3)} & $(\p^2, 3u)$ & pt & & $Z_0 \cong T^2, [Z_0] = 3u$ & $\p^2$ & & & 40\\ \hline
			   
			   {\bf (III-4.1)} & $(\p^2 \# \overline{\p^2}, 3u - E_1)$ & pt & pt  & $Z_0 \cong S^2, [Z_0] = E_1$ & $\p^2 \# \overline{\p^2}$ & & & 50\\ \hline
			   {\bf (III-4.2)} & $(\p^2 \# \overline{\p^2}, 3u - E_1)$ & pt & pt  & \makecell{ $Z_0 \cong S^2$,\\ $ [Z_0] = u - E_1$} & $\p^2 \# \overline{\p^2}$ & & & 50\\ \hline
			   {\bf (III-4.3)} & $(\p^2 \# \overline{\p^2}, 3u - E_1)$ & pt & pt & $Z_0 \cong S^2, [Z_0] = u$ & $\p^2 \# \overline{\p^2}$ & & &46 \\ \hline
			   {\bf (III-4.4)} & $(\p^2 \# \overline{\p^2}, 3u - E_1)$ & pt & pt & \makecell{$Z_0 \cong S^2$, \\ $[Z_0] = 2u - E_1$} & $\p^2 \# \overline{\p^2}$ & & &42 \\ \hline
			   {\bf (III-4.5)} & $(\p^2 \# 2\overline{\p^2}, 3u - E_1-E_2)$ & pt & 2 pts & \makecell{$Z_0 \cong S^2$, \\ $[Z_0] = u - E_1 - E_2$} & $\p^2 \# 2\overline{\p^2}$ & & &46\\ \hline
		\end{tabular}
		\vs{0.3cm}
		\caption {List of topological fixed point data for $Z_{\min} = \mathrm{pt}$} \label{table_list} 
	\end{table}

\end{example}

\section{Classification of Topological fixed point data : $\dim Z_{\min} = \dim Z_{\max} = 2$}
\label{secClassificationOfTopologicalFixedPointDataDimZMinDimZMax2}

In this section, we classify all topological fixed point data in case of $\dim Z_{\min} = \dim Z_{\max} = 2$. That is, $Z_{\min}$ and $Z_{\max}$ are both two dimensional surfaces diffeomorphic to 
$S^2$ by \cite[Theorem 0.1]{Li1}.
We also give the corresponding example of a Fano variety with an explicit holomorphic $S^1$(or $\C^*$)-action for each TFD. 
We divide into four cases by the set $\mathrm{Crit} ~\mathring{H}$ of interior critical values of the balanced moment map $H$.

Up to the orientation of $M$, we may assume that 
\begin{equation}\label{equation_assumption}
	b_{\min} \leq b_{\max}
\end{equation}
where $b_{\min}$ and $b_{\max}$ denote the first Chern numbers of the normal bundles of $Z_{\min}$ and $Z_{\max}$, respectively.

\subsection{Case I : $\mathrm{Crit} ~\mathring{H} = \emptyset$}
\label{ssecCaseIMathrmCritMathringHEmptyset}
	
\begin{theorem}\label{theorem_I_1}
	Let $(M,\omega)$ be a six-dimensional closed monotone semifree Hamiltonian $S^1$-manifold with $c_1(TM) = [\omega]$.
	Suppose that $\mathrm{Crit} H = \{ 2, -2\}$. Then the only possible 
	topological fixed point data is given by 	
		\begin{table}[H]
			\begin{tabular}{|c|c|c|c|c|c|c|c|}
				\hline
				    & $(M_0, [\omega_0])$ & $e(P_{-2}^+)$ &$Z_{-2}$  & $Z_2$ & $b_2$ & $c_1^3$ \\ \hline \hline
				    {\bf (I-1)} & $(S^2 \times S^2, 2x + 2y)$ & $x-y$  &$S^2$ &  $S^2$ & $1$ & $64$\\ \hline    
			\end{tabular}
			\vs{0.5cm}			
			\caption{\label{table_I_1} Topological fixed point data for $\mathrm{Crit} H = \{-2, 2\}$}			
		\end{table}
			\vs{-0.7cm}
\end{theorem}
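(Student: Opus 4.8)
The plan is to pin down, in turn, the diffeomorphism type of the reduced space, the Euler class of the $S^1$-bundle, the two fixed components, and the numerical invariants, using the ABBV localization theorem as the main computational engine. By Lemma \ref{lemma_possible_critical_values}, the hypothesis $\mathrm{Crit}\,H=\{2,-2\}$ forces $Z_{\min}$ and $Z_{\max}$ to be two-spheres carrying the $S^1$-weights $(1,1,0)$ and $(-1,-1,0)$, of Morse--Bott index $0$ and $4$ respectively, and leaves no interior critical value. In particular $0$ is a regular value, so by Proposition \ref{proposition_monotonicity_preserved_under_reduction} the reduced space $(M_0,\omega_0)$ is a closed monotone symplectic four-manifold with $[\omega_0]=c_1(TM_0)$; and by the local model near the minimum (the discussion preceding Proposition \ref{proposition_topology_reduced_space}), $M_0\cong M_{-2+\epsilon}$ is an $S^2$-bundle over $Z_{\min}\cong S^2$, hence diffeomorphic either to $S^2\times S^2$ or to the nontrivial bundle $E_{S^2}$. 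Since the Euler class $e(P_c^\pm)$ is locally constant on intervals of regular values and $(-2,2)$ contains none, $e(P_{-2}^+)=e(P_0^+)=e(P_2^-)$.

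The heart of the argument is to show $M_0\cong S^2\times S^2$ and that the first Chern numbers $b_{\min},b_{\max}$ of the normal bundles of $Z_{\min},Z_{\max}$ both equal $2$. I would write each of these normal bundles as an equivariant sum of line bundles whose ordinary Chern classes sum to $b_{\min}q$ (resp.\ $b_{\max}q$), with $q$ the positive generator of $H^2(S^2)$, so that the equivariant Euler classes are $\lambda^2+b_{\min}\lambda q$ at $Z_{\min}$ and $\lambda^2-b_{\max}\lambda q$ at $Z_{\max}$. Applying Theorem \ref{theorem_localization} to $\alpha=1$ gives $\int_M 1=(b_{\max}-b_{\min})/\lambda^3=0$, hence $b_{\min}=b_{\max}=:b$; applying it to $\alpha=c_1^{S^1}(TM)$, and using Proposition \ref{proposition_equivariant_Chern_class} to get $c_1^{S^1}(TM)|_{Z_{\min}}=(2+b)q+2\lambda$ and $c_1^{S^1}(TM)|_{Z_{\max}}=(2+b)q-2\lambda$, gives $\int_M c_1^{S^1}(TM)=2(2-b)/\lambda^2=0$, hence $b=2$. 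Since $b_{\max}$ is even, Lemma \ref{lemma_volume} yields $M_0\cong S^2\times S^2$ and $e(P_2^-)=-x+y$ in the basis of \eqref{equation_basis_trivial}; relabeling the two $S^2$ factors puts this in the form $e(P_{-2}^+)=x-y$ recorded in Table \ref{table_I_1}. Corollary \ref{corollary_volume} (and the monotonicity identity $\langle c_1(TM),[Z]\rangle=\int_Z\omega$ for $Z=Z_{\min}$) then gives $\int_{Z_{\pm2}}\omega=2+b=4$, consistent with $[\omega_0]=c_1(TM_0)=2x+2y$ and with Duistermaat--Heckman (Theorem \ref{theorem_DH}), which gives $[\omega_t]=(2-t)x+(2+t)y$.

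The remaining invariants then follow mechanically. Since $H$ is a perfect Morse--Bott function (Corollary \ref{corollary_properties_moment_map}), the Poincar\'e polynomial of $M$ is $(1+t^2)+(1+t^2)t^4=1+t^2+t^4+t^6$, so $b_2(M)=1$ and $b_{\mathrm{odd}}(M)=0$. For the Chern number I would apply Theorem \ref{theorem_localization} to $\alpha=c_1^{S^1}(TM)^3$, which has degree $2n$, so by Corollary \ref{corollary : localization degree 2n} one has $\int_M\alpha=\langle c_1(TM)^3,[M]\rangle$; with $b=2$ the two fixed-point contributions are $\int_{S^2}(8\lambda+32q)=32$ from $Z_{\min}$ and, by the symmetric computation, $32$ from $Z_{\max}$, giving $\langle c_1(TM)^3,[M]\rangle=64$.

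I do not anticipate a serious obstacle: the only genuine content is excluding the nontrivial bundle $E_{S^2}$, which is dispatched by the parity of $b$ coming out of the two localization identities. The main point requiring care is the bookkeeping of orientation and sign conventions --- the ordering of the two $S^2$ factors, the sign of $e(P_c^\pm)$, and the identifications of $M_{-2+\epsilon}$, $M_0$, $M_{2-\epsilon}$ --- so that the normalizations agree with those of Lemma \ref{lemma_volume} and the entries of Table \ref{table_I_1}.
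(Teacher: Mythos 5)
Your proposal is correct, and the final localization computation of $\langle c_1(TM)^3,[M]\rangle=64$ is identical to the paper's. But the core step --- determining the normal Chern numbers and excluding the nontrivial bundle $E_{S^2}$ --- is done by a genuinely different route. The paper splits into the two cases $M_0\cong S^2\times S^2$ and $M_0\cong E_{S^2}$, writes $e(P_{-2}^+)=kx-y$ via Lemma \ref{lemma_volume}, and in each case uses the Duistermaat--Heckman formula to force $\lim_{t\to 2}\int_{M_t}[\omega_t]^2=0$; this pins down $k=1$ in the trivial case and produces the contradiction $k=\tfrac12$ in the twisted case. You instead apply the ABBV localization theorem to $1$ and to $c_1^{S^1}(TM)$ (legitimate by Corollary \ref{corollary : localization degree 2n}, since both have degree $<6$), obtaining $b_{\max}-b_{\min}=0$ and $b_{\min}+b_{\max}=4$, hence $b_{\min}=b_{\max}=2$; the parity clause of Lemma \ref{lemma_volume} then rules out $E_{S^2}$ in one line. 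This is exactly the technique the paper itself deploys later for the case $\mathrm{Crit}\,\mathring{H}=\{-1,1\}$ (equations \eqref{equation_3_localization_0} and \eqref{equation_3_localization_c1} with $m=0$), so your argument is essentially that computation specialized to the present setting; it is arguably cleaner, avoiding the case split and the volume-vanishing limit, at the cost of not directly exhibiting the family $[\omega_t]=(2-t)x+(2+t)y$ (which you recover afterwards as a consistency check). The one point to state carefully is the identification $e(P_{-2}^+)=x-y$ versus $e(P_2^-)=-x+y$: since there are no interior critical values these two Euler classes are literally equal under the gradient-flow identification of the level sets, and the discrepancy is resolved, as you note, by the fact that the labels $x$ (fiber) and $y$ (base) are assigned relative to which extremum one uses to present $M_0$ as an $S^2$-bundle; swapping the two factors interchanges them and matches the normalization of Table \ref{table_I_1}.
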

	
\begin{proof}
	Note that $M_{-2 + \epsilon} \cong M_0 \cong M_{2 - \epsilon}$ is an $S^2$-bundle over $S^2$, which is diffeomorphic to either $S^2 \times S^2$ or a Hirzebruch surface $E_{S^2}$.
	
	We first assume that $M_0 \cong S^2 \times S^2$. Then $b_{\min} = 2k$ for some $k \in \Z$ and $e(P_{-2}^+) = kx - y$ by Lemma \ref{lemma_volume}.
	Also we have $b_{\min} = 2k \geq -1$, i.e., $k \geq 0$, by Corollary \ref{corollary_volume}. Using the monotonicity of the reduced space (Proposition
	\ref{proposition_monotonicity_preserved_under_reduction}) and the Duistermaat-Heckman theorem \ref{theorem_DH}, 
	we obtain
	\[
		[\omega_t] = 2x + 2y - t(kx - y) = (2 - kt)x + (2 + t)y, \quad \quad t \in (-2,2).
	\]
	As $\lim_{t \rightarrow 2} \int_{M_t} [\omega_t]^2 = 0$, we get $k=1$ and so $b_{\min} = 2$. Moreover there is a natural identification 
	$H^{-1}(-2 + \epsilon) \cong H^{-1}(2 - \epsilon)$ by the gradient flow of $H$ so that we obtain $e(P_2^-) = e(P_{-2}^+)$ and 
	\[
		\langle e(P_{-2}^+)^2, [M_{-2 + \epsilon}] \rangle  = \langle e(P_{-2}^+)^2, [M_{2 - \epsilon}] \rangle = -2.
	\]	
	Therefore $b_{\max} = 2$ by Lemma \ref{lemma_volume}.
	
	Let $u$ be the positive generator of $H^2(Z_{\min};\Z) = H^2(Z_{\max};\Z)$.
	Then the first Chern number can be obtained by applying the 
	localization theorem \ref{theorem_localization} : 
	\[
		\begin{array}{ccl}\vs{0.1cm}
			\ds \int_M c_1^{S^1}(TM)^3 & = &  \ds  
							\int_{Z_{\min}} \frac{\left(c_1^{S^1}(TM)|_{Z_{\min}}\right)^3}{e_{Z_{\min}}^{S^1}} + 
							\int_{Z_{\max}} \frac{\left(c_1^{S^1}(TM)|_{Z_{\max}}\right)^3}{e_{Z_{\max}}^{S^1}} \\ \vs{0.1cm}
							& = &  \ds  
							\int_{Z_{\min}} \frac{\left( (2+b_{\min}) u + 2\lambda \right)^3}{b_{\min} u \lambda + \lambda^2} + 
							\int_{Z_{\max}} \frac{\left( (2+b_{\max}) u - 2\lambda \right)^3}{-b_{\max} u \lambda + \lambda^2} \\ \vs{0.1cm}
							& = &  \ds  
							\int_{Z_{\min}} (\lambda - 2u)(48u\lambda^2 + 8\lambda^3) + 
							\int_{Z_{\max}} (\lambda + 2u)(48u\lambda^2 - 8\lambda^3) = 32 + 32 = 64. 
		\end{array}
	\]
	
	Now we consider the case where $M_0 \cong E_{S^2}$.
	In this case, we have $b_{\mathrm{min}} = 2k + 1$ for some $k \in \Z$ by Lemma \ref{lemma_volume} and the Duistermaat-Heckman theorem \ref{theorem_DH} implies  
	\[
		[\omega_t] = (3x + 2y) - t (kx - y) = (3 - kt)x + (2 + t)y, \quad t \in (-2, 2).
	\]
	Again, since $\lim_{t \rightarrow 2} \int_{M_t} [\omega_t]^2 = 8(3-2k) - 16 = 8 - 16k = 0$, we have $k = \frac{1}{2}$ which contradicts that $k \in \Z$. Consequently, 
	$M_0$ cannot be diffeomorphic to $E_{S^2}$. This finishes the proof.
\end{proof}	
	
\begin{example}[Fano variety of type {\bf (I-1)}]\cite[17th in Section 12.2]{IP}\label{example_I_1} 
	Let $X = \p^3$ with the symplectic form $4 \omega_{\mathrm{FS}}$ (so that $c_1(TX) = [4\omega_{\mathrm{FS}}]$).
	Consider the Hamiltonian $S^1$-action on $(X, 4\omega_{\mathrm{FS}})$
	given by 
	\[
		t \cdot [z_0, z_1, z_2, z_3] = [tz_0, tz_1, z_2, z_3], \quad \quad t \in S^1
	\]
	where the balanced moment map for the action is given by
	\[
		H([z_0, z_1, z_2, z_3]) = \frac{4|z_0|^2 + 4|z_1|^2}{|z_0|^2 + |z_1|^2 + |z_2|^2 + |z_3|^2} - 2.
	\]
	Then the fixed point set (whose image is the union of the red lines in Figure \ref{figure_II_1}) is given by $\{ Z_{-2} \cong Z_2 \cong S^2 \}$ and this coincides with the one given in Theorem
	 \ref{theorem_I_1}.
	(See also \cite[Table 1-(4)]{Li2}.)
	
	\begin{figure}[H]
		\scalebox{1}{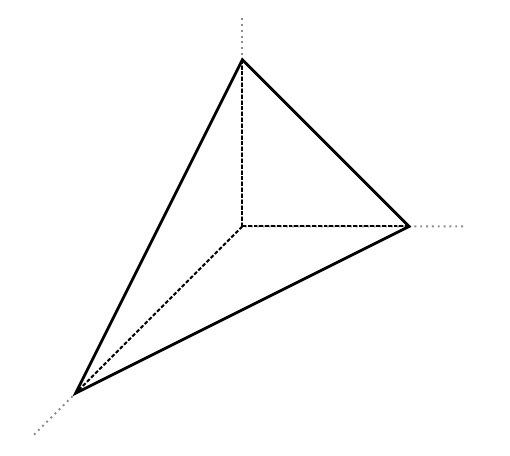}
		\caption{\label{figure_I_1} Toric moment map on $\p^3$}
	\end{figure}
\end{example}

\subsection{Case II : $\mathrm{Crit} ~\mathring{H} = \{0\}$}
\label{ssecCaseIIMathrmCritMathringH}

	In this case, we have $M_{-2 + \epsilon} \cong M_0 \cong M_{2 - \epsilon}$ by Proposition \ref{proposition_topology_reduced_space}.
	Thus we divide the proof into two cases as follows:
\begin{itemize}
    \item $M_0 \cong S^2 \times S^2.$
    \item $M_0 \cong E_{S^2}.$
\end{itemize} 
We begin with the case $M_0 \cong S^2 \times S^2.$

\begin{theorem}\label{theorem_II_1}
	Let $(M,\omega)$ be a six-dimensional closed monotone semifree Hamiltonian $S^1$-manifold with $c_1(TM) = [\omega]$. Suppose that $\mathrm{Crit} H = \{ 2, 0, -2\}$ and
	$M_0 \cong S^2 \times S^2$. Then, up to orientation of $M$, the list of all possible topological fixed point data is given by
		\begin{table}[H]
			\begin{tabular}{|c|c|c|c|c|c|c|c|c|}
				\hline
				    & $(M_0, [\omega_0])$ & $e(P_{-2}^+)$ &$Z_{-2}$  & $Z_0$ & $Z_2$ & $b_2(M)$ & $c_1^3(M)$ \\ \hline \hline
				    {\bf (II-1.1)} & $(S^2 \times S^2, 2x + 2y)$ & $-y$  &$S^2$ & $Z_0 \cong S^2, ~\mathrm{PD}(Z_0) = x+y$ & $S^2$ & $2$ &$48$ \\ \hline
				    {\bf (II-1.2)} & $(S^2 \times S^2, 2x + 2y)$ & $-y$  &$S^2$ & $Z_0 \cong S^2, ~\mathrm{PD}(Z_0) = x$ & $S^2$ & $2$ & $56$\\ \hline    
				    {\bf (II-1.3)} & $(S^2 \times S^2, 2x + 2y)$ & $-y$  &$S^2$ &
				     \makecell{ $Z_0 = Z_0^1 ~\dot \cup ~ Z_0^2$ \\
				    $Z_0^1 \cong Z_0^2 \cong S^2$ \\ $\mathrm{PD}(Z_0^1) = \mathrm{PD}(Z_0^2) = y$}   & $S^2$ & $3$ & $48$\\ \hline
			\end{tabular}
			\vs{0.5cm}			
			\caption{\label{table_II_1} Topological fixed point data for $\mathrm{Crit} H = \{-2, -0, 2\}$, $M_0 \cong S^2 \times S^2$}
		\end{table}
			\vs{-0.7cm}
\end{theorem}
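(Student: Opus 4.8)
The plan is to mimic the structure of the proofs in Section \ref{secClassificationOfTopologicalFixedPointDataDimZMin}, but now exploiting the richer geometry coming from the fact that both $Z_{\min}$ and $Z_{\max}$ are copies of $S^2$ sitting inside $S^2$-bundles over $S^2$. First I would record the topology of the reduced spaces: by Proposition \ref{proposition_topology_reduced_space}, since $\pm 2$ are extremal and $0$ is the only interior critical value (with $\dim Z_0 = 2$), the reduced spaces $M_{-2+\epsilon}$, $M_0$, and $M_{2-\epsilon}$ are all diffeomorphic; here we assume $M_0 \cong S^2\times S^2$, so fix the basis $x,y$ as in \eqref{equation_basis_trivial}. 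By Lemma \ref{lemma_volume} applied near the minimum, $b_{\min}=2k$ for some $k\ge 0$ (nonnegativity from Corollary \ref{corollary_volume}) and $e(P_{-2}^+)=kx-y$ after a possible relabeling of the factors; similarly $b_{\max}=2\ell$ and $e(P_2^-)=\ell x - y$ (or with the roles of $x,y$ swapped on the top — I would be careful here and normalize using the orientation convention \eqref{equation_assumption} that $b_{\min}\le b_{\max}$).

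Next I would pin down $k$, $\mathrm{PD}(Z_0)$, and $\ell$ using the Duistermaat-Heckman theorem and the continuity/positivity of the Duistermaat-Heckman function. Write $\mathrm{PD}(Z_0)=ax+by$. By Theorem \ref{theorem_DH} on $(-2,0)$ we get $[\omega_t]=(2-kt)x+(2+t)y$, and by Lemma \ref{lemma_Euler_class} $e(P_0^+)=e(P_0^-)+\mathrm{PD}(Z_0)=(k+a)x+(-1+b)y$, giving $[\omega_t]$ on $(0,2)$ as well. Imposing that $\langle[\omega_t]^2,[M_t]\rangle>0$ throughout $(-2,2)$, that $\int_{M_{2-\epsilon}}[\omega_{2-\epsilon}]^2$ matches the value forced by $b_{\max}$ and Corollary \ref{corollary_volume}, and that $\langle[\omega_0],[Z_0]\rangle>0$, should cut the possibilities down to a short finite list. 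Then the adjunction formula (as used in \eqref{equation_adjunction}) applied to each connected component of $Z_0$ forces each component to be a sphere and, together with the disjointness constraint $[Z_0^i]\cdot[Z_0^j]=0$, isolates exactly the three cases $\mathrm{PD}(Z_0)=x+y$, $\mathrm{PD}(Z_0)=x$, and $Z_0=Z_0^1\sqcup Z_0^2$ with $\mathrm{PD}(Z_0^i)=y$. One also checks $Z_{-2}\cong Z_2\cong S^2$ directly from Lemma \ref{lemma_possible_critical_values}. The Betti number count comes from perfectness of $H$ as a Morse-Bott function, and the Chern numbers $c_1^3(M)$ are computed by the ABBV localization formula \ref{theorem_localization} applied to $c_1^{S^1}(TM)^3$, exactly as in the proofs of Theorems \ref{theorem_Sec6_2_3} and \ref{theorem_Sec6_3_4}, using $c_1^{S^1}(TM)|_{Z}=c_1(TM)|_Z$ on the interior surface fixed component (Proposition \ref{proposition_equivariant_Chern_class}) and the normal-bundle splitting over $Z_{\pm 2}$.

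The main obstacle I anticipate is the bookkeeping needed to rule out the spurious candidates for $\mathrm{PD}(Z_0)$ and for the pair $(b_{\min},b_{\max})$: because $M_0\cong S^2\times S^2$ has a symmetry exchanging $x$ and $y$, one must be careful not to double-count, and because the reduced symplectic class evolves piecewise-linearly through $t=0$, the positivity-of-volume constraints must be checked on both subintervals and at the endpoints simultaneously. Unlike the $\p^2$-blow-up cases, here there is no "list of exceptional classes" lemma (Lemma \ref{lemma_list_exceptional}) to lean on directly, since exceptional classes in $S^2\times S^2$ are of the form $x-y$ type; so the exclusion of, say, $\mathrm{PD}(Z_0)=2x-y$-type classes has to come purely from the adjunction formula plus the sign of the self-intersection, as in the $W$-reference argument in Case III of Section \ref{ssecCaseIIIDimZMax4}. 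Once the arithmetic is organized, the remaining verifications (Chern numbers, Betti numbers, and the Euler-class normalization $e(P_{-2}^+)=-y$ appearing in the table, which fixes $k=0$) are routine.

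\begin{proof}
	Since $\pm 2$ are extremal critical values and the only interior critical value is $0$ with $\dim Z_0 = 2$, Proposition \ref{proposition_topology_reduced_space} shows that
	$M_{-2+\epsilon} \cong M_0 \cong M_{2 - \epsilon}$; by assumption this manifold is $S^2 \times S^2$, so we fix the basis $x, y \in H^2(M_0;\Z)$ as in \eqref{equation_basis_trivial}.
	By Lemma \ref{lemma_volume} applied near the minimum and Corollary \ref{corollary_volume}, the first Chern number $b_{\min}$ of the normal bundle of $Z_{\min}$ is even and
	$\ge -1$, hence $b_{\min} = 2k$ with $k \ge 0$, and (after possibly exchanging the two $S^2$-factors of $M_0$) $e(P_{-2}^+) = kx - y$. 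By Proposition
	\ref{proposition_monotonicity_preserved_under_reduction} and the Duistermaat-Heckman theorem \ref{theorem_DH},
	\[
		[\omega_t] = (2x + 2y) - t(kx - y) = (2 - kt)x + (2 + t)y, \quad t \in (-2, 0].
	\]
	Writing $\mathrm{PD}(Z_0) = ax + by$ with $a, b \in \Z$, Lemma \ref{lemma_Euler_class} gives
	$e(P_0^+) = e(P_0^-) + \mathrm{PD}(Z_0) = (k + a)x + (b - 1)y$, so that by Theorem \ref{theorem_DH} again
	\[
		[\omega_t] = [\omega_0] - t\, e(P_0^+) = (2 - (k+a)t)x + (2 - (b-1)t)y, \quad t \in [0, 2).
	\]
	Since $\langle [\omega_t]^2, [M_t] \rangle > 0$ for all $t \in (-2, 2)$ and $\langle [\omega_0], [Z_0] \rangle > 0$, and since $\lim_{t \to 2} \langle [\omega_t]^2, [M_t]\rangle = 0$
	(as $Z_{\max}$ is the extremal fixed component at level $2$), a direct computation forces $k = 0$, $a + b = 2$, and $a, b \ge 0$, hence $(a,b) \in \{(2,0), (1,1), (0,2)\}$.
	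Moreover $k = 0$ gives $e(P_{-2}^+) = -y$; using the identification $H^{-1}(2-\epsilon) \cong H^{-1}(-2+\epsilon)$ by the gradient flow together with Lemma
	\ref{lemma_volume} on the top, one checks $b_{\max} = 0$ as well, so $Z_{\max} \cong S^2$, and likewise $Z_{-2} \cong S^2$ by Lemma \ref{lemma_possible_critical_values}.

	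We now apply the adjunction formula \eqref{equation_adjunction} to the connected components of $Z_0$, viewed as embedded symplectic surfaces in $(M_0, \omega_0)$.
	If $(a,b) = (1,1)$, then $[Z_0]\cdot[Z_0] = 2$ and $\langle c_1(TM_0), [Z_0]\rangle = 4$, so $\sum_i (2 - 2g_i) = 2$; since any disjoint pair of components must have
	vanishing intersection number, one checks $Z_0$ is connected with $g = 0$, giving {\bf (II-1.1)}. If $(a,b) = (2,0)$, an analogous argument (using that a class $\alpha x$ with
	$\alpha \ge 2$ cannot be represented by an embedded sphere here, and the disjointness constraint) shows $Z_0$ is a single sphere with $\mathrm{PD}(Z_0) = x$, giving {\bf (II-1.2)};
	after relabeling the factors this also absorbs $(a,b)=(0,2)$ into the case where $Z_0 = Z_0^1 \sqcup Z_0^2$ with $\mathrm{PD}(Z_0^i) = y$, which is {\bf (II-1.3)}
	(the splitting being forced by $\langle [\omega_0], [Z_0]\rangle = 4$ and $[Z_0^1]\cdot[Z_0^2] = 0$).

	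Since $H$ is a perfect Morse-Bott function, the Poincar\'{e} polynomial of $M$ is determined by the fixed components, yielding $b_2(M) = 2$ in cases {\bf (II-1.1)}, {\bf (II-1.2)}
	and $b_2(M) = 3$ in case {\bf (II-1.3)}. Finally, applying the ABBV localization theorem \ref{theorem_localization} to $c_1^{S^1}(TM)^3$, using $c_1^{S^1}(TM)|_{Z_0} = c_1(TM)|_{Z_0}$
	by Proposition \ref{proposition_equivariant_Chern_class} (so the $Z_0$-contribution vanishes) and the normal bundle data of $Z_{\pm 2}$ computed above, we obtain
	$\langle c_1(TM)^3, [M] \rangle = 48, 56, 48$ in cases {\bf (II-1.1)}, {\bf (II-1.2)}, {\bf (II-1.3)}, respectively. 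This completes the proof.
\end{proof}
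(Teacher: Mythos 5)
Your overall strategy (Duistermaat--Heckman plus Lemma \ref{lemma_Euler_class} to track $[\omega_t]$, adjunction to identify the components of $Z_0$, ABBV localization for the Chern numbers) is the same as the paper's, but the central arithmetic step is wrong and it changes the answer. Writing $\mathrm{PD}(Z_0)=ax+by$ and $e(P_{-2}^+)=kx-y$, the constraints $\langle[\omega_t]^2,[M_t]\rangle>0$ on $(-2,2)$, $\lim_{t\to2}\langle[\omega_t]^2,[M_t]\rangle=0$, $2a+2b>0$ and $k\ge0$ do \emph{not} force $a+b=2$. They give $4(1-a-k)(4-2b)=0$ together with $a+k\le1$ and $b\le2$, whose integral solutions (after discarding $(a,b,k)=(-1,2,0),(-1,2,1)$ by the adjunction formula \eqref{equation_adjunction} and $(0,1,1)$ by the normalization \eqref{equation_assumption}) are $(1,1,0)$, $(1,0,0)$ and $(0,2,0)$, exactly as in \eqref{equation_2_1_bminbmax}. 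Your list $\{(2,0),(1,1),(0,2)\}$ both omits the genuine case $(a,b)=(1,0)$ --- which is {\bf (II-1.2)} with $\mathrm{PD}(Z_0)=x$ --- and includes the spurious case $(a,b)=(2,0)$: there $[\omega_t]=(2-2t)x+(2+t)y$, so $\langle[\omega_t]^2,[M_t]\rangle=2(2-2t)(2+t)$ vanishes already at $t=1<2$, contradicting positivity of the reduced volume. Your attempt to recover {\bf (II-1.2)} by asserting that $(a,b)=(2,0)$ ``shows $Z_0$ is a single sphere with $\mathrm{PD}(Z_0)=x$'' is a non sequitur, since in that case $\mathrm{PD}(Z_0)$ is by definition $2x$.

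A second error: you claim $b_{\max}=0$ in all three cases. In case {\bf (II-1.2)} one has $e(P_2^-)=e(P_0^+)=x-y$, hence $\langle e(P_2^-)^2,[M_{2-\epsilon}]\rangle=-2$ and $b_{\max}=2$ by Lemma \ref{lemma_volume}. This matters because localization gives $\int_M c_1^{S^1}(TM)^3=(24+4b_{\min})+(24+4b_{\max})$, so your own stated value $56$ for {\bf (II-1.2)} is consistent only with $b_{\max}=2$, not $0$. Once the solution set $(a,b,k)\in\{(1,1,0),(1,0,0),(0,2,0)\}$ and the corresponding pairs $(b_{\min},b_{\max})$ are corrected, the remainder of your argument (the connectedness analysis for $(1,1)$ via adjunction and disjointness, the two-sphere splitting for $(0,2)$, and the Betti numbers from perfectness of $H$) goes through as in the paper.
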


\begin{proof}
	Denote by $\mathrm{PD}(Z_0) = ax + by \in H^2(M_0; \Z)$ for some $a,b \in \Z$.
	By Lemma \ref{lemma_volume}, we have $b_{\mathrm{min}} = 2k$ and $e(P_{-2}^+) = kx - y$ for some integer $k \in \Z$.
	Also the Duistermaat-Heckman theorem \ref{theorem_DH} implies that
	\[
		[\omega_2] = [\omega_0] - 2(kx - y + \mathrm{PD}(Z_0)) = 2(1-a-k)x + (4-2b)y. 
	\]	
	As $\lim_{t \rightarrow 2} \int_{M_t} [\omega_t]^2 = 0$, we see that
	\begin{enumerate}
		\item $1-a-k=0$ and $4-2b > 0$, or
		\item $b=2$ and $1-a-k > 0$
	\end{enumerate}
	where the above two strict inequalities follow from the fact that $\int_{M_t} [\omega_t]^2 > 0$ for every $0 \leq t < 2$. Moreover, we have 
	\begin{equation}\label{equation_area_vol}
		\langle c_1(TM_0), [Z_0] \rangle = \langle [\omega_0], [Z_0] \rangle = 2a + 2b > 0, \quad \quad 
		\mathrm{Vol}(Z_{-2}) = b_{\min} + 2 = 2k + 2 > 0 \hs{0.2cm} (\Leftrightarrow ~k \geq 0) 
	\end{equation}	
	by Corollary \ref{corollary_volume}. \\
	
	\noindent 
	{\bf Case (1).} ~If $a + k = 1$ and $b \leq 1$, then the integer solutions $(a,b,k)$ for \eqref{equation_area_vol} are
	\[
		(1,1,0), (1,0,0), (0,1,1).
	\]
	
	\noindent
	{\bf Case (2).} ~If $a + k \leq 0$ and $b = 2$, then the integer solutions for $(a,b,k)$ are 
	\[
		(0,2,0), (-1,2,0), (-1,2,1).
	\]
	However, we may rule out the last two solutions in {\bf Case (2)} using the adjuction formula 
	\begin{equation}
		[Z_0]\cdot [Z_0]  + \sum (2 - 2g_i) = \langle c_1(TM_0), [Z_0] \rangle
	\end{equation}
	where the sum is taken over all fixed components of $Z_0$. If $(a,b,k)$ is $(-1,2,0)$ or $(-1,2,1)$, we have 
	\[
		-4 + \sum (2 - 2g_i) = \langle 2x + 2y, [Z_0] \rangle = 2, \quad \quad \mathrm{PD}(Z_0) = -x + 2y
	\]
	which implies that there are at least three components each of which is homeomorphic to a sphere. Meanwhile, since 
	$\langle c_1(TM_0), [Z_0] \rangle = 2$ is the symplectic area of $Z_0$, there should be at most two components in $Z_0$ and this leads to a contradiction.
	Summing up, we have 
	\begin{equation}\label{equation_2_1_bminbmax}
		\begin{array}{lll}
			(a,b,k) = (1,1,0) ~(b_{\min} = 0, b_{\max} = 0), & & (a,b,k) = (1,0,0) ~(b_{\min} = 0, b_{\max} = 2) \\
			(a,b,k) = (0,1,1) ~(b_{\min} = 2, b_{\max} = 0), & & (a,b,k) = (0,2,0) ~(b_{\min} = 0, b_{\max} = 0) \\
		\end{array}
	\end{equation}
	where $b_{\min} = 2k$ and $b_{\max}$ are obtained by Lemma \ref{lemma_volume}.
	Since we only need to classify TFD's satisfying $b_{\min} \leq b_{\max}$ by \eqref{equation_assumption}, the case $(a,b,k) = (0,1,1)$ can be ruled out.
	
	Notice that the symplectic area of each component of $Z_0$ is even (since $[\omega_0] = 2x + 2y$). Applying \eqref{equation_adjunction} to each 
	solutions in \eqref{equation_2_1_bminbmax},
	we deduce that
	\begin{equation}\label{equation_II_1}
		\begin{array}{lllll}
			\text{\bf (II-1.1)} : (a,b,k) = (1,1,0) & \Rightarrow & 2 + \sum (2-2g_i) = 4 & \Rightarrow & \text{$Z_0$ has at most two components,}\\
			\text{\bf (II-1.2)} : (a,b,k) = (1,0,0) & \Rightarrow & 0 + \sum (2-2g_i) = 2 & \Rightarrow & Z_0 \cong S^2,\\
			\text{\bf (II-1.3)} : (a,b,k) = (0,2,0) & \Rightarrow & 0 + \sum (2-2g_i) = 4 & \Rightarrow & \text{$Z_0$ has exactly two components.}\\
		\end{array}
	\end{equation}
	For the last case, it is easy to check that each two components are spheres (with area $2$) whose Poincar\'{e} dual classes are both $y$. 
	
	For the first case {\bf (II-1.1)}, if $Z_0$ consists of two components, say $Z_0^1$ and $Z_0^2$, then the adjunction formula implies that
	$Z_0^1 \cong S^2$ and $Z_0^2 \cong T^2$ with 
	\[
		[Z_0^1] \cdot [Z_0^1] = 0, \quad [Z_0^2] \cdot [Z_0^2] = 2, \quad 
		[Z_0^1] \cdot [Z_0^2] = 0, \quad \mathrm{PD}(Z_0^1) + \mathrm{PD}(Z_0^2) = x+y.  
	\]
	The first and the third equalities imply that $(\mathrm{PD}(Z_0^1), \mathrm{PD}(Z_0^2)) = (ax, bx)$ 
	or $(ay, by)$ for some $a,b \in \Z$, but in either case, the second (as well as fourth) equality does not hold.
	Therefore, $Z_0$ is connected and homeomorphic to $S^2$.
	
	To calculate the Chern number for each fixed point data, we apply the localization theorem \ref{theorem_localization} : 
	\[
		\begin{array}{ccl}\vs{0.3cm}
			\ds \int_M c_1^{S^1}(TM)^3 & = &  \ds  
							\int_{Z_{\min}} \frac{\left(c_1^{S^1}(TM)|_{Z_{\min}}\right)^3}{e_{Z_{\min}}^{S^1}} + 
							\int_{Z_{\max}} \frac{\left(c_1^{S^1}(TM)|_{Z_{\max}}\right)^3}{e_{Z_{\max}}^{S^1}} + 
							\int_{Z_0} \frac{\overbrace{\left(c_1^{S^1}(TM)|_{Z_0}\right)^3}^{= 0}}{e_{Z_0}^{S^1}} \\ \vs{0.2cm}
							& = &  \ds  
							\int_{Z_{\min}} \frac{\left( (2+b_{\min}) u + 2\lambda \right)^3}{b_{\min} u\lambda + \lambda^2} + 
							\int_{Z_{\max}} \frac{\left( (2+b_{\max}) u - 2\lambda \right)^3}{-b_{\max} u\lambda + \lambda^2} \\ \vs{0.1cm}
							& = &  \ds  
							\int_{Z_{\min}} (\lambda - b_{\min}u)(12(2+b_{\min}) u\lambda^2 + 8\lambda^3) + 
							\int_{Z_{\max}} (\lambda + b_{\max}u)(12(2+ b_{\max}) u\lambda^2 - 8\lambda^3) \\ \vs{0.1cm}
							& = &\ds 24 + 4b_{\min} + 24 + 4b_{\max}.
		\end{array}
	\]
	By \eqref{equation_2_1_bminbmax}, this completes the proof. See Table \ref{table_II_1} and compare it with \eqref{equation_II_1}.
\end{proof}

\begin{remark}\label{remark_localization_surface}
	We use the following formulas frequently for calculating the Chern numbers : 
	\[
		\left(c_1^{S^1}(TM)|_{Z_0} \right)^3 = 0, \quad \int_{Z_{\min}} \frac{\left(c_1^{S^1}(TM)|_{Z_{\min}}\right)^3}{e_{Z_{\min}}^{S^1}} = 24 + 4b_{\min}, \quad 
		\int_{Z_{\max}} \frac{\left(c_1^{S^1}(TM)|_{Z_{\max}}\right)^3}{e_{Z_{\max}}^{S^1}} = 24 + 4b_{\max}.
	\]
\end{remark}
\vs{0.3cm}

\begin{example}[Fano varieties of type {\bf (II-1)}]\label{example_II_1} 
	We denote by $T^k$ a $k$-dimensional compact torus, $\frak{t}$ the Lie algebra of $T$, and $\frak{t}^*$ the dual of $\frak{t}$. 
	We provide algebraic Fano examples for each topological fixed point data given in Theorem \ref{theorem_II_1} as follows. \vs{0.3cm}

	\begin{figure}[H]
		\scalebox{1}{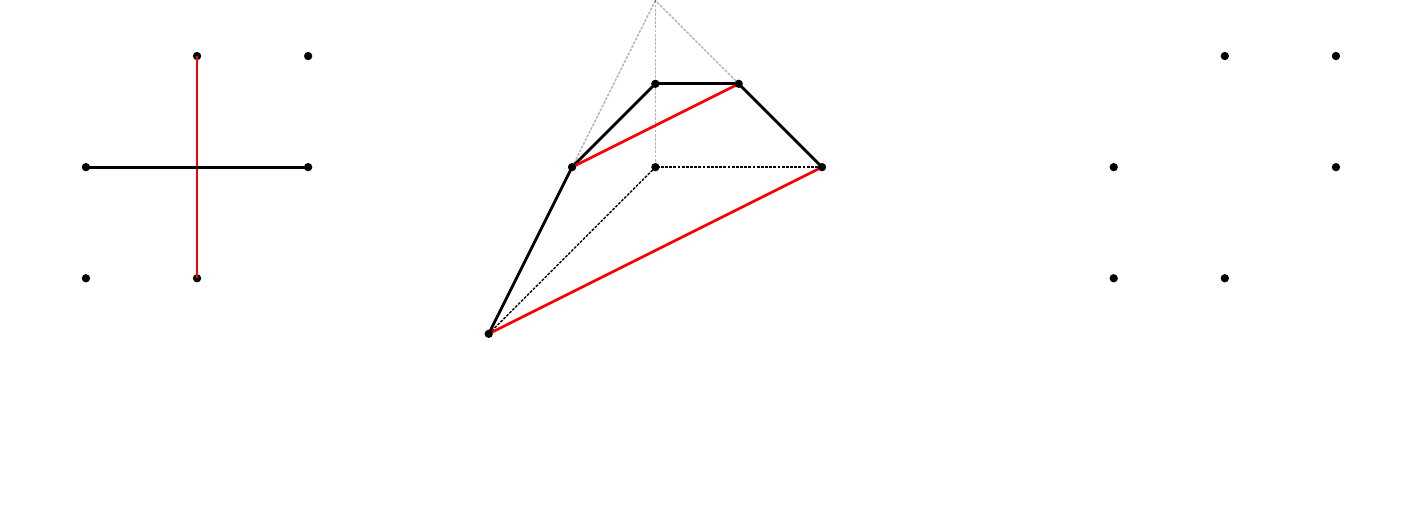}
		\caption{\label{figure_II_1} Fano varieties of type {\bf (II-1)}}
	\end{figure}          	
	
	\begin{enumerate}
		\item {\bf Case (II-1.1)} \cite[32nd in Section 12.3]{IP} : Let $W= \mcal{F}(3)$ be the complete flag variety of $\C^3$, or equivalently, a smooth divisor of bidegree $(1,1)$ in $\p^2 \times \p^2$
		(via the Pl\"{u}cker embedding).
		One can also think of $M$ as a co-adjoint orbit 
		of $U(3)$. It is well-known that $M$ admits a unique $U(3)$-invariant monotone K\"{a}hler form $\omega$ (called a {\em Kirillov-Kostant-Souriau form}) such that 
		$c_1(TW) = [\omega]$. A maximal torus $T^2$ of $U(3)$ acts on $(W,\omega)$ in a Hamiltonian fashion with a moment map 
		\[
			\mu : W \rightarrow \frak{t}^*
		\]
		such that the moment map image can be described by Figure \ref{figure_II_1} (a), where edges corresponds to $T$-invariant spheres (called 1-skeleton in \cite{GKM}).   
		If we take a circle subgroup $S^1$ generated by $\xi = (1,0) \in \frak{t} \cong \R^2$, then the action is semifree and the balanced moment map is given by 
		\[
			\mu_\xi = \langle \mu, \xi \rangle - 2
		\] 
		The fixed point set for the $S^1$-action consists of three spheres 
		corresponding to the edges (colored by red in Figure \ref{figure_II_1} (a))
		\[
			e_1 = \overline{(0,0) ~(0,2)}, \quad e_2 = \overline{(2,0) ~(2,4)}, \quad e_3 = \overline{(4,2) ~(4,4)}
		\]
		The symplectic areas of the minimum $Z_{-2} = \mu^{-1}(e_1)$ and the maximum
		$Z_2 = \mu^{-1}(e_3)$ are both equal to $2 = 2 + b_{\min} = 2 + b_{\max}$ by Corollary \ref{corollary_volume} and hence 
		$b_{\min} = b_{\max} = 0$. Thus $W_{-2 + \epsilon} \cong S^2 \times S^2$ by Lemma \ref{lemma_volume}. Therefore, the corresponding fixed point data
		should coincide with {\bf (II-1.1)} in Table \ref{table_II_1}.
		\vs{0.2cm}
		
		\item {\bf Case (II-1.2)} \cite[35th in Section 12.3]{IP} : Let $M = V_7$, the toric blow-up of $\p^3$ at a point. Then the moment polytope is given by 
		Figure \ref{figure_II_1} (b) where we denote the moment map by $\mu$. 
		If we take a circle subgroup generated by $\xi = (1,1,0) \in \frak{t}$, then 
		we can easily check that the $S^1$-action is semifree and the balanced moment map is given by 
		$\mu_\xi := \langle \mu, \xi \rangle - 2$.
		Moreover, the fixed components $Z_{-2}$, $Z_0$, and $Z_2$ are three spheres whose moment map images are the edges 
		(colored by red in Figure \ref{figure_II_1} (b))
		\[
			e_1 = \overline{(0,0,0) ~(0,0,2)}, \quad e_2 = \overline{(0,2,2) ~(2,0,2)}, \quad e_3 = \overline{(0,4,0) ~(4,0,0)}. 
		\]
		In this case, we have $Z_{-2} = \mu^{-1}(e_1)$ and $Z_2 = \mu^{-1}(e_3)$ with the symplectic areas 2 and 4, respectively.
		By Corollary \ref{corollary_volume}, we have $b_{\min} = 0$ and $b_{\max} = 2$ and so $M_{-2 + \epsilon} \cong S^2 \times S^2$
		by Lemma \ref{lemma_volume}. Also, one can easily check that the fixed point data for the $S^1$-action equals {\bf (II-1.2)} in Table \ref{table_II_1}
		(see also \eqref{equation_2_1_bminbmax}).
		
		\vs{0.2cm}
		\item {\bf Case (II-1.3)} \cite[27th in Section 12.4]{IP} : Let $M = \p^1 \times \p^1 \times \p^1$ with the monotone K\"{a}hler form $\omega = 2\omega_{\mathrm{FS}} 
		\oplus 2\omega_{\mathrm{FS}} \oplus 2\omega_{\mathrm{FS}}$ so that $c_1(TM) = [\omega]$. Then the standard Hamiltonian $T^3$-action admits a 
		moment map whose image is a  cube with side length 2, see Figure \ref{figure_II_1} (c). Take a circle subgroup $S^1$ of $T^3$ generated by $\xi = (1,0,1)$. 
		Then the induced $S^1$-action becomes semifree with the balanced moment map is given by $\mu_\xi = \langle \mu, \xi \rangle - 2$. It is easy to see that 
		there are four fixed components homeomorphic to spheres and their moment map images are 
		\[
			e_1 = \overline{(0,2,0) ~(0,0,0)}, \quad e_2 = \overline{(0,2,2) ~(0,0,2)}, \quad e_3 = \overline{(2,2,0) ~(2,0,0)}, \quad e_4 = \overline{(2,2,2) ~(2,0,2)}  
		\]
		colored by red in Figure \ref{figure_II_1} (c).
		Since $Z_{-2} = \mu^{-1}(e_1)$ and $Z_2 = \mu^{-1}(e_4)$ both have the symplectic area 2, we have $b_{\min} = b_{\max} = 0$ and this fixed point data coincides with
		{\bf (II-1.3)} in Table \ref{table_II_1}. 
	\end{enumerate} 
\end{example}

Next we deal with the case of $M_0 \cong E_{S^2}$. 

\begin{theorem}\label{theorem_II_2}
	Let $(M,\omega)$ be a six-dimensional closed monotone semifree Hamiltonian $S^1$-manifold with $c_1(TM) = [\omega]$. Suppose that $\mathrm{Crit} H = \{ 2, 0, -2\}$ 
	and $M_0 \cong E_{S^2}$. Then, up to orientation of $M$, the list of all possible topological fixed point data is given by
		\begin{table}[H]
			\begin{tabular}{|c|c|c|c|c|c|c|c|c|}
				\hline
				    & $(M_0, [\omega_0])$ & $e(P_{-2}^+)$ &$Z_{-2}$  & $Z_0$ & $Z_2$ & $b_2(M)$ & $c_1^3(M)$ \\ \hline \hline
				    {\bf (II-2.1)} & $(E_{S^2}, 3x + 2y)$ & $-x -y$  &$S^2$ & 
				    	\makecell{ $Z_0 = Z_0^1 ~\dot \cup ~ Z_0^2$ \\
					    $Z_0^1 \cong Z_0^2 \cong S^2$ \\ $\mathrm{PD}(Z_0^1) = y$, $\mathrm{PD}(Z_0^2) = x+y$}  & $S^2$ & $3$ & $48$\\ \hline    
				    {\bf (II-2.2)} & $(E_{S^2}, 3x + 2y)$ & $-x-y$  &$S^2$ & $Z_0 \cong S^2, ~\mathrm{PD}(Z_0) = 2x+2y$ & $S^2$ & $2$ &$40$ \\ \hline
			\end{tabular}
			\vs{0.5cm}			
			\caption{\label{table_II_2} Topological fixed point data for $\mathrm{Crit} H = \{-2, -0, 2\}$, $M_0 \cong E_{S^2}$}
		\end{table}
			\vs{-0.7cm}
\end{theorem}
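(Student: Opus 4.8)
The proof will run exactly parallel to that of Theorem~\ref{theorem_II_1}, with the twisted intersection form \eqref{equation_basis_twist} of $E_{S^2}$ replacing the split form \eqref{equation_basis_trivial}. Writing $x,y$ for the fiber and section classes of $M_0\cong E_{S^2}$ (so $x^2=0$, $y^2=-1$, $xy=1$) and $\mathrm{PD}(Z_0)=ax+by\in H^2(M_0;\Z)$, the first thing to record is that $M_{-2+\epsilon}\cong M_0\cong M_{2-\epsilon}\cong E_{S^2}$ forces, via Lemma~\ref{lemma_volume}, both first Chern numbers $b_{\min}$ and $b_{\max}$ of the normal bundles of $Z_{\min},Z_{\max}$ to be \emph{odd}; moreover $e(P_{-2}^+)=kx-y$ where $b_{\min}=2k+1$, and Corollary~\ref{corollary_volume} gives $\mathrm{Vol}(Z_{-2})=2k+3>0$, i.e. $k\geq-1$. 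Since $[\omega_0]=c_1(TM_0)=3x+2y$ is the anticanonical class of $E_{S^2}$ (by Proposition~\ref{proposition_monotonicity_preserved_under_reduction}), combining Lemma~\ref{lemma_Euler_class} with the Duistermaat--Heckman theorem~\ref{theorem_DH} yields $e(P_0^+)=(k+a)x+(b-1)y$ and
\[
	[\omega_t] = \bigl(3-(k+a)t\bigr)x + \bigl(2-(b-1)t\bigr)y, \qquad t\in[0,2).
\]

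Next I would extract the defining numerical constraints: $\int_{M_t}[\omega_t]^2>0$ for all $t\in[0,2)$, the collapsing condition $\lim_{t\to2}\int_{M_t}[\omega_t]^2=0$ (the reduced space degenerates onto $Z_{\max}$), $\langle[\omega_0],[Z_0]\rangle=\langle c_1(TM_0),[Z_0]\rangle>0$, and the normalization $b_{\min}\leq b_{\max}$ from \eqref{equation_assumption}, with $b_{\max}=-\langle e(P_0^+)^2,[M_0]\rangle$ read off by Lemma~\ref{lemma_volume}. The key simplification is that the parity requirement on $b_{\max}$ forces $b$ to be even, and then the factorization of $\lim_{t\to2}\int_{M_t}[\omega_t]^2$ rules out the second branch entirely and pins $b=2$; the remaining positivity and $b_{\min}\le b_{\max}$ inequalities then cut the admissible triples $(a,b,k)$ down to $\{(0,2,-1),(0,2,0),(1,2,-1),(2,2,-1)\}$.

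For each surviving triple I would apply the adjunction formula \eqref{equation_adjunction} to $Z_0$, and to each hypothetical connected component of $Z_0$, using that distinct components are disjoint and therefore have vanishing mutual intersection. For $(0,2,-1)$ and $(0,2,0)$ the class is $2y$ with $[Z_0]^2=-4$ and symplectic area $2$, so $\sum_i(2-2g_i)=6$ would force at least three sphere components out of a total area $2$ --- a contradiction, so these are discarded. For $(1,2,-1)$ one finds $[Z_0]^2=0$, area $4$, hence $Z_0$ must be disconnected; invoking the $(-1)$-curve theorem of Li--Liu \cite{LL} and Li's positivity result \cite{Li} to locate the unique exceptional sphere (class $y$) and to forbid spurious classes, the only admissible splitting is $Z_0=Z_0^1\sqcup Z_0^2$ with $\mathrm{PD}(Z_0^1)=y$ and $\mathrm{PD}(Z_0^2)=x+y$ (case {\bf (II-2.1)}). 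For $(2,2,-1)$ one gets $[Z_0]^2=4$, area $6$, $\sum_i(2-2g_i)=2$, and a short intersection-number computation shows any nontrivial splitting forces a component of zero area, so $Z_0\cong S^2$ with $\mathrm{PD}(Z_0)=2x+2y$ (case {\bf (II-2.2)}). Finally, $b_2(M)$ follows from the perfectness of $H$ as a Morse--Bott function via its Poincar\'e polynomial expressed through the fixed components, and $\langle c_1(TM)^3,[M]\rangle$ from the localization theorem~\ref{theorem_localization}, which by Remark~\ref{remark_localization_surface} evaluates to $(24+4b_{\min})+(24+4b_{\max})$ since $\bigl(c_1^{S^1}(TM)|_{Z_0}\bigr)^3=0$.

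The step I expect to be the main obstacle is the combinatorial bookkeeping for the disconnected case {\bf (II-2.1)}: one must show that among all ways of writing $x+2y$ as a sum of effective classes of positive symplectic area and pairwise zero intersection, with the genus budget $\sum_i(2-2g_i)=4$ prescribed by adjunction, the only possibility is the two-sphere configuration in Table~\ref{table_II_2} --- in particular ruling out decompositions involving a torus component or more than two pieces. This is where the $E_{S^2}$ intersection form, with its single $(-1)$-curve and its null fiber class, makes the analysis noticeably more delicate than in the $S^2\times S^2$ case of Theorem~\ref{theorem_II_1}.
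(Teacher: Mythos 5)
Your proposal is correct and follows essentially the same route as the paper's proof: the parity of $b_{\max}$ (via Lemma \ref{lemma_volume}) to force $b=2$, the positivity and $b_{\min}\le b_{\max}$ constraints to reduce to the four triples $(a,b,k)$, and adjunction to eliminate the $a=0$ cases and to pin down the component structure in the remaining two. The only cosmetic difference is that for the splitting analysis in {\bf (II-2.1)} the paper works out the possible component classes $px+qy$ directly from adjunction and the vanishing of mutual intersections, rather than appealing to the Li--Liu and Li curve theorems, but your argument closes the same gap.
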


\begin{proof}
	The idea of the proof is essentially similar to the proof of Theorem \ref{theorem_II_1}.
	
	In this case, Lemma \ref{lemma_volume} implies that $b_{\mathrm{min}} = 2k+1$ and $e(P_{-2}^+) = kx - y$ for some integer $k \in \Z$.
	If we denote by $\mathrm{PD}(Z_0) = ax + by \in H^2(M_0; \Z)$ for some $a,b \in \Z$, then it follows that 
          \[
	          \langle c_1(TM_0), [Z_0] \rangle > 0, \quad  \mathrm{Vol}(Z_{-2}) = 2k + 3 > 0 \quad \quad \Rightarrow \quad \quad 2a+b \geq 1, \quad k \geq -1.
          \]  by Corollary \ref{corollary_volume}.
	Also, by the Duistermaat-Heckman theorem \ref{theorem_DH}, we obtain
	\[
		[\omega_2] = [\omega_0] - 2(kx - y + \mathrm{PD}(Z_0)) = (3-2a-2k)x + (4-2b)y.
	\]
          Since $\lim_{t \rightarrow 2} \int_{M_t} [\omega_t]^2 = 0$, we have 
          \[
          		2(3-2a-2k)(4-2b) - (4-2b)^2 = 0 \quad \Rightarrow \quad b=2 \hs{0.3cm} \text{or} \hs{0.3cm} 1+b = 2a + 2k 
          \]
          Note that in  the latter case, $b$ becomes odd and this implies that 
          \begin{equation}\label{equation_2_2_bmax}
	          	\langle e(P_2^-)^2, [M_0] \rangle = \langle ((a+k)x + (b-1)y)^2, [M_0] \rangle = 2(a+k)(b-1) - (b-1)^2  \equiv 0 ~\mod 2
	\end{equation}
	which contradicts that $- b_{\max} = \langle e(P_2^-)^2, [M_0] \rangle$ is odd by Lemma \ref{lemma_volume}
	(since $M_{2-\epsilon} \cong M_0 \cong E_{S^2}$). 
	Consequently, we get 
	\begin{equation}\label{equation_2_2}
		b=2, \quad a \geq 0, \quad k \geq -1, \quad a + k \leq 1 ~( \Leftrightarrow ~b_{\max} + 2 = \mathrm{vol}(Z_{\max}) \geq 1). 
	\end{equation}
          Therefore, all possible solutions $(k,a,b)$ to \eqref{equation_2_2} are given by 
          \[
          		(-1,0,2), (-1,1,2), (-1, 2, 2), (0,0,2), (0,1,2),(1,0,2).
          \]          
          Applying the adjunction formula, we may rule out some solutions : if $a=0$, then $\mathrm{PD}(Z_0) = 2y$ so that 
          we have $[Z_0] \cdot [Z_0] = -4$ and $\langle c_1(TM_0), [Z_0] \rangle = 2$ and hence there are at most two connected component in $Z_0$. 
          On the other hand, the adjunction formula \eqref{equation_adjunction} implies that 
          \[
		\underbrace{[Z_0] \cdot [Z_0]}_{= ~-4} + \sum (2 - 2g_i) = \langle c_1(TM_0), [Z_0] \rangle = 2
          \]
	and therefore there should be at least three spheres, which contradicts that $Z_0$ consists of at most two connected components.
	Also, if $(k,a,b)=(0,1,2)$, then the formula \eqref{equation_2_2_bmax} induces that 
	 $b_{\min} = 1$ and $b_{\max} = -1$ (in particular $b_{\min} > b_{\max}$) and hence we may rule out this case by \eqref{equation_assumption}.
	To sum up, we have only two possible cases  : \vs{0.2cm}
	
	\noindent
	{\bf (II-2.1)} : $(k,a,b)=(-1,1,2)$. In this case, $[Z_0] \cdot [Z_0] = 0$ and $\langle c_1(TM_0), [Z_0] \rangle = 4$, $b_{\min} = -1$ and $b_{\max} = 1$. Thus the adjunction formula 
	implies that there are at least two sphere components $Z_0^1$ and $Z_0^2$ of $Z_0$ where the followings are satisfied : 
		\begin{itemize}
			\item $1 \leq \langle [\omega_0], [Z^i_0] \rangle \leq 3$. 
			\item $2 \leq \langle [\omega_0], [Z_0^1] + [Z_0^2]  \rangle \leq 4$.
			\item $[Z_0^1] \cdot [Z_0^2] =0$.
		\end{itemize}
	Let $\mathrm{PD}(Z_0^1) = p x + q y$. If $\langle [\omega_0], [Z_0^1] \rangle = 2p + q =  1$, then $[Z_0^1] \cdot [Z_0^1] = 2pq - q^2 = -1$ by the adjunction formula so that 
	we have $(p,q) = (0,1)$. Similarly, if $\langle [\omega_0], [Z_0^1] \rangle = 2p + q =  2$, then we have $2pq - q^2 = 0$ and hence 
	\[
		q = 0 ~(p = 1) \quad \text{or} \quad q = 2p ~(4p = 2). 
	\]
	So, we have $(p,q) = (1,0)$. This computation is similarly applied for $Z_0^2$.
	
	Note that if $\langle [\omega_0], [Z_0^i] \rangle \leq 2$ for every $i = 1,2$, since $[Z_0^1] \cdot [Z_0^2] = 0$, the only possible case is 
	$\langle [\omega_0], [Z_0^i] \rangle = 2$ for each $i$. However this cannot happen since $\mathrm{PD}(Z_0^1) + \mathrm{PD}(Z_0^2) = 2x \neq x + 2y$.
	Thus the only possibility is that $\langle [\omega_0], [Z_0^1] \rangle = 1$ and $\langle [\omega_0], [Z_0^2] \rangle = 3$.
	Therefore, we obtain $\mathrm{PD}(Z_0^1) = y$, $\mathrm{PD}(Z_0^2) = x+y$, and $Z_0^1 \cong Z_0^2 \cong S^2$. See Table \ref{table_II_2}: {\bf (II-2.1)}.
	\vs{0.3cm}
			
	\noindent
	{\bf (II-2.2)} : $(k,a,b)=(-1,2,2)$. In this case, we have $[Z_0] \cdot [Z_0] = 4$ and $\langle c_1(TM_0), [Z_0] \rangle = 6$, $b_{\min} = -1$ and $b_{\max} = -1$. 
	By the adjunction formula, there exists a component $Z_0^1 \cong S^2$ of $Z_0$ where we denote by $\mathrm{PD}(Z_0^1) = px + qy$.  Then we have 
	\[
		[Z_0^1] \cdot ([Z_0] - [Z_0^1]) = \langle (px + qy) \cdot ((2-p)x + (2-q)y), [M_0] \rangle = 0 \quad \Leftrightarrow \quad -2pq + 2p + q^2 = 0.
	\]
	Also, since 
	\[
		V := \mathrm{vol}(Z_0^1) = [Z_0^1] \cdot [Z_0^1] + 2 = \langle (px+qy)^2, [M_0] \rangle + 2 = 2pq - q^2 + 2, 
	\] we get $2p + 2 - V = 0$ and hence $V$ is even (with $2 \leq V \leq 6$.)
	
	If $V=2$, then $p = q= 0$ which is impossible. Also if $V=4$, then $p = 1$ and $q^2 - 2q + 2 = 0$ whose solution cannot be an integer. Therefore we have $V=6$
	so that $Z_0$ is connected and homemorphic to $S^2$. See Table \ref{table_II_2}: {\bf (II-2.2)}.	
	
	The Chern number computations in Table \ref{table_II_2} easily follow from Remark \ref{remark_localization_surface}. 
	
\end{proof}

\begin{example}[Fano varieties of type {\bf (II-2)}]\label{example_II_2} 

	We illustrate algebraic Fano varieties with holomorphic Hamiltonian torus actions having each topological fixed point data given in Theorem \ref{theorem_II_2}.\vs{0.3cm}
	
	\begin{figure}[h]
		\scalebox{1}{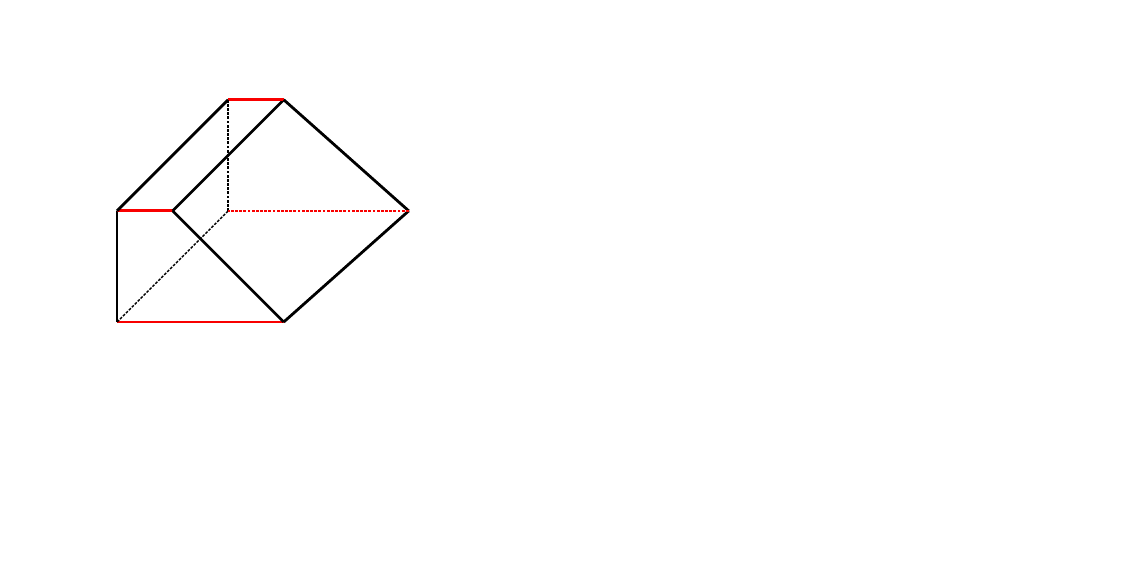}
		\caption{\label{figure_II_2} Fano varieties of type {\bf (II-2)}}
	\end{figure}          	
		
	\begin{enumerate}
		\item {\bf Case (II-2.1)} \cite[28th in Section 12.4]{IP} : Let $M = \p^1 \times F_1$ where $F_1 = \p(\mcal{O} \oplus \mcal{O}(1))$ is the Hirzebruch surface.
		Equip $M$ with the toric K\"{a}hler form $\omega$ such that $c_1(TM) = [\omega]$ so that the moment map $\mu : M \rightarrow \frak{t}^*$ 
		has the image FIgure \ref{figure_II_2} (a). 
		If we take a circle subgroup $S^1$ generated by $\xi = (0,1,-1) \in \frak{t}$, then one can check that the action is semifree and the balanced moment map is given by 
		\[
			\mu_\xi = \langle \mu, \xi \rangle.
		\] 
		The fixed point set for the $S^1$-action has four connected components each of which are all spheres and have the moment map images 
		(colored by red in Figure \ref{figure_II_2} (a))
		\[
			e_1 = \overline{(0,0,2) ~(1,0,2)}, \quad e_2 = \overline{(0,2,2) ~(1,2,2)}, \quad e_3 = \overline{(0,0,0) ~(3,0,0)}, \quad e_4 = \overline{(0,2,0) ~(3,2,0)}.
		\]
		The symplectic areas of the minimum $Z_{-2} = \mu^{-1}(e_1)$ and the maximum
		$Z_2 = \mu^{-1}(e_4)$ are 1 and 3, respectively, so that $b_{\min} = -1$ and $b_{\max} = 1$ by Corollary \ref{corollary_volume}. 
		Thus $M_{-2 + \epsilon} \cong E_{S^2}$ by Lemma \ref{lemma_volume} and the corresponding fixed point data
		coincides with {\bf (II-2.1)} in Table \ref{table_II_2}.
		\vs{0.2cm}
		
		\item {\bf Case (II-2.2)} \cite[29th in Section 12.3]{IP} : Let $M$ be a smooth quadric in $\p^4$. As a co-adjoint orbit of $SO(5)$, $M$ admits a $SO(5)$-invariant 
		monotone K\"{a}hler form $\omega$ such that $c_1(TM) = [\omega]$. With respect to the maximal torus $T^2$-action on $(M,\omega)$, we get a moment map
		$\mu : M \rightarrow \frak{t}^*$ whose image is a square with four vertices $(0, \pm 3)$, $(\pm 3, 0)$ (see Figure \ref{figure_II_2} (b)). 
		Let $C$ be the $T^2$-invariant sphere $\mu^{-1}(\overline{(0,-3) ~(0,3)})$ and define 
		\[
			\widetilde{M} := ~\text{$T^2$-equivariant (or GKM) blow-up of $M$ along $C$}
		\]
		where the {\em $T^2$-equivariant blowing up} can be done via the following two steps:\vs{0.2cm}
		\begin{itemize}
			\item Take a $T^2$-equivariant neighborhood $\mcal{U}$ of $C$, isomorphic to some $T^2$-equivariant $\C^2$-bundle over $\p^1$, and extend the $T^2$-action 
			to (any effective Hamiltonian) $T^3$-action so that we get a toric model. 
			\item Take the toric blow-up of $\mcal{U}$ along the zero section, i.e., $C$, and restrict the toric action to the original $T^2$-action. 
		\end{itemize}
		\vs{0.2cm}
		The resulting moment map image is given in Figure \ref{figure_II_2} (b). 
		
		Now, we take a circle subgroup generated by $\xi = (0,1) \in \frak{t}$. One can directly check that 
		the $S^1$-action is semifree and the balanced moment map is given by 
		$\mu_\xi := \langle \mu, \xi \rangle - 2$.
		Moreover, the fixed components $Z_{-2}$, $Z_0$, and $Z_2$ are given by 
		\[
			Z_{-2} = \mu^{-1}(e_1), \quad Z_{-2} = \mu^{-1}(e_2), \quad Z_{-2} = \mu^{-1}(e_3)
		\]
		where 
		\[
			e_1 = \overline{(-1,-2) ~(1,-2)}, \quad e_2 = \overline{(-3,0) ~(3,0)}, \quad e_3 = \overline{(-1,2) ~(1,2)}
		\]
		(colored by red in Figure \ref{figure_II_2} (b). In particular, we have $\mathrm{vol}(Z_{-2}) = \mathrm{vol}(Z_{-2}) = 1$ so that  
		$b_{\min} = b_{\max} = -1$. By Lemma \ref{lemma_volume}, we have 
		 $M_{-2 + \epsilon} \cong S^2 \times S^2$. So, the fixed point data for the $S^1$-action coincides with {\bf (II-2.2)} in Table \ref{table_II_2}. \\
		
	\end{enumerate} 
	
\end{example}

\subsection{Case III : $\mathrm{Crit} ~\mathring{H} = \{-1, 1\}$}
\label{ssecCaseIIIMathrmCritMathringH11}

	Let $m = |Z_{-1}|$ be the number of isolated fixed points of index two. By the Poincar\'{e} duality, we have $|Z_1| = m$. Applying the localization theorem to 
	$1 \in H^0_{S^1}(M)$ and $c_1^{S^1}(TM) \in H^2_{S^1}(M)$, 
we obtain
	\begin{equation}\label{equation_3_localization_0}
		\begin{array}{ccl}\vs{0.2cm}
			0 = \ds \int_M 1 & = & \ds  \int_{Z_{\min}} \frac{1}{e_{Z_{\min}}^{S^1}} + m \cdot \frac{1}{-\lambda^3} + m \cdot \frac{1}{\lambda^3} + 
							\int_{Z_{\max}} \frac{1}{e_{Z_{\max}}^{S^1}}  \\ \vs{0.2cm}
							& = &  \ds  
							\int_{Z_{\min}} \frac{1}{b_{\min} u\lambda + \lambda^2} +
							\int_{Z_{\max}} \frac{1}{-b_{\max} u\lambda + \lambda^2} \\ \vs{0.2cm}
							& = &\ds \frac{- b_{\min} + b_{\max}}{\lambda^3} 
		\end{array}
	\end{equation}
	and
	\begin{equation}\label{equation_3_localization_c1}
		\begin{array}{ccl}\vs{0.2cm}
			0 =  \ds \int_M c_1^{S^1}(TM) & = & \ds  \int_{Z_{\min}} \frac{c_1^{S^1}(TM)|_{Z_{\min}}}{e_{Z_{\min}}^{S^1}} + m \cdot \frac{\lambda}{-\lambda^3} + 
							m \cdot \frac{-\lambda}{\lambda^3} + 
							\int_{Z_{\max}} \frac{c_1^{S^1}(TM)|_{Z_{\max}}}{e_{Z_{\max}}^{S^1}}  \\ \vs{0.2cm}
							& = &  \ds  
							\int_{Z_{\min}} \frac{2\lambda + (b_{\min}+2)u}{b_{\min} u\lambda + \lambda^2} -2m \cdot  \frac{\lambda}{\lambda^3} + 
							\int_{Z_{\max}} \frac{- 2\lambda + (b_{\max} + 2)u}{-b_{\max} u\lambda + \lambda^2} \\ \vs{0.2cm}
							& = &\ds \frac{- b_{\min} - b_{\max} -2m +4}{\lambda^2}.
		\end{array}
	\end{equation}
	From \eqref{equation_3_localization_0} and \eqref{equation_3_localization_c1}, we get $b_{\max} = b_{\min}$ and $b_{\min} + m = 2$. Moreover, Corollary \ref{corollary_volume}
	implies that $b_{\min} \geq -1$ and therefore we have three possible cases : 
	\[
		(b_{\min}, m) = (1,1), (0,2), (-1, 3).
	\]
	Therefore we obtain the following.
	
	\begin{theorem}\label{theorem_III}
	Let $(M,\omega)$ be a six-dimensional closed monotone semifree Hamiltonian $S^1$-manifold with $c_1(TM) = [\omega]$. Suppose that $\mathrm{Crit} H = \{ 2, 1, -1, -2\}$. 
	Then the list of all possible topological fixed point data is given by
		\begin{table}[H]
			\begin{tabular}{|c|c|c|c|c|c|c|c|c|}
				\hline
				    & $(M_0, [\omega_0])$ & $e(P_{-2}^+)$ &$Z_{-2}$  & $Z_{-1}$ & $Z_1$ & $Z_2$ & $b_2(M)$ & $c_1^3(M)$ \\ \hline \hline
				    {\bf (III.1)} & $(E_{S^2} \# ~\overline{\p^2}, 3x + 2y - E_1)$ & $-y$  &$S^2$ & 
				    	{\em pt} & {\em pt} & $S^2$ & $2$ & $54$\\ \hline    
				    {\bf (III.2)} & $(S^2 \times S^2 \# ~2\overline{\p^2}, 2x + 2y - E_1 - E_2)$ & $-y$  &$S^2$ & 
				    	{\em 2 pts} & {\em 2 pts}  & $S^2$ & $3$ & $44$\\ \hline    
				    {\bf (III.3)} & $(E_{S^2} \# ~\overline{\p^2}, 3x + 2y - E_1)$ & $-x-y$  &$S^2$ & {\em 3 ~pts} & {\em 3 ~pts} & $S^2$ & $4$ &$34$ \\ \hline
			\end{tabular}
			\vs{0.5cm}
			\caption{\label{table_III} Topological fixed point data for $\mathrm{Crit} H = \{-2, -1,1, 2\}$.}
		\end{table}
			\vs{-0.7cm}
	\end{theorem}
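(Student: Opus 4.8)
The plan is to build directly on the reduction carried out just before the statement. From the localization identities \eqref{equation_3_localization_0} and \eqref{equation_3_localization_c1}, together with Corollary \ref{corollary_volume}, we already know $b_{\min}=b_{\max}$, $b_{\min}+m=2$ where $m=|Z_{-1}|=|Z_1|$, and $b_{\min}\geq -1$; hence the only possibilities are $(b_{\min},m)\in\{(1,1),(0,2),(-1,3)\}$. It remains to show that each of these three pairs forces a single topological fixed point data, namely one of the three rows of Table \ref{table_III}, and then to read off $b_2(M)$ and $\langle c_1(TM)^3,[M]\rangle$.

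First, in each case I would identify the reduced space near the minimum. Since $M_{-2+\epsilon}$ is an $S^2$-bundle over $Z_{\min}\cong S^2$, Lemma \ref{lemma_volume} gives $M_{-2+\epsilon}\cong S^2\times S^2$ when $b_{\min}$ is even and $M_{-2+\epsilon}\cong E_{S^2}$ when $b_{\min}$ is odd, and it pins down $e(P_{-2}^+)$ explicitly from $b_{\min}$ (so $e(P_{-2}^+)=-y$ in the first two cases and $-x-y$ in the third). Because $Z_{-1}$ consists of $m$ isolated fixed points of index two (Table \ref{table_fixed}), Proposition \ref{proposition_topology_reduced_space} identifies $M_0\cong M_{-1+\epsilon}$ with the $m$-fold blow-up of $M_{-2+\epsilon}$, and Proposition \ref{proposition_monotonicity_preserved_under_reduction} then forces $[\omega_0]=c_1(TM_0)$. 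All remaining Euler classes $e(P_c^{\pm})$ are determined by Lemma \ref{lemma_Euler_condition}. Since the non-extremal fixed components $Z_{-1},Z_1$ are isolated points carrying no further homology data, and $Z_{\min},Z_{\max}$ are $2$-spheres whose normal information is exactly $b_{\min}=b_{\max}$, this fixes every entry of Definition \ref{definition_topological_fixed_point_data} in each of the three cases.

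The one place that requires genuine care is the application of Lemma \ref{lemma_Euler_condition}, which rests on tracking the Euler classes and the reduced symplectic classes across the crossings $t=\pm 1$. Crossing $t=-1$ is an index-two blow-up, where Lemma \ref{lemma_Euler_class} adds the $m$ new exceptional classes; the Duistermaat–Heckman theorem \ref{theorem_DH} propagates $[\omega_t]$ through the regular interval $(-1,1)$; and crossing $t=1$ is a co-index-two blow-down, where one must determine \emph{which} $m$ pairwise disjoint $(-1)$-spheres vanish. I would do this by imposing $\langle[\omega_1],[C]\rangle=0$ for the vanishing classes and $\langle[\omega_t],[C]\rangle>0$ for all exceptional classes $C$ on the relevant blow-up of $S^2\times S^2$ or of $E_{S^2}$ (the analogue of Lemma \ref{lemma_list_exceptional}, with existence of representing spheres supplied by \cite{LL}), and then checking compatibility with $M_{2-\epsilon}$ being the $S^2$-bundle over $Z_{\max}$ with $b_{\max}=b_{\min}$. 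This bookkeeping — keeping straight the two bases (with $\langle y^2\rangle=0$ versus $\langle y^2\rangle=-1$) and the signs in Lemma \ref{lemma_Euler_class} at the co-index-two crossing — is the main, though routine, obstacle; no idea beyond those already used in Section \ref{secClassificationOfTopologicalFixedPointDataDimZMin} is needed.

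Finally, $b_2(M)$ follows from the perfectness of $H$ as a Morse–Bott function: the Poincar\'e polynomial is $1+(m+1)t^2+(m+1)t^4+t^6$, so $b_2(M)=m+1=2,3,4$ in the three cases. For the Chern number I would apply the localization theorem \ref{theorem_localization} together with Remark \ref{remark_localization_surface}: the two $S^2$-fixed components contribute $24+4b_{\min}$ and $24+4b_{\max}$, while each of the $2m$ isolated fixed points contributes $-1$, so $\langle c_1(TM)^3,[M]\rangle=48+8b_{\min}-2m$, which equals $54,44,34$ respectively. This reproduces Table \ref{table_III} and completes the proof.
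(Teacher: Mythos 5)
Your proposal is correct and follows essentially the same route as the paper: the paper's own proof is just the two-line observation that $e(P_{-2}^+)$ follows from Lemma \ref{lemma_volume} once $(b_{\min},m)$ has been pinned down by the localization computation preceding the statement, together with the Chern-number evaluation via Theorem \ref{theorem_localization} and Remark \ref{remark_localization_surface} --- exactly the formula $48+8b_{\min}-2m$ you derive, and your Betti-number and Euler-class bookkeeping matches what the paper leaves implicit. (One remark: your correct identification of $M_0$ as the $m$-fold blow-up of $M_{-2+\epsilon}$ gives $E_{S^2}\# \,3\overline{\p^2}$ with class $3x+2y-E_1-E_2-E_3$ in case \textbf{(III.3)}, which is what the table should read; the printed entry appears to be a typo rather than a defect of your argument.)
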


	\begin{proof}
		The formula follows from Lemma \ref{lemma_volume} that $e(P_{-2}^+) = kx - y$ with $b_{\min} = 2k$ (if $M_{-2+\epsilon} \cong S^2 \times S^2$) or $2k+1$
		(if $M_{-2+\epsilon} \cong E_{S^2}$). 
		Also the Chern numbers in each cases can be easily obtained from the localization theorem \ref{theorem_localization}
		using
		Remark \ref{remark_localization_surface}. 
	\end{proof}	

\begin{example}[Fano varieties of type {\bf (III)}]\label{example_III} 
We provide examples of algebraic Fano varieties with holomorphic Hamiltonian $\C^*$-actions with topological fixed point data given in Theorem \ref{theorem_III} as follows.

	\begin{figure}[h]
		\scalebox{1}{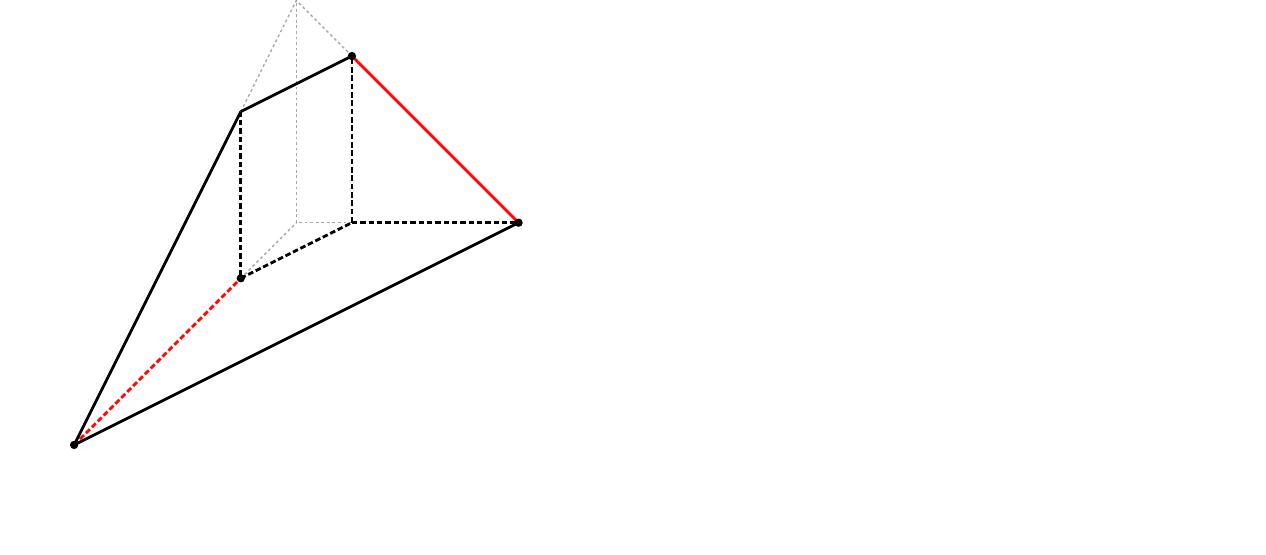}
		\caption{\label{figure_III} Fano varieties of type {\bf (III)}}
	\end{figure}          	

	\begin{enumerate}
	            \item {\bf Case (III.1)} \cite[33rd in Section 12.3]{IP} : Let $M$ be the toric blow-up of $\p^3$ along a $T^3$-invariant line. With respect to the $T^3$-invariant 
	            normalized monotone K\"{a}hler form, we get a moment map $\mu : M \rightarrow \frak{t}^*$ whose image is given by Figure \ref{figure_III} (a). If we take a circle 
	            subgroup $S^1$ generated by $\xi = (1,0,1) \in \frak{t}$, then the action is semifree with the balanced moment map $\mu_\xi = \langle \mu, \xi \rangle - 2$ 
	            and the fixed point set consists of 
	            \[
	            	Z_{-2} = \mu^{-1}(e_1), \quad Z_{-1} = \mu^{-1}(1,0,0), \quad Z_1 = \mu^{-1}(0,1,3), \quad \mu^{-1}(e_2)
	            \]
	            where $e_1 = \overline{(0,1,0) ~(0,4,0)}$ and $e_2 = \overline{(1,0,3) ~(4,0,0)}$. Note that $\mathrm{Vol}(Z_{-2}) = \mathrm{Vol}(Z_2) = 3$ and so 
	            $b_{\min} = b_{\max} = 1$ by Corollary \ref{corollary_volume}. Thus the fixed point data for the $S^1$-action coincides with 
	            Table \ref{table_III} {\bf (III.1)}. \vs{0.2cm}
	            
           	 \item {\bf Case (III.2)} \cite[25th in Section 12.4]{IP} : Let $M$ be the toric blow-up of $\p^3$ along two disjoint $T^3$-invariant lines. Then the image 
           	 of a moment map $\mu : M \rightarrow \frak{t}^*$
           	 (with respect to the normalized $T^3$-invariant K\"{a}hler form) is described as in Figure \ref{figure_III} (b). One can easily check that the circle action generated by $\xi = (1,0,1)
           	 \in \frak{t}$ is semifree and the balanced moment map is given by $\mu_\xi = \langle \mu, \xi \rangle - 2$. The fixed components are 
           	 \[
           	 	Z_{-2} = \mu^{-1}(e_1), \quad Z_{-1} = \{ (0,3,1), (1,0,0) \}, \quad Z_1 = \{ (0,1,3), (3,0,0)\}, \quad Z_2 = \mu^{-1}(e_2)
           	 \]
           	 where $e_1 = \overline{(0,3,0) ~(0,1,0)}$ and $e_2 = \overline{(1,0,3) ~(3,0,1)}$. As the symplectic volumes of $Z_{-2}$ and $Z_2$ are both 2, we have 
           	 $b_{\min} = b_{\max} = 0$ by Corollary \ref{corollary_volume} and so the fixed point data of the action is the same as Table \ref{table_III} {\bf (III.2)}.

           	 \item {\bf Case (III.3)} \cite[6th in Section 12.5]{IP} : Consider $M = \p^1 \times \p^1 \times \p^1$ equipped with the normalized monotone K\"{a}hler form $\omega$ on $M$
           	 with the standard $\omega$-compatible integrable complex structure $J$ on $M$. 
           	 Consider the standard $T^3$-action on $(M,\omega)$ with a moment map given by 
           	 \[
           	 	\mu([x_0, x_1], [y_0, y_1], [z_0, z_1]) = \left( \frac{2x_0|^2}{|x_0|^2 + |x_1|^2}, \frac{2|y_0|^2}{|y_0|^2 + |y_1|^2}, \frac{2|z_0|^2}{|z_0|^2 + |z_1|^2} \right).
           	 \]
           	 For the diagonal circle subgroup
           	 \[
           	 	S^1 = \{(t,t,t) ~|~ t \in S^1 \} \subset T^3, 
           	 \]
           	 generated by $\xi = (1,1,1) \in \frak{t}$, 
           	 the induced $S^1$-action on $(M,\omega, J)$ is semifree with the balanced moment map $\mu_\xi = \langle \mu, \xi \rangle - 3$. See Figure \ref{figure_6_2} in Example \ref{example_Sec6_1_2}.
           	 
           	 Now, we take the $S^1$-invariant diagonal sphere $D = \{ \left([z_0, z_1], [z_0, z_1], [z_0, z_1] \right) ~|~ [z_0, z_1] \in \p^1 \}$ in $M$, which is obviously a K\"{a}hler 
           	 submanifold of $(M,\omega,J)$. One can obtain an equivariant blowing-up $(\widetilde{M}, \widetilde{\omega}, \widetilde{J})$ 
           	 of $(M,\omega,J)$ along $D$ as follows. (The construction seems to be well-known to experts.) \vs{0.2cm}
           	 \begin{itemize}
           	 	\item Let $\mcal{U}$ be a sufficiently small $T^3$-invariant neighborhood of $D$ such that $\mcal{U}$ equipped with the induced K\"{a}hler structure is 
           	 	$S^1$-equivariantly isomorphic to 
           	 	some neighborhood of the zero section of $E_D := \mcal{O}(k_1) \oplus \mcal{O}(k_2)$ for some $k_1, k_2 \in \Z$ where 
           	 	\begin{itemize}
           	 		\item $E_D$ is equipped with the K\"{a}hler structure whose restriction on each fiber of $E_D$ equals the standard symplectic form on $\C \oplus \C$, \vs{0.1cm} 
           	 		\item $E_D$ admits an $S^1$-action compatible with the bundle structure such that the normal bundle $\nu_D$ of $D$ in $M$ is 
           	 		$S^1$-equivariantly isomorphic to $E_D$. \vs{0.1cm}
           	 	\end{itemize}  
			Note that each $\mcal{O}(k_i)$ has a fiberwise circle action so that $E_D$ has a fiberwise $T^2$-action. Together with the $S^1$-action given, $E_D$ becomes 
			a (non-complete) toric variety and a zero section becomes $T^3$-invariant. \vs{0.1cm}
						           	 	  
			\item Equip $\mcal{U}$ the toric structure (called a {\em local toric structure near $D$}) 
			induced by the $T^3$-action on $E_D$. Then one can obtain a toric blow-up of $\mcal{U}$ along $D$ so that we obtain a new K\"{a}hler manifold, say  
			$(\widetilde{M}, \widetilde{\omega}, \widetilde{J})$. We finally restrict the $T^3$-action to the $S^1$-subgroup of $T^3$. 
			
			\begin{figure}[h]
				\scalebox{1}{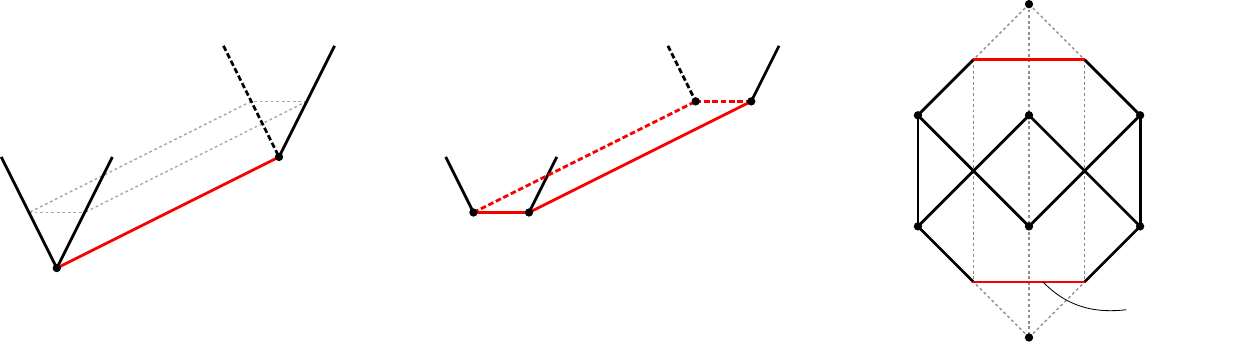}
				\caption{\label{figure_III_blowup} Blow up along an $S^1$-invariant sphere}
			\end{figure}          	
           	 \end{itemize}

	It is not hard to see that the induced $S^1$-action on $\widetilde{M}$ is semifree. Also, new fixed components which appear on $\widetilde{M}$ 
	instead of two isolated fixed points on $D$ in $M$ are two 2-spheres and hence the fixed point data coincides with Table \ref{table_III} {\bf (III.3)}
	(see Figure \ref{figure_III_blowup}). \\
           	 
          \end{enumerate}		
\end{example}

\subsection{Case IV : $\mathrm{Crit} ~\mathring{H} = \{-1, 0, 1\}$}	
\label{ssecCaseIVMathrmCritMathringH11}

	Let $m = |Z_{-1}|= |Z_1| > 0$ be the number of fixed points of index two. 

\begin{lemma}\label{lemma_number_indextwo}
	We have $m=1$ or $2$.
\end{lemma}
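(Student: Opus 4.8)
The plan is to extract the constraint on $m$ from the Atiyah--Bott--Berline--Vergne localization theorem applied to the degree-two equivariant class $c_1^{S^1}(TM)$, exactly in the spirit of the computations \eqref{equation_3_localization_0}--\eqref{equation_3_localization_c1} carried out in Case III, but now keeping track of the extra contribution coming from the two-dimensional fixed components at level $0$.

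First I would record the fixed point data allowed by Lemma \ref{lemma_possible_critical_values}: $Z_{\min}\cong S^2$ at level $-2$, the $m$ isolated fixed points of index two at level $-1$, the (disjoint union of) surfaces $Z_0$ of index two at level $0$, the $m$ isolated fixed points of index four at level $1$ (the equality $|Z_{-1}|=|Z_1|=m$ coming from perfectness of $H$ and Poincar\'e duality), and $Z_{\max}\cong S^2$ at level $2$. For each fixed component $F$ I would compute $c_1^{S^1}(TM)|_F$ and $e^{S^1}(F)$: by Proposition \ref{proposition_equivariant_Chern_class} the restriction is $c_1(TM)|_F$ plus $\lambda$ times the sum of the weights of the tangential $S^1$-representation at $F$. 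At an index-two (respectively index-four) isolated point the weights are $(-1,1,1)$ (respectively $(-1,-1,1)$), so the restriction is $\lambda$ (respectively $-\lambda$) while $e^{S^1}(F)=-\lambda^3$ (respectively $\lambda^3$), each point contributing $-1/\lambda^2$. For the spheres $Z_{\min},Z_{\max}$, writing $b_{\min},b_{\max}$ for the first Chern numbers of their normal bundles and using $c_1(TM)|_{Z}=(2+b)u$ together with the normal weights $(1,1)$ and $(-1,-1)$, the corresponding integrals evaluate to $(2-b_{\min})/\lambda^2$ and $(2-b_{\max})/\lambda^2$. For a component $Z$ of $Z_0$ the two normal weights are $+1$ and $-1$, so the sum of weights is $0$ and $c_1^{S^1}(TM)|_Z=c_1(TM)|_Z=\mathrm{Vol}(Z)\,q_Z$ by monotonicity (here $q_Z$ is the positive generator of $H^2(Z)$); expanding $e^{S^1}(\nu_Z)=(\lambda+m_Zq_Z)(-\lambda+n_Zq_Z)=-\lambda^2+(n_Z-m_Z)\lambda q_Z$ and inverting modulo $q_Z^2=0$ yields $\int_Z c_1^{S^1}(TM)|_Z/e^{S^1}(\nu_Z)=-\mathrm{Vol}(Z)/\lambda^2$.

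Summing these contributions and using $\int_M c_1^{S^1}(TM)=0$ (Corollary \ref{corollary : localization degree 2n}, since $\deg c_1^{S^1}(TM)=2<6$), I would obtain
\[
	2m+b_{\min}+b_{\max}+\mathrm{Vol}(Z_0)=4 .
\]
Now Corollary \ref{corollary_volume} gives $\mathrm{Vol}(Z_{\min})=2+b_{\min}>0$ and $\mathrm{Vol}(Z_{\max})=2+b_{\max}>0$, hence $b_{\min},b_{\max}\ge -1$; and since $0$ is a critical value, $Z_0\neq\emptyset$, so $\mathrm{Vol}(Z_0)=\sum_{Z\subset Z_0}\langle c_1(TM),[Z]\rangle\ge 1$ (a sum of positive integers). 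Therefore $2m\le 4-(-1)-(-1)-1=5$, i.e.\ $m\le 2$, and since $m=|Z_{-1}|>0$ we conclude $m\in\{1,2\}$.

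The only real work is the bookkeeping in the localization sum, and the one point requiring a little care is the contribution of the level-zero components $Z_0$: there the normal bundle splits equivariantly as a weight $+1$ line bundle plus a weight $-1$ line bundle, so its equivariant Euler class is a genuine product $(\lambda+m_Zq_Z)(-\lambda+n_Zq_Z)$ rather than $\pm\lambda^3$, and one must expand and invert it modulo $q_Z^2=0$. It is worth noting that the auxiliary integers $m_Z,n_Z$ drop out of the $c_1^{S^1}$-localization entirely (only $\mathrm{Vol}(Z)$ survives), which is what makes the estimate clean; the localization of $1\in H^0_{S^1}(M)$, which does involve the differences $m_Z-n_Z$, is not needed for this lemma.
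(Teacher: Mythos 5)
Your proof is correct and follows essentially the same route as the paper: apply ABBV localization to $c_1^{S^1}(TM)$ to derive the identity $b_{\min}+b_{\max}+2m+\mathrm{Vol}(Z_0)=4$ (equation \eqref{equation_m}) and then bound $m$ using $b_{\min},b_{\max}\ge -1$ from Corollary \ref{corollary_volume}. Your explicit observation that $\mathrm{Vol}(Z_0)\ge 1$ because $0\in\mathrm{Crit}\,\mathring{H}$ forces $Z_0\neq\emptyset$ is a small point the paper leaves implicit, but otherwise the computations of the fixed-component contributions match the paper's term by term.
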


\begin{proof}
	Applying the localization theorem to $c_1^{S^1}(TM)$, we obtain
	\[
		\begin{array}{ccl}\vs{0.2cm}
			0 & = & \ds \int_M c_1^{S^1}(TM) \\  \vs{0.2cm}
					& = &  \ds  
							\int_{Z_{\min}} \frac{c_1^{S^1}(TM)|_{Z_{\min}}}{e_{Z_{\min}}^{S^1}} + m \cdot \frac{\lambda}{-\lambda^3} + m \cdot \frac{-\lambda}{\lambda^3} + 
							\int_{Z_0} \frac{c_1^{S^1}(TM)|_{Z_0}}{e_{Z_0}^{S^1}} + \int_{Z_{\max}} \frac{c_1^{S^1}(TM)|_{Z_{\max}}}{e_{Z_{\max}}^{S^1}}  \\ \vs{0.2cm}
							& = &  \ds  
							\int_{Z_{\min}} \frac{2\lambda + (b_{\min}+2)u}{b_{\min} u\lambda + \lambda^2} -2m \cdot  \frac{\lambda}{\lambda^3}
							+ \int_{Z_0} \frac{\overbrace{c_1(TM)|_{Z_0}}^{=~\mathrm{Vol}(Z_0)}}{(b^- - b^+) u\lambda - \lambda^2} + 
							\int_{Z_{\max}} \frac{- 2\lambda + (b_{\max} + 2)u}{-b_{\max} u\lambda + \lambda^2} \\ \vs{0.2cm}
							& = &\ds \frac{- b_{\min} - b_{\max} -2m +4 - \mathrm{Vol}(Z_0)}{\lambda^2}
		\end{array}
	\]
	where $b^+$ and $b^-$ denote the Chern numbers of the positive and negative normal bundle of $Z_0$ in $M$, respectively. 
	So, we have 
	\begin{equation}\label{equation_m}
		b_{\min} + b_{\max} + 2m + \mathrm{Vol}(Z_0) = 4.
	\end{equation} 
	Moreover, since 
	$b_{\min}, ~b_{\max} \geq -1$ by Corollary \ref{corollary_volume}, 
	we have $m \leq 2.$ 
\end{proof}

By Lemma \ref{lemma_number_indextwo}, we may divide into two cases: $m=1$ and $m=2$. 
Indeed, it follows directly from \eqref{equation_m} that there are 13 solutions for $(m, \mathrm{Vol}(Z_0), b_{\min}, b_{\max})$: 

	\begin{equation}\label{equation_8_solutions}
		m=2, ~\begin{cases}
			\underline{\mathrm{Vol}(Z_0) = 2, (b_{\min}, b_{\max}) = (-1,-1)}\\
			\underline{\mathrm{Vol}(Z_0) = 1, (b_{\min}, b_{\max}) = (-1,0)}\\
			\mathrm{Vol}(Z_0) = 1, (b_{\min}, b_{\max}) = (0,-1)\\
		\end{cases}
		m=1, ~\begin{cases}
			\underline{\mathrm{Vol}(Z_0) = 4, (b_{\min}, b_{\max}) = (-1,-1)}\\
			\underline{\mathrm{Vol}(Z_0) = 3, (b_{\min}, b_{\max}) = (-1,0)}\\
			\mathrm{Vol}(Z_0) = 3, (b_{\min}, b_{\max}) = (0,-1)\\
			\underline{\mathrm{Vol}(Z_0) = 2, (b_{\min}, b_{\max}) = (-1,1)}\\
			\underline{\mathrm{Vol}(Z_0) = 2, (b_{\min}, b_{\max}) = (0,0)}\\
			\mathrm{Vol}(Z_0) = 2, (b_{\min}, b_{\max}) = (1,-1)\\						
			\underline{\mathrm{Vol}(Z_0) = 1, (b_{\min}, b_{\max}) = (-1,2)}\\						
			\underline{\mathrm{Vol}(Z_0) = 1, (b_{\min}, b_{\max}) = (0,1)}\\						
			\mathrm{Vol}(Z_0) = 1, (b_{\min}, b_{\max}) = (1,0)\\						
			\mathrm{Vol}(Z_0) = 1, (b_{\min}, b_{\max}) = (2,-1)\\												
		\end{cases}
	\end{equation}
	By the assumption \eqref{equation_assumption}, we may rule out the case of ``$b_{\min} > b_{\max}$'', and therefore we only need to deal with 8 solutions 
	(underlined in \eqref{equation_8_solutions})
	with $b_{\min} \leq b_{\max}$
	and obtain the following. 

	\begin{theorem}\label{theorem_IV_1}
		Let $(M,\omega)$ be a six-dimensional closed monotone semifree Hamiltonian $S^1$-manifold with $c_1(TM) = [\omega]$. Suppose that $\mathrm{Crit} H = \{ 2, 1, 0, -1, -2\}$. 
		If the number of fixed points of index two equals two, up to orientation of $M$, the list of all possible topological fixed point data is given in the Table \ref{table_IV_1}.
		\begin{table}[h]
			\begin{tabular}{|c|c|c|c|c|c|c|c|c|c|}
				\hline
				    & $(M_0, [\omega_0])$ & $e(P_{-2}^+)$ &$Z_{-2}$  & $Z_{-1}$ & $Z_0$ & $Z_1$ & $Z_2$ & $b_2(M)$ & $c_1^3(M)$ \\ \hline \hline
				    {\bf (IV-1-1.1)} & \makecell{$(E_{S^2} \# ~2\overline{\p^2},$ \\$3x + 2y - E_1-E_2)$} & $-x-y$  &$S^2$ & 
				    	{\em 2 pts} &
				    		\makecell{ $Z_0 = Z_0^1 ~\dot \cup ~ Z_0^2$ \\ $Z_0^1 \cong Z_0^2 \cong S^2$ \\ 
				    		$\mathrm{PD}(Z_0^1) = x+y-E_1 - E_2$ \\ $\mathrm{PD}(Z_0^2) = x - E_1$}
					     & {\em 2 pts} & $S^2$ & $5$ & $36$\\ \hline    
				    {\bf (IV-1-1.2)} & \makecell{$(E_{S^2} \# ~2\overline{\p^2},$ \\$3x + 2y - E_1-E_2)$} & $-x-y$  &$S^2$ & 
				    	{\em 2 pts} &
				    		\makecell{ $Z_0 = Z_0^1 ~\dot \cup ~ Z_0^2$ \\ $Z_0^1 \cong Z_0^2 \cong S^2$ \\ $\mathrm{PD}(Z_0^1) = y$ \\ 
				    		$\mathrm{PD}(Z_0^2) = x+y-E_1 - E_2$}
					     & {\em 2 pts} & $S^2$ & $5$ & $36$\\ \hline    					     
				    {\bf (IV-1-1.3)} & \makecell{$(E_{S^2} \# ~2\overline{\p^2},$ \\$3x + 2y - E_1-E_2)$} & $-x-y$  &$S^2$ & 
				    	{\em 2 pts} &
				    		\makecell{ $Z_0 \cong S^2$  \\ $\mathrm{PD}(Z_0) = x+y-E_1$}
					     & {\em 2 pts} & $S^2$ & $4$ & $36$\\ \hline    
				    {\bf (IV-1-2)} & \makecell{$(E_{S^2} \# ~2\overline{\p^2},$ \\$3x + 2y - E_1-E_2)$} & $-x-y$  &$S^2$ & 
				    	{\em 2 pts} & 
				    		\makecell{ $Z_0 \cong S^2$  \\ $\mathrm{PD}(Z_0) = x - E_1$}
				    	& {\em 2 pts}  & $S^2$ & $4$ & $40$\\ \hline 
			\end{tabular}		
			\vs{0.5cm}			
			\caption{\label{table_IV_1} Topological fixed point data for $\mathrm{Crit} H = \{-2, -1,0,1, 2\}$ with $|Z_{-1}| = 2$.}
		\end{table}				   
	\end{theorem}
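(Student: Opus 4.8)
The plan is to repeat, in this last subcase, the argument used for Theorems~\ref{theorem_II_1}, \ref{theorem_II_2} and \ref{theorem_III}: extract the numerical data from~\eqref{equation_8_solutions}, then pin down $\mathrm{PD}(Z_0)$ using the Duistermaat--Heckman theorem~\ref{theorem_DH}, the Euler-class variation formula (Lemma~\ref{lemma_Euler_class}), the list of exceptional classes (Lemma~\ref{lemma_list_exceptional}) and the adjunction formula~\eqref{equation_adjunction}. Since $m=|Z_{-1}|=2$ and $b_{\min}\le b_{\max}$ by~\eqref{equation_assumption}, equation~\eqref{equation_8_solutions} leaves exactly the two profiles $(\mathrm{Vol}(Z_0),b_{\min},b_{\max})=(2,-1,-1)$ and $(1,-1,0)$; in both, $b_{\min}=-1$ is odd, so $M_{-2+\epsilon}\cong E_{S^2}$ by Lemma~\ref{lemma_volume}, and crossing the level $-1$ (two index-two fixed points) turns it, by Proposition~\ref{proposition_topology_reduced_space}, into the two-point blow-up $M_0\cong E_{S^2}\#\,2\overline{\p^2}$. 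Fix the basis $\{x,y,E_1,E_2\}$ of $H^2(M_0;\Z)$ with $x^2=0$, $y^2=-1$, $xy=1$ and $E_i^2=-1$, $x\cdot E_i=y\cdot E_i=0$. Then $[\omega_0]=c_1(TM_0)=3x+2y-E_1-E_2$ by Proposition~\ref{proposition_monotonicity_preserved_under_reduction}, while Lemma~\ref{lemma_volume} gives $e(P_{-2}^+)=-x-y$ and Lemma~\ref{lemma_Euler_class} gives $e(P_{-1}^+)=-x-y+E_1+E_2$.

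I would then set $\mathrm{PD}(Z_0)=ax+by+cE_1+dE_2$, use Lemma~\ref{lemma_Euler_class} again to get $e(P_0^+)=e(P_{-1}^+)+\mathrm{PD}(Z_0)$, and record the family $[\omega_t]=[\omega_0]-t\,e(P_0^+)$, $t\in[0,1)$, via Theorem~\ref{theorem_DH}. The constraints to impose are: (i) $\langle[\omega_t]^2,[M_t]\rangle>0$ for $t\in(-2,2)$ and $\langle[\omega_t]^2,[M_t]\rangle\to0$ as $t\to2$ (the reduced space collapsing onto $Z_{\max}$), which controls the $x$- and $y$-coefficients; (ii) at $t=1$ exactly two disjoint $(-1)$-spheres $C_1,C_2$ contract, so $\langle[\omega_1],[C_i]\rangle=0$ and $[C_1]\cdot[C_2]=0$, while the remaining exceptional classes of Lemma~\ref{lemma_list_exceptional} — in this basis $y$, $E_1$, $E_2$, $x-E_1$, $x-E_2$, $x+y-E_1-E_2$ — stay positive on $(M_1,\omega_1)$; (iii) the blow-down at level $1$ together with Lemma~\ref{lemma_volume} forces $\langle e(P_0^+)^2,[M_0]\rangle=-b_{\max}-2$, pinning $b_{\max}$; (iv) $\langle[\omega_0],[Z_0]\rangle=\mathrm{Vol}(Z_0)\in\{1,2\}$; and (v) the adjunction formula~\eqref{equation_adjunction} for $Z_0$. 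Solving this finite system in $(a,b,c,d)$ should single out exactly the four Poincar\'e-dual sums of Table~\ref{table_IV_1}.

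For each surviving class I would determine $Z_0$ as an embedded surface: since $\mathrm{Vol}(Z_0)\le2$ and every component carries a positive integral symplectic area, $Z_0$ has at most two components, a component of area one being a $(-1)$-sphere by~\eqref{equation_adjunction} and hence lying in one of the six exceptional classes above; the disjointness relations $[Z_0^i]\cdot[Z_0^j]=0$ with $\sum_i\mathrm{PD}(Z_0^i)=\mathrm{PD}(Z_0)$ then force the decompositions in the table (two $(-1)$-spheres in {\bf (IV-1-1.1)} and {\bf (IV-1-1.2)}, a single self-intersection-$0$ sphere in {\bf (IV-1-1.3)}, a single $(-1)$-sphere in {\bf (IV-1-2)}) and exclude any higher-genus component. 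Finally $b_2(M)$ follows from the perfectness of $H$: with $r$ the number of components of $Z_0$ the Poincar\'e polynomial is $1+(3+r)t^2+(3+r)t^4+t^6$, so $b_2(M)=3+r$; and the localization theorem~\ref{theorem_localization} with Remark~\ref{remark_localization_surface} yields $\langle c_1(TM)^3,[M]\rangle=(24+4b_{\min})+(24+4b_{\max})-4=44+4(b_{\min}+b_{\max})$, equal to $36$ or $40$.

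The real work is step (ii)--(v): solving the linear-and-quadratic system and reading off the topology of $Z_0$. It is cleanest to identify $E_{S^2}\#\,2\overline{\p^2}\cong\p^2\#\,3\overline{\p^2}$ (so that $x=u-E_0$, $y=E_0$ for a del Pezzo blow-up basis) in order to invoke Lemma~\ref{lemma_list_exceptional} verbatim, but one must then repeatedly combine the adjunction formula, the disjointness conditions and Li's positivity theorem (\cite[Corollary~3.10]{Li}) — a symplectic surface of non-negative self-intersection meets every exceptional class non-negatively — both to discard the spurious numerical solutions and to decide connectedness of $Z_0$. No single computation is hard, but the case analysis is long and must be organized so that no candidate class or component pattern is missed or double-counted.
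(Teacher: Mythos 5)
Your proposal follows essentially the same route as the paper's proof: reduce to the two profiles $(\mathrm{Vol}(Z_0),b_{\min},b_{\max})=(2,-1,-1)$ and $(1,-1,0)$ from \eqref{equation_8_solutions}, identify $M_0\cong E_{S^2}\#\,2\overline{\p^2}$ with $e(P_{-2}^+)=-x-y$, and then run the case analysis over the six possible pairs of disjoint vanishing exceptional classes at level $1$, using Duistermaat--Heckman, the volume and Euler-square constraints $2a+b+c+d=\mathrm{Vol}(Z_0)$ and $\langle e(P_0^+)^2,[M_0]\rangle=-b_{\max}-2$, positivity of the surviving exceptional classes, and adjunction plus Lemma~\ref{lemma_list_exceptional} to pin down the components of $Z_0$; your Betti-number and Chern-number computations also agree with the paper's. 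The argument is correct in outline, with the understood caveat that the finite system still has to be solved case by case as the paper does.
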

		
	\begin{proof}
		
		As in \eqref{equation_8_solutions}, $b_{\min} = -1$ so that $M_{-2 + \epsilon} \cong E_{S^2}$ by Lemma \ref{lemma_volume}, and therefore $M_0$ is a 
		two points blow-up of $E_{S^2}$ where we denote the dual classes of the exceptional divisors by $E_1$ and $E_2$. 
		Also, we have $e(P_{-2}^+) = kx - y = -x -y$ as 
		$b_{\min} = 2k+1 = -1$.
		
		Let $\mathrm{PD}(Z_0) = ax + by + cE_1 + dE_2 \in H^2(M_0;\Z)$ for some $a,b,c,d \in \Z$. By the Duistermaat-Heckman theorem \ref{theorem_DH}, we have 
		\[
			\begin{array}{ccl}
				[\omega_1] = [\omega_0] - e(P_0^+) & = & (3x + 2y - E_1 - E_2) - (-x - y + E_1 + E_2 + \mathrm{PD}(Z_0)) \\ 
				& = &(4-a)x + (3-b)y - (2+c)E_1 - (2+d)E_2
			\end{array}
		\] 
		where $[\omega_0] = c_1(TM_0) = 3x + 2y - E_1 - E_2$ and $e(P_0^+) = -x - y + E_1 + E_2 + \mathrm{PD}(Z_0)$ by Lemma \ref{lemma_Euler_class}.
		Observe that exactly two blow-downs occur simultaneously on $M_1$. We denote by $C_1, C_2$ the vanishing cycles so that they satisfy
		\begin{equation}\label{equation_vanishing_IV_1}
			\langle [\omega_1], C_1 \rangle = \langle [\omega_1], C_2 \rangle = 0, \quad C_1 \cdot C_2 = 0.
		\end{equation}
		By Lemma \ref{lemma_list_exceptional}, 
		the list of all possible $(\mathrm{PD}(C_1), \mathrm{PD}(C_2))$ (up to permutation on $\{E_1, E_2\}$) is given by
		\[
			 (E_1, E_2), \quad (E_1, E_3), \quad (E_3, u - E_1 - E_2), \quad (E_1, u - E_2- E_3),
			\quad (u-E_1-E_2, u-E_1- E_3), \quad (u-E_1-E_3, u-E_2- E_3)
		\]
		with the identification $u = x+y$ and $E_3 = y$. Equivalently in terms of $\{x,y,E_1, E_2\}$, all possible candidates for $(\mathrm{PD}(C_1), \mathrm{PD}(C_2))$ are
		\[
			 (E_1, E_2), \quad (E_1, y), \quad (y, x+y  - E_1 - E_2), \quad (E_1, x - E_2),
			\quad (x+y -E_1-E_2, x - E_1), \quad (x -E_1, x -E_2).
		\] 		
		We divide the proof into two cases; {\bf (IV-1-1)}: $(b_{\min}, b_{\max}) = (-1,-1)$ and {\bf (IV-1-2)}: $(b_{\min}, b_{\max}) = (-1,0)$ as listed 
		in \eqref{equation_8_solutions}.
		\vs{0.3cm}

		\noindent
		{\bf (IV-1-1) : $m = 2, \mathrm{Vol}(Z_0) = 2, (b_{\min}, b_{\max}) = (-1,-1)$} \vs{0.3cm}
		
		\noindent 
		Note that there are at most two connected components of $Z_0$. From $b_{\max} = -1$, it follows that 
		\begin{equation}\label{equation_IV_1_1}
			\mathrm{Vol}(Z_0) = 2a+b+c+d = 2, \quad \langle e(P_2^-)^2, [M_{2-\epsilon}] \rangle = 1 ~\text{so that $\langle e(P_0^+)^2, [M_0] \rangle = -1$}
		\end{equation}
		by Lemma \ref{lemma_volume} and Lemma \ref{lemma_Euler_class}. \vs{0.3cm}
			
		\noindent
		{\bf Case (1) : $(\mathrm{PD}(C_1), \mathrm{PD}(C_2)) = (E_1, E_2)$}  \vs{0.3cm}
		
		\noindent
		In this case, we have $c=d=-2$ by \eqref{equation_vanishing_IV_1}. 
		Also \eqref{equation_IV_1_1} implies that $2a + b = 6$ and
		\[
			\langle ((a-1)x + (b-1)y + (c+1)E_1 + (d+1)E_2)^2, [M_0] \rangle = 2(a-1)(b-1) - (b-1)^2 - 2 = -1.
		\]
		So, we get $a=2, ~b=2, ~c=d=-2.$, i.e., $\mathrm{PD}(Z_0) = 2x + 2y - 2E_1 - 2E_2$ and hence $Z_0 \cdot Z_0 = -4.$ 
		Because the number of connected components of $Z_0$ is at most two, there is no such manifold by the adjunction formula \eqref{equation_adjunction} : 
		\[
			[Z_0] \cdot [Z_0] + \sum (2 - 2g_i) = \langle c_1(TM_0), [Z_0] \rangle = 2
		\]
		where the sum is taken over all connected components of $Z_0$
		\vs{0.1cm}

		\noindent		
		{\bf Case (2) : $(\mathrm{PD}(C_1), \mathrm{PD}(C_2)) = (E_1, y)$}  \vs{0.1cm}
		
		\noindent 
		By \eqref{equation_vanishing_IV_1}, we obtain $c = -2$ and $a = b + 1$. Also from \eqref{equation_IV_1_1}, we get
		\[
			b =  1 \hs{0.2cm}(a = 2) \quad \text{and} \quad d = -1, 
		\]
		that is, $\mathrm{PD}(Z_0) = 2x + y - 2E_1 - E_2$ and $[Z_0] \cdot [Z_0] = -2$. The adjunction formula \eqref{equation_adjunction} says that 
		\[
			[Z_0] \cdot [Z_0] + \sum (2 - 2g_i) = \langle c_1(TM_0), [Z_0] \rangle = 2
		\]
		and this implies that $Z_0$ consists of two spheres $Z_0^1$ and $Z_0^2$ (since $Z_0$ consists at most two components) with 
		\begin{equation}\label{equation_IV_1_1_1}
			\mathrm{PD}(Z_0^1) = x + y - E_1 - E_2 \quad \mathrm{PD}(Z_0^2) = x - E_1
		\end{equation}
		up to permutation on $\{E_1, E_2\}$. (Note that this computation can be easily obtained from the fact that each $[Z_0^i]$ is an exceptional class 
		so  that one can apply Lemma \ref{lemma_list_exceptional}.) See Table \ref{table_IV_1} : {\bf (IV-1-1.1)}. 
		
		For the Chern number computation, we apply the localization theorem \ref{theorem_localization} and get
		\begin{equation}\label{equation_Chern_IV_1_1}
			\begin{array}{ccl}\vs{0.3cm}
				\ds \int_M c_1^{S^1}(TM)^3 & = &  \ds  
							\int_{Z_{\min}} \frac{\left(c_1^{S^1}(TM)|_{Z_{\min}}\right)^3}{e_{Z_{\min}}^{S^1}} + 2 \frac{\overbrace{\lambda^3}^{Z_{-1} ~\text{term}}}
							{-\lambda^3}
							+ \int_{Z_0} \frac{\overbrace{\left(c_1^{S^1}(TM)|_{Z_0}\right)^3}^{= 0}}{e_{Z_0}^{S^1}}
							 + 2 \frac{\overbrace{-\lambda^3}^{Z_1 ~\text{term}}}{\lambda^3} + \int_{Z_{\max}} \frac{\left(c_1^{S^1}(TM)|_{Z_{\max}}\right)^3}
							 {e_{Z_{\max}}^{S^1}} \\ \vs{0.2cm}
							& = &  (24 + 4b_{\min}) + (24 + 4b_{\max}) - 4 = 36
			\end{array}			
		\end{equation}
		by Remark \ref{remark_localization_surface}.
		\vs{0.3cm}
		
		\noindent		
		{\bf Case (3) : $(\mathrm{PD}(C_1), \mathrm{PD}(C_2)) = (y, x+y-E_1-E_2)$} \vs{0.1cm}

		\noindent
		From \eqref{equation_vanishing_IV_1} and \eqref{equation_IV_1_1}, we have 
		\[
			a = b+1, \quad a+c+d = 0, \quad 2a + b + c + d = 2  \quad (\Leftrightarrow 3b + c + d = 0).
		\]
		This implies that $a = 3b$ so that $b = \frac{1}{2}$ and it leads to a contradiction. Thus no such manifold exists.
		\vs{0.3cm}
		
		\noindent		
		{\bf Case (4) : $(\mathrm{PD}(C_1), \mathrm{PD}(C_2)) = (E_1, x-E_2)$} \vs{0.1cm}

		\noindent
		We similarly obtain 
		\[
			c = -2, \quad b+d = 1, \quad 2a + b + c + d = 2 \quad (\Leftrightarrow 2a + c = 1).
		\]
		Then we see that $a = \frac{3}{2}$, which is not an integer. Therefore no such manifold exists. 
		
		\vs{0.3cm}
		\noindent		
		{\bf Case (5) : $(\mathrm{PD}(C_1), \mathrm{PD}(C_2)) = (x+y-E_1 - E_2, x-E_1)$} \vs{0.1cm}

		\noindent
		In this case, we have 
		\[
			a+c+d = 0, \quad b+c = 1, \quad 2a+b+c+d=2 \quad (\Leftrightarrow 2a + d = 1), 
		\]
		and 
		\[
			\langle e(P_0^+)^2, [M_0] \rangle = 2(a-1)(b-1) - (b-1)^2 - (c+1)^2 - (d+1)^2 = -1.
		\]
		Those equations have the unique solution $(a,b,c,d) = (1,1,0,-1)$ so that $\mathrm{PD}(Z_0) = x + y - E_2$ and hence
		$[Z_0] \cdot [Z_0] = 0$. We can also check that  $Z_0$ is connected (by Lemma \ref{lemma_list_exceptional})
		and therefore $Z_0 \cong S^2$ by the adjunction formula \eqref{equation_adjunction}. 
		The Chern number can be obtained in exactly the same way as in \eqref{equation_Chern_IV_1_1}. See Table \ref{table_IV_1} : {\bf (IV-1-1.3)}. 
		(The connectedness of $Z_0$ is proved as follows : if $Z_0^1$ and $Z_0^2$ are connected components of $Z_0$, then 
			\begin{itemize}
				\item $\mathrm{Vol}(Z_0^1) = \mathrm{Vol}(Z_0^2) = 1$, and 
				\item $[Z_0^1] \cdot [Z_0^1] = -1$ and $[Z_0^2] \cdot [Z_0^2] = 1$ since 
				\[
					[Z_0^i] \cdot [Z_0^i] + 2 - 2g_i = 1\quad \text{and} \quad [Z_0^1] \cdot [Z_0^1] + [Z_0^2] \cdot [Z_0^2] = 0.
				\]
			\end{itemize}
		Then $Z_0^1 \cong S^2$ by the adjunction formula \eqref{equation_adjunction} and $\mathrm{PD}(Z_0^1)$ should be on the list in Lemma \ref{lemma_list_exceptional}.
		However, it contradicts that $\mathrm{PD}(Z_0^1) \cdot (x + y -E_2 - \mathrm{PD}(Z_0^1)) = 0$. Therefore $Z_0$ has to be connected.)			
		\vs{0.3cm}
		
		\noindent		
		{\bf Case (6) : $(\mathrm{PD}(C_1), \mathrm{PD}(C_2)) = (x - E_1, x - E_2)$} \vs{0.1cm}		
		
		\noindent 
		Again by \eqref{equation_vanishing_IV_1} and \eqref{equation_IV_1_1} , we get
		\[
			b+c = 1, \quad b+d = 1, \quad 2a + b + c + d = 2 \quad (\Leftrightarrow 2a + d = 2a + c = 1), 
		\]
		and 
		\[
			\langle e(P_0^+)^2, [M_0] \rangle = 2(a-1)(b-1) - (b-1)^2 - (c+1)^2 - (d+1)^2 = -1.
		\]
		Then we get the unique solution $(a,b,c,d) = (1,2,-1,-1)$ so that $\mathrm{PD}(Z_0) = x + 2y - E_1 - E_2$. Moreover, since $[Z_0] \cdot [Z_0] = -2$, the adjunction formula
		 \eqref{equation_adjunction}
		implies that $Z_0$ consists of two spheres $Z_0^1$ and $Z_0^2$ such that $[Z_0^1] \cdot [Z_0^1] = [Z_0^2] \cdot [Z_0^2] = -1$. 
		Applying Lemma \ref{lemma_list_exceptional}, we finally obtain
		\[
			\mathrm{PD}(Z_0^1) = y \quad \text{and} \quad \mathrm{PD}(Z_0^2) = x + y - E_1 - E_2.
		\]
		See Table \ref{table_IV_1} : {\bf (IV-1-1.2)}. \vs{0.5cm}
		
		\noindent
		{\bf (IV-1-2) : $m = 2, \mathrm{Vol}(Z_0) = 1, (b_{\min}, b_{\max}) = (-1,0)$} \vs{0.3cm}		
		
		\noindent 
		In this case, $Z_0$ is connected by the assumption $\mathrm{Vol}(Z_0) = 1$.  
		Together with the condition $b_{\max} = 0$, we have
		\begin{equation}\label{equation_IV_1_2}
			\mathrm{Vol}(Z_0) = 2a+b+c= 1, \quad \langle e(P_2^-)^2, [M_{2-\epsilon}] \rangle = 0 ~\text{so that $\langle e(P_0^+)^2, [M_0] \rangle = -2$}
		\end{equation}
		by Lemma \ref{lemma_volume}. The latter equation can be re-written as 
		\begin{equation}\label{equation_IV_1_2_Euler}
			2(a-1)(b-1) - (b-1)^2 - (c+1)^2 - (d+1)^2 = -2.
		\end{equation}
		Using \eqref{equation_vanishing_IV_1}, \eqref{equation_IV_1_2}, \eqref{equation_IV_1_2_Euler}, we analyze each cases as follows:
		\vs{0.3cm}

		\noindent
		{\bf Case (1) : $(\mathrm{PD}(C_1), \mathrm{PD}(C_2)) = (E_1, E_2).$}  \vs{0.1cm}		
		\[
			c=-2, \quad d=-2, \quad 2a+b+c+d = 1 \quad (\Leftrightarrow 2a + b = 5), \quad 2(a-1)(b-1) - (b-1)^2 = 0
		\]
		so that $(a,b,c,d) = (2,1,-2,-2)$, i.e., $\mathrm{PD}(Z_0) = 2x + y  - 2E_1 - 2E_2$. However, since $[Z_0] \cdot [Z_0] = -5$, no such manifold exists by the adjunction 
		formula \eqref{equation_adjunction}.
		
		\vs{0.3cm}
		\noindent
		{\bf Case (2) : $(\mathrm{PD}(C_1), \mathrm{PD}(C_2)) = (E_1, y)$.}  \vs{0.1cm}
		\[
			c = -2, \quad a = b+1, \quad 2a+b+c+d = 1 \quad (\Leftrightarrow 3b + d = 1), \quad \underbrace{2(a-1)(b-1) - (b-1)^2 - (d+1)^2}_{= ~-8b^2 + 12b - 5} = -1
		\]
		so that $(a,b,c,d) = (2,1,-2,-2)$, i.e., $\mathrm{PD}(Z_0) = 2x + y  - 2E_1 - 2E_2$. Again by the adjunction 
		formula \eqref{equation_adjunction}, no such manifold exists.

		\vs{0.3cm}		
		\noindent
		{\bf Case (3) : $(\mathrm{PD}(C_1), \mathrm{PD}(C_2)) = (y, x+y-E_1 - E_2)$.}  \vs{0.1cm}	
		\[
			a = b+1, \quad a+c+d = 0, \quad 2a+b+c+d = 1 \quad (\Leftrightarrow a + b = 1 ~\Leftrightarrow ~b=0, a=1), \quad (c+1)^2 + (d+1)^2 = 1
		\]
		so that $(a,b,c,d) = (1,0,-1,0)$ or $(1,0,0,-1)$ (where they are equal up to permutation on $\{E_1, E_2\}$.) 
		In this case, we have $Z_0 \cong S^2$ by \eqref{equation_adjunction}. See Table \ref{table_IV_1} : {\bf (IV-1-2)}.

		\vs{0.3cm}		
		\noindent
		{\bf Case (4) : $(\mathrm{PD}(C_1), \mathrm{PD}(C_2)) = (E_1, x - E_2)$.}  \vs{0.1cm}		
		\[
			c=-2, \quad b+d = 1, \quad 2a+b+c+d = 1 \quad (\Leftrightarrow 2a + c = 0 \Leftrightarrow a=1), \quad (b-1)^2 + (d+1)^2 = 1
		\]
		so that $(a,b,c,d) = (1,1,-2,0)$ or $(1,2,-2,-1)$. In either case, $[Z_0] \cdot [Z_0] < -1$ so that it violates the adjunction 
		formula \eqref{equation_adjunction}.
		Therefore no such manifold exists.

		\vs{0.3cm}		
		\noindent
		{\bf Case (5) : $(\mathrm{PD}(C_1), \mathrm{PD}(C_2)) = (x+y - E_1 - E_2, x - E_1)$.}  \vs{0.1cm}		
		\[
			a+c+d = 0, \quad b+c = 1, \quad \underbrace{2a+b+c+d = 1}_{\Leftrightarrow ~a+b = 1, ~2a+d=0}, \quad \underbrace{-2b(b-1) - (b-1)^2 - (2-b)^2 - (2b-1)^2}_{= ~-8b^2 + 12b - 6} = -2
		\]
		and we obtain $(a,b,c,d) = (0,1,0,0)$, i.e., $\mathrm{PD}(Z_0) = y$. However, we can check that a cycle representing $x - E_2$ vanishes on $M_1$ which leads to a contradiction.
		Therefore no such manifold exists.
				
		\vs{0.3cm}		
		\noindent
		{\bf Case (6) : $(\mathrm{PD}(C_1), \mathrm{PD}(C_2)) = (x - E_1, x - E_2)$.}  \vs{0.1cm}										
		\[
			b+c=1, \quad b+d = 1, \quad \underbrace{2a+b+c+d = 1}_{\Leftrightarrow 2a + d =0, ~2a + c = 0}, \quad \underbrace{2(a-1)(b-1) - (b-1)^2 - (c+1)^2 - (d+1)^2}_{4a(a-1) -
			4a^2 - (1-2a)^2 - 
			(1-2a)^2} = -2. 
		\]
		So, $(a,b,c,d) = (0,1,0,0)$. 
		Similar to {\bf Case (5)}, a cycle representing $x+y - E_1 -E_2$ vanishes on $M_1$, and therefore no such manifold exists. \vs{0.5cm}

	\end{proof}		
		
\begin{example}[Fano varieties of type {\bf (IV-1)}]\label{example_IV_1} 
	In this example, we illustrate algebraic Fano varieties with holomorphic Hamiltonian $S^1$-action with topological fixed point data given in Theorem \ref{theorem_IV_1}. 

	\begin{enumerate}
	            \item {\bf Case (IV-1-1.1)} \cite[2nd in Section 12.6]{IP} : Let $Y$ be the toric blow-up of $\p^3$ along two disjoint $T^3$-invariant lines where the moment map image
	            			is described in Figure \ref{figure_IV_1_1_1} (see also Figure \ref{figure_III} (b)).
	            			We take two disjoint lines $C_1$ and $C_2$ corresponding to the edges 
	            			\[
	            				e_1 = \overline{(0,1,3) ~(1,0,3)} \quad e_2 = \overline{(0,1,0) ~(1,0,0)}
	            			\]
	            			respectively. Let $M$ be the monotone toric blow-up of $Y$ along $C_1$ and $C_2$ so that the resulting moment polytope 
	            			(with respect to a moment map $\mu : M \rightarrow \R^3$)
	            			is illustrated 
	            			on the right of Figure \ref{figure_IV_1_1_1}. Now, we take the circle subgroup of $T^3$ generated by
	            			$\xi = (1,0,1)$. It is straightforward (by calculating the inner product of $\xi$ and each primitive edge vector) that the action is semifree
	            			and the balanced moment map is given by 
	            			\[
	            				\mu_\xi = \langle \mu, \xi \rangle -2.
	            			\] 
	            			Moreover, the fixed point set consists of 
	            			\begin{itemize}
	            				\item $Z_{-2} = \mu^{-1}(\overline{(0,2,0) ~(0,3,0)})$
	            				\item $Z_{-1} = \mu^{-1}(0,3,1) \cup \mu^{-1}(0,1,1)$
	            				\item $Z_{0} = \mu^{-1}(\overline{(0,2,2) ~(0,1,2)}) \cup \mu^{-1}(\overline{(1,0,1) ~(2,0,0)})$
	            				\item $Z_1 = \mu^{-1}(1,0,2) \cup \mu^{-1}(3,0,0)$
	            				\item $Z_2 = \mu^{-1}(\overline{(2,0,2) ~(3,0,1)})$
	            			\end{itemize}
					Furthremore, the symplectic areas of $Z_{-2}, Z_0^1, Z_0^2,$  and $Z_2$ are all 1 (see \eqref{equation_IV_1_1_1})
					and hence $b_{\min} = b_{\max} = -1$. Thus the fixed point data of $M$ coincides with the one in Table \ref{table_IV_1} {\bf (IV-1-1.1)}.
	            	            
			\begin{figure}[H]
				\scalebox{1}{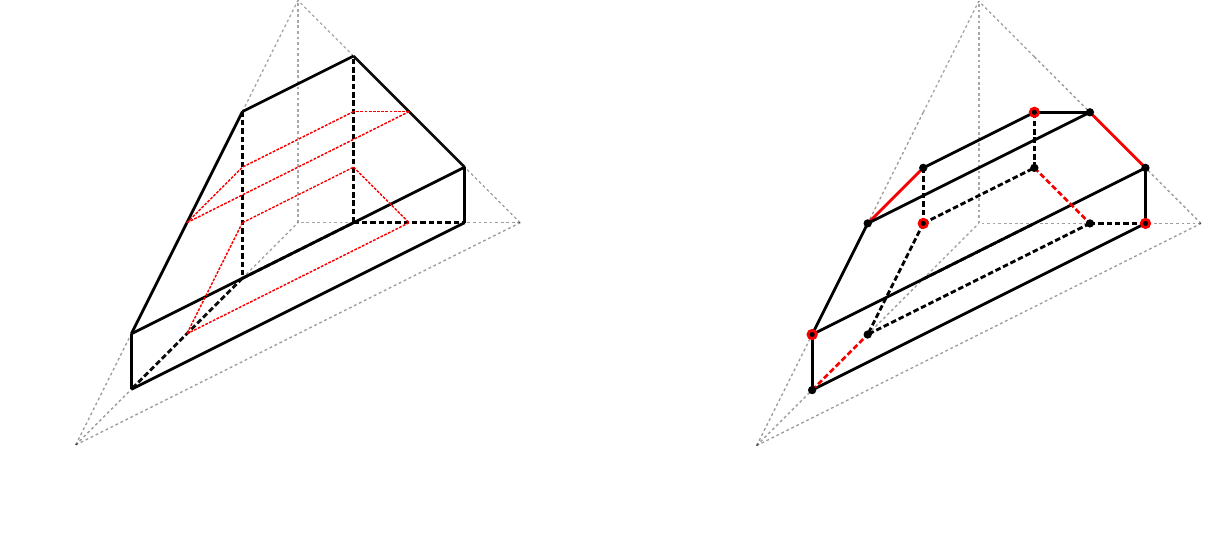}
				\caption{\label{figure_IV_1_1_1} Blow up of $Y$ along two lines $C_1$ and $C_2$ lying on the same exceptional components}
			\end{figure}          	
	            	            
           	 \item {\bf Case (IV-1-1.2)} \cite[3rd in Section 12.6]{IP} : Let $M = \p^1 \times X_3$ where $X_k$ denotes the blow-up of $\p^2$ at $k$ generic points.
           					 In particular we assume that $X_3$ is the toric blow-up of $\p^3$ equipped with the standard toric structure. 
           	 
				           	 Equip $M$ with the monotone toric K\"{a}hler form $\omega$ such that $c_1(TM) = [\omega]$ so that the moment map $\mu : M \rightarrow \R^3$ 
				           	 has the image given in Figure \ref{figure_IV_1_1_2}. Take $\xi = (0,-1,1)$. Then the $S^1$-action generated by $\xi$ is semifree and the 
				           	 balanced moment map is given by $\mu_\xi = \langle \mu, \xi \rangle$. The fixed point set consists of 
	            				\begin{itemize}
	            					\item $Z_{-2} = \mu^{-1}(\overline{(0,2,0) ~(1,2,0)})$
		            				\item $Z_{-1} = \mu^{-1}(0,1,0) \cup \mu^{-1}(2,1,0)$
		            				\item $Z_{0} = \mu^{-1}(\overline{(0,2,2) ~(1,2,2)}) \cup \mu^{-1}(\overline{(1,0,0) ~(2,0,0)})$
	           	 				\item $Z_1 = \mu^{-1}(0,1,2) \cup \mu^{-1}(2,1,2)$
	            					\item $Z_2 = \mu^{-1}(\overline{(2,0,2) ~(1,0,2)})$
	            				\end{itemize}
						It is not hard to check that the fixed point data of $M$ coincides with the one in Table \ref{table_IV_1} {\bf (IV-1-1.2)}.
						
			\begin{figure}[H]
				\scalebox{1}{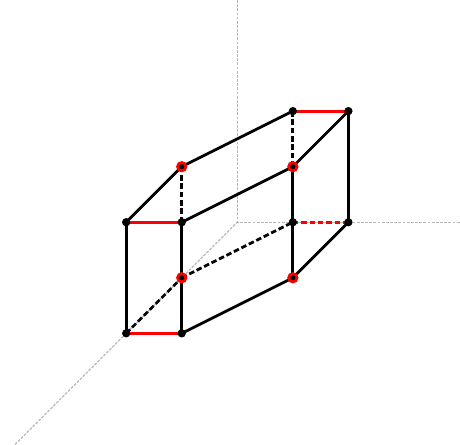}
				\caption{\label{figure_IV_1_1_2} $\p^1 \times X_3$}
			\end{figure}          	

           	 \item {\bf Case (IV-1-1.3)} \cite[7th in Section 12.5]{IP} : 
           	 Let $(W, \omega)$ be the monotone complete flag variety 
           	 given in Example \ref{example_II_1} (1) equipped with the Hamiltonian $T^2$-action where the moment polytope
           	 is described on the left of Figure \ref{figure_IV_1_1_3}. 
           	 
           	 Consider two edges $A$ and $B$ indicated in Figure \ref{figure_IV_1_1_3} and 
           	 denote by $C_A$ and $C_B$ the corresponding $T^2$-invariant spheres, respectively. (Note that $C_A$ and $C_B$ are curves of bidegree $(1,0)$ and $(0,1)$
           	 with respect to the Pl\"{u}cker embedding $W \subset \p^2 \times \p^2$.) 
      		 Using local toric structures on the normal bundles of $C_A$ and $C_B$, respectively, we may take $T^2$-equivariant blow up of $W$ along $C_A$ and $C_B$
      		 and denote the resulting manifold by $M$ and the image of the moment map $\mu : M \rightarrow \R^2$
      		  is given on the right of Figure \ref{figure_IV_1_1_3} (with respect to the monotone 
		 K\"{a}hler form). 
      		            	 
			\begin{figure}[H]
				\scalebox{1}{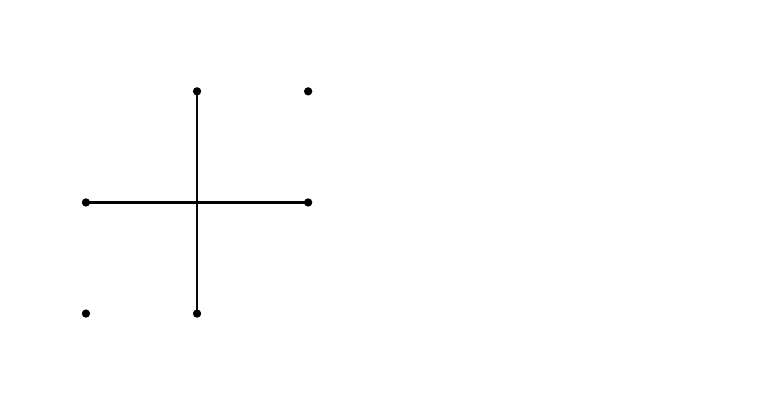}
				\caption{\label{figure_IV_1_1_3} Blow up of $W$ along two disjoint curves of bidegree $(1,0)$ and $(0,1)$. }
			\end{figure}          	

      		 Take the circle subgroup $S^1$ generated by $\xi = (1,0)$. Then the $S^1$-action is semifree and the balanced moment map is given by 
      		 $\mu_\xi = \langle \mu, \xi \rangle - 2$. The fixed point set consists of       		 

		\begin{itemize}
			\item $Z_{-2} = \mu^{-1}(\overline{(0,1) ~(0,2)})$
			\item $Z_{-1} = \mu^{-1}(1,1) \cup \mu^{-1}(1,3)$
			\item $Z_{0} = \mu^{-1}(\overline{(2,1) ~(2,3)})$
			\item $Z_1 = \mu^{-1}(3,1) \cup \mu^{-1}(3,3)$
			\item $Z_2 = \mu^{-1}(\overline{(4,2) ~(4,3)})$
		\end{itemize}
		and we can easily check that this should coincide with {\bf (IV-1-1.3)} in Table \ref{table_IV_1}.
           	(Note that the symplectic area of $Z_{-2}$ and $Z_2$ are both 1 so that $b_{\min} = b_{\max} = -1$.) \vs{0.5cm}

           	 \item {\bf Case (IV-1-2)} \cite[9th in Section 12.5]{IP} : Let Y be the toric blow-up of $\p^3$ along two disjoint $T^3$-invariant lines where the moment map 
           	 image is given on the left of Figure \ref{figure_IV_1_2} (see also Figure \ref{figure_III} (b)). Let $M$ be a toric blow up of $Y$ along a $T$-invariant exceptional line 
           	 (corresponding to the edge $A$ in Figure \ref{figure_IV_1_2}). With respect to the $T^3$-invariant monotone K\"{a}hler form, the image of a moment map $\mu$ is described 
           	 on the right of Figure \ref{figure_IV_1_2}. 
           	 
			\begin{figure}[H]
				\scalebox{1}{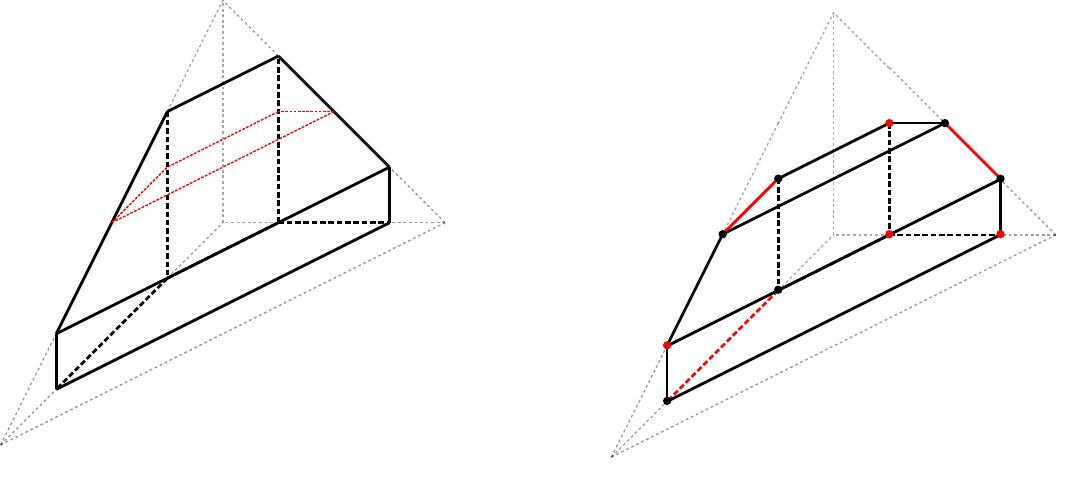}
				\caption{\label{figure_IV_1_2} Blow up of $Y$ along an exceptional line on $Y$. }
			\end{figure}          	
			\vs{-0.5cm}
		\noindent
		Take the circle subgroup $S^1$ of $T^3$ generated by $\xi = (-1, 0, -1)$. Then it is easy to check that the $S^1$-action is semifree and has the balanced moment map
		given by $\mu_\xi = \langle \mu, \xi \rangle + 2$. Also, the fixed point set consists of 

		\begin{itemize}
	            				\item $Z_{-2} = \mu^{-1}(\overline{(2,0,2) ~(3,0,1)})$
	            				\item $Z_{-1} = \mu^{-1}(1,0,2) \cup \mu^{-1}(3,0,0)$
	            				\item $Z_{0} = \mu^{-1}(\overline{(0,1,2) ~(0,2,2)})$
	            				\item $Z_1 = \mu^{-1}(0,3,1) \cup \mu^{-1}(1,0,0)$
	            				\item $Z_2 = \mu^{-1}(\overline{(0,1,0) ~(0,3,0)})$
		\end{itemize}
		where $\mathrm{Area}(Z_{-2}) = \mathrm{Area}(Z_{0}) = 1$ and $\mathrm{Area}(Z_{2}) = 2$. Thus one can see that the fixed point data of $M$ 
		coincides with {\bf (IV-1-2)} in Table \ref{table_IV_1}. 
          \end{enumerate}		
          
\end{example}
		
	Next, we deal with the case where $m = |Z_{-1}| = 1$.	
		
	\begin{theorem}\label{theorem_IV_2}
		Let $(M,\omega)$ be a six-dimensional closed monotone semifree Hamiltonian $S^1$-manifold with $c_1(TM) = [\omega]$. Suppose that $\mathrm{Crit} H = \{ 2, -1, 0, 1, -2\}$. 
		If the number of fixed points of index two equals one, up to orientation of $M$, the list of all possible topological fixed point data is given in the Table \ref{table_IV_2}.
		\begin{table}[h]
			\begin{tabular}{|c|c|c|c|c|c|c|c|c|c|}
				\hline
				    & $(M_0, [\omega_0])$ & $e(P_{-2}^+)$ &$Z_{-2}$  & $Z_{-1}$ & $Z_0$ & $Z_1$ & $Z_2$ & $b_2(M)$ & $c_1^3(M)$ \\ \hline \hline
				    {\bf (IV-2-1.1)} & \makecell{$(E_{S^2} \# ~\overline{\p^2},$ \\$3x + 2y - E_1)$} & $-x-y$  &$S^2$ & {\em pt} &
				    		\makecell{ $Z_0 \cong S^2$  \\ $\mathrm{PD}(Z_0) = 2x + y - E_1$}				    
				     &{\em pt} & $S^2$ & $3$ &$38$ \\ \hline
				    {\bf (IV-2-1.2)} & \makecell{$(E_{S^2} \# ~\overline{\p^2},$ \\$3x + 2y - E_1)$} & $-x-y$  &$S^2$ & {\em pt} &
				    		\makecell{ $Z_0 = Z_0^1 ~\dot \cup ~ Z_0^2$ \\ $Z_0^1 \cong Z_0^2 \cong S^2$ \\ 
				    		$\mathrm{PD}(Z_0^1) = \mathrm{PD}(Z_0^2) = x + y - E_1$}
				     &{\em pt} & $S^2$ & $4$ &$38$ \\ \hline

				    {\bf (IV-2-2.1)} & \makecell{$(E_{S^2} \# ~\overline{\p^2},$ \\$3x + 2y - E_1)$} & $-x-y$  &$S^2$ & {\em pt} & 
				    		\makecell{ $Z_0 \cong S^2$  \\ $\mathrm{PD}(Z_0) = x + y$}						    
				    &{\em pt} & $S^2$ & $3$ &$42$ \\ \hline
				    {\bf (IV-2-2.2)} & \makecell{$(E_{S^2} \# ~\overline{\p^2},$ \\$3x + 2y - E_1)$} & $-x-y$  &$S^2$ & {\em pt} & 
				    		\makecell{ $Z_0 = Z_0^1 ~\dot \cup ~ Z_0^2$ \\ $Z_0^1 \cong Z_0^2 \cong S^2$ \\ 
				    		$\mathrm{PD}(Z_0^1) = y$ \\ $\mathrm{PD}(Z_0^2)= x + y - E_1$}				    				    
				    &{\em pt} & $S^2$ & $4$ &$42$ \\ \hline				    
				    {\bf (IV-2-3)} & \makecell{$(E_{S^2} \# ~\overline{\p^2},$ \\$3x + 2y - E_1)$} & $-x-y$  &$S^2$ & {\em pt} &
				    		\makecell{ $Z_0 \cong S^2$  \\ $\mathrm{PD}(Z_0) = x$}				    
				     &{\em pt} & $S^2$ & $3$ &$46$ \\ \hline
				    {\bf (IV-2-4)} & \makecell{$(E_{S^2} \# ~\overline{\p^2},$ \\$3x + 2y - E_1)$} & $-x-y$  &$S^2$ & {\em pt} &
				    		\makecell{ $Z_0 \cong S^2$ 
				    		 \\ $\mathrm{PD}(Z_0) = E_1$}			    
				     &{\em pt} & $S^2$ & $3$ &$50$ \\ \hline

				    {\bf (IV-2-5)} & \makecell{$(S^2 \times S^2  \# ~\overline{\p^2},$ \\$2x + 2y - E_1)$} & $-y$  &$S^2$ & {\em pt} & 
				    		\makecell{ $Z_0 = Z_0^1 \dot \cup Z_0^2$ \\ $Z_0^1 \cong Z_0^2 \cong S^2$ \\ 
				    		$\mathrm{PD}(Z_0^1) = x - E_1$ \\  $\mathrm{PD}(Z_0^2) = y - E_1$  \\ }				    
				    &{\em pt} & $S^2$ & $4$ &$46$ \\ \hline
				    {\bf (IV-2-6)} & \makecell{$(S^2 \times S^2  \# ~\overline{\p^2},$ \\$2x + 2y - E_1)$} & $-y$  &$S^2$ & {\em pt} &
				    		\makecell{ $Z_0 \cong S^2$  \\ $\mathrm{PD}(Z_0) = x - E_1$}					    
				     &{\em pt} & $S^2$ & $3$ &$50$ \\ \hline
			\end{tabular}		
			\vs{0.5cm}			
			\caption{\label{table_IV_2} Topological fixed point data for $\mathrm{Crit} H = \{-2, -1,0,1, 2\}$ with $|Z_{-1}| = 1$.}
		\end{table}				   
	\end{theorem}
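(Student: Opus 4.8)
The plan is to run the same machinery used for the $m=2$ case in Theorem \ref{theorem_IV_1}, now specialized to $m=|Z_{-1}|=1$. The localization computation leading to \eqref{equation_m} has already reduced the possibilities for the quadruple $(m,\mathrm{Vol}(Z_0),b_{\min},b_{\max})$ to the six underlined lines of \eqref{equation_8_solutions} once the normalization $b_{\min}\le b_{\max}$ from \eqref{equation_assumption} is imposed, so I will treat these six profiles one at a time. In each profile Lemma \ref{lemma_volume} identifies $M_{-2+\epsilon}$ (it is $S^2\times S^2$ when $b_{\min}=0$ and $E_{S^2}$ when $b_{\min}=-1$) and yields $e(P_{-2}^+)=kx-y$ with $b_{\min}\in\{2k,2k+1\}$; since $|Z_{-1}|=1$ means $M_0\cong M_{-1+\epsilon}$ is a one-point blow-up of $M_{-2+\epsilon}$, in either case $M_0$ is diffeomorphic to $\p^2\#2\overline{\p^2}$, and by Proposition \ref{proposition_monotonicity_preserved_under_reduction} its reduced monotone class is $[\omega_0]=c_1(TM_0)=3x+2y-E_1$ or $2x+2y-E_1$. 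This fixes the cohomology basis $\{x,y,E_1\}$ in which all remaining computations take place.

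For each profile I then set $\mathrm{PD}(Z_0)=ax+by+cE_1$, compute $e(P_0^+)$ via Lemma \ref{lemma_Euler_class} (using $e(P_0^-)=e(P_{-2}^+)+E_1$), and invoke the Duistermaat--Heckman theorem \ref{theorem_DH} to write $[\omega_t]$ for $t\in(-2,2)$ and in particular $[\omega_1]=[\omega_0]-e(P_0^+)$. Because $|Z_1|=1$, Proposition \ref{proposition_topology_reduced_space} forces exactly one symplectic blow-down on $M_1$, so there is a single vanishing $(-1)$-sphere $C$; by Lemma \ref{lemma_list_exceptional}, translated into the basis at hand (via $u=x+y$, $E_2=y$ in the $E_{S^2}$ case, and the analogous dictionary for the $S^2\times S^2$ case), $\mathrm{PD}(C)$ ranges over the short list of exceptional classes of $\p^2\#2\overline{\p^2}$. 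Imposing $\langle[\omega_1],[C]\rangle=0$, the positivity constraints $\langle[\omega_t]^2,[M_t]\rangle>0$ for $t<2$ and $\langle[\omega_0],[Z_0]\rangle=\mathrm{Vol}(Z_0)>0$, and the Euler-square identity $\langle e(P_0^+)^2,[M_0]\rangle=-b_{\max}$ supplied by Lemma \ref{lemma_volume}, leaves a small linear/quadratic Diophantine system whose integer solutions I solve for $(a,b,c)$. Each surviving class is then fed into the adjunction formula \eqref{equation_adjunction} to determine the genus of $Z_0$ and, when $\mathrm{Vol}(Z_0)\ge2$, the way it splits into connected components; the constraint $[Z_0^i]\cdot[Z_0^j]=0$ for disjoint components together with the exceptional-class list rules out all splittings except the ones recorded in Table \ref{table_IV_2}. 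Finally the Chern numbers are extracted from the localization theorem \ref{theorem_localization}: the argument of Remark \ref{remark_localization_surface}, after accounting for the two isolated interior fixed points (each contributing $-1$), gives $\langle c_1(TM)^3,[M]\rangle=(24+4b_{\min})+(24+4b_{\max})-2$, which produces the last column of the table.

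I expect the main obstacle to be the exhaustive bookkeeping in the second step rather than any single conceptual difficulty: six $(\mathrm{Vol}(Z_0),b_{\min},b_{\max})$ profiles, each splitting further according to $\mathrm{PD}(C)$, means the ``solve the system, then discard the solutions contradicting adjunction'' loop must be carried out carefully and completely. The one genuinely delicate recurring point is the connectedness analysis of $Z_0$: for $\mathrm{Vol}(Z_0)\ge2$ the fixed surface could a priori break into several symplectic spheres (or a sphere together with a torus), and eliminating the impossible configurations forces a combination of the adjunction formula, the disjointness condition $[Z_0^i]\cdot[Z_0^j]=0$, and Lemma \ref{lemma_list_exceptional}---this is exactly the step that yields the two-component entries {\bf (IV-2-1.2)}, {\bf (IV-2-2.2)}, {\bf (IV-2-5)} and simultaneously rules out every other splitting.
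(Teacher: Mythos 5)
Your proposal follows essentially the same route as the paper's proof: the six underlined profiles from \eqref{equation_8_solutions} with $m=1$, the identification of $M_0$ as a one-point blow-up of the $S^2$-bundle $M_{-2+\epsilon}$, the Duistermaat--Heckman/vanishing-cycle/adjunction loop over the exceptional classes of Lemma \ref{lemma_list_exceptional}, the connectedness analysis via $[Z_0^i]\cdot[Z_0^j]=0$, and the localization count $(24+4b_{\min})+(24+4b_{\max})-2$ for the Chern numbers all match the paper's argument. One constraint as you state it is off by one: Lemma \ref{lemma_volume} gives $\langle e(P_2^-)^2,[M_{2-\epsilon}]\rangle=-b_{\max}$ on the reduced space \emph{above} the level-$1$ blow-down, and transporting this to $M_0$ via $e(P_2^-)=e(P_0^+)+\mathrm{PD}(C)$ together with $e(P_0^+)\cdot C=c_1(TM_0)\cdot C=1$ and $C\cdot C=-1$ yields $\langle e(P_0^+)^2,[M_0]\rangle=-b_{\max}-1$, not $-b_{\max}$; using the uncorrected identity would shift every Diophantine system and change the solution sets, so this must be fixed before running the case analysis.
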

	
	\begin{proof}
	
		By \eqref{equation_8_solutions}, $b_{\min}$ is either  $-1$ or $0$.  For each cases, we have 
		\begin{equation}\label{equation_bmin_IV_2}
			\begin{cases}
				M_{-2 + \epsilon} \cong E_{S^2}, \quad c_1(TM_0) = [\omega_0] = 3x + 2y - E_1, \quad e(P_{-2}^+) = kx - y = -x -y & \text{if $b_{\min} = -1$} \\ \vs{0.1cm}
				M_{-2 + \epsilon} \cong S^2 \times S^2, \quad c_1(TM_0) = [\omega_0] = 2x + 2y - E_1, \quad e(P_{-2}^+) = kx - y = -y & \text{if $b_{\min} = 0$} 
			\end{cases}
		\end{equation}
		by Lemma \ref{lemma_volume}, where $M_0$ is a one point blow-up of $M_{-2 + \epsilon}$ and $E_1$ is the dual class of the 
		exceptional divisor on $M_0$.
		
		Let $\mathrm{PD}(Z_0) = ax + by + cE_1$ for some $a,b,c \in \Z$. By the Duistermaat-Heckman theorem \ref{theorem_DH}, we have 
		\[
				[\omega_1] = [\omega_0] - e(P_0^+) = \begin{cases}
						(4-a)x + (3-b)y - (2+c)E_1 & \text{if $b_{\min} = -1$}\\ \vs{0.1cm}
						(2-a)x + (3-b)y - (2+c)E_1 & \text{if $b_{\min} = 0$}.				
				\end{cases} 
		\] 
		Moreover, only one blow-down occurs on $M_1$ with the vanishing cycle $C$ so that 
		\begin{equation}\label{equation_vanishing_IV_2}
			\langle [\omega_1], C \rangle = 0. 
		\end{equation}
		By Lemma \ref{lemma_list_exceptional}, 
		the list of all possible $\mathrm{PD}(C)$ is given by
		\[
			u - E_1 - E_2, \quad E_1, \quad E_2.
		\]
		In terms of $\{x,y,E_1\}$, the list is expressed by
		\begin{itemize}
			\item if $b_{\min} = -1$, then 
				\[
					x - E_1, \quad E_1, \quad y,
				\] 		
			\item if $b_{\min} = 0$, then 
				\[
					E_1, \quad x - E_1, \quad y - E_1.
				\]
		\end{itemize}
		
		Now we compute the fixed point data for the six cases (on the right of \eqref{equation_8_solutions}) as follows.
		(Note that the Chern number computation can be easily obtained from the localization theorem \ref{theorem_localization} and Remark \ref{remark_localization_surface}.)
		
		\vs{0.3cm}
				
		\noindent
		{\bf (IV-2-1) : $m = 1, \mathrm{Vol}(Z_0) = 4, (b_{\min}, b_{\max}) = (-1,-1)$} \vs{0.3cm}				
		
		\noindent
		Because $\mathrm{Vol}(Z_0) = 4$ and $b_{\max} = -1$, it follows that 
		\begin{equation}\label{equation_IV_2_1}
			\mathrm{Vol}(Z_0) = 2a+b+c= 4, \quad \langle e(P_2^-)^2, [M_{2-\epsilon}] \rangle = 1 ~\text{so that $\langle e(P_0^+)^2, [M_0] \rangle = 0$}
		\end{equation}
		by Lemma \ref{lemma_volume}. The latter equation can be re-written as
		\[
			2(a-1)(b-1) - (b-1)^2 - (c+1)^2 = 0 \quad \quad \text{since \quad $e(P_0^+) = (a-1)x + (b-1)y + (c+1)E_1$.}
		\]
		\vs{0.1cm}
		
		\noindent
		{\bf Case (1) :} $\mathrm{PD}(C) = x - E_1$. \vs{0.1cm}
		
		\noindent
		Since $b+c = 1$ by \eqref{equation_vanishing_IV_2}, we have $2a=3$ by \eqref{equation_IV_2_1}. Thus no such manifold exists. \vs{0.3cm}
		
		\noindent
		{\bf Case (2) :} $\mathrm{PD}(C) = E_1$.  \vs{0.1cm}

		\noindent
		In this case, we have $c = -2$ by \eqref{equation_vanishing_IV_2}. Then \eqref{equation_IV_2_1} implies that 
		\[
			2a + b = 6, \quad 2(a-1)(b-1) - (b-1)^2 = (b-1)(2a - b -1) = 1
		\]
		which has the unique integeral solution $(a,b,c) = (2,2,-2)$. So, $\mathrm{PD}(Z_0) = 2x + 2y - 2E_1$ and $[Z_0] \cdot [Z_0] = 0$.
		Then the adjunction formula \eqref{equation_adjunction} implies that 
		\[
			[Z_0] \cdot [Z_0] + \sum (2 - 2g_i) = 4 \quad \text{(sum is taken over connected components of $Z_0$)}. 
		\]
		Thus there are at least two spheres, namely $Z_0^1$ and $Z_0^2$. Moreover, they satisfy (again by \eqref{equation_adjunction})
		\[
			[Z_0^1] \cdot [Z_0^1] \geq -1 \quad \text{and } \quad [Z_0^2] \cdot [Z_0^2] \geq -1.
		\]
		Note that if $[Z_0^i] \cdot [Z_0^i] = -1$ for some $i$, then we can easily check that $([Z_0] - [Z_0^i]) \cdot [Z_0^i] \neq 0$ using Lemma \ref{lemma_list_exceptional}.  Therefore
		\[
			[Z_0^1] \cdot [Z_0^1] \geq 0  \quad \text{and } \quad [Z_0^2] \cdot [Z_0^2] \geq 0.
		\]		
		In particular, we have $\mathrm{Vol}(Z_0^i) = [Z_0^i] \cdot [Z_0^i] + 2 \geq 2$ so that the only possibility is that 
		\[
			[Z_0^i] \cdot [Z_0^i] = 0, \quad i=1,2,
		\]
		and we get  $\mathrm{PD} (Z_0^1) = \mathrm{PD}(Z_0^2) = x + y - E_1$. See Table \ref{table_IV_2} : {\bf (IV-2-1.2)}. 
		
		\vs{0.3cm}
		
		\noindent
		{\bf Case (3) :} $\mathrm{PD}(C) = y$.  \vs{0.1cm}
		
		\noindent
		From \eqref{equation_vanishing_IV_2}, we get $a = b + 1$. By \eqref{equation_IV_2_1}, 
		\[
			3b + c = 2, \quad 2b(b-1) - (b-1)^2 - (c+1)^2 = 0, 
		\]
		whose solution is $(a,b,c) = (2, 1, -1)$, that is, $\mathrm{PD}(Z_0) = 2x + y  - E_1$ (and so $[Z_0] \cdot [Z_0] = 2$). On the other hand, the adjunction formula 
		\[
			[Z_0] \cdot [Z_0] + \sum (2 - 2g_i) = 4 
		\]
		implies that 
		there exists a sphere component, say $Z_0^1$, of $Z_0$. If we denote by $\mathrm{PD}(Z_0^1) = \alpha x + \beta y + \gamma E_1$, it satisfies 
		\[
			 2\alpha\beta - \beta^2 - \gamma^2 + 2 = [Z_0^1] \cdot [Z_0^1] + 2 = \langle c_1(TM_0), [Z_0^1] \rangle = 2\alpha + \beta + \gamma.
		\]
		Also since $([Z_0] - [Z_0^1]) \cdot [Z_0^1] = 0$, we have
		\[
			\left( (2 - \alpha)x + (1 - \beta)y - (1+\gamma)E_1) \right) \cdot (\alpha x + \beta y + \gamma E_1) = -2\alpha\beta + \alpha + \beta + \gamma + \beta^2 + \gamma^2 = 0.
		\]
		Combining those two equations above, we get $\alpha = 2$ and 
		\[
			\beta^2 + \gamma^2 - 3\beta + \gamma + 2 = 0 \quad \Leftrightarrow \quad (\beta - \frac{3}{2})^2 + (\gamma + \frac{1}{2})^2 - \frac{1}{2} = 0.
		\]
		Therefore, $(\beta, \gamma) = (2, 0), (2, -1), (1, 0), (1, -1)$. In any case, $\mathrm{Vol}(Z_0^1) \geq 4$ which is impossible unless $Z_0^1 = Z_0$. 
		This implies that $Z_0$ is connected and is a sphere. See {\bf (IV-2-1.1)}. 
		
		 \vs{0.3cm}
			
		\noindent
		{\bf (IV-2-2) : $m = 1, \mathrm{Vol}(Z_0) = 3, (b_{\min}, b_{\max}) = (-1,0)$} \vs{0.3cm}				
		
		\noindent
		By Lemma \ref{lemma_volume}, it follows that 
		\begin{equation}\label{equation_IV_2_2}
			\mathrm{Vol}(Z_0) = 2a+b+c= 3, \quad \langle e(P_2^-)^2, [M_{2-\epsilon}] \rangle = 0 ~\text{so that $\langle e(P_0^+)^2, [M_0] \rangle = -1$}
		\end{equation}
		where the latter equation is equivalent to 
		\[
			2(a-1)(b-1) - (b-1)^2 - (c+1)^2 = -1.
		\]
		\vs{0.1cm}
		
		\noindent
		{\bf Case (1) :} $\mathrm{PD}(C) = x - E_1$. \vs{0.1cm}
		
		\noindent
		By \eqref{equation_vanishing_IV_2}, we have $b+c = 1$ so that $a = 1$ and $(b-1)^2 + (c+1)^2 = 1$ (and so $(b,c) = (1, 0)$ or $(2, -1)$). \vs{0.1cm}
		\begin{itemize}
			\item If $(a,b,c) = (1,1,0)$, then $\mathrm{PD}(Z_0) = x + y$ and $[Z_0] \cdot [Z_0] = 1$ so that there exists at least one sphere component, denote by $Z_0^1$,
			 in $Z_0$ by \eqref{equation_adjunction}.

				Suppose that $Z_0$ is not connected. Then $\mathrm{Vol}(Z_0^1) = 1$ or $2$. If $\mathrm{Vol}(Z_0^1) = 1$, then $[Z_0^1] \cdot [Z_0^1] = -1$
				by the adjunction formula, and hence $\mathrm{PD}(Z_0^1) = E_1, y, x - E_1$ by Lemma \ref{lemma_list_exceptional}.
				In either case, it follows that 
				\[
					[Z_0^1] \cdot ([Z_0] - [Z_0^1]) \neq 0
				\]
				which leads to a contradiction. So, $\mathrm{Vol}(Z_0^1) \neq 1$.
				
				On the other hand, if $\mathrm{Vol}(Z_0^1) = 2$, then $[Z_0^1] \cdot [Z_0^1] = 0$ by the adjunction formula. If we let 
				$\mathrm{PD}(Z_0^1) = \alpha x + \beta y + \gamma E_1$, then 
				\begin{itemize}
					\item $2\alpha\beta - \beta^2 - \gamma^2 = 0$, \quad ($\because ~[Z_0^1]\cdot [Z_0^1] = 0$), 
					\item $\alpha - 2\alpha\beta + \beta^2 + \gamma^2 = 0$, \quad ($\because ~[Z_0^1] \cdot ([Z_0] - [Z_0^1]) = 0$), 
					\item $2\alpha + \beta + \gamma = 2$ \quad ($\because ~\mathrm{Vol}(Z_0^1) = 2$)
				\end{itemize}
				whose (real) solution does not exist. Thus $Z_0$ is connected and we have $Z_0 \cong S^2$. See Table \ref{table_IV_2}: {\bf (IV-2-2.1)}.\vs{0.2cm}
			\item If $(a,b,c) = (1, 2, -1)$, i.e., $\mathrm{PD}(Z_0) = x + 2y - E_1$, then we have $[Z_0] \cdot [Z_0] = -1$ and there are at least two 
			sphere components $Z_0^1$ and $Z_0^2$ in $Z_0$ by \eqref{equation_adjunction}. Since $\mathrm{Vol}(Z_0^1) + \mathrm{Vol}(Z_0^2) \leq 3$,
			we may assume that $\mathrm{PD}(Z_0^1) = 1$ (so that $[Z_0^1] \cdot [Z_0^1] = -1$).
			Then we obtain $\mathrm{PD}(Z_0^1) = y$ by the fact that $([Z_0] - [Z_0^1]) \cdot [Z_0^1] = 0$ and Lemma \ref{lemma_list_exceptional}. So, 
			\[
				Z_0^1 \cong S^2 ~(\mathrm{PD}(Z_0^1) = y) \quad \text{and} \quad Z_0^2 \cong S^2 ~(\mathrm{PD}(Z_0^2) = x + y - E_1)
			\]
			See Table \ref{table_IV_2}: {\bf (IV-2-2.2)}. (Note that $\mathrm{Vol}(Z_0^2) \neq 1$ otherwise $\mathrm{PD}(Z_0^2)$ also should be $y$ which contradicts 
			that $[Z_0^1] \cdot [Z_0^2] = 0$.)
		\end{itemize}
		
		\vs{0.3cm}
		
		\noindent
		{\bf Case (2) :} $\mathrm{PD}(C) = E_1$.  \vs{0.1cm}

		\noindent
		Since $c=-2$ by \eqref{equation_vanishing_IV_2}, we have 
		\[
			2a + b = 5 \quad \text{and} \quad 2(a-1)(b-1) - (b-1)^2 = 0
		\]
		where it has a unique integral solution $(a,b,c) = (2,1,-2)$. However, since
		\[
			[\omega_1] \cdot y = (2x + 2y) \cdot y = 0, 
		\]
		the exceptional divisor representing $y$ vanishes on $M_1$, i.e., two simultaneous blow-downs occur on $M_1$. 
		Thus no such manifold exists.
		
		\vs{0.3cm}
		
		\noindent
		{\bf Case (3) :} $\mathrm{PD}(C) = y$.  \vs{0.1cm}
		
		\noindent
		Now we have $a = b+1$ and so 
		\[
			3b+c = 1 \quad \text{and} \quad 2b(b-1) - (b-1)^2 - (c+1)^2 = -1
		\]
		by \eqref{equation_IV_2_2}. This has a unique integral solution $(a,b,c) = (2,1,-2)$. This case is exactly 
		the same as in {\bf Case (2)} above and we have $[\omega_1] \cdot E_1 = 0$. 
		Then two simultaneous blow-downs occur on $M_1$ which is impossible. 
		Therefore there is no such manifold.

		 \vs{0.3cm}

		\noindent
		{\bf (IV-2-3) : $m = 1, \mathrm{Vol}(Z_0) = 2, (b_{\min}, b_{\max}) = (-1,1)$} \vs{0.3cm}				

		\noindent
		In  this case, we have 
		\begin{equation}\label{equation_IV_2_3}
			\mathrm{Vol}(Z_0) = 2a+b+c= 2, \quad \langle e(P_2^-)^2, [M_{2-\epsilon}] \rangle = -1 ~\text{so that $\langle e(P_0^+)^2, [M_0] \rangle = -2$}
		\end{equation}
		where the latter one is 
		\[
			2(a-1)(b-1) - (b-1)^2 - (c+1)^2 = -2.
		\]
		\vs{0.1cm}
		
		\noindent
		{\bf Case (1) :} $\mathrm{PD}(C) = x - E_1$. \vs{0.1cm}
		
		\noindent
		Using $b+c = 1$ by \eqref{equation_vanishing_IV_2}, we have $2a = 1$. Thus no such manifold exists.
		
		\vs{0.3cm}
		
		\noindent
		{\bf Case (2) :} $\mathrm{PD}(C) = E_1$.  \vs{0.1cm}

		\noindent
		Since $c = -2$, we have 
		\[
			2a + b = 4, \quad 2(a-1)(b-1) - (b-1)^2 = -1
		\]
		and the only possible solution is $(a,b) = (1,2)$, i.e., $\mathrm{PD}(Z_0) = x + 2y - 2E_1$. However, the adjunction formula \eqref{equation_adjunction}
		implies that 
		\[
			[Z_0] \cdot [Z_0] + \sum (2 - 2g_i) = -4 + \sum (2 - 2g_i) = 2, 
		\]
		i.e., there are three sphere components $Z_0^1, Z_0^2, Z_0^3$ and hence $\mathrm{Vol}(Z_0) \geq 3$ which leads to a contradiction.
		So, no such manifold exists.

		\vs{0.3cm}
		
		\noindent
		{\bf Case (3) :} $\mathrm{PD}(C) = y$.  \vs{0.1cm}
		
		\noindent
		In this case, $a = b+1$ so that 
		\[
			3b + c = 0, \quad 2b(b-1) - (b-1)^2 - (c+1)^2 = -2
		\]
		and it has a unique solution $(a,b,c) = (1,0,0)$. If $Z_0$ is not connected, then the adjunction formula implies that $Z_0$ consists of two spheres $Z_0^1$ and $Z_0^2$
		each of which has symplectic area $1$ (so that it is an exceptional sphere). On the other hand, by the fact that $[Z_0^1] \cdot [Z_0^2] = 0$ and Lemma \ref{lemma_list_exceptional}
		imply that the dual classes of $Z_0^1$ and $Z_0^2$ are 
		$y$ and $E_1$, respectively. Then $\mathrm{PD}(Z_0) = x \neq \mathrm{PD}(Z_0^1) + \mathrm{PD}(Z_0^2)$, and therefore $Z_0$ is connected and 
		\[
			Z_0 \cong S^2, \quad \mathrm{PD}(Z_0) = x.
		\]
		See Table \ref{table_IV_2}: {\bf (IV-2-3)}.
		 \vs{0.3cm}

		\noindent
		{\bf (IV-2-4) : $m = 1, \mathrm{Vol}(Z_0) = 1, (b_{\min}, b_{\max}) = (-1,2)$} \vs{0.3cm}				

		\noindent
		As $\mathrm{Vol}(Z_0) = 1$, $Z_0$ is connected. Also, 
		\begin{equation}\label{equation_IV_2_4}
			\mathrm{Vol}(Z_0) = 2a+b+c= 1, \quad \langle e(P_2^-)^2, [M_{2-\epsilon}] \rangle = -2 ~\text{so that $\langle e(P_0^+)^2, [M_0] \rangle = -3$}
		\end{equation}
		i.e., 
		\[
			2(a-1)(b-1) - (b-1)^2 - (c+1)^2 = -3.
		\]
		\vs{0.1cm}
		
		\noindent
		{\bf Case (1) :} $\mathrm{PD}(C) = x - E_1$. \vs{0.1cm}
		
		\noindent
		We have $b+c = 1$ so that $(a,b,c) = (0, 2,-1)$ or $(0,0,1)$. If $(a,b,c) = (0, 2,-1)$, then $\mathrm{PD}(Z_0) = 2y - E_1$ and $[Z_0] \cdot [Z_0] = -5$. 
		This is impossible by the adjunction formula since $Z_0$ is connected. So, no such manifold exists. On the other hand, if $(a,b,c) = (0,0,1)$, i.e., $\mathrm{PD}(Z_0) = E_1$, 
		then we have 
		\[
			Z_0 \cong S^2, \quad \mathrm{PD}(Z_0) = E_1. 
		\]
		See Table \ref{table_IV_2}: {\bf (IV-2-4)}.
		
		\vs{0.3cm}
		
		\noindent
		{\bf Case (2) :} $\mathrm{PD}(C) = E_1$.  \vs{0.1cm}

		\noindent
		Now, we have $c = -2$ and \eqref{equation_IV_2_4} implies that 
		\[
			2a + b = 3, \quad 2(a-1)(b-1) - (b-1)^2 = -2
		\]		
		which has no integral solution. Thus there is no such manifold.
		\vs{0.3cm}
		
		\noindent
		{\bf Case (3) :} $\mathrm{PD}(C) = y$.  \vs{0.1cm}
		
		\noindent
		From \eqref{equation_vanishing_IV_2}, we have $a = b+1$ and so
		\[
			3b + c = -1, \quad 2b(b-1) - (b-1)^2 - (c+1)^2 = -3
		\]
		where no integral solution exists. Thus no such manifold exists.
		 \vs{0.3cm}
		
		\noindent
		{\bf (IV-2-5) : $m = 1, \mathrm{Vol}(Z_0) = 2, (b_{\min}, b_{\max}) = (0,0)$} \vs{0.3cm}				

		\noindent
		Since $b_{\min} = 0$, we have $M_{-2 + \epsilon} \cong S^2 \times S^2$ and $e(P)_{-2}^+ = -y$ with $c_1(TM_0) = 2x + 2y - E_1$, see \eqref{equation_bmin_IV_2}.  
		Also, Lemma \ref{lemma_volume} implies that 
		\begin{equation}\label{equation_IV_2_5}
			\mathrm{Vol}(Z_0) = 2a+2b+c= 2, \quad \langle e(P_2^-)^2, [M_{2-\epsilon}] \rangle = 0 ~\text{so that $\langle e(P_0^+)^2, [M_0] \rangle = -1$}
		\end{equation}
		where the latter equation can be re-written by 
		\[
			2a(b-1) - (c+1)^2 = -1.
		\]
		Note that if $Z_0$ is connected, then $[Z_0] \cdot [Z_0] = 0$ by the adjunction formula.
		Also, if $Z_0$ is disconnected with two components $Z_0^1$ and $Z_0^2$ such that $\mathrm{Vol}(Z_0^1) = \mathrm{Vol}(Z_0^2) = 1$, then 
		the adjunction formula implies that $[Z_0^1] \cdot [Z_0^1] = [Z_0^2] \cdot [Z_0^2] = -1$. In particular, $[Z_0] \cdot [Z_0] = -2$.
		
		Recall that a possible dual class of the cycle $C$ vanishing at the reduced space $M_1$ is $x - E_1$, $E_1$, or $y - E_1$ by Lemma \ref{lemma_list_exceptional}.
		\vs{0.1cm}
		
		\noindent
		{\bf Case (1) :} $\mathrm{PD}(C) = x - E_1$. \vs{0.1cm}
		
		\noindent
		By \eqref{equation_vanishing_IV_2}, we have $b+c = 1$ so that 
		\[
			2a -c  = 0, \quad -2ac - (c+1)^2 = -1
		\]
		where it has a unique integral solution $(a,b,c) = (0,1,0)$. However in this case, a cycle representing $y - E_1$ is also vanishing on $M_1$. In other words, 
		two blow-downs occur simultaneously on $M_1$. So, no such manifold exists.
		\vs{0.3cm}
		
		\noindent
		{\bf Case (2) :} $\mathrm{PD}(C) = E_1$.

		\noindent
		In this case, we have $c = -2$ and hence 
		\[
			a+b = 2, \quad 2a(b-1) = 0
		\]
		where the solution is $(a,b,c) = (0,2,-2)$ or $(1,1,-2)$. If $(a,b,c) = (0,2,-2)$, then $[Z_0] \cdot [Z_0] = -4$ so that there are at least three spheres in $Z_0$ by the adjunction formula, 
		which is impossible since 
		$\mathrm{Vol}(Z_0) = 2$. Thus there is no such manifold. 
		
		If $(a,b,c) = (1,1,-2)$, then $[Z_0] \cdot [Z_0] = -2$ and so $Z_0$ consists of two spheres, say $Z_0^1$ and $Z_0^2$, each of which has self-intersection number $-1$ by the 
		adjunction formula. 
		By Lemma \ref{lemma_list_exceptional}, we get 
		\[
			Z_0^1 \cong Z_0^2 \cong S^2, \quad \mathrm{PD}(Z_0^1) = x - E_1, \quad \mathrm{PD}(Z_0^2) = y - E_1.
		\]
		See Table \ref{table_IV_2}: {\bf (IV-2-5)}.
		\vs{0.3cm}
		
		\noindent
		{\bf Case (3) :} $\mathrm{PD}(C) = y - E_1$.
		
		\noindent
		From \eqref{equation_vanishing_IV_2}, we have $a + c = 0$ and so 
		\[
			a + 2b = 2, \quad 2a(b-1) - (1-a)^2 = -1
		\]
		and it has the unique solution $(a,b,c) = (0, 1, 0)$. Similar to {\bf Case (1)}, a cycle representing $x - E_1$ also vanishes on $M_1$ so that two blow-downs occur simultaneously 
		on $M_1$. Therefore there is no such manifold.
		 \vs{0.3cm}
		
		\noindent
		{\bf (IV-2-6) : $m = 1, \mathrm{Vol}(Z_0) = 1, (b_{\min}, b_{\max}) = (0,1)$} \vs{0.3cm}	

		\noindent
		Note that $Z_0$ is connected and the condition $b_{\min} = 0$ implies that $e(P)_{-2}^+ = -y$ by Lemma \ref{lemma_volume}. 
		Moreover, $\mathrm{Vol}(Z_0) = 1$ and $b_{\max} = 1$ implies that 
		\begin{equation}\label{equation_IV_2_5}
			\mathrm{Vol}(Z_0) = 2a+2b+c= 1, \quad \langle e(P_2^-)^2, [M_{2-\epsilon}] \rangle = -1 ~\text{so that $\langle e(P_0^+)^2, [M_0] \rangle = -2$}
		\end{equation}
		where the latter one is equivalent to 
		\[
			2a(b-1) - (c+1)^2 = -2.
		\]
		\vs{0.1cm}
		
		\noindent
		{\bf Case (1) :} $\mathrm{PD}(C) = x - E_1$. \vs{0.1cm}
		
		\noindent
		Since $b+c = 1$, we have 
		\[
			2a + b = 0, \quad 2a(-2a-1) - (2 + 2a)^2 = -2
		\]
		so that $(a,b,c) = (-1,2,-1)$. That is, $\mathrm{PD}(Z_0) = -x + 2y - E_1$ and so $[Z_0] \cdot [Z_0] = -5$. This contradicts the fact that $Z_0$ is conencted by the adjunction formula.
		So, there is no such manifold.
		\vs{0.3cm}
		
		\noindent
		{\bf Case (2) :} $\mathrm{PD}(C) = E_1$.

		\noindent
		We have $c = -2$ by \eqref{equation_vanishing_IV_2} which implies that $2a + 2b = 3$. Thus no such manifold exists.
		\vs{0.3cm}
		
		\noindent
		{\bf Case (3) :} $\mathrm{PD}(C) = y - E_1$.
		
		\noindent
		In this case, we have $a + c = 0$ so that 
		\[
			a + 2b = 1, \quad 2a(b-1) - (1-a)^2 = -2.
		\]							
		It has a unique solution $(a,b,c) = (1,0,-1)$, i.e., 
		\[
			Z_0 \cong S^2, \quad \mathrm{PD}(Z_0) = x - E_1.
		\] 
		See Table \ref{table_IV_2}: {\bf (IV-2-6)}.
	\end{proof}

	\begin{example}[Fano variety of type {\bf (IV-2)}]\label{example_IV_2} In this example, we describe Fano varieties of type {\bf (IV-2)} listed in Theorem \ref{theorem_IV_2}.
		
		\begin{itemize}
	           	 \item {\bf (IV-2-1.1)} \cite[20th in Section 12.4]{IP}  : Recall that a smooth quadric in $\p^4$, isomorphic to a coadjoint orbit of $\mathrm{SO}(5)$, admits a 
	           	 maximal torus $T^2$ action whose moment map image is given on the left of Figure \ref{figure_IV_2_1_1} (see also Example \ref{example_Sec6_1_1}). Let $M$ be 
	           	 the blow-up of the smooth quadric along two disjoint $T^2$-invariant spheres with the induced $T^2$-action. Then the corresponding moment map can be 
	           	 described as on the right of Figure \ref{figure_IV_1_1_1}. 
	           	 
	           	 	\begin{figure}[h]
	           	 		\scalebox{1}{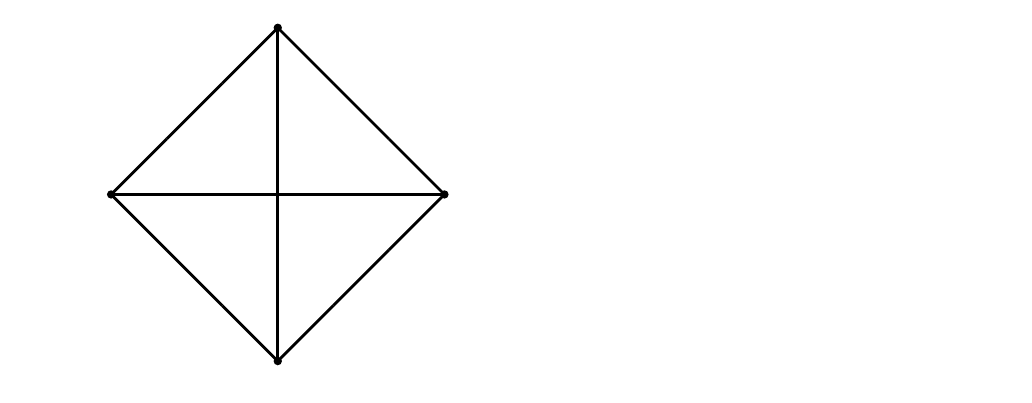}
	           	 		\caption{\label{figure_IV_2_1_1} Blow up of the smooth quadric along two disjoint lines}
	           	 	\end{figure}
	           	 	
			Now, we take the $S^1$-subgroup of $T^2$ generated by $\xi = (0,1) \in \frak{t}$. Then the fixed point set consists of 
			
	            				\begin{itemize}
	            					\item $Z_{-2} = S^2$ with  $\mu(Z_{-2}) = \overline{(0,-2) ~(1,-2)}$ and $\mathrm{Vol}(Z_{-2}) = 1$,
		            				\item $Z_{-1} = \mathrm{pt}$ with $\mu(Z_{-1}) = (2,-1)$,
		            				\item $Z_{0} = S^2$ with $\mu(Z_{0}) = \overline{(-2,0) ~(2,0)}$ and $\mathrm{Vol}(Z_{0}) = 4$,
	           	 				\item $Z_1 = \mathrm{pt}$ with $\mu(Z_1) = (-2,1)$,
	            					\item $Z_2 = S^2$ with $\mu(Z_2) = \overline{(-1,2) ~(0,2)})$ and $\mathrm{Vol}(Z_{2}) = 1$.
	            				\end{itemize}
			\vs{0.5cm}
			
	           	 \item {\bf (IV-2-1.2)} \cite[8th in Section 12.5]{IP} : Consider $X = \p^1 \times \p^1 \times \p^1$ equipped with $T^2$-action defined by 
	           	 \[
	           	 	(t_1, t_2) \cdot ([x_0 : x_1], [y_0 : y_1], [z_0 : z_1]) := ([t_1x_0 : x_1], [t_2y_0 : y_1], [t_2z_0 : z_1])
	           	 \]
	           	 with respect to the normalized monotone K\"{a}hler form on $X$. The moment map image is given in the middle of Figure \ref{figure_IV_2_1_2}. 
	           	 (Note that the red double line in the middle  indicates the image of the upper-left and lower-right red edges in the first of Figure \ref{figure_IV_2_1_2}.)
	           	 
	           	 Let $C$ be the $T$-invariant sphere given by 
	           	 \[
	           	 	C = \{ ([1:0], [y_0 : y_1],  [y_0 : y_1]) ~|~ [y_0:y_1] \in \p^1\}
			\]
	           	whose moment map image is indicated by the blue line in Figure \ref{figure_IV_2_1_2}. Then, let $M$ be the $T^2$-equivariant blow-up of $X$ whose moment map 
	           	is described in the third of Figure \ref{figure_IV_2_1_2}. The fixed point set consists of 
	            				\begin{itemize}
	            					\item $Z_{-2} = S^2$ with  $\mu(Z_{-2}) = \overline{(1,-2) ~(2,-2)}$ and $\mathrm{Vol}(Z_{-2}) = 1$,
		            				\item $Z_{-1} = \mathrm{pt}$ with $\mu(Z_{-1}) = (0,-1)$,
		            				\item $Z_{0} = S^2 ~\dot \cup~ S^2$ with $\mu(Z_{0}^1) =\mu(Z_{0}^2) = \overline{(0,0) ~(2,0)}$ and $\mathrm{Vol}(Z_{0}^1) = \mathrm{Vol}(Z_{0}^2) = 2$,
	           	 				\item $Z_1 = \mathrm{pt}$ with $\mu(Z_1) = (0,1)$,
	            					\item $Z_2 = S^2$ with $\mu(Z_2) = \overline{(1,2) ~(2,2)})$ and $\mathrm{Vol}(Z_{2}) = 1$.
	            				\end{itemize}
	           	 	\vs{0.3cm}
	           	 
	           	 	\begin{figure}[h]
	           	 		\scalebox{1}{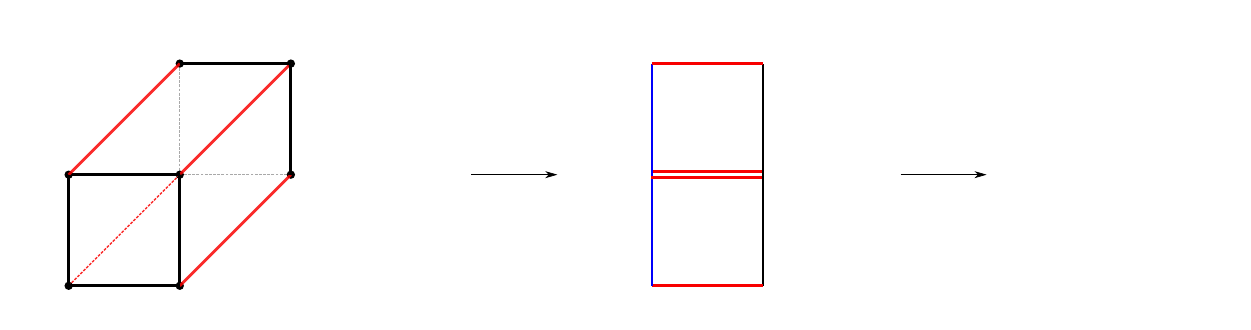}
	           	 		\caption{\label{figure_IV_2_1_2} Blow up of $\p^1 \times \p^1 \times \p^1$ along $C$}
	           	 	\end{figure}	           	 
	           	 
	           	 \item {\bf (IV-2-2.1)} \cite[24th in Section 12.4]{IP} : Consider the complete flag variety $\mcal{F}(3) \cong U(3) / T^3$ together with the induced $T^2$-action whose moment map 
	           	 image is given in the first of Figure \ref{figure_IV_2_2_1}. (See also Example \ref{example_II_1}.) Let $C$ be a $T$-invariant sphere (for instance, take a sphere whose moment map
	           	 image is $\overline{(0,0) ~(0,2)}$ as in Figure \ref{figure_IV_2_2_1}). Let $M$ be the $T^2$-equivariant blow-up of $\mcal{F}(3)$ along $C$. Then the moment map image 
	           	 for the induced $T^2$-action on $M$ can be depicted as in the second in Figure \ref{figure_IV_2_2_1}. 
		           The fixed point set consists of 
                           				\begin{itemize}
	            					\item $Z_{-2} = S^2$ with  $\mu(Z_{-2}) = \overline{(1,0) ~(2,0)}$ and $\mathrm{Vol}(Z_{-2}) = 1$,
		            				\item $Z_{-1} = \mathrm{pt}$ with $\mu(Z_{-1}) = (1,1)$,
		            				\item $Z_{0} = S^2$ with $\mu(Z_{0}) = \overline{(1,2) ~(4,2)}$ and $\mathrm{Vol}(Z_{0}) = 3$,
	           	 				\item $Z_1 = \mathrm{pt}$ with $\mu(Z_1) = (1,3)$,
	            					\item $Z_2 = S^2$ with $\mu(Z_2) = \overline{(2,4) ~(4,4)})$ and $\mathrm{Vol}(Z_{2}) = 2$.
	            				\end{itemize}
	           	 	           	 	\vs{0.3cm}
	           	 	           	 	
	           	 	\begin{figure}[H]
	           	 		\scalebox{1}{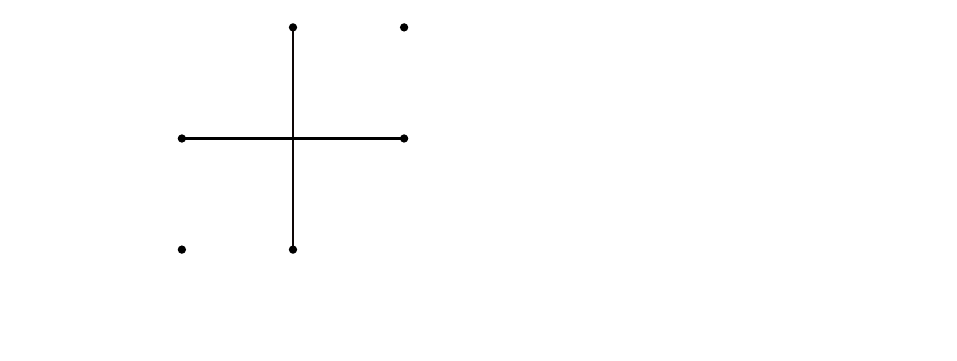}
	           	 		\caption{\label{figure_IV_2_2_1} Blow up of $\mcal{F}(3)$ along $C$}
	           	 	\end{figure}
	            
                          	\item {\bf (IV-2-2.2)} \cite[10th in Section 12.5]{IP} : Consider $\C P^1 \times ~X_2$ with the standard $T^3$-action, where $X_k$ is the $k$-times blow-up of $\p^2$. 
                          	The corresponding moment polytope is given in Figure \ref{figure_IV_2_2_2}. Take a circle subgroup of $T^3$ generated by $\xi = (-1,1,0)$. Then one can easily check
                          	that the $S^1$-action is semifree and the fixed point set consists of 
                           				\begin{itemize}
	            					\item $Z_{-2} = S^2$ with  $\mu(Z_{-2}) = \overline{(2,0,0) ~(2,0,1)}$ and $\mathrm{Vol}(Z_{-2}) = 1$,
		            				\item $Z_{-1} = \mathrm{pt}$ with $\mu(Z_{-1}) = (1,0,2)$,
		            				\item $Z_{0} = S^2$ with $\mu(Z_{0}) = \overline{(2,2,0) ~(2,2,1)}$ and $\mathrm{Vol}(Z_{0}) = 1$,
	           	 				\item $Z_1 = \mathrm{pt}$ with $\mu(Z_1) = (1,2,2)$,
	            					\item $Z_2 = S^2$ with $\mu(Z_2) = \overline{(0,2,0) ~(0,2,2)})$ and $\mathrm{Vol}(Z_{2}) = 2$.
	            				\end{itemize}
	           	 	           	 	\vs{0.3cm}
                          	
	           	 	\begin{figure}[H]
	           	 		\scalebox{1}{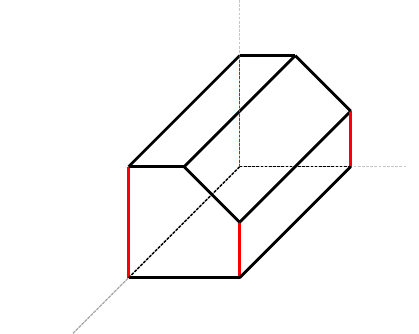}
	           	 		\caption{\label{figure_IV_2_2_2} $S^2 \times X_2$}
	           	 	\end{figure}
                          	                          	
	           	\item {\bf (IV-2-3)} \cite[26th in Section 12.4]{IP} : Consider $\p^3$ with the standard $T^3$-action and let $M$ be the $T^3$-equivariant blow-up of 
	           	$\p^3$ along a disjoint union of a fixed point and a $T^3$-invariant sphere. Then the moment map image of $M$ is described in Figure \ref{figure_IV_2_3}. 
	           	If we take a circle subgroup of $T^3$ generated by $\xi = (0,-1,-1)$, then the $S^1$-action becomes semifree and the fixed point set is give by 

	           	 	\begin{figure}[H]
	           	 		\scalebox{0.8}{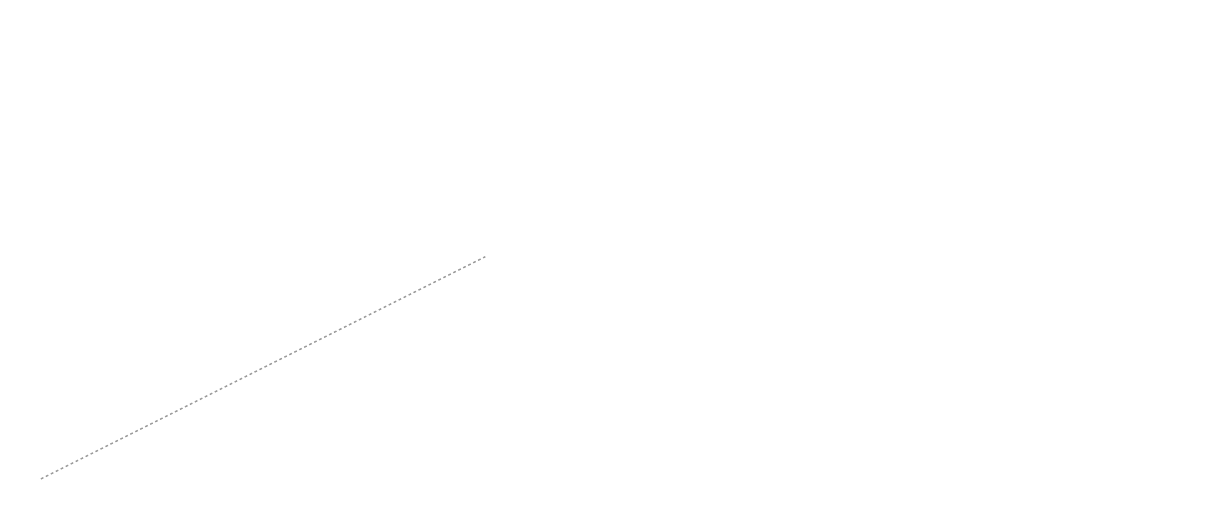}
	           	 		\caption{\label{figure_IV_2_3} Toric blow up of $\p^3$ along a fixed point and a $T^3$-invariant sphere}
	           	 	\end{figure}
	           	
                           				\begin{itemize}
	            					\item $Z_{-2} = S^2$ with  $\mu(Z_{-2}) = \overline{(0,2,2) ~(0,3,1)}$ and $\mathrm{Vol}(Z_{-2}) = 1$,
		            				\item $Z_{-1} = \mathrm{pt}$ with $\mu(Z_{-1}) = (0,3,0)$,
		            				\item $Z_{0} = S^2$ with $\mu(Z_{0}) = \overline{(0,0,2) ~(2,0,2)}$ and $\mathrm{Vol}(Z_{0}) = 2$,
	           	 				\item $Z_1 = \mathrm{pt}$ with $\mu(Z_1) = (3,0,1)$,
	            					\item $Z_2 = S^2$ with $\mu(Z_2) = \overline{(0,0,0) ~(3,0,0)})$ and $\mathrm{Vol}(Z_{2}) = 3$.
	            				\end{itemize}
	           	 	           	 	\vs{0.3cm}

	           	\item {\bf (IV-2-4)} \cite[29th in Section 12.4]{IP} : Consider $V_7$, the $T^3$-equivariant blow-up of $\p^3$ at a fixed point. (See also Example \ref{example_II_1} (2).) 
	           	Take $C$ be any $T^3$-invariant sphere lying on the exceptional divisor of the blow-up $V_7 \rightarrow \p^3$. Then the moment map image is given in 
	           	Figure \ref{figure_IV_2_4}. Take a circle subgroup generated by $\xi = (0,-1,-1)$. 
	           	 	\begin{figure}[H]
	           	 		\scalebox{0.7}{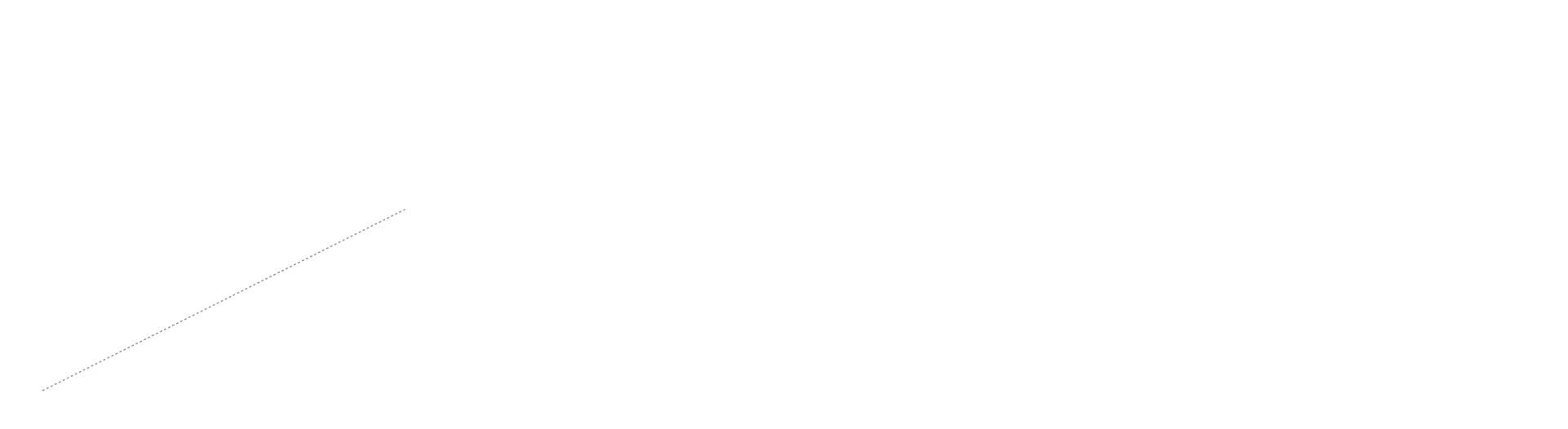}
	           	 		\caption{\label{figure_IV_2_4} Blow up of $V_7$ along a $T$-invariant sphere on the exceptional divisor}
	           	 	\end{figure}
	           	 	\noindent
			The $S^1$-action is semifree and the fixed point set consists of 
                           				\begin{itemize}
	            					\item $Z_{-2} = S^2$ with  $\mu(Z_{-2}) = \overline{(0,4,0) ~(0,3,1)}$ and $\mathrm{Vol}(Z_{-2}) = 1$,
		            				\item $Z_{-1} = \mathrm{pt}$ with $\mu(Z_{-1}) = (0,1,2)$,
		            				\item $Z_{0} = S^2$ with $\mu(Z_{0}) = \overline{(0,0,2) ~(1,0,2)}$ and $\mathrm{Vol}(Z_{0}) = 1$,
	           	 				\item $Z_1 = \mathrm{pt}$ with $\mu(Z_1) = (3,0,1)$,
	            					\item $Z_2 = S^2$ with $\mu(Z_2) = \overline{(0,0,0) ~(4,0,0)})$ and $\mathrm{Vol}(Z_{2}) = 4$.
	            				\end{itemize}
	           	 	           	 	\vs{0.3cm}
	           		           	
	           	\item {\bf (IV-2-5)} \cite[12th in Section 12.5]{IP} : We consider $Y$, the blow-up of $\p^3$ along a $T^3$-invariant line (see Example \ref{example_III}).
	           	Let $C_1$ and $C_2$ be two $T^3$-invariant disjoint lines lying on the exceptional divisor of $Y \rightarrow \p^3$. See Figure \ref{figure_IV_2_5} (a).
	           	Let $M$ be the $T^3$-equivariant blow-up of $Y$ along $C_1$ and $C_2$. Then the moment map image of the induced $T^3$-action is given by 
	           	Figure \ref{figure_IV_2_5}. 

	           	 	\begin{figure}[H]
	           	 		\scalebox{0.7}{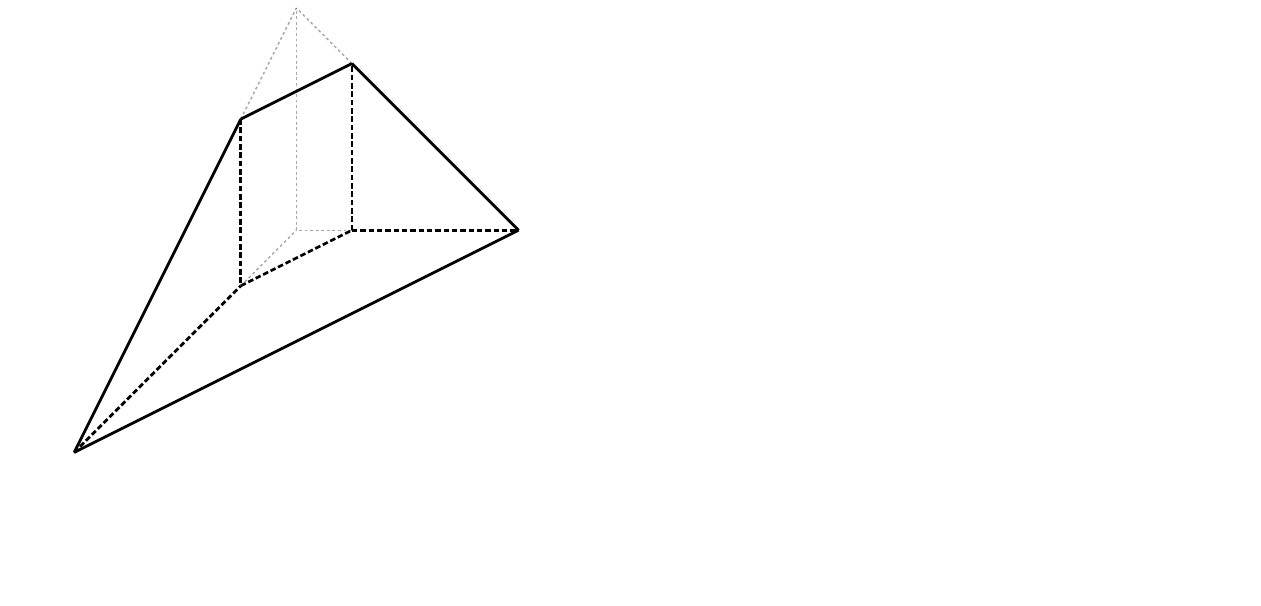}
	           	 		\caption{\label{figure_IV_2_5} Blow up of $Y$ along disjoint $T$-invariant two spheres on the exceptional divisor}
	           	 	\end{figure}
	           	
	           	Take an $S^1$ subgroup of $T^3$ generated by $\xi = (1,0,1)$. One can easily check that the $S^1$-action is semifree and the fixed point set is given by
                           				\begin{itemize}
	            					\item $Z_{-2} = S^2$ with  $\mu(Z_{-2}) = \overline{(0,4,0) ~(0,2,0)}$ and $\mathrm{Vol}(Z_{-2}) = 2$,
		            				\item $Z_{-1} = \mathrm{pt}$ with $\mu(Z_{-1}) = (0,1,1)$,
		            				\item $Z_{0} = S^2 ~\dot \cup ~ S^2$ with 
		            				\[
		            					\mu(Z_{0}^1) = \overline{(0,1,2) ~(0,2,2)},  \quad \mu(Z_{0}^2) = \overline{(1,0,1) ~(2,0,0)}, 
		            					\quad \quad \mathrm{Vol}(Z_{0}^1) = \mathrm{Vol}(Z_{0}^2) = 1,
							\]
	           	 				\item $Z_1 = \mathrm{pt}$ with $\mu(Z_1) = (1,0,2)$,
	            					\item $Z_2 = S^2$ with $\mu(Z_2) = \overline{(2,0,2) ~(4,0,0)})$ and $\mathrm{Vol}(Z_{2}) = 2$.
	            				\end{itemize}
	           	 	           	 	\vs{0.3cm}

	           	\item {\bf (IV-2-6)} \cite[30th in Section 12.4]{IP} : Consider the $T^3$-equivariant blow-up $V_7$ of $\p^3$ at a fixed point and let $M$ be the blow-up of $V_7$
	           	along a $T^3$-invariant sphere passing through the exceptional divisor of $V_7 \rightarrow \p^3$. Then the moment map image of $M$ with respect to the induced 
	           	action is given by Figure \ref{figure_IV_2_6}. 

	           	 	\begin{figure}[H]
	           	 		\scalebox{0.7}{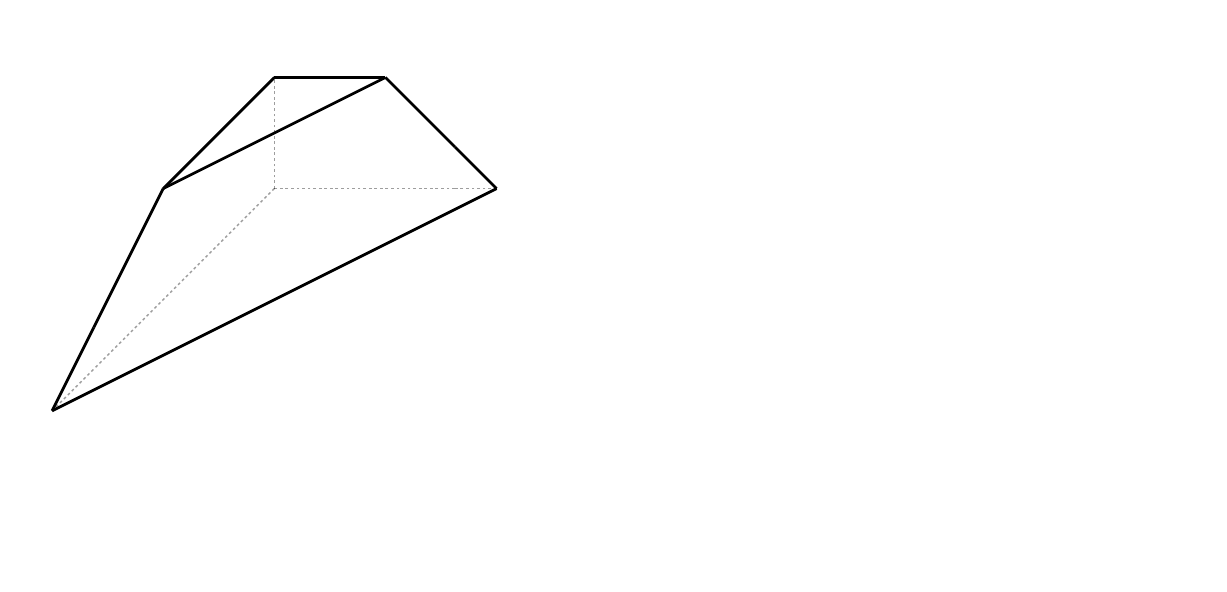}
	           	 		\caption{\label{figure_IV_2_6} Blow up of $V_7$ along a $T$-invariant sphere passing through the exceptional divisor}
	           	 	\end{figure}

			Take a circle subgroup of $T^3$ generated by $\xi = (-1,0,-1)$. Then the action is semifree and the fixed point set 
	           	consists of 
                           				\begin{itemize}
	            					\item $Z_{-2} = S^2$ with  $\mu(Z_{-2}) = \overline{(4,0,0) ~(2,0,2)}$ and $\mathrm{Vol}(Z_{-2}) = 2$,
		            				\item $Z_{-1} = \mathrm{pt}$ with $\mu(Z_{-1}) = (1,0,2)$,
		            				\item $Z_{0} = S^2$ with 
		            				$
		            					\mu(Z_{0}) = \overline{(0,1,2) ~(0,2,2)} 
							$ with $\mathrm{Vol}(Z_{0}^2) = 1$, 
	           	 				\item $Z_1 = \mathrm{pt}$ with $\mu(Z_1) = (1,0,0)$,
	            					\item $Z_2 = S^2$ with $\mu(Z_2) = \overline{(0,1,0) ~(0,4,0)})$ and $\mathrm{Vol}(Z_{2}) = 3$.
	            				\end{itemize}
	           	 	           	 	\vs{0.3cm}
	          \end{itemize}		
	\end{example}

	\begin{table}[H]
	\begin{adjustbox}{width=1\textwidth}
		\begin{tabular}{|c|c|c|c|c|c|c|c|c|c|}
			\hline
			         & $(M_0, [\omega_0])$           & $e(P_{-2+\epsilon})$ & $Z_{-2}$ & $Z_{-1}$ &  $Z_0$ & $Z_1$ & $Z_2$ & $b_2$ &  $c_1^3$ \\ \hline \hline
	          {\bf (I-1)}      & $(S^2 \times S^2, 2x + 2y)$ & $x-y$                         &  $S^2$    &               &              &            & $S^2$ & $1$ & $64$\\ \hline    
		{\bf (II-1.1)} & $(S^2 \times S^2, 2x + 2y)$ & $-y$                           &$S^2$ & &$Z_0 \cong S^2, ~\mathrm{PD}(Z_0) = x+y$ & & $S^2$ & $2$ &$48$ \\ \hline
		{\bf (II-1.2)} & $(S^2 \times S^2, 2x + 2y)$ & $-y$                           &$S^2$ & & $Z_0 \cong S^2, ~\mathrm{PD}(Z_0) = x$ & & $S^2$ & $2$ & $56$\\ \hline    
		{\bf (II-1.3)} & $(S^2 \times S^2, 2x + 2y)$ & $-y$                           &$S^2$ &  & \makecell{ $Z_0 = Z_0^1 ~\dot \cup ~ Z_0^2$ \\
						$Z_0^1 \cong Z_0^2 \cong S^2$ \\ $\mathrm{PD}(Z_0^1) = \mathrm{PD}(Z_0^2) = y$}   &  & $S^2$ & $3$ & $48$\\ \hline
		{\bf (II-2.1)} & $(E_{S^2}, 3x + 2y)$ & $-x -y$  &$S^2$ & &
				    	\makecell{ $Z_0 = Z_0^1 ~\dot \cup ~ Z_0^2$ \\
					    $Z_0^1 \cong Z_0^2 \cong S^2$ \\ $\mathrm{PD}(Z_0^1) = y$, $\mathrm{PD}(Z_0^2) = x+y$}  & & $S^2$ & $3$ & $48$\\ \hline    
		{\bf (II-2.2)} & $(E_{S^2}, 3x + 2y)$ & $-x-y$  &$S^2$ & & $Z_0 \cong S^2, ~\mathrm{PD}(Z_0) = 2x+2y$ &  & $S^2$ & $2$ &$40$ \\ \hline
		{\bf (III.1)} & \makecell{$(E_{S^2} \# ~\overline{\p^2},$ \\$3x + 2y - E_1)$} & $-y$  &$S^2$ & 
				    	{ pt} & &{pt} & $S^2$ & $2$ & $54$\\ \hline    
		{\bf (III.2)} & \makecell{$(S^2 \times S^2 \# ~2\overline{\p^2},$ \\ $2x + 2y - E_1 - E_2)$} & $-y$  &$S^2$ & 
				    	{2 pts} & &{2 pts}  & $S^2$ & $3$ & $44$\\ \hline    
		{\bf (III.3)} & \makecell{$(E_{S^2} \# ~\overline{\p^2},$ \\ $3x + 2y - E_1)$} & $-x-y$  &$S^2$ & {3 ~pts} & &{3 ~pts} & $S^2$ & $4$ &$34$ \\ \hline
		{\bf (IV-1-1.1)} & \makecell{$(E_{S^2} \# ~2\overline{\p^2},$ \\$3x + 2y - E_1-E_2)$} & $-x-y$  &$S^2$ & 
				    	{ 2 pts} &
				    		\makecell{ $Z_0 = Z_0^1 ~\dot \cup ~ Z_0^2$ \\ $Z_0^1 \cong Z_0^2 \cong S^2$ \\ 
				    		$\mathrm{PD}(Z_0^1) = x+y-E_1 - E_2$ \\ $\mathrm{PD}(Z_0^2) = x - E_1$}
					     & { 2 pts} & $S^2$ & $5$ & $36$\\ \hline    
		{\bf (IV-1-1.2)} & \makecell{$(E_{S^2} \# ~2\overline{\p^2},$ \\$3x + 2y - E_1-E_2)$} & $-x-y$  &$S^2$ & 
				    	{ 2 pts} &
				    		\makecell{ $Z_0 = Z_0^1 ~\dot \cup ~ Z_0^2$ \\ $Z_0^1 \cong Z_0^2 \cong S^2$ \\ $\mathrm{PD}(Z_0^1) = y$ \\ 
				    		$\mathrm{PD}(Z_0^2) = x+y-E_1 - E_2$}
					     & { 2 pts} & $S^2$ & $5$ & $36$\\ \hline    					     
		{\bf (IV-1-1.3)} & \makecell{$(E_{S^2} \# ~2\overline{\p^2},$ \\$3x + 2y - E_1-E_2)$} & $-x-y$  &$S^2$ & 
				    	{ 2 pts} &
				    		\makecell{ $Z_0 \cong S^2$  \\ $\mathrm{PD}(Z_0) = x+y-E_1$}
					     & { 2 pts} & $S^2$ & $4$ & $36$\\ \hline    
		{\bf (IV-1-2)} & \makecell{$(E_{S^2} \# ~2\overline{\p^2},$ \\$3x + 2y - E_1-E_2)$} & $-x-y$  &$S^2$ & 
				    	{ 2 pts} & 
				    		\makecell{ $Z_0 \cong S^2$  \\ $\mathrm{PD}(Z_0) = x - E_1$}
				    	& { 2 pts}  & $S^2$ & $4$ & $40$\\ \hline 
		{\bf (IV-2-1.1)} & \makecell{$(E_{S^2} \# ~\overline{\p^2},$ \\$3x + 2y - E_1)$} & $-x-y$  &$S^2$ & { pt} &
				    		\makecell{ $Z_0 \cong S^2$  \\ $\mathrm{PD}(Z_0) = 2x + y - E_1$}				    
				     &{ pt} & $S^2$ & $3$ &$38$ \\ \hline
		{\bf (IV-2-1.2)} & \makecell{$(E_{S^2} \# ~\overline{\p^2},$ \\$3x + 2y - E_1)$} & $-x-y$  &$S^2$ & { pt} &
				    		\makecell{ $Z_0 = Z_0^1 ~\dot \cup ~ Z_0^2$ \\ $Z_0^1 \cong Z_0^2 \cong S^2$ \\ 
				    		$\mathrm{PD}(Z_0^1) = \mathrm{PD}(Z_0^2) = x + y - E_1$}
				     &{ pt} & $S^2$ & $4$ &$38$ \\ \hline
		{\bf (IV-2-2.1)} & \makecell{$(E_{S^2} \# ~\overline{\p^2},$ \\$3x + 2y - E_1)$} & $-x-y$  &$S^2$ & { pt} & 
				    		\makecell{ $Z_0 \cong S^2$  \\ $\mathrm{PD}(Z_0) = x + y$}						    
				    &{ pt} & $S^2$ & $3$ &$42$ \\ \hline
		{\bf (IV-2-2.2)} & \makecell{$(E_{S^2} \# ~\overline{\p^2},$ \\$3x + 2y - E_1)$} & $-x-y$  &$S^2$ & { pt} & 
				    		\makecell{ $Z_0 = Z_0^1 ~\dot \cup ~ Z_0^2$ \\ $Z_0^1 \cong Z_0^2 \cong S^2$ \\ 
				    		$\mathrm{PD}(Z_0^1) = y$ \\ $\mathrm{PD}(Z_0^2)= x + y - E_1$}				    				    
				    &{ pt} & $S^2$ & $4$ &$42$ \\ \hline				    
		{\bf (IV-2-3)} & \makecell{$(E_{S^2} \# ~\overline{\p^2},$ \\$3x + 2y - E_1)$} & $-x-y$  &$S^2$ & { pt} &
				    		\makecell{ $Z_0 \cong S^2$  \\ $\mathrm{PD}(Z_0) = x$}				    
				     &{ pt} & $S^2$ & $3$ &$46$ \\ \hline
		{\bf (IV-2-4)} & \makecell{$(E_{S^2} \# ~\overline{\p^2},$ \\$3x + 2y - E_1)$} & $-x-y$  &$S^2$ & { pt} &
				    		\makecell{ $Z_0 \cong S^2$ 
				    		 \\ $\mathrm{PD}(Z_0) = E_1$}			    
				     &{ pt} & $S^2$ & $3$ &$50$ \\ \hline
		{\bf (IV-2-5)} & \makecell{$(S^2 \times S^2  \# ~\overline{\p^2},$ \\$2x + 2y - E_1)$} & $-y$  &$S^2$ & { pt} & 
				    		\makecell{ $Z_0 = Z_0^1 \dot \cup Z_0^2$ \\ $Z_0^1 \cong Z_0^2 \cong S^2$ \\ 
				    		$\mathrm{PD}(Z_0^1) = x - E_1$ \\  $\mathrm{PD}(Z_0^2) = y - E_1$  \\ }				    
				    &{ pt} & $S^2$ & $4$ &$46$ \\ \hline
		{\bf (IV-2-6)} & \makecell{$(S^2 \times S^2  \# ~\overline{\p^2},$ \\$2x + 2y - E_1)$} & $-y$  &$S^2$ & { pt} &
				    		\makecell{ $Z_0 \cong S^2$  \\ $\mathrm{PD}(Z_0) = x - E_1$}					    
				     &{ pt} & $S^2$ & $3$ &$50$ \\ \hline
		\end{tabular}
		\end{adjustbox}
		\vs{0.1cm}
		\caption {List of topological fixed point data for $\dim Z_{\min} = \dim Z_{\max} = 2$} \label{table_list_2} 
	\end{table}

\section{Classification of topological fixed point data : $\dim Z_{\min} \geq 2$ and $\dim Z_{\max} = 4$}
\label{secClassificationOfTopologicalFixedPointDataDimZMinGeq2AndDimZMax4}

In this section, we classify all topological fixed point data for the remaining cases where $\dim Z_{\min} \geq 2$ and $\dim Z_{\max} = 4$.

\subsection{Case I : $\dim Z_{\min} = 2$}
\label{ssecCaseIDimZMin2}

	We first note that $Z_{\min} \cong S^2$ by Lemma \ref{lemma_possible_critical_values} so that the reduced space $M_{-2 + \epsilon}$ 
	near the minimum is an $S^2$-bundle over $S^2$. Recall from \eqref{equation_basis_trivial} and \eqref{equation_basis_twist} that
	$x$ and $y$ are the dual classes of the fiber and the base of the bundle $M_{-2 + \epsilon} \rightarrow Z_{\min}$,
	respectively, such that 
	\[
		\begin{cases}
					\langle xy, [M_{-2 + \epsilon}] \rangle = 1, \quad \langle x^2, [M_{-2 + \epsilon}] \rangle = \langle y^2, [M_{-2 + \epsilon}] \rangle = 0 &
						\text{if $M_{-2 + \epsilon} \cong S^2 \times S^2$} \\
					\langle xy, [M_{-2 + \epsilon}] \rangle = 1, \quad \langle x^2, [M_{-2 + \epsilon}] \rangle = 0, \quad \langle y^2, [M_{-2 + \epsilon}] \rangle = -1 & 
						\text{if $M_{-2 + \epsilon} \cong E_{S^2}$} \\		
		\end{cases}
	\]
	where $E_{S^2}$ denotes the non-trivial $S^2$-bundle over $S^2$. 
	With these notations, we have 
	\begin{equation}\label{equation_chern_class}
		c_1(T(S^2 \times S^2)) = 2x + 2y, \quad c_1(TE_{S^2}) = 3x+2y.
	\end{equation}

	Observe that the only possible non-extremal critical values are $\{-1, 0\}$
	and each non-extremal fixed component $Z$ is either 
	\[
		\begin{cases}
			\text{$Z$ = pt} \hspace{1cm} \text{if $H(Z) = - 1$, \quad or} \\
			\text{$\dim Z = 2$} \quad \text{if $H(Z) = 0$.}
		\end{cases}
	\]
	by Lemma  \ref{lemma_possible_critical_values}. 
	
	Let $m = |Z_{-1}|$ be the number of isolated fixed points (of index two). Then $M_0 \cong M_1$ is the $k$ points blow-up of $M_{-2 + \epsilon}$
	by Proposition \ref{proposition_topology_reduced_space}. 
	Moreover, since $(M_0, \omega_0)$ is a monotone symplectic four manifold, $M_0$ is diffeomorphic to 
	a del Pezzo surface by \cite{OO2} and so $M_0$ is either $\p^2, S^2 \times S^2$, or $X_k$ for $k \leq 8$ 
	where $X_k$ denotes the blow-up of $\p^2$ at $k$ generic points. Thus we obtain
	\begin{equation}\label{equation_number_of_points}
		0 \leq m \leq 7
	\end{equation}
	Set
	\begin{itemize}
		\item $E_1, \cdots, E_m$ the dual classes of the exceptional divisors on $M_0$, 
		\item $\mathrm{PD}(Z_0) = ax + by + \sum_{i=1}^m c_i E_i$ the dual class of $Z_0$ in $M_0$ for some integers $a,b,c_1,\cdots,c_m$,
		\item $e(P_{-2 + \epsilon}) = kx - y$ (for $k \in \Z$) the Euler class of $P_{-2}^+$. (See Lemma \ref{lemma_volume}.)
	\end{itemize}
	Then Corollary \ref{corollary_volume} implies that
	\begin{equation}\label{equation_I_k}
		\begin{cases}
			k \geq -1 & ~\text{if $b_{\min}$ is odd} \\
			k \geq 0 & ~\text{if $b_{\min}$ is even} 
		\end{cases}
	\end{equation}

	We collect equations in $\{a,b,c_1, \cdots, c_m, k\}$ as follows. Since $c_1(TM_0) = [\omega_0]$ by Proposition \ref{proposition_monotonicity_preserved_under_reduction}, 
	we see that
	\begin{equation}\label{equation_I_symplectic_form}
		[\omega_0] = \begin{cases}
			3x + 2y - \sum_{i=1}^m E_i & ~\text{if $b_{\min}$ is odd,} \\ 
			2x + 2y - \sum_{i=1}^m E_i & ~\text{if $b_{\min}$ is even.} \\ 
		\end{cases}
	\end{equation}
	So, we have
	\begin{equation}\label{equation_I_volume}
		\mathrm{Vol}(Z_0) = \langle [\omega_0], [Z_0] \rangle = \begin{cases}
			2a + b + \sum_{i=1}^m c_i \geq 1 & ~\text{if $b_{\min}$ is odd,} \\ 
			2a + 2b + \sum_{i=1}^m c_i \geq 1& ~\text{if $b_{\min}$ is even.} \\ 
		\end{cases}
	\end{equation}
	Moreover. since $e(P_0^+) = kx - y + \sum_{i=1}^m E_i + \mathrm{PD}(Z_0) = (a+k)x + (b-1)y + \sum_{i=1}^m (c_i + 1)E_i$ by Lemma \ref{lemma_Euler_class}, we obtain
	\begin{equation}\label{equation_I_symplectic_form_level_t}
		[\omega_t] = [\omega_0] - e(P_0^+)t = 
		\begin{cases}
			(3 - (a + k)t)x + (2 + (1-b)t)y - \ds \sum_{i=1}^m (1 + (c_i+1) t)E_i & ~\text{if $b_{\min}$ is odd,} \\
			(2 - (a + k)t)x + (2 + (1-b)t)y - \ds \sum_{i=1}^m (1 + (c_i+1) t)E_i & ~\text{if $b_{\min}$ is even.} 
		\end{cases}
	\end{equation}
	for $t \in [0,1]$.
	In particular, we have
	\begin{equation}\label{equation_I_symplectic_form_level_1}
		[\omega_1] = [\omega_0] - e(P_0^+) = 
		\begin{cases}
			(3 - a - k)x + (3-b)y - \ds \sum_{i=1}^m (c_i + 2)E_i & ~\text{if $b_{\min}$ is odd,} \\
			(2 - a - k)x + (3-b)y - \ds \sum_{i=1}^m (c_i + 2)E_i & ~\text{if $b_{\min}$ is even.} 
		\end{cases}
	\end{equation}
	
	Note that the Chern number of $(M, \omega)$ can be calculated by the following lemma. 
		
	\begin{lemma}\label{lemma_I_Chern_number}
		Suppose that $\dim Z_{\min} = 2$ and $\dim Z_{\max} = 4$ and let $m = |Z_{-1}|$. Then
		\begin{equation}\label{equation_I_Chern}
			\int_M c_1(TM)^3 = 24 + 4b_{\min} -m + \langle 3c_1(TM_0)^2 - 3c_1(TM_0)e(P_0^+) + e(P_0^+)^2, [M_0] \rangle.
		\end{equation}
	\end{lemma}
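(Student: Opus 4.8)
The plan is to evaluate $\langle c_1(TM)^3,[M]\rangle$ by localizing the equivariant class $c_1^{S^1}(TM)^3\in H^6_{S^1}(M)$. First I would observe, via Corollary \ref{corollary : localization degree 2n} and the fact that $f^{*}$ is a ring homomorphism, that
\[
	\int_M c_1^{S^1}(TM)^3=\langle f^{*}(c_1^{S^1}(TM))^3,[M]\rangle=\langle c_1(TM)^3,[M]\rangle,
\]
so it suffices to compute the right-hand side of the ABBV formula (Theorem \ref{theorem_localization}) $\int_M c_1^{S^1}(TM)^3=\sum_F\int_F \frac{(c_1^{S^1}(TM)|_F)^3}{e^{S^1}(F)}$. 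By Lemma \ref{lemma_possible_critical_values} the fixed components are exactly $Z_{\min}\cong S^2$ at level $-2$, the $m$ isolated points of $Z_{-1}$ at level $-1$, the (possibly disconnected) surface $Z_0$ at level $0$, and the four-manifold $Z_{\max}$ at level $1$; by Corollary \ref{corollary_sum_weights_moment_value} the tangential weights are $(0,1,1)$ along $Z_{\min}$, $(1,1,-1)$ at each point of $Z_{-1}$, $(0,1,-1)$ along $Z_0$, and $(0,0,-1)$ along $Z_{\max}$.

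The three easy contributions are handled by Remark \ref{remark_localization_surface}: along $Z_{\min}\cong S^2$ one gets $\int_{Z_{\min}}\frac{(c_1^{S^1}(TM)|_{Z_{\min}})^3}{e^{S^1}(Z_{\min})}=24+4b_{\min}$, each point of $Z_{-1}$ (weights $(1,1,-1)$) contributes $\frac{\lambda^3}{-\lambda^3}=-1$ for a total of $-m$, and along each component of $Z_0$ the vanishing weight sum forces $c_1^{S^1}(TM)|_{Z_0}=c_1(TM)|_{Z_0}\in H^2(Z_0)$, whose cube lies in $H^6(Z_0)=0$, so $Z_0$ contributes $0$. The substantive step is the $Z_{\max}$ term. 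Here I would use that $(0,1)$ contains no critical value of $H$, so the gradient flow identifies $P_0^+$ with $P_1^-$ and identifies $M_0$ with $M_{1-\epsilon}\cong Z_{\max}$ (Proposition \ref{proposition_topology_reduced_space}); under this identification the normal line bundle $\nu$ of $Z_{\max}$ (tangential weight $-1$) satisfies $c_1(\nu)=-e(P_0^+)$ and hence $c_1(TM)|_{Z_{\max}}=c_1(TZ_{\max})+c_1(\nu)=c_1(TM_0)-e(P_0^+)$ — the same substitutions already used in the proofs of Theorems \ref{theorem_Sec6_3_1}--\ref{theorem_Sec6_3_4}. Thus $c_1^{S^1}(TM)|_{Z_{\max}}=c_1(TM_0)-e(P_0^+)-\lambda$ and $e^{S^1}(Z_{\max})=-e(P_0^+)-\lambda$, and I would expand the resulting Laurent series
\[
	\frac{(c_1(TM_0)-e(P_0^+)-\lambda)^3}{-e(P_0^+)-\lambda}
\]
in $\lambda$, dropping all cubes of degree-two classes since $H^{\ge 6}(M_0)=0$. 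The terms of positive $\lambda$-degree integrate to $0$ over the four-manifold $M_0$, and the $\lambda$-free part collapses, via $3C^2-3CN+N^2=3a^2-3ae+e^2$ with $C=a-e$, $N=-e$, to $\langle 3c_1(TM_0)^2-3c_1(TM_0)e(P_0^+)+e(P_0^+)^2,[M_0]\rangle$. Summing the four contributions yields \eqref{equation_I_Chern}.

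The main obstacle I anticipate is purely bookkeeping in the $Z_{\max}$ computation: being careful about signs in the identities $c_1(\nu)=-e(P_0^+)$ and $e(P_0^+)=e(P_1^-)$ (i.e. that the collapsing circle bundle at the maximum has exactly the stated Euler class and that the flow identification matches $M_0$ with $Z_{\max}$ orientation-compatibly), and then carrying out the Laurent expansion precisely enough that the $\lambda$-dependent terms cancel as they must. None of this is deep — it is a direct analogue of the localization computations already carried out in Sections \ref{secClassificationOfTopologicalFixedPointDataDimZMin} and \ref{secClassificationOfTopologicalFixedPointDataDimZMinDimZMax2} — but it is the only place where an explicit verification, rather than a quotation of Remark \ref{remark_localization_surface}, is required.
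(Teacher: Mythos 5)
Your proposal is correct and follows the paper's own proof essentially verbatim: the same ABBV localization of $c_1^{S^1}(TM)^3$, the same four contributions ($24+4b_{\min}$ from $Z_{\min}$, $-1$ per point of $Z_{-1}$, $0$ from $Z_0$, and the rational expression over $Z_{\max}\cong M_0$), and the same algebraic collapse of $\int_{M_0}\frac{(c_1(TM_0)-e-\lambda)^3}{-e-\lambda}$ to $\langle 3c_1(TM_0)^2-3c_1(TM_0)e+e^2,[M_0]\rangle$. No gaps.
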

	
	\begin{proof}
		Applying the ABBV localization theorem, we have
		\[
			\begin{array}{ccl}\vs{0.3cm}
				\ds \int_M c_1^{S^1}(TM)^3 & = &  \ds  
							\int_{Z_{\min}} \frac{\left(c_1^{S^1}(TM)|_{Z_{\min}}\right)^3}{e_{Z_{\min}}^{S^1}} + m \frac{\overbrace{\lambda^3}^{Z_{-1} ~\text{term}}}
							{-\lambda^3}
							+ \int_{Z_0} \frac{\overbrace{\left(c_1^{S^1}(TM)|_{Z_0}\right)^3}^{= 0}}{e_{Z_0}^{S^1}}
							 + \int_{Z_{\max}} \frac{\left(c_1^{S^1}(TM)|_{Z_{\max}}\right)^3}
							 {e_{Z_{\max}}^{S^1}} \\ \vs{0.2cm}
							 & = & \ds \int_{Z_{\min}} \frac{(2\lambda + (2+b_{\min})u)^3}{\lambda^2 + b_{\min}u\lambda} - m + 
							 \int_{Z_{\max}} \frac{\left(c_1^{S^1}(TM)|_{Z_{\max}}\right)^3} {e_{Z_{\max}}^{S^1}} \\ \vs{0.2cm}
							& = &  (24 + 4b_{\min}) - m + \ds \int_{M_0} \frac{\left(c_1(TM_0) - e - \lambda \right)^3}{-e - \lambda}\quad \quad 
							(c_1(TZ_{\max}) = c_1(TM_0), ~e := e(P_0^+) = e(P_1^-)) \\ \vs{0.2cm}
							& = & 24 + 4b_{\min} - m + \ds \int_{M_0} \frac{\left(c_1(TM_0) - e - 
							\lambda \right)^3(e^2 - e\lambda + \lambda^2)}{-(e+\lambda)(e^2 - e\lambda + \lambda^2)}
							\\ \vs{0.2cm}
							& = & 24 + 4b_{\min} - m + \langle 3c_1(TM_0)^2 - 3c_1(TM_0)e) + e^2, [M_0] \rangle.
			\end{array}			
		\]
	\end{proof}

	We divide into four cases: $\mathrm{Crit} ~H = \underbrace{\{1,-2\}}_{\text{({\bf I-1})}}$ , $\underbrace{\{1,0, -2\}}_{\text{({\bf I-2})}}$, 
	$\underbrace{\{1,-1, -2\}}_{\text{({\bf I-3})}}$, and $\underbrace{\{1,0,-1,-2\}}_{\text{({\bf I-4})}}$.

	\begin{theorem}[Case {\bf I-1}]\label{theorem_Sec8_I_1}
		Let $(M,\omega)$ be a six-dimensional closed monotone semifree Hamiltonian $S^1$-manifold such that 
		$\mathrm{Crit} H = \{ 1, -2\}$. 
		Then the list of all possible topological fixed point data is given in Table \ref{table_Sec8_I_1}
		\begin{table}[h]
			\begin{tabular}{|c|c|c|c|c|c|c|}
				\hline
				    & $(M_0, [\omega_0])$ & $e(P_{-2}^+)$ & $Z_{-2}$ & $Z_1$ & $b_2(M)$ & $c_1^3(M)$ \\ \hline \hline
				    {\bf (I-1-1.1)} & $(E_{S^2}, 3x + 2y)$ & $-x-y$  & $S^2$ & 
				    	$E_{S^2}$
					     & $2$ & $54$ \\ \hline    
				    {\bf (I-1-2.1)} & $(S^2 \times S^2, 2x + 2y)$ & $-y$  & $S^2$ & 
				    	$S^2 \times S^2$
					     & $2$ & $54$ \\ \hline    
				    {\bf (I-1-2.2)} & $(S^2 \times S^2, 2x + 2y)$ & $x-y$  & $S^2$ & 
				    	$S^2 \times S^2$
					     & $2$ & $54$ \\ \hline    
			\end{tabular}		
			\vs{0.5cm}			
			\caption{\label{table_Sec8_I_1} Topological fixed point data for $\mathrm{Crit} H = \{1, -2\}$.}
		\end{table}				   
	\end{theorem}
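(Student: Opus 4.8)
The plan is to exploit that here $\mathrm{Crit}\,\mathring H=\emptyset$, so there are no isolated fixed points and no interior fixed surface $Z_0$; the whole topological fixed point data is then pinned down by the diffeomorphism type of a single reduced space, the Euler class near the minimum, and the Duistermaat--Heckman variation of the reduced symplectic class. First I would record that all the reduced spaces $M_t$, $t\in(-2,1)$, are canonically identified and, from the local model near $Z_{\min}\cong S^2$, each is an $S^2$-bundle over $S^2$; hence $M_0$ is diffeomorphic to $S^2\times S^2$ or to $E_{S^2}$, and since $\dim Z_{\max}=4$ we get $Z_{\max}\cong M_{1-\epsilon}\cong M_0$ by Proposition~\ref{proposition_topology_reduced_space}. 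Proposition~\ref{proposition_monotonicity_preserved_under_reduction} gives $[\omega_0]=c_1(TM_0)$, which by \eqref{equation_chern_class} is $2x+2y$ or $3x+2y$; Lemma~\ref{lemma_volume}, applied to the minimum after reversing orientation, shows $e(P^+_{-2})=kx-y$ with $b_{\min}=2k$ or $2k+1$ according to the bundle type, and Corollary~\ref{corollary_volume} forces $b_{\min}+2>0$, i.e.\ $k\ge 0$ (resp.\ $k\ge-1$), as in \eqref{equation_I_k}. Because no critical value lies in the open interval $(-2,1)$, the Euler class is unchanged across it, so $e(P^+_0)=e(P^-_1)=kx-y$.

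The key finiteness step is the positivity of the reduced symplectic class along the entire moment interval. By the Duistermaat--Heckman theorem~\ref{theorem_DH}, $[\omega_t]=[\omega_0]-t(kx-y)$ for $t\in(-2,1]$, and this must lie in the open symplectic cone of the $S^2$-bundle $M_0$: for $t<1$ it is the reduced symplectic class, and at $t=1$ it equals $[\omega|_{Z_{\max}}]$, which is genuinely symplectic since $Z_{\max}$ is a symplectic submanifold (Corollary~\ref{corollary_properties_moment_map}) and $[\omega_1]$ is the $\epsilon\to0$ limit of the symplectic classes $[\omega_{1-\epsilon}]$. Writing the symplectic cones explicitly --- both coefficients of $x,y$ positive for $S^2\times S^2$, and $0<\mathrm{coeff}_y<\mathrm{coeff}_x$ for $E_{S^2}$, with the conventions of \eqref{equation_basis_trivial} and \eqref{equation_basis_twist} --- and imposing these inequalities for all $t\in(-2,1]$ leaves only $k\in\{0,1\}$ when $M_0\cong S^2\times S^2$ and only $k=-1$ when $M_0\cong E_{S^2}$; in particular the borderline values ($k=2$, resp.\ $k=0$) are ruled out precisely because at $t=1$ the class would fall on the boundary of the cone.

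Finally I would read off the data for the three surviving values of $k$: $M_0$ and $[\omega_0]=c_1(TM_0)$ as above, $e(P^+_{-2})=kx-y$, $Z_{-2}=Z_{\min}\cong S^2$, and $Z_1=Z_{\max}\cong M_0$. The second Betti number follows from $H$ being a perfect Morse--Bott function, via $P_M(t)=(1+t^2)+t^2P_{M_0}(t)$, which gives $b_2(M)=2$ in all three cases. The first Chern number is computed from Lemma~\ref{lemma_I_Chern_number} with $m=0$, substituting the relevant $b_{\min}$, $c_1(TM_0)$ and $e=kx-y$; a short intersection-number calculation yields $\langle c_1(TM)^3,[M]\rangle=54$ in every case. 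Collecting these reproduces exactly the rows {\bf (I-1-1.1)}, {\bf (I-1-2.1)}, {\bf (I-1-2.2)} of Table~\ref{table_Sec8_I_1}.

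I expect the only genuine subtlety to be the justification that $[\omega_1]=[\omega|_{Z_{\max}}]$ lies in the \emph{open} symplectic cone of $Z_{\max}\cong M_0$, so that the borderline $k$'s are truly impossible rather than merely degenerate-looking; this rests on the co-index-two structure near the maximum and the continuity of the Duistermaat--Heckman family up to the critical level. A secondary, purely bookkeeping, point is to keep the bundle conventions (which of $x$, $y$ is the fibre class, and the sign in $e(P^+_{-2})=kx-y$) consistent with Lemma~\ref{lemma_volume} and \eqref{equation_basis_trivial}--\eqref{equation_basis_twist}; once that is fixed the inequalities and the Chern-number arithmetic are routine.
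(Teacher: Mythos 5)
Your argument is correct and follows essentially the same route as the paper's proof: identify $M_0$ as an $S^2$-bundle over $S^2$ with $[\omega_0]=c_1(TM_0)$ and $e(P_{-2}^+)=kx-y$, then use the Duistermaat--Heckman variation together with positivity of the symplectic class on $Z_{\max}\cong M_1\cong M_0$ to force $k=-1$ (for $E_{S^2}$) or $k\in\{0,1\}$ (for $S^2\times S^2$), and finish with Lemma~\ref{lemma_I_Chern_number}. The paper phrases the exclusion of borderline $k$ as ``an exceptional (resp.\ fiber) class would acquire zero area at some $t\le 1$, forcing an impossible blow-down'' and as $\int_{M_1}\omega_1^2>0$, which is exactly your open-cone condition, so the subtlety you flag is handled the same way there.
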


	\begin{proof}
		 We divide the proof into two cases: $M_0 \cong E_{S^2}$ {\bf (I-1-1)} and $M_0 \cong S^2 \times S^2$ {\bf (I-1-2)}.
		\vs{0.3cm}
		
	\noindent	
	{\bf (I-1-1)}: $M_0 \cong E_{S^2}$. \vs{0.3cm}
	
	\noindent
		In this case, we have $b_{\mathrm{min}} = 2k + 1$ for some $k \in \Z$ by Lemma \ref{lemma_volume}.
		Recall that 
		\[
			[\omega_t] = (3 - kt)x + (2 + t)y, \quad t \in [0,1]
		\]
		by \eqref{equation_I_symplectic_form_level_t}.
		If $k \geq 0$, then 
		\[
			\langle [\omega_t], y \rangle = 1 - (k+1)t
		\]
		and a blow-down occurs at level $0< t = \frac{1}{k+1} \leq 1$ where the vanishing exceptional divisor corresponds to $y$. 
		Since $M_0 \cong M_t \cong M_1$, this cannot be happened and therefore $k \leq -1$ which implies that $k=-1$ 
		by \eqref{equation_I_k}.
		See Table \ref{table_Sec8_I_1}: {\bf (I-1-1.1)}. The Chern number is computed from Lemma \ref{lemma_I_Chern_number} by
		\[
			\int_M c_1(TM)^3 = 24 + (-4) + 0 + 3\cdot 8 - 3 (-3) + 1 = 54.
		\]
	\vs{0.3cm}
	
	\noindent
	{\bf (I-1-2)}: $M_0 \cong S^2 \times S^2$. \vs{0.3cm}

	\noindent
		In this case, $b_{\min}$ is even by Lemma \ref{lemma_volume} so that $b_{\mathrm{min}} = 2k$ 
		for some $k \in \Z$. Since $[\omega_1] = (2 - k)x + 3y$ by \eqref{equation_I_symplectic_form_level_1}, 
		\[
			\int_{M_1} \omega_1^2 = 6(2-k) > 0. 
		\]
		Also since $k \geq 0$ by \eqref{equation_I_k}, we have $k=0$ or $1$. 
		See {\bf (I-1-2.1)} and {\bf (I-1-2.2)} in Table \ref{table_Sec8_I_1} for $k=0$ and $k=1$, respectively.
		The Chern numbers for each cases immediately follow from Lemma \ref{lemma_I_Chern_number}.
	\end{proof}

	\begin{example}[Fano varieties of type {\bf (I-1)}]\label{example_Sec8_I_1} We describe Fano varieties of type {\bf (I-1)} listed in Theorem \ref{theorem_Sec8_I_1}
	as follows.
		
		\begin{itemize}

			\item {\bf (I-1-1.1)} \cite[No.33 in Section 12.3]{IP} : Let $M$ be the blow-up of $\p^3$ along a $T^3$-invariant line and the circle action generated 
			by $\xi = (0,0,-1) \in \frak{t}^* \cong \R^3$. 
			
				 \begin{figure}[H]
	           	 		\scalebox{0.8}{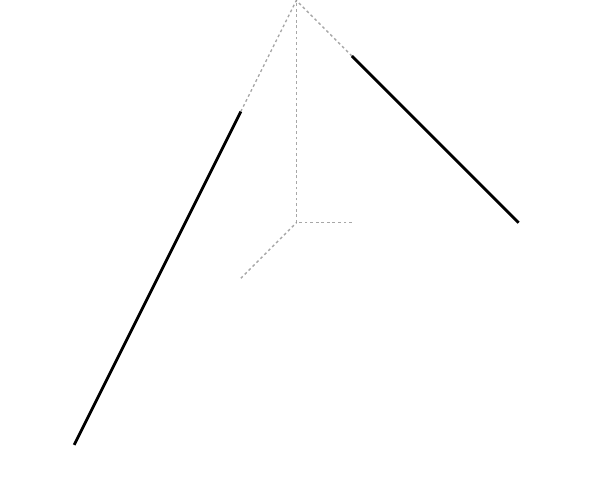}
		           	 	\caption{\label{figure_I_1_2_2} Blow-up of $\p^3$ along a $T^3$-invariant line}
		           	 \end{figure}
			\noindent
			As in Figure \ref{figure_I_1_2_2}, the volume of $Z_{\min} = 1$ (or equivalently $b_{\min} = -1$ by 
			Corollary \ref{corollary_volume}). Also, the volume of $Z_{\max}$ equals $15 (= 16 - 1)$ and so it coincides with 
			$\langle [\omega_1]^2, [Z_1] \rangle = \rangle (4x + 3y)^2, [Z_1] \rangle = 15$. Thus one can easily check that the topological fixed point data 
			of the action on $M$ coincides with {\bf (I-1-1.1)} in Table \ref{table_Sec8_I_1}.
			See also Example \ref{example_Sec6_3_3} and Example \ref{example_III}.\vs{0.3cm}
					           	 
	           	 \item {\bf (I-1-2.1)} \cite[No.34 in Section 12.3]{IP}  : Consider $M = \p^1 \times \p^2$ with the standard Hamiltonian $T^3$-action. Take the circle subgroup 
	           	 of $T^3$ generated by $\xi = (1,1,0)$. Then the fixed point set can be described as in Figure \ref{figure_I_1_1_1} (colored by red). In this case, the volume 
	           	 of $Z_{\min} = 2$ so that $b_{\min} = 0$ (i.e., $k=0$). Thus this example corresponds to {\bf (I-1-2.1)} in Table \ref{table_Sec8_I_1}.
			See also Example \ref{example_Sec6_2_3} (2). 
			
	           		 \begin{figure}[H]
	           	 		\scalebox{0.8}{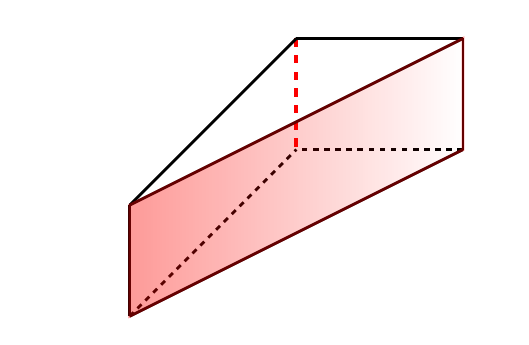}
		           	 	\caption{\label{figure_I_1_1_1} $\p^1 \times \p^2$}
		           	 \end{figure}

			\item {\bf (I-1-2.2)} \cite[No.33 in Section 12.3]{IP} : Let $M$ be the same as {\bf (I-1-1.1)}, that is, the blow-up of $\p^3$ along a $T^3$-invariant line. 
			In this case, we take another circle subgroup of $T^3$ generated by $\xi = (-1-1,0)$. Then the fixed point set can be described as in Figure \ref{figure_I_1_2_1}.
			Also, since the volume of $Z_{\min} = 4$, we see that $b_{\min} = 2$ by Corollary \ref{corollary_volume}, and hence we have $k=1$. 
			See also {\bf (I-1-1.1)}, Example \ref{example_Sec6_3_3} and Example \ref{example_III}.
			
	           		 \begin{figure}[H]
	           	 		\scalebox{0.8}{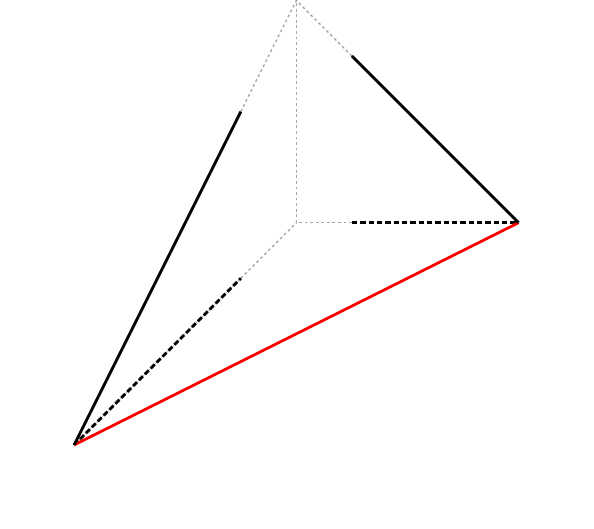}
		           	 	\caption{\label{figure_I_1_2_1} Blow-up of $\p^3$ along a $T^3$-invariant line}
		           	 \end{figure}
			
		\end{itemize}		
	\end{example}

	\begin{theorem}[Case {\bf I-2}]\label{theorem_Sec8_I_2}
		Let $(M,\omega)$ be a six-dimensional closed monotone semifree Hamiltonian $S^1$-manifold such that $\mathrm{Crit} H = \{ 1, -1, -2\}$. 
		Then there is only one possible topological fixed point data as in Table \ref{table_Sec8_I_2}
		\begin{table}[h]
			\begin{tabular}{|c|c|c|c|c|c|c|c|}
				\hline
				    & $(M_0, [\omega_0])$ & $e(P_{-2}^+)$ & $Z_{-2}$ & $Z_{-1}$ & $Z_1$ & $b_2(M)$ & $c_1^3(M)$ \\ \hline \hline
				    {\bf (I-2)} & $(E_{S^2}, 3x + 2y - E_1)$ & $-x-y$  & $S^2$ & $\mathrm{pt}$ & 
				    	$E_{S^2} \# ~\overline{\p^2}$
					     & $3$ & $46$ \\ \hline    
			\end{tabular}		
			\vs{0.5cm}			
			\caption{\label{table_Sec8_I_2} Topological fixed point data for $\mathrm{Crit} H = \{1, -1, -2\}$.}
		\end{table}				   
	\end{theorem}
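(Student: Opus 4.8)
The plan is to run the same machine as in Theorem \ref{theorem_Sec8_I_1}: first use Lemma \ref{lemma_possible_critical_values} to fix the combinatorial shape of the fixed set, then use the Duistermaat--Heckman theorem together with the positivity of the reduced symplectic class on exceptional curves to kill every free parameter. By Lemma \ref{lemma_possible_critical_values} the structure is already rigid: $Z_{-2}=Z_{\min}\cong S^2$, every component of $Z_{-1}$ is an isolated fixed point of Morse index two, and $Z_1=Z_{\max}$ is four dimensional. Set $m=|Z_{-1}|$; since $-1\in\mathrm{Crit}\,H$ we have $m\ge 1$. Because there is no critical value in $(-1,1)$ and the maximum has codimension two, Proposition \ref{proposition_topology_reduced_space} gives $M_0\cong M_1\cong Z_{\max}$, and this manifold is the $m$-fold blow-up of $M_{-2+\epsilon}$, which is an $S^2$-bundle over $Z_{\min}\cong S^2$, hence $S^2\times S^2$ or $E_{S^2}$. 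Write $e(P_{-2}^+)=kx-y$ as in Lemma \ref{lemma_volume}, so $b_{\min}=2k$ (resp. $2k+1$), and $k\ge 0$ (resp. $k\ge-1$) by Corollary \ref{corollary_volume} and \eqref{equation_I_k}.

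The crux is the computation and constraints on $[\omega_1]$. Since $0$ is a regular value, Proposition \ref{proposition_monotonicity_preserved_under_reduction} gives $[\omega_0]=c_1(TM_0)$; combining this with $e(P_{-1}^+)=e(P_{-2}^+)+\sum_{i=1}^m E_i$ (Lemma \ref{lemma_Euler_class}) and the Duistermaat--Heckman theorem \ref{theorem_DH} yields $[\omega_1]=c_1(TM_0)-e(P_{-1}^+)$, i.e. $(3-k)x+3y-2\sum E_i$ when $M_{-2+\epsilon}\cong E_{S^2}$ and $(2-k)x+3y-2\sum E_i$ when $M_{-2+\epsilon}\cong S^2\times S^2$ (this is \eqref{equation_I_symplectic_form_level_1} read with $Z_0=\emptyset$). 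Now $\omega_1$ is a symplectic form on $M_0$, so it has strictly positive area on every exceptional class of Lemma \ref{lemma_list_exceptional}, each of which carries an embedded symplectic sphere by Li--Liu \cite{LL}. I would pass to the standard basis $\{u,F_1,\dots,F_{m+1}\}$ of $\mathbb{C}P^2\#(m{+}1)\overline{\mathbb{C}P^2}$: in the $E_{S^2}$ case $x=u-F_1$, $y=F_1$, $E_i=F_{i+1}$; in the $S^2\times S^2$ case $x=u-F_1$, $y=u-F_2$, $E_1=u-F_1-F_2$, $E_i=F_{i+1}$ for $i\ge2$. Pairing $[\omega_1]$ with the exceptional class $F_1$ gives $\langle[\omega_1],F_1\rangle=-k$ in both cases; in the $S^2\times S^2$ case $k\ge0$ makes this $\le 0$, a contradiction, so $M_{-2+\epsilon}\cong E_{S^2}$, and then $-k>0$ forces $k=-1$, hence $e(P_{-2}^+)=-x-y$. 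Finally, pairing $[\omega_1]$ (with $k=-1$) against the exceptional class $u-F_2-F_3=x+y-E_1-E_2$ gives $0$ whenever $m\ge2$, again contradicting positivity; therefore $m=1$, $M_0\cong E_{S^2}\#\overline{\mathbb{C}P^2}$ and $[\omega_0]=3x+2y-E_1$, matching Table \ref{table_Sec8_I_2}.

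It then remains only to read off the two invariants. Perfectness of $H$ as a Morse--Bott function gives $P_M(t)=(1+t^2)+t^2+(1+3t^2+t^4)t^2=1+3t^2+3t^4+t^6$, so $b_2(M)=3$; and Lemma \ref{lemma_I_Chern_number} with $b_{\min}=-1$, $m=1$, $c_1(TM_0)=3x+2y-E_1$ and $e(P_0^+)=-x-y+E_1$ gives $\int_M c_1(TM)^3=24-4-1+\langle 3c_1(TM_0)^2-3c_1(TM_0)e(P_0^+)+e(P_0^+)^2,[M_0]\rangle=24-4-1+27=46$. The only real obstacle I anticipate is bookkeeping: getting the change of basis between $\{x,y,E_i\}$ and $\{u,F_j\}$ right and verifying that the exceptional classes used for the positivity contradictions actually occur on $M_0$ in the relevant range of $m$ (so that Li--Liu applies); once the right test classes are identified the required inequalities are immediate.
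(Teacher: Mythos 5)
Your proposal is correct and follows essentially the same route as the paper: both cases ($M_{-2+\epsilon}\cong E_{S^2}$ or $S^2\times S^2$) are eliminated or pinned down by testing $[\omega_1]$ against the same exceptional classes ($y$, resp. $y-E_1$, and $x+y-E_1-E_2$), with only cosmetic differences — you change basis to $\{u,F_j\}$ and derive $k=-1$ before ruling out $m\ge 2$, whereas the paper works directly in $\{x,y,E_i\}$ and reverses that order. The Betti number and Chern number computations also agree with the paper's.
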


	\begin{proof}
		Similar to the proof of Theorem \ref{theorem_I_1}, we divide into two cases: $M_0 \cong E_{S^2}$ and $M_0 \cong S^2 \times S^2$. \vs{0.3cm}
		Note that $m > 0$ by our assumption.
	\noindent	
	{\bf Case (1)}: $M_0 \cong E_{S^2}$. \vs{0.3cm}
	
	\noindent
	Let $b_{\mathrm{min}} = 2k + 1$ for some $k \in \Z$ (by Lemma \ref{lemma_volume}). If $m > 1$, then $x + y - E_1 - E_2$ becomes an exceptional class by Lemma
	 \ref{lemma_list_exceptional}
	and the volume of the corresponding exceptional divisor $C$ on $M_1$ equals 
	\[
		\langle [\omega_1], [C] \rangle = \langle (x + y - E_1 - E_2) \cdot ((3-k)x + 3y - 2E_1 - 2E_2), [M_1] \rangle  = -1 - k \leq 0
	\]
	by \eqref{equation_I_k} and \eqref{equation_I_symplectic_form_level_t}.
	That is, the blow-down occurs at some level $t \leq 1$. (Indeed, the blow-down occurs at level $t = \frac{1}{2+k}$ by \eqref{equation_I_symplectic_form_level_t}.)
	This is impossible since $M_0 \cong M_1$ and hence we have $m = 1$. Moreover, the symplectic volume of the exceptional class $y$ on $M_1$ 
	should be positive and equal to 
	\[
		\langle y \cdot ((3-k)x + 3y - 2E_1 - 2E_2), [M_1] \rangle = -k >0. 
	\]
	Thus we get $k = -1$ by \eqref{equation_I_k}. See Table \ref{table_Sec8_I_2}: {\bf (I-2)}.\vs{0.3cm}
	
	\noindent
	{\bf Case (2)}: $M_0 \cong S^2 \times S^2$. \vs{0.3cm}
	
	\noindent
	Let $b_{\mathrm{min}} = 2k$ (for some $k \geq 0$ by \eqref{equation_I_k}) by Lemma \ref{lemma_volume}.  
	In this case, we observe that 
	\[
		\langle (y - E_1) \cdot [\omega_1], [M_1] \rangle = \langle (y - E_1) \cdot ((2 - k)x + 3y - \sum_{i=1}^m 2 E_i), [M_1] \rangle = -k \leq 0
	\]
	so that the exceptional divisor representing $y - E_1$ vanishes at $M_t$ where $t = \frac{1}{k+1} \leq 1$. Thus no such manifold exists.

	\end{proof}

	\begin{example}[Fano variety of type {\bf (I-2)}]\label{example_Sec8_I_2}\cite[No.26 in Section 12.4]{IP}
	Consider the standard $T^3$-action on $\p^3$ and let $M$ be the blow-up of $\p^3$ along a fixed point $p$ and a $T^3$-invariant sphere not containing $p$. The induced $T^3$-action
	on $M$ admits a moment map whose image is given in Figure \ref{figure_I_2}. Taking a circle subgroup of $T^3$ generated by $\xi = (-1,0,0)$, we obtain a semifree Hamiltonian 
	$S^1$-action on $M$ whose fixed point set is depicted by red faces as in Figure \ref{figure_I_2}. 
	
	The volume of $Z_{\min}$ equals the length of the red edge, 1, which also equals $b_{\min} + 2$  by Corollary \ref{corollary_volume}. So, one can see that the fixed point data
	coincides with {\bf (I-2)} in Table \ref{table_Sec8_I_2}. See also Example \ref{example_Sec6_3_4} (3). 
			           	 
		\begin{figure}[H]
	           	\scalebox{0.8}{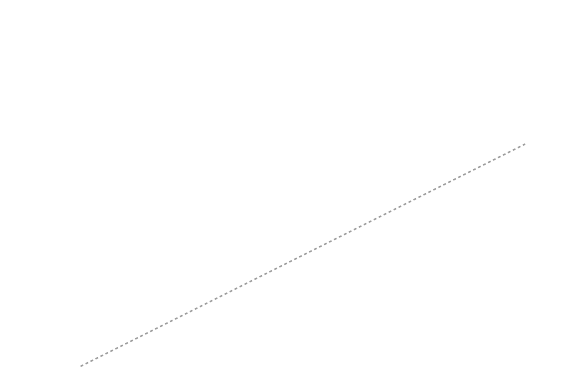}
			\caption{\label{figure_I_2} Blow-up of $\p^3$ along a point and a sphere}
		\end{figure}
			
	\end{example}

	\begin{theorem}[Case {\bf (I-3-1)}]\label{theorem_Sec8_I_3_1}
		Let $(M,\omega)$ be a six-dimensional closed monotone semifree Hamiltonian $S^1$-manifold such that $\mathrm{Crit} H = \{ 1, 0, -2\}$. 
		If $M_0 \cong E_{S^2}$, then the list of all possible topological fixed point data is given in the Table \ref{table_Sec8_I_3_1}.
		\begin{table}[h]		
		\begin{adjustbox}{max width=.9\textwidth}		
			\begin{tabular}{|c|c|c|c|c|c|c|c|}
				\hline
				    & $(M_0, [\omega_0])$ & $e(P_{-2}^+)$ & $Z_{-2}$ & $Z_0$ & $Z_1$ & $b_2(M)$ & $c_1^3(M)$ \\ \hline \hline
				    {\bf (I-3-1.1)} & $(E_{S^2}, 3x + 2y)$ & $-y$  & $S^2$ & \makecell{$Z_0 \cong S^2$ \\ $\mathrm{PD}(Z_0) = y$} & $E_{S^2}$ & $3$ & $52$ \\ \hline    
				    {\bf (I-3-1.2)} & $(E_{S^2}, 3x + 2y)$ & $-x-y$  & $S^2$ & \makecell{$Z_0 \cong S^2$ \\ $\mathrm{PD}(Z_0) = y$} 
				    															& $E_{S^2}$ & $3$ & $50$ \\ \hline    
				    {\bf (I-3-1.3)} & $(E_{S^2}, 3x + 2y)$ & $-y$  & $S^2$ &  
				    				\makecell{$Z_0 \cong S^2 ~\dot\cup ~S^2$ \\ $\mathrm{PD}(Z_0^1) = x+y$ \\ $\mathrm{PD}(Z_0^2) = y$} 				    
				    															& $E_{S^2}$ & $4$ & $44$ \\ \hline    
				    {\bf (I-3-1.4)} & $(E_{S^2}, 3x + 2y)$ & $-x-y$  & $S^2$ & \makecell{$Z_0 \cong S^2$ \\ $\mathrm{PD}(Z_0) = x + y$} 
				    								& $E_{S^2}$ & $3$ & $44$ \\ \hline    
				    {\bf (I-3-1.5)} & $(E_{S^2}, 3x + 2y)$ & $-x-y$  & $S^2$ & 
				    							\makecell{$Z_0 \cong S^2 ~\dot\cup ~S^2$ \\ $\mathrm{PD}(Z_0^1) = x+y$ \\ $\mathrm{PD}(Z_0^2) = y$} 
				    								& $E_{S^2}$ & $4$ & $40$ \\ \hline    
				    {\bf (I-3-1.6)} & $(E_{S^2}, 3x + 2y)$ & $-x-y$  & $S^2$ & \makecell{$Z_0 \cong S^2$ \\ $\mathrm{PD}(Z_0) = 2x + 2y$} & $E_{S^2}$ & $3$ & $36$ \\ \hline    
			\end{tabular}		
			\end{adjustbox}
			\vs{0.5cm}			
			\caption{\label{table_Sec8_I_3_1} Topological fixed point data for $\mathrm{Crit} H = \{1, 0, -2\}$ with $M_0 \cong E_{S^2}$.}
		\end{table}				   
	\end{theorem}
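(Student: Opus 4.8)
The plan is to run the machinery already assembled in \eqref{equation_I_k}--\eqref{equation_I_symplectic_form_level_1} and Lemma \ref{lemma_I_Chern_number}, specialized to $\mathrm{Crit}~\mathring{H} = \{0\}$ (so $m = |Z_{-1}| = 0$) and $M_0 \cong E_{S^2}$. First I would record the standing data: since $b_{\min}$ is odd we have $b_{\min} = 2k+1$ with $k \geq -1$ by \eqref{equation_I_k}, $[\omega_0] = c_1(TM_0) = 3x + 2y$, and, writing $\mathrm{PD}(Z_0) = ax + by \in H^2(E_{S^2};\Z)$, the Euler class is $e(P_0^+) = (a+k)x + (b-1)y$ while $[\omega_t] = (3-(a+k)t)x + (2+(1-b)t)y$ for $t \in [0,1]$, by Lemma \ref{lemma_Euler_class} and \eqref{equation_I_symplectic_form_level_t}.

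Next I would extract the constraints on $(k,a,b)$. Because $\mathrm{Crit}~H$ has no critical value strictly between $0$ and $1$, one has $M_t \cong M_0 \cong E_{S^2}$ for every $t \in [0,1]$; in particular the $(-1)$-class $y$ on $E_{S^2}$ never acquires zero area, so $\langle [\omega_1], y\rangle = b - a - k > 0$, and positivity of $\langle[\omega_t]^2,[M_t]\rangle$ on $[0,1]$ forces $b \leq 2$ and $a + k \leq 1$. Together with $\mathrm{Vol}(Z_0) = 2a + b \geq 1$ from \eqref{equation_I_volume} and $k \geq -1$, these leave only finitely many triples $(k,a,b)$ and in fact pin $k$ to $\{-1,0\}$: for $k \geq 2$ the volume constraint forces $b \geq 3$, contradicting $b \leq 2$, and $k = 1$ survives numerically only as $\mathrm{PD}(Z_0) = 2y$, which is excluded by adjunction as below. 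For each surviving $(k,a,b)$ I would apply the adjunction formula \eqref{equation_adjunction} to $Z_0$: computing $[Z_0]\cdot[Z_0] = 2ab - b^2$ and $\langle c_1(TM_0),[Z_0]\rangle = 2a+b$ determines $\sum_i(2-2g_i)$, while $\mathrm{Vol}(Z_0) = 2a+b$ bounds the number of components. This simultaneously kills the remaining bad triples (where $\sum_i(2-2g_i)$ would require more sphere components than the volume permits, e.g. $\mathrm{PD}(Z_0)=2y$) and shows every surviving $Z_0$ is a disjoint union of $2$-spheres.

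In the cases where $Z_0$ is forced to be disconnected --- which happens precisely when $[Z_0]\cdot[Z_0]=0$ and $\langle c_1(TM_0),[Z_0]\rangle = 4$, i.e. $\mathrm{PD}(Z_0)=x+2y$ --- I would pin down the component classes. Each component is a sphere of self-intersection $-1$ or $0$, so, using $E_{S^2} \cong \p^2 \# \overline{\p^2}$ and Lemma \ref{lemma_list_exceptional} (via the dictionary $u = x+y$, $E_1 = y$) to list the admissible sphere classes of small volume, the disjointness condition $[Z_0^1]\cdot[Z_0^2]=0$ together with $\mathrm{PD}(Z_0^1)+\mathrm{PD}(Z_0^2) = x+2y$ leaves only $\{\,y,\ x+y\,\}$; one also checks no genus-one split and no three-component split survives. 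This produces exactly the six entries of Table \ref{table_Sec8_I_3_1}. Finally $b_2(M)$ follows from perfectness of $H$ as a Morse--Bott function (the Poincar\'e polynomial is $1 + (2+r)t^2 + (2+r)t^4 + t^6$, where $r$ is the number of components of $Z_0$), and $\langle c_1(TM)^3,[M]\rangle$ is read off from Lemma \ref{lemma_I_Chern_number} by substituting $b_{\min} = 2k+1$, $m=0$, $c_1(TM_0) = 3x+2y$, and $e(P_0^+) = (a+k)x+(b-1)y$.

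The main obstacle I expect is the bookkeeping in the last two steps rather than any single hard idea: one must verify that the finite list of $(k,a,b)$ is genuinely complete, that in each disconnected case the partition of $\mathrm{PD}(Z_0)$ into component classes is the only one compatible with disjointness, $g_i \geq 0$, and positivity of volumes, and that no alternative recombination (a torus component together with spheres, or three or more components) slips through. None of this is deep, but an omitted sub-case would alter the final table.
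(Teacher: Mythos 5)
Your proposal is correct and follows essentially the same route as the paper: the same inequalities ($b\le 2$, $b\ge a+k+1$ from the non-vanishing of the $(-1)$-class $y$, $2a+b\ge 1$, $k\ge -1$) cut the triples $(k,a,b)$ down to the same six survivors after the adjunction formula kills $\mathrm{PD}(Z_0)=2y$, and the component analysis for $\mathrm{PD}(Z_0)=x+2y$ via Lemma \ref{lemma_list_exceptional} and disjointness is exactly the paper's argument for {\bf (I-3-1.3)}. The paper likewise writes out only that disconnected case and leaves the remaining connectedness checks (no torus or three-component splittings for $x+y$ and $2x+2y$) to the reader, so your flagged bookkeeping is the same residual work.
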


	\begin{proof}
		Let $\mathrm{PD}(Z_0) = ax + by \in H^2(M_0; \Z)$. \vs{0.3cm}
		Observe that, since $[\omega_1] = (3 - a - k)x + (3-b)y$ by \eqref{equation_I_symplectic_form_level_1}, we have
		\[
			\underbrace{a+k \leq 2 \text{\hs{0.1cm} and \hs{0.1cm}} b \leq 2}_{\langle [\omega_1]^2,  [M_1] \rangle > 0}, \quad 
			\underbrace{b \geq a + k + 1}_{\langle [\omega_1]\cdot y, [M_1] \rangle \geq 1}, \quad \underbrace{2a + b \geq 1}_{\mathrm{Vol}(Z_0) \geq 1}, \quad 
			\underbrace{k \geq -1}_{\text{by \eqref{equation_I_k}}}.
		\]
		From the inequalities above, we have $a + k \leq 1$, $0 \leq a \leq 2$, and $k \geq -1$. Thus we have 9 solutions: 
		\[
			a = 0: \quad \begin{cases}
				k = 1 & b = 2 \\
				k = 0 & b = 1, 2 \\
				k = -1 & b = 1,2 \\
			\end{cases},
			\quad \quad 
			a = 1: \quad \begin{cases}
				k = 0 & b = 2 \\
				k = -1 & b = 1, 2 \\
			\end{cases},
			\quad \quad 
			a = 2: \quad \begin{cases}
				k = -1 & b = 2.
			\end{cases}						
		\]
		On the other hand, for the case of $a = 0$ and $b=2$, we have 
		\begin{itemize}
			\item $\langle c_1(TM_0), [Z_0] \rangle = \mathrm{Vol}(Z_0) = 2$ and so there are at most two connected components in $Z_0$, and 
			\item $[Z_0] \cdot [Z_0] = -4$ (so that there are at least three sphere components by the adjunction formula)
		\end{itemize}
		which contradict to each other. Therefore, there are 6 possible cases: 
			\begin{table}[H]
				\begin{tabular}{|c|c|c|c|c|c|c|}
					\hline
					    & {\bf (I-3-1.1)} & {\bf (I-3-1.2)} & {\bf (I-3-1.3)} & {\bf (I-3-1.4)} & {\bf (I-3-1.5)} & {\bf (I-3-1.6)} \\ \hline \hline
					    $(a,b,k)$ & $(0,1,0)$ & $(0,1,-1)$  & $(1,2,0)$ & $(1,1,-1)$ & $(1,2,-1)$ & $(2,2,-1)$ \\ \hline
					    $[Z_0]\cdot[Z_0]$ & $-1$ & $-1$ & $0$ & $1$ & $0$ & $4$ \\ \hline
					    \makecell{$\langle c_1(TM_0), [Z_0] \rangle$ \\ $= ~\mathrm{Vol}(Z_0)$}  & $1$ & $1$ & $4$ & $3$ & $4$ & $6$ \\ \hline
				\end{tabular}		
			\end{table}
		\noindent
		Applying the adjunction formula and Lemma \ref{lemma_list_exceptional} to each cases, we obtain Table \ref{table_Sec8_I_3_1}. 
		We only provide the proof for the case {\bf (I-3-1.3)} where the other five cases can be proved in a similar way and so we left it to the reader.
		
		In case of {\bf (I-3-1.3)}, we have $\mathrm{Vol}(Z_0) = 4$ and $[Z_0] \cdot [Z_0] = 0$ so that 
		there are at least two sphere components, namely $Z_0^1$ and $Z_0^2$, such that $\mathrm{Vol}(Z_0^1) + \mathrm{Vol}(Z_0^2) \leq 4$.
		Then,
		\begin{itemize}
			\item  $Z_0^1$ and $Z_0^2$ cannot have volume one simultaneously (otherwise $Z_0^1$ and $Z_0^2$ represent $-1$ spheres so that 
			$\mathrm{PD}(Z_0^1) = \mathrm{PD}(Z_0^2) = y$ by Lemma \ref{lemma_list_exceptional}
				and this violates the fact that $[Z_0^1] \cdot [Z_0^2] = 0$),
			\item  any $Z_0^i$ cannot have volume two. Otherwise, $[Z_0^i] \cdot [Z_0^i]  = 0$ by the adjunction formula and so one would obtain 
				either $\mathrm{PD}(Z_0^i) = c(x + 2y)$ for some integer $c$ (where the volume of $Z_0^i$ becomes $4c$) or 
				$\mathrm{PD}(Z_0^i) = x$ (so that $(\mathrm{PD}(Z_0) - \mathrm{PD}(Z_0^i)) \cdot \mathrm{PD}(Z_0^i) = 2y \cdot x \neq 0$).
		\end{itemize}
		Therefore, the only possibility is that  $\mathrm{Vol}(Z_0^1) = 1$ and  $\mathrm{Vol}(Z_0^2) = 3$
		and hence $\mathrm{PD}(Z_0^1) = y$ and $\mathrm{PD}(Z_0^2) = x + y$. (See Table \ref{table_Sec8_I_3_1} {\bf (I-3-1.3)}.)

	\end{proof}

	\begin{example}[Fano variety of type {\bf (I-3-1)}]\label{example_Sec8_I_3_1}  We describe Fano varieties of type {\bf (I-3-1)} in Theorem \ref{theorem_Sec8_I_3_1} as follows. 
		
		\begin{itemize}
	           	 \item {\bf (I-3-1.1)} \cite[No.31 in Section 12.4]{IP}  : Let $M = \mathbb{P}(\mcal{O} \oplus \mcal{O}(1,1))$ equipped with a $T^3$-action whose moment polytope is given 
	           	 in Figure \ref{figure_I_3_1_1}. If we consider a circle subgroup of $T^3$ generated by $\xi = (1,0,1)$, then the $S^1$-action becomes semifree and has a fixed point set 
	           	 where its image is colored by red. The volume of $Z_{\min}$ equals three as in Figure \ref{figure_I_3_1_1} and so we have $b_{\min} = 1$ (i.e., $k = 0$). 
	           	 Also, $Z_0 \cong S^2$
	           	 has volume $1$, and therefore the fixed point data equals {\bf (I-3-1.1)} in Table \ref{table_Sec8_I_3_1}.  See also Example \ref{example_Sec6_1_3}.
	           	 
	           		 \begin{figure}[H]
	           	 		\scalebox{0.8}{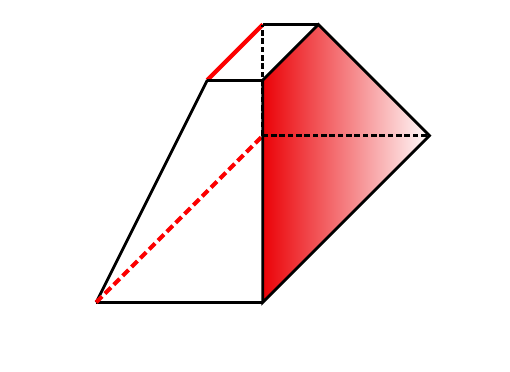}
		           	 	\caption{\label{figure_I_3_1_1} $M = \mathbb{P}(\mcal{O} \oplus \mcal{O}(1,1))$}
		           	 \end{figure}	 	
	           	 	
			\item {\bf (I-3-1.2)} \cite[No.30 in Section 12.4]{IP} : Consider $V_7$, the blow-up of $\p^3$ at a fixed point and let $M$ be the blow-up of $V_7$ along a $T^3$-invariant sphere 
			$C$ (depicted in Figure \ref{figure_I_3_1_2} (a)) passing through the exceptional divisor of $V_7 \rightarrow \p^3$. A moment polytope associated to the induced $T^3$-action on $M$ 
			is given by Figure \ref{figure_I_3_1_2} (b). 
			
			Take a circle subgroup generated by $\xi = (1,1,1)$. The fixed point set for the $S^1$-action corresponds to the faces painted by red. The volume of $Z_{\min}$ and $Z_0$ are both
			one (and hence $b_{\min} = -1$). So, the fixed point data for the $S^1$-action should coincides with {\bf (I-3-1.2)} in Table \ref{table_Sec8_I_3_1}. See also 
			Example \ref{example_Sec6_3_4} (2) and Example \ref{example_IV_2}.
	           	 
	           		 \begin{figure}[H]
	           	 		\scalebox{0.8}{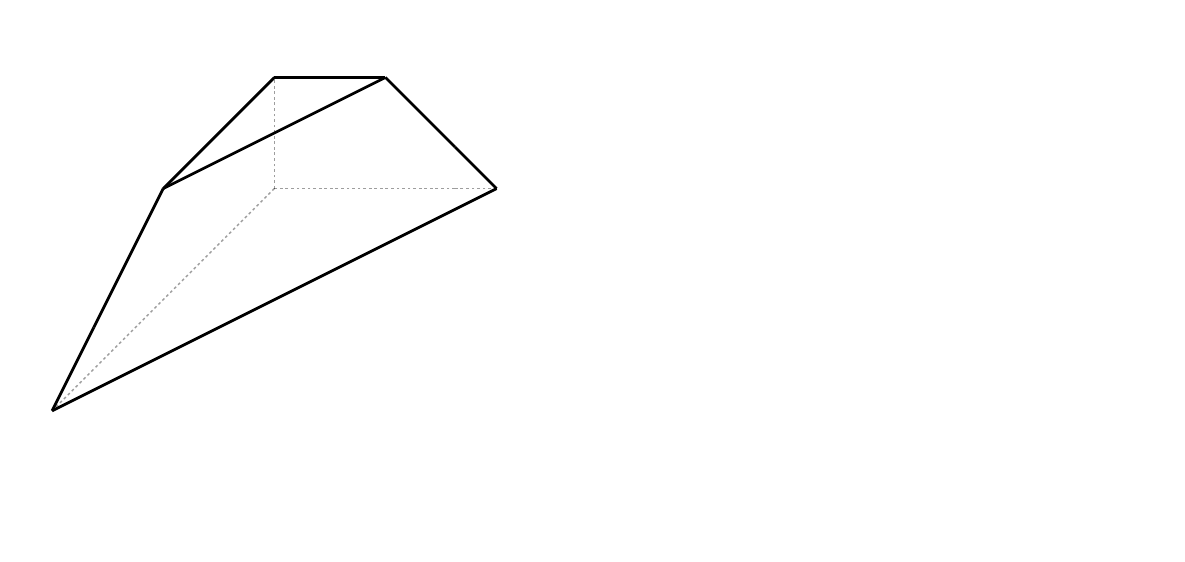}
		           	 	\caption{\label{figure_I_3_1_2} Blow-up of $V_7$ along a $T^2$-invariant sphere passing through the exceptional divisor}
		           	 \end{figure}					
		           
			\item {\bf (I-3-1.3)} \cite[No.11 in Section 12.5]{IP} : Consider the toric variety $\p^1 \times X_1$ and let $M$ be the blow-up of $\p^1 \times X_1$ along a 
			$T^3$-invariant sphere $t \times E$ where $t \in \p^1$ is a fixed point (for the $S^1$-action on $\p^1$) and $E$ is the exceptional curve in $X_1$. Then Figure \ref{figure_I_3_1_3}
			is a moment polytope for the induced $T^3$-action on $M$. 

	           		 \begin{figure}[H]
	           	 		\scalebox{1}{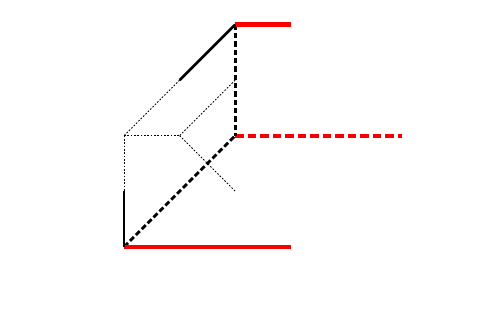}
		           	 	\caption{\label{figure_I_3_1_3} Blow-up of $\p^1 \times X_1$ along $t \times E$}
		           	 \end{figure}					
			
			Now, consider the circle subgroup of $T^3$ generated by $\xi = (0,1,1)$. Then the moment map image of the fixed point set for the $S^1$-action is red faces
			as in Figure \ref{figure_I_3_1_3} (where $Z_0^1$ and $Z_0^2$ correspond to $\overline{(0,0,2)~(1,0,2)}$ and $\overline{(0,2,0)~(3,2,0)}$, respectively).
			Also, we have $\mathrm{Vol}(Z_{\min}) = 3$ (i.e., $b_{\min} =1$ and $k=0$), $\mathrm{Vol}(Z_0^1) = 1$, and $\mathrm{Vol}(Z_0^2) = 3$ so that the fixed point data is exactly the same as 
			{\bf (I-3-1-2)} in Table \ref{table_Sec8_I_3_1}. See also Example \ref{example_Sec6_2_4} (1). \vs{0.3cm}

	           	 \item {\bf (I-3-1.4)} \cite[No.25 in Section 12.4]{IP} : Consider $\p^3$ with the standard $T^3$-action and let $Y$ be the blow-up of $\p^3$ with two disjoint $T^3$-invariant spheres.
	           	 Then the induced $T^3$-action has a moment polytope illustrated in Figure \ref{figure_I_3_1_4}. 

	           		 \begin{figure}[H]
	           	 		\scalebox{0.8}{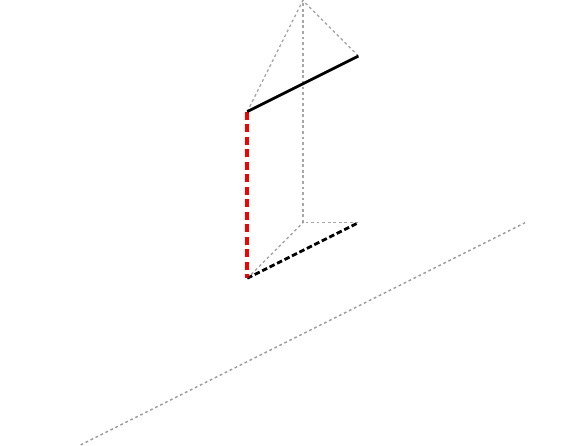}
		           	 	\caption{\label{figure_I_3_1_4} $M$ : blow-up of $\p^3$ along two disjoint curves}
		           	 \end{figure}

	           	 Take the circle subgroup generated by $\xi = (0,-1,0)$. Then the fixed point set for the $S^1$-action corresponds to red faces in Figure \ref{figure_I_3_1_4}. 
	           	 One can easily see that $\mathrm{Vol}(Z_{\min}) = 1$ (so that $b_{\min} = -1$ and $k = -1$) and also $\mathrm{Vol}(Z_0) = 3$. Thus the fixed point data 
	           	 coincides with {\bf (I-3-1.4)} in Table \ref{table_Sec8_I_3_1}. 
	           	 See also Example \ref{example_III}. \vs{0.3cm}

	           	 \item {\bf (I-3-1.5)} \cite[No.9 in Section 12.5]{IP} : Let $M$ be the blow-up of $\p^2$ along two disjoint $T^3$-invariant spheres given in {\bf (I-3-1.4)}. For the induced $T^3$-action, 
	           	 let $\widetilde{M}$ be the blow-up of $M$ along a $T^3$-invariant curve (labeled by $A$ in Figure \ref{figure_I_3_1_5})
	           	 lying on an exceptional divisor where the corresponding moment polytope is given on the right in Figure \ref{figure_I_3_1_5}. 
	      
		     		 \begin{figure}[H]
	           	 		\scalebox{0.8}{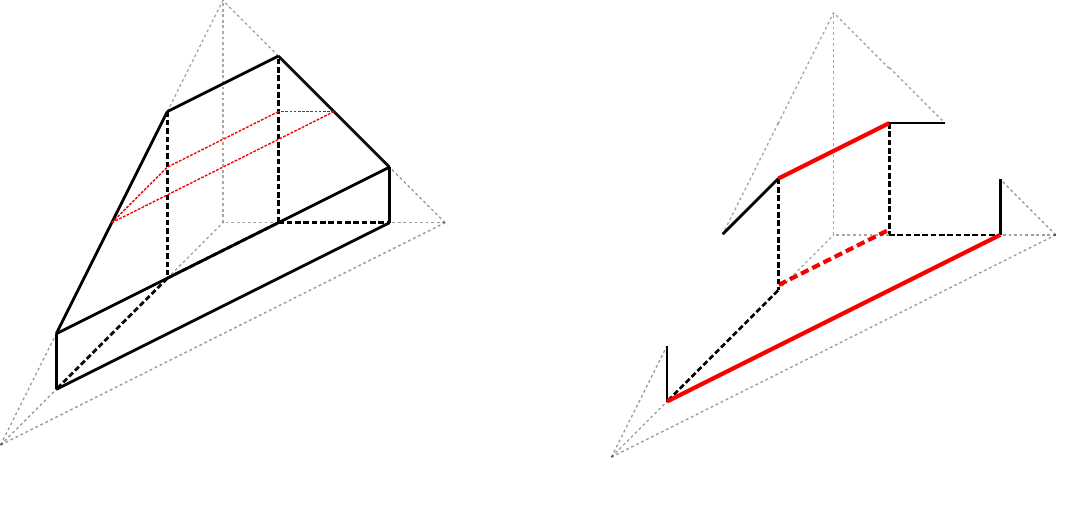}
		           	 	\caption{\label{figure_I_3_1_5} Blow-up of $M$ (in {\bf (I-3-1.4)}) along a $T^3$-invariant exceptional sphere}
		           	 \end{figure}
	      	            \vs{-0.5cm}	 
	           	 Take a circle subgroup of $T^3$ generated by $\xi = (1,1,1)$. Then the minimal fixed component $Z_{\min}$ has volume 1 so that $b_{\min} = -1$ and $k = -1$. Also the volume of 
	           	 $Z_0^1$ and $Z_0^2$ corresponding to $\overline{(0,1,0)~(1,0,0)}$ and $\overline{(3,0,0)~(0,3,0)}$ are respectively $1$ and $3$. Thus the fixed point data for the $S^1$-action
	           	 should be equal to {\bf (I-3-1.5)} in Table \ref{table_Sec8_I_3_1}.
	           	 See also Example \ref{example_IV_1} (4).
	           	 \vs{0.3cm}
	      
	           	 \item {\bf (I-3-1.6)} \cite[No.18 in Section 12.4]{IP} : Consider the closed monotone semifree Hamiltonian $S^1$-manifold $(M,\omega)$ given in 
	           	 Example \ref{example_Sec8_I_1}  {\bf (I-1-1.1)}. Then the maximal fixed component $Z_{\max}$ is the one-point blow-up of $\p^2$. 
	           	 
	           	 Let $C$ be a conic in $Z_{\max}$ disjoint from the exceptional divisor and $\widetilde{M}$ be the $S^1$-equivariant blow-up of $M$ along $C$. Then one can check 
	           	 that the induced 
	           	 $S^1$-action is semifree. Since the blow-up does not affect outside the neighborhood of the exceptional divisor, we have $\mathrm{Vol}(\widetilde{Z}_{\min}) = 1$ and so 
	           	 $b_{\min} = -1$ and $k = -1$. Also, since $\widetilde{Z}_0$ is a conic in $M_0 \cong \p^2$, the volume of $\widetilde{Z}_0$ equals six. Thus the fixed point data for 
	           	 the $S^1$-action
	           	 coincides with {\bf (I-3-1.6)} in Table \ref{table_Sec8_I_3_1}. See figure \ref{figure_I_3_1_6}.
	           	 
	           		 \begin{figure}[H]
	           	 		\scalebox{0.8}{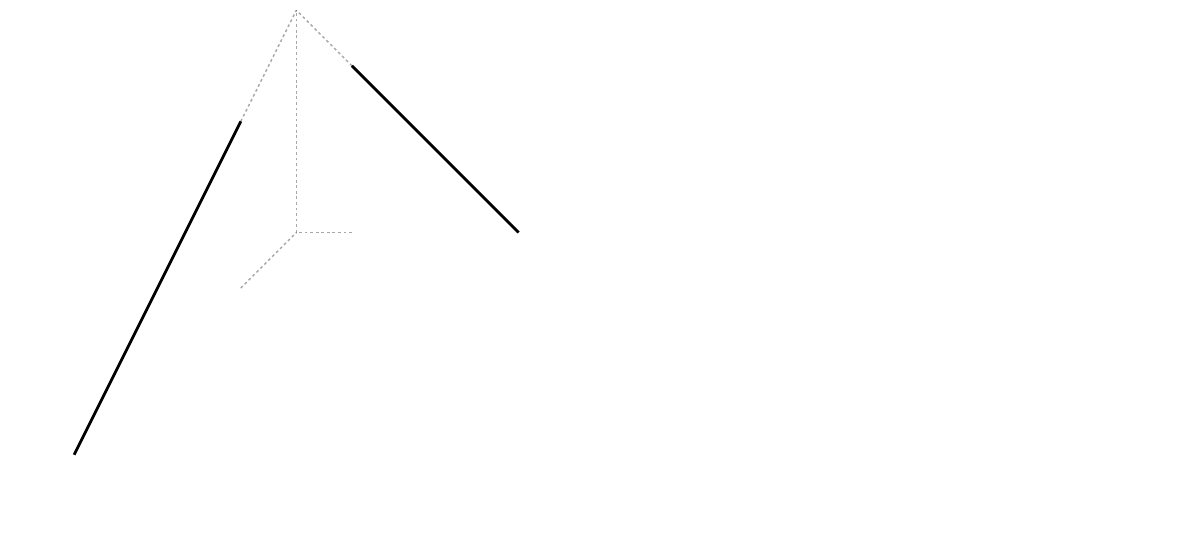}
		           	 	\caption{\label{figure_I_3_1_6} $M$ : blow-up of $\p^3$ along a disjoint union of a line and a conic}
		           	 \end{figure}

		\end{itemize}		
	\end{example}

	\begin{theorem}[Case {\bf (I-3-2)}]\label{theorem_Sec8_I_3_2}
		Let $(M,\omega)$ be a six-dimensional closed monotone semifree Hamiltonian $S^1$-manifold such that $\mathrm{Crit} H = \{ 1, 0, -2\}$. 
		If $M_0 \cong S^2 \times S^2$, then the list of all possible topological fixed point data is given in the Table \ref{table_Sec8_I_3_2}.
		\begin{table}[h]
			\begin{tabular}{|c|c|c|c|c|c|c|c|}
				\hline
				    & $(M_0, [\omega_0])$ & $e(P_{-2}^+)$ & $Z_{-2}$ & $Z_0$ & $Z_1$ & $b_2(M)$ & $c_1^3(M)$ \\ \hline \hline
				    {\bf (I-3-2.1)} & $(S^2 \times S^2, 2x + 2y)$ & $-y$  & $S^2$ & \makecell{$Z_0 \cong S^2$ \\ $\mathrm{PD}(Z_0) = y$} & $S^2 \times S^2$ & $3$ & $48$ \\ \hline    
				    {\bf (I-3-2.2)} & $(S^2 \times S^2, 2x + 2y)$ & $x-y$  & $S^2$ & \makecell{$Z_0 \cong S^2$ \\ $\mathrm{PD}(Z_0) = y$} 
				    															&$S^2 \times S^2$ & $3$ & $50$ \\ \hline    
				    {\bf (I-3-2.3)} & $(S^2 \times S^2, 2x + 2y)$ & $-y$  & $S^2$ &  
				    				\makecell{$Z_0 \cong S^2 ~\dot\cup ~S^2$ \\ $\mathrm{PD}(Z_0^1) = y$ \\ $\mathrm{PD}(Z_0^2) = y$} 				    
				    															&$S^2 \times S^2$ & $4$ & $42$ \\ \hline    
				    {\bf (I-3-2.4)} & $(S^2 \times S^2, 2x + 2y)$ & $x-y$  & $S^2$ & 
				    								\makecell{$Z_0 \cong S^2 ~\dot\cup ~S^2$ \\ $\mathrm{PD}(Z_0^1) = y$ \\ $\mathrm{PD}(Z_0^2) = y$} 
				    								& $S^2 \times S^2$ & $4$ & $46$ \\ \hline    
				    {\bf (I-3-2.5)} & $(S^2 \times S^2, 2x + 2y)$ & $-y$  & $S^2$ & 
				    							\makecell{$Z_0 \cong S^2$ \\ $\mathrm{PD}(Z_0) = x$}
				    								& $S^2 \times S^2$ & $3$ & $46$ \\ \hline    
				    {\bf (I-3-2.6)} & $(S^2 \times S^2, 2x + 2y)$ & $-y$  & $S^2$ & 
				    							\makecell{$Z_0 \cong S^2$ \\ $\mathrm{PD}(Z_0) = x + y$}
				    								& $S^2 \times S^2$ & $3$ & $42$ \\ \hline    
				    {\bf (I-3-2.7)} & $(S^2 \times S^2, 2x + 2y)$ & $-y$  & $S^2$ & 
				    							\makecell{$Z_0 \cong S^2$ \\ $\mathrm{PD}(Z_0) = x + 2y$}
				    								& $S^2 \times S^2$ & $3$ & $38$ \\ \hline    				    								
			\end{tabular}		
			\vs{0.5cm}			
			\caption{\label{table_Sec8_I_3_2} Topological fixed point data for $\mathrm{Crit} H = \{1, 0, -2\}$ with $M_0 \cong S^2 \times S^2$.}
		\end{table}				   
	\end{theorem}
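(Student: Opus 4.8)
The plan is to mimic the proof of Theorem~\ref{theorem_Sec8_I_3_1} (the case $M_0\cong E_{S^2}$), adapting every step to $M_0\cong S^2\times S^2$. First I would extract the structural consequences of $\mathrm{Crit}\,H=\{1,0,-2\}$. There are no isolated fixed points, i.e.\ $m=|Z_{-1}|=0$, so no blow-up occurs anywhere along the moment interval; moreover the $2$-dimensional fixed component $Z_0$ consists of index-two points, so by Proposition~\ref{proposition_topology_reduced_space} crossing the level $0$ leaves the diffeomorphism type of the reduced space unchanged (blowing up a complex surface along a divisor is trivial). Hence $M_{-2+\epsilon}\cong M_0\cong M_{1-\epsilon}\cong Z_{\max}$, which is $S^2\times S^2$ by hypothesis, and $[\omega_0]=c_1(TM_0)=2x+2y$ by Proposition~\ref{proposition_monotonicity_preserved_under_reduction} and \eqref{equation_chern_class}. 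Since $M_{-2+\epsilon}\cong S^2\times S^2$, Lemma~\ref{lemma_volume} forces $b_{\min}=2k$ to be even with $e(P_{-2}^+)=kx-y$, and Corollary~\ref{corollary_volume} (equivalently \eqref{equation_I_k}) gives $k\ge 0$. Also $Z_{-2}=Z_{\min}\cong S^2$ and $Z_1=Z_{\max}\cong S^2\times S^2$ by Lemma~\ref{lemma_possible_critical_values}.

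Next I would put $\mathrm{PD}(Z_0)=ax+by\in H^2(M_0;\Z)$, so that $e(P_0^+)=kx-y+\mathrm{PD}(Z_0)=(a+k)x+(b-1)y$ (Lemma~\ref{lemma_Euler_class}, in the form preceding Lemma~\ref{lemma_I_Chern_number}) and, by the Duistermaat--Heckman theorem~\ref{theorem_DH},
\[
	[\omega_t]=(2-(a+k)t)\,x+(2+(1-b)t)\,y,\qquad t\in[-2,1].
\]
Positivity of the two ruling-class areas $\langle[\omega_t],x\rangle$ and $\langle[\omega_t],y\rangle$ on $S^2\times S^2$, imposed over the whole interval and in particular as $t\to1^-$ (where $M_{1-\epsilon}$ is a genuine symplectic $S^2\times S^2$) and as $t\to-2^+$, yields $a+k\le1$ and $b\le2$; the condition $\mathrm{Vol}(Z_0)=\langle[\omega_0],\mathrm{PD}(Z_0)\rangle=2(a+b)\ge1$ gives $a+b\ge1$; and $k\ge0$ with $a+k\le1$ gives $a\le1$ and $k\in\{0,1\}$. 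The residual possibility $a\le-1$ (which these inequalities still permit, forcing $\mathrm{PD}(Z_0)=-x+2y$) is excluded by the adjunction formula~\eqref{equation_adjunction}: it would require $Z_0$ to contain at least three spheres, while $\mathrm{Vol}(Z_0)=2$ allows at most two connected components. Enumerating the surviving integer solutions produces exactly the seven triples $(a,b,k)\in\{(0,1,0),(0,1,1),(0,2,0),(0,2,1),(1,0,0),(1,1,0),(1,2,0)\}$ listed in Table~\ref{table_Sec8_I_3_2}.

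For each such triple I would then determine the topology of $Z_0$ precisely as in the $E_{S^2}$ case: the adjunction formula~\eqref{equation_adjunction}, applied to $Z_0$ and to its connected components, together with the disjointness of distinct components (so that their Poincar\'e duals $a_ix+b_iy$ satisfy $a_1b_2+a_2b_1=0$ in addition to $\sum a_i=a$, $\sum b_i=b$), pins down whether $Z_0$ is a single $S^2$ or a disjoint union of two $S^2$'s and computes the individual classes. The triples with $\mathrm{PD}(Z_0)=2y$, namely $(0,2,0)$ and $(0,2,1)$, force $Z_0=Z_0^1\,\dot\cup\,Z_0^2$ with $\mathrm{PD}(Z_0^1)=\mathrm{PD}(Z_0^2)=y$; all other triples give $Z_0\cong S^2$. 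Finally $b_2(M)$ follows from perfectness of $H$ as a Morse--Bott function (the Poincar\'e polynomial is $1+3t^2+3t^4+t^6$ when $Z_0$ is connected and $1+4t^2+4t^4+t^6$ when $Z_0$ has two components), while $\langle c_1(TM)^3,[M]\rangle$ is obtained by substituting $m=0$, $b_{\min}=2k$, $c_1(TM_0)=2x+2y$ and $e(P_0^+)=(a+k)x+(b-1)y$ into Lemma~\ref{lemma_I_Chern_number}, which gives $54+8k-6(a+b+k)+2(a+k)(b-1)$ and reproduces the values $48,50,42,46,46,42,38$ in the table.

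The main obstacle is the component-by-component bookkeeping of the last paragraph rather than any conceptual point: one must check, for every candidate decomposition $\mathrm{PD}(Z_0)=\sum_i(a_ix+b_iy)$ into classes carried by disjoint symplectic surfaces, that the volume and adjunction constraints are violated except for the decompositions recorded in Table~\ref{table_Sec8_I_3_2}. This analysis is entirely parallel to the one carried out explicitly for case {\bf (I-3-1.3)} in the proof of Theorem~\ref{theorem_Sec8_I_3_1}, so the same reasoning applies verbatim, case by case.
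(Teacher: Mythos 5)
Your proposal is correct and follows essentially the same route as the paper: the same inequalities from $\langle[\omega_1]^2,[M_1]\rangle>0$, $\mathrm{Vol}(Z_0)\ge 1$ and $k\ge 0$, the same exclusion of $\mathrm{PD}(Z_0)=-x+2y$ via adjunction, the same seven triples, and the same use of Lemma \ref{lemma_I_Chern_number} for the Chern numbers (your closed formula $54+8k-6(a+b+k)+2(a+k)(b-1)$ checks out against all seven entries). The only quibble is that your single Duistermaat--Heckman formula for $[\omega_t]$ is stated for $t\in[-2,1]$ whereas it only holds on $[0,1]$ (for $t<0$ the Euler class is $kx-y$ rather than $e(P_0^+)$), but since both constraints $a+k\le 1$ and $b\le 2$ are extracted at $t\to 1^-$, exactly as in the paper, this does not affect the argument.
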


	\begin{proof}
		In this case, we have $b_{\min} = 2k$ for some $k \in \Z$ by Lemma \ref{lemma_volume}. Let $\mathrm{PD}(Z_0) = ax + by$ for some $a,b \in \Z$
		Since $[\omega_1] = (2 - a - k)x + (3-b)y$ by \eqref{equation_I_symplectic_form_level_1}, we have 
		\[
			\underbrace{a+k \leq 1 \text{\hs{0.1cm} and \hs{0.1cm}} b \leq 2}_{\langle [\omega_1]^2,  [M_1] \rangle > 0}, \quad 
			\underbrace{2a + 2b \geq 2}_{\mathrm{Vol}(Z_0)}, \quad 
			\underbrace{k \geq 0}_{\text{by \eqref{equation_I_k}}}.
		\]
		From the inequalities above, we have 10 solutions: 
		\[
			a = -1: \quad \begin{cases}
				k = 0 & b = 2 \\
				k = 1 & b = 2 \\
				k = 2 & b = 2 \\
			\end{cases},
			\quad \quad 
			a = 0: \quad \begin{cases}
				k = 0 & b = 1,2 \\
				k = 1 & b = 1,2 \\
			\end{cases},
			\quad \quad 
			a = 1: \quad \begin{cases}
				k = 0 & b = 0,1,2.
			\end{cases}						
		\]		 
		On the other hand, in case of $(a,b) = (-1,2)$ (i.e., $\mathrm{PD}(Z_0) = -x + 2y$), we have 
		\begin{itemize}
			\item $[Z_0] \cdot [Z_0] = -4$,
			\item $\mathrm{Vol}(Z_0) = 2$
		\end{itemize}	
		so that it violates the adjunction formula. Thus we have 7 possibilities:
			\begin{table}[H]
				\begin{tabular}{|c|c|c|c|c|c|c|c|}
					\hline
					    & {\bf (I-3-2.1)} & {\bf (I-3-2.2)} & {\bf (I-3-2.3)} & {\bf (I-3-2.4)} & {\bf (I-3-2.5)} & {\bf (I-3-2.6)} & {\bf (I-3-2.7)}\\ \hline \hline
					    $(a,b,k)$ & $(0,1,0)$ & $(0,1,1)$  & $(0,2,0)$ & $(0,2,1)$ & $(1,0,0)$ & $(1,1,0)$ & (1,2,0) \\ \hline
					    $[Z_0]\cdot[Z_0]$ & $0$ & $0$ & $0$ & $0$ & $0$ & $2$  & $4$\\ \hline
					    \makecell{$\langle c_1(TM_0), [Z_0] \rangle$ \\ $= ~\mathrm{Vol}(Z_0)$}  & $2$ & $2$ & $4$ & $4$ & $2$ & $4$ & $6$\\ \hline
				\end{tabular}		
			\end{table}
		\noindent
		Similar to the proof of Theorem \ref{theorem_Sec8_I_3_1}, one can check, by the adjunction formula and the fact that the volume of each fixed component of $Z_0$ is even,
		that each $(a,b,k)$ in the table above determines the corresponding topological fixed point data 
		in Table \ref{table_Sec8_I_3_2} uniquely (where each Chern numbers are obtained from Lemma \ref{lemma_I_Chern_number}).
	\end{proof}
	
	\begin{example}[Fano varieties of type {\bf (I-3-2)}]\label{example_Sec8_I_3_2}  We describe Fano varieties of type {\bf (I-3-2)} in Theorem \ref{theorem_Sec8_I_3_2} as follows. 
		
		\begin{itemize}
	           	 \item {\bf (I-3-2.1)} \cite[No.28 in Section 12.4]{IP} : Let $M = \p^1 \times X_1$ equipped with the $T^3$-action induced from the standard toric action on 
	           	 $\p^1 \times \p^2$. The moment polytope is described in Figure \ref{figure_I_3_2_1}. If we take a circle subgroup of $T^3$ generated by $\xi = (1,0,1)$, then
	           	 the $S^1$-action is semifree and the fixed point components corresponds to the faces colored by red in Figure \ref{figure_I_3_2_1}. 
	           	 One can check at a glance from the figure that $\mathrm{Vol}(Z_{\min}) = \mathrm{Vol}(Z_0) = 2$ (and so $b_{\min} = 0$ and $k=0$) so that the fixed point data 
	           	 for the $S^1$-action coincides with {\bf (I-3-2.1)} in Table \ref{table_Sec8_I_3_2}.
	           	 See also Example \ref{example_II_2} (1). 
	           	 
	           		 \begin{figure}[H]
	           	 		\scalebox{1}{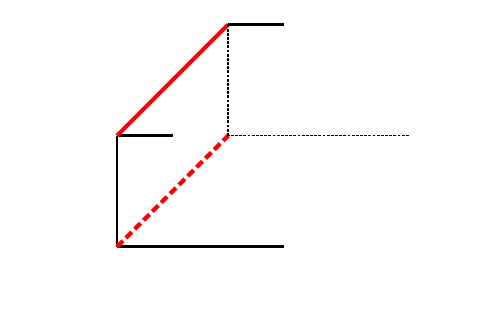}
		           	 	\caption{\label{figure_I_3_2_1} $M = \p^1 \times X_1$}
		           	 \end{figure}
	           	 	
	           	 \item {\bf (I-3-2.2)} \cite[No.30 in Section 12.4]{IP} : Consider $M$ in {\bf (I-3-1.2)}, the blow-up of $V_7$\footnote{Note that $V_7$ denotes the $T^3$-equivariant 
	           	 blow-up of $\p^3$ at a fixed point.} along a $T^3$-invariant sphere passing through the exceptional divisor. In this case, we take another circle subgroup of $T^3$
	           	 generated by $\xi = (-1,-1,0)$. Then we see from Figure \ref{figure_I_3_2_2} that $\mathrm{Vol}(Z_{\min}) = 4$ (and so $b_{\min} = 2$ and $k=1$) and 
	           	 $\mathrm{Vol}(Z_0) = 2$. So, the corresponding fixed point data should equal {\bf (I-3-2.2)}.
	           	 See also Example \ref{example_Sec6_3_4} (2), Example 8.5 \ref{example_IV_2}, and Example \ref{example_Sec8_I_3_1} {\bf (I-3-1.2)}.

	           		 \begin{figure}[H]
	           	 		\scalebox{0.8}{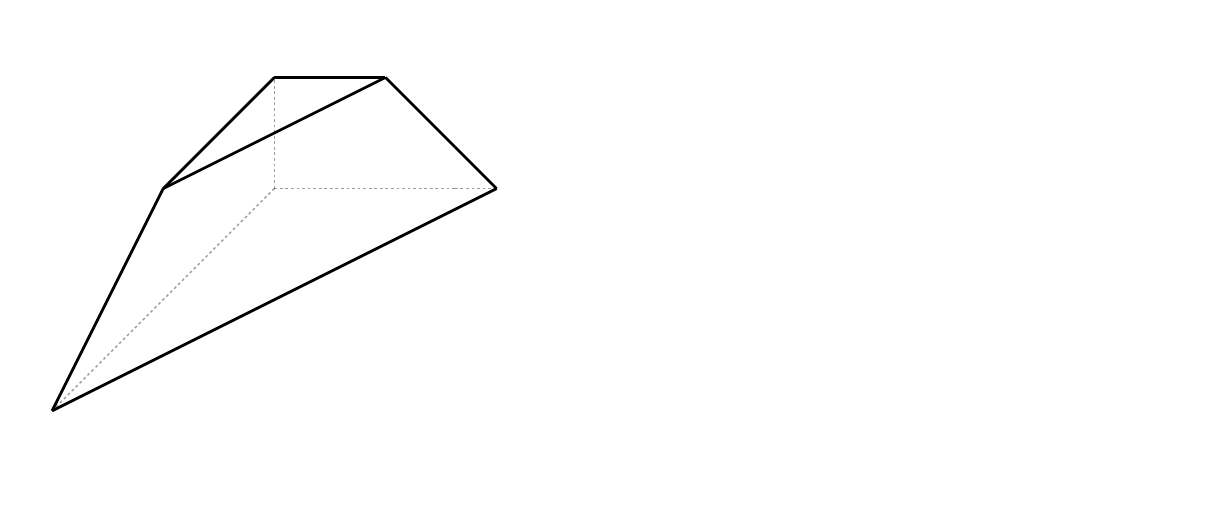}
		           	 	\caption{\label{figure_I_3_2_2} Blow up of $V_7$ along a $T^3$-invariant sphere passing through the exceptional divisor}
		           	 \end{figure}

	           	 \item {\bf (I-3-2.3)} \cite[No.10 in Section 12.5]{IP} : Let $M = \p^1 \times X_2$ with the $T^3$-action induced from the standard $T^3$-action on 
	           	 $\p^1 \times \p^2$. We then take a circle subgroup generated by $\xi = (1,0,1)$ where the moment map image of the fixed point set is depicted by red
	           	 in Figure \ref{figure_I_3_2_3}. One can immediately check that $\mathrm{Vol}(Z_{\min}) = 2$ (so that $b_{\min} = 0$ and $k=0$) and 
	           	 $\mathrm{Vol}(Z_0^1) = \mathrm{Vol}(Z_0^2) = 2$. Thus the fixed point data coincides with {\bf (I-3-2.2)} in Table \ref{table_Sec8_I_3_2}.
	           	 See also Example \ref{example_Sec6_2_4} (2) and Example \ref{example_IV_2}. 
	           	 
	           		 \begin{figure}[H]
	           	 		\scalebox{1}{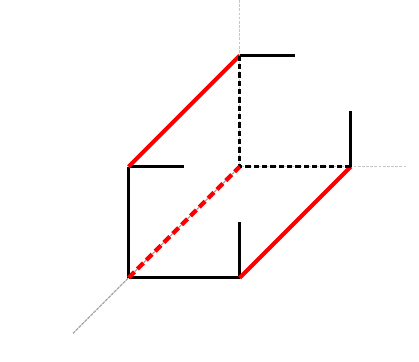}
		           	 	\caption{\label{figure_I_3_2_3} $\p^1 \times X_2$}
		           	 \end{figure}

	           	 \item {\bf (I-3-2.4)} \cite[No.12 in Section 12.5]{IP} : Let $M$ be the $T^3$-equivariant blow-up of $\p^3$ along a $T^3$-invariant line given in 
	           	 Example \ref{example_Sec8_I_1} {\bf (I-1-1.1)} (and {\bf (I-1-2.2)}). 
	           	 Let $\widetilde{M}$ denotes the $T^3$-equivariant blow-up of $M$ along two $T^3$-invariant exceptional lines (that is, lines lying on the exceptional 
	           	 divisor). Then the induced $T^3$-action on $\widetilde{M}$ has a moment map whose image is illustrated in Figure \ref{figure_I_3_2_4} on the right.
	           	 	           	 
	           		 \begin{figure}[H]
	           	 		\scalebox{0.8}{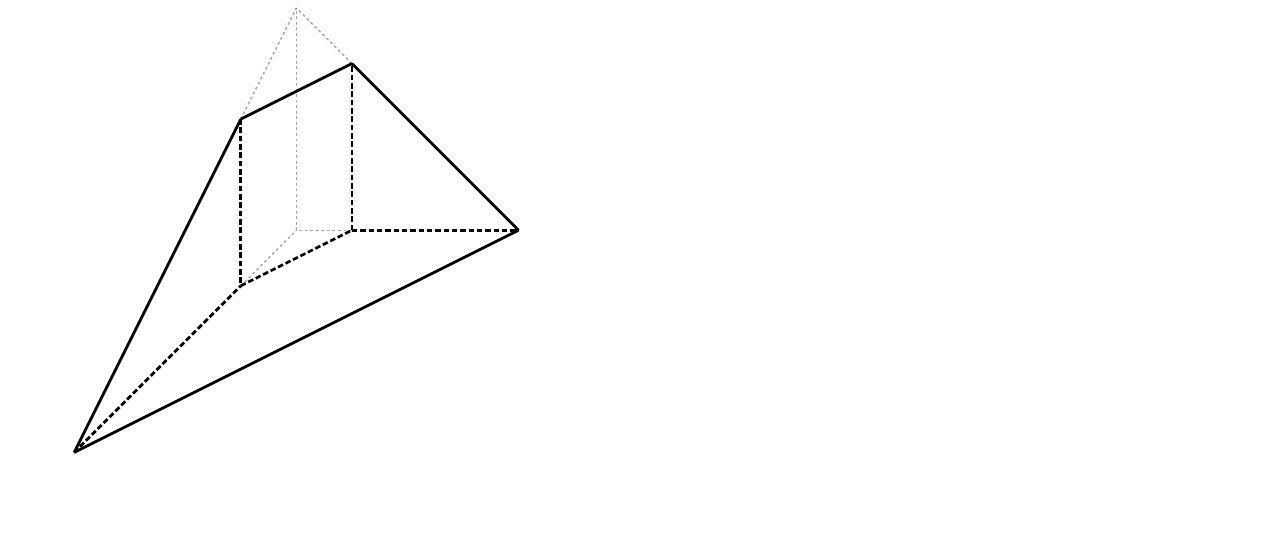}
		           	 	\caption{\label{figure_I_3_2_4} Blow-up of $Y$ along two exceptional lines}
		           	 \end{figure}
		           	 \noindent
	           	 Take a circle subgroup of $T^3$ generated by $\xi = (-1,-1,0)$. Then the action becomes semifree and we can immediately check that 
	           	 $\mathrm{Vol}(Z_{\min}) = 4$ (so that $b_{\min} = 2$ and $k=1$) and $\mathrm{Vol}(Z_0^1) = \mathrm{Vol}(Z_0^2) = 2$. So, the fixed point data of the action
	           	 is equal to {\bf (I-3-2.4)} in Table \ref{table_Sec8_I_3_2}. See also Example \ref{example_Sec6_3_4} (5) and Example \ref{example_IV_2}.\vs{0.3cm}

	           	 \item {\bf (I-3-2.5)} \cite[No.26 in Section 12.4]{IP} : Let $M$ be the $T^3$-equivariant blow-up of $\p^3$ along a disjoint union of a fixed point and a $T^3$-invariant line.
	           	 If we take a circle subgroup generated by $\xi = (-1,-1,0)$, then the action is semifree and the moment map image of the fixed point set can be described as in 
	           	 Figure \ref{figure_I_3_2_5}. The volume of the minimal fixed component $Z_{\min}$ is 2 so that $b_{\min} = 2k = 0$ and also $\mathrm{Vol}(Z_0) = 2$. 
	           	 Therefore the fixed point data for the $S^1$-action equals {\bf (I-3-2.5)} in Table \ref{table_Sec8_I_3_2}.
	           	 See also Example \ref{example_Sec6_3_4} (3) and Example \ref{example_IV_2}.
	           	 
	           		 \begin{figure}[H]
	           	 		\scalebox{0.8}{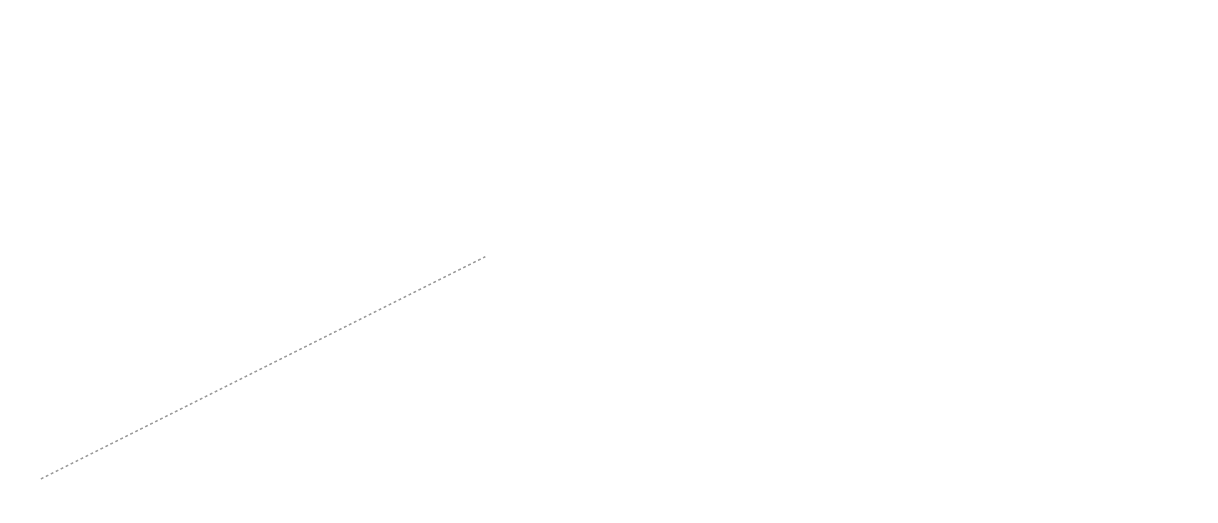}
		           	 	\caption{\label{figure_I_3_2_5} Blow-up of $\p^3$ along a disjoint union of a point and a line}
		           	 \end{figure}

	           	 \item {\bf (I-3-2.6)} \cite[No.24 in Section 12.4]{IP} : Consider the complete flag manifold $\mcal{F}(3)$ with the $T^2$-action induced from the standard $T^3$-action 
	           	 on $\C^3$ where the moment map image, together with the images of $T^2$-invariant spheres, is described on the left of Figure \ref{figure_I_3_2_6}. Let $M$ be the 
	           	 $T^2$-equivariant blow-up of $\mcal{F}(3)$ along a $T^2$-invariant curve $C$ where the corresponding edge is $\overline{(0,0)~(0,2)}$. Then the image of the 
	           	 moment map for the induced $T^2$-action on $M$ is given on the right of Figure \ref{figure_I_3_2_6}. 
	           	 (Note that the red edge $\overline{(1,0)~(1,3)}$ corresponds to the exceptional divisor of the blow-up.) 

	           		 \begin{figure}[H]
	           	 		\scalebox{1}{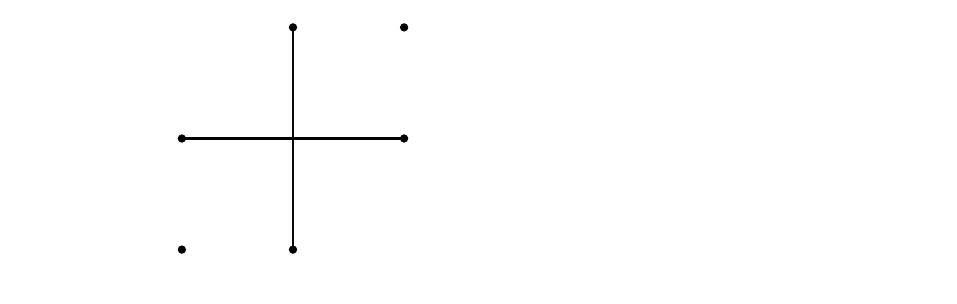}
		           	 	\caption{\label{figure_I_3_2_6} Blow up of $\mcal{F}(3)$ along $C$}
		           	 \end{figure}
	           	 
	           	 Take a circle subgroup of $T^2$ generated by $\xi = (-1,0)$. Then $\mathrm{Vol}(Z_{\min}) = 2$ (so that $b_{\min} = 2k = 0$) and $\mathrm{Vol}(Z_0) = 4$. 
	           	 Therefore the fixed point data for the $S^1$-action should be equal to {\bf (I-3-2.6)} in Table \ref{table_Sec8_I_3_2}.
	           	 See Example \ref{example_IV_2}.
			\vs{0.3cm}	           	 

	           	 \item {\bf (I-3-2.7)} \cite[No.21 in Section 12.4]{IP} : Consider $\p^1 \times \p^2$ with the standard $T^3$-action and consider the $S^1$ subgroup of $T^3$ 
	           	 generated by $\xi = (0,-1,0)$. Then the fixed point set for the $S^1$-action
	           	 can be described as in the first of Figure \ref{figure_I_3_2_7}. The maximal fixed component $Z_{\max}$ is diffeomorphic to  $S^2 \times S^2$. Take 
	           	 a curve $C$ of bidegree $(1,2)$. 
	
           		 \begin{figure}[H]
	           	 		\scalebox{1}{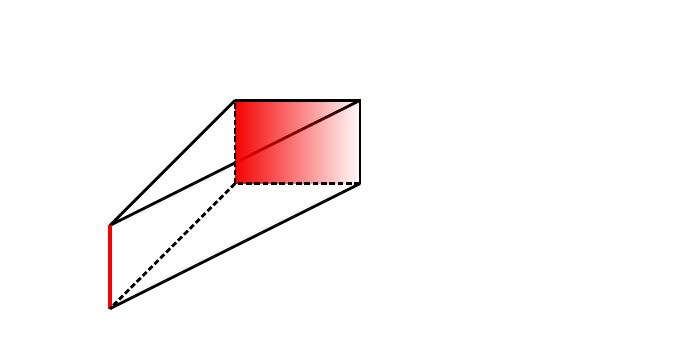}
		           	 	\caption{\label{figure_I_3_2_7} blow-up of $\p^1 \times \p^2$ along a curve of bidegree $(2,1)$}
	           	 \end{figure}
		            \noindent	 
	           	 Let $M$ be the $S^1$-equivariant blow-up of $\p^1 \times \p^2$ along the curve $C$. Since $C$ has volume six in $M_0$ and the blow-up operation does not 
	           	 affect the neighborhood of the minimal fixed component $Z_{\min}$, the volume of $Z_{\min}$ is still 2 and so $b_{\min} = 2k = 0$. This coincides with 
	           	 {\bf (I-3-2.7)} in Table \ref{table_Sec8_I_3_2}. 
	           	 	           	 	
		\end{itemize}
	\end{example}

	\begin{theorem}[Case {\bf (I-4)}]\label{theorem_Sec8_I_4}
		Let $(M,\omega)$ be a six-dimensional closed monotone semifree Hamiltonian $S^1$-manifold such that $\mathrm{Crit} H = \{ 1, 0, -1, -2\}$. 
		Then the list of all possible topological fixed point data is given in the Table \ref{table_Sec8_I_4}
		\begin{table}[h]
			\begin{tabular}{|c|c|c|c|c|c|c|c|c|}
				\hline
				    & $(M_0, [\omega_0])$ & $e(P_{-2}^+)$ & $Z_{-2}$ & $Z_{-1}$ & $Z_0$ & $Z_1$ & $b_2(M)$ & $c_1^3(M)$ \\ \hline \hline
				    {\bf (I-4-1.1)} & \makecell{$(E_{S^2} \# \overline{\p^2},$ \\ $3x + 2y - E_1)$} & $-x-y$  & $S^2$ & $\mathrm{pt}$ & \makecell{ $Z_0 \cong S^2$  \\ $\mathrm{PD}(Z_0) = x + y - E_1$} & 
				    	$X_2$
					     & $4$ & $40$ \\ \hline    
				    {\bf (I-4-1.2)} & \makecell{$(E_{S^2} \# 2\overline{\p^2},$ \\ $3x + 2y - E_1 - E_2)$} & $-x-y$  & $S^2$ & $\mathrm{2 ~pts}$ & \makecell{ $Z_0 \cong S^2$  \\ 
				    $\mathrm{PD}(Z_0) = x + y - E_1 - E_2$} & 
				    	$X_3$
					     & $5$ & $36$ \\ \hline    					     
				    {\bf (I-4.2)} & \makecell{$(S^2 \times S^2 \# \overline{\p^2},$ \\  $2x + 2y - E_1)$} & $-y$  & $S^2$ & $\mathrm{pt}$ & \makecell{ $Z_0 \cong S^2$  \\ $\mathrm{PD}(Z_0) = y - E_1$} & 
				    	$X_2$
					     & $4$ & $44$ \\ \hline    					     					     
			\end{tabular}		
			\vs{0.5cm}			
			\caption{\label{table_Sec8_I_4} Topological fixed point data for $\mathrm{Crit} H = \{1, 0, -1, -2\}$.}
		\end{table}				   
	\end{theorem}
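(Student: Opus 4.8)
The proof will follow the template of Theorems \ref{theorem_Sec8_I_1}--\ref{theorem_Sec8_I_3_2}, the new feature being that a surface fixed component $Z_0$ at level $0$ and a nonempty set $Z_{-1}$ of index-two points at level $-1$ occur simultaneously. Write $m=|Z_{-1}|\ge 1$. As established at the start of Section \ref{ssecCaseIDimZMin2}, $M_0\cong M_1\cong Z_{\max}$ is the $m$-fold blow-up of $M_{-2+\epsilon}$ (crossing the index-two surface $Z_0$ does not change the diffeomorphism type of the reduced space, so all exceptional classes $E_1,\dots,E_m$ persist in $M_1$), and since $(M_{-2+\epsilon},\omega_{-2+\epsilon})$ is monotone four-dimensional it is $S^2\times S^2$ or $E_{S^2}$ according as $b_{\min}$ is even or odd. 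I would run the argument separately for these two possibilities, the $E_{S^2}$ case producing the entries \textbf{(I-4-1.1)}, \textbf{(I-4-1.2)} and the $S^2\times S^2$ case producing \textbf{(I-4.2)}, writing $e(P_{-2}^+)=kx-y$ as in Lemma \ref{lemma_volume} and $\mathrm{PD}(Z_0)=ax+by+\sum_{i=1}^m c_iE_i$.

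First I would assemble the constraints on $(k,a,b,c_1,\dots,c_m)$. Monotonicity of the reduced space (Proposition \ref{proposition_monotonicity_preserved_under_reduction}) fixes $[\omega_0]=c_1(TM_0)$ and gives $\mathrm{Vol}(Z_0)=\langle c_1(TM_0),[Z_0]\rangle\ge 1$ as in \eqref{equation_I_volume}; the Duistermaat-Heckman formula \eqref{equation_I_symplectic_form_level_1} together with Lemma \ref{lemma_Euler_class} expresses $[\omega_1]=[\omega_0]-e(P_0^+)$, which must be a genuine symplectic class on $M_1$, so $\langle[\omega_1]^2,[M_1]\rangle>0$ and, by the Li--Liu $(-1)$-curve theorem, every exceptional class in the list of Lemma \ref{lemma_list_exceptional} has strictly positive $[\omega_1]$-area; moreover $\langle[\omega_t]^2,[M_t]\rangle>0$ for all $t\in[0,1]$ by \eqref{equation_I_symplectic_form_level_t} and the mean value theorem, so no cycle degenerates before the maximum is reached (recall $M_t\cong M_0$ for $t\in(0,1)$); and $k\ge -1$ or $k\ge 0$ according to \eqref{equation_I_k}. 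Since the number of exceptional classes grows with $m$, these linear inequalities already force $m$ to be small; I expect $m\le 2$ when $M_{-2+\epsilon}\cong E_{S^2}$ and $m=1$ when $M_{-2+\epsilon}\cong S^2\times S^2$, by the same mechanism as in Theorem \ref{theorem_Sec8_I_2}, where an exceptional class of the form $y-E_i-E_j$ or $x+y-E_i-E_j$ is forced to have nonpositive area once $m$ is too large.

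Once $m$ is pinned down, the system has only finitely many integral solutions. For each I would read off $[Z_0]\cdot[Z_0]$ and $\mathrm{Vol}(Z_0)$, feed them into the adjunction formula \eqref{equation_adjunction} to obtain the genus of $Z_0$, and — in the cases where $Z_0$ could a priori split — use Lemma \ref{lemma_list_exceptional}, the disjointness relations $[Z_0^i]\cdot[Z_0^j]=0$, and Li's theorem on the nonnegativity of the intersection of a surface of nonnegative self-intersection with an exceptional class to eliminate all but the admissible component configurations, exactly as in the proof of Theorem \ref{theorem_Sec8_I_3_1}. The Chern number $\langle c_1(TM)^3,[M]\rangle$ for each surviving case then follows from Lemma \ref{lemma_I_Chern_number}, and $b_2(M)=m+r+2$ (with $r$ the number of connected components of $Z_0$) from the perfectness of the balanced moment map. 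Matching the surviving data against Table \ref{table_Sec8_I_4} completes the proof; the accompanying Fano realizations can be produced, as in the earlier cases, by equivariant blow-ups of $V_7$, of the blow-up $Y$ of $\p^3$ along a torus-invariant line, and of $\p^1\times X_k$ along invariant curves.

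The main difficulty is bookkeeping rather than conceptual: this is the most elaborate case of Section \ref{secClassificationOfTopologicalFixedPointDataDimZMinGeq2AndDimZMax4}, with $m+3$ free parameters instead of the three of Theorems \ref{theorem_Sec8_I_3_1} and \ref{theorem_Sec8_I_3_2}, so one must check a longer list of exceptional classes on the del Pezzo surface $M_1$ to be certain the classification is exhaustive. One also has to justify carefully that passing the index-two surface $Z_0$ leaves $M_0\cong M_1$ and changes the Euler class by precisely $\mathrm{PD}(Z_0)$ — this is Lemma \ref{lemma_Euler_class} applied in the present setting, already used implicitly in Theorem \ref{theorem_Sec6_1_3} — since everything downstream depends on it.
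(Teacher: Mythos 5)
Your proposal follows essentially the same route as the paper's proof: split on whether $M_{-2+\epsilon}$ is $E_{S^2}$ or $S^2\times S^2$, parametrize $e(P_{-2}^+)=kx-y$ and $\mathrm{PD}(Z_0)=ax+by+\sum c_iE_i$, bound $m$ via positivity of the $[\omega_1]$-areas of the exceptional classes from Lemma \ref{lemma_list_exceptional}, and finish with the adjunction formula and Lemma \ref{lemma_I_Chern_number}; the bounds $m\le 2$ (resp.\ $m=1$) and the final data all match. The only part you leave schematic is the elimination of $4\le m\le 7$, which in the paper requires averaging the area inequalities over all quadruples for the classes $2x+y-E_{i_1}-\cdots-E_{i_4}$ and a separate check of $6x+3y-2(E_1+\cdots+E_7)$ at $m=7$, but you correctly flag that the longer list of exceptional classes must be checked, so this is a matter of bookkeeping rather than a gap.
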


	\begin{proof}
		 Let $m = |Z_{-1}|$ so that $M_0$ is the $m$-times blow-up of $M_{-2+\epsilon}$ with the exceptional divisors whose dual classes are 
		 denoted by $E_1, \cdots, E_m$. 
		 Let
		 \[
		 	\mathrm{PD}(Z_0) = ax + by + \sum_{i=1}^m c_i E_i, \quad \quad a,b,c_1, \cdots, c_m \in \Z.
		 \]
		 We divide the proof into two cases for $M_{-2 + \epsilon}$: $E_{S^2}$ and $S^2 \times S^2$.\vs{0.3cm}
		 
  		 \noindent
    		 {\bf (I-4-1) :} $M_{-2 + \epsilon} \cong E_{S^2}$ \vs{0.3cm}
		
		\noindent 
		In this case, we have 
		\[
			b_{\min} = 2k + 1, \quad \quad [\omega_1] = (3 - a - k)x + (3 - b)y - \sum_{i=1}^m (c_i + 2) E_i.
		\]
		Then we have the following inequalities:
		\begin{enumerate}
			\item $2a + b + \sum_{i=1}^m c_i \geq 1$ \quad ($\because$ ~ $\mathrm{Vol}(Z_0) \geq 1$)
			\item $a + k \leq 2$ \quad ($\because$ ~ $\int_{M_t} \omega_t^2 > 0$)
			\item $b \leq 2$ \quad ($\because$ ~ $\int_{M_t} \omega_t^2 > 0$)
			\item $c_i \geq -1$ \quad ($\because$ ~ $E_i$ does not vanish on $M_t$)
			\item $b+c_i \leq 0$ \quad ($\because$ ~ $x - E_i$ does not vanish on $M_t$)
			\item $k+a-b \leq -1$ \quad ($\because$ ~ $y$ does not vanish on $M_t$)
			\item $k \geq -1$ \quad (by \eqref{equation_I_k})
		\end{enumerate}
		Combining these inequalities, we have 
		\begin{equation}\label{equation_I_4}
			a \leq b \leq 1
		\end{equation}
		where the first one comes from (6) and (7), and the latter one follows from (4),(5).
		
		We claim that $m \leq 2$. If $m=3$, then (1) and (5) imply that 
		\[
			2a - 2b \geq 2a - 2b + 3b + \sum_{i=1}^3 c_i \geq 1
		\]
		which contradicts that $a - b \leq 0$ in \eqref{equation_I_4}.
		Also if $m \geq 4$, a new exceptional divisor $C$ appears where $\mathrm{PD}(C) = 2x + y - E_{i_1} - E_{i_2} - E_{i_3} - E_{i_4}$
		for any distinct $i_1, i_2, i_3, i_4$ as in Lemma \ref{lemma_list_exceptional}.
		Then 
		\[
			\mathrm{Vol}(C) = \langle (2x + y - E_{i_1} - E_{i_2} - E_{i_3} - E_{i_4}) \cdot [\omega_1], [M_1] \rangle = -2 - a - b - k - \sum_{j=1}^4 c_{i_j} \geq 1
		\]
		which implies that 
		\[
			-3 \geq a + b + k + \sum_{j=1}^4 c_{i_j}. 
		\]		
		Summing those equations for all possible quadruples $(i_1, i_2, i_3, i_4)$, we obtain
		\[
			-3 {m \choose 4} \geq {m \choose 4}a + {m \choose 4}b + {m \choose 4}k + {{m-1} \choose 3} \sum_{i=1}^m c_i		
		\]
		which can be simplified as 
		\begin{equation}\label{equation_imsi}
			\begin{array}{ccl}\vs{0.1cm}
				-3m & \geq & ma + mb + mk + 4\sum_{i=1}^m c_i \\ \vs{0.1cm}
					& = &  4(2a + b + k + \sum_{i=1}^m c_i) + (m-8)a + (m-4)b + (m-4)k \\ \vs{0.1cm}
					& \geq & \ds (m-4)(b-a) + (2m - 12)a - (m-4) \\ \vs{0.1cm}
					& \geq & \ds (2m - 12)a - (m-4).
			\end{array}
		\end{equation}
		by (1) and (7). So, $-2m - 4 \geq (2m - 12)a$ and it is impossible when $4\leq m\leq 6$ since $a \leq 1$ by \eqref{equation_I_4}. 
		Therefore the only possible case is when $m=7$. However in this case, a new divisor appears, namely $D$, where 
		$\mathrm{PD}(D) = 6x + 3y - 2(E_1 + \cdots + E_7)$ by Lemma \ref{lemma_list_exceptional}.
		 Applying the inequality $\langle \mathrm{PD}(D) \cdot [\omega_1], [M_1] \rangle \geq 1$,(1),(7), and \eqref{equation_I_4},
		we get 
		\[
			-11 \geq 3(a+k+b) + 2\sum_{i=1}^7 c_i \geq -a + b + k \geq -1
		\]
		which leads to a contradiction.

		Consequently, we have $m = 1$ or $2$. If $m=1$, then by (1), (5), and \eqref{equation_I_4}, we have $a=1, b=1$ (and hence $k = -1$ by (6) and (7)). Also $c = -1$ by (4) and (5).
		So, 
		\[
			\mathrm{PD}(Z_0) = x + y - E_1, \quad Z_0 \cong S^2 ~\text{by the adjunction formula.}
		\]
		See Table \ref{table_Sec8_I_4}: {\bf (I-4-1.1)}. 
		
		Finally, when $m=2$, we similarly obtain $a=b=1$ by (1),(5), and \eqref{equation_I_4}, and hence $c_1 = c_2 = -1$ 
		by (4),(5), and $k=-1$ by (6),(7). Thus
		\[
			\mathrm{PD}(Z_0) = x+ y - E_1 - E_2, \quad Z_0 \cong S^2
		\]
		by the adjunction formula again. See Table \ref{table_Sec8_I_4}: {\bf (I-4-1.2)}.
		\vs{0.3cm}
		
		\noindent
		 {\bf (I-4-2) :} $M_{-2 + \epsilon} \cong S^2 \times S^2$ \vs{0.3cm}
		
		\noindent 
		In this case, we have 
		\[
			b_{\min} = 2k \quad (k \geq 0 ~\text{by \eqref{equation_I_k}}), \quad \quad [\omega_1] = (2 - a - k)x + (3 - b)y - \sum_{i=1}^m (c_i + 2) E_i.
		\]		
		We similarly have the following inequalities :
		\begin{enumerate}
			\item $2a + 2b + \sum_{i=1}^m c_i \geq 1$ \quad ($\mathrm{Vol}(Z_0) \geq 1$.)
			\item $1\geq a+k$ \quad ($\int_{M_t} [\omega_t]^2 > 0$.)
			\item $2 \geq b$ \quad ($\int_{M_t} [\omega_t]^2 > 0$.)
			\item $c_i  \geq -1$ \quad($E_i$ does not vanish on $M_t$)
			\item $b+c_i \leq 0$ \quad ($x - E_i$ does not vanish on $M_t$)
			\item $k+a+c_i \leq -1$ \quad ($y-E_i$ does not vanish on $M_t$)
			\item $k \geq 0$ 
		\end{enumerate}
		Note that by (4),(5),(6),(7), we have 
		\begin{equation}\label{equation_I_4_2}
			a \leq 0, \quad b \leq 1, \quad \underbrace{a+b \leq 0}_{\Leftrightarrow ~(a,b) \neq (0,1)} ~(\text{in case of}~ m \geq 2)
		\end{equation}
		otherwise it contradicts the equation (1),(4),(5).

		We claim that $m=1$. If $m=2$, then (1) and (5) imply that 
		\[
			1 \leq 2a + 2b + c_1 + c_2 \leq 2a 
		\]
		which contradicts that $a \leq 0$ in \eqref{equation_I_4_2}.
		
		Now we assume that $m \geq 3$ (and $m \leq 7$ by \eqref{equation_number_of_points}). 
		Then we have an exceptional divisor $C$ with 
		$\mathrm{PD}(C) = x + y - E_i - E_j - E_k$ for distinct $i,j,k$ by Lemma \ref{lemma_list_exceptional}. 
		The condition $\langle [\omega_1], C \rangle > 0$ implies that 
		\[
			k + a + b + c_i + c_j + c_k \leq -2
		\] for each triple $(i,j,k)$. By summing these inequalities, we obtain
		\[
			{{m} \choose {3}} k+ {{m} \choose {3}} a + {{m} \choose {3}} b + {{m-1} \choose {2}} \sum_{i=1}^m c_i \leq -2 {{m} \choose {3}}
		\] 
		or equivalently, $mk + ma + mb + 3\sum_{i=1}^m c_i \leq -2m$.
		Then, 
		\[
			\begin{array}{ccl}\vs{0.2cm}
				-2m & \geq & 3(2a + 2b + \sum_{i=1}^m c_i) + (m-6)a + (m-6)b +mk  \\ \vs{0.2cm}
					& \geq & 3 + (m-6)(a+b) 
			\end{array}
		\]
		which is possible unless $m=7$ (since $a + b \leq 0$ by \eqref{equation_I_4_2}). When $m=7$, we have an exceptional divisor $D$ with 
		$\mathrm{PD}(D) = 4x + 3y - E_1 - 2(E_2 + \cdots + E_7)$, the last item in the list of Lemma \ref{lemma_list_exceptional}. Then 
		\[
			\langle \mathrm{PD}(D) \cdot [\omega_1], [M_1] \rangle = -8 - 3(a+k) - 4b + c_1 - 2\sum_{i=1}^7 c_i \geq 1.
		\]
		So, by (1), (6), and (7), we have 
		\[
			-9 \geq -c_1 + 3(a+k) + 4b + 2\sum_{i=1}^7 c_i \geq -c_1 - a + 2 \geq k + 1 + 2 \geq 3
		\] which leads to a contradiction.
		
		Therefore, we have $m=1$. By (1),(5), and \eqref{equation_I_4_2}, we obtain
		\[
			1 - c \leq 2a + 2b \leq 2a - 2c \leq -2c, \quad \quad (\Rightarrow ~c \leq -1)
		\]
		which implies that $c = -1$ by (4). Then (1) and \eqref{equation_I_4_2} induces $a = 0, b=1$ (and hence $k=0$ by (6) and (7)). So, 
		\[
			\mathrm{PD}(Z_0) = y - E_1, \quad Z_0 \cong S^2
		\]
		by the adjunction formula. See Table \ref{table_Sec8_I_4}: {\bf (I-4-2)}. This completes the proof.
		
	\end{proof}
			
	\begin{example}[Fano varieties of type {\bf (I-4)}]\label{example_Sec8_I_4}  We describe Fano varieties of type {\bf (I-4)} in Theorem \ref{theorem_Sec8_I_4} as follows. 
		
		\begin{itemize}
	           	 \item {\bf (I-4-1.1)} \cite[No.9 in Section 12.5]{IP}: Let $M$ be the toric variety given in Example \ref{example_Sec8_I_3_1} {\bf (I-3-1.4)}; it is the 
	           	 $T^3$-equivariant blow-up of $\p^3$ along two $T^3$-invariant spheres whose moment polytope is given on the left of Figure \ref{figure_I_4_1_1}.
	           	 Let $\widetilde{M}$ be the $T^3$-equivariant blow-up of $M$ along a $T^3$-equivariant exceptional sphere. Then the corresponding moment polytope
	           	 can be described as on the right of Figure \ref{figure_I_4_1_1}. 

	           		 \begin{figure}[H]
	           	 		\scalebox{0.8}{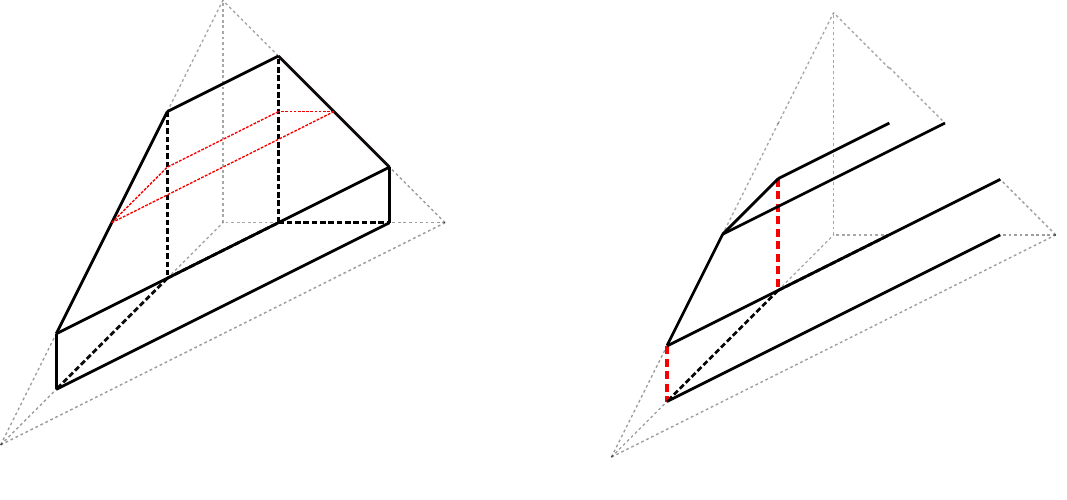}
		           	 	\caption{\label{figure_I_4_1_1} Blow-up of $M$ (in {\bf (I-3-1.4)}) along an exceptional curve}
		           	 \end{figure}
	           	 \noindent
	           	 Take the circle subgroup of $T^3$ generated by $\xi = (0,-1,0)$. The red faces correspond to the fixed components of the action and we have 
	           	 $\mathrm{Vol}(Z_{\min}) = 1$ (and so $b_{\min} = -1$ and $k = -1$). Also, there is a one isolated fixed point of index two and $Z_0$ is a sphere 
	           	 having volume 2. Therefore, the fixed point data should be equal to {\bf (I-4-1.1)} in Table \ref{table_Sec8_I_4}.
	           	 See also Example \ref{example_IV_1} (4).
			\vs{0.3cm}	           	 
	           	 	
	           	 \item {\bf (I-4-1.2)} \cite[No.2 in Section 12.6]{IP} : Let $M$ be the same as in Example \ref{example_Sec8_I_3_1} {\bf (I-3-1.4)}. 
	           	 Now, we let $\widetilde{M}$ be the $T^3$-equivariant 
	           	 blow-up of $M$ along two $T^3$-invariant exceptional spheres lying on the same exceptional component where the moment polytope for the induced $T^3$-action
	           	 is described in Figure \ref{figure_I_4_1_2}.
	           	 
	           		 \begin{figure}[H]
	           	 		\scalebox{0.8}{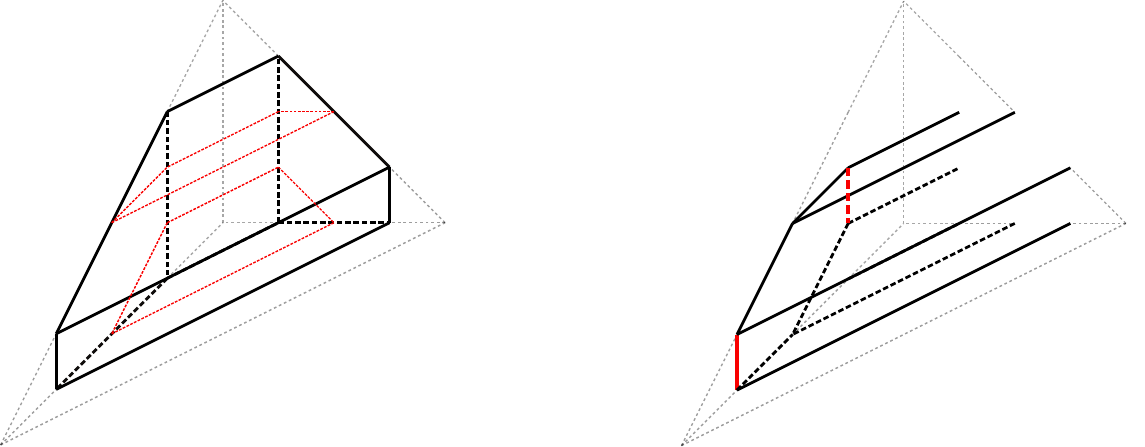}
		           	 	\caption{\label{figure_I_4_1_2} Blow-up of $M$ along two invariant spheres on the same exceptional components}
		           	 \end{figure}
			\noindent
			Take the circle subgroup generated by $\xi = (0,-1,0)$. One can check that the action is semifree and the fixed components corresponds to the red faces 
			in Figure \ref{figure_I_4_1_2}. We can check at a glance that the volume of $Z_{\min}$ and $Z_0$ are both 1 and there are two isolated fixed points 
			of index two. Thus the fixed point data for the $S^1$-action coincides with {\bf (I-4-1.2)} in Table \ref{table_Sec8_I_4}. 
			See also Example \ref{example_IV_1} (1). \vs{0.3cm}
	           	 	
	           	 \item {\bf (I-4-2)} \cite[No.11 in Section 12.5]{IP} :  $M$ be given in {\bf (I-3-1.3)} in Example \ref{example_Sec8_I_3_1}, the $T^3$-equivariant blow-up of 
	           	 $\p^1 \times X_1$
	           	 along $t \times E$ where $t \in \p^1$ is the fixed point for the $S^1$-action and $E$ is the exceptional curve in $X_1$. 
	           	 
	           	 Take the circle subgroup
	           	 generated by $\xi = (-1,0,0)$. Then the action is semifree and the fixed point set can be illustrated by the red faces in Figure \ref{figure_I_4_2}. One can immediately
	           	 check that $\mathrm{Vol}(Z_{\min}) = 2$ (so that $b_{\min} = 2k = 0$)
	           	  and $\mathrm{Vol}(Z_0) = 1$. Also there is exactly one isolated fixed point of index two. Thus the fixed point data for the action
	           	 should be equal to {\bf (I-4-2)} in Table \ref{table_Sec8_I_4}. 
			See also Example \ref{example_Sec6_2_4} (1) and Example \ref{example_Sec8_I_3_1} {\bf (I-3-1.3)}. \vs{0.3cm}
	           	 
	           		 \begin{figure}[H]
	           	 		\scalebox{0.8}{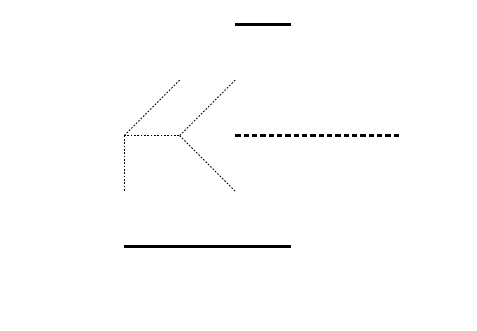}
		           	 	\caption{\label{figure_I_4_2} Blow-up of $\p^1 \times X_1$ along $t \times E$}
		           	 \end{figure}

		\end{itemize}
	
	\end{example}			

\subsection{Case II : $\dim Z_{\min} = 4$}
\label{ssecCaseIIDimZMin4}

	In this section we deal with the case where $\dim Z_{\min} = \dim Z_{\max} = 4$. 
	Since the only possible interior critical value is $0$, there is no isolated fixed point and $Z_{\min} \cong Z_0 \cong Z_{\max}$.
	Also, all possible candidates of $M_0$ are $\C P^2, S^2 \times S^2,$ or $X_k$ (blow-up of $\p^2$ at $k$  generic points) for $1 \leq k \leq 8$
	by Proposition \ref{proposition_monotonicity_preserved_under_reduction}. \vs{0.1cm}
	
	We begin with the following lemma for the computation of Chern numbers. 
		
	\begin{lemma}\label{lemma_II_Chern_number}
		Suppose that $\dim Z_{\min} = 4$ and $\dim Z_{\max} = 4$. Then
		\[
			\int_M c_1(TM)^3 =  \langle 2e^2 + 6c_1^2 - 3c_1[Z_0] + 2e[Z_0] + [Z_0]^2, [M_0] \rangle
		\]
		where $e := e(P_{-1 + \epsilon}^+)$, $c_1 := c_1(TM_0)$, and $[Z_0] := \mathrm{PD}(Z_0)$. 
	\end{lemma}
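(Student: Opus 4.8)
The plan is to run the Atiyah--Bott--Berline--Vergne localization argument on $c_1^{S^1}(TM)^3\in H^6_{S^1}(M)$, exactly as in the proof of Lemma \ref{lemma_I_Chern_number}. By Corollary \ref{corollary : localization degree 2n} the left-hand side of Theorem \ref{theorem_localization} equals $\langle c_1(TM)^3,[M]\rangle$, so it suffices to evaluate the three groups of contributions coming from $Z_{\min}$, the level-$0$ fixed components, and $Z_{\max}$.

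First I would record the relevant geometry. By Lemma \ref{lemma_possible_critical_values} the only non-extremal critical value is $0$, and its fixed components have dimension $2$ and index $2$ (weights $(1,-1,0)$), while $Z_{\min}$ (level $-1$) and $Z_{\max}$ (level $1$) are four-dimensional with rank-one normal bundles of weight $+1$ and $-1$ respectively. Crossing $0$ amounts to blowing $M_0$ up/down along the divisors $Z_0^i$, which changes nothing in a surface, and near a rank-one extremal fixed component the reduced space is that fixed component; hence $M_{-1+\epsilon}\cong M_0\cong M_{1-\epsilon}$, with $M_{-1+\epsilon}\cong Z_{\min}$ and $M_{1-\epsilon}\cong Z_{\max}$. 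Consequently $c_1(TZ_{\min})=c_1(TZ_{\max})=c_1(TM_0)=c_1$, and Lemma \ref{lemma_Euler_class} gives $e(P_1^-)=e(P_0^+)=e(P_0^-)+\mathrm{PD}(Z_0)=e+[Z_0]$.

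Then I would compute term by term. For each $Z_0^i$ the weights sum to $0$, so $c_1^{S^1}(TM)|_{Z_0^i}=c_1(TM)|_{Z_0^i}$ is a degree-two class on the surface $Z_0^i$ and its cube vanishes; these contribute nothing, just as in Lemma \ref{lemma_I_Chern_number}. For $Z_{\min}$, Proposition \ref{proposition_equivariant_Chern_class} gives $c_1^{S^1}(TM)|_{Z_{\min}}=c_1+e+\lambda$ while the equivariant Euler class of the normal bundle is $\lambda+e$; expanding $(c_1+e+\lambda)^3/(\lambda+e)$ formally in $\lambda^{-1}$, discarding classes of degree $>4$ on the four-manifold $Z_{\min}$ and keeping the $\lambda^0$-part, yields $\langle 3c_1^2+3c_1e+e^2,[M_0]\rangle$. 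For $Z_{\max}$ the same calculation with weight $-1$ and $e':=e(P_1^-)=e+[Z_0]$ gives $c_1^{S^1}(TM)|_{Z_{\max}}=c_1-e'-\lambda$, equivariant Euler class $-e'-\lambda$, and contribution $\langle 3c_1^2-3c_1e'+e'^2,[M_0]\rangle$. Adding the two surviving contributions and substituting $e'=e+[Z_0]$ gives
\[
\langle c_1(TM)^3,[M]\rangle=\langle 6c_1^2+2e^2-3c_1[Z_0]+2e[Z_0]+[Z_0]^2,[M_0]\rangle,
\]
which is the assertion.

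The only genuinely delicate point — and the only place an error could creep in — is sign bookkeeping: one must take the equivariant Euler classes and the identifications $c_1(\nu_{Z_{\min}})=e$, $c_1(\nu_{Z_{\max}})=-e(P_1^-)$ with the orientation conventions fixed in Lemma \ref{lemma_I_Chern_number} and Lemma \ref{lemma_volume}, and check that the diffeomorphisms $M_{-1+\epsilon}\cong Z_{\min}$, $M_{1-\epsilon}\cong Z_{\max}$ intertwine $c_1(TM_0)$ with $c_1(TZ_{\min})$ and $c_1(TZ_{\max})$. Everything else is the routine degree-counting already carried out in the proof of Lemma \ref{lemma_I_Chern_number}.
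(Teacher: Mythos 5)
Your proposal is correct and follows essentially the same route as the paper: apply ABBV localization to $c_1^{S^1}(TM)^3$, note that the $Z_0$ contribution vanishes since $\bigl(c_1^{S^1}(TM)|_{Z_0}\bigr)^3=0$ on a surface, and evaluate the $Z_{\min}$ and $Z_{\max}$ integrands $\frac{(\lambda+e+c_1)^3}{\lambda+e}$ and $\frac{(-\lambda-(e+[Z_0])+c_1)^3}{-\lambda-(e+[Z_0])}$ using $e(P_1^-)=e+[Z_0]$ from Lemma \ref{lemma_Euler_class}. Your term-by-term contributions $\langle 3c_1^2+3c_1e+e^2,[M_0]\rangle$ and $\langle 3c_1^2-3c_1e'+e'^{\,2},[M_0]\rangle$ agree with the paper's computation and sum to the stated formula.
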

	
	\begin{proof}
		It is straightforward from the localization theorem \ref{theorem_localization}:
		\[
			\begin{array}{ccl}\vs{0.3cm}
				\ds \int_M c_1^{S^1}(TM)^3 & = &  \ds  
							\int_{Z_{\min}} \frac{\left(c_1^{S^1}(TM)|_{Z_{\min}}\right)^3}{e_{Z_{\min}}^{S^1}} 
							+ \int_{Z_0} \frac{\overbrace{\left(c_1^{S^1}(TM)|_{Z_0}\right)^3}^{= 0}}{e_{Z_0}^{S^1}}
							 + \int_{Z_{\max}} \frac{\left(c_1^{S^1}(TM)|_{Z_{\max}}\right)^3}
							 {e_{Z_{\max}}^{S^1}} \\ \vs{0.2cm}
							 & = & \ds \int_{Z_{\min}} \frac{(\lambda + e + c_1)^3}{\lambda + e}  + 
							 \int_{Z_{\max}} \frac{(-\lambda - (e + [Z_0]) + c_1)^3} {-\lambda - (e + [Z_0]) }  \\ \vs{0.2cm}
							& = &  \langle 2e^2 + 6c_1^2 - 3c_1[Z_0] + 2e[Z_0] + [Z_0]^2, [M_0] \rangle
			\end{array}			
		\]
	\end{proof}

	We divide the classification into two cases: $\mathrm{Crit} \mathring{H} = \emptyset$ {\bf (II-1)} and $\mathrm{Crit} \mathring{H} = \{0\}$ {\bf (II-2)}.
	Note that, since the moment map can be reversed by taking $-\omega$ instead of $\omega$, we may assume that
	\begin{equation}\label{equation_II_1_assumption}
		\mathrm{Vol}(Z_{\min}) \geq \mathrm{Vol}(Z_{\max}).
	\end{equation}
	We first consider the case that $\mathrm{Crit} \mathring{H} = \emptyset$. 		
	
	\begin{theorem}[Case {\bf (II-1)}]\label{theorem_Sec8_II_1}
		Let $(M,\omega)$ be a six-dimensional closed monotone semifree Hamiltonian $S^1$-manifold such that $\mathrm{Crit} H = \{ 1,-1\}$. 
		Then the list of all possible topological fixed point data is given in the Table \ref{table_Sec8_II_1}
		\begin{table}[h]
			\begin{tabular}{|c|c|c|c|c|c|c|c|c|}
				\hline
				    & $(M_0, [\omega_0])$ & $e(P_{-1}^+)$ & $Z_{-1}$  & $Z_0$ &  $Z_1$ & $b_2(M)$ & $c_1^3(M)$ \\ \hline \hline
				    {\bf (II-1-1.1)} & $(\p^2, 3u)$ & $0$  & $\p^2$  & & 
				    	$\p^2$
					     & $2$ & $54$ \\ \hline    
				    {\bf (II-1-1.2)} & $(\p^2, 3u)$ & $u$  & $\p^2$  && 
				    	$\p^2$
					     & $2$ & $56$ \\ \hline    
				    {\bf (II-1-1.3)} & $(\p^2, 3u)$ & $2u$  & $\p^2$  && 
				    	$\p^2$
					     & $2$ & $62$ \\ \hline    
					     					     					     
				    {\bf (II-1-2.1)} & $(S^2 \times S^2, 2x + 2y)$ & $0$  & $S^2 \times S^2$  && 
				    	$S^2 \times S^2$
					     & $3$ & $48$ \\ \hline    					     
				    {\bf (II-1-2.2)} & $(S^2 \times S^2, 2x + 2y)$ & $x$  & $S^2 \times S^2$ && 
				    	$S^2 \times S^2$
					     & $3$ & $48$ \\ \hline    					     
				    {\bf (II-1-2.3)} & $(S^2 \times S^2, 2x + 2y)$ & $x+y$  & $S^2 \times S^2$ && 
				    	$S^2 \times S^2$
					     & $3$ & $52$ \\ \hline    					     
				    {\bf (II-1-2.4)} & $(S^2 \times S^2, 2x + 2y)$ & $x-y$  & $S^2 \times S^2$ && 
				    	$S^2 \times S^2$
					     & $3$ & $44$ \\ \hline    					     
					     					     					     					     
				    {\bf (II-1-3.1)} & $(E_{S^2}, 3x + 2y)$ & $0$  & $E_{S^2}$ && 
				    	$E_{S^2}$
					     & $3$ & $48$ \\ \hline    					     					     
				    {\bf (II-1-3.2)} & $(E_{S^2}, 3x + 2y)$ & $x + y$  & $E_{S^2}$ && 
				    	$E_{S^2}$
					     & $3$ & $50$ \\ \hline    					     					     
					     					     
				    \makecell{{\bf (II-1-4.k)} \\ {\bf k = 2$\sim$8}} & $(X_k, 3u - \sum_{i=1}^k E_i)$ & $0$  & $X_k$ && 
				    	$X_k$
					     & $k+2$ & $54-6k$ \\ \hline    					     					     					     
			\end{tabular}		
			\vs{0.5cm}			
			\caption{\label{table_Sec8_II_1} Topological fixed point data for $\mathrm{Crit} H = \{1,-1\}$.}
		\end{table}				   
	\end{theorem}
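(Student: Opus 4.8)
The plan is to exploit that $\mathrm{Crit}\,H = \{1,-1\}$ has no interior critical value, so that (by the discussion in Section~\ref{ssecDuistermaatHeckmanTheorem}) the gradient flow of $H$ identifies all reduced spaces $M_t$, $t\in(-1,1)$, with one closed symplectic four-manifold, which by Proposition~\ref{proposition_monotonicity_preserved_under_reduction} is monotone with $[\omega_0]=c_1(TM_0)$; by the Ohta--Ono theorem \cite{OO2} this common reduced space $M_0$ is diffeomorphic to $\p^2$, $S^2\times S^2$, or $X_k$ for some $1\le k\le 8$ (with $X_1=E_{S^2}$). Since $\dim Z_{\min}=\dim Z_{\max}=4$, the reductions near the two ends collapse onto $Z_{\min}$ and $Z_{\max}$ (which are connected by Corollary~\ref{corollary_properties_moment_map}), so $Z_{\min}\cong M_0\cong Z_{\max}$ and the forms $\omega|_{Z_{\min}}$, $\omega|_{Z_{\max}}$ are the $t\to-1^+$ and $t\to1^-$ limits of $\omega_t$. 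First I would set $e:=e(P_{-1}^+)\in H^2(M_0;\Z)$; the Duistermaat--Heckman theorem~\ref{theorem_DH} then gives $[\omega_t]=c_1(TM_0)-t\,e$ for all $t\in[-1,1]$ (the endpoint values recording the cohomology classes of $\omega|_{Z_{\min}}$ and $\omega|_{Z_{\max}}$).

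The engine of the classification is positivity. For $t\in(-1,1)$ the class $[\omega_t]$ is a genuine symplectic class on $M_0$, and for $t=\pm1$ it is the class of a symplectic form on $Z_{\min}\cong M_0$ or $Z_{\max}\cong M_0$; in all cases it pairs strictly positively with every class represented by an embedded symplectic surface. By the $(-1)$-curve theorem of Li--Liu \cite[Theorem A]{LL}, every exceptional class $E$ of $M_0$ (listed in Lemma~\ref{lemma_list_exceptional}) is represented by an embedded symplectic sphere in $(M_0,\omega_t)$, so $\langle[\omega_t],E\rangle>0$ for all $t\in[-1,1]$, and likewise $\langle[\omega_t]^2,[M_0]\rangle>0$; moreover, after possibly replacing $\omega$ by $-\omega$ (which reverses $H$ and swaps the two ends) we may assume \eqref{equation_II_1_assumption}, i.e. $\langle[\omega_{-1}]^2,[M_0]\rangle\ge\langle[\omega_1]^2,[M_0]\rangle$. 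Evaluating these linear inequalities at $t=\pm1$ cuts out a finite list of admissible Euler classes $e$ in each case: on $\p^2$, writing $e=cu$ gives $c\in\{-2,\dots,2\}$ from positivity and then $c\in\{0,1,2\}$ from \eqref{equation_II_1_assumption}; on $S^2\times S^2$, writing $e=px+qy$ and using the two ruling classes, the factor-swapping symmetry, and \eqref{equation_II_1_assumption} leaves $(p,q)\in\{(0,0),(1,0),(1,1),(1,-1)\}$; on $E_{S^2}$, writing $e=\alpha x+\beta y$ with $x^2=0$, $y^2=-1$, positivity on the unique exceptional class and on the fibre forces $\alpha=\beta$ with $|\alpha|\le1$, and \eqref{equation_II_1_assumption} gives $\alpha\in\{0,1\}$; and on $X_k$ with $k\ge2$, positivity of $\langle[\omega_{\pm1}],E_i\rangle$ forces every $E_i$-coefficient of $e$ to vanish, after which positivity of $\langle[\omega_{\pm1}],u-E_i-E_j\rangle$ forces the $u$-coefficient to vanish, so $e=0$ (and $\langle c_1(TM_0)^2,[M_0]\rangle=9-k>0$ records $k\le8$). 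This reproduces the four groups of rows in Table~\ref{table_Sec8_II_1}.

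The remaining invariants are routine. Since $H$ is a perfect Morse--Bott function with $\mathrm{ind}(Z_{\min})=0$ and $\mathrm{ind}(Z_{\max})=2$ (Lemma~\ref{lemma_possible_critical_values}), one has $P_M(t)=(1+t^2)(1+b_2(M_0)t^2+t^4)$, hence $b_2(M)=b_2(M_0)+1$; and since there is no fixed component at level $0$, Lemma~\ref{lemma_II_Chern_number} with $\mathrm{PD}(Z_0)=0$ gives $\langle c_1(TM)^3,[M]\rangle=\langle 2e^2+6\,c_1(TM_0)^2,[M_0]\rangle$, which one evaluates in each case using $\langle e^2,[M_0]\rangle=c^2$ for $\p^2$, $=2pq$ for $S^2\times S^2$, $=\alpha^2$ for $E_{S^2}$, and $=0$ for $X_k$, yielding $54+2c^2$, $48+4pq$, $48+2\alpha^2$, and $54-6k$ respectively, matching the table.

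I expect the main subtlety to lie in the positivity input rather than the enumeration. One must justify that the reduced form degenerates at the two ends precisely to $\omega|_{Z_{\min}}$ and $\omega|_{Z_{\max}}$ (so that the decisive endpoint inequalities are \emph{strict}, which is what eliminates the otherwise-admissible extreme values of $e$), and one must invoke the $(-1)$-curve theorem to know that each relevant cohomology class is actually represented by a symplectic surface in $(M_0,\omega_t)$ for every $t$, not merely at a single value. Once these two facts are in place, each of the four families ($\p^2$, $S^2\times S^2$, $E_{S^2}$, and $X_k$ with $k\ge2$) reduces to a short finite check of linear inequalities, and the Betti- and Chern-number computations are bookkeeping.
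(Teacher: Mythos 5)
Your proposal is correct and follows essentially the same route as the paper's proof: the same Duistermaat--Heckman formula $[\omega_t]=c_1(TM_0)-te$, the same case split by the diffeomorphism type of $M_0$, the same positivity constraints at $t=\pm1$ combined with the normalization \eqref{equation_II_1_assumption}, and the same use of Lemma \ref{lemma_II_Chern_number} for the Chern numbers. Your explicit appeal to the Li--Liu $(-1)$-curve theorem and your remark on strictness at the endpoints merely make precise inputs the paper uses implicitly, so there is nothing substantively different to compare.
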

	
	\begin{proof}
		Let $e := e(P_{-1 + \epsilon}^+) = e(P_{1 - \epsilon}^-)$. \vs{0.3cm}
	
	\noindent
	{\bf (II-1-1)} $M_0 \cong \p^2$. \vs{0.3cm}
	
	Note that $[\omega_t] = 3u - te$ for $t \in [-1, 1]$. Since $\int_{M_0} \omega_t^2 > 0$, we get $e = 0, u,$ or $2u$ (by our assumption \eqref{equation_II_1_assumption}).
	Each cases can be described as
	\begin{table}[H]
		\begin{tabular}{|c|c|c|c|}
			\hline
				& {\bf (II-1-1.1)} & {\bf (II-1-1.2)} & {\bf (II-1-1.3)} \\ \hline \hline
				$e$ & $0$ & $u$  & $2u$  \\ \hline
				$\mathrm{Vol}(Z_{\min})$ & $9$ & $16$ & $25$ \\ \hline
				$\mathrm{Vol}(Z_{\max})$ & $9$ & $4$ & $1$ \\ \hline
		\end{tabular}		
	\end{table}
	\noindent	
	and those are listed in Table \ref{table_Sec8_II_1}.
	\vs{0.3cm}

	\noindent
	{\bf (II-1-2)} $M_0 \cong S^2 \times S^2$. \vs{0.3cm}
	
	Letting $e = ax + by$ for $a,b \in \Z$, 
	we have $[\omega_t] = (2-at)x + (2-bt)y$ for $t \in [-1,1]$. Again by $\int_{M_t} \omega_t^2 > 0$, we obtain $2 \pm a \geq 1$ and $2 \pm b \geq 1$. 
	So, up to permutation on $\{x, y\}$ and by the condition \eqref{equation_II_1_assumption}, 
	we have $(a,b) = (0,0), (1,0), (1,1),$ and $(1,-1)$. So, we obtain
	\begin{table}[H]
		\begin{tabular}{|c|c|c|c|c|}
			\hline
				& {\bf (II-1-2.1)} & {\bf (II-1-2.2)} & {\bf (II-1-2.3)} & {\bf (II-1-2.4)} \\ \hline \hline
				$e$ & $0$ & $x$  & $x+y$ & $x - y$  \\ \hline
				$\mathrm{Vol}(Z_{\min})$ & $8$ & $12$ & $18$ & $6$ \\ \hline
				$\mathrm{Vol}(Z_{\max})$ & $8$ & $4$ & $2$ & $6$ \\ \hline
		\end{tabular}		
	\end{table}
	\vs{0.3cm}

	\noindent
	{\bf (II-1-3)} $M_0 \cong E_{S^2}$. \vs{0.3cm}
	
	Let $u = x + y$ and $E_1 = y$ so that $\langle u^2, [M_0] \rangle = 1$, $\langle u\cdot E_1, [M_0] \rangle = 0$, and $\langle E_1^2, [M_0] \rangle = -1$.
	Set $e = au + bE_1$ ($a,b \in \Z$). 
	Since $\int_{M_t} \omega_t^2 > 0$ for every $t \in [-1,1]$ where $[\omega_t] = (3-at)u - (1 + bt)E_1$ for $t \in [-1,1]$, we have
	\begin{enumerate}
		\item $3-a > 1+b \geq 1$ \quad ($\langle [\omega_1]^2, [M_1] \rangle >0$ and $\langle [\omega_1] \cdot E_1, [M_1] \rangle > 0$),  
		\item $3+a > 1-b \geq 1$ \quad ($\langle [\omega_{-1}]^2, [M_{-1}] \rangle >0$ and $\langle [\omega_{-1}] \cdot E_1, [M_{-1}] \rangle > 0$) . 
	\end{enumerate}
	From (1) and (2), we obtain $b=0$ and hence $(a,b) = (1,0)$, $(0,0)$ (where the case $(a,b) = (-1,0)$ can be recovered by \eqref{equation_II_1_assumption}). Therefore,
	\begin{table}[H]
		\begin{tabular}{|c|c|c|}
			\hline
				& {\bf (II-1-3.1)} & {\bf (II-1-3.2)} \\ \hline \hline
				$e$ & $0$ & $x + y$   \\ \hline
				$\mathrm{Vol}(Z_{\min})$ & $8$ & $15$  \\ \hline
				$\mathrm{Vol}(Z_{\max})$ & $8$ & $3$ \\ \hline
		\end{tabular}		
	\end{table}
	\vs{0.3cm}

	\noindent
	{\bf (II-1-4)} $M_0 \cong X_k$ (for $2 \leq k \leq 8$). \vs{0.3cm}
	
	In this case, we have $[\omega_0] = 3u - \sum_{i=1}^{k} E_i$. Let $\mathrm{PD}(Z_0) = au + \sum_{i=1}^k b_i E_i$ for some $a, b_1, \cdots, b_k \in \Z$.
	Then, as $\int_{M_t} \omega_t^2 > 0$ for every $t \in [-1,1]$ where $[\omega_t] = (3-at)u - \sum_{i=1}^k (1 + b_it)E_i$ for $t \in [-1,1]$, we get
	\begin{enumerate}
		\item $3-a > 1+b_i \geq 1$, \quad ($\langle [\omega_1]^2, [M_1] \rangle >0$ and $\langle [\omega_1] \cdot E_i, [M_1] \rangle > 0$)  
		\item $3+a > 1-b_i \geq 1$, \quad ($\langle [\omega_{-1}]^2, [M_{-1}] \rangle >0$ and $\langle [\omega_{-1}] \cdot E_i, [M_{-1}] \rangle > 0$)  
	\end{enumerate}
	for every $i=1,\cdots,k$. From (1) and (2), we see that every $b_i$ vanishes and $-1 \leq a \leq 1$. On the other hand, since $u - E_i - E_j$ is an exceptional class for any $i \neq j$, we also have
	\[
		\langle [\omega_1] \cdot (u - E_i - E_j), [M_1] \rangle = 1 - a \geq 1, \quad \langle [\omega_{-1}] \cdot (u - E_i - E_j), [M_{-1}] \rangle = 1 + a \geq 1, 
	\]
	i.e., $a = 0$ (and hence $e = 0$). We denote by {\bf (II-1-4.k)} for each case : $M_0 \cong X_k$ where $2 \leq k \leq 8$. See Table \ref{table_Sec8_II_1}.

	The Chern number computation for each case can be easily obtained from Lemma \ref{lemma_II_Chern_number}.

	\end{proof}

	\begin{example}[Fano varieties of type {\bf (II-1)}]\label{example_Sec8_II_1}  We describe Fano varieties of type {\bf (II-1)} in Theorem \ref{theorem_Sec8_II_1} as follows. \vs{0.1cm}
		
		\begin{itemize}
	           	 \item {\bf (II-1-1)} \cite[No. 34, 35, 36 in Section 12.3]{IP} 
			For {\bf (II-1-1.1)}, let $M = \p^1 \times \p^2$ with the standard $T^3$-action where the moment polytope is described in the first of Figure \ref{figure_II_1_1}. 
			The circle subgroup generated by $\xi = (0,0,1)$ acts on the first factor of $\p^1 \times \p^2$ and is semifree and the fixed components for the action 
			correspond to the red faces. It immediately follows from the figure that $\mathrm{Vol}(Z_{\min}) = \mathrm{Vol}(Z_{\max}) = 16$. Thus the fixed point data 
			coincides with {\bf (II-1-1.1)} in Table \ref{table_Sec8_II_1}. See also Example \ref{example_Sec6_2_3} (2) and Example \ref{example_Sec8_I_2} {\bf (I-1-2.1)}.  \vs{0.3cm}

			For {\bf (II-1-1.2)}, Let $M = V_7$, the $T^3$-equivariant blow-up of $\p^3$ at a fixed point. The induced $T^3$-action has a moment polytope as in the middle 
			of Figure \ref{figure_II_1_1}. If we take the circle subgroup generated by $\xi = (0,0,1)$, then the action becomes semifree and there are two fixed components
			(colored by red in the figure) such that $\mathrm{Vol}(Z_{\min}) = 16$ and $\mathrm{Vol}(Z_{\max}) = 4$. So, this corresponds to {\bf (II-1-1.2)} in 
			Table \ref{table_Sec8_II_1}.
			See also Example \ref{example_Sec6_3_2} and Example \ref{example_II_1} (2).
			\vs{0.3cm}

			For {\bf (II-1-1.3)}, we consider the polytope given on the right of Figure \ref{figure_II_1_1}. The polytope corresponds to the toric variety 
			$M = \mathbb{P}(\mcal{O} \oplus \mcal{O}(2))$. Take a circle subgroup generated by $\xi = (0,0,1)$. Then two fixed components appears as in the figure such that 
			$\mathrm{Vol}(Z_{\min}) = 25$ and$\mathrm{Vol}(Z_{\max}) = 1$. Thus the fixed point data for the action shuold equal {\bf (II-1-1.3)} in Table \ref{table_Sec8_II_1}.
			See also Example \ref{example_Sec6_2_3} (1). 
	           	 
	           		 \begin{figure}[H]
	           	 		\scalebox{0.8}{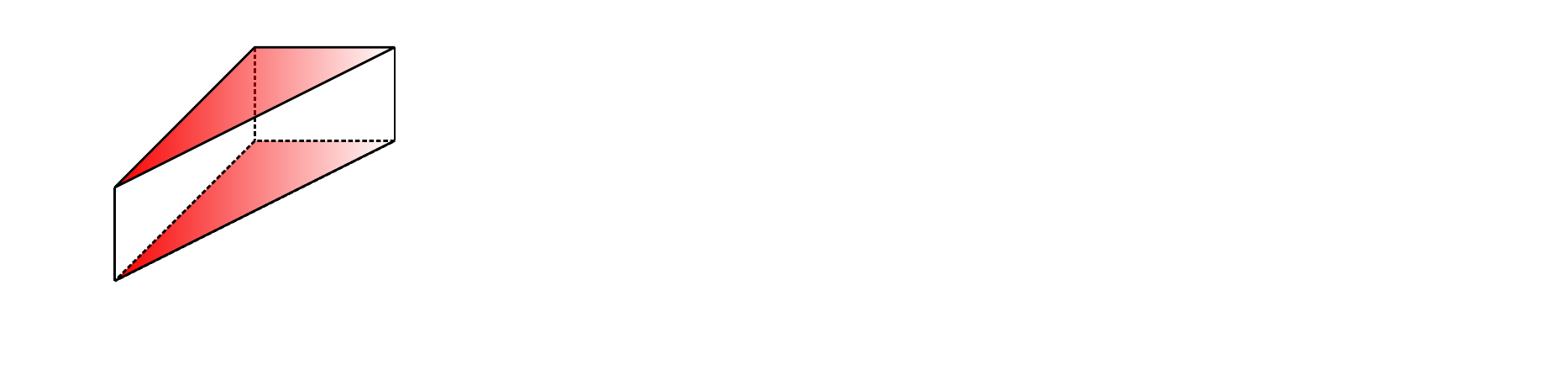}
		           	 	\caption{\label{figure_II_1_1} Fano varieties of type {\bf (II-1-1)}}
		           	 \end{figure}
		           	 
	           	 \item {\bf (II-1-2)} \cite[No. 27,28,31,25 in Section 12.4]{IP}  
	           	 For {\bf (II-1-2.1)}, we consider $M = \p^1 \times \p^1 \times \p^1$ with the standard $T^3$-action whose moment polytope is given in the first of Figure 
	           	 \ref{figure_II_1_2} and a circle subgroup generated by $\xi = (0,0,1)$. Then the action is semifree and there are two copies of $\p^1 \times \p^1$ as 
	           	 the fixed components whose volumes are both $8$. Thus the fixed point data is equal to {\bf (II-1-2.1)} in Table \ref{table_Sec8_II_1}. 
	           	 See also Example \ref{example_Sec6_1_2} and Example \ref{example_II_1} (3). \vs{0.3cm}

	           		 \begin{figure}[H]
	           	 		\scalebox{0.8}{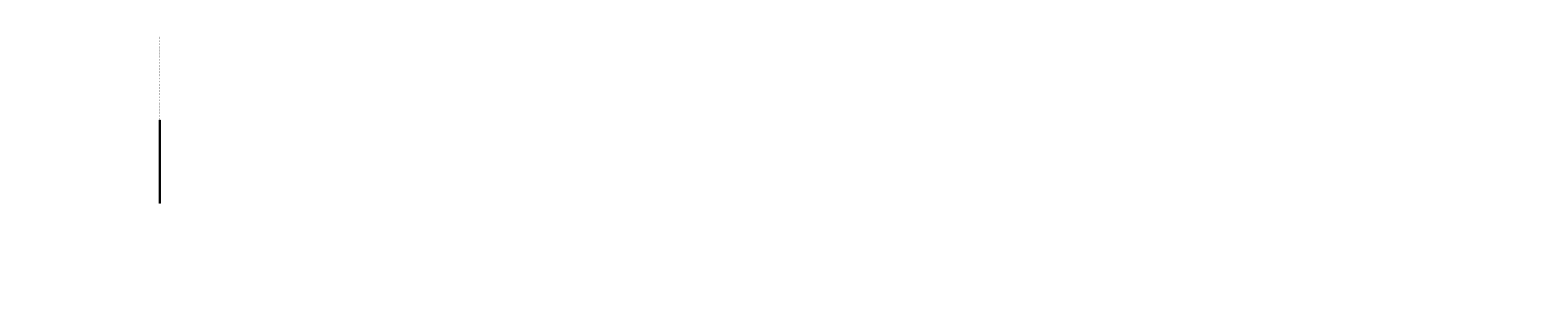}
		           	 	\caption{\label{figure_II_1_2} Fano varieties of type {\bf (II-1-2)}}
		           	 \end{figure}
	           	 
	           	 In case of {\bf (II-1-2.2)}, let $M = \p^1 \times X_1$ equipped with the standard $T^3$-action where the $T^2$-action on $X_1$ is induced from the standard 
	           	 $T^2$-action on $\p^2$. The moment polytope for the action is given in the second of Figure \ref{figure_II_1_2}. Take the circle subgroup of $T^3$ generated 
	           	 by $\xi = (0,0,1)$ so that the action is semifree and there are two fixed  components corresponding to red faces of the figure. One can immediately check that 
	           	 $\mathrm{Vol}(Z_{\min}) = 12$ and $\mathrm{Vol}(Z_{\max}) = 4$. So, the corresponding fixed point data should be {\bf (II-1-2.2)} in Table \ref{table_Sec8_II_1}. 
	           	 See also Example \ref{example_II_2} (1) and Example \ref{example_Sec8_I_3_2} {\bf (I-3-2.1)}. \vs{0.3cm}
	           	 
	           	  For {\bf (II-1-2.3)}, consider the polytope given in the third of Figure \ref{figure_II_1_2} where the corresponding toric variety is 
	           	  $M = \mathbb{P}(\mcal{O} \oplus \mcal{O}(1,1))$. Then the circle subgroup of $T^3$ generated by $\xi = (0,0,1)$ acts on $M$ semifreely where the fixed point set 
	           	  consists of two extremal components such that $\mathrm{Vol}(Z_{\min}) = 18$ and $\mathrm{Vol}(Z_{\max}) = 2$. 
	           	  Therefore the fixed point data should be equal to {\bf (II-1-2.3)} in Table \ref{table_Sec8_II_1}. 
	           	  See also Example \ref{example_Sec6_1_3} and Example \ref{example_Sec8_I_3_1} {\bf (I-3-1.1)}. \vs{0.3cm}

			  Finally in case of {\bf (II-1-2.4)}, let $M$ be the $T^3$-equivariant blow-up of $\p^3$ along two $T^3$-invariant lines. Then the corresponding moment polytope is 
			  depicted in the last of Figure \ref{figure_II_1_2}. Take the circle subgroup of $T^3$ generated by $\xi = (1,1,0)$ so that the fixed point set for the $S^1$-action consists
			  of two extremal components. One can check that both of two fixed components have volume 6, and hence the fixed point data is exactly {\bf (II-1-2.4)} in Table 
			  \ref{table_Sec8_II_1}. See also	Example \ref{example_III} (2) and Example \ref{example_Sec8_I_3_1} {\bf (I-3-1.4)}.\vs{0.3cm}

	           	 \item {\bf (II-1-3)} \cite[No. 28, 30 in Section 12.4]{IP}  For {\bf (II-1-3.1)}, we let $M = \p^1 \times X_1$ with the standard $T^3$-action as in the case of 
	           	 {\bf (II-1-2.2)}. We take the circle subgroup of $T^3$ generated by $\xi = (0,1,0)$. Then the $S^1$-action is semifree and the fixed components are two copies 
	           	 of $X_1$ where the corresponding faces in the moment polytope are colored by red in Figure \ref{figure_II_1_3}. It immediately follows that the volume of the both
	           	 fixed components are 8, and so the fixed point data equals {\bf (II-1-3.1)} in Table \ref{table_Sec8_II_1}. 
	           	 See also Example \ref{example_II_2} (1), Example \ref{example_Sec8_I_3_2} {\bf (I-3-2.1)}, and \ref{example_Sec8_II_1} {\bf (II-1-2.2)}. \vs{0.3cm}
	           	 
	           	 In case that {\bf (II-1-3.2)}, we consider $V_7$, the $T^3$-equivariant blow-up of $\p^3$ at a fixed point, and let $M$ be the $T^3$-equivariant blow-up of 
	           	 $V_7$ along a $T^3$-invariant line passing through the exceptional divisor of $V_7 \rightarrow \p^3$. Then the moment polytope can be illustrated as on the right 
	           	 of Figure \ref{figure_II_1_3}. Take the circle subgroup of $T^3$ generated by $\xi = (0,0,1)$. One can easily see that $\mathrm{Vol}(Z_{\min}) = 15$ and 
	           	 $\mathrm{Vol}(Z_{\max}) = 3$, and therefore the fixed point data is exactly {\bf (II-1-3.2)} in Table \ref{table_Sec8_II_1}.
	           	 See also Example \ref{example_Sec6_3_4} (2), Example \ref{example_IV_2}, Example \ref{example_Sec8_I_3_1} {\bf (I-3-1.2)}, and Example 
	           	 \ref{example_Sec8_I_3_2} {\bf (I-3-2.2)}. 
	           	 
	           		 \begin{figure}[H]
	           	 		\scalebox{0.8}{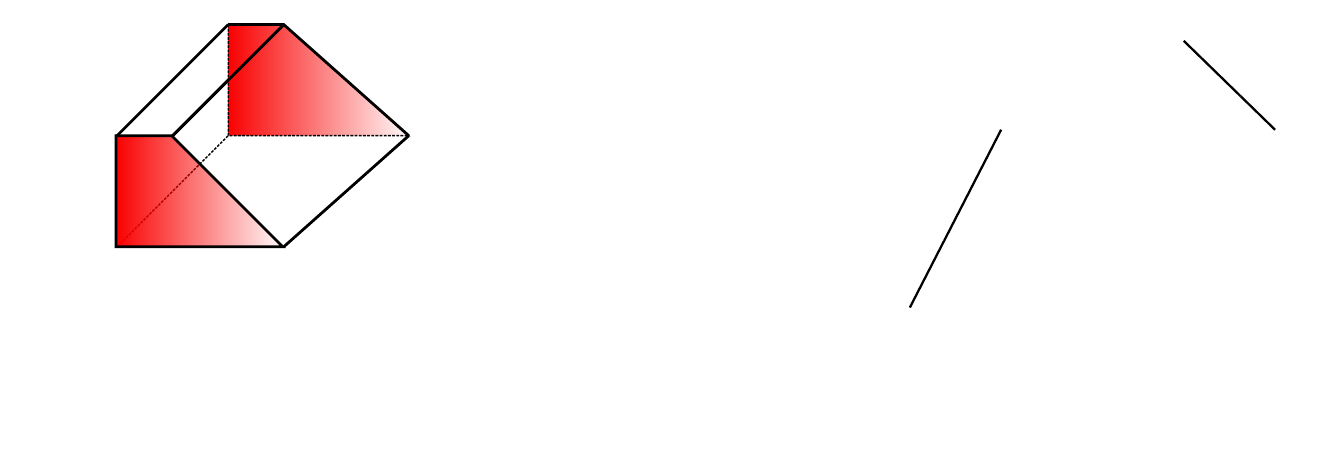}
		           	 	\caption{\label{figure_II_1_3} Fano varieties of type {\bf (II-1-3)}}
		           	 \end{figure}
	           	 
	           	 \item {\bf (II-1-4)} \cite[No. 10 in Section 12.5, No. 3,4,5,6,7,8 in Section 12.6]{IP} : For each $2 \leq k \leq 8$, let $M = X_k \times \p^1$ 
	           	 where the $S^1$-action is given on the second factor. Then the Euler class of each level set of a moment map is trivial. 
	           	 In particular two fixed components $X_k \times t_1$ and $X_k \times t_2$ have the volume $9-k$. Therefore, the fixed point data coincides with
	           	 {\bf (II-1-4.k)} in Table \ref{table_Sec8_II_1}.
		\end{itemize}
	\end{example}

	Before to proceed the classification for remaining cases, we will show that $M_0$ cannot be $X_k$ for $k > 1$ whenever $Z_0$ is non-empty as follows.

	\begin{proposition}\label{proposition_II_2}
		Let $(M,\omega)$ be a six-dimensional closed monotone semifree Hamiltonian $S^1$-manifold such that $\mathrm{Crit} H = \{ 1,0,-1\}$. Then 
		$M_0 \not \cong X_k$ for any $k \geq 2$. 
	\end{proposition}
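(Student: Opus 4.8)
The plan is to argue by contradiction, assuming $M_0 \cong X_k$ for some $k \geq 2$, and then to exploit the fact that $M_0 = M_{-1+\epsilon} = M_{1-\epsilon}$ together with the existence of the extra fixed component $Z_0$ at level $0$. The starting point is the general setup of Section \ref{ssecCaseIDimZMin2}: since $\dim Z_{\min} = \dim Z_{\max} = 4$ and $\mathrm{Crit}\,\mathring{H} = \{0\}$, we have $Z_{\min} \cong Z_0 \cong Z_{\max} \cong M_0$ as smooth manifolds, and $Z_0$ is an embedded symplectic surface in $(M_0,\omega_0)$ via $Z_0 \hookrightarrow H^{-1}(0) \to M_0$. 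Writing $e := e(P_{-1}^+) = kx - \cdots$ in the appropriate basis, Lemma \ref{lemma_Euler_class} gives $e(P_1^-) = e(P_{-1}^+) + \mathrm{PD}(Z_0)$, and the Duistermaat--Heckman theorem (Theorem \ref{theorem_DH}) controls $[\omega_t]$ on the intervals $(-1,0)$ and $(0,1)$.

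First I would fix notation: $[\omega_0] = c_1(TM_0) = 3u - \sum_{i=1}^k E_i$ by Proposition \ref{proposition_monotonicity_preserved_under_reduction}, write $\mathrm{PD}(Z_0) = au + \sum_{i=1}^k b_i E_i$ and $e(P_{-1}^+) = a_0 u + \sum_{i=1}^k c_i E_i$, and then $e(P_0^+) = e(P_{-1}^+) + \sum E_i + \mathrm{PD}(Z_0)$. The Duistermaat--Heckman theorem yields
\[
	[\omega_t] = [\omega_0] - t\,e(P_0^+), \qquad t \in [0,1),
\]
and also, going the other way, $[\omega_t] = [\omega_0] + (t+1)\,e(P_{-1}^+) - \dots$ for $t \in (-1,0]$. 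The key structural fact is that \emph{no blow-down can occur at levels $t \in (-1,1)$}, because $M_t \cong M_0 \cong M_1 \cong M_{-1}$ throughout (Proposition \ref{proposition_topology_reduced_space}), so every exceptional class on $X_k$ must have strictly positive symplectic area with respect to $[\omega_t]$ for all $t$ in the open interval, and in the limits $t \to \pm 1$ as well as $t\to 0$. Since $k \geq 2$, Lemma \ref{lemma_list_exceptional} guarantees the existence of the exceptional classes $E_i$ and $u - E_i - E_j$ for distinct $i,j$. Imposing positivity of $\langle [\omega_t], E_i\rangle$ and $\langle [\omega_t], u-E_i-E_j\rangle$ at $t=1$ and $t=-1$, together with the volume positivity $\langle [\omega_t]^2, [M_t]\rangle > 0$ and the constraint $\langle [\omega_0], [Z_0] \rangle \geq 1$, should force all the $b_i$ and $c_i$ into a very narrow range; I expect this will pin down $e(P_{-1}^+) = 0$ (as in the proof of Theorem \ref{theorem_Sec8_II_1}, case {\bf (II-1-4)}) and simultaneously pin down $\mathrm{PD}(Z_0)$ to a short list, from which the adjunction formula (as in \eqref{equation_adjunction}) on the surface $Z_0 \subset M_0$ and on its connected components will produce a genus/intersection contradiction --- mirroring the mechanism already seen repeatedly in Theorems \ref{theorem_Sec8_I_3_1}, \ref{theorem_Sec8_I_3_2}, and \ref{theorem_Sec8_I_4}.

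The main obstacle I anticipate is bookkeeping: for $k$ as large as $8$ the number of exceptional classes in Lemma \ref{lemma_list_exceptional} grows, and one must be careful to use enough of them (especially the ones of the form $2u - 2E_i - E_{j\cdots}$, $3u - 2E_i - E_{j\cdots}$, etc.) to rule out the large-$k$ cases, rather than just $E_i$ and $u - E_i - E_j$. The cleanest route is probably a two-step argument: (i) show $e(P_{-1}^+) = 0$ using only $E_i$ and $u-E_i-E_j$ at $t = \pm 1$, which immediately gives $[\omega_t] = (3-at)u - \sum(1 + b_i t)E_i$; (ii) then observe that positivity of $E_i$ at both $t=\pm1$ forces $-1 \le b_i \le 1$ and, combined with positivity of $u - E_i - E_j$, forces $b_i = 0$ for all $i$ and $|a| \le 1$, whence $\mathrm{PD}(Z_0) = au$ with $\langle [\omega_0], [Z_0]\rangle = 3a - \text{(something} \geq 2)$ --- but for $k \geq 2$ the class $au$ with $a \le 1$ has $\langle c_1(TM_0), [Z_0]\rangle = 3a$ while $[Z_0]^2 = a^2$, and the adjunction formula $a^2 + \sum(2-2g_i) = 3a$ together with the disjointness / positivity of components (each $\langle[\omega_0],\cdot\rangle \ge 1$, and $E_i \cdot [Z_0] = 0$ conflicting with Li's theorem \cite[Corollary 3.10]{Li} if $a = 0$) produces the contradiction. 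I would present step (i) in full and then summarize step (ii), leaving the finitely many arithmetic checks to the reader as the paper does elsewhere.
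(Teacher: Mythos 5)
Your overall toolkit is the right one (Duistermaat--Heckman, positivity of exceptional classes at $t=\pm1$, positivity of $\mathrm{Vol}(Z_0)$, and extra exceptional classes of higher degree for large $k$), but the proposal has a genuine gap in its two central deductions, and the contradiction does not come from where you expect it to.

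First, step (i) --- forcing $e(P_{-1}^+)=0$ from the classes $E_i$ and $u-E_i-E_j$ ``as in case {\bf (II-1-4)}'' --- does not go through. In {\bf (II-1-4)} the level $0$ carries no fixed component, so the Euler class is constant on $(-1,1)$ and $[\omega_t]$ is linear in $t$ on the whole interval; that is exactly what lets one isolate $e$ there. Here $e(P_0^+)=e(P_{-1}^+)+\mathrm{PD}(Z_0)$, so with $\mathrm{PD}(Z_0)=xu+\sum y_iE_i$ and $e(P_{-1}^+)=au+\sum b_iE_i$ one gets $[\omega_{-1}]=(3+a)u-\sum(1-b_i)E_i$ but $[\omega_{1}]=(3-a-x)u-\sum(1+b_i+y_i)E_i$. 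The conditions at $t=-1$ give only $b_i\le 0$ and $a+b_i+b_j\ge 0$, and the conditions at $t=+1$ involve $b_i+y_i$ and $a+x+\cdots$; nothing isolates $e$, and your subsequent claims ($b_i=0$, $\mathrm{PD}(Z_0)=au$ with $|a|\le 1$) are not consequences of these inequalities. Relatedly, your opening formula $e(P_{-1}^+)=kx-\cdots$ is a misapplication of Lemma \ref{lemma_volume}, which only applies when the extremum is a $2$-sphere; here $Z_{\min}$ is four-dimensional and $e$ is an arbitrary class.

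Second, the contradiction is not an adjunction/genus contradiction at the end of a short list; the linear system itself is infeasible, and seeing this requires a step your plan is missing. The paper's key move is to sum the inequalities $\langle[\omega_1],u-E_i-E_j\rangle\ge 1$ over all pairs $(i,j)$ and compare with the analogous sum at $t=-1$: this yields $\tfrac{k}{2}x+\sum y_i\le 0$, while $b_i+y_i\ge 0$ and $b_i\le 0$ give $x\le 0$, and together these contradict $\mathrm{Vol}(Z_0)=3x+\sum y_i\ge 1$ for every $2\le k\le 6$ (no adjunction needed); $k=7,8$ are then killed with the class $3u-2E_1-E_{234567}$. Even a toy check shows your expected endpoint would not produce a contradiction by adjunction: $\mathrm{PD}(Z_0)=u$ gives $[Z_0]^2=1$, $\langle c_1,[Z_0]\rangle=3$, $g=0$, which is perfectly consistent --- it is ruled out instead by $\langle[\omega_1],u-E_i-E_j\rangle>0$. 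You do correctly anticipate needing the higher-degree exceptional classes for large $k$, but as written the proposal would stall at the intermediate values of $k$ without the summation argument, and its stated intermediate conclusions are false.
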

	
	\begin{proof}
		Suppose that $M_0 \cong X_k$ for some $k \geq 2$ with $[\omega_0] = 3u - \sum_{i=1}^{k} E_i$ where $u \in H^2(X_k; \Z)$ such that 
		$\langle u^2, [X_k] \rangle = 1$ and $u \cdot E_i  = 0$ for every $i=1,\cdots,k$.
		
		Denote by $\mathrm{PD}(Z_0) = xu + \sum_{i=1}^k y_iE_i$ and $e = au + \sum_{i=1}^k b_iE_i$ for some $a, b_1, \cdots, b_k, x, y_1, \cdots, y_k \in \Z$.
		By the Duistermaat-Heckman theorem \ref{theorem_DH}, we get
		\[
			[\omega_{-1}] = (3+a)u - \sum (1 - b_i)E_i \quad \text{and} \quad [\omega_{1}] = (3-a-x)u - \sum (1 + b_i + y_i)E_i.
		\]			
		Thus we have the following inequalities.
		\begin{enumerate}
			\item $3x + \sum_{i=1}^k y_i \geq 1$ \quad ($\mathrm{Vol}(Z_0) \geq 1$):
			\item $b_i \leq 0$ \quad ($\langle [\omega_{-1}] \cdot E_i, [M_{-1}] \rangle \geq 1$),
			\item $a + b_i + b_j \geq 0$ \quad ($\langle [\omega_{-1}] \cdot (u - E_i - E_j), [M_{-1}] \rangle \geq 1$),
			\item $b_i + y_i \geq 0$ \quad ($\langle [\omega_{1}] \cdot E_i, [M_{1}] \rangle \geq 1$),
			\item $a+x+b_i+b_j+y_i+y_j \leq 0$ \quad ($\langle [\omega_1] \cdot (u-E_i-E_j), [M_1] \rangle \geq 1$).
		\end{enumerate}
		Note that $\underbrace{a \geq 0}_{=: (6)}$ from (2) and (3), and $\underbrace{a + x \leq 0}_{=: (7)}$ by (4) and (5).
		By summing all equations of the form (5), we obtain
		\[
			{k \choose 2} (a + x) + (k-1) \left( \sum_{i=1}^k b_i + \sum_{i=1}^k y_i \right) \leq 0 \quad \Rightarrow \quad \underbrace{\frac{k}{2}a + \frac{k}{2}x + 
			\sum_{i=1}^k b_i + \sum_{i=1}^k y_i \leq 0.}_{=: \text{(5')}}. 
		\]	
		Similarly, from (3), we have
		\[
			\underbrace{\frac{k}{2}a + \sum_{i=1}^k b_i \geq 0.}_{=: \text{(3')}}.
		\]
		So, by (3') and (5'), we get $\underbrace{\ds \frac{k}{2}x + \sum_{i=1}^k y_i \leq 0}_{=: (8)}$. 
		Since $x \leq 0$ by (6) and (7), we have $k > 6$ (otherwise (1) and (8) contradict to each other).
		
		For $k \geq 7$, there are exceptional divisors of the form $3u - 2E_1 - E_{234567}$ by Lemma \ref{lemma_list_exceptional} where 
		$E_{234567} := E_{2} + \cdots + E_{7}$. Then 
		\begin{itemize}
			\item $\langle [\omega_{-1}] \cdot (3u - 2E_1 - E_{234567}), [M_{-1}] \rangle \geq 1$ \quad $\Rightarrow$ 
			\quad $3a + 2b_1 + b_{234567} \geq 0$.
			\item $\langle [\omega_1] \cdot (3u - 2E_1 - E_{234567}), [M_1] \rangle \geq 1$ \quad $\Rightarrow$ 
			\quad $3a + 3x + 2b_1 + 2y_1 + b_{234567} + y_{234567} \leq 0$
		\end{itemize}
		where $b_{234567} =  b_{2} + \cdots + b_{7}$ and $y_{234567} = y_{2} + \cdots + y_{7}$, respectively.
		Then we obtain 
		\begin{equation}\label{equation_k_7}
			3x  + 2y_1 + y_{234567} \leq 0 
		\end{equation}
		Since the inequality \eqref{equation_k_7} holds up to permutation on $\{1,\cdots,8\}$, we may assume that $y_1$ is  the maximal among all $y_i$'s. Then, 
		\[
			1 \leq 3x  + (y_1 + y_8) + y_{234567} \leq 3x  + 2y_1 + y_{234567} \leq 0 
		\]
		by (1) and this leads  to a contradiction. Therefore, no such manifold exists. 
	\end{proof}
				
	By Proposition \ref{proposition_II_2}, we only to consider the three cases $M_0 \cong \p^2$ {\bf (II-2-1)}, $S^2 \times S^2$ {\bf (II-2-2)}, and $X_1$ {\bf (II-2-3)}. 			

	\begin{theorem}[Case {\bf (II-2-1)}]\label{theorem_Sec8_II_2_1}
		Let $(M,\omega)$ be a six-dimensional closed monotone semifree Hamiltonian $S^1$-manifold such that $\mathrm{Crit} H = \{ 1,0,-1\}$. 
		Suppose that $M_0 \cong \p^2$. Then the list of all possible topological fixed point data is given in the Table \ref{table_Sec8_II_2_1}
		\begin{table}[H]
			\begin{tabular}{|c|c|c|c|c|c|c|c|c|}
				\hline
				    & $(M_0, [\omega_0])$ & $e(P_{-1}^+)$ & $Z_{-1}$  & $Z_0$ &  $Z_1$ & $b_2(M)$ & $c_1^3(M)$ \\ \hline \hline
				    {\bf (II-2-1.1)} & $(\p^2, 3u)$ & $0$  & $\p^2$  & \makecell{$Z_0 \cong S^2$, \\ $\mathrm{PD}(Z_0) = u$} & 
				    	$\p^2$
					     & $3$ & $46$ \\ \hline    
				    {\bf (II-2-1.2)} & $(\p^2, 3u)$ & $u$  & $\p^2$  & \makecell{$Z_0 \cong S^2$, \\ $\mathrm{PD}(Z_0) = u$} & 
				    	$\p^2$
					     & $3$ & $50$ \\ \hline    
				    {\bf (II-2-1.3)} & $(\p^2, 3u)$ & $-u$  & $\p^2$  & \makecell{$Z_0 \cong S^2$, \\ $\mathrm{PD}(Z_0) = 2u$} & 
				    	$\p^2$
					     & $3$ & $38$ \\ \hline    
					     					     					     
				    {\bf (II-2-1.4)} & $(\p^2, 3u)$ & $0$  & $\p^2$  & \makecell{$Z_0 \cong S^2$, \\ $\mathrm{PD}(Z_0) = 2u$} & 
				    	$\p^2$
					     & $3$ & $40$ \\ \hline    					     
				    {\bf (II-2-1.5)} & $(\p^2, 3u)$ & $-u$  & $\p^2$ & \makecell{$Z_0 \cong T^2$, \\ $\mathrm{PD}(Z_0) = 3u$} & 
				    	$\p^2$
					     & $3$ & $32$ \\ \hline    					     
				    {\bf (II-2-1.6)} & $(\p^2, 3u)$ & $-2u$  & $\p^2$ & \makecell{$Z_0 \cong \Sigma_3$, \\ $\mathrm{PD}(Z_0) = 4u$} & 
				    	$\p^2$
					     & $3$ & $26$ \\ \hline    					     
			\end{tabular}		
			\vs{0.5cm}			
			\caption{\label{table_Sec8_II_2_1} Topological fixed point data for $\mathrm{Crit} H = \{1,0,-1\}$ with $M_0 = \p^2$}
		\end{table}				   
	\end{theorem}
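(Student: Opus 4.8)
\textbf{Proof strategy for Theorem \ref{theorem_Sec8_II_2_1}.}
The plan is to run the same type of argument used throughout Sections \ref{secClassificationOfTopologicalFixedPointDataDimZMinGeq2AndDimZMax4}, specialized to the situation $M_0 \cong \p^2$, $\mathrm{Crit}\, H = \{1,0,-1\}$. Since $Z_{\min} \cong Z_{\max}$ are four-dimensional and the only interior critical value is $0$, the reduced spaces $M_{-1+\epsilon} \cong M_0 \cong M_{1-\epsilon}$ are all diffeomorphic to $\p^2$, so we may write $e := e(P_{-1}^+) = au$, $\mathrm{PD}(Z_0) = du$ for integers $a, d$; note $d \geq 1$ since $\int_{Z_0} \omega_0 = 3d > 0$. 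First I would write down, via the Duistermaat-Heckman theorem \ref{theorem_DH} and Lemma \ref{lemma_Euler_class}, the cohomology classes
\[
	[\omega_t] = (3 - at)u \quad (t \in [-1,0]), \qquad [\omega_t] = (3 - a - (d+a)(t))u \quad (t \in [0,1]),
\]
where $e(P_0^+) = e + \mathrm{PD}(Z_0) = (a+d)u$ by Lemma \ref{lemma_Euler_class}. The positivity $\int_{M_t}[\omega_t]^2 > 0$ for all $t \in (-1,1)$ forces $3 + a > 0$ and $3 - a - d > 0$; together with $d \geq 1$ this gives a short finite list of pairs $(a,d)$, exactly the six rows of Table \ref{table_Sec8_II_2_1} (here one also uses the normalization $\mathrm{Vol}(Z_{\min}) \geq \mathrm{Vol}(Z_{\max})$ from \eqref{equation_II_1_assumption}, i.e. $3+a \geq 3-a-d$, to cut the list in half).

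Next I would pin down the diffeomorphism type of $Z_0$. Since $Z_0 \hookrightarrow M_0 \cong \p^2$ is an embedded symplectic surface with $\mathrm{PD}(Z_0) = du$, connectedness of $Z_0$ follows exactly as in Lemma \ref{lemma_connectedness_1_1}: if $Z_0 = Z_0^1 \,\dot\cup\, Z_0^2$ with $\mathrm{PD}(Z_0^i) = d_i u$, $d_i \geq 1$, then $[Z_0^1]\cdot[Z_0^2] = d_1 d_2 \neq 0$, a contradiction. Then the adjunction formula \eqref{equation_adjunction} in $M_0 \cong \p^2$ reads $d^2 + 2 - 2g = \langle c_1(TM_0),[Z_0]\rangle = 3d$, so $g = \binom{d-1}{2}$: thus $d=1$ or $d=2$ gives $g = 0$ ($S^2$), $d = 3$ gives $g = 1$ ($T^2$), and $d = 4$ gives $g = 3$ (the genus-$3$ curve $\Sigma_3$). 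Matching $d$ against the surviving $(a,d)$-list produces precisely the six cases {\bf (II-2-1.1)}--{\bf (II-2-1.6)}. The Betti number $b_2(M) = 3$ is immediate from the perfectness of the moment map as a Morse-Bott function, the fixed point set being $Z_{\min} \sqcup Z_0 \sqcup Z_{\max}$ with $Z_{\min}, Z_0, Z_{\max}$ of even index contributing $P_M(t) = 1 + 2t^2 + (2-2g)t^3 + 2t^4 + t^6$, whence $b_2 = 2 + 1 = 3$ and the rank of $H^3$ is $2-2g$.

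Finally I would compute the Chern numbers using Lemma \ref{lemma_II_Chern_number}: with $e = au$, $c_1 = 3u$, $[Z_0] = du$ and $\langle u^2,[M_0]\rangle = 1$,
\[
	\langle c_1(TM)^3,[M] \rangle = 2a^2 + 54 - 9d + 2ad + d^2,
\]
and plugging in the six pairs $(a,d) \in \{(0,1),(1,1),(-1,2),(0,2),(-1,3),(-2,4)\}$ yields $46, 50, 38, 40, 32, 26$ respectively, agreeing with the table. The only genuinely delicate point is the same one that recurs throughout these sections: knowing the homology class $du$ and the genus does not by itself say $Z_0$ is \emph{connected} of that genus rather than, say, a disjoint union of lower-degree curves — but in $\p^2$ this is settled cleanly by the intersection-product obstruction above (any two components would meet), so here there is no real obstacle. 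The bookkeeping to verify the Duistermaat–Heckman positivity really does exclude every other $(a,d)$, and not just the ones in the table, is the one step requiring genuine care; I expect it to be a routine but slightly lengthy case check.
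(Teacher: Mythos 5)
Your proposal is correct and follows essentially the same route as the paper: parametrize $e = au$, $\mathrm{PD}(Z_0)=du$, use Duistermaat--Heckman positivity at $t=\pm1$ together with $d\geq 1$ and the normalization $3+a \geq 3-a-d$ to cut the list to the six pairs, then apply adjunction and Lemma \ref{lemma_II_Chern_number}; your explicit connectedness argument for $Z_0$ is a welcome addition the paper leaves implicit here. Two minor slips: your displayed class for $t\in[0,1]$ should read $(3-(a+d)t)u$ (it must equal $3u$ at $t=0$), though the inequality $3-a-d>0$ you extract from it is correct; and your Poincar\'e polynomial is garbled --- the correct one is $1+3t^2+2g\,t^3+3t^4+t^6$, so $b_2=3$ as claimed but $b_3=2g$, not $2-2g$ (which would be negative for $\Sigma_3$). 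Neither slip affects the classification or the Chern numbers, which all check out.
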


	\begin{proof}
		Let $e = au$ and $\mathrm{PD}(Z_0) = pu$ for $a, p \in \Z$. Then the Duistermaat-Heckman theorem yields
		\[
			[\omega_{-1}] = (3+a)u, \quad [\omega_1] = (3 - a - p)u \quad \text{so that} \quad 
			\begin{cases}
				3 + a \geq 1 \\
				3 - a - p \geq 1.
			\end{cases}
		\]
		Note that $p$ is positive by $\langle [\omega_0], [Z_0] \rangle  > 0$ and $3 + a \geq 3 - a - p$ by the assumption \eqref{equation_II_1_assumption}.
		Therefore all possible solutions of $(p,a)$
		are listed as follows.
		\begin{table}[H]
			\begin{tabular}{|c|c|c|c|c|c|c|}
				\hline
					& {\bf (II-2-1.1)} & {\bf (II-2-1.2)} & {\bf (II-2-1.3)} & {\bf (II-2-1.4)} & {\bf (II-2-1.5)} & {\bf (II-2-1.6)} \\ \hline \hline
					$(p,a)$ & $(1,0)$ & $(1,1)$  & $(2,-1)$ & $(2,0)$ & $(3,-1)$ & $(4,-2)$ \\ \hline
					$\mathrm{Vol}(Z_{\min})$ & $9$ & $16$ & $4$ & $9$ & $4$ & $1$\\ \hline
					$\mathrm{Vol}(Z_{\max})$ & $4$ & $1$ & $4$ & $1$ & $1$& $1$\\ \hline
					\makecell{$\langle c_1(TM_0), [Z_0] \rangle$ \\ $= \mathrm{Vol}(Z_0)$} & $3$ & $3$ & $6$ & $6$ & $9$& $12$\\ \hline
					$[Z_0] \cdot [Z_0]$ & $1$ & $1$ & $4$ & $4$ & $9$& $16$\\ \hline
					$g$ : genus of $Z_0$ & $0$ & $0$ & $0$ & $0$ & $1$& $3$\\ \hline					
			\end{tabular}		
		\end{table}
		\noindent
		Thus we obtain Table \ref{table_Sec8_II_2_1} using the adjunction formula and Lemma \ref{lemma_II_Chern_number}.
		\vs{0.3cm}		
	\end{proof}

	\begin{example}[Fano varieties of type {\bf (II-2-1)}]\label{example_Sec8_II_2_1}
	  We describe Fano varieties of type {\bf (II-2-1)} in Theorem \ref{theorem_Sec8_II_2_1} as follows. \vs{0.3cm}
	  
	  \begin{itemize}
	  	\item {\bf (II-2-1.1), (II-2-1.4)} \cite[No. 26, 22 in Section 12.4]{IP} : 
	  	Let $M = \p^1 \times \p^2$ with the standard $T^3$-action and we consider the circle subgroup generated by $\xi = (0,0,-1)$.
	  	Then the maximal and minimal fixed component are both $\p^2$. Take (any) line $C_1$ and a conic $C_2$ in $Z_{\max}$ and denote the $S^1$-equivariant 
	  	blow-up of $M$ along $C_i$ by $M_i$ for $i=1,2$. 
	  	
	  		 \begin{figure}[H]
	           	 		\scalebox{0.8}{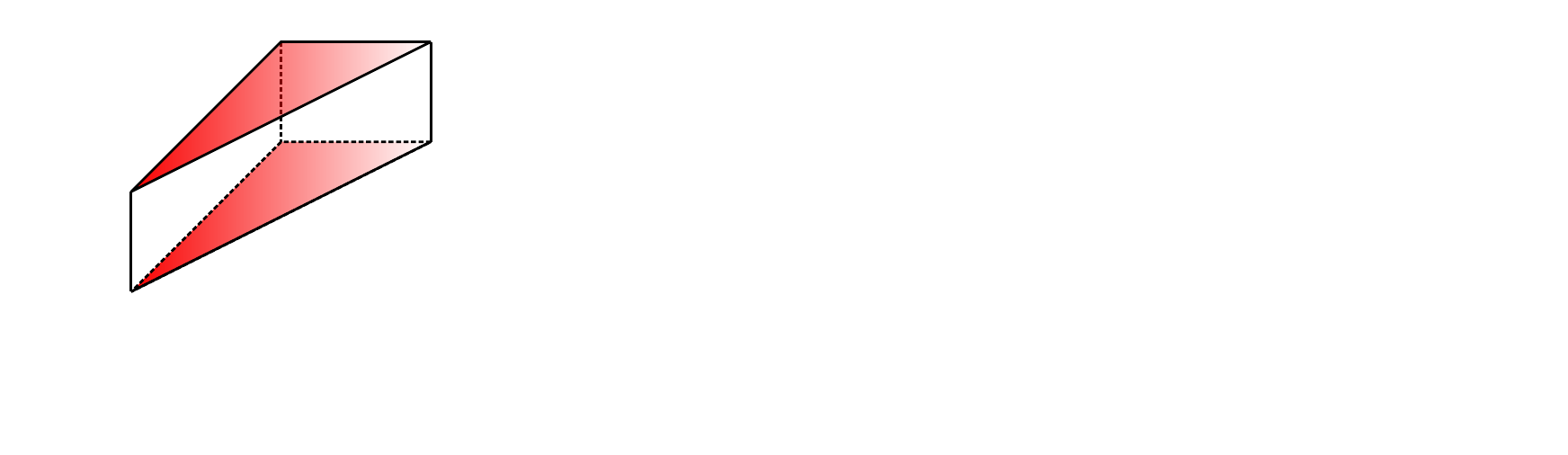}
		           	 	\caption{\label{figure_II_2_1_1} Fano varieties of type {\bf (II-2-1.1)} and {\bf (II-2-1.4)}}
		           \end{figure}
	  	\noindent
	  	Then the induced $S^1$-action on each $M_i$ is semifree and the fixed point data is described in Figure \ref{figure_II_2_1_1}. (The first one is a moment polytope of 
	  	$\p^1 \times \p^2$, the second one is for $M_1$ where $C_1$ is taken to be $T^3$-equivariant, and  the third one is a {\em conceptual images} of the fixed point set
	  	where the {\em blue ellipse} indicates the fixed component on the zero level set in $M_2$. If we denote by $Z_{\min}^i$ and $Z_0^i$ 
	  	the minimal fixed component and the interior fixed component of $M_i$ respectively, then one can 
	  	immediately check that $\mathrm{Vol}(Z_{\min}^1) = \mathrm{Vol}(Z_{\min}^2) = 9$ and $\mathrm{Vol}(Z_0^1) = 3$ and $\mathrm{Vol}(Z_0^2) = 6$ (since $Z_0^2$ is a 
	  	conic in $M_0$).
	  	Thus the corresponding fixed point data coincide with {\bf (II-2-1.1)} and {\bf (II-2-1.4)} in Table \ref{table_Sec8_II_2_1}, respectively. 
	  	See also Example \ref{example_Sec8_I_2} {\bf (I-2)}, Example \ref{example_Sec8_I_3_2} {\bf (I-3-2.5)}, Example \ref{example_Sec6_3_4} (3), 
	  	and Example \ref{example_IV_2} for {\bf (II-2-1.1)}.
	  	\vs{0.3cm}
	  		  	
	  	\item {\bf (II-2-1.2), (II-2-1.3), (II-2-1.5)} \cite[No. 29, 19, 14 in Section 12.4]{IP} : 
	  	Consider $V_7$, the $T^3$-equivariant blow-up of $\p^3$ at a fixed point, and the $S^1$-action on $V_7$
	  	generated by $\xi = (0,0,1)$ where the fixed components for the action are colored by red in the first of Figure \ref{figure_II_2_1_2}
	  	where $\mathrm{Vol}(Z_{\min}) = 4$ and $\mathrm{Vol}(Z_{\min}) = 16$.
	  	
	  		 \begin{figure}[H]
	           	 		\scalebox{0.8}{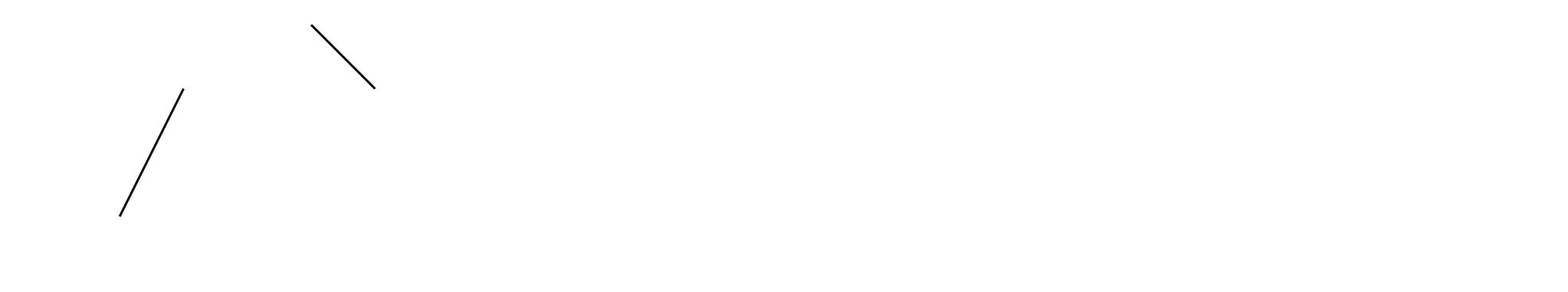}
		           	 	\caption{\label{figure_II_2_1_2} Fano varieties of type {\bf (II-2-1.2)}, {\bf (II-2-1.3)} and {\bf (II-2-1.5)}}
		           \end{figure}
	  	\noindent
	  	There are three ways of obtaining a semifree $S^1$-Fano variety by blowing-up $V_7$ as follows. 
		Let $C_1$ be degree-one curve lying on the exceptional divisor $Z_{\max} \cong \p^2$. Also, we let $C_2$ be a conic, and $C_3$ be a cubic in $Z_{\min}$. 
		Denote by $M_i$ the $S^1$-equivariant blow-up of $V_7$ along $C_i$ for each $i=1,2,3$. If we denote by $Z_{\min}^i$ and $Z_{\max}^i$ the minimal and maximal
		fixed components on $M_i$, respectively, then we can easily see that 
		\begin{itemize}
			\item $\mathrm{Vol}(Z_{\min}^1) = 16$, $\mathrm{Vol}(Z_{\max}^1) = 1$,
			\item $\mathrm{Vol}(Z_{\min}^2) = 9$, $\mathrm{Vol}(Z_{\max}^2) = 4$,
			\item $\mathrm{Vol}(Z_{\min}^3) = 4$, $\mathrm{Vol}(Z_{\max}^3) = 4$.
		\end{itemize}
		Therefore, the fixed point data for each case coincides with one described in Table \ref{table_Sec8_II_2_1}. See also Example \ref{example_Sec6_3_4} (1) and
		Example \ref{example_IV_2} for {\bf (II-2-1.2)}.
		
		\begin{remark}
			If we consider a line $C$ in $Z_{\min}$ in $V_7$ and take the $S^1$-equivariant blow-up of $V_7$ along $C$, then the resulting manifold $\widetilde{M}$ is also Fano 
			and $\mathrm{Vol}(\widetilde{Z}_{\min}) = 9$, $\mathrm{Vol}(\widetilde{Z}_{\max}) = 4$. This model can be taken as a toric model and so one can check that 
			$\widetilde{M}$ is isomorphic to the blow-up of $\p^1 \times \p^2$ along a line, see Example \ref{example_Sec8_II_2_1} {\bf (II-2-1.1)}. 
			
			We also note that, in Mori-Mukai's classification, the manifold $M_2$ is described as the $T^2$-equivariant blow-up of a smooth quadric $Q$ in $\p^4$ at two isolated
			extrema. See in Figure \ref{figure_II_2_1_2_1}.
	  	
  		 \begin{figure}[H]
	           	 \scalebox{1}{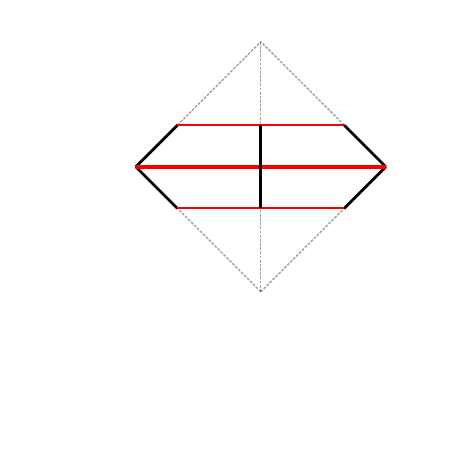}
		           \caption{\label{figure_II_2_1_2_1} Another description of {\bf (II-2-1.3)}}
	           \end{figure}
		\end{remark}

		\vs{0.3cm}
	  	
	  	\item {\bf (II-2-1.6)} \cite[No. 9 in Section 12.4]{IP} : Let $M = \mathbb{P}(\mcal{O} \oplus \mcal{O}(1,1))$ with the semifree $S^1$-action 
	  	as we have seen in Example \ref{example_II_1} {\bf (II-1-1.3)}.
	  	It is a toric variety whose moment polytope is given in the first of Figure \ref{figure_II_2_1_3}. Take $C$ a quartic in $Z_{\min} \cong \p^2$ and denote the $S^1$-equivariant 
	  	blow-up of $M$ along $C$ by $\widetilde{M}$. Then we may easily check that $\mathrm{Vol}(\widetilde{Z}_{\min}) = \mathrm{Vol}(\widetilde{Z}_{\max}) = 1$ and this 
	  	coincides with {\bf (II-2-1.6)} in Table \ref{table_Sec8_II_2_1}.

	           		 \begin{figure}[H]
	           	 		\scalebox{0.8}{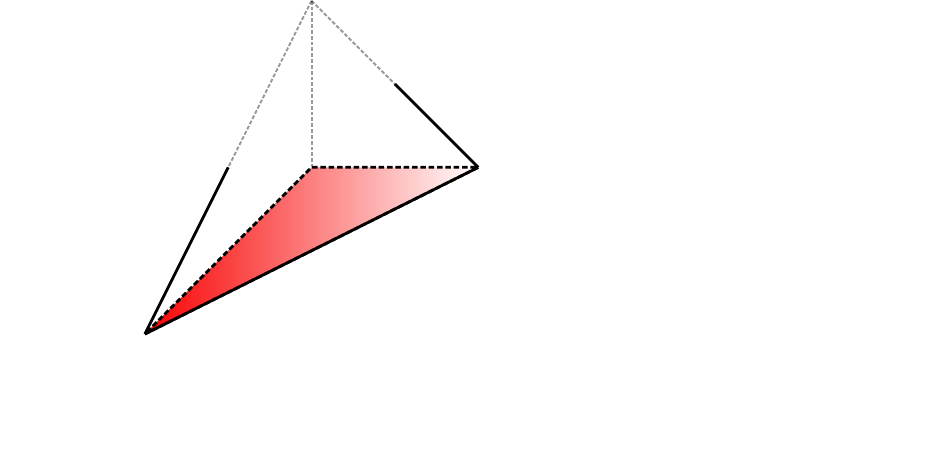}
		           	 	\caption{\label{figure_II_2_1_3} Fano varieties of type {\bf (II-2-1.6)}}
		           	 \end{figure}
		           	 
		\begin{remark}
			It is also possible that we take any curve $C$ of degree less than four in $Z_{\min}$ and take the $S^1$-equivariant blow-up of $M$ along $C$. Namely, if we let
			$C_i$ be any smooth curve of degree $i$ in $Z_{\min}$, then the $S^1$-equivariant blow-up, denoted by $M_i$, is also a semifree $S^1$-Fano variety where the fixed 
			point data
			is given by 
		\begin{itemize}
			\item $\mathrm{Vol}(Z_{\min}^1) = 16$, $\mathrm{Vol}(Z_{\max}^1) = 1$,
			\item $\mathrm{Vol}(Z_{\min}^2) = 9$, $\mathrm{Vol}(Z_{\max}^2) = 1$,
			\item $\mathrm{Vol}(Z_{\min}^3) = 4$, $\mathrm{Vol}(Z_{\max}^3) = 1$.
		\end{itemize}
		Those fixed point data exactly match up with {\bf (II-2-1.2), (II-2-1.4), (II-2-1.5)}, respectively. We will see later that 
		each $M_i$' are $S^1$-equivariantly symplectomorphic to the Fano varieties of type {\bf (II-2-1.2), (II-2-1.4), (II-2-1.5)} given above.

		\end{remark}
		  \end{itemize}
	\end{example}
				
	\begin{theorem}[Case {\bf (II-2-2)}]\label{theorem_Sec8_II_2_2}
		Let $(M,\omega)$ be a six-dimensional closed monotone semifree Hamiltonian $S^1$-manifold such that $\mathrm{Crit} H = \{ 1,0,-1\}$. 
		Suppose that $M_0 \cong S^2 \times S^2$. Then the list of all possible topological fixed point data is given in the Table \ref{table_Sec8_II_2_2}
		\begin{table}[H]
			\begin{tabular}{|c|c|c|c|c|c|c|c|c|}
				\hline
				    & $(M_0, [\omega_0])$ & $e(P_{-1}^+)$ & $Z_{-1}$  & $Z_0$ &  $Z_1$ & $b_2(M)$ & $c_1^3(M)$ \\ \hline \hline
				    {\bf (II-2-2.1)} & $(S^2 \times S^2, 2x + 2y)$ & $-x -y$  & $S^2 \times S^2$  & \makecell{$Z_0 \cong T^2$, \\ $\mathrm{PD}(Z_0) = 2x+2y$} & 
				    	$S^2 \times S^2$
					     & $4$ & $28$ \\ \hline    
				    {\bf (II-2-2.2)} & $(S^2 \times S^2, 2x + 2y)$ & $-x$  & $S^2 \times S^2$  & \makecell{$Z_0 \cong S^2$, \\ $\mathrm{PD}(Z_0) = 2x + y$} & 
				    	$S^2 \times S^2$
					     & $4$ & $32$ \\ \hline    
				    {\bf (II-2-2.3)} & $(S^2 \times S^2, 2x + 2y)$ & $-x$  & $S^2 \times S^2$  & \makecell{$Z_0 \cong S^2 ~\dot \cup ~S^2$, \\ 
				    $\mathrm{PD}(Z_0^1) = \mathrm{PD}(Z_0^2) = x$} & 
				    	$S^2 \times S^2$
					     & $5$ & $36$ \\ \hline    			     					     
				    {\bf (II-2-2.4)} & $(S^2 \times S^2, 2x + 2y)$ & $-x$  & $S^2 \times S^2$  & \makecell{$Z_0 \cong S^2$, \\ $\mathrm{PD}(Z_0) = x+y$} & 
				    	$S^2 \times S^2$
					     & $4$ & $36$ \\ \hline    					     
				    {\bf (II-2-2.5)} & $(S^2 \times S^2, 2x + 2y)$ & $-x + y$  & $S^2 \times S^2$  & \makecell{$Z_0 \cong S^2 ~\dot \cup ~S^2$, \\ 
				    $\mathrm{PD}(Z_0^1) = \mathrm{PD}(Z_0^2) = x$} & 
				    	$S^2 \times S^2$
					     & $5$ & $36$ \\ \hline    			     					     
				    {\bf (II-2-2.6)} & $(S^2 \times S^2, 2x + 2y)$ & $-x + y$  & $S^2 \times S^2$  & \makecell{$Z_0 \cong S^2$, \\ $\mathrm{PD}(Z_0) = x$} & 
				    	$S^2 \times S^2$
					     & $4$ & $40$ \\ \hline    					     					     
				    {\bf (II-2-2.7)} & $(S^2 \times S^2, 2x + 2y)$ & $0$  & $S^2 \times S^2$ & \makecell{$Z_0 \cong S^2$, \\ $\mathrm{PD}(Z_0) = x+y$} & 
				    	$S^2 \times S^2$
					     & $4$ & $38$ \\ \hline    					     
				    {\bf (II-2-2.8)} & $(S^2 \times S^2, 2x + 2y)$ & $0$  & $S^2 \times S^2$ & \makecell{$Z_0 \cong S^2$, \\ $\mathrm{PD}(Z_0) = x$} & 
				    	$S^2 \times S^2$
					     & $4$ & $42$ \\ \hline    					     
				    {\bf (II-2-2.9)} & $(S^2 \times S^2, 2x + 2y)$ & $y$  & $S^2 \times S^2$ & \makecell{$Z_0 \cong S^2$, \\ $\mathrm{PD}(Z_0) = x$} & 
				    	$S^2 \times S^2$
					     & $4$ & $44$ \\ \hline    					     					     					     
			\end{tabular}		
			\vs{0.5cm}			
			\caption{\label{table_Sec8_II_2_2} Topological fixed point data for $\mathrm{Crit} H = \{1,0,-1\}$ with $M_0 = S^2 \times S^2$}
		\end{table}				   
	\end{theorem}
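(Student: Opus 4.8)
<br>

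The plan is to follow the same strategy already used in the proofs of Theorem~\ref{theorem_Sec8_II_2_1} and the earlier $S^2\times S^2$ cases (e.g.\ Theorem~\ref{theorem_II_1}). We are in the situation $\mathrm{Crit}\,H=\{1,0,-1\}$ with $Z_{\min}\cong Z_0\cong Z_{\max}$ all of dimension $4$ and $M_0\cong S^2\times S^2$; by Proposition~\ref{proposition_II_2} this is one of only three admissible diffeomorphism types for $M_0$, so the case is genuinely relevant. Write $e:=e(P_{-1+\epsilon}^+)=ax+by$ and $\mathrm{PD}(Z_0)=px+qy$ for integers $a,b,p,q$. By Lemma~\ref{lemma_Euler_class} we have $e(P_1^-)=e+\mathrm{PD}(Z_0)=(a+p)x+(b+q)y$, and the Duistermaat--Heckman theorem~\ref{theorem_DH} together with Proposition~\ref{proposition_monotonicity_preserved_under_reduction} gives
\[
[\omega_t]=(2-at)x+(2-bt)y \ \ (t\in[-1,0]),\qquad [\omega_t]=(2-ap'-?)\dots
\]
more precisely $[\omega_{-1}]=(2+a)x+(2+b)y$ and $[\omega_1]=(2-a-p)x+(2-b-q)y$.

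First I would extract the linear inequalities. Positivity of $\int_{M_t}\omega_t^2$ on $(-1,1)$ forces the coefficients of $x$ and $y$ in $[\omega_{-1}]$ and $[\omega_1]$ to be positive, i.e.\ $2+a\ge 1$, $2+b\ge 1$, $2-a-p\ge 1$, $2-b-q\ge 1$; positivity of $\langle[\omega_0],[Z_0]\rangle=\langle(2x+2y)\cdot(px+qy)\rangle=2(p+q)$ gives $p+q\ge 1$; and the normalisation $\mathrm{Vol}(Z_{\min})\ge\mathrm{Vol}(Z_{\max})$ from \eqref{equation_II_1_assumption} gives a comparison between $\langle[\omega_{-1}]^2\rangle$ and $\langle[\omega_1]^2\rangle$. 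Because $x^2=y^2=0$, these are all affine constraints in $a,b,p,q$ and cut out a finite list. For each surviving quadruple I would compute $[Z_0]\cdot[Z_0]=2pq$ and $\langle c_1(TM_0),[Z_0]\rangle=2(p+q)=\mathrm{Vol}(Z_0)$, and then apply the adjunction formula~\eqref{equation_adjunction} to pin down the genus and number of connected components of $Z_0$ — exactly as in Theorem~\ref{theorem_II_1}, using that the symplectic area of each component is even (since $[\omega_0]=2x+2y$) to rule out spurious decompositions. The Chern numbers in the last column then follow directly from Lemma~\ref{lemma_II_Chern_number} with $c_1=2x+2y$, $e=ax+by$, $[Z_0]=px+qy$; no further work is needed there.

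A subtlety worth flagging: unlike the $\p^2$ case, here $Z_0$ can be \emph{disconnected}, so after solving the Diophantine system I must, for each quadruple with $[Z_0]\cdot[Z_0]\ge 0$, carefully enumerate possible splittings $\mathrm{PD}(Z_0)=\sum_i\mathrm{PD}(Z_0^i)$ subject to $[Z_0^i]\cdot[Z_0^j]=0$ for $i\ne j$ and each $[Z_0^i]$ satisfying adjunction; the disjointness constraint $[Z_0^i]\cdot[Z_0^j]=0$ in $H^2(S^2\times S^2)$ forces the dual classes to be proportional to a common factor ($ax$ or $ay$), which quickly limits the options (this is precisely the mechanism used in case {\bf(II-1.1)} of Theorem~\ref{theorem_II_1}). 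One must also double-check, using \eqref{equation_II_1_assumption}, which of the two orientations of $M$ a given solution represents, so as not to list a configuration and its mirror twice.

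The main obstacle I anticipate is bookkeeping rather than conceptual: the admissible region for $(a,b,p,q)$ is larger than in the $\p^2$ case (there are nine entries in Table~\ref{table_Sec8_II_2_2}), and each candidate requires a small but non-trivial adjunction-plus-disjointness analysis to determine whether $Z_0$ is a sphere, a torus, or a pair of spheres, and with which classes. Care is needed to ensure no quadruple is dropped and none is double-counted after modding out by the $x\leftrightarrow y$ symmetry and the orientation reversal. Once the list is complete, each Fano realisation would be exhibited afterwards (in the accompanying Example), typically as an $S^1$-equivariant blow-up of $\p^1\times\p^1\times\p^1$, of a $\mathbb{P}^1$-bundle over $S^2\times S^2$, or of $\p^1\times\p^2$ along an invariant curve, matching the computed volumes $\mathrm{Vol}(Z_{\min}),\mathrm{Vol}(Z_0),\mathrm{Vol}(Z_{\max})$ in each row of Table~\ref{table_Sec8_II_2_2}.
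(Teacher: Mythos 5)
Your proposal follows essentially the same route as the paper's proof: parametrize $e=ax+by$ and $\mathrm{PD}(Z_0)=px+qy$, extract the affine positivity constraints from Duistermaat--Heckman on $[\omega_{\pm 1}]$ together with $\mathrm{Vol}(Z_0)>0$ and the normalisation $\mathrm{Vol}(Z_{\min})\ge\mathrm{Vol}(Z_{\max})$, mod out by the $x\leftrightarrow y$ swap, then settle genus and connectedness of $Z_0$ via adjunction and the disjointness condition $[Z_0^i]\cdot[Z_0^j]=0$, with the Chern numbers coming from Lemma~\ref{lemma_II_Chern_number}. The plan is correct and matches the paper's argument in all essentials (the paper additionally records which two quadruples are discarded by the symmetry convention and adjunction, which is exactly the bookkeeping you flag).
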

				
	\begin{proof}
		Let $e = ax + by$ and $\mathrm{PD}(Z_0) = px + qy$ for $a,b,p,q \in \Z$. Then we have 
		\[
			[\omega_{-1}] = (2+a)x + (2 + b)y \quad \text{and} \quad [\omega_1] = (2 - (a+p))x + (2 - (b + q))y
		\]
		and so 
		\[
			2 + a \geq 1, \quad 2 + b \geq 1, \quad 2 - a - p \geq 1, \quad 2 - b - q \geq 1
		\]
		by $\int_{M_t} \omega_t^2 > 0$. We also have $\mathrm{Vol}(Z_0) = 2p + 2q \geq 2$ and $(2+a)(2+b) \geq (2 - a - p)(2 - b - q)$ by \eqref{equation_II_1_assumption}. 
		Furthermore, we may assume that 
		\begin{equation}\label{equation_II_2_2}
			\begin{cases}
				p > q, ~\text{or}  \\
				p = q ~\text{and}~ b \geq a
			\end{cases}
		\end{equation}
		as there are two choices of $\{x,y\}$, i.e., other cases ($p < q$ or $p=q$ and $a > b$) can be recovered by swapping $x$ and $y$.
		Summing up, we get
		\[
			-2 \leq a + b \leq 1, \quad -1 \leq a,b, \quad a+p, b+q \leq 1 \quad p + q \geq 1, \quad p \geq q, \quad (2+a)(2+b) \geq (2 - a - p)(2 - b - q).
		\]
		Using the adjunction formula, we obtain the following table. (Note that, two solutions $(a,b,p,q) = (0,-1,1,1)$ and $(-1,1,2,-1)$ are dropped from the table below
		due to \eqref{equation_II_2_2} and the adjunction formula.)
		
		\begin{table}[H]
		\begin{adjustbox}{max width=\textwidth}
			\begin{tabular}{|c|c|c|c|c|c|c|c|c|c|c|}
				\hline
					& {\bf (II-2-2.1)} & {\bf (II-2-2.2)} & {\bf (II-2-2.3)} & {\bf (II-2-2.4)} & {\bf (II-2-2.5)} & {\bf (II-2-2.6)} & {\bf (II-2-2.7)} & 
					{\bf (II-2-2.8)} & {\bf (II-2-2.9)} \\ \hline \hline
					$e = (a,b)$ & $(-1,-1)$ & $(-1,0)$  & $(-1,0)$ & $(-1,0)$ & $(-1,1)$ & $(-1,1)$ &$(0,0)$ & $(0,0)$  & $(0,1)$\\ \hline
					$\mathrm{PD}(Z_0)= (p,q)$ & $(2,2)$ & $(2,1)$  & $(2,0)$ & $(1,1)$ & $(2,0)$  & $(1,0)$ &$(1,1)$ & $(1,0)$ & (1, 0) \\ \hline
					$\mathrm{Vol}(Z_{\min})$ & $2$ & $4$ & $4$ & $4$ & $6$ & $6$ &$8$ & $8$ & $12$ \\ \hline
					$\mathrm{Vol}(Z_{\max})$ & $2$ & $2$ & $4$ & $4$ & $2$ & $4$ &$2$ & $4$ & $2$\\ \hline
					\makecell{$\langle c_1(TM_0), [Z_0] \rangle$ \\ $= \mathrm{Vol}(Z_0)$} & $8$ & $6$ & $4$ & $4$ & $4$ & $2$ &$4$ & $2$ & $2$\\ \hline
					$[Z_0] \cdot [Z_0]$ & $8$ & $4$ & $0$ & $2$ & $0$ & $0$ &$2$ & $0$ & $0$\\ \hline
					\# components of $Z_0$ & $1$ & $1$ & $2$ & $1$ & $2$ & $1$ &$1$ & $1$ & $1$\\ \hline
					$g$ : genus of $Z_0$ & $1$ & $0$ & $(0,0)$ & $0$ & $(0,0)$ &$0$ &$0$ & $0$  & $0$\\ \hline					
			\end{tabular}		
		\end{adjustbox}
		\end{table}
		\noindent
		The Chern numbers in Table \ref{table_Sec8_II_2_2} directly follows from simple computation using Lemma \ref{lemma_II_Chern_number}.
	\end{proof}				
				
	\begin{example}[Fano varieties of type {\bf (II-2-2)}]\label{example_II_2_2}\cite[No. 2,5,7,8,9,10,11 in Section 12.5, No. 2,3 in Section 12.6]{IP} 
	  We describe Fano varieties of type {\bf (II-2-2)} in Theorem \ref{theorem_Sec8_II_2_2} as follows. 
			           	 
	\begin{itemize}
		\item {\bf (II-2-2.1), (II-2-2.2), (II-2-2.7)} \cite[No. 2,5,11 in Section 12.5]{IP} : Consider the toric variety $M = \pp(\mcal{O} \oplus \mcal{O}(1,1))$ whose moment polytope 
		is described in the first of Figure \ref{figure_II_2_2_1}. For the $S^1$-action generated by $\xi = (0,0,1)$, the minimal fixed component $Z_{\min}$ is diffeomorphic to 
		$\p^1 \times \p^1$ and it has volume 18, see also 
		Example \ref{example_Sec8_II_1} {\bf (II-1-2.3)}.
		
	           		 \begin{figure}[H]
	           	 		\scalebox{0.8}{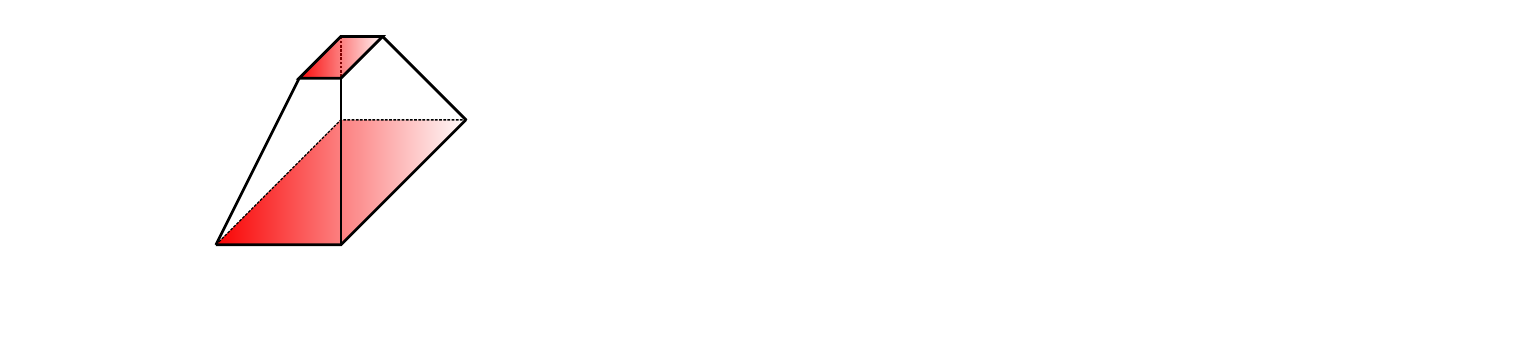}
		           	 	\caption{\label{figure_II_2_2_1} Fano varieties of types {\bf (II-2-2.1), (II-2-2.2), (II-2-2.7)}}
		           	 \end{figure}
		\noindent
		Let $C_{(a,b)}$ be a smooth curve of bidegree $(a,b)$ on $Z_{\min}$. Since $[\omega_1] = 3x + 3y$ as in Example \ref{example_Sec8_II_1} {\bf (II-1-2.3)}, we may blow-up 
		of $M$ along $C_{(a,b)}$ where $(a,b) = (1,0), (2,0), (1,1),(2,1)$ and $(2,2)$. Denote by $M_{(a,b)}$ the $S^1$-equivariant blow-up of $M$ along $C_{(a,b)}$. Then
		each $M_{(a,b)}$ has three fixed components and one can check that 
		
		\begin{table}[H]
		\begin{adjustbox}{max width=\textwidth}
			\begin{tabular}{|c|c|c|c|c|c|}
				\hline
					$(a,b)$ & $(1,0)$ & $(1,1)$ & $(2,0)$ & $(2,1)$ & $(2,2)$ \\ \hline \hline
					$\mathrm{Vol}(Z_{\min})$ & $12$ & $8$ & $6$ & $4$ & $2$ \\ \hline
					$\mathrm{Vol}(Z_{\max})$ & $2$ & $2$ & $2$ & $2$ & $2$ \\ \hline
					$\mathrm{Vol}(Z_0)$ & $2$ & $4$ & $4$ & $6$ & $8$ \\ \hline
					TFD type & {\bf (II-2-2.9)} & {\bf (II-2-2.7)} & {\bf (II-2-2.5)} & {\bf (II-2-2.2)} & {\bf (II-2-2.1)} \\ \hline
			\end{tabular}		
		\end{adjustbox}
		\end{table}
		\noindent
		We visualize each cases in Figure \ref{figure_II_2_2_1} except for the cases {\bf (II-2-2.5)} and {\bf (II-2-2.9)}, where the latter two types can be 
		represented by toric Fano varieties as we will see below.
		
		\begin{remark}
			Let $M = \p^1 \times \p^1 \times \p^1$ with the standard $T^3$-action and consider the $S^1$-action generated by $\xi = (0,0,1)$. 
			Then there are two fixed components that are both diffeomorphic to $S^2 \times S^2$ with the same volume 8. If $C$ is a curve of bidegree $(1,1)$
			on $Z_{\max}$, then one can take the $S^1$-equivariant blow-up of $M$ along $C$ where we denote the resulting manifold by $\widetilde{M}$. 
			Then we have 
			\[
				\mathrm{Vol}(\widetilde{Z}_{\max}) = 2, \quad \mathrm{Vol}(\widetilde{Z}_{\min}) = 8, \quad \mathrm{Vol}(\widetilde{Z}_{0}) = 4
			\]
			and so the fixed point data is of type {\bf (II-2-2.7)}. See Figure \ref{figure_II_2_2_1_1}. We will see in Section \ref{secMainTheorem} that 
			$\widetilde{M}$ is $S^1$-equivariantly symplectomorphic to the blow-up of $\pp(\mcal{O} \oplus \mcal{O}(1,1))$ along a curve of bidegree $(1,1)$
			described in Figure \ref{figure_II_2_2_1}.
			
	           		 \begin{figure}[H]
	           	 		\scalebox{0.8}{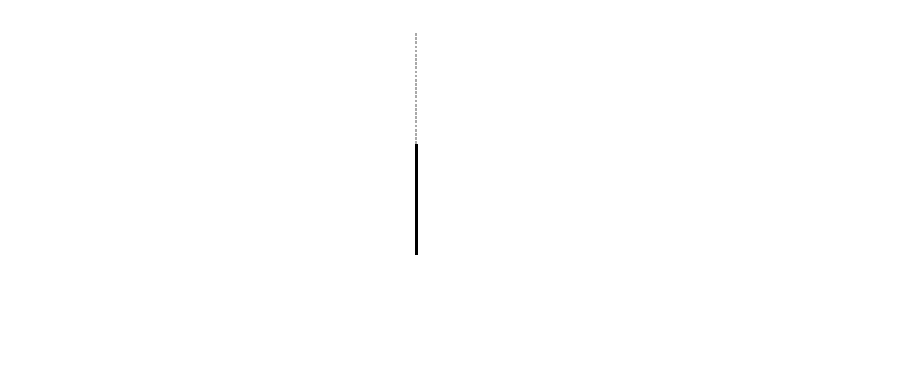}
		           	 	\caption{\label{figure_II_2_2_1_1} Fano varieties of types {\bf (II-2-2.7)}}
		           	 \end{figure}
			
		\end{remark}
		\vs{0.3cm}
		
		\item {\bf (II-2-2.3), (II-2-2.4), (II-2-2.8), (II-2-2.9)} \cite[No. 7,8 in Section 12.5, No. 2,3 in Section 12.6]{IP} : 			
		Now we let $M = \p^1 \times X_1$ admitting the standard $T^3$-action where the moment polytope is given in Figure \ref{figure_II_2_2_2}. With respect to 
		the $S^1$-action generated by $\xi = (0,0,1)$, we have two fixed components such that $\mathrm{Vol}(Z_{\min}) = 12$ and $\mathrm{Vol}(Z_{\max}) = 4$.
		
	           		 \begin{figure}[H]
	           	 		\scalebox{0.8}{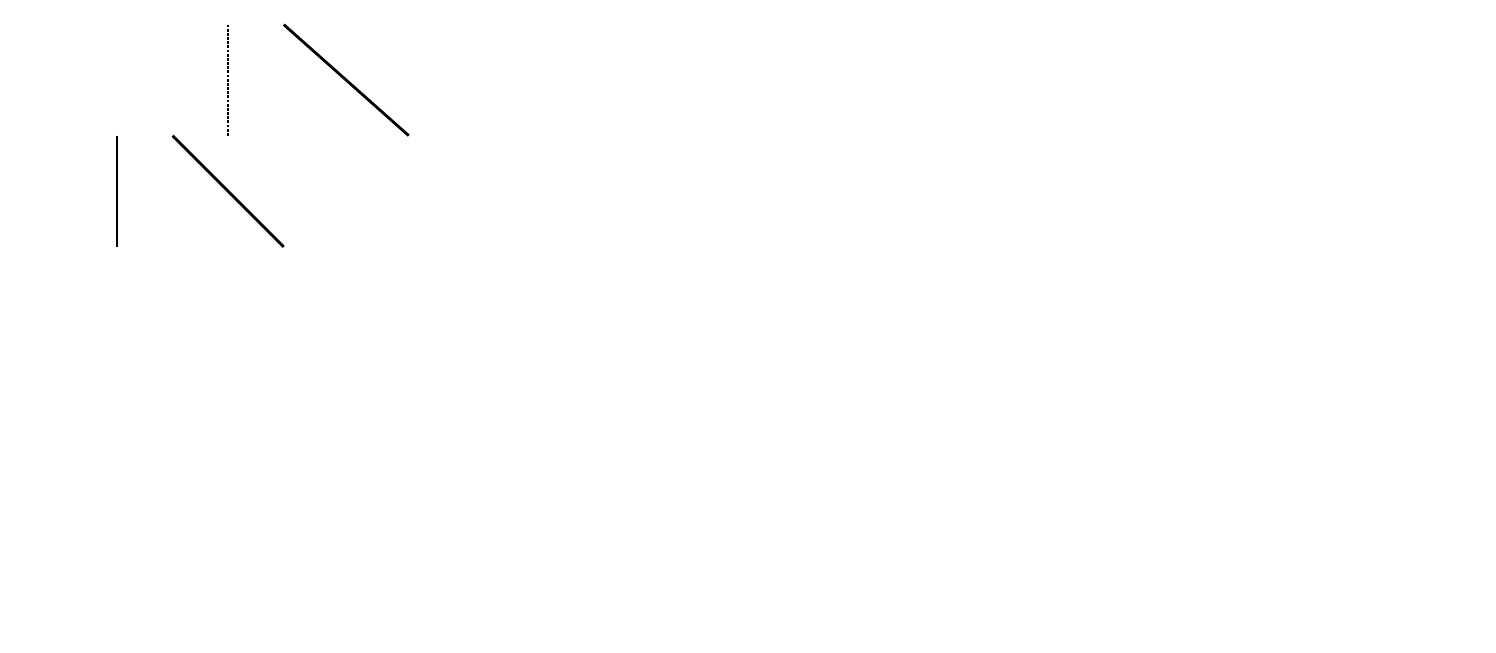}
		           	 	\caption{\label{figure_II_2_2_2} Fano varieties of types {\bf (II-2-2.3), (II-2-2.4), (II-2-2.8), (II-2-2.9)}}
		           	 \end{figure}
		
		\noindent
		Note that $[\omega_{-1}] = x + 2y$ and $[\omega_1] = 3x + 2y$. So, we may obtain a Fano variety by blowing up $M$ along 
		\begin{itemize}
			\item a curve of bidegree $(a,b)$ on $Z_{\min}$ where $(a,b) = (1,0), (2,0), (1,1),$ or $(2,1)$, or 
			\item a curve of bidegree $(0,1)$ on $Z_{\max}$.
		\end{itemize}
		and each of them has fixed point set as follows.
		\begin{table}[H]
		\begin{adjustbox}{max width=\textwidth}
			\begin{tabular}{|c|c|c|c|c|c|}
				\hline
					$(a,b)$ & $(1,0)$ & $(2,0)$ & $(1,1)$ & $(2,1)$ & $(0,1)$ \\ \hline \hline
					$\mathrm{Vol}(Z_{\min})$ & $8$ & $4$ & $4$ & $2$ & $12$ \\ \hline
					$\mathrm{Vol}(Z_{\max})$ & $4$ & $4$ & $4$ & $4$ & $2$ \\ \hline
					$\mathrm{Vol}(Z_0)$ & $2$ & $4$ & $4$ & $6$ & $2$ \\ \hline
					TFD type & {\bf (II-2-2.8)} & {\bf (II-2-2.3)} & {\bf (II-2-2.4)} & {\bf (II-2-2.2)} & {\bf (II-2-2.9)} \\ \hline
			\end{tabular}		
		\end{adjustbox}
		\end{table}
		\noindent
		
		\begin{remark}
			Note that 
			\begin{itemize}
				\item the blow-up of $\pp(\mcal{O} \oplus \mcal{O}(1,1))$ along a curve of bidegree $(2,1)$ and
				\item the blow-up of $\p^1 \times X_1$ along a curve of bidegree $(2,1)$
			\end{itemize}
			have the same fixed point data {\bf (II-2-2.2)}. We will see in Section \ref{secMainTheorem} 
			that they are $S^1$-equivariantly symplectormophic to each other.
		\end{remark}

		\begin{remark}
			Consider the complete flag variety $\mcal{F}(3)$ equipped with the $T^2$-action where the corresponding moment polytope is given 
			on the bottom-right in Figure \ref{figure_II_2_2_2}. By taking $T^2$-equivariant blow-up along two $T^2$-invariant spheres $C_{\min}$ and $C_{\max}$ corresponding
			the edges $\overline{(0,0) ~(2,0)}$ and $\overline{(2,4) ~(4,4)}$, we obtain a Fano variety and it has three fixed components such that 
			\begin{itemize}
				\item $Z_{\min} \cong Z_{\max} \cong S^2 \times S^2$ (since $\mathrm{Vol}(C_{\min}) = \mathrm{Vol}(C_{\max})  = 2$ and by Lemma \ref{lemma_volume})
				\item $\mathrm{Vol}(Z_0) = 4$ and $\mathrm{Vol}(Z_{\min}) = \mathrm{Vol}(Z_{\max}) = 4$. 
			\end{itemize}
			So, the fixed point data is of type {\bf (II-2-2.4)}. Consequently, two semifree $S^1$-Fano varieties 
			\begin{itemize}
				\item the blow-up of $\p^1 \times X_1$ along a curve of bidegree $(1,1)$ on the mnimum, and
				\item the blow-up of $\mcal{F}(3)$ along the disjoint union of two spheres
			\end{itemize}
			have the same fixed point data. We will see in Section \ref{secMainTheorem} that they are $S^1$-equivariantly symplectomorphic. 
		\end{remark}

		\item {\bf (II-2-2.5), (II-2-2.6)} \cite[No. 9,10 in Section 12.5]{IP} : In this case, we consider $M$, the $T^3$-equivariant blow-up of $\p^3$ along two disjoint $T^3$-invariant curves
		which we described in Example \ref{example_Sec8_I_3_1} {\bf (I-3-1.4)}. We fix one exceptional divisor $D$ on $M$ and let $C_1$, $C_2$ be two disjoint $T^3$-invariant curves. Then, 
		we obtain two Fano varieties
		\begin{itemize}
			\item $M_1$ : the $T^3$-equivariant blow-up of $M$ along $C_1 ~\dot \sqcup ~C_2$.
			\item $M_2$ : the $T^3$-equivariant blow-up of $M$ along $C_1$,
		\end{itemize}
		One can check that the $S^1$-action on each $M_i$ generated by $\xi = (-1,-1,0)$ is semifree. Then it immediately follows from Figure \ref{figure_II_2_2_3} that   
		\begin{table}[H]
		\begin{adjustbox}{max width=\textwidth}
			\begin{tabular}{|c|c|c|}
				\hline
							& $M_1$ & $M_2$ \\ \hline \hline
					$\mathrm{Vol}(Z_{\min})$ & $6$ & $6$ \\ \hline
					$\mathrm{Vol}(Z_{\max})$ & $2$ & $4$ \\ \hline
					$\mathrm{Vol}(Z_0)$ & $(2,2)$ & $2$ \\ \hline
					TFD type & {\bf (II-2-2.5)} & {\bf (II-2-2.6)} \\ \hline
			\end{tabular}		
		\end{adjustbox}
		\end{table}
		\noindent
		where the fixed component $Z_0$ in $M_1$ consists of two spheres corresponding to the edges $\overline{(0,1,1) ~(1,0,1)}$ and $\overline{(0,2,0) ~(2,0,0)}$ in the middle of 
		Figure \ref{figure_II_2_2_3} whose affine distances are both two. See also Example \ref{example_Sec8_I_3_1} {\bf (I-3-1.5)} and Example \ref{example_IV_1} (4).
	\end{itemize}					           	 
	
	           		 \begin{figure}[H]
	           	 		\scalebox{0.8}{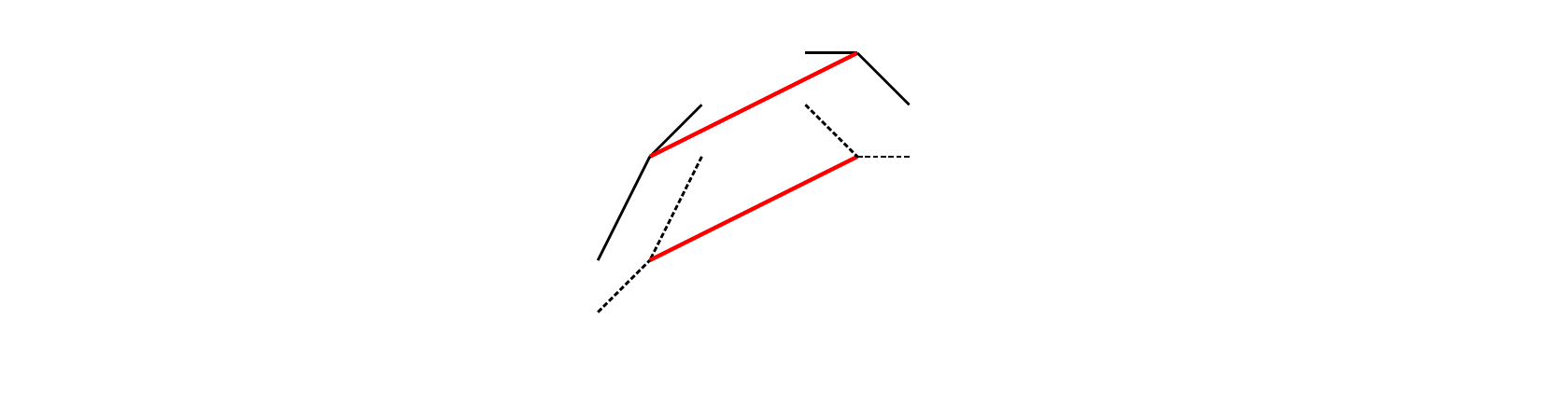}
		           	 	\caption{\label{figure_II_2_2_3} Fano varieties of type {\bf (II-2-2.5) and (II-2-2.6)}}
		           	 \end{figure}

	\end{example}
								
	\begin{theorem}[Case {\bf (II-2-3)}]\label{theorem_Sec8_II_2_3}
		Let $(M,\omega)$ be a six-dimensional closed monotone semifree Hamiltonian $S^1$-manifold such that $\mathrm{Crit} H = \{ 1,0,-1\}$. 
		Suppose that $M_0 \cong X_1$. Then the list of all possible topological fixed point data is given in the Table \ref{table_Sec8_II_2_3}
		\begin{table}[H]
			\begin{tabular}{|c|c|c|c|c|c|c|c|c|}
				\hline
				    & $(M_0, [\omega_0])$ & $e(P_{-1}^+)$ & $Z_{-1}$  & $Z_0$ &  $Z_1$ & $b_2(M)$ & $c_1^3(M)$ \\ \hline \hline
				    {\bf (II-2-3.1)} & $(X_1, 3u - E_1)$ & $-u$  & $X_1$  & \makecell{$Z_0 \cong S^2$, \\ $\mathrm{PD}(Z_0) = 2u$} & 
				    	$X_1$
					     & $4$ & $32$ \\ \hline    
				    {\bf (II-2-3.2)} & $(X_1, 3u - E_1)$ & $-E_1$  & $X_1$  & \makecell{$Z_0 \cong S^2 ~\dot \cup ~ S^2$, \\ $\mathrm{PD}(Z_0^1) = u$ \\ $\mathrm{PD}(Z_0^2) = E_1$} & 
				    	$X_1$
					     & $5$ & $36$ \\ \hline    
				    {\bf (II-2-3.3)} & $(X_1, 3u - E_1)$ & $0$  & $X_1$  & \makecell{$Z_0 \cong S^2 $, \\ 
				    $\mathrm{PD}(Z_0) = u$} & 
				    	$X_1$
					     & $4$ & $40$ \\ \hline    			     					     
				    {\bf (II-2-3.4)} & $(X_1, 3u - E_1)$ & $u - E_1$  & $X_1$  & \makecell{$Z_0 \cong S^2$, \\ $\mathrm{PD}(Z_0) = E_1$} & 
				    	$X_1$
					     & $4$ & $46$ \\ \hline    					     
				    {\bf (II-2-3.5)} & $(X_1, 3u - E_1)$ & $0$  & $X_1$  & \makecell{$Z_0 \cong S^2 $, \\ 
				    $\mathrm{PD}(Z_0) = E_1$} & 
				    	$X_1$
					     & $4$ & $44$ \\ \hline    			     					     
			\end{tabular}		
			\vs{0.5cm}			
			\caption{\label{table_Sec8_II_2_3} Topological fixed point data for $\mathrm{Crit} H = \{1,0,-1\}$ with $M_0 = X_1$}
		\end{table}				   
	\end{theorem}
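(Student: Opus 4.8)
The plan is to repeat, with $M_0 \cong X_1$, the scheme used to prove Theorem~\ref{theorem_Sec8_II_2_1} and Theorem~\ref{theorem_Sec8_II_2_2}. Fix the standard basis $\{u, E_1\}$ of $H^2(X_1;\Z)$ with $\langle u^2, [M_0]\rangle = 1$, $\langle E_1^2, [M_0]\rangle = -1$, $\langle u\cdot E_1, [M_0]\rangle = 0$; by Proposition~\ref{proposition_monotonicity_preserved_under_reduction} we have $[\omega_0] = c_1(TM_0) = 3u - E_1$. Since $\mathrm{Crit}\,\mathring{H} = \{0\}$ there is no isolated fixed point and $Z_{-1} \cong Z_1 \cong X_1$. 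Write $e := e(P_{-1}^+)$ and $\mathrm{PD}(Z_0)$ in this basis, say $e = au + bE_1$ and $\mathrm{PD}(Z_0) = pu + qE_1$. Because $Z_0$ is a surface of index two, crossing the level $0$ leaves the reduced space diffeomorphism type unchanged and, exactly as in Lemma~\ref{lemma_Z0_1_1} and Lemma~\ref{lemma_Euler_class}, changes the Euler class by $\mathrm{PD}(Z_0)$; hence $e(P_1^-) = e(P_0^+) = e + \mathrm{PD}(Z_0)$, and the Duistermaat--Heckman theorem~\ref{theorem_DH} gives
\[
	[\omega_{-1}] = (3+a)u + (b-1)E_1, \qquad [\omega_1] = (3-a-p)u - (1+b+q)E_1 .
\]

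First I would assemble the constraints on $(a,b,p,q)$ as in the earlier proofs: positivity of the reduced symplectic volumes $\langle [\omega_{\pm1}]^2, [M_{\pm1}]\rangle > 0$ together with positivity of ${\bf DH}$ on all of $[-1,1]$; positivity of $\langle [\omega_t]\cdot E_1, [M_t]\rangle$ for $t \in [-1,1]$ (which encodes that the unique exceptional class $E_1$ of $X_1$ never vanishes, consistent with $M_t \cong X_1$ throughout); positivity of the area $\mathrm{Vol}(Z_0) = \langle [\omega_0], \mathrm{PD}(Z_0)\rangle = 3p+q > 0$; and the orientation normalization~\eqref{equation_II_1_assumption}, i.e.\ $\mathrm{Vol}(Z_{\min}) = \langle [\omega_{-1}]^2, [M_{-1}]\rangle \ge \langle [\omega_1]^2, [M_1]\rangle = \mathrm{Vol}(Z_{\max})$. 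Solving this finite integral system should leave only the five candidates recorded in Table~\ref{table_Sec8_II_2_3}.

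For each surviving solution I would then pin down the topology of $Z_0$ using the adjunction formula~\eqref{equation_adjunction} applied to each connected component, the integrality and positivity of component areas, and Lemma~\ref{lemma_list_exceptional} (in $X_1$ the only exceptional class is $E_1$, while the degree-one sphere classes are $u$ and $u-E_1$): this fixes the genus, the number of components and their Poincar\'e duals, and lets one discard the numerically admissible but geometrically impossible solutions. The Betti number $b_2(M)$ then follows from the perfectness of $H$ as a Morse--Bott function (Poincar\'e polynomial $P_{X_1}(t) + t^2 P_{Z_0}(t) + t^2 P_{X_1}(t)$), and the Chern number $\langle c_1(TM)^3, [M]\rangle$ is read off from Lemma~\ref{lemma_II_Chern_number} with $e = e(P_{-1}^+)$, $c_1 = 3u - E_1$, $[Z_0] = pu + qE_1$.

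The main obstacle, as in Theorems~\ref{theorem_Sec8_II_2_1} and~\ref{theorem_Sec8_II_2_2}, is the last step: deciding connectedness of $Z_0$ and eliminating the spurious solutions --- for instance ruling out a putative $Z_0$ that would require more $(-1)$-spheres than the adjunction bound allows, or whose would-be components are forced to intersect even though they must be disjoint. Everything else is the routine bookkeeping already illustrated in those two theorems.
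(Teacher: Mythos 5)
Your proposal is correct and follows essentially the same route as the paper: writing $e = au+bE_1$, $\mathrm{PD}(Z_0)=pu+qE_1$, extracting the same finite system of inequalities from Duistermaat--Heckman positivity, the non-vanishing of $E_1$, $\mathrm{Vol}(Z_0)>0$ and the normalization \eqref{equation_II_1_assumption} (which in the paper reduces to $p+q\leq 2$, $3p+q\geq 1$, $q\geq 0$), then using the adjunction formula together with Lemma \ref{lemma_list_exceptional} to fix the topology of $Z_0$ and discard the two numerically admissible solutions with $\mathrm{PD}(Z_0)=2E_1$, and finally computing $b_2$ and $c_1^3$ exactly as you describe via Lemma \ref{lemma_II_Chern_number}. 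The only cosmetic difference is your citation of Lemma \ref{lemma_Z0_1_1} where the relevant general statement is Lemma \ref{lemma_Euler_class}, but the underlying fact you invoke is the correct one.
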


	\begin{proof}
		Let $e = au + bE_1$ and $\mathrm{PD}(Z_0) = pu + qE_1$ for some $a,b,p,q \in \Z$. By the Dustermaat-Heckman theorem, we get 
		\[
			[\omega_{-1}] = (3+a)u + (b-1)E_1 \quad \text{and} \quad [\omega_1] = (3 - (a+p))u + (-1 - (b + q))E_1. 
		\]
		By the following inequalities $\int_{M_t} \omega_t^2 > 0$, $\langle [\omega_t], E_1 \rangle > 0$, and $\mathrm{Vol}(Z_0) > 0$, we obtain
		\[
			3+ a > 1 - b \geq 1, \quad 3- a - p > 1 + b + q \geq 1, \quad 3p + q \geq 1.
		\]
		Moreover, the condition \eqref{equation_II_1_assumption} allows us to assume that
		\[
			(3+a)^2 - (b-1)^2 \geq (3 - a - p)^2 - (1 + b + q)^2.
		\]
		Combining those equations, we obtain 
		\[
			p+q \leq 2, \quad 3p + q \geq 1, \quad q \geq 0
		\]
		whose integral solutions are $(2,0), (1,0), (0,1), (0,2), (1,1)$. Using the adjunction formula, we obtain the following table. 
		which coincides with Table \ref{table_Sec8_II_2_3}. 
		\begin{table}[H]
		\begin{adjustbox}{max width=\textwidth}
			\begin{tabular}{|c|c|c|c|c|c|c|c|}
				\hline
					& {\bf (II-2-2.1)} & {\bf (II-2-2.2)} & {\bf (II-2-2.3)} & {\bf (II-2-2.4)} & {\bf (II-2-2.5)} & {\bf (II-2-2.6)} & {\bf (II-2-2.7)}\\ \hline \hline
					$\mathrm{PD}(Z_0)= (p,q)$ & $(2,0)$ & $(1,1)$ & $(1,0)$ & $(0,1)$  & $(0,1)$ & $(0,2)$ & $(0,2)$  \\ \hline					
					$e = (a,b)$ & $(-1,0)$ & $(0, -1)$ & $(0,0)$ & $(1,-1)$ & $(0,0)$ & $(1,-2)$ & $(0,-1)$ \\ \hline
					$\mathrm{Vol}(Z_{\min})$ & $3$ & $5$ & $8$ & $12$ & $8$ & $7$ & $5$ \\ \hline
					$\mathrm{Vol}(Z_{\max})$ & $3$ & $3$ & $3$ & $3$ & $5$ & $3$ & $5$ \\ \hline
					\makecell{$\langle c_1(TM_0), [Z_0] \rangle$ \\ $= \mathrm{Vol}(Z_0)$} & $6$ & $4$ & $3$ & $1$ & $1$ & $2$ & $2$ \\ \hline
					$[Z_0] \cdot [Z_0]$ & $4$ & $0$ & $1$ & $-1$ & $-1$ & $-4$ & $-4$ \\ \hline
					\# components & $1$ & $2$ & $1$ & $1$ & $1$ & $\times$ & $\times$ \\ \hline
					$g$ : genus of $Z_0$ & $0$ & $(0,0)$ & $0$ & $0$ & $0$ & $\times$ & $\times$ \\ \hline					
			\end{tabular}		
		\end{adjustbox}
		\end{table}
		\noindent
		Note that the Chern numbers in Table \ref{table_Sec8_II_2_3} can be obtained from Lemma \ref{lemma_II_Chern_number} by direct computation.
	\end{proof}
				
	\begin{example}[Fano varieties of type {\bf (II-2-3)}]\label{example_II_2_3}\cite[No. 4,9,11,12 in Section 12.5, No. 2 in Section 12.6]{IP} 
	  We describe Fano varieties of type {\bf (II-2-3)} in Theorem \ref{theorem_Sec8_II_2_3} as follows. 
			           	 
	\begin{itemize}
		\item {\bf (II-2-3.1), (II-2-3.2), (II-2-3.3), (II-2-3.4)} \cite[No. 4,9,12 in Section 12.5, No. 2 in Section 12.6]{IP} : 		
		Let $M$ be the $T^3$-equivariant blow-up of $V_7$ along a $T^3$-invariant line passing through the exceptional divisor of $V_7 \rightarrow \p^3$
		where the moment polytope of $M$ is depicted on the left of Figure \ref{figure_II_2_3_1}. 
		Then the $S^1$-subgroup of $T^3$ generated by $\xi = (0,0,1)$ acts on $M$ semifreely such that $\mathrm{Vol}(Z_{\min}) = 15$ and $\mathrm{Vol}(Z_{\max}) = 3$.		
		See also Example \ref{example_Sec8_II_1} {\bf (II-1-3.2)}. 			
		
			 \begin{figure}[H]
	           	 		\scalebox{0.8}{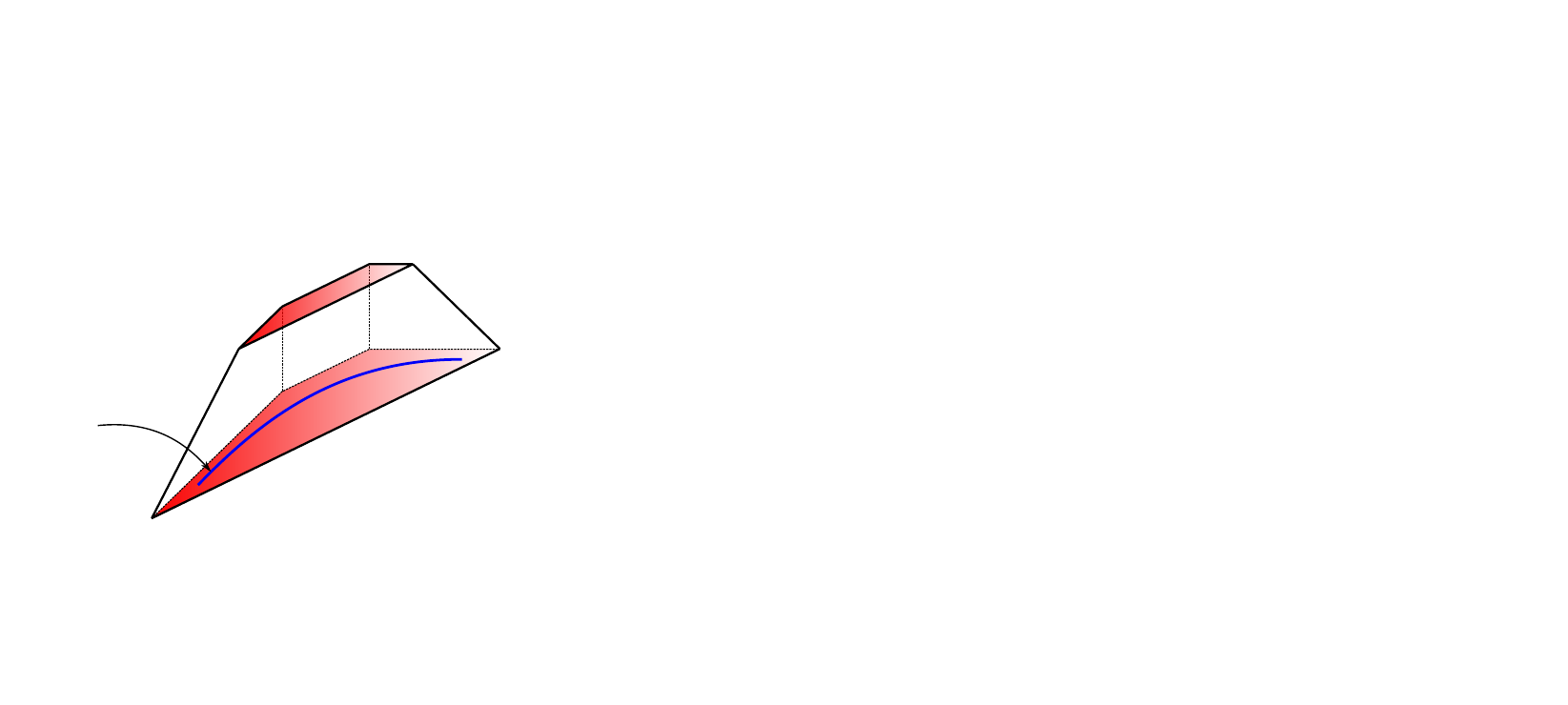}
		           	 	\caption{\label{figure_II_2_3_1} {\bf (II-2-3.1)} : Blow-up of $M$ along the conic}
	           	 \end{figure}
	           	 
		\noindent
		There are four possible ways of blowing $M$ up to obtain a semifree $S^1$-Fano variety. Let $C_{(a,b)}$ be a smooth (possibly disconnected) curve 
		of bidegree $(a,b)$ in $Z_{\min} \cong X_1$. Since $[\omega_{-1}] = 4u - E_1$ and by the adjunction formula, we can easily check that all possible $(a,b)$'s are $(1,0), (2,0), (0,1), (1,1)$.
		(Note that $(4u - E_1) - (au  + bE_1)$ becomes the symplectic form on the minimal fixed component of the blow-up of $M$ along $C_{(a,b)}$.) Denote by $M_{(a,b)}$ the $S^1$-equivariant 
		blow-up of $M$ along $C_{(a,b)}$. Then
		\begin{itemize}
			\item $M_{(1,0)}$ is the blow-up of $M$ along a line $H$ (corresponding to the edge $\overline{(4,0,0)~(0,4,0)}$),
			\item $M_{(2,0)}$ is the blow-up of $M$ along a conic,
			\item $M_{(0,1)}$ is the blow-up of $M$ along an exceptional line $E$ (corresponding to the edge $\overline{(1,0,0)~(0,1,0)}$),
			\item $M_{(1,1)}$ is the blow-up of $M$ along a disjoint union of an exceptional line $E$ and a line $H$.
		\end{itemize}
		Moreover, the volume of the extremal fixed component for each $M_{(a,b)}$ can be computed directly from Figure \ref{figure_II_2_3_1} and therefore we obtain the following.
		\vs{0.3cm}	      
		\begin{table}[H]
		\begin{adjustbox}{max width=\textwidth}
			\begin{tabular}{|c|c|c|c|c|}
				\hline
							& $M_{(1,0)}$ & $M_{(2,0)}$ &  $M_{(0,1)}$ & $M_{(1,1)}$ \\ \hline \hline
					$\mathrm{Vol}(Z_{\min})$ & $8$ & $3$ & $12$ & $5$ \\ \hline
					$\mathrm{Vol}(Z_{\max})$ & $3$ & $3$ & $3$ & $3$  \\ \hline
					$\mathrm{Vol}(Z_0)$ & $3$ & $(3,3)$ & $1$ & $4$ \\ \hline
					TFD type & {\bf (II-2-3.3)} & {\bf (II-2-3.1)} & {\bf (II-2-3.4)} & {\bf (II-2-3.2)} \\ \hline
			\end{tabular}		
		\end{adjustbox}
		\end{table}
		\noindent
	           	 
		\item {\bf (II-2-3.5)} \cite[No. 11 in Section 12.5]{IP} : Consider $M = \p^1 \times X_1$ with the standard $T^3$-action and let $\widetilde{M}$ be the $T^3$-equivariant blow-up 
		along the $T^3$-invariant curve 	$t \times E$ where $t$ is a fixed point of the $S^1$-action on $\p^1$ and $E$ is the exceptional divisor of $X_1$. (Note that the curve $t \times E$
		corresponds to the edge $\overline{(0,2,2) ~(1,2,2)}$ in Figure \ref{figure_II_2_3_2}.)
		
			 \begin{figure}[H]
	           	 		\scalebox{0.8}{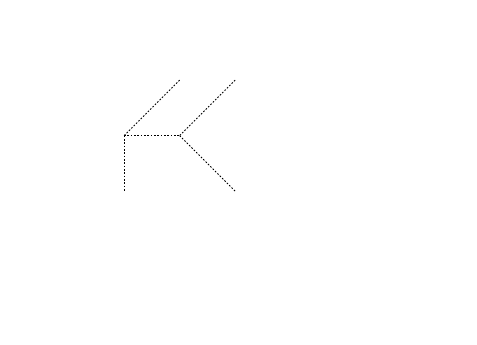}
		           	 	\caption{\label{figure_II_2_3_2} {\bf (II-2-3.5)} : Blow-up of $\p^1 \times X_1$ along $t \times E$}
	           	 \end{figure}		
	           \noindent
	           The $S^1$ subgroup of $T^3$ generated by $\xi = (0,1,0)$ is semifree and the fixed point set satisfies 
	           \[
	           	\mathrm{Vol}(Z_{\min}) = 8, \quad \mathrm{Vol}(Z_{0}) = 1, \quad \mathrm{Vol}(Z_{\max}) = 5
	           \]
	           and therefore the fixed point data coincides with {\bf (II-2-3.5)} in Table \ref{table_Sec8_II_2_3}.
	           	 
	\end{itemize}			           	 

	\end{example}

		{\footnotesize
		\begin{longtable}{|c|c|c|c|c|c|c|c|c|}
			\hline
			& $(M_0, [\omega_0])$ & $e(P_{-2}^+)$ & $Z_{-2}$ & $Z_{-1}$ &  $Z_0$ & $Z_1$ & $b_2$ & $c_1^3$ \\ \hline \hline
		{\bf (I-1-1.1)} & $(E_{S^2}, 3x + 2y)$  & $-x-y$  & $S^2$ & & & $E_{S^2}$ & $2$ & $54$ \\ \hline    
		{\bf (I-1-2.1)} & $(S^2 \times S^2, 2x + 2y)$ & $-y$  & $S^2$ & & & $S^2 \times S^2$ & $2$ & $54$ \\ \hline    
		{\bf (I-1-2.2)} & $(S^2 \times S^2, 2x + 2y)$ & $x-y$  & $S^2$ & & & $S^2 \times S^2$ & $2$ & $54$ \\ \hline    
		{\bf (I-2)} & $(E_{S^2}, 3x + 2y - E_1)$ & $-x-y$  & $S^2$ & $\mathrm{pt}$ &  & $E_{S^2} \# ~\overline{\p^2}$ & $3$ & $46$ \\ \hline    
		{\bf (I-3-1.1)} & $(E_{S^2}, 3x + 2y)$ & $-y$ & & $S^2$ & \makecell{$Z_0 \cong S^2$ \\ $\mathrm{PD}(Z_0) = y$} & $E_{S^2}$ & $3$ & $52$ \\ \hline    
		{\bf (I-3-1.2)} & $(E_{S^2}, 3x + 2y)$ & $-x-y$  & $S^2$ & & \makecell{$Z_0 \cong S^2$ \\ $\mathrm{PD}(Z_0) = y$} & $E_{S^2}$ & $3$ & $50$ \\ \hline    
		{\bf (I-3-1.3)} & $(E_{S^2}, 3x + 2y)$ & $-y$  & $S^2$ &  &
				    				\makecell{$Z_0 \cong S^2 ~\dot\cup ~S^2$ \\ $\mathrm{PD}(Z_0^1) = x+y$ \\ $\mathrm{PD}(Z_0^2) = y$} 				    
				    															& $E_{S^2}$ & $4$ & $44$ \\ \hline    
		{\bf (I-3-1.4)} & $(E_{S^2}, 3x + 2y)$ & $-x-y$  & $S^2$ & & \makecell{$Z_0 \cong S^2$ \\ $\mathrm{PD}(Z_0) = x + y$} 
				    								& $E_{S^2}$ & $3$ & $44$ \\ \hline    
		{\bf (I-3-1.5)} & $(E_{S^2}, 3x + 2y)$ & $-x-y$  & $S^2$ & &
				    							\makecell{$Z_0 \cong S^2 ~\dot\cup ~S^2$ \\ $\mathrm{PD}(Z_0^1) = x+y$ \\ $\mathrm{PD}(Z_0^2) = y$} 
				    								& $E_{S^2}$ & $4$ & $40$ \\ \hline    
		{\bf (I-3-1.6)} & $(E_{S^2}, 3x + 2y)$ & $-x-y$  & $S^2$ & &\makecell{$Z_0 \cong S^2$ \\ $\mathrm{PD}(Z_0) = 2x + 2y$} & $E_{S^2}$ & $3$ & $36$ \\ \hline    
		{\bf (I-3-2.1)} & $(S^2 \times S^2, 2x + 2y)$ & $-y$  & $S^2$ & &\makecell{$Z_0 \cong S^2$ \\ $\mathrm{PD}(Z_0) = y$} & $S^2 \times S^2$ & $3$ & $48$ \\ \hline    
		{\bf (I-3-2.2)} & $(S^2 \times S^2, 2x + 2y)$ & $x-y$  & $S^2$ & &\makecell{$Z_0 \cong S^2$ \\ $\mathrm{PD}(Z_0) = y$} 
				    															&$S^2 \times S^2$ & $3$ & $50$ \\ \hline    
		{\bf (I-3-2.3)} & $(S^2 \times S^2, 2x + 2y)$ & $-y$  & $S^2$ &  &
				    				\makecell{$Z_0 \cong S^2 ~\dot\cup ~S^2$ \\ $\mathrm{PD}(Z_0^1) = y$ \\ $\mathrm{PD}(Z_0^2) = y$} 				    
				    															&$S^2 \times S^2$ & $4$ & $42$ \\ \hline    
		{\bf (I-3-2.4)} & $(S^2 \times S^2, 2x + 2y)$ & $x-y$  & $S^2$ & &
				    								\makecell{$Z_0 \cong S^2 ~\dot\cup ~S^2$ \\ $\mathrm{PD}(Z_0^1) = y$ \\ $\mathrm{PD}(Z_0^2) = y$} 
				    								& $S^2 \times S^2$ & $4$ & $46$ \\ \hline    
		{\bf (I-3-2.5)} & $(S^2 \times S^2, 2x + 2y)$ & $-y$  & $S^2$ & &
				    							\makecell{$Z_0 \cong S^2$ \\ $\mathrm{PD}(Z_0) = x$}
				    								& $S^2 \times S^2$ & $3$ & $46$ \\ \hline    
		{\bf (I-3-2.6)} & $(S^2 \times S^2, 2x + 2y)$ & $-y$  & $S^2$ & &
				    							\makecell{$Z_0 \cong S^2$ \\ $\mathrm{PD}(Z_0) = x + y$}
				    								& $S^2 \times S^2$ & $3$ & $42$ \\ \hline    
		{\bf (I-3-2.7)} & $(S^2 \times S^2, 2x + 2y)$ & $-y$  & $S^2$ & &
				    							\makecell{$Z_0 \cong S^2$ \\ $\mathrm{PD}(Z_0) = x + 2y$}
				    								& $S^2 \times S^2$ & $3$ & $38$ \\ \hline    				    									{\bf (I-4-1.1)} & \makecell{$(E_{S^2} \# \overline{\p^2},$ \\ $3x + 2y - E_1)$} & $-x-y$  & $S^2$ & $\mathrm{pt}$ & 
				    								\makecell{ $Z_0 \cong S^2$  \\ $\mathrm{PD}(Z_0) = x + y - E_1$} & $X_2$ & $4$ & $40$ \\ \hline    
		{\bf (I-4-1.2)} & \makecell{$(E_{S^2} \# 2\overline{\p^2},$ \\ $3x + 2y - E_1 - E_2)$} & $-x-y$  & $S^2$ & $\mathrm{2 ~pts}$ & 
			\makecell{ $Z_0 \cong S^2$  \\  $\mathrm{PD}(Z_0) = x + y - E_1 - E_2$} & $X_3$ & $5$ & $36$ \\ \hline    					     
		{\bf (I-4.2)} & \makecell{$(S^2 \times S^2 \# \overline{\p^2},$ \\  $2x + 2y - E_1)$} & $-y$  & $S^2$ & $\mathrm{pt}$ & 
		\makecell{ $Z_0 \cong S^2$  \\ $\mathrm{PD}(Z_0) = y - E_1$} & $X_2$ & $4$ & $44$ \\ \hline    					     					     			
				    {\bf (II-1-1.1)} & $(\p^2, 3u)$ & $0$  & & $\p^2$  & & 
				    	$\p^2$
					     & $2$ & $54$ \\ \hline    
				    {\bf (II-1-1.2)} & $(\p^2, 3u)$ & $u$  & & $\p^2$  && 
				    	$\p^2$
					     & $2$ & $56$ \\ \hline    
				    {\bf (II-1-1.3)} & $(\p^2, 3u)$ & $2u$  &  &$\p^2$  && 
				    	$\p^2$
					     & $2$ & $62$ \\ \hline    
					     					     					     
				    {\bf (II-1-2.1)} & $(S^2 \times S^2, 2x + 2y)$ & $0$  & & $S^2 \times S^2$  && 
				    	$S^2 \times S^2$
					     & $3$ & $48$ \\ \hline    					     
				    {\bf (II-1-2.2)} & $(S^2 \times S^2, 2x + 2y)$ & $x$  & & $S^2 \times S^2$ && 
				    	$S^2 \times S^2$
					     & $3$ & $48$ \\ \hline    					     
				    {\bf (II-1-2.3)} & $(S^2 \times S^2, 2x + 2y)$ & $x+y$  & & $S^2 \times S^2$ && 
				    	$S^2 \times S^2$
					     & $3$ & $52$ \\ \hline    					     
				    {\bf (II-1-2.4)} & $(S^2 \times S^2, 2x + 2y)$ & $x-y$  & &$S^2 \times S^2$ && 
				    	$S^2 \times S^2$
					     & $3$ & $44$ \\ \hline    					     
					     					     					     					     
				    {\bf (II-1-3.1)} & $(E_{S^2}, 3x + 2y)$ & $0$  & & $E_{S^2}$ & & 
				    	$E_{S^2}$
					     & $3$ & $48$ \\ \hline    					     					     
				    {\bf (II-1-3.2)} & $(E_{S^2}, 3x + 2y)$ & $x + y$  & & $E_{S^2}$ && 
				    	$E_{S^2}$
					     & $3$ & $50$ \\ \hline    					     					     
					     					     
				    \makecell{{\bf (II-1-4.k)} \\ {\bf k = 2$\sim$8}} & $(X_k, 3u - \sum_{i=1}^k E_i)$ & $0$  & & $X_k$ & &
				    	$X_k$
					     & $k+2$ & $54-6k$ \\ \hline    					     					     					     	
				    {\bf (II-2-1.1)} & $(\p^2, 3u)$ & $0$  & &$\p^2$  & \makecell{$Z_0 \cong S^2$, \\ $\mathrm{PD}(Z_0) = u$} & 
				    	$\p^2$
					     & $3$ & $46$ \\ \hline    
				    {\bf (II-2-1.2)} & $(\p^2, 3u)$ & $u$  & & $\p^2$  & \makecell{$Z_0 \cong S^2$, \\ $\mathrm{PD}(Z_0) = u$} & 
				    	$\p^2$
					     & $3$ & $50$ \\ \hline    
				    {\bf (II-2-1.3)} & $(\p^2, 3u)$ & $-u$  & & $\p^2$  & \makecell{$Z_0 \cong S^2$, \\ $\mathrm{PD}(Z_0) = 2u$} & 
				    	$\p^2$
					     & $3$ & $38$ \\ \hline    
					     					     					     
				    {\bf (II-2-1.4)} & $(\p^2, 3u)$ & $0$  & & $\p^2$  & \makecell{$Z_0 \cong S^2$, \\ $\mathrm{PD}(Z_0) = 2u$} & 
				    	$\p^2$
					     & $3$ & $40$ \\ \hline    					     
				    {\bf (II-2-1.5)} & $(\p^2, 3u)$ & $-u$  & & $\p^2$ & \makecell{$Z_0 \cong T^2$, \\ $\mathrm{PD}(Z_0) = 3u$} & 
				    	$\p^2$
					     & $3$ & $32$ \\ \hline    					     
				    {\bf (II-2-1.6)} & $(\p^2, 3u)$ & $-2u$  & & $\p^2$ & \makecell{$Z_0 \cong \Sigma_3$, \\ $\mathrm{PD}(Z_0) = 4u$} & 
				    	$\p^2$
					     & $3$ & $26$ \\ \hline    					     
				    {\bf (II-2-2.1)} & $(S^2 \times S^2, 2x + 2y)$ & $-x -y$  & & $S^2 \times S^2$  & \makecell{$Z_0 \cong T^2$, \\ $\mathrm{PD}(Z_0) = 2x+2y$} & 
				    	$S^2 \times S^2$
					     & $4$ & $28$ \\ \hline    
				    {\bf (II-2-2.2)} & $(S^2 \times S^2, 2x + 2y)$ & $-x$  & & $S^2 \times S^2$  & \makecell{$Z_0 \cong S^2$, \\ $\mathrm{PD}(Z_0) = 2x + y$} & 
				    	$S^2 \times S^2$
					     & $4$ & $32$ \\ \hline    
				    {\bf (II-2-2.3)} & $(S^2 \times S^2, 2x + 2y)$ & $-x$  & & $S^2 \times S^2$  & \makecell{$Z_0 \cong S^2 ~\dot \cup ~S^2$, \\ 
				    $\mathrm{PD}(Z_0^1) = \mathrm{PD}(Z_0^2) = x$} & 
				    	$S^2 \times S^2$
					     & $5$ & $36$ \\ \hline    			     					     
				    {\bf (II-2-2.4)} & $(S^2 \times S^2, 2x + 2y)$ & $-x$  & & $S^2 \times S^2$  & \makecell{$Z_0 \cong S^2$, \\ $\mathrm{PD}(Z_0) = x+y$} & 
				    	$S^2 \times S^2$
					     & $4$ & $36$ \\ \hline    					     
				    {\bf (II-2-2.5)} & $(S^2 \times S^2, 2x + 2y)$ & $-x + y$  & & $S^2 \times S^2$  & \makecell{$Z_0 \cong S^2 ~\dot \cup ~S^2$, \\ 
				    $\mathrm{PD}(Z_0^1) = \mathrm{PD}(Z_0^2) = x$} & 
				    	$S^2 \times S^2$
					     & $5$ & $36$ \\ \hline    			     					     
				    {\bf (II-2-2.6)} & $(S^2 \times S^2, 2x + 2y)$ & $-x + y$  & & $S^2 \times S^2$  & \makecell{$Z_0 \cong S^2$, \\ $\mathrm{PD}(Z_0) = x$} & 
				    	$S^2 \times S^2$
					     & $4$ & $40$ \\ \hline    					     					     
				    {\bf (II-2-2.7)} & $(S^2 \times S^2, 2x + 2y)$ & $0$  & & $S^2 \times S^2$ & \makecell{$Z_0 \cong S^2$, \\ $\mathrm{PD}(Z_0) = x+y$} & 
				    	$S^2 \times S^2$
					     & $4$ & $38$ \\ \hline    					     
				    {\bf (II-2-2.8)} & $(S^2 \times S^2, 2x + 2y)$ & $0$  & & $S^2 \times S^2$ & \makecell{$Z_0 \cong S^2$, \\ $\mathrm{PD}(Z_0) = x$} & 
				    	$S^2 \times S^2$
					     & $4$ & $42$ \\ \hline    					     
				    {\bf (II-2-2.9)} & $(S^2 \times S^2, 2x + 2y)$ & $y$  & & $S^2 \times S^2$ & \makecell{$Z_0 \cong S^2$, \\ $\mathrm{PD}(Z_0) = x$} & 
				    	$S^2 \times S^2$
					     & $4$ & $44$ \\ \hline    					     					     					     
				    {\bf (II-2-3.1)} & $(X_1, 3u - E_1)$ & $-u$  & & $X_1$  & \makecell{$Z_0 \cong S^2$, \\ $\mathrm{PD}(Z_0) = 2u$} & 
				    	$X_1$
					     & $4$ & $32$ \\ \hline    
				    {\bf (II-2-3.2)} & $(X_1, 3u - E_1)$ & $-E_1$  & & $X_1$  & 
				    \makecell{$Z_0 \cong S^2 ~\dot \cup ~ S^2$, \\ $\mathrm{PD}(Z_0^1) = u$ \\ $\mathrm{PD}(Z_0^2) = E_1$} & 
				    	$X_1$
					     & $5$ & $36$ \\ \hline    
				    {\bf (II-2-3.3)} & $(X_1, 3u - E_1)$ & $0$  & & $X_1$  & \makecell{$Z_0 \cong S^2 $, \\ 
				    $\mathrm{PD}(Z_0) = u$} & 
				    	$X_1$
					     & $4$ & $40$ \\ \hline    			     					     
				    {\bf (II-2-3.4)} & $(X_1, 3u - E_1)$ & $u - E_1$  & & $X_1$  & \makecell{$Z_0 \cong S^2$, \\ $\mathrm{PD}(Z_0) = E_1$} & 
				    	$X_1$
					     & $4$ & $46$ \\ \hline    					     
				    {\bf (II-2-3.5)} & $(X_1, 3u - E_1)$ & $0$  & & $X_1$  & \makecell{$Z_0 \cong S^2 $, \\ 
				    $\mathrm{PD}(Z_0) = E_1$} & 
				    	$X_1$
					     & $4$ & $44$ \\ \hline  	     					     					     
		\multicolumn{3}{l}{} \\[-1pt]					     
		\caption{ \label{table_list_3} Topological fixed point data for $\dim Z_{\min} = \dim Z_{\max} = 4$}
	\end{longtable}
	}

\section{Main Theorem}
\label{secMainTheorem}

	In this section, we will prove Theorem \ref{theorem_main}.
	
	\begin{theorem}[Theorem \ref{theorem_main}]
		Let $(M,\omega)$ be a six-dimensional closed monotone symplectic manifold equipped with a semifree Hamiltonian 
		circle action. 
		Then $(M,\omega)$ is $S^1$-equivariantly symplectomorphic to some 
		K\"{a}hler Fano manifold with some holomorphic Hamiltonian circle action. 
	\end{theorem}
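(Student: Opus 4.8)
The plan is to follow the two-step strategy from the introduction, combining the classification of topological fixed point data obtained in Sections \ref{secClassificationOfTopologicalFixedPointDataDimZMin}--\ref{secClassificationOfTopologicalFixedPointDataDimZMinGeq2AndDimZMax4} with Gonzalez's uniqueness theorem. First I would normalize the setup: reversing the sign of $\omega$ if necessary we may assume $\dim Z_{\min} \leq \dim Z_{\max}$, and rescaling $\omega$ we may assume $c_1(TM) = [\omega]$ and take $H$ to be the balanced moment map of Definition \ref{definition_balanced}. By Lemma \ref{lemma_possible_critical_values} each of $Z_{\min}$, $Z_{\max}$ has dimension $0$, $2$, or $4$, so $(M,\omega,H)$ lies in one of the three families treated in the body of the paper; moreover every non-extremal fixed component has index two or co-index two, so every critical level is simple in the sense of Definition \ref{definition_critical_slice}. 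Hence $\frak{F}_{\mathrm{top}}(M,\omega,H)$ is defined, and by Tables \ref{table_list}, \ref{table_list_2}, and \ref{table_list_3} it coincides with one of finitely many explicit candidates, for each of which we have already exhibited a smooth Fano $3$-fold $(X,\omega_X,J)$ with a semifree holomorphic Hamiltonian $S^1$-action having the same topological fixed point data.

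The core of the argument is to upgrade this equality of \emph{topological} fixed point data to an equality of fixed point data (Definition \ref{definition_fixed_point_data}), and then to verify the hypotheses of Theorem \ref{theorem_Gonzalez_5}. For the rigidity hypothesis, observe that in every case of Tables \ref{table_list} and \ref{table_list_2}, and in all but the cases {\bf (II-1-4.k)} of Table \ref{table_list_3}, each reduced space is diffeomorphic to $\p^2$, $\p^1\times\p^1$, or $\p^2 \# k\overline{\p^2}$ with $k \leq 4$, all of which are symplectically rigid by Theorem \ref{theorem_uniqueness} and Theorem \ref{theorem_symplectomorphism_group}. For {\bf (II-1-4.k)} with $k = 5,\dots,8$ the Euler class $e(P_{-1}^+)$ vanishes and there is no interior fixed point, so the slice decomposition forces $M$ to be the trivial $\p^1$-bundle over $M_0 \cong X_k$; this case I would handle directly, identifying $(M,\omega)$ $S^1$-equivariantly with $X_k \times \p^1$ carrying the rotation action on the second factor, without appealing to rigidity of $X_k$. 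To pin down the fixed point data itself, I would invoke the algebraicity results of Siebert--Tian and Zhang (Theorems \ref{theorem_ST} and \ref{theorem_Z}): each fixed component, realized as an embedded symplectic submanifold of a reduced space whose homology class appears in our tables, is symplectically isotopic to an algebraic curve. Combined with Lemma \ref{lemma_isotopic}, which says two algebraic curves in a reduced space representing the same topological fixed point data are symplectically isotopic, this shows $\frak{F}_{\mathrm{top}}(M,\omega,H)$ determines $\frak{F}(M,\omega,H)$; in particular $\frak{F}(M,\omega,H) = \frak{F}(X,\omega_X,H_X)$ for the matching Fano candidate $X$.

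Finally, applying Theorem \ref{theorem_Gonzalez_5} (in its six-dimensional form of Remark \ref{remark_Gonzalez_5}) to both $M$ and $X$ yields that $(M,\omega)$ and $(X,\omega_X)$ are $S^1$-equivariantly symplectomorphic, which is the assertion. I expect the main obstacle to lie in the second step, namely passing from homology classes to symplectic isotopy classes of the embedded fixed surfaces inside the reduced spaces; this is precisely where the classification of topological fixed point data is indispensable, since it confines the reduced spaces to rational surfaces small enough that the relevant symplectic surfaces are forced to be (isotopic to) algebraic curves, and for which uniqueness up to symplectic isotopy is available. A secondary technical point is to keep the Euler-class bookkeeping of Lemma \ref{lemma_Euler_condition} consistent across the gluing of slices, so that matching topological fixed point data genuinely produces matching $e(P_c^\pm)$ at every critical level, and hence compatible gluing data in the sense of Lemma \ref{lemma_gluing}.
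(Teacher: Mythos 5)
Your proposal follows essentially the same route as the paper: classify $\frak{F}_{\mathrm{top}}$, match each case to a Fano candidate, upgrade topological fixed point data to fixed point data via Theorems \ref{theorem_ST}, \ref{theorem_Z} and Lemma \ref{lemma_isotopic}, and conclude by Theorem \ref{theorem_Gonzalez_5} using the rigidity supplied by Theorems \ref{theorem_uniqueness} and \ref{theorem_symplectomorphism_group}. The one place you diverge is the exceptional family {\bf (II-1-4.k)} with $k\geq 5$: asserting that the slice decomposition ``forces'' $M$ to be the trivial bundle $X_k\times \p^1$ begs exactly the uniqueness-of-gluing question that symplectic rigidity normally settles and that fails here (Seidel's result for $X_5$); the paper instead exploits that the Euler class vanishes, so the whole family of reduced forms is constant in cohomology, and runs a two-parameter Moser argument on $P\times[-1,1]$ to produce the equivariant symplectomorphism directly, needing only the uniqueness/isotopy parts of rigidity and not path-connectedness of the symplectomorphism group. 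You should supply that (or an equivalent) argument to close this sub-case.
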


	We notice that, according to our classification result of topological fixed point data, any reduced space of $(M,\omega)$ in Theorem \ref{theorem_main} is either 
	$\p^2$, $\p^1 \times \p^1$, or $\p^2 \#~k~ \overline{\p^2}$ for $1 \leq k \leq 4$ except for the cases 
	{\bf (II-1-4.k)} where $5 \leq k \leq  8$. See Table \ref{table_list}, \ref{table_list_2}, and \ref{table_list_3}. 
	The following theorems then imply that every reduced space (except for {\bf (II-1-4.k)})
	is symplectically rigid (in the sense of \cite[Definition 2.13]{McD2} or \cite[Definition 1.4]{G}). (See also Section \ref{secFixedPointData}.)

	\begin{theorem}\cite[Theorem 1.2]{McD4}\label{theorem_uniqueness}
		Let $M$ be a blow-up of a rational or a ruled symplectic four manifold. Then any two cohomologous and deformation equivalent\footnote{Two symplectic forms $\omega_0$ and $\omega_1$
		are said to be {\em deformation equivalent} if there exists a family of symplectic forms $\{ \omega_t  ~|~  0 \leq t \leq 1 \}$ connecting $\omega_0$ and $\omega_1$. We also say that 
		$\omega_0$ and $\omega_1$ are {\em isotopic} if such a family can be chosen such that $[\omega_t]$ is a constant path in $H^2(M; \Z)$.}
		symplectic forms on $M$ are isotopic.
	\end{theorem}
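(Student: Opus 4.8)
The plan is to reproduce the strategy of McDuff's paper \emph{From symplectic deformation to isotopy}, whose two engines are the theory of embedded $J$-holomorphic spheres on four-manifolds with $b^+ = 1$ and the inflation technique of Lalonde--McDuff. Here ``deformation equivalent'' means joined by a path $\omega_t$ of symplectic forms with $[\omega_t]$ allowed to vary, while ``isotopic'' demands $[\omega_t]$ constant; so the content of the statement is that, on a blow-up $M$ of a rational or ruled surface, the fibers of the period map $\omega \mapsto [\omega]$ restricted to a single deformation class are path-connected. First I would dispose of the exceptional divisors and reduce to a minimal model. Since $b^+(M) = 1$ and $M$ is a blow-up of $\p^2$, $S^2 \times S^2$, or an irrational ruled surface, each exceptional class has nonvanishing Gromov invariant and is therefore represented, for every $\omega$-tame $J$, by an embedded $J$-holomorphic $(-1)$-sphere; working relative to a maximal disjoint collection of such spheres transfers the problem to the minimal surface underneath, where the required homological input is cleanest.

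Next comes the curve-theoretic core. On each minimal model there is a distinguished class $A$ with $A^2 \geq 0$ and nonvanishing Gromov invariant: a line class ($A^2 = 1$) on $\p^2$, or a fiber class ($A^2 = 0$) on a ruled surface. By Gromov's argument, together with automatic regularity for spheres of square at most one and positivity of intersections, the embedded $J$-holomorphic spheres in class $A$ are pairwise disjoint and foliate $M$, producing a ruling by $\omega$-symplectic spheres; moreover the space of such tame $J$, equipped with its rulings, is connected. This rigidifies $\omega$ up to the data of its areas on a fiber and on a section, that is, essentially up to its cohomology class.

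The isotopy is then constructed by inflation. Given cohomologous and deformation-equivalent $\omega_0, \omega_1$, I first fix a deformation $\omega_t$ joining them; the inflation lemma asserts that along a $J$-holomorphic curve $Z$ with $[Z]^2 \geq 0$ one may push the form through its deformation class while translating the cohomology class in the direction $\mathrm{PD}[Z]$. Inflating along the fiber class and along a section class, both of nonnegative square, supplies two independent cohomological directions spanning $H^2$ of the ruled model; superimposing these on the given deformation lets me steer the varying cohomology class back onto the constant one, thereby converting the deformation into an honest isotopy with fixed endpoints.

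The hard part will be making the cohomological correction \emph{exact}. Inflation only moves $[\omega]$ in a prescribed direction and by a nonnegative amount, and one must remain inside the symplectic cone at every stage; arranging that the superposition of inflations along the several available curve classes produces precisely the net change zero — while simultaneously keeping each intermediate form tamed by a $J$ that still admits the relevant embedded curves — is the delicate point. It is exactly here that the connectedness of the space of rulings and Gromov compactness (used to extract limiting curves as a form is driven toward the boundary of the symplectic cone) are indispensable, and it is the reconciliation of these global curve-existence statements with the local surgery of inflation that constitutes the genuine difficulty of the theorem.
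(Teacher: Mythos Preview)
This theorem is not proved in the paper; it is quoted from McDuff \cite{McD4} and used as a black box (see the citation \cite[Theorem 1.2]{McD4} in the statement). Your proposal is a faithful outline of McDuff's original argument in that reference --- reduction to the minimal model via embedded $(-1)$-spheres, existence of rulings by $J$-holomorphic spheres, and the inflation technique to steer the cohomology class --- and you have correctly identified where the real work lies.
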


	\begin{theorem}\cite[Lemma 4.2]{G}\label{theorem_symplectomorphism_group}
	For any of the following symplectic manifolds, the group of symplectomorphisms  which act trivially on homology is path-connected. 
		\begin{itemize}
			\item $\p^2$ with the Fubini-Study form. \cite[Remark in p.311]{Gr}
			\item $\p^1 \times \p^1$ with any symplectic form. \cite[Theorem 1.1]{AM}
			\item $\p^2 \# ~k~\overline{\p^2}$ with any blow-up symplectic form for $k \leq 4$. \cite[Theorem 1.4]{AM}, \cite{E}, \cite{LaP}, \cite{Pin}, \cite{LLW}
		\end{itemize}
	\end{theorem}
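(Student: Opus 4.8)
The plan is to treat the statement as an assembly of known results on the topology of symplectomorphism groups of rational surfaces, each established by the same pseudoholomorphic-curve machinery, and to organize the proof around the common strategy so that only the case-specific geometric input varies. Throughout, write $G := \mathrm{Symp}_h(M,\omega)$ for the group of symplectomorphisms acting trivially on $H_*(M;\mathbb{Z})$, and let $\mathcal{J}_\omega$ denote the space of $\omega$-tame almost complex structures. Gromov's foundational result gives that $\mathcal{J}_\omega$ is contractible and that $G$ acts on it, so the idea is to analyze this action by studying the embedded $J$-holomorphic curves in a fixed finite list of homology classes, and then to read connectedness of $G$ off the long exact sequences of the resulting fibrations $G_J \to G \to (\text{orbit})$, in which the stabilizer $G_J$ is the isometry group of an integrable K\"ahler structure.

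First I would dispose of the two toric cases. For $(\p^2, \omega_{FS})$ every tame $J$ carries a unique holomorphic pencil of lines, and positivity of intersections together with automatic regularity shows that $\mathcal{J}_\omega$ fibers over a connected configuration space with stabilizer $PU(3)$; this is Gromov's computation that $\mathrm{Symp}(\p^2,\omega_{FS}) \simeq PU(3)$, and since $H_2(\p^2) = \mathbb{Z}$ the entire group already acts trivially on homology, so $G$ is connected. For $\p^1 \times \p^1$ with an arbitrary symplectic form I would use the two ruling classes $B$ and $F$: depending on $J$ one obtains either the standard product ruling or a Hirzebruch-type degeneration, and Abreu--McDuff's stratification of $\mathcal{J}_\omega$ by these curve configurations shows that $G$ is connected for every cohomology class (the extra $\mathbb{Z}/2$ interchanging the two factors is precisely the piece removed by passing to the homology-trivial subgroup). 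Each of these is a direct citation, but the point is that both reduce to controlling a single rigid family of holomorphic spheres.

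The substantive step is the blow-up case $\p^2 \# k\overline{\p^2}$ for $k \le 4$. Here the relevant homology classes are exactly the exceptional classes catalogued in Lemma \ref{lemma_list_exceptional}---the classes of embedded symplectic $(-1)$-spheres---together with the line class and the self-intersection $(-2)$-classes $E_i - E_j$, represented by symplectic or Lagrangian spheres according to $\omega$. The plan is to stratify $\mathcal{J}_\omega$ by which of these finitely many classes admit embedded $J$-holomorphic representatives, to show that the top (generic) stratum is connected and that every lower stratum has real codimension at least two, and then to conclude connectivity of $G$ from the fibration over the space of maximal exceptional-curve configurations, whose stabilizer is a connected K\"ahler isometry group. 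This program is carried out for $k=1$ by Pinsonnault and by Lalonde--Pinsonnault, for $k=2$ by Li--Pinsonnault and by Evans, and for $k \le 4$ by Evans, with the refinement to $G = \mathrm{Symp}_h$ supplied by Li--Li--Wu.

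The main obstacle, and the reason the statement stops at $k=4$, lies in this last step. As $k$ grows the homology lattice acquires a large Weyl-group symmetry permuting the exceptional classes, so one must pass to the homology-trivial subgroup $G$ merely to have a chance at connectedness, and one must simultaneously control the codimensions of the increasingly many degenerate strata where the exceptional-curve configuration jumps. For $k \le 4$ the configuration of exceptional spheres is still rigid enough---essentially because at most four generic blow-up points impose no unexpected coincidences---that the codimension-two estimate holds and the fibration argument closes; for $k \ge 5$ the curve configurations are no longer rigid and the method genuinely fails, which is exactly why the reduced spaces of type {\bf (II-1-4.k)} with $5 \le k \le 8$ must be excluded from the rigidity argument in Section \ref{secFixedPointData}. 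I would therefore expect the bulk of the real work to be the verification of the codimension estimate and the connectedness of the generic stratum in the three- and four-point blow-ups, both of which rest on Gromov compactness and positivity of intersections applied to the explicit list in Lemma \ref{lemma_list_exceptional}.
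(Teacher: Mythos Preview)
The paper does not prove this theorem; it is stated as a compilation of results from the cited references (Gromov, Abreu--McDuff, Evans, Lalonde--Pinsonnault, Pinsonnault, Li--Li--Wu) and used as a black box. Your proposal is a reasonable high-level sketch of the pseudoholomorphic-curve and stratification arguments that actually appear in those references, but it goes well beyond what the paper itself supplies or needs---the paper simply invokes the statement to verify the symplectic rigidity hypothesis of Gonzalez's Theorem~\ref{theorem_Gonzalez_5}.
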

	
	\begin{remark}
		In \cite{Se}, Seidel proved that the symplectic rigidity of $X_k$ fails for $k = 5$.
	\end{remark}
	
	From now on, we discuss how a topological fixed point data determines a fixed point data. 
	Note that a topological fixed point data only records homology classes of fixed components regarded as embedded submanifolds of reduced spaces. 
	In general, we cannot rule out the possibility that there are many distinct
	fixed point data which have the same topological fixed point data. 
	
	Recall that any non-extremal component of a topological fixed point data in Table \ref{table_list} is one of the forms
	\[
		(M_c, [\omega_c], [Z_c^1], \cdots, [Z_c^{k_c}]), \quad c = -1, 0, 1. 
	\]
	If $c = \pm 1$, then all $Z_c^i$'s are isolated points. In this case, the topological fixed point data determines a fixed point data uniquely, since if 
	\[
		(M_c, \omega_c, p_1, \cdots, p_r) \quad \text{and} \quad (M_c, \omega_c' , q_1, \cdots, q_r), \quad \quad p_i, q_j : \text{points},\quad  [\omega_c] = [\omega_c'],
	\]
	then it follows from the symplectic rigidity of $M_c$ (obtained by Theorem \ref{theorem_uniqueness} and Theorem \ref{theorem_symplectomorphism_group})
	that there exists a symplectomorphism $\phi : (M_c, \omega_c) \rightarrow (M_c, \omega_c')$ sending $p_i$ to $q_i$ for $i=1,\cdots,r$. 
	(See \cite[Proposition 0.3]{ST}.)  
	
	For $c= 0$, it is not clear whether a topological fixed point data determines a fixed point data uniquely. On the other hand, the following theorems guarantee that 
	any symplectic embedding $Z_0 \hookrightarrow M_0$ in Table \ref{table_list}, \ref{table_list_2}, and \ref{table_list_3}
	can be identified with an algebraic embedding. Note the followings.
	\begin{itemize}
		\item In Table \ref{table_list}, every $Z_0^i$, except for the case {\bf (III-3.3)}, in Table \ref{table_list} is a sphere with self intersection greater than equal to $-1$. 
			Moreover, in case of $M_0 \cong \p^2$, 
			the degree of $Z_0$ is less than or equal to $3$. In particular, $Z_0 \cong T^2$ in {\bf (III-3.3)} is of degree $3$, i.e., cubic, in $\p^2$.
		\item In Table \ref{table_list_2}, every $Z_0^i$ is a sphere with self intersection greater than equal to $-1$. 
		\item In Table \ref{table_list_3}, every $Z_0^i$ is a sphere with self intersection greater than equal to $-1$ except for the cases {\bf (II-2-1.5), (II-2-1.6), (II-2-2.1)}.
		For {\bf (II-2-1.5)} and {\bf (II-2-1.6)}, $Z_0 \subset \p^2$ is a Riemann surface of genus $1$ and $3$, respectively, and they are of degree less than 4.
		For {\bf (II-2-2.1)}, we see that $Z_0 \cong T^2$ living in $S^2 \times S^2$ with $\mathrm{PD}(Z_0) = 2x + 2y$. In particular $Z_0$ has relative degree two.

	\end{itemize} 
	
	\begin{theorem}\cite[Theorem C]{ST}\label{theorem_ST}
		Any symplectic surface in $\p^2$ of degree $d \leq17$ is symplectically isotopic to an algebraic curve.
		Also, any symplectic surface in $\p^1 \times \p^1$ of relative degree  $d \leq 7$ is symplectically isotopic to an algebraic curve.
	\end{theorem}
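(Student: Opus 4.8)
The plan is to follow the pseudoholomorphic-curve strategy of Siebert--Tian. Let $\Sigma$ be the given symplectic surface of degree $d$ in $\p^2$ (the argument for $\p^1 \times \p^1$ is parallel, with ``degree'' replaced by ``relative bidegree''). The first step is to realize $\Sigma$ as a pseudoholomorphic curve: since $\Sigma$ is a symplectic submanifold, there is an almost complex structure $J$ tamed by the Fubini--Study form $\omega_{\mathrm{FS}}$ for which $\Sigma$ is the image of an embedded $J$-holomorphic curve. Because the space of $\omega_{\mathrm{FS}}$-tamed almost complex structures is contractible, I would join $J$ to the standard integrable structure $J_0$ by a generic path $\{J_t\}_{t \in [0,1]}$.

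Second, I would study the universal moduli space
\[
	\mcal{M} = \{ (t, u) ~|~ t \in [0,1], ~u \text{ a $J_t$-holomorphic curve in the class } dH \}
\]
where $H$ denotes the line class. For a generic path this is a smooth one-dimensional cobordism between the $J_0$- and the $J_1$-moduli, and by positivity of intersections together with the adjunction formula each embedded member realizes exactly the genus prescribed by $\Sigma$. If no member of $\mcal{M}$ were to degenerate, the cobordism would directly furnish a symplectic isotopy from $\Sigma$ to a $J_0$-holomorphic --- hence algebraic --- curve, and we would be finished.

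Third, and this is the heart of the matter, one has to control the degenerations guaranteed by Gromov compactness: as $t$ crosses certain walls, a $J_t$-holomorphic curve may break into a cusp curve (a bubble tree, or a reducible nodal configuration, possibly carrying multiply-covered components). I would translate the isotopy problem into a combinatorial one by choosing a generic pseudoholomorphic pencil and projecting $\Sigma \to \p^1$ as a branched cover of degree $d$; the isotopy class of $\Sigma$ is then encoded by a braid monodromy factorization, with two curves isotopic precisely when their factorizations are related by Hurwitz moves. The degree hypotheses $d \le 17$ (respectively relative degree $\le 7$) are exactly the numerical range in which node-counting, combined with the positivity and adjunction constraints, forces every degenerate limit to be smoothable back within the class, so that the space of admissible factorizations is connected.

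The main obstacle is precisely this third step: ruling out, or else explicitly navigating, the reducible and multiply-covered degenerations that can occur as $J_t$ varies, and showing that crossing each wall of $\mcal{M}$ preserves the Hurwitz-equivalence class of the braid monodromy. This demands the full strength of Gromov compactness, a transversality analysis at the walls, and the delicate combinatorics of braid factorizations. The degree bound is exactly what keeps this combinatorial reduction finite and tractable; outside that range the argument genuinely breaks down, consistent with the known existence of mutually non-isotopic symplectic surfaces of higher degree.
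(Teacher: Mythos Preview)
This theorem is not proved in the paper; it is quoted from \cite[Theorem C]{ST} and used as a black box in the proof of Theorem \ref{theorem_main}. So there is no proof in the paper to compare against, only the original Siebert--Tian argument.

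Your first two steps correctly capture that strategy. The third step, however, misidentifies the mechanism: Siebert--Tian do not use braid monodromy or Hurwitz equivalence. Their key technical input is an analytic \emph{Isotopy Lemma}: if a sequence of smooth $J_n$-holomorphic curves Gromov-converges to a pseudoholomorphic cycle $C_\infty = \sum m_a C_{\infty,a}$ (possibly with multiply covered components), then under a numerical bound on the multiple part one can find nearby smooth $J_n$-curves isotopic to the original ones, allowing the isotopy to be continued across the wall. The proof of this lemma is local analysis near $C_\infty$ --- a study of the linearized Cauchy--Riemann operator on the normal bundle together with a smoothing construction for nodes and multiple covers --- not combinatorics of braid factorizations. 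The degree bounds $d \le 17$ and relative degree $\le 7$ arise from an induction on degree: one needs every component of every possible Gromov limit to itself satisfy the hypotheses of the Isotopy Lemma, and the numerics close up precisely in this range. The braid-monodromy reduction you sketch belongs to a different line of work (closer to Auroux and Kharlamov--Kulikov) and is not known to produce these specific bounds; in particular your sentence ``node-counting \ldots\ forces every degenerate limit to be smoothable'' does not describe an existing argument.
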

	Here, a {\em relative degree} of a curve $C$ in $\p^1 \times \p^1$ means the degree of the map $p|_C : C \rightarrow \p^1$ where $p : \p^1 \times \p^1 \rightarrow \p^1$
	is a projection on the left (or right) factor. Equivalently, the relative degree of $C$ is the intersection number of $C$ and the fiber of $p$.
	
	\begin{theorem}\cite[Proposition 3.2]{LW}\cite[Theorem 6.9]{Z}\label{theorem_Z}
		Any symplectic sphere $S$ with self-intersection $[S]\cdot[S] \geq 0$ in a symplectic four manifold $(M,\omega)$ is symplectically isotopic to an (algebraic) rational curve.
		Any two homologous spheres with self-intersection $-1$ are symplectically isotopic to each other.
	\end{theorem}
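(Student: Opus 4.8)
The plan is to prove both assertions through the theory of embedded pseudoholomorphic spheres, exploiting the standard fact that an embedded symplectic sphere $S$ can always be realized as a $J$-holomorphic curve for a suitably chosen $\omega$-compatible almost complex structure. Recall that the space $\mathcal{J}(\omega)$ of $\omega$-compatible almost complex structures is nonempty and contractible, hence path-connected; this contractibility is precisely the feature that will manufacture the isotopy. First I would fix a $J_S \in \mathcal{J}(\omega)$ making the given sphere $S$ into a $J_S$-holomorphic curve in the class $A := [S] \in H_2(M;\Z)$, and choose a reference structure $J_0$. In the cases relevant to our classification the ambient four-manifold $M_c$ is a rational surface, so $J_0$ may be taken integrable, whence every $J_0$-holomorphic sphere is literally an algebraic rational curve; the goal then becomes to connect $S$ to a $J_0$-holomorphic representative of $A$ through a smooth family of embedded symplectic spheres.

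The analytic heart is an automatic transversality argument. For an embedded $J$-holomorphic sphere the normal bundle has degree $A \cdot A$, and the adjunction equality reads $c_1(M) \cdot A = A \cdot A + 2$. By the automatic transversality criterion of Hofer--Lizan--Sikorav for curves in four-manifolds, a genus-zero embedded curve is regular as soon as its normal degree satisfies $A \cdot A \geq -1$, which covers both hypotheses of the statement, namely $A \cdot A \geq 0$ and $A \cdot A = -1$. Consequently the linearized Cauchy--Riemann operator is surjective for \emph{every} $J \in \mathcal{J}(\omega)$, not merely a generic one, so the universal moduli space
\[
	\mathcal{M}(A) = \{ (J, u) \mid J \in \mathcal{J}(\omega), \ u \text{ a simple } J\text{-holomorphic sphere in class } A \}
\]
is a smooth Banach manifold and the projection $\pi \colon \mathcal{M}(A) \to \mathcal{J}(\omega)$ is a submersion whose unparametrized fibers have dimension $2(A \cdot A) + 2$. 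Moreover the adjunction formula $c_1(M)\cdot A = 2 + A\cdot A - 2\delta$, together with positivity of intersections which guarantees $\delta \geq 0$, forces $\delta = 0$ for \emph{every} simple sphere in class $A$ once it holds for $S$; thus every member of $\mathcal{M}(A)$ is automatically embedded, so all representatives along any path in $\mathcal{J}(\omega)$ are genuinely embedded symplectic spheres.

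Next I would promote a path $\{J_t\}_{t \in [0,1]}$ from $J_S$ to $J_0$ into an isotopy via Ehresmann's lemma: if $\pi$ is proper over the path then it is a locally trivial fibration there, and I can lift $t \mapsto J_t$ to a path $(J_t, u_t)$ in $\mathcal{M}(A)$ starting at $(J_S, S)$, yielding the desired one-parameter family of embedded symplectic spheres. The properness question is the main obstacle. By Gromov compactness, a failure of the family to stay smooth produces a stable nodal limit whose components carry classes $A_1, \dots, A_k$ with $\sum_i A_i = A$, each of positive symplectic area. I would exclude this using the non-negativity (respectively the exceptionality) of $A$ together with positivity of intersections and the genus-zero adjunction bound: a nontrivial decomposition would force some component to violate the adjunction inequality or to decompose $A$ against the constraint $A \cdot A \geq -1$. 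Controlling these degenerations is the delicate point, and it is the same phenomenon that imposes the degree restriction in the higher-genus statement of Theorem \ref{theorem_ST}; for spheres, however, automatic regularity and the rigidity of the genus-zero adjunction formula make the decomposition analysis tractable.

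Finally, the two assertions are assembled. For $A \cdot A \geq 0$, nonemptiness of $\mathcal{M}(A)$ over $J_S$ (it contains $S$) together with properness makes the image of $\pi$ open and closed in the connected base $\mathcal{J}(\omega)$, hence all of it, so the fiber over the integrable $J_0$ is nonempty; the bubbling-free lift then connects $S$ to a $J_0$-holomorphic, i.e. algebraic, curve, proving the first statement. (Equivalently, nonemptiness over $J_0$ follows from Taubes' identity $\mathrm{SW} = \mathrm{Gr}$ on the rational surface.) For $A \cdot A = -1$ the class $A$ is exceptional, the unparametrized fiber is zero-dimensional, and by the uniqueness of $J$-holomorphic representatives of an exceptional class each fiber of $\pi$ is a single embedded sphere; tracking this point along a path from $J_S$ to $J_{S'}$, where $J_S, J_{S'}$ are chosen to make two given homologous $(-1)$-spheres $S, S'$ holomorphic, produces a symplectic isotopy between them, which is the second statement.
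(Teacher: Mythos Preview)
The paper does not prove this statement; it is quoted from \cite{LW} and \cite{Z} and used as a black box in the proof of Theorem \ref{theorem_main}. Your outline is the standard approach in those references: realize the sphere as $J$-holomorphic, invoke Hofer--Lizan--Sikorav automatic transversality (valid precisely when the normal degree is $\geq -1$), use adjunction to ensure every simple representative stays embedded, and lift a path in the contractible space $\mathcal{J}(\omega)$ to a path of curves via Gromov compactness.

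The one place your sketch is thin is exactly where you flag it: the bubbling exclusion. The sentence ``a nontrivial decomposition would force some component to violate the adjunction inequality'' is not enough on its own in an arbitrary symplectic four-manifold; spherical classes \emph{can} degenerate into reducible configurations. What makes the argument go through in \cite{Z} is a finer analysis of how a spherical class with $A\cdot A \geq 0$ can split, using positivity of intersections between distinct components of the limit together with the index constraints on each bubble for generic paths of $J$. In the paper's actual applications the ambient manifolds are rational surfaces (indeed $\p^2$, $\p^1\times\p^1$, or $X_k$ with $k \leq 4$), where the possible bubble classes are completely classified and the exclusion becomes concrete. For the $(-1)$-case your uniqueness-of-exceptional-representative argument is correct, and there the area bound plus positivity of intersections does suffice to rule out bubbling along a generic path.
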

	
	Now we are ready to prove Theorem \ref{theorem_main}
	
	\begin{proof}[Proof of Theorem \ref{theorem_main}]
		We first assume that the topological fixed point data of $(M,\omega)$ is not of type {\bf (II-1-4.k)} for $k \geq 5$.
		Since every reduced space is symplecticaly rigid (by Theorem \ref{theorem_uniqueness} and Theorem \ref{theorem_symplectomorphism_group}), 
		it is enough to show that for each $(M,\omega,H)$, there exists a smooth Fano 3-fold admitting semifree holomorphic Hamiltonian $S^1$-action whose fixed point data 
		equals $\frak{F}(M,\omega,H)$. Then the proof immediately follows from Theorem \ref{theorem_Gonzalez}. 
		
		For any $(M,\omega,H)$ satisfying the conditions in Theorem \ref{theorem_main}, we have already seen in Section \ref{secClassificationOfTopologicalFixedPointDataDimZMin},
		 \ref{secClassificationOfTopologicalFixedPointDataDimZMinDimZMax2}, \ref{secClassificationOfTopologicalFixedPointDataDimZMinGeq2AndDimZMax4} that
		there exists a smooth Fano 3-fold $(X, \omega_X, H_X)$ with a holomorphic Hamiltonian $S^1$-action
		whose topological fixed point data equals $\frak{F}_{\mathrm{top}} (M,\omega,H)$. 
		By Theorem \ref{theorem_ST} and Theorem \ref{theorem_Z}, we may assume that every $(M_c, \omega_c, Z_c) \in \frak{F}(M,\omega,H)$ is an algebraic tuple, that is, 
		$Z_c$ is a complex (and hence K\"{a}hler) submanifold of $M_c$ for every critical value $c$ of the balanced moment map $H$. 
		Note that every reduced space of $(M,\omega,H)$ is either $\p^2$, $\p^1 \times \p^1$, or $\p^2 \# ~k \overline{\p^2}$ for $k \leq 4$, see Table \ref{table_list}. 
		In particular, any reduced space is birationally equivalent to $\p^2$, and therefore $H^1(M_c, \mcal{O}_{M_c}) = 0$. 
		Then Lemma \ref{lemma_isotopic} (stated below) implies that $(M_c, \omega_c, Z_c)$ is equivalent to the fixed point data $(X_c, (\omega_X)_c, (Z_X)_c)$ of $X$ at level $c$. 

		On the other hand, if $(M,\omega)$ is of type {\bf (II-1-4.k)} with $k \geq 5$, 
		then the TFD has no extremal fixed component and the Euler class of each level set (as a principal $S^1$-bundle)
		vanishes, see Table \ref{table_list_2}. Moreover, since any reduced space of $M$ is $X_k$, which is in particular a rational surface, any two deformation equivalent cohomologous 
		symplectic forms are isotopic by Theorem \ref{theorem_uniqueness}.
		
		 If $(M', \omega')$ is another semifree Hamiltonian $S^1$-manifold having the same TFD of $(M,\omega)$'s, 
		we obtain cohomologous families of reduced symplectic forms $\{\omega_t\}$ from $M$ and $\{\omega_t'\}$ from $M'$ such that $[\omega_t] = [\omega_t']$ for every $t \in [-1,1]$. 
		Note that both $\{\omega_t\}$ and $\{\omega_t'\}$ can be regarded as families of symplectic forms on $M_0$ since $M_0 \cong M_t$ for every $t \in [-1,1]$. Moreover, 
		by the uniqueness of symplectic structure on $M_{-1}$ (Theorem \ref{theorem_uniqueness}), we may assume that $\omega_{-1} = \omega_{-1}'$. 
		
		Define a two-parameter family of symplectic forms on $M_0$:
		\[
			\omega_{s,t} := \begin{cases}
				\omega_{(1-2s)t} & \text{if $0 \leq s \leq \frac{1}{2}$} \\
				\omega_{(2s - 1)t}' & \text{if $\frac{1}{2} \leq s \leq 1$} \\
			\end{cases}
		\]
		Then $[\omega_{s,t}]$ is a constant class in $H^2(M_0)$ by the vanishing of the Euler class $e(P_{-1}^+)$, see Table \ref{table_list_2} {\bf (II-1-4.k)}. Then 
		$\{ \omega_{s,t} \}$ satisfies 
		\begin{itemize}
			\item $\omega_{0, t} = \omega_t$ and $\omega_{1,t} = \omega_t'$,
			\item $\frac{d}{ds} [\omega_{s,t}] = 0$ 
		\end{itemize}
		for any $0 \leq s \leq 1$ and $-1 \leq t \leq 1$. (Equivalently, $\{\omega_t\}$ and $\{\omega_t'\}$ are equivalent in the sense of \cite[Definition 3.3]{G}.)
		Therefore, we can apply the Moser type argument to show that there exists a family of symplectomorphisms 
		\[
			\varphi_t : (M_0, \omega_t) \rightarrow (M_0, \omega_t'), \quad \varphi_t^*  \omega_t' =  \omega_t
		\]
		for every $t \in [-1,1]$. This family $\{\varphi_t\}$ induces an $S^1$-equivariant symplectomorphism
		\[
			\varphi : P \times [-1,1]  \rightarrow P \times [-1,1], \quad \varphi^*\widetilde{\omega}' = \widetilde{\omega}
		\]
		where 
		\begin{itemize}
			\item $P$ is the principal $S^1$-bundle $S^1 \times M_0 \rightarrow M_0$,
			\item $\widetilde{\omega}$ and $\widetilde{\omega}'$ are two $S^1$-invariant symplectic forms on $P \times I$ determined by $\{ \omega_t\}$ and $\{\omega_t'\}$, respectively.
		\end{itemize}
		See \cite[Proposition 5.8]{McS1}.
		Then $\varphi$ implies the symplectomorphism from $(M,\omega)$ to $(M,\omega')$ induced by the symplectic cut of
		$(P \times I, \widetilde{\omega})$ and $(P \times I, \widetilde{\omega}')$ respectively. 
		This completes the proof.

	\end{proof}

	The following lemma can be obtained by composing Pereira's post \cite{MO} in MathOverflow (originally given in \cite[Remark 2]{To}) and \cite[Proposition 0.3]{ST}.

	\begin{lemma}\label{lemma_isotopic}
		Suppose that $X$ is a smooth projective surface with $H^1(X, \mcal{O}_X) = 0$. Let $H_1$ and $H_2$ be two smooth curves of $X$ representing the same homology class.
		Then $H_1$ is symplectically isotopic to $H_2$ with respect to the symplectic form $\omega_X = \omega_{\mathrm{FS}}|_X$ on $X$.
	\end{lemma}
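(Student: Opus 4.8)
\textbf{Proof proposal for Lemma \ref{lemma_isotopic}.}

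The plan is to combine two facts into a single chain of symplectic isotopies. The first is a deformation statement: since $H_1$ and $H_2$ are homologous smooth curves in a projective surface $X$ with $H^1(X,\mathcal{O}_X)=0$, they can be connected through a family of smooth curves in $X$, i.e. they are \emph{deformation equivalent as symplectic submanifolds}. The vanishing $H^1(X,\mathcal{O}_X)=0$ is exactly what guarantees that the linear system attached to the common divisor class has no obstruction coming from $\operatorname{Pic}^0$, so that $H_1$ and $H_2$ lie in the same connected component of the relevant Hilbert scheme (or, failing irreducibility of that component, can be connected by a chain of algebraic families passing through possibly singular but still smoothable members). This is the content of Pereira's argument in \cite{MO} (see also \cite[Remark 2]{To}): first I would write down this algebraic family $\{H_s\}_{s\in[0,1]}$ of smooth curves with $H_0 = H_1$, $H_1 = H_2$, each $H_s$ being a complex (hence $\omega_X$-symplectic) submanifold of $X$.

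The second step is to upgrade this deformation of symplectic submanifolds to a genuine symplectic isotopy, i.e. an ambient family of symplectomorphisms of $(X,\omega_X)$ carrying $H_1$ to $H_2$. Here I would invoke \cite[Proposition 0.3]{ST}, which is precisely the tool that turns a deformation through symplectic submanifolds of a fixed symplectic four-manifold into an ambient Hamiltonian isotopy, provided the symplectic form restricted to the family has constant cohomology class on the submanifolds — which holds automatically here since the $H_s$ are all homologous and $[\omega_X]$ is fixed. Concretely, one uses a parametrized Moser-type argument on a tubular neighborhood of the family: each $H_s$ is symplectic, the normal bundles fit into a smooth family, and one integrates the resulting time-dependent vector field to produce the isotopy $\Phi_s\colon X\to X$ with $\Phi_0=\mathrm{id}$ and $\Phi_1(H_1)=H_2$. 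Stringing the local isotopies together along a partition of $[0,1]$ (so that on each subinterval the curves stay inside a common Weinstein neighborhood) yields the global symplectic isotopy.

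The main obstacle I anticipate is the first step: ensuring that the algebraic family connecting $H_1$ and $H_2$ stays within the realm of \emph{smooth} (or at least smoothable, and symplectically isotopic to smooth) curves. When the component of the Hilbert scheme containing the class is irreducible and its generic member is smooth, this is immediate; in general one must argue that any two smooth members are connected by a path that avoids the most degenerate loci, or one must replace a singular intermediate curve by a nearby smooth one and patch the isotopies. This is exactly where $H^1(X,\mathcal{O}_X)=0$ is used — it rules out a nontrivial $\operatorname{Pic}^0$ that could disconnect the space of effective divisors in a fixed numerical class — and where the precise statement of \cite{MO}/\cite[Remark 2]{To} does the work. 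The rest, namely the passage from deformation to isotopy via \cite[Proposition 0.3]{ST}, is a routine parametrized Moser argument and should not present difficulties; I would state it and cite rather than reprove it.
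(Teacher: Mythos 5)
Your proposal follows essentially the same route as the paper: reduce the lemma to finding a connected family of smooth algebraic curves in the given class (each such curve being automatically $\omega_X$-symplectic) and then apply \cite[Proposition 0.3]{ST}. The one obstacle you flag — keeping the connecting family inside the locus of smooth curves — dissolves more cleanly than you anticipate: since $H^1(X,\mathcal{O}_X)=0$ makes the Chern class map $\mathrm{Pic}(X)\to H^2(X;\Z)$ injective, $H_1$ and $H_2$ are linearly equivalent and both lie in the single complete linear system $|H_1|\cong\p^N$, in which the smooth members form a nonempty Zariski open, hence connected, subset, so no Hilbert-scheme components or chains through singular-but-smoothable members ever enter the argument.
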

	
	\begin{proof}
		Using 
		\begin{itemize}
			\item \cite[Proposition 0.3]{ST}, and 
			\item the fact that every smooth algebraic curve of $X$ is a symplectic submanifold of $(X,\omega_X)$, 
		\end{itemize}
		It is enough to find a family $\{H_t\}_{1 \leq t \leq 2}$ of smooth algebraic curves in $X$ which induces a constant homology class. 
		Note that the set of effective divisors in the class $[H_1]$ contains both $H_1$ and $H_2$ since the Chern class map 
		\[
			c : H^1(X, \mcal{O}_X^*) \rightarrow H^2(X; \Z)
		\]
		is injective by our assumption that $H^1(X, \mcal{O}_X) = 0$. 
		
		Now consider the complete linear system $|H_1|$ isomorphic to some projective space $\p^N$. 
		Since the set of smooth divisors in $|H_1|$ is Zariski open in $\p^N$, it is connected. This completes the proof.
	\end{proof}

\appendix

\section{Monotone symplectic four manifolds with semifree $S^1$-actions}
\label{secMonotoneSymplecticFourManifoldsWithSemifreeS1Actions}

In this section, we classify semifree Hamiltonian $S^1$-actions on compact monotone symplectic four manifolds up to $S^1$-equivariant symplectomorphism.

Let $(M,\omega)$ be a four dimensional closed monotone semifree Hamiltonian $S^1$-manifold such that $c_1(TM) = [\omega]$ with the balanced moment map. 
Then, similar to Lemma \ref{lemma_possible_critical_values}, we have the following. 

\begin{lemma}
	All possible critical values of $H$ are $\pm 2, \pm 1$, and $0$. Moreover, any connected component $Z$ of $M^{S^1}$ satisfies one of the followings : 
		\begin{table}[h]
			\begin{tabular}{|c|c|c|c|}
			\hline
			    $H(Z)$ & $\dim Z$ & $\mathrm{ind}(Z)$ & $\mathrm{Remark}$ \\ \hline 
			    $2$ &  $0$ & $4$ & $Z = Z_{\max} = \mathrm{point}$ \\ \hline
			    $1$ &  $2$ & $2$ & $Z = Z_{\max} \cong S^2$ \\ \hline
			    $0$ &  $0$ & $2$ & $Z = \mathrm{point}$ \\ \hline
			    $-1$ &  $2$ & $0$ & $Z = Z_{\min} \cong S^2$ \\ \hline
			    $-2$ &  $0$ & $0$ & $Z = Z_{\min} = \mathrm{point}$ \\ \hline
			\end{tabular}
			\vspace{0.2cm}
			\caption{\label{table_fixed_4dim} List of possible fixed components}
		\end{table}
\end{lemma}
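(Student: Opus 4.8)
The plan is to mimic the proof of Lemma \ref{lemma_possible_critical_values}, specializing the weight analysis to complex dimension two. The starting point is Corollary \ref{corollary_sum_weights_moment_value}: since $H$ is the balanced moment map, each fixed component $Z$ satisfies $H(Z) = -\Sigma(Z)$, where $\Sigma(Z)$ is the sum of the weights of the isotropy representation at a point $z \in Z$. Because the action is semifree, every weight of the $S^1$-representation on $T_z M$ lies in $\{-1, 0, +1\}$. As $\dim_{\C} T_z M = 2$, the unordered weight vectors are among $(\pm1,\pm1)$, $(\pm1,0)$, and $(0,0)$; but $(0,0)$ is excluded since the action is effective and nontrivial near no fixed point can it be globally trivial — more precisely, a component with weights $(0,0)$ would be an open subset fixed by $S^1$, contradicting effectiveness on a connected closed manifold. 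This immediately gives that $\Sigma(Z) \in \{-2,-1,0,1,2\}$, hence $H(Z) \in \{-2,-1,0,1,2\}$, and so the critical values of $H$ lie in this set, proving the first assertion.

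For the second assertion I would produce the table row by row, using: (i) the zero-weight subspace of $T_z M$ equals $T_z Z$, so $\dim Z$ is twice the multiplicity of the zero weight; (ii) by Corollary \ref{corollary_properties_moment_map}, $\mathrm{ind}(Z)$ equals twice the number of negative weights at $Z$; and (iii) the moment map attains its extreme values on extremal fixed components (Corollary \ref{corollary_properties_moment_map} again). Then: $H(Z) = 2$ forces weights $(-1,-1)$, hence $\dim Z = 0$, $\mathrm{ind}(Z) = 4 = \dim M$, so $Z = Z_{\max}$ is an isolated point; $H(Z) = 1$ forces weights $(-1,0)$, hence $\dim Z = 2$, $\mathrm{ind}(Z) = 2$, and $Z$ is a maximal fixed component of positive dimension; $H(Z) = 0$ forces weights $(-1,1)$ (the only weight pair summing to $0$ with no zero entry, since $(0,0)$ is excluded), hence $\dim Z = 0$, $\mathrm{ind}(Z) = 2$, an isolated interior point; and the cases $H(Z) = -1, -2$ follow by the symmetry $H \mapsto -H$ (replace $\omega$ by $-\omega$), giving $Z_{\min}$ of dimension $2$ or $0$ respectively. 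Symmetry halves the work, exactly as in Lemma \ref{lemma_possible_critical_values}.

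The one genuinely non-formal point is the claim that when $\dim Z = 2$, $Z$ is diffeomorphic to $S^2$, matching the $Z \cong S^2$ entries in Table \ref{table_fixed_4dim}. For $Z = Z_{\max}$ at level $1$ (and symmetrically $Z_{\min}$ at level $-1$), I would argue as in Lemma \ref{lemma_possible_critical_values}: when the action has $0$ as a regular value, $(M_0, \omega_0)$ is a monotone symplectic \emph{two}-manifold by Proposition \ref{proposition_monotonicity_preserved_under_reduction}, hence $M_0 \cong S^2$; when $0$ is a critical value it consists of index- or coindex-two isolated points and the reduced space at level $0$ is still $S^2$ by Proposition \ref{proposition_topology_reduced_space}. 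In either case $M$ is a blow-up/blow-down tower over $S^2$-bundles, forcing $\pi_1(M) = \pi_1(M_0) = 1$, and since $Z_{\max}$ is a symplectic surface in $M$ with $\pi_1(Z_{\max})$ surjecting onto $\pi_1(M)$ via the result of Li (as invoked in Lemma \ref{lemma_possible_critical_values}), $Z_{\max}$ is simply connected, hence a sphere. The main obstacle, such as it is, is just making sure the $\pi_1$-surjectivity input and the monotone-reduction input apply cleanly in dimension four; but both are already set up in the excerpt, so this should go through verbatim. Everything else is a finite weight bookkeeping exercise.
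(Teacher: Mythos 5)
Your proposal is correct and follows essentially the same route as the paper: the weight bookkeeping via Corollary \ref{corollary_sum_weights_moment_value} (with weights in $\{0,\pm1\}$ by semifreeness), the dimension/index readings from Corollary \ref{corollary_properties_moment_map}, and Li's theorem \cite[Theorem 0.1]{Li1} to conclude $Z\cong S^2$ for the two-dimensional extremal components. The only (immaterial) difference is that the paper gets $\pi_1(M)=1$ by citing that $M$ is diffeomorphic to a del Pezzo surface via Ohta--Ono, whereas you route through $\pi_1(M)\cong\pi_1(M_0)$ with $M_0\cong S^2$ from monotonicity of the reduction; both are already set up in the text and give the same conclusion.
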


\begin{proof}
	The dimension and the Morse-Bott index of each fixed component can be obtained from Corollary \ref{corollary_sum_weights_moment_value}. Also, if $H(Z) = \pm 1$, then 
	$Z$ should be an extremal fixed component since the weights of the $S^1$-action at $Z$ is $(\mp 1, 0)$. Furthermore, $Z \cong S^2$ 
	by \cite{Li1} since $M$ is diffeomorphic to some del Pezzo surface (which is simply connected). 
\end{proof}

So, we may divide into three cases (where other cases are recovered by taking reversed orientation of $M$) : 

\begin{itemize}
	\item $H(Z_{\min}) = -2$ and $H(Z_{\max}) = 2$.
	\item$H(Z_{\min}) = -2$ and $H(Z_{\max}) = 1$.
	\item $H(Z_{\min}) = -1$ and $H(Z_{\max}) = 1$.
\end{itemize}

\begin{theorem}\label{theorem_classification_4dim}
	Let $(M,\omega)$ be a four dimensional closed monotone semifree Hamiltonian $S^1$-manifold such that $c_1(TM) = [\omega]$ with the balanced moment map $H$. 
	Then all possible $(M,\omega)$, together with their fixed point data, are list as follows : 
	\begin{table}[H]
		\begin{tabular}{|c|c|c|c|c|c|c|c|}
			\hline
			                &  $M$                            & $Z_{-2}$ & $Z_{-1}$ &  $Z_0$          & $Z_1$ & $Z_2$ & $e(P_{\min}^+)$\\ \hline \hline
			{\bf (I-1)} &  $\p^1 \times \p^1$ & {\em pt} &                    & {\em 2 pts} &              & {\em pt} & $-u$ \\ \hline    
			
			{\bf (II-1)} &  $\p^2$ & {\em pt} &                    & 	 & $\p^1$             &  & $-u$ \\ \hline    
			{\bf (II-2)} &  $\p^2 \# ~\overline{\p^2}$ & {\em pt} &                    & {\em pt}	 & $\p^1$             & & $-u$\\ \hline    
			{\bf (II-3)} &  $\p^2 \# ~2 ~\overline{\p^2}$ & {\em pt} &                    & {\em 2 pts}	 & $\p^1$        &     & $-u$ \\ \hline    
			
			{\bf (III-1)} &  $\p^1 \times \p^1$                      &  & $\p^1$ &                    & $\p^1$  &  & $0$\\ \hline    
			{\bf (III-2)} &  $\p^2 \# ~\overline{\p^2}$                &  & $\p^1$ &                    & $\p^1$  & & $-u$  \\ \hline    
			{\bf (III-3)} &  $\p^2 \# ~2~\overline{\p^2}$ &  & $\p^1$ &                  {\em pt} & $\p^1$ &  & $0$ \\ \hline    
			{\bf (III-4)} &  $\p^2 \# ~3~\overline{\p^2}$ &  & $\p^1$ &                  {\em 2 pts} & $\p^1$ &  & $-u$\\ \hline    			
		\end{tabular}
		\vs{0.3cm}
		\caption {List of topological fixed point data} \label{table_list_4dim} 
	\end{table}
	\noindent
	Moreover, the corresponding $S^1$-actions for each cases are described in Figure \ref{figure_4dim}.
\end{theorem}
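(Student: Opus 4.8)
\textbf{Proof plan for Theorem \ref{theorem_classification_4dim}.}
The plan is to run exactly the same machinery developed for the six-dimensional case, but in dimension four where everything is simpler because a reduced space is now a closed symplectic \emph{two}-manifold, hence a sphere, and the fixed point data of a component $Z_c$ is just a finite collection of points together with the reduced symplectic area. First I would invoke the preceding lemma (Table \ref{table_fixed_4dim}) to reduce to the three cases $\mathrm{Crit}\,H \subseteq \{-2,\dots,2\}$ with $(H(Z_{\min}),H(Z_{\max}))$ equal to $(-2,2)$, $(-2,1)$, or $(-1,1)$, the remaining possibilities being obtained by reversing the orientation of $M$ (equivalently replacing $\omega$ by $-\omega$ and $H$ by $-H$). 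In each case the reduced space $M_t$ at a regular value is a $2$-sphere, so $\p^1$ with some area, and Proposition \ref{proposition_topology_reduced_space} tells us a blow-up in dimension $4$ is just insertion of a point on $\p^1$, i.e.\ it does not change $M_t$ topologically; the topology only records how many interior fixed points sit on each critical level and what the reduced areas are.

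The key computational tool is the localization theorem \ref{theorem_localization} applied to $1\in H^0_{S^1}(M)$ and to $c_1^{S^1}(TM)\in H^2_{S^1}(M)$, exactly as in Lemma \ref{lemma_2_2_number}, \eqref{equation_k_1_vol_4}, and \eqref{equation_3_localization_0}, together with the Duistermaat--Heckman theorem \ref{theorem_DH} and Corollary \ref{corollary_volume} relating the symplectic area of an extremal sphere fixed component to the first Chern number of its normal bundle. Concretely: in the case $\mathrm{Crit}\,H=\{-2,\dots,2\}$ one writes $e(P_{-2}^+)=-u$ (the Euler class of $S^5/S^1\to\p^2$ near an isolated minimum), propagates $[\omega_t]$ forward by \ref{theorem_DH}, uses Lemma \ref{lemma_Euler_class} at each interior critical level to track the jump in the Euler class, and uses positivity of the reduced area $\int_{M_t}\omega_t>0$ together with the continuity of the Duistermaat--Heckman function to pin down the number of interior fixed points and the areas. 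The genus/degree constraints come from the adjunction formula as in Lemma \ref{lemma_Z0_1_1}. This is entirely parallel to Section \ref{secClassificationOfTopologicalFixedPointDataDimZMin} but shorter, and I would organize the argument by listing, for each of the three orientation-classes, the finitely many admissible tuples $(\mathrm{Crit}\,H, |Z_{-1}|, |Z_0|, |Z_1|, \text{areas})$, which yields precisely the eight rows of Table \ref{table_list_4dim}.

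Having classified the topological fixed point data, I would then argue uniqueness and realizability exactly as in the proof of Theorem \ref{theorem_main}: every reduced space here is $\p^1$ (possibly with extra marked points, but as a surface it is $\mathbb{CP}^1$), which is symplectically rigid in the strong sense — any two cohomologous symplectic forms on $S^2$ are diffeomorphic (Moser) and $\mathrm{Symp}(S^2)$ is connected — so by \cite[Proposition 0.3]{ST} a finite configuration of points in $\p^1$ is determined up to symplectomorphism by its cardinality, and hence the topological fixed point data determines the fixed point data. Applying Gonzalez's Theorem \ref{theorem_Gonzalez} (in its four-dimensional incarnation, or directly the original \cite[Theorem 1.5]{G} since every non-extremal fixed point is isolated of index two, which is both index and co-index two here) gives that $(M,\omega)$ is determined up to $S^1$-equivariant symplectomorphism; and for each row of Table \ref{table_list_4dim} one exhibits a toric del Pezzo surface with an explicit semifree circle subgroup realizing the data, as drawn in Figure \ref{figure_4dim}. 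The main obstacle, as in the six-dimensional setting, is not any single step but the bookkeeping: ensuring the case list is exhaustive and that the Euler-class jumps, reduced areas, and adjunction constraints are mutually consistent in every branch — though in dimension four this is light enough that the explicit toric pictures serve as an immediate sanity check.
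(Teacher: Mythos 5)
Your proposal is correct and follows essentially the same route as the paper: split into the three cases by the extremal critical values (up to orientation reversal), use the Duistermaat--Heckman theorem, the Euler-class jump of Lemma \ref{lemma_Euler_class}, and positivity of reduced areas to enumerate the admissible data, then invoke Moser rigidity of $S^2$, \cite[Proposition 0.3]{ST}, Gonzalez's theorem, and the explicit toric models. The only cosmetic difference is that the paper's actual computation never needs ABBV localization or the adjunction formula in dimension four (interior fixed points are isolated and the extremal spheres come from simple connectivity via \cite{Li1}), so those tools in your plan are harmless but redundant.
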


\begin{figure}[h]
	\scalebox{0.9}{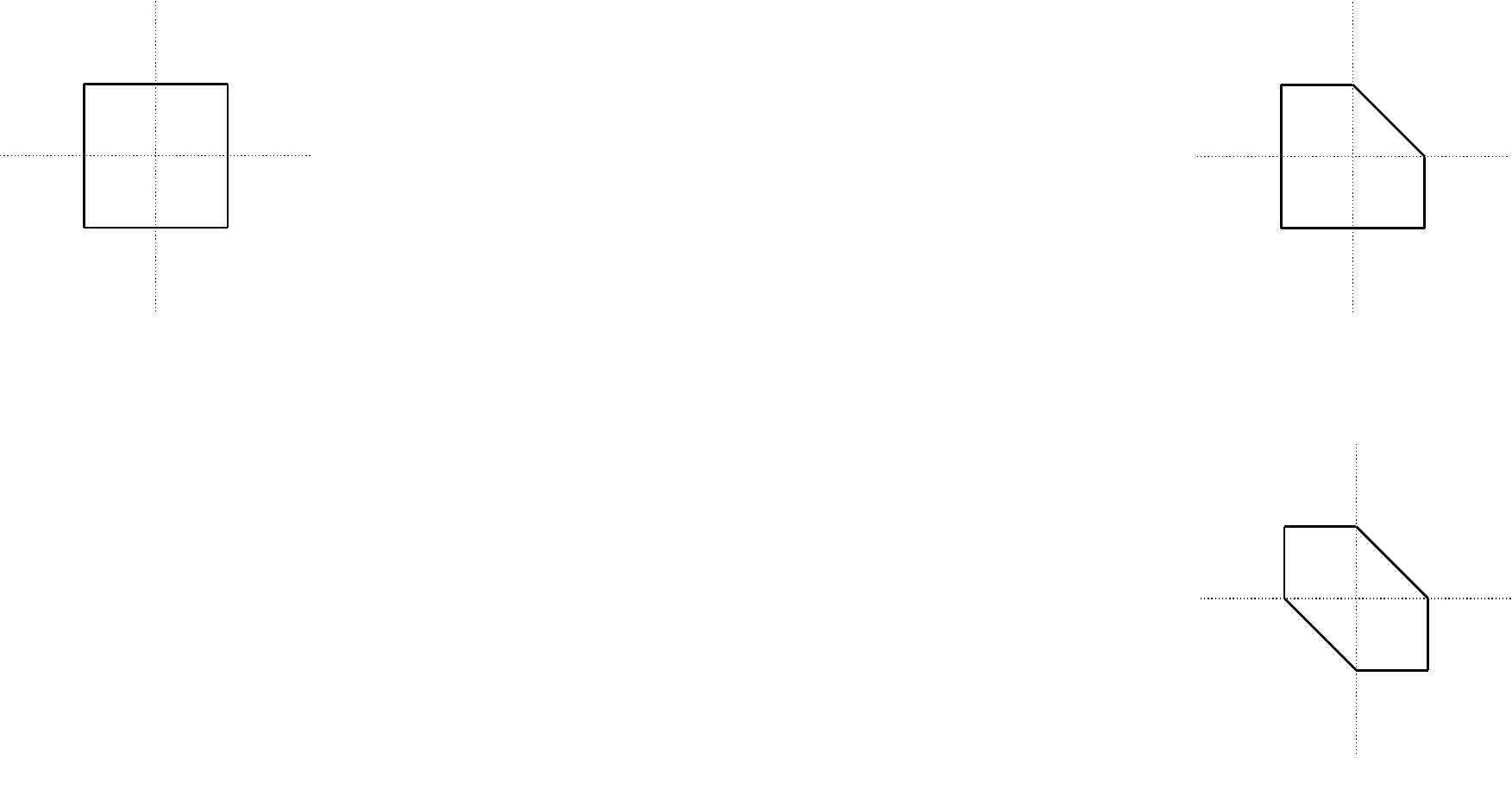}
	\vs{0.3cm}
	\caption{\label{figure_4dim} Classification in dimension four}
\end{figure}

\begin{proof}
	First note that $(M_0, [\omega_0]) = (\p^1, 2u)$ by Proposition \ref{proposition_monotonicity_preserved_under_reduction}.
	Let $k = |Z_0|$ be the number of interior fixed points. \vs{0.1cm}
	
	\noindent {\bf Case I : $H(Z_{\min}) = -2$ and $H(Z_{\max}) = 2$.} In this case, we have $Z_{\min}$ = $Z_{\max}$ = pt and 
	the Euler classes of the principal bundles 
	\[
		H^{-1}(-2 + \epsilon) \rightarrow M_{-2 + \epsilon} \quad \text{and} \quad H^{-1}(2 - \epsilon) \rightarrow M_{2 - \epsilon}
	\]
	are $-u$ and $u$, respectively. By Lemma \ref{lemma_Euler_class}, we have 
	\[
		e(P_{2}^-) = e(P_{- 2}^+) + \mathrm{PD}(Z_0) \quad \Leftrightarrow \quad u = -u + ku \quad \Leftrightarrow \quad k = 2.
	\]
	
	\noindent {\bf Case II : $H(Z_{\min}) = -2$ and $H(Z_{\max}) = 1$.} In this case, we have $Z_{\min}$ = pt and $Z_{\max}$ = $\p^1$. 
	Note that if $k \geq 3$, then 
	\[
		[\omega_1] = 2u - (k-1)u = (3 - k)u 
	\]
	so that $\int_{Z_{\max}} \omega_1 \leq 0$ which is impossible. Thus we get $k \leq 2$.
	\vs{0.2cm}
	
	\noindent {\bf Case III : $H(Z_{\min}) = -1$ and $H(Z_{\max}) = 1$.} In this case, we have $Z_{\min}$ = $Z_{\max}$ = $\p^1$. Set 
	\[
		[\omega_{-1}] = au, \quad  e(P_{-1}^+) = bu, \quad a > 0 , ~b \in \Z.
	\]
	Then, 
	\[
		[\omega_0] = 2u = (a-b)u, \quad [\omega_1] = a - b - (b+k)u = (a - 2b - k)u > 0,
	\]
	where all possible tuples $(a,b,k)$ are 
	\begin{itemize}
		\item $\{(1, -1, k) ~|~ 0 \leq k \leq 2\},$
		\item $\{(2, 0,  k) ~|~ 0 \leq k \leq 1\},$
		\item $\{(3, 1,0 )\}$. 
	\end{itemize}	
	Note that $(2, 0, 1)$ and $(1, -1, 1)$ are essentially same where one can be obtained from another by taking an opposite orientation $M$. 
	Similarly $(3,1,0)$ and $(1, -1, 0)$ induce the same fixed point data. 
	Therefore, 
	there are four possibilities 
	\[
		\underbrace{(2,0,0)}_{\text{\bf (III-1)}}, \quad \underbrace{(1,-1,0)}_{\text{\bf (III-2)}}, \quad \underbrace{(2,0,1)}_{\text{\bf (III-3)}}, \quad \underbrace{(1,-1,2)}_{\text{\bf (III-4)}}. 
	\]
	
	To show that $(M,\omega)$ is one of those in Table \ref{table_list_4dim} (or in Figure \ref{figure_4dim}), note that every reduced space (at any regular level) is diffeomorphic to $S^2$, 
	and hence it is symplectically rigid by the classical Moser Lemma. Also, the topological fixed point data determines the fixed point data uniquely since 
	\[
		(S^2, \omega_0, p_1, \cdots, p_k) \underbrace{\cong}_{\text{symplectic isotopic}} (S^2, \omega_0, q_1, \cdots, q_k) 
	\]
	for any distinct $k$ points $\{p_1, \cdots, p_k\}$ and $\{q_1, \cdots, q_k\}$ on $S^2$ by \cite[Proposition 0.3]{ST}. Furthermore, each toric Fano manifold with the specific choice of the 
	$S^1$-action described in Figure \ref{figure_4dim} has the same fixed point data as the corresponding one in Table \ref{table_list_4dim}. So, our theorem follows from
	Theorem \ref{theorem_Gonzalez}. (See also Remark \ref{remark_Gonzalez_5}.)
\end{proof}

\section{Symplectic capacities of smooth Fano 3-folds}
\label{secSymplecticCapacitiesOfSmoothFano3Folds}

In this section, we compute two kinds of symplectic capacities, namely the {\em Gromov width} and the {\em Hofer-Zehnder capacity}, of symplectic manifolds given in Table \ref{table_list}. 

Recall that the {\em Gromov width} of a closed $2n$-dimensional symplectic manifold $(M,\omega)$ is defined as 
\[
	w_G(M,\omega) := \sup ~\{ \pi a^2 ~|~ \text{$B(a)$ is symplectically embedded in $(M,\omega)$} \}
\]
where $B(a) \subset \C^n$ is a $2n$-dimensional open ball of radius $a$ with a standard symplectic structure on $\C^n$. The Hofer-Zehnder capacity of $(M,\omega)$ is defined to be 
\[
	c_{HZ}(M,\omega) := \sup ~\{ K_{\max} - K_{\min} ~|~ K : \text{admissible} \}.
\]
Here, a smooth function $K : M \rightarrow \R$ is said to be {\em admissible} if there are open subsets $U, V$ in $M$ such that 
\begin{itemize}
	\item $K|_U = K_{\max}$ and $K|_V = K_{\min}$,
	\item there is no non-constant periodic orbit of $K$ whose period is less than one.
\end{itemize}
(We refer to \cite{McS1} for more details.) In \cite{HS}, Hwang and Suh proved the following. 

\begin{theorem}\cite[Theorem 1.1]{HS}\label{theorem_HS}
	Let $(M,\omega)$ be a closed monotone symplectic manifold with a semifree Hamiltonian circle action such that $c_1(TM) = [\omega]$ and let $H$ be a moment map.
	If the minimal fixed component is an isolated point, then the Gromov width and the Hofer-Zehnder capacity of $(M,\omega)$ are respectively given by $H_{\mathrm{smin}} - H_{\min}$ 
	and $H_{\max} - H_{\min}$. Here, $M_{\mathrm{smix}}$ is the second minimal critical value of the moment map $H$.
\end{theorem}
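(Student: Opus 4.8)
The plan is to establish each of the two capacity formulas by a matching pair of inequalities, reducing everything to the local structure of the action near the minimum together with a supply of pseudoholomorphic spheres coming from gradient trajectories.

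First I would set up the local model at the minimum. Since $Z_{\min}$ is an isolated fixed point and the action is semifree, every weight of the isotropy representation at $p_{\min}$ equals $+1$; by the equivariant Darboux theorem (Theorem \ref{theorem_equivariant_darboux}) there is an invariant chart in which $\omega$ is standard and, after the balancing of Corollary \ref{corollary_sum_weights_moment_value}, $H = H_{\min} + \tfrac12\sum_i |z_i|^2$ up to the normalization of the circle used in \cite{HS}. The crucial consequence is that for any $c$ with $H_{\min} < c < H_{\mathrm{smin}}$ the sublevel set $H^{-1}([H_{\min},c])$ contains no critical point other than $p_{\min}$, so the gradient flow of $H$ identifies it $S^1$-equivariantly and symplectically with a closed ball of Gromov width $c - H_{\min}$.

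Next come the lower bounds. For the Gromov width, the ball of the previous paragraph embeds, and letting $c \uparrow H_{\mathrm{smin}}$ gives $w_G(M,\omega) \ge H_{\mathrm{smin}} - H_{\min}$. For the Hofer--Zehnder capacity I would test with functions of the form $K = f(H)$, where $f\colon [H_{\min},H_{\max}] \to \R$ is smooth, locally constant near both endpoints, nondecreasing, with $f'$ bounded above so that the reparametrized circle flow it generates has minimal period at least $1$; such a $K$ is admissible because the Hamiltonian flow of $f(H)$ is exactly the $S^1$-flow on each level set, sped up by the locally constant factor $f'(H)$, so it has no nonconstant orbit of period less than $1$, while $K_{\max} - K_{\min}$ can be made as close to $H_{\max} - H_{\min}$ as desired. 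Hence $c_{HZ}(M,\omega) \ge H_{\max} - H_{\min}$.

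The upper bounds are the heart of the matter. For any $S^1$-invariant $\omega$-compatible $J$, the closure of the $S^1$-orbit of a gradient trajectory joining two fixed points is an embedded $J$-holomorphic sphere on which the circle acts with weights $\pm 1$ by semifreeness, so its symplectic area equals the difference of the two moment-map values. Two such families matter: a gradient sphere from a minimum to a maximum has area $H_{\max} - H_{\min}$; and, because $H_{\mathrm{smin}}$ is the \emph{second} critical value, the descending manifold of any fixed component at level $H_{\mathrm{smin}}$ flows entirely down to $p_{\min}$, so the $S^1$-closure of one gradient trajectory in it produces an invariant $J$-holomorphic sphere $S$ from $p_{\min}$ to that fixed component with $\int_S \omega = H_{\mathrm{smin}} - H_{\min}$. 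To convert these into upper bounds on the capacities one invokes the standard monotonicity/non-squeezing argument (Gromov, Hofer--Viterbo, Lu): given a symplectic embedding of a ball, choose $J$ adapted to it, pass a $J$-holomorphic sphere in the relevant class through the centre, and compare areas inside the ball. The point I expect to be the real obstacle is guaranteeing that such a sphere exists for \emph{every} compatible $J$ (not merely the invariant ones) and through the prescribed point --- the minimal invariant sphere only sweeps out a lower-dimensional subset --- which requires nonvanishing of the Gromov--Witten invariant in the class $[S]$ (respectively the min-to-max class) with a point insertion. For Hamiltonian $S^1$-manifolds this follows from the theory of the Seidel representation in the style of McDuff--Tolman: the minimal section class of the fibration associated to the $S^1$-action contributes to an invertible Seidel element, forcing the needed invariant to be nonzero. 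Feeding this into the monotonicity estimate yields $w_G(M,\omega) \le H_{\mathrm{smin}} - H_{\min}$ and $c_{HZ}(M,\omega) \le H_{\max} - H_{\min}$, which together with the lower bounds completes the proof.
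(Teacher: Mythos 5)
There is nothing to compare against here: the paper does not prove Theorem \ref{theorem_HS}. It is quoted from Hwang--Suh \cite{HS} and used as a black box in Appendix \ref{secSymplecticCapacitiesOfSmoothFano3Folds}. Judged on its own, your proposal does reconstruct the actual strategy of \cite{HS} quite faithfully: the lower bounds come from the equivariant Darboux ball filling the sublevel set below $H_{\mathrm{smin}}$ and from admissible reparametrizations $f(H)$ of the moment map with $f'\le 1$, while the upper bounds come from non-vanishing genus-zero Gromov--Witten invariants with a point insertion in the min-to-second-level and min-to-max gradient-sphere classes, extracted from the McDuff--Tolman computation of the Seidel element of a semifree action and fed into the Hofer--Viterbo/Lu monotonicity estimates. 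Your identification of the existence of a $J$-holomorphic sphere through a prescribed point for \emph{arbitrary} compatible $J$ as the crux is exactly right.

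Two steps would need sharpening before this counts as a proof. First, the gradient flow does not by itself produce a symplectomorphism of $H^{-1}([H_{\min},c])$ with a ball; one needs a Karshon--Tolman type ``centered region'' argument (an equivariant Moser/Darboux extension over the entire sublevel set), and this is where the fact that all weights at $p_{\min}$ equal $+1$ is genuinely used. Second, you never invoke the hypothesis $c_1(TM)=[\omega]$, and you do not say where the isolatedness of $Z_{\min}$ enters the Seidel-representation step. Both are essential: isolatedness is what makes the Seidel element of the inverse action have leading term a multiple of the point class, which is the only reason a GW invariant \emph{constrained at a point} appears in the expansion of $S(\lambda)*S(\lambda^{-1})=\mathbb{1}$; and monotonicity is needed both for the GW/quantum-cohomology framework to apply and to order the terms of the Seidel element by symplectic area so that the lowest-area contribution is the minimal gradient sphere. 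Without pinning these down, the upper-bound half of the argument is a plausible plan rather than a proof.
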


Using Theorem \ref{theorem_HS}, we obtain the followings. 
\begin{proposition}
	For each smooth Fano 3-fold admitting semifree Hamiltonian circle action, the Gromov width and the Hofer-Zehnder capacity can be compute as follows : 
	\begin{table}[H]
		\begin{tabular}{|c|c|c|c|c|c|}
			\hline
			    & $H_{\max}$ & $H_{\mathrm{smin}}$ & $H_{\min}$ & $w_G$ & $c_{HZ}$ \\ \hline \hline
			    {\bf (I-1)} & 3 & -1 &  -3& 2 & 6 \\ \hline    
			   {\bf (I-2)} &  3 & -1 &  -3& 2 & 6 \\ \hline    
			   
			   {\bf (I-3)} & 3 & -1 &  -3& 2 & 6 \\ \hline    
			    
			   {\bf (II-3.1)} & 2 & -1 &  -3& 2 & 5 \\ \hline    
			   {\bf (II-3.2)} & 2 & -1 &  -3& 2 & 5 \\ \hline    
			   {\bf (II-3.3)} & 2 & -1 &  -3& 2 & 5 \\ \hline    
			    
			   {\bf (II-4.1)} & 2 & -1 &  -3& 2 & 5 \\ \hline    
			   {\bf (II-4.2)} & 2 & -1 &  -3& 2 & 5 \\ \hline    
		    		    
			   {\bf (III-1)} & 1 & 1 &  -3& 4 & 4 \\ \hline    
			   {\bf (III-2)} & 1 & -1 &  -3& 2 & 4 \\ \hline    
			   {\bf (III-3.1)} & 1 & 0 &  -3& 3 & 4 \\ \hline    
			   {\bf (III-3.2)} & 1 & 0 &  -3& 3 & 4 \\ \hline    
			   {\bf (III-3.3)} & 1 & 0 &  -3& 3 & 4 \\ \hline    
			   
			   {\bf (III-4.1)} &  1 & -1 &  -3& 2 & 4 \\ \hline
			   {\bf (III-4.2)} &  1 & -1 &  -3& 2 & 4 \\ \hline
			   {\bf (III-4.3)} &  1 & -1 &  -3& 2 & 4 \\ \hline
			   {\bf (III-4.4)} &  1 & -1 &  -3& 2 & 4 \\ \hline
			   {\bf (III-4.5)} &  1 & -1 &  -3& 2 & 4 \\ \hline
		\end{tabular}
		\vs{0.3cm}
		\caption {Gromov width and Hofer-Zehnder capacity of smooth Fano 3-folds} \label{table_list_capacities} 
	\end{table}
	
\end{proposition}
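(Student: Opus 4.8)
The plan is to apply Theorem \ref{theorem_HS} (Hwang--Suh) directly to each of the manifolds in Table \ref{table_list}, since every entry there has $Z_{\min} = \mathrm{pt}$, which is precisely the hypothesis required for the theorem. Theorem \ref{theorem_HS} asserts that under the assumption $c_1(TM) = [\omega]$ with balanced moment map $H$ and isolated minimal fixed point, one has
\[
	w_G(M,\omega) = H_{\mathrm{smin}} - H_{\min}, \qquad c_{HZ}(M,\omega) = H_{\max} - H_{\min},
\]
where $H_{\mathrm{smin}}$ is the second smallest critical value. Thus the entire computation reduces to reading off the critical values of $H$ for each topological fixed point data in Table \ref{table_list}, which I already determined during the classification in Sections \ref{secClassificationOfTopologicalFixedPointDataDimZMin} through \ref{secClassificationOfTopologicalFixedPointDataDimZMinGeq2AndDimZMax4}.

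First I would verify that the hypotheses of Theorem \ref{theorem_HS} hold uniformly: all manifolds in question are monotone with $c_1(TM) = [\omega]$ (by our standing normalization, scaling $\omega$ if necessary), each carries a semifree Hamiltonian $S^1$-action, and each has $Z_{\min}$ an isolated point, as recorded in the $Z_{-3}$ column of Table \ref{table_list} (all entries are ``pt''). Then, for each row, I would extract the set $\mathrm{Crit}\, H$ from the corresponding classification theorem. Concretely: for types {\bf (I-1), (I-2), (I-3)} we have $\min H = -3$; for types {\bf (II-3.k), (II-4.k)} we have $\min H = -3$; and for types {\bf (III-1), (III-2), (III-3.k), (III-4.k)} we again have $\min H = -3$. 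The values of $H_{\max}$ and $H_{\mathrm{smin}}$ are then read off by listing the critical values in increasing order, using Lemma \ref{lemma_possible_critical_values} which restricts all critical values to $\{-3,-2,-1,0,1,2,3\}$.

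The bulk of the proof is simply bookkeeping, so I would organize it as a single pass through the table. For instance, type {\bf (I-1)} has $\mathrm{Crit}\, H = \{-3,0,3\}$ by Theorem \ref{theorem_Sec6_1_1}, giving $H_{\min}=-3$, $H_{\mathrm{smin}}=0$, $H_{\max}=3$, hence $w_G = 0-(-3) = 3$; this matches the table (which records $H_{\mathrm{smin}}=-1$ only when there is an intermediate isolated fixed point at level $-1$). I would double-check each arithmetic difference against the $w_G$ and $c_{HZ}$ columns. The subtle cases are those where the second-smallest critical value equals the minimum of the \emph{interior} critical set: for example type {\bf (III-1)} with $\mathrm{Crit}\, H = \{-3,1\}$ (Theorem \ref{theorem_Sec6_3_1}) has $H_{\mathrm{smin}} = H_{\max} = 1$, so $w_G = c_{HZ} = 4$, reflecting that there are only two critical levels.

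The main obstacle, such as it is, lies not in any deep argument but in correctly matching each row's critical-value structure to the quantities $H_{\mathrm{smin}}$ and $H_{\max}$ and confirming that no row violates the isolated-minimum hypothesis. In particular I must confirm that for the types appearing in Table \ref{table_list_capacities} (which omits those with two-dimensional or four-dimensional $Z_{\min}$) the minimal fixed component is genuinely a point, so that Theorem \ref{theorem_HS} applies; the cases with $\dim Z_{\min} \geq 2$ from Tables \ref{table_list_2} and \ref{table_list_3} are correctly excluded from this computation since they fall outside the scope of the theorem. Once this verification is complete, the proposition follows immediately by substituting the critical values into the two formulas, and the resulting values are exactly those tabulated.
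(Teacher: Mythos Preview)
Your approach is correct and identical to the paper's: the proposition is stated immediately after Theorem~\ref{theorem_HS} with no proof beyond the implicit ``Using Theorem~\ref{theorem_HS}, we obtain the following,'' so a direct application of the Hwang--Suh formulas to the critical values already tabulated in Section~\ref{secClassificationOfTopologicalFixedPointDataDimZMin} is exactly what is intended.

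However, your worked example for {\bf (I-1)} contains an internal contradiction that you should not gloss over. You correctly note that $\mathrm{Crit}\,H = \{-3,0,3\}$ for {\bf (I-1)} (Theorem~\ref{theorem_Sec6_1_1}), which gives $H_{\mathrm{smin}} = 0$ and hence $w_G = 3$; but the table in the proposition records $H_{\mathrm{smin}} = -1$ and $w_G = 2$ for this case. These do \emph{not} match, contrary to what you wrote. Either the table contains a typo in the {\bf (I-1)} row, or you are misreading the classification---but your reading is correct, since {\bf (I-1)} has no fixed point at level $-1$. You should flag this discrepancy rather than assert agreement; the remaining rows are consistent with the critical values recorded in Table~\ref{table_list}.
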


\bibliographystyle{annotation}

\begin{thebibliography}{99}

	\bibitem[AB]{AB} M. Atiyah and R. Bott, \textit{The moment map and equivariant cohomology}, Topology \textbf{23} (1984), 1--28.

	


	\bibitem[AM]{AM} M. Abreu and D. McDuff, {\em Topology of symplectomorphism groups of rational rule surfaces}, J. Amer. Math. Soc. \textbf{13} (2000), no. 4, 971-1009.


	\bibitem[Au]{Au} M. Audin, {\em Topology of Torus actions on symplectic manifolds Second revised edition.} 
			Progress in Mathematics, \textbf{93}. Birkh\"{a}user Verlag, Basel (2004).






	\bibitem[BV]{BV} N. Berline and M. Vergne, \textit{Classes caract\'{e}ristiques \'{e}quivariantes. Formule de localisation en cohomologie \'{e}quivariante}, C. R. Acad. Sci. Paris \textbf{295} (1982)
	 539-541.

%

%
%
%
			
	\bibitem[CK1]{CK1} Y. Cho and M. K. Kim, {\em Hard Lefschetz property for Hamiltonian torus actions on 6-dimensional GKM-manifolds}, J. Symplectic Geom. \textbf{16} (2018), no. 6, 
	1549-1590.
			
			



	\bibitem[DH]{DH} J. J. Duistermaat and G. J. Heckman, {\em On the variation in the cohomology of the symplectic form of the reduced phase space}, Invent. Math. \textbf{69} (1982), 259-268.

	\bibitem[E]{E} J. D. Evans, {\em Symplectic mapping class groups of some Stein and rational surfaces}, J. Symplectic Geom. \textbf{9 (1)} (2011), 45-82.




	\bibitem[FP]{FP} J. Fine and D. Panov, {\em Hyperbolic geometry and non-K\"{a}hler manifolds with trivial canonical bundle}, Geom. Topol. \textbf{14} (2010), no. 3, 1723-1763.
	
	\bibitem[FP2]{FP2} J. Fine and D. Panov. {\em Circle invariant fat bundles and symplectic Fano 6-manifolds,} J. London Math. Soc. \textbf{91}(3) (2015), 709-730.	

	









	\bibitem[G]{G} E. Gonzalez, {\em Classifying semi-free Hamiltonian $S^1$-manifolds,} Int. Math. Res. Not. (2011), no.2, 387--418.
	
	\bibitem[GKM]{GKM} M. Goresky, R. Kottwitz and R. MacPherson, {\em Equivariant cohomology, Koszul duality, and the localization theorems,}
	 Invent. Math. {\bf 131} (1998),  25-83.



	\bibitem[Gr]{Gr} M. Gromov, {\em Pseudo-holomorphic curves in symplectic manifolds}, Invent. Math., {\bf 82}, (1985), 307--347.
	


	\bibitem[GS]{GS} V. Guillemin and S. Sternberg, {\em Birational equivalence in the symplectic category}, Invent. Math. \textbf{97} (1989), no. 3, 485--522.

	\bibitem[GZ]{GZ} V. Guillemin, C. Zara, {\em Combinatorial formulas for products of Thom classes} in {\em Geometry, mechanics, and dynamics}, 363--405, Springer, New York, 2002.




	\bibitem[HS]{HS} T. Hwang and D. Y. Suh, {\em Symplectic capacities from Hamiltonian circle actions,} J. Symplectic Geom. \textbf{15} (2017), no. 3, 785-802.

	\bibitem[I1]{I1} V. A. Iskovskih, {\em Fano 3-folds I,} Izv. Akad. Nauk SSSR Ser. Mat. textbf{41} (1977), 516-562.

	\bibitem[I2]{I2} V. A. Iskovskih, {\em Fano 3-folds II,} Izv. Akad. Nauk SSSR Ser. Mat. textbf{42} (1978), 504-549.

	\bibitem[IP]{IP} V. A. Iskovskih and Yu. G. Prokhorov, {\em Fano Varieties},  in A. N. Parshin; I. R. Shafarevich, Algebraic Geometry, V. Encyclopedia Math. Sci., {\bf 47} (1999),
	 Springer-Verlag, pp. 1--247.

	
	\bibitem[Ki]{Ki} F. C. Kirwan, \textit{Cohomology of quotients in symplectic and algebraic geometry}, Mathematical Notes, \textbf{31}. Princeton University Press, Princeton, NJ, 1984.


	\bibitem[KMM]{KMM} J. Kollar, Y. Miyaoka, and S. Mori, {\em Rational connectedness and boundedness of Fano manifolds}, J. Diff. Geom. \textbf{36} (1992), 765-779.





	\bibitem[KW]{KW} R. S. Kulkarni and J. W. Wood, {\em Topology of nonsingular complex hypersurfaces}, Adv. Math. \textbf{35} (1980), 239-263.

	
	\bibitem[Li]{Li} T. J. Li, {\em Existence of symplectic surfaces}, Geometry and Topology of manifolds, Fields Inst. Commun., {\bf 47}, Amer. Math. Soc. (2005), 203--217. 
	from: ``Geometry and topology of manifolds'', (H U Boden, I Hambleton, A J Nicas, editors), Fields Inst. Commun. 47, Amer. Math. Soc. (2005), 203--217.

	 \bibitem[Li1]{Li1} H. Li, {\em $\pi_1$ of Hamiltonian $S^1$ manifolds}, Proc. Amer. Math. Soc. {\bf 131} (2003), no. 11,  3579-3582. 

	\bibitem[Li2]{Li2} H. Li, {\em Semi-free Hamiltonian circle actions on 6-dimensional symplectic manifolds}, Trans. Amer. Math. Soc. \textbf{355} (2003), no. 11, 4543--4568.
	
	\bibitem[Li3]{Li3} H. Li, {\em On the construction of certain 6-dimensional symplectic manifolds with Hamiltonian circle actions}, 
	Trans. Amer. Math. Soc. \textbf{357} (2005), no. 3, 983-998.

	

	\bibitem[LL]{LL} T. J. Li and A. Liu, {\em Symplectic structures on ruled surfaces and a generalized adjunction formula}, Math. Res. Lett. \textbf{2} (1995), no. 4, 453-471. 

	\bibitem[LLW]{LLW} J. Li, T. J. Li, and W. Wu, {\em The symplectic mapping class group of $\p^2 \# n \overline{\p^2}$ with $n \leq 4$,} Michigan Math. J., {\bf 64} (2015), no. 2,
	319-333.


	\bibitem[LaP]{LaP} F. Lalonde and M.Pinsonnault, {\em The topology of symplectic balls rational 4 manifolds}, Duke Math. J. \textbf{122 (2)} (2004), 347-397. 

	\bibitem[LinP]{LinP} N. Lindsay and D. Panov, {\em $S^1$-invariant symplectic hypersurfaces in dimension 6 and the Fano condition}, preprint(2017), arXiv:1711.03126v1.

	\bibitem[LW]{LW} T. J. Li and W. Wu, {\em Lagrangian spheres, symplectic surfaces and the symplectic mapping class group}, Geom. Topol. \textbf{16} (2012), no. 2, 1121-1169.


	\bibitem[McS1]{McS1} D. McDuff and D. Salamon, \textit{Introduction to Symplectic Topology}, Oxford Mathematical
		 Monographs, The Clarendon Press, Oxford University Press, New York, 1995.			

	\bibitem[McS2]{McS2} D. McDuff and D. Salamon, \textit{$J$-holomorphic curves and quantum cohomology},  University Lecture Series, \textbf{6}, 
	American Mathematical Society, Providence, RI, 1994.
	
	\bibitem[McD1]{McD1} D. McDuff, {\em The moment map for circle actions on symplectic manifolds}, J. Geom. Phys. \textbf{5} (1988), no. 2, 149--160.
%
	\bibitem[McD2]{McD2} D. McDuff. {\em Some 6-dimensional Hamiltonian $S^1$-manifolds,} J. Topol. \textbf{2} (2009) 589--623.

	\bibitem[McD3]{McD3} D. McDuff. {\em The structure of rational and ruled symplectic 4-manifolds}. J. Amer. Math. Soc. \textbf{3} (1990) no. 3, 679-712.

	\bibitem[McD4]{McD4} D. McDuff, {\em From symplectic deformation to isotopy}, (Irvine, CA, 1996), 85-99, First Int. Press Lect Ser. I, Internat. Press, Cambridge, MA, 1998.

	\bibitem[McD5]{McD5} D. McDuff, {\em Lectures on groups of symplectomorphisms}, Rend. Circ. Mat. Palermo (2) Suppl. (2004), no. 72, 43-78.

	\bibitem[MM]{MM} S. Mori and S. Mukai,  {\em Classification of Fano 3-folds with $b_2(M) \geq 2.$} Manuscripta Math. \textbf{36} (1981/82) no. 2, 147-162.

	\bibitem[MO]{MO} Math Overflow posts by Jorge Vit\'{o}rio Pereira, 2011. https://mathoverflow.net/questions/58875/

	\bibitem[MS]{MS} D. McDuff and J. Slimowitz, {\em Hofer-Zehnder capacity and length minimizing Hamiltonian paths,} Geometry and Topology \textbf{5} (2001), 799-830.



	

	
	\bibitem[OO1]{OO1} H. Ohta and K. Ono. {\em Symplectic 4-manifolds with $b^+_2 = 1$,} In Geometry and physics (Aarhus, 1995),
		volume 184 of Lecture Notes in Pure and Appl. Math., 237-244. Dekker, New York. 
	

	\bibitem[OO2]{OO2} H. Ohta and K. Ono, {\em Notes on symplectic 4-manifolds with $b^+_2 = 1$. II,} Internat. J. Math., {\bf 7}(6) (1996), 755 - 770. 

	

	\bibitem[Pin]{Pin} M. Pinsonnault, {\em Symplectomorphism groups and embeddings of balls into rational ruled surfaces}, Compos. Math. \text{144} (2008), 787-810.


	\bibitem[Se]{Se} P. Seidel, {\em Lectures on four-dimensional Dehn twists. In Symplectic 4-manifolds and algebraic surfaces,}  Lecture Notes in Math. 1938,
	Springer, Berlin (2008), 231-267.

	\bibitem[ST]{ST} B. Seibert, G. Tian, {\em On the holomorphicity of genus two Lefschetz fibrations}, Ann. Math. \textbf{161} (2005), 959-1020.

	\bibitem[To]{To} B. Totaro, {\em The Topology of Smooth Divisors and the Arithmetic of Abelian Varieties}, Michigan Math. J. \textbf{48} (2000), 611-624.

	\bibitem[T]{T} C. H. Taubes. {\em The Seiberg-Witten and Gromov invariants}, Math. Res. Lett. \textbf{2} (1995), no. 2, 221-238.
	
	\bibitem[T2]{T2} C. H. Taubes. {\em SW $\Rightarrow$ Gr : from the Seiberg-Witten equations to pseudo-holomorphic curves}, J. Amer. Math. Soc. \textbf{9}(1996), no. 3, 845-918.
	
	\bibitem[Tol]{Tol} S. Tolman {\em On a symplectic generalization of Petrie's conjecture}, Trans. Amer. Math. Soc. \textbf{362} (2010), 3963-3996.
	
%
	
	\bibitem[W]{W} C. Wendl, {\em Holomorphic curves in low dimensions,} Lecture Notes in Mathematics {\bf 2216}, Springer.
	
	


	\bibitem[Z]{Z} W. Zhang, {\em Moduli spaces of $J$- holomorphic subvarieties}, arXiv:1601.07855 (2016).


\end{thebibliography}

\end{document}